\documentclass[a4,10pt]{amsart}
\usepackage{standalone}  % 加载 standalone 宏包
\usepackage{tikz}
\usepackage{booktabs,tabularx,array}
\usepackage{placeins}
 \newcolumntype{Y}{>{\raggedright\arraybackslash}X}
\usepackage{float}
\usetikzlibrary{calc, intersections, arrows.meta, decorations.markings}
\usepackage{amssymb,amstext,amscd,amsfonts,mathdots,mathrsfs}
\usepackage[hidelinks]{hyperref}
\usepackage{pdflscape}
\usepackage{setspace}% for spacing
\usepackage[all]{xy}% for quiver
\usepackage{enumerate}% for [(1)] or [(a)]
\usepackage{indentfirst}%for indent
\usepackage{ytableau}
\usepackage{color}% for color
\usepackage{colortbl}
\usepackage{dsfont}
\usepackage{multicol}
\usepackage{amsmath}% for the equation label
\usepackage{tikz}% for tikz quiver
\usetikzlibrary{shapes.geometric, arrows}
\usetikzlibrary{calc}
\def\arraystretch{1.2} %for the stretch of array
\numberwithin{equation}{section}
 \definecolor{darkred}{HTML}{993333}
\definecolor{darkblue}{RGB}{20,40,120}
\newcommand{\clr}{darkblue}

\def\dim{\text{dim}}
\def\Ob{\text{Ob}}
\def\Vect{\text{Vect}}

\def\op{\text{op}}
\def\Id{\text{Id}}
\def\Z{\mathbb{Z}}
\def\N{\mathbb{N}}
\def\id{\text{id}}
\def\Obj{\text{Obj}}
\def\R{\mathbb{R}}
\def\k{\Bbbk}
\def\P{\mathcal{P}}
\def\pcl{P^+_{cl,k}}
\def\ssum{\textstyle\sum\limits}
\def\dim{\operatorname{dim}}
\def\Hom{\operatorname{Hom}}
\def\End{\operatorname{End}}
\def\max{\operatorname{max}}
\def\ev{\operatorname{ev}}
\def\deg{\operatorname{deg}}
\def\res{\operatorname{res}}
\def\ch{\operatorname{char}}

\newtheorem{Theorem}{Theorem}[section] %[chapter] theorem number will %continue
\newtheorem{theorem*}{Theorem}[section]
\newtheorem{Lemma}[Theorem]{Lemma}
\newtheorem{Cor}[Theorem]{Corollary}

\newtheorem{Prop}[Theorem]{Proposition}
\newtheorem{Defn}[Theorem]{Definition}

\newtheorem{rem}[Theorem]{Remark}
\newtheorem{Assumption}[Theorem]{Assumption}
\theoremstyle{definition}
\newtheorem{THEOREM}{Theorem}
\def\theTHEOREM{\Alph{THEOREM}}

\newcommand{\rclr}{rgb:red,4}
\newcommand{\wdot}{ node[circle, draw, fill=white, thick, inner sep=0pt, minimum width=4pt]{}}
\newcommand{\bdot}{ node[circle, draw, fill=\clr, thick, inner sep=0pt, minimum width=4pt]{}}
\newcommand{\rdot}{ node[circle, draw=darkred, fill=darkred, thick, inner sep=0pt, minimum width=4pt]{}}
\newcommand{\bd}[1]{\mathbf{#1}}
\newcommand{\ob}[1]{\mathsf{#1}}
\newcommand{\up}{\uparrow}
\newcommand{\down}{\downarrow}
\newcommand{\sdim}{\text{sdim}}
\newcommand{\C}{\mathcal{C}}
\newcommand{\OBC}{\mathcal{OBC}}
\newcommand{\AOB}{\mathcal{AOB}}
\newcommand{\COB}{\mathcal{COB}}
\newcommand{\AOBC}{\mathcal{AOBC}}
\newcommand{\unit}{\mathds{1}}
\newcommand{\OB}{\mathcal{OB}}
\newcommand{\B}{\mathcal{B}}
\newcommand{\CB}{\mathcal{CB}}
\newcommand{\AB}{\mathcal{AB}}
\newcommand{\fgl}{\mathfrak{gl}}
\newcommand{\fq}{\mathfrak{q}}
\newcommand{\fh}{\mathfrak{h}}
\newcommand{\fn}{\mathfrak{n}}
\newcommand{\fb}{\mathfrak{b}}
\newcommand{\wrd}{\langle\up,\down\rangle}
\definecolor{darkblue}{HTML}{111199}
\newcommand{\fulldot}{
    \begin{tikzpicture}[color=\clr]
    \draw (0,0) \bdot;
    \end{tikzpicture}
}
\newcommand{\emptydot}{
    \begin{tikzpicture}[color=\clr]
    \draw (0,0) \wdot;
    \end{tikzpicture}
}
\newcommand{\undot}[1]{\operatorname{undot}({#1})}
\newcommand{\p}[1]{|{#1}|}

\def\clock{\begin{tikzpicture}[baseline=-.9mm]
\filldraw[white] (0,0) circle (1.72mm);
\draw[-] (0,-0.18) to[out=180,in=-102] (-.178,0.02);
\draw[-] (-0.18,0) to[out=90,in=180] (0,0.18);
\draw[-] (0.18,0) to[out=-90,in=0] (0,-0.18);
\draw[-] (0,0.18) to[out=0,in=90] (0.18,0);
\end{tikzpicture}\,}
\newcommand{\cE}{\mathsf{E}}
\newcommand{\cF}{\mathsf{F}}
\newcommand{\excise}[1]{}
\newcommand{\thal}{{}^{\theta}\!\alpha}
\newcommand{\So}{\mathsf{S}}
\def\iLam{\Lambda^{\!\jmath}}
\def\iH{\H^{\jmath}}

\newcommand{\lcap}{
\begin{tikzpicture}[baseline = 3pt, scale=0.5]
        \draw[-,thick] (1,0) to[out=up, in=right] (0.53,0.5) to[out=left, in=right] (0.47,0.5);
        \draw[-,thick] (0.49,0.5) to[out=left,in=up] (0,0);
\end{tikzpicture}
}
\newcommand{\lcup}{
\begin{tikzpicture}[baseline = 6pt, scale=0.5]
        \draw[-,thick] (1,1) to[out=down, in=right] (0.53,0.5) to[out=left, in=right] (0.47,0.5);
        \draw[-,thick] (0.49,0.5) to[out=left,in=down] (0,1);
\end{tikzpicture}
}
\newcommand{\lcapl}{
\begin{tikzpicture}[baseline = 3pt, scale=0.5]
        \draw[<-,thick] (1,0) to[out=up, in=right] (0.53,0.5);
         \draw[-,thick]  (0.53,0.5) to[out=left, in=right] (0.47,0.5);
        \draw[-,thick] (0.49,0.5) to[out=left,in=up] (0,0);
\end{tikzpicture}
}
\newcommand{\lcupl}{
\begin{tikzpicture}[baseline = 6pt, scale=0.5]
        \draw[<-,thick] (1,1) to[out=down, in=right] (0.53,0.5);
        \draw[-,thick](0.53,0.5) to[out=left, in=right] (0.47,0.5);
        \draw[-,thick] (0.49,0.5) to[out=left,in=down] (0,1);
\end{tikzpicture}
}
\newcommand{\lcuplL}{
\begin{tikzpicture}[baseline = 6pt, scale=0.5]
        \draw[<-,thick] (1,1) to[out=down, in=right] (0.53,0.5);
        \draw[-,thick](0.53,0.5) to[out=left, in=right] (0.47,0.5);
        \node at (-0.15, 0.85) {$\color{darkblue}\scriptstyle\bullet$};
        \draw[-,thick] (0.49,0.5) to[out=left,in=down] (0,1);
\end{tikzpicture}
}
\newcommand{\lcuplr}{
\begin{tikzpicture}[baseline = 6pt, scale=0.5]
        \draw[<-,thick] (1,1) to[out=down, in=right] (0.53,0.5);
        \draw[-,thick](0.53,0.5) to[out=left, in=right] (0.47,0.5);
        \node at (1.15, 0.85) {$\color{darkblue}\scriptstyle\bullet$};
        \draw[-,thick] (0.49,0.5) to[out=left,in=down] (0,1);
\end{tikzpicture}
}
\newcommand{\lcapo}{
\begin{tikzpicture}[baseline = 3pt, scale=0.5]
        \draw[->,thick] (1,0) to[out=up, in=right] (0.53,0.5) to[out=left, in=right] (0.47,0.5);
        \draw[-,thick] (0.49,0.5) to[out=left,in=up] (0,0);
\end{tikzpicture}
}
\newcommand{\lcupo}{
\begin{tikzpicture}[baseline = 6pt, scale=0.5]
        \draw[-,thick] (1,1) to[out=down, in=right] (0.53,0.5) to[out=left, in=right] (0.47,0.5);
        \draw[->,thick] (0.49,0.5) to[out=left,in=down] (0,1);
\end{tikzpicture}
}
\newcommand{\rcap}{
\begin{tikzpicture}[baseline = 3pt, scale=0.5]
        \draw[<-,thick] (1,0) to[out=up, in=right] (0.53,0.5) to[out=left, in=right] (0.47,0.5);
        \draw[-,thick] (0.49,0.5) to[out=left,in=up] (0,0);
\end{tikzpicture}
}
\newcommand{\rcup}{
\begin{tikzpicture}[baseline = 6pt, scale=0.5]
        \draw[<-,thick] (1,1) to[out=down, in=right] (0.53,0.5) to[out=left, in=right] (0.47,0.5);
        \draw[-,thick] (0.49,0.5) to[out=left,in=down] (0,1);
\end{tikzpicture}
}
\newcommand{\swap}{
\begin{tikzpicture}[baseline = 3pt, scale=0.4]
        \draw[-,thick] (0,0) to[out=up, in=down] (1,1);
        \draw[-,thick] (1,0) to[out=up, in=down] (0,1);
\end{tikzpicture}
}
\newcommand{\swapo}{
\begin{tikzpicture}[baseline = 3pt, scale=0.5]
        \draw[->,thick] (0,0) to[out=up, in=down] (1,1);
        \draw[->,thick] (1,0) to[out=up, in=down] (0,1);
\end{tikzpicture}
}

\newcommand{\swapr}{
\begin{tikzpicture}[baseline = 3pt, scale=0.5]
        \draw[<-,thick] (0,0) to[out=up, in=down] (1,1);
        \draw[<-,thick] (1,0) to[out=up, in=down] (0,1);
\end{tikzpicture}
}
\newcommand{\rswap}{
\begin{tikzpicture}[baseline = 3pt, scale=0.5]
        \draw[->,thick] (0,0) to[out=up, in=down] (1,1);
        \draw[<-,thick] (1,0) to[out=up, in=down] (0,1);
\end{tikzpicture}
}

\newcommand{\lswap}{
\begin{tikzpicture}[baseline = 3pt, scale=0.5]
        \draw[<-,thick] (0,0) to[out=up, in=down] (1,1);
        \draw[->,thick] (1,0) to[out=up, in=down] (0,1);
\end{tikzpicture}
}
\newcommand{\dswap}{
\begin{tikzpicture}[baseline = 3pt, scale=0.5]
        \draw[<-,thick] (0,0) to[out=up, in=down] (1,1);
        \draw[<-,thick] (1,0) to[out=up, in=down] (0,1);
\end{tikzpicture}
}
\newcommand{\caup}{
\begin{tikzpicture}[baseline = 5pt, scale=0.75]
                 \draw[<-,thick] (0.75,1) to[out=down,in=left] (1,0.6) to[out=right,in=down] (1.25,1);
          \draw[<-,thick] (1.25,0) to[out=up,in=right] (1,0.4) to[out=left,in=up] (0.75,0);\end{tikzpicture}}

\newcommand{\xd}{
\begin{tikzpicture}[baseline = 3pt, scale=0.5, color=\clr]
\draw[->,thick] (0,0) to[out=up, in=down] (0,1);
\draw(0,0.5) \bdot;
 \draw (0.35,0.4) node{\tiny $2$};
\end{tikzpicture}
}
\newcommand{\xdx}{
\begin{tikzpicture}[baseline = 3pt, scale=0.5, color=\clr]
\draw[<-,thick] (0,0) to[out=up, in=down] (0,1);
\draw(0,0.5) \bdot;
 \draw (0.35,0.4) node{ \tiny$ 2$};\end{tikzpicture}
}
\newcommand{\xdz}{
\begin{tikzpicture}[baseline = 3pt, scale=0.5, color=\clr]
\draw[->,thick] (0,0) to[out=up, in=down] (0,1);
\draw(0,0.5) \bdot;
\end{tikzpicture}
}
\newcommand{\xli}{
\begin{tikzpicture}[baseline = 3pt, scale=0.5, color=\clr]
        \draw[<-,thick] (0,0) to[out=up, in=down] (0,1);
\end{tikzpicture}
}
\newcommand{\sli}{
\begin{tikzpicture}[baseline = 3pt, scale=0.5, color=\clr]
        \draw[->,thick] (0,0) to[out=up, in=down] (0,1);
\end{tikzpicture}
}
\newcommand{\xdxz}{
\begin{tikzpicture}[baseline = 3pt, scale=0.5, color=\clr]
\draw[<-,thick] (0,0) to[out=up, in=down] (0,1);
\draw(0,0.5) \bdot;
 \end{tikzpicture}
}

\newcommand{\obg}{\begin{tikzpicture}[baseline = 10pt, scale=0.5, color=\clr] \draw[-,thick] (0,0.5)to[out=up,in=down](0,1.2);
    \end{tikzpicture}
    }
\def\bigdt{{\color{white}\bullet}\hspace{.1mm}\!\!\!\pmb\circ}
\def\dt{{\color{white}\bullet}\!\!\!\circ}
\def\bull{{\scriptstyle\bullet}}

\newcommand{\xdotk}{
\begin{tikzpicture}[baseline = 3pt, scale=0.5, color=\clr]
        \draw[-,thick] (0,0) to[out=up, in=down] (0,1);
            \draw (0,0.4)\bdot;
        \draw (0.5,0.4) node{$k$};
\end{tikzpicture}
}
\newcommand{\xdot}{\begin{tikzpicture}[baseline = -1mm, color=\clr]
	\draw[-] (0.08,-.3) to (0.08,.4);
      \node at (0.08,0.15) {$\dt$};
\end{tikzpicture}}
\newcommand{\cev}[1]{\reflectbox{\ensuremath{\vec{\reflectbox{\ensuremath{#1}}}}}}
\newcommand{\xdota}{
\begin{tikzpicture}[baseline = 3pt, scale=0.5, color=\clr]
        \draw[-,thick] (0,0) to[out=up, in=down] (0,1);
            \draw (0,0.4)\bdot;
        \draw (0.5,0.4) node{$a$};
\end{tikzpicture}
}

\newcommand{\hf}{\diamond}
\newcommand{\darkg}{\color{green!70!black}}
\tikzset{darkg/.style={green!70!black}}

\newcommand{\comment}[1]{{\color[rgb]{0.1,0.7,0.1} #1}}
\newcommand{\old}[1]{{\color{red} #1}}
\newcommand{\new}[1]{{\color{blue} #1}}
\tikzset{
	cplus/.style={
		circle,
		draw=black,
		fill=white,
		line width=0.2pt,
		inner sep=0.025pt,
		minimum size=.2pt,
		font=\fontsize{.5}{.8}\selectfont
	}
}
\newcommand{\midscript}[1]{{\fontsize{6pt}{7pt}\selectfont$#1$}}

\begin{document}
\setlength{\baselineskip}{14pt}

\title[Affine Brauer categorification]
{Affine and cyclotomic Brauer categorification via $\imath$-Kac--Moody $2$-categories: the half-integral type 
 $\operatorname{AIII}$ case
}

\author{Mengmeng  Gao}
\email{g19920119@163.com}
\author{Hebing  Rui}
\email{hbrui@tongji.edu.cn}
\author{Linliang Song}
\email{llsong@tongji.edu.cn}
\address{School of Mathematical Sciences, Tongji University, Shanghai, 200092, China.}
%\date{\today}
\keywords{Affine Brauer category, $\imath$-quantum groups, $\imath$-Kac-Moody $2$-category}
\thanks{M. Gao is  partially supported by NSFC (grant No. 12301038), H. Rui is partially supported   by NSFC (grant No.  11971351), and  L. Song is  partially supported   by NSFC (grant No.  12071346). }

\begin{abstract}
 Cyclotomic Brauer or cyclotomic   Nazarov–Wenzl algebras arise in higher Schur–Weyl dualities involving parabolic categories $\mathcal O$ for Lie algebras of types $B,C$ and $D$. 
  Their connection with Kac--Moody-type categorification is substantially less developed than the corresponding type $A$ theory for cyclotomic Hecke algebras.

We construct a categorical bridge between affine Brauer-type representation theory and the half-integral quasi-split type $\operatorname{AIII}$ $\imath$-Kac--Moody $2$-category of Bao–Shan–Wang–Webster. More precisely, an action of the affine Brauer category on a locally Schurian category, with dot spectrum exactly $\frac12+\mathbb Z$, determines a generalized nilpotent 2-representation of the even component $\mathfrak  U^{\imath}_{+}$. Conversely, every nilpotent $2$-subrepresentation of an ambient locally Schurian $2$-representation of $\mathfrak  U^{\imath}_{+}$ carries a compatible affine Brauer action whose dot spectrum is contained in $\frac12+\mathbb Z$. Applying these constructions to cyclotomic quotients, we prove that the locally unital algebra attached to a $\mathbf u$-admissible cyclotomic Brauer category is isomorphic to the locally unital  algebra attached to the corresponding cyclotomic quotient of the principal $2$-representation of $\mathfrak  U^{\imath}_{+}$. Consequently, the associated cyclotomic Brauer (or cyclotomic Nazarov–Wenzl) algebras acquire natural $\mathbb Z$-gradings. To our knowledge, this is the first such categorical realization of 
$\mathbf u$-admissible half-integral cyclotomic Brauer algebras by means of an $\imath$-Kac--Moody $2$-category. It provides the categorical and graded framework toward a Brauer-type extension of the Brundan–Kleshchev–Ariki theory in which coideal algebras and $\imath$-canonical bases are expected to replace  ordinary quantum groups and canonical bases.
\end{abstract}

\maketitle

\tableofcontents
%%%%%%%%%%%%%%%%%%%%%%%%%%%%%%%%%%%%%%%%%%%%%%%%%%%%%%%%
\section{Introduction}
One of the central developments in the representation theory of Hecke algebras is the passage from ungraded algebras to graded diagrammatic categorification. Ariki’s theorem identifies the decomposition numbers of cyclotomic Hecke algebras with coefficients of canonical bases \cite{Ariki-dec}. Brundan and Kleshchev subsequently constructed an explicit isomorphism between blocks of cyclotomic Hecke algebras and cyclotomic quiver Hecke algebras \cite{BK-klr}. Besides producing a natural grading on the Hecke-algebra side, their isomorphism connects the representation theory of cyclotomic Hecke algebras directly with Kac--Moody categorification and leads to graded refinements of decomposition numbers \cite{BK-gr}. This circle of ideas is one of the most effective instances in which a categorical quantum-group action converts canonical-basis information into concrete representation-theoretic invariants.

The purpose of the present paper is to establish a new categorical
bridge for the affine Brauer category and its cyclotomic quotients.
Rui and Song introduced these categories and constructed higher
Schur--Weyl duality functors from cyclotomic Brauer categories to
parabolic BGG categories $\mathcal O$ associated with orthogonal and
symplectic Lie algebras, namely Lie algebras of types $B$, $C$, and
$D$~\cite{RS-cyc}. Affine and cyclotomic Nazarov--Wenzl algebras arise
as endomorphism algebras in these diagrammatic categories.
However, the analogue of the Brundan–Kleshchev–Ariki picture is more intricate in this setting. The relevant quantum symmetry is a quantum symmetric pair, or coideal algebra, rather than an ordinary quantum group; diagrammatically, one must account for cups, caps, unoriented Brauer strands, and boundary phenomena in addition to the quiver-Hecke-type crossings and dots. These features are not formal modifications of the type $A$ theory.

The categorical symmetry used here is the quasi-split type $\operatorname{AIII}$ $\imath$-Kac--Moody $2$-category $\mathfrak  U^{\imath}$  introduced in the half-integral case by Bao–Shan–Wang–Webster \cite{BSWW-icate}. Although the name “type $\operatorname{AIII}$” refers to the underlying symmetric pair and its coideal algebra, the resulting action provides a categorical framework for the
Brauer-type categories arising in the relevant Lie-theoretic
realizations.

This distinction is important: the paper is not another ordinary
type~$A$ categorification. It relates a quantum-symmetric-pair
categorification to the diagrammatic centralizer algebras attached to
orthogonal and symplectic representation theory. The present project
was initiated in 2022 and grows out of our earlier study of affine and
cyclotomic Brauer categories~\cite{RS-cyc} and the generating-series
identities appearing in~\cite{GRS-tri}. While this work was in progress,
Brundan--Wang--Webster noted the general expectation that the
quasi-split type~$\operatorname{AIII}$ $2$-$\imath$-quantum groups
should be related to the affine Brauer category of~\cite{RS-cyc}, and
raised the problem of finding a conceptual explanation analogous to
the type~$A$ Heisenberg--Kac--Moody correspondence
\cite[Introduction, pp.~5--6]{BWW25}. Savage and Webster recorded a
related expectation for the affine Brauer and Kauffman categories
\cite[Introduction]{SW}. Theorem~A makes this general expectation
precise in the half-integral type~$\operatorname{AIII}$ setting by
providing explicit constructions in both directions.

Concrete $2$-representations of $\imath$-Kac--Moody $2$-categories remain much less understood than their ordinary Kac--Moody counterparts. One obstruction is the complexity of the defining relations. In the half-integral type $\operatorname{AIII}$ case, the boundary color $\frac12$ is governed by an inhomogeneous relation with no analogue in the ordinary type $A$ Kac--Moody $2$-category. Consequently, it is difficult to recognize an $\imath$-Kac--Moody $2$-action directly on a naturally occurring representation-theoretic category. Our first main result shows that the affine Brauer category provides precisely such a concrete model.

To formulate this result precisely, we first introduce  the relevant component of the
$\imath$-Kac--Moody $2$-category and fix some terminology for its $2$-representations.

The $2$-category $\mathfrak U^\imath$ is the
disjoint union of its even and odd full sub-$2$-categories, and these two
components are isomorphic. We therefore formulate the theorem for the even
component $\mathfrak U^\imath_+$. 
Throughout, unless otherwise stated, every ambient
$2$-representation of $\mathfrak U^\imath_+$ considered in this
paper is assumed to be either locally finite Abelian or locally
Schurian  following~\cite{BSW-heisenberg}.
We adopt from  \cite{BSW-heisenberg} the notion of a nilpotent $2$-representation
and use the same terminology for a  $2$-subrepresentation of an ambient locally
Schurian $2$-representation when  
the two conditions in Definition~\ref{nil} hold for all of its objects.
The notion of a generalized nilpotent
$2$-representation, introduced in the present paper, is defined in
Section~3. For brevity, we call a generalized nilpotent
$2$-representation of $\mathfrak U^\imath_+$ a generalized nilpotent
$\imath$-Kac--Moody categorification.

Let $\mathcal{AB}$  denote the affine Brauer category of Rui–Song \cite{RS-cyc}. If $\mathcal C$ is a locally Schurian category equipped with an $\mathcal{AB}$-module-category structure, the dot endomorphism of the generating functor decomposes that functor into generalized eigenspace functors. When its spectrum is the full half-integral set
\begin{equation} I=\frac 12+\mathbb Z, \end{equation}
we use these eigenspace functors to construct a $2$-representation  of the even component $\mathfrak U^{\imath}_{+}$, including the four orientations of crossings and the left and right cups and caps. 
Conversely, starting from a nilpotent $2$-subrepresentation of an
ambient locally Schurian $2$-representation of
$\mathfrak U^\imath_+$, we combine the generating functors and
natural transformations to recover an affine Brauer action.
 The precise statement is as follows.

\begin{THEOREM}\label{main111}  Let $\mathcal C$ be a locally Schurian category over $\mathbb C$.
\begin{itemize}
\item [(1)]  Suppose that $\mathcal C$ admits an affine Brauer categorification and that the spectrum of the dot endomorphism on the generating endofunctor is exactly $\frac12+\mathbb Z$. Then $\mathcal C$ carries the structure of a generalized nilpotent $2$-representation of $\mathfrak U^{\imath}_{+}$. Moreover, the full subcategory $\mathcal C^{\mathrm{fg}}$ of finitely generated objects is a nilpotent $2$-subrepresentation. In particular, if $\mathcal C$ is locally finite Abelian, then the resulting $2$-representation is nilpotent.
\item[(2)] Suppose that $\mathcal C$ carries a $2$-representation of $\mathfrak U^{\imath}_{+}$, and let $\mathcal M$ be a nilpotent $2$-subrepresentation. Then $\mathcal M$ admits an affine Brauer categorification for which the dot spectrum is contained in $\frac{1}{2}+\mathbb Z$.
\end{itemize}
\end{THEOREM}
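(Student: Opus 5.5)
The strategy for part (1) is the one used by Brundan--Savage--Webster for the Heisenberg/Kac--Moody correspondence, adapted to the Brauer setting. Let $B$ be the generating endofunctor of $\mathcal C$ and $x\in\End(B)$ the dot. Because $\mathcal C$ is locally Schurian, $x$ acts locally finitely on $B(M)$ for each finitely generated $M$. We therefore decompose
\[
B=\bigoplus_{i\in I} B_i,\qquad B_i=\text{generalized } i\text{-eigenspace of } x .
\]
The Brauer cup and cap relations intertwine the dot with minus the dot: a dot slides through a cup or cap as $x\mapsto -x$. Hence $B_i$ is adjoint to $B_{-i}$. We then split $I=\frac12+\mathbb Z$ into the positive half $\{\frac12,\frac32,\dots\}$, which indexes the $\imath$-colours. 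The generators $E_i$ and $F_i$ of $\mathfrak U^\imath_+$ are realized by $B_{i}$ and $B_{-i}$; equivalently, by the $\pm$ eigen-summands under the involution $\theta(i)=-i$. The boundary colour $\frac12$ is fixed up to sign.

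Next we define the 2-morphisms. The KLR-type dots are $x-i$, restricted to each summand and suitably renormalized. For the crossings we use the standard Brundan--Kleshchev/Rouquier formulas: the Brauer crossing $s$, conjugated by the idempotents $1_{i}\otimes 1_{j}$, is corrected by the factor $(x_1-x_2)^{-1}$ or $(x_1-x_2\pm1)$ according to whether $j=i$, $j=i\pm1$, or neither. The invertibility of $x_1-x_2$ off the diagonal follows from nilpotence of $x-i$. All four orientations of crossings are obtained from the upward ones by rotating with the cups and caps. Left and right cups and caps come from the Brauer cup and cap restricted to the $(i,-i)$ summands, rescaled by normalizing power series in the dot. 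These power series are built from the bubble generating series of the cyclotomic/affine Brauer category, i.e.\ the series $\omega(u)$ satisfying the identity from \cite{GRS-tri}.

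The verification proceeds in three stages.
\begin{enumerate}
\item The quiver Hecke relations, via the usual computations.
\item The $\imath$-specific inhomogeneous relation at colour $\frac12$, coming from the fact that $B_{1/2}$ is self-adjoint up to sign and the Brauer relation (dot)+(dot through cap)$=0$ couples $B_{1/2}$ with $B_{-1/2}$.
\item The invertibility/adjunction axioms, expressed through the bubble generating-function identity.
\end{enumerate}
For the generalized nilpotency, nilpotence of $x-i$ is local on each object. On finitely generated objects both conditions of Definition~\ref{nil} hold uniformly, which gives the claims about $\mathcal C^{\mathrm{fg}}$ and the locally finite Abelian case.

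For part (2) we reverse the construction. On $\mathcal M$ we set
\[
B=\bigoplus_{i\in I}E_i .
\]
Here $E_{-i}$ is read as $F_i$ under the identification of $\mathfrak U^\imath_+$-generators with signed colours. We then define
\begin{itemize}
\item the dot as $\sum_i(y_i+i)1_i$, where $y_i$ is the KLR dot;
\item the Brauer crossing as the inverse of the above correction, which is available since $y$ is nilpotent on $\mathcal M$, so the power series $(y_1-y_2+i-j)^{-1}$ converge for $i\ne j$;
\item the Brauer cup and cap as the sums of $\imath$-cups and caps with the inverse normalizations.
\end{itemize}
The eigenvalues of the dot lie in $I$ by construction. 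The Brauer relations follow because the two constructions are mutually inverse on 2-morphisms, so we can reuse the computations of part (1) read backwards. Nilpotence is exactly what guarantees that the formal series converge.

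I expect the main obstacle to be the colour-$\frac12$ stage, the second item in the part (1) verification. Its inhomogeneous term must match the Brauer relation connecting the dot, the cup, and the crossing, and this pins down the normalization series. I would first fix these series by testing the relation on the smallest case, the object $\mathbf 1\to B_{1/2}B_{-1/2}$. Then I would check that the same choice makes the bubble/adjunction identities consistent, using the generating-series identity of \cite{GRS-tri}.
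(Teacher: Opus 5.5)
Your outline of part (1) has the right skeleton, but it never constructs the weight decomposition, and without it there is no $2$-representation of $\mathfrak U^\imath_+$ at all. The parts that are right are the generalized eigenspaces; $E_i=B_i$ and $F_i=B_{-i}$ for $i\in\frac12+\mathbb N$ with $(B_i,B_{-i})$ biadjoint; Brundan--Kleshchev crossings on $B_iB_j$, where the cup--cap term dies because $i+j\neq0$; and cups and caps rescaled by bubble data. What is missing: you must write $\mathcal C=\prod_{\lambda\in X^+_\imath}\mathcal C_\lambda$ with $B_i\colon\mathcal C_\lambda\to\mathcal C_{\lambda+\alpha_i}$, and show that only even weights occur. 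The paper gets this from the bubble series on simples, $\mathcal O_L(u)=((-1)^{a_L}u-\frac12)\frac{m_L(-u)}{m_L(u)}$ (Lemma \ref{u-admiss}), and its change under $E_i$ (Lemma \ref{choosesimple}), then uses Definition \ref{expre}, Lemma \ref{BLOCK} and Corollary \ref{cor:even}.

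The same data fixes the cup/cap normalizations. These must depend on the object, because in an affine action bubbles are only central, not scalar. The paper uses $\frac{n_{V,i}(x)}{xm_{V,i}(x)}$, with $n_V=n_{V,i}t_{V,i}$ a Hensel-type factorization over the local ring $Z_{\lambda,V}$ (Lemma \ref{lem:moniclift}). Naturality in $V$ then needs its own argument (Lemma \ref{tvi}). A fixed scalar series $\omega(u)$, as you propose, exists only in the cyclotomic case.

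Your account of the inhomogeneous relation is also off target. $B_{1/2}$ is not self-adjoint; its biadjoint is $B_{-1/2}$. The relation is not a shadow of the dot--cap relation. It is a three-strand relation on $F_{1/2}E_{1/2}F_{1/2}$, coming from the Brauer braid relation on eigenvalues $(-\frac12,\frac12,-\frac12)$. There $\pm\frac12$ are at once adjacent and opposite, so both the Hecke correction and the cup--cap correction survive. Consequently:
\begin{itemize}
\item testing on $\mathbf 1\to B_{1/2}B_{-1/2}$ cannot see it;
\item the normalizations are dictated by the bubble relations, not by this relation;
\item there is no Rouquier-type invertibility shortcut for $\mathfrak U^\imath$, so the adjunction, cyclicity, bubble, nodal, bicross and mixed relations must each be checked;
\item only after that is the inhomogeneous relation reduced (Section \ref{diliu}) to Lemma \ref{lem:unprimed-rational-identities}.
\end{itemize}

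For part (2), ``the two constructions are mutually inverse on 2-morphisms, so read part (1) backwards'' is not an argument. Part (1) derives $\imath$-relations \emph{from} the Brauer relations, using Brauer consequences along the way, such as Lemmas \ref{ou} and \ref{usefuel-equa} and the braid relation. Inverse formulas on generators do not turn these implications into equivalences. The Brauer relations must be re-derived in an order that uses only what is already proved, and two steps are genuinely new.
\begin{itemize}
\item The bubble series of the reconstructed black bubbles must be identified with the one assembled from red bubbles, including the constant term $\widetilde{\mathcal O}_V(0)=-\frac12$. The paper gets this from $\widetilde{\mathcal O}_V(0)^2=\frac14$ and the locality of $\widetilde Z_{\lambda,V}$ (Proposition \ref{bub=bubred}).
\item The braid relation on triples $(-j,j,-j)$ does not follow from the homogeneous relations. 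The paper reruns the Section \ref{diliu} computation without it and isolates the defect $\widetilde Z_{10}-Z_{10}$. It kills this defect with the inhomogeneous relation when $j=\frac12$, and with the non-defining relation \eqref{equ:inhomotype} plus Lemma \ref{lem:primed-rational-identities} when $j\neq\frac12$ (Lemmas \ref{braid-local-r211} and \ref{braid-local-rneq}).
\end{itemize}
Likewise, Definition \ref{C defn}(2) and Definition \ref{AB defn}(1) hold only after extra terms are killed by the nodal and bicross relations (Lemmas \ref{right cuaps - down cross-inverse} and \ref{crosss-inverse}).

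Two smaller points. On summands mixing $E$'s and $F$'s, the black crossing is built from sideways red crossings and rotations (Definition \ref{cross-red123}), not as an inverse of the BK correction. And condition (1) of Definition \ref{nil} is what makes $\bigoplus_i(E_i\oplus F_i)$ a well-defined sweet endofunctor; it is needed beyond the convergence of the series.
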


Theorem~\ref{main111} should be compared with the type~$A$ relationship between
Heisenberg and Kac--Moody categorical actions developed by
Brundan--Savage--Webster
\cite[Sections~4 and~5.5]{BSW-heisenberg}. 
The construction and verification in the present setting, however, take place in a different diagrammatic 
category and are not direct adaptations of the type $A$ arguments. Most importantly,  the exceptional relation at the boundary
color $\frac12$ is inhomogeneous and does not yield the inversion isomorphism available  in the  type~$A$ setting.  Consequently,  the remaining oriented generators cannot be recovered by that method. 

We therefore construct all generating $2$-morphisms explicitly,
including the four orientations of crossings and both the left and
right cups and caps, and verify the defining relations directly.
We also introduce the notion of a generalized nilpotent
$2$-representation, which is adapted to locally Schurian categories
and is essential for the application to finitely generated projective
modules in Section~8.

Part (1) is proved in Theorem~\ref{thm:mainthmaffineb}. Part (2) follows from the reverse construction of Section~7. These statements are deliberately formulated with different spectral hypotheses. The full-spectrum assumption in part (1) guarantees that all generalized-eigenspace functors required by $\mathfrak  U^{\imath}_{+}$  are present. The reverse construction only asserts that the reconstructed dot has half-integral spectrum; it need not realize every half-integer. Accordingly, Theorem~\ref{main111} gives two complementary constructions and does not claim that they are mutually inverse for arbitrary $2$-representations.

The principal representation-theoretic consequence is obtained from cyclotomic quotients. Let $\mathbf u$ be a finite multiset of parameters in $\frac12+\mathbb Z$, let $\mathcal{CB}_{\mathbf u}$ be the corresponding $\mathbf u$-admissible cyclotomic Brauer category, and let $B$ be its locally unital path algebra. The cyclotomic parameters determine a dominant $\imath$-weight $\kappa$ and a cyclotomic quotient $\mathcal Q(\kappa)$ of the principal $2$-representation of $\mathfrak U^{\imath}_{+}$. Let $Q$ denote the locally unital endomorphism algebra of its distinguished additive generators. Our second main result is the following.

\begin{THEOREM}\label{iso-local-init} There is an isomorphism
$B\cong  Q$
of locally unital $\mathbb C$-algebras.\end{THEOREM}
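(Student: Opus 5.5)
The plan is to build mutually inverse algebra homomorphisms $B\to Q$ and $Q\to B$ from the two constructions of Theorem~\ref{main111}, in the style of Brundan--Kleshchev and Brundan--Savage--Webster.

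\textbf{Step 1: $Q$ acts on $B$-modules.} Let $\mathcal C$ be the category of finitely generated projective (locally unital) $B$-modules. Since $\mathcal{CB}_{\mathbf u}$ is a quotient of $\mathcal{AB}$, the category $\mathcal C$ is a locally Schurian $\mathcal{AB}$-module category. The $\mathbf u$-admissibility hypothesis is used in two places:
\begin{itemize}
\item it ensures that the cyclotomic polynomial $\prod_{u\in\mathbf u}(x-u)$ kills the dot on the unit object;
\item it ensures that the dot spectrum lies in $\frac12+\mathbb Z$.
\end{itemize}
After enlarging by a direct sum with a full-spectrum model if needed, Theorem~\ref{main111}(1) gives a nilpotent $2$-representation of $\mathfrak U^\imath_+$ on $\mathcal C^{\mathrm{fg}}$. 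The unit object $B1_{\emptyset}$ is a highest-weight vector: the cyclotomic relation together with nilpotency shows that $\cE$-type $1$-morphisms kill it, and its weight is exactly $\kappa$. The universal property of the cyclotomic quotient $\mathcal Q(\kappa)$ of the principal $2$-representation (assumed from the definitions) then gives a morphism of $2$-representations $\mathcal Q(\kappa)\to\mathcal C$ sending the highest-weight generator to $B1_\emptyset$. Applied to the distinguished additive generators $F_{\mathbf i}1_\kappa$, this yields an algebra map $\phi\colon Q\to\End(\bigoplus_{\mathbf i}F_{\mathbf i}B1_\emptyset)$. Summing over residue sequences $\mathbf i$ of length $n$, we have $\bigoplus_{\mathbf i} F_{\mathbf i} B1_\emptyset\cong B1_{\up^n}$ (more precisely $1_n$), because $F=\bigoplus_i F_i$ is the generalized eigenspace decomposition of the generating functor. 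Hence $\phi$ lands in $B$.

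\textbf{Step 2: $B$ acts on $Q$.} Theorem~\ref{main111}(2), applied to $\mathcal Q(\kappa)$, which is nilpotent, gives an $\mathcal{AB}$-action on $\mathcal Q(\kappa)$. The dot acting on the highest-weight object satisfies the cyclotomic relation determined by $\kappa$, and hence by $\mathbf u$. The relevant bubble and parameter relations must be checked to match the admissibility conditions. By the universal property of $\mathcal{CB}_{\mathbf u}$ as a quotient, this gives $\psi\colon B\to Q$.

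\textbf{Step 3: $\phi$ and $\psi$ are mutually inverse.} Both maps are compatible on generators:
\begin{itemize}
\item dots correspond to $\sum_i(x_i+i)$-type expressions corrected by the idempotents $e(\mathbf i)$;
\item crossings correspond to the Brundan--Kleshchev-style normalized crossings;
\item cups and caps correspond to the explicit left and right cups and caps built in the proof of Theorem~\ref{main111}.
\end{itemize}
It therefore suffices to check $\psi\phi=\id$ and $\phi\psi=\id$ on generators. This reduces to the fact that the Section~7 reverse construction undoes the construction of Theorem~\ref{thm:mainthmaffineb} at the level of formulas, once the generalized eigenspace idempotents are identified with the $e(\mathbf i)$.

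\textbf{Main obstacle.} I expect the hardest step to be this final check on generators, above all at the boundary color $\frac12$, where the inhomogeneous relation intervenes and the power-series normalizations of crossings and cups do not simply cancel. If the direct formula check becomes unwieldy, the fallback is to prove $\phi$ surjective and compare dimensions. Surjectivity would follow because the images of the generators of $Q$ generate $B$, using the cellular or basis theorem for $\mathcal{CB}_{\mathbf u}$ from \cite{RS-cyc} together with the spanning set for $Q$. Injectivity would follow by matching graded dimensions via the categorified $\imath$-module, since both sides realize the same integrable highest-weight module of weight $\kappa$.
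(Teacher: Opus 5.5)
Your architecture is the paper's. The forward construction, together with the universal property of the principal $2$-representation $\mathcal R(\kappa)$, gives $Q\to B$. The reverse construction gives $B\to Q$. The two maps are inverse because the formulas of Definitions~\ref{natrual-basic-red} and~\ref{cross-red123} are obtained by solving those of Definitions~\ref{natrual-basic} and~\ref{natrual-basic1} on each generalized-eigenspace summand; the paper treats this as holding by construction, including at the color $\frac12$.

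\textbf{The gap: how you obtain the factorizations.} Your claim that $P=1_0B$ is a highest-weight vector killed by the $\cE$-type $1$-morphisms is false here. The Brauer strand is self-adjoint, $EP\cong 1_1B\neq 0$, and the dot on it has minimal polynomial $m(u)$. Hence $E_iP\neq 0$ for every $i\in I$ occurring in $\mathbf u$.
\begin{itemize}
\item If $\mathbf u$ contains both positive and negative half-integers, then neither the $E_i$ nor the $F_i=E_{-i}$ ($i\in I^\imath$) annihilate $P$.
\item Consequently $1_nB\cong\bigoplus_{\mathbf i\in I^n}E_{\mathbf i}P$ involves words in both kinds of $1$-morphisms, not only $F_{\mathbf i}$.
\item More importantly, $\mathcal Q(\kappa)$ is not a highest-weight quotient and has no universal property of the kind you invoke. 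It is $\mathcal R(\kappa)$ modulo the invariant ideal generated by the elements in \eqref{equ:genofgenealizedcyc}: the dot powers $x^{\epsilon_i}$ on $E_i1_\kappa$ and $x^{\phi_i}$ on $F_i1_\kappa$, and prescribed values $\omega_i^{(r)}$, $\tilde\omega_i^{(r)}$ for all dotted bubbles at $\kappa$.
\end{itemize}
Factoring $\mathcal R(\kappa)\to\operatorname{lfdmod}\text{-}B$ through $\mathcal Q(\kappa)$ therefore requires showing that exactly these elements act by zero at $P$. Your argument never does this.

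\textbf{How to close it.} The check follows from the earlier lemmas. Since $\operatorname{End}_B(P)=\mathbb C$ and the black bubbles act on $P$ by the scalars $\omega_r$, you have $m_P=m$, $\bar n_P=n$ and $t_{P,i}(u)=(u-i)^{\phi_i}$.
\begin{itemize}
\item Definition~\ref{natrual-basic}(1), together with $m(x)=0$ and $\phi_i\ge\epsilon_{-i}$, kills the dot generators.
\item The bubble computations \eqref{kkk1} and \eqref{lll1} behind Definition~\ref{natrual-basic1}(4) specialize at $P$ to the single monomials $O_i(u)$ and $\tilde O_i(u)$ of Definition~\ref{844}.
\item Dually, the bubble check you leave as ``must be checked'' in Step~2 is what Proposition~\ref{bub=bubred} supplies, combined with Definition~\ref{844} and \eqref{uadamr}.
\end{itemize}

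\textbf{Smaller points.}
\begin{itemize}
\item $\operatorname{pmod}\text{-}B$ is not locally Schurian, so Theorem~\ref{main111}(1) must be applied to $\operatorname{lfdmod}\text{-}B$. The paper gets the exact spectrum from \cite[Theorem~6.5]{GRS-tri}; your direct-sum enlargement could replace this only once you exhibit the auxiliary model and check its spectrum.
\item Theorem~\ref{main111}(2) needs an ambient locally Schurian $2$-representation. It should be applied to $\operatorname{pmod}\text{-}Q\subset\operatorname{lfdmod}\text{-}Q$, not to $\mathcal Q(\kappa)$ itself.
\item Your fallback through graded dimensions is not available. No basis theorem for $\mathcal Q(\kappa)$ is known, $B$ has no a priori grading, and there is no highest-weight module theory for the coideal algebra to compare against.
\end{itemize}
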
 

This is Theorem~\ref{cyc-iso}. The  isomorphism  is obtained by applying the two constructions  of Theorem~\ref{main111} to the relevant  cyclotomic module categories and comparing the resulting actions on their canonical projective generators. It identifies the generalized-eigenspace idempotents and the dot, crossing, cup, and cap generators with the corresponding $\imath$-Kac--Moody diagrams. 

Theorem~B should also be compared with the generalized cyclotomic
quotient isomorphism of Brundan--Savage--Webster
\cite[Theorem~5.19]{BSW-heisenberg}. To the best of our knowledge,
Theorem~B is the first isomorphism relating the locally unital algebra
of a cyclotomic Brauer category to the endomorphism algebra arising
from a cyclotomic quotient of the principal $2$-representation of an
$\imath$-Kac--Moody $2$-category.

Since the cyclotomic quotient of $\mathfrak  U^{\imath}_{+}$  is graded, Theorem~B transports its grading to the cyclotomic Brauer category. Passing from the locally unital algebra isomorphism in Theorem~B
to individual endomorphism algebras yields a Brauer--coideal analogue
of the realization of cyclotomic Hecke algebras by cyclotomic quiver
Hecke algebras. 

\begin{THEOREM}\label{thmc}Let $\mathbf u=(u_1, u_2, \ldots, u_a)$ be a finite multiset contained in $\frac12+\mathbb Z$, and set $m(z)=\prod_{i=1}^a (z-u_i)$. For $n\ge 0$, 
let $B_{m,n}$ be the associated  $\mathbf u$-admissible cyclotomic Brauer algebra over $\mathbb C$. Then there is an isomorphism of
$\mathbb C$-algebras
\begin{equation} \label{iso-nwalg}   B_{m,n}
   \cong
   \operatorname{End}_{ Q}(X_n),
   \qquad
   X_n=\bigoplus_{\mathbf i\in I^n}E_{\mathbf i}P',
\end{equation} where $P'=1_{\varnothing}Q$ is the distinguished finitely generated
projective right $Q$-module. For each fixed $n$, only finitely many summands in the definition of $X_n$ are non-zero. Consequently, $B_{m,n}$ admits a  $\mathbb Z$-grading   induced via
Theorem B.
\end{THEOREM}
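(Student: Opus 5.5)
Theorem~\ref{thmc} will be deduced from Theorem~\ref{iso-local-init} by restricting the locally unital isomorphism $B\cong Q$ to suitable idempotent truncations. First I would identify $B_{m,n}$ inside $B$. Recall that $B=\bigoplus_{r,s}1_r B 1_s$ is the path algebra of $\mathcal{CB}_{\mathbf u}$, whose objects are $n\in\mathbb N$. Then $B_{m,n}=1_nB1_n=\End_{\mathcal{CB}_{\mathbf u}}(n)$. The $\mathbf u$-admissibility hypothesis guarantees that this endomorphism algebra is the cyclotomic Brauer algebra with its expected basis. In the affine Brauer action on the cyclotomic module category, the generating functor $E$ splits as $\bigoplus_{i\in I}E_i$ into generalized eigenspace functors of the dot. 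Correspondingly, $1_n$ splits into orthogonal idempotents $1_{\mathbf i}$, $\mathbf i\in I^n$, obtained from the generalized eigenspaces of the dots $x_1,\dots,x_n$ acting on $1_nB1_n$. Hence $1_n=\sum_{\mathbf i}1_{\mathbf i}$ in $B_{m,n}$.

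Second, I would check that this sum is finite. Since $1_nB1_n$ is finite dimensional, each $x_k$ satisfies a polynomial. Iterating the cyclotomic relation $m(x_1)=0$ through the affine relations shows that the eigenvalues of $x_k$ lie in a finite subset of $\frac12+\mathbb Z$, namely those reachable from the roots $u_i$ by the residue shifts $\pm1$ and sign changes $u\mapsto -u$ produced by crossings and cups/caps. Thus only finitely many $1_{\mathbf i}$ are nonzero. Under the isomorphism of Theorem~\ref{iso-local-init}, which matches the eigenspace idempotents with the $\imath$-Kac--Moody idempotents, $1_{\mathbf i}$ corresponds to the idempotent of $Q$ projecting onto $E_{\mathbf i}P'$, where $P'=1_\varnothing Q$. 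Consequently $E_{\mathbf i}P'=0$ whenever $1_{\mathbf i}=0$. This yields the finiteness statement for $X_n$.

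Third, I would compute $\End_Q(X_n)$ with the standard Yoneda argument. The summand $E_{\mathbf i}P'$ is isomorphic to $e_{\mathbf i}Q$ for the idempotent $e_{\mathbf i}\in Q$ given by the distinguished additive generator $E_{\mathbf i}$ applied to the object $\varnothing$. Therefore
\[
\End_Q(X_n)\cong\bigoplus_{\mathbf i,\mathbf j}\Hom_Q(e_{\mathbf j}Q,e_{\mathbf i}Q)\cong\bigoplus_{\mathbf i,\mathbf j}e_{\mathbf i}Qe_{\mathbf j}=e_nQe_n,
\]
where $e_n=\sum_{\mathbf i}e_{\mathbf i}$ is a finite sum. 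Transporting along $B\cong Q$ with $e_{\mathbf i}\leftrightarrow 1_{\mathbf i}$ gives $e_nQe_n\cong 1_nB1_n=B_{m,n}$, which establishes \eqref{iso-nwalg}. Composition matches because both sides are corners of isomorphic locally unital algebras.

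For the grading, $Q$ is $\mathbb Z$-graded as a quotient of the principal $2$-representation of the graded $2$-category $\mathfrak U^\imath_+$. The idempotents $e_{\mathbf i}$ are homogeneous of degree $0$, so $e_nQe_n$ is a graded algebra, and the isomorphism transports this grading to $B_{m,n}$.

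The main obstacle is the identification in the first two steps. The isomorphism of Theorem~\ref{iso-local-init} is stated at the locally unital level, and one must confirm that it sends $1_n$ exactly to $e_n$. This means the object $n$ of $\mathcal{CB}_{\mathbf u}$ must correspond precisely to the collection $\{E_{\mathbf i}P'\}_{\mathbf i\in I^n}$, with no mixing between different lengths $n$. I would try to read this off from the explicit construction: the functor $E^n$ applied to the generator of $\mathcal{CB}_{\mathbf u}$-modules corresponds to $\bigoplus_{\mathbf i}E_{\mathbf i}$ applied to $P'$. Because the dot spectrum is contained in $\frac12+\mathbb Z$, the summands for all $\mathbf i\in I^n$ are exhausted.
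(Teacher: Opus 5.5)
Your proposal is correct and follows the paper's route: $B_{m,n}\cong 1_nB1_n=\End_{\mathcal{CB}_{\mathbf u}}(n)$ (the paper cites \cite[Theorem~C]{RS-cyc} for this), and the idempotent-preserving isomorphism $\Phi\colon Q\to B$ of Theorem~\ref{cyc-iso}, together with $1_n=\sum_{\mathbf i\in I^n}1_{E_{\mathbf i}}$, carries $e_nQe_n\cong\End_Q(X_n)$ onto this corner. Your Yoneda computation, and your finiteness argument via orthogonal idempotents in the finite-dimensional algebra $1_nB1_n$, make explicit what the paper calls immediate; for the grading you also need, as the paper notes, that $\mathcal I(\kappa)$ is a homogeneous ideal, so that $Q$ inherits the grading.
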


Gradings on certain level-two cyclotomic Nazarov--Wenzl algebras
were obtained by Ehrig and Stroppel via their realization as
endomorphism algebras in a graded parabolic category
$\mathcal O$~\cite{ES16, ES18}. A combinatorially defined grading on
ordinary Brauer algebras was constructed in~\cite{Li}.
Theorem~C treats $\mathbf u$-admissible half-integral cyclotomic
Brauer algebras at arbitrary level, and the grading obtained here
is induced, through Theorem~B, from the $\imath$-Kac--Moody
$2$-categorical structure on $\mathcal Q(\kappa)$.

The associated weakly triangular representation theory and its
decategorification were developed in~\cite{GRS-tri}. Theorems~B
and~C provide the corresponding categorical identification and
grading arising from an $\imath$-Kac--Moody $2$-category.
They
therefore supply a graded categorical foundation for future
comparisons with $\imath$-canonical bases, without asserting such a
comparison or a decomposition-number formula in the present paper.

Table~\ref{tab:type-A-BCD-comparison}  summarizes the  comparison between the type $A$ paradigm and the Brauer-coideal setting of the present paper.  

\begin{table}[!htbp]
  \centering
  \small
  \setlength{\tabcolsep}{5pt}
  \renewcommand{\arraystretch}{1.18}
  \caption{The type $A$ paradigm and the Brauer-coideal setting of  this paper.}
  \label{tab:type-A-BCD-comparison}
  \begin{tabularx}{\textwidth}{@{}>{\bfseries}p{0.21\textwidth}YY@{}}
    \toprule
    & Type $A$ paradigm
    & Brauer-coideal setting of this paper  \\
    \midrule
    Lie-theoretic realization
      & Type A Schur-Weyl duality and related category $\mathcal O$ realizations
      & Schur-Weyl duality involving parabolic category $\mathcal O$ for  Lie algebras of types $B$, $C$, and $D$ \\
    Algebraic  side
      & Degenerate affine Hecke algebras and their cyclotomic quotients 
      & Affine Brauer category and its cyclotomic quotients \\
    Categorified side
      & Kac--Moody/KLR 2-categories
      & The type AIII $\imath$-Kac--Moody 2-category associated with  a quantum symmetric pair \\
    Spectral operators
      &  Jucys--Murphy or tensor Casimir operators and their generalized eigenspaces
      & The affine Brauer dot, realized by a tensor Casimir operator in Lie--theoretic applications, and its half-integral generalized eigenspaces\\
    Verification problem
      & Construct and verify  the KLR  generators and relations  on generalized eigenspaces
      & Begin with the more direct affine Brauer relations; Theorem~A then
constructs and verifies all generators and relations of the
$\imath$-Kac--Moody $2$-action.
\\
    Cyclotomic comparison
      & Brundan--Kleshchev-type isomorphism
      & The locally unital  algebra isomorphism of Theorem~B and its fixed-rank consequence in Theorem~C.  \\
    \bottomrule
  \end{tabularx}
\end{table}

The comparison in the table is structural rather than literal: the
$\imath$-Kac--Moody $2$-category appearing here is of type $\operatorname{AIII}$,
whereas the Lie-theoretic realizations of the affine Brauer action are
related to parabolic category $\mathcal O$ in types $B$, $C$, and $D$.
A further feature of our construction is that the defining identities
must be verified on arbitrary finitely generated objects.

The separation into half-integral and integral cases is also structural. In the present half-integral case the spectrum is $\frac12+\mathbb Z$, and the local boundary behavior is concentrated at the color $\frac12$. In the integral case the involution has the fixed spectral point $0$, producing a different local model and additional boundary relations \cite{GRSW}. The integral construction is therefore not a routine specialization or repetition of the present argument. Treating the two cases separately makes it possible to isolate their distinct local categorification mechanisms.

We briefly describe the proof of Theorem~\ref{main111}.  Starting from an affine Brauer action, we first decompose the generating functor into generalized eigenspaces. The dot, crossings, cups, and caps of the affine Brauer category are then combined with rational functions of the dot to define the generating $2$-morphisms of $\mathfrak U^{\imath}_{+}$. The ordinary quiver-Hecke relations account for only part of the verification. The three additional crossing orientations and the two adjunction structures require separate formulas, and the inhomogeneous boundary relation at $\frac12$ requires a direct calculation. Conversely, a nilpotent $\mathfrak  U^{\imath}_{+}$-action allows the sum of the positive and negative generating functors to be formed object-wise. Suitable combinations of the $\imath$-Kac--Moody $2$-morphisms then satisfy the defining affine Brauer relations. Nilpotence makes these sums object-wise finite and permits the
required power-series evaluations; the exceptional boundary
expressions are treated separately.

For Theorem~B, we apply the forward construction to locally finite-dimensional modules over the cyclotomic Brauer path algebra. The cyclotomic relations determine a highest $\imath$-weight $\kappa$ and force the action to factor through $\mathcal Q(\kappa)$, yielding a homomorphism $Q\to B$. Applying the reverse construction to finitely generated projective $Q$-modules yields a homomorphism $B\to Q$. By construction, the two homomorphisms 
are inverse on generators and hence are mutually inverse. 
 This comparison is also what identifies the grading on the Brauer side.

The paper is organized as follows. Section 2 recalls the affine Brauer categories and fixes the diagrammatic conventions. Section 3 recalls the half-integral type $\operatorname{AIII}$ $\imath$-Kac--Moody $2$-category and the notions of nilpotent and generalized nilpotent $2$-representations. Section 4 develops the generalized-eigenspace and weight decompositions used in both constructions. Section 5 constructs the generating $2$-morphisms of $\mathfrak  U^{\imath}_{+}$  from an affine Brauer action and verifies the homogeneous relations. Section 6 proves the inhomogeneous boundary relation. Section 7 gives the reverse construction of an affine Brauer action from a nilpotent $\imath$-Kac--Moody $2$-representation. Section 8 defines the cyclotomic quotient, proves local finite-dimensionality, establishes Theorem~B, and derives the grading on cyclotomic Brauer algebras. Section~9  records the rational-function identities used in the boundary calculations.

\textbf{Declaration on the use of artificial intelligence.}
The mathematical results, constructions, and proofs in this paper
were developed by the authors. ChatGPT was used for English-language
editing, improvements to the presentation.
Certain rational-function identities were 
verified  by the authors using exact symbolic computations
in Maple. We
take full responsibility for the entire content of the manuscript.

\section{The affine Brauer category} 
In this section, we recall the definition of the affine Brauer category from
 \cite[Definition~1.2]{RS-cyc} and record several  formulas that will be used  later. Throughout, $\Bbbk$ is an algebraically closed field with characteristic different from $2$. 

The affine Brauer category is the degenerate affine diagrammatic category associated with Brauer centralizer algebras. Its crossing, cup, and cap encode the unoriented Brauer calculus, while the dot records the affine Jucys–Murphy-type operator. Polynomial relations imposed on the dot produce cyclotomic quotients and hence the cyclotomic Brauer (or Nazarov–Wenzl) algebras occurring in higher Schur--Weyl dualities involving parabolic categories $\mathcal O$ for Lie algebras of types $B,C$ and $D$. This combination of affine spectral data with unoriented duality morphisms is precisely what makes the category a natural candidate for an action of a quantum-symmetric-pair 2-category.

 \tikzset{strand/.style={-,thick}}
\begin{Defn} \label{C defn}
Let  $\mathcal {AB}'$ be the $\Bbbk$-linear strict monoidal category generated by a single object, denoted diagrammatically by a vertical strand
$\ % [inline block 0: 40 envs, 11087 chars in 7 pieces, piece 1 here, a bare % at each other -> data_tex | \begin{tikzpicture}[baseline = 10pt, scale=0.5]  	\draw[-,thick] (0.1,0.5)to[out=up,in=down](0.1,1.2);...]
$.
The objects of $\mathcal {AB}'$ are identified with $\mathbb N$. Under this identification,  $n\in\mathbb N$ corresponds to $
%
^{\otimes n}$, and $0$ corresponds to the unit object.
The generating morphisms are $%
	: 1 \to 1 $.
These morphisms   are subject to the following defining relations:
%given in  %	subject to the defining relations in 
%\eqref{B relations 1 (symmetric group)}--\eqref{B relations 2 (zigzags and invertibility2)}:
\begin{multicols}{2}
    \item[(1)] 
		\label{B relations 2 (zigzags and invertibility)}
		$%
$,
	%\end{equation}
	
  \item[(5)]  % \begin{equation}
		\label{B relations 2 (zigzags and invertibility2)}
		$%
$.
	%\end{equation}
    \end{multicols}
\end{Defn} 

\begin{Lemma}\label{AB relations-re} In $\mathcal {AB}'$, the following relations hold:  \begin{multicols}{2}
     \item [(1)]
 $ %
$.
                   \end{multicols} \end{Lemma}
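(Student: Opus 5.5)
The derived relations in the statement should follow from the defining relations (1)--(5) of Definition~\ref{C defn} by purely local diagrammatic manipulations. The main tool is the rigidity of the generating object: the zigzag relations make the strand self-dual, so any relation can be ``rotated'' by attaching cups and caps and then straightening. Concretely, for a relation $f=g$ between morphisms $1^{\otimes a}\to 1^{\otimes b}$, I would compose with cups at the bottom and caps at the top to turn boundary points around, and then use the zigzag relations to remove the resulting snakes. This produces the mirrored or rotated versions of the defining relations.

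\textbf{Step 1: relations involving only crossings, cups and caps.} From the relation that a cap composed with a crossing equals the cap (and the symmetric-group relations), I would derive the following by rotation through $90^\circ$:
\begin{itemize}
\item[(a)] the dual statement for a cup;
\item[(b)] the ``sliding'' of a crossing through a cup or cap, i.e.\ the pitchfork relations, obtained by bending one leg of the braid relation with a cup or cap and then simplifying with the zigzag identity.
\end{itemize}
I would also check the value of the bubble (the closed loop) whenever it occurs. It only matters for normalisation, and in $\mathcal{AB}'$ it stays a free central element $\Delta_0$ of $\End(0)$.

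\textbf{Step 2: dot relations.} The defining relation identifying a dot on the left leg of a cap with minus a dot on the right leg yields the cup version after rotation by zigzag. Next, I would take the defining dot--crossing relation, of the shape ``dot on the bottom-left strand times crossing, minus crossing times dot on the bottom-right strand, equals identity minus cup-cap''. Conjugating it by the crossing, and using that the crossing is an involution and that the crossing fixes cup and cap, gives the version with the dot on the other strand; the sign of the cup-cap term should be tracked carefully at this point. Any formulas involving higher powers of the dot then follow by induction on the power $k$, telescoping the single-dot relation. This gives the usual expressions
\[
x_1^k s - s x_2^k=\sum_{a+b=k-1}\bigl(x_1^a x_2^b - x_1^a\, e\, (-x_2)^b\bigr)\cdot(\pm1),
\]
where $e$ denotes cup-cap and the $(-x_2)^b$ arise from teleporting dots across $e$ using the cap relation.

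\textbf{Main obstacle.} I expect the hardest step to be bookkeeping the signs and the exact placement of the $e$-terms when dots are teleported across cups and caps in the inductive polynomial formula. Here I would first verify the case $k=1,2$ by hand, and then set up the induction by multiplying the $k$-case on the appropriate side by a single dot, applying the $k=1$ relation once.
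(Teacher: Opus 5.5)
Your treatment of (1) and (2) is in line with the paper. The paper also gets the cap and cup versions of the dot relation from Definition~\ref{C defn}(1) and~(5) alone, in the zigzag manner you describe.

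The gap is in (3), the second dot--crossing relation. You obtain it by conjugating Definition~\ref{C defn}(4) with the crossing and using that the crossing is an involution. But the lemma is stated in $\mathcal{AB}'$, where $s\circ s=1$ is \emph{not} available: it is Definition~\ref{AB defn}(1), imposed only in the quotient $\mathcal{AB}$. Likewise, the braid relation you bend to get pitchfork relations in Step~1 is Definition~\ref{AB defn}(2), which is also absent from $\mathcal{AB}'$.

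This is not a technicality. Relation (3) feeds into Corollary~\ref{caup1}, and the paper uses that corollary in Section~7, in the proof of Lemma~\ref{crosss-inverse}, precisely to establish $s\circ s=1$. A proof of (3) that presupposes the involution property would therefore be circular.

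The missing idea is to \emph{rotate} rather than conjugate. Apply to both sides of Definition~\ref{C defn}(4) the operation $f\mapsto(\mathrm{cap}\otimes1\otimes1)\circ(1\otimes f\otimes1)\circ(1\otimes1\otimes\mathrm{cup})$, or its mirror image. This operation has three effects.
\begin{itemize}
\item By the interchange law and the zigzag relations, it interchanges $1_2$ and $\mathrm{cup}\circ\mathrm{cap}$.
\item By Definition~\ref{C defn}(3), it sends the crossing to the crossing.
\item Any dot on a bent leg is carried across the attached cap or cup to the neighbouring endpoint, at the cost of a sign by (1) or (2).
\end{itemize}
Collecting terms turns Definition~\ref{C defn}(4) into (3). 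This uses only Definition~\ref{C defn}(1),(3),(4) together with parts (1)--(2), and it is exactly the paper's computation.

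The rest of your plan is not needed here: the pitchfork relations, the bubble value, and the induction on powers of the dot are not part of this lemma. The power-series dot--crossing formulas are Corollaries~\ref{caup1} and~\ref{slidecro}, which are proved afterwards using (3).
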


\begin{proof}
 The first two assertions  follow from  Definition~\ref{C defn}(1) and (5). For (3),  we compute  as follows: 
 $$\begin{aligned} 
 %
  \quad
                   \text{by Definition~\ref{C defn}(1),(4).} \end{aligned} $$
 %Here the first equality follows from Definition~\ref{C defn}(1), and the second equality follows from by (1),(2) and %Definition~\ref{C defn}(3), and the last one follows from  Definition~\ref{C defn}(1),(4). 
 \end{proof}
Keeping the relations in  Definition~\ref{C defn}(1)--(4) unchanged, one may replace Definition~\ref{C defn}(5) by Lemma~\ref{AB relations-re}(1) and (2). This  gives an equivalent presentation of the category, which will be used in  Section~\ref{ikmcmc}.

\begin{Defn} \label{dot-n}   For all integers $n\ge 1$, let 
$% [inline block 1: 30 envs, 12057 chars in 7 pieces, piece 1 here, a bare % at each other -> data_tex | \begin{tikzpicture}[baseline=0, scale=0.7] 	% thick strand with a dot, labelled n...]

    is defined to be    the identity morphism. 
\end{Defn}

Later, we also write   $%
.
This convention  allows  us to    
represent a $\Bbbk$-linear combination of monomials by  labeling dots with   polynomials in 
$x$.

\begin{Cor}\label{caup1}  In $\AB' $, the following relations hold:%原来点的位置画错了 
\begin{multicols}{2}
\item [(1)]
$%
	\frac{x^r}{u^r}
$.
\end{Cor}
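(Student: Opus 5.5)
The plan is to derive all the displayed relations from the defining relations of Definition~\ref{C defn} and Lemma~\ref{AB relations-re}, via the dot powers of Definition~\ref{dot-n}. I would first prove the statements for a single monomial $x^r$ by induction on $r$. Then I would pass to arbitrary polynomials by linearity. Finally I would obtain the generating-series form (the terms $\frac{x^r}{u^r}$) by summing over $r$ formally in $u^{-1}$.

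\textbf{Step 1: dots and cups/caps.} The base case $r=1$ is the basic relation of Lemma~\ref{AB relations-re}: a dot on one leg of a cup (or cap) equals minus a dot on the other leg. Applying this $r$ times, and using that dots on the same strand compose to $x^r$ by Definition~\ref{dot-n}, gives $x^r$ on the left leg $=(-x)^r$ on the right leg. Hence $f(x)$ on one leg equals $f(-x)$ on the other for any polynomial $f$. Summing $\sum_{r\ge0}\frac{x^r}{u^{r}}$ (suitably normalized by $u^{-1}$) then gives the series form with $u$ replaced by $-u$ on the other side.

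\textbf{Step 2: dots and crossings.} Here the base case is the dot--crossing relation of Definition~\ref{C defn}. It says that moving a dot through a crossing produces an identity term minus a cap--cup term. For the inductive step I would write
\[
x_1^{r+1}s-s\,x_2^{r+1}=x_1\bigl(x_1^{r}s-s\,x_2^{r}\bigr)+\bigl(x_1s-s\,x_2\bigr)x_2^{r}.
\]
Substituting the inductive hypothesis into the first term and the base case into the second, I would use Step~1 to slide the dots on the cap--cup terms to a fixed side. This yields a closed form of the shape
\[
\sum_{a+b=r-1}x_1^ax_2^b \;-\;\sum_{a+b=r-1}(-1)^{b}\,(\text{cup--cap with } x_1^a \text{ and } x_2^b \text{ placed on it}).
\]
Equivalently, the two correction terms are the divided differences $\frac{f(x_1)-f(x_2)}{x_1-x_2}$ and $\frac{f(x_1)-f(-x_2)}{x_1+x_2}$ placed on the appropriate diagrams. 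Multiplying by $u^{-r}$ and summing, the divided differences telescope into products of $\frac{1}{u-x_1}$-type series. This gives the generating-function identity.

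\textbf{Expected difficulty.} The main obstacle is bookkeeping in the cap--cup term of Step~2: tracking signs and which leg each dot sits on after using Step~1. I would handle it by always normalizing dots to the left leg of the cup and the right leg of the cap before comparing coefficients. Any bubble-type identities in the statement would follow by closing these relations with a cup and cap and applying Definition~\ref{C defn}(5) to evaluate the resulting loops.
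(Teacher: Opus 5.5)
\textbf{The gap: the proposal proves a different set of relations.} Your Steps~1 and~2 move a dot (or $\ominus$) from one leg of a cup or cap to the other, and through a \emph{single} crossing at the cost of identity and cup--cap correction terms. These are correct relations in $\mathcal{AB}'$, but in generating-series form they are the content of Corollary~\ref{caup}(1)--(3). Corollary~\ref{caup1} is instead about the \emph{square} of the crossing in $\mathcal{AB}'$, where the relation $s^2=1$ of Definition~\ref{AB defn}(1) is not imposed. It says that $\ominus$ on the left strand (item~(1)), and likewise on the right strand (item~(2)), passes unchanged from the bottom to the top of the double crossing. This is why the paper remarks that in $\mathcal{AB}$ these relations follow immediately. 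It is also why Lemma~\ref{crosss-inverse} uses this corollary to show that the components of the double crossing between distinct generalized eigenspaces vanish. Your plan never composes two crossings, so even granting Steps~1--2 it does not reach the statement.

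\textbf{What is missing.} Write $s$ for the crossing, $x_1,x_2$ for a dot on the left/right strand, and $e$ for the cap followed by the cup. Definition~\ref{C defn}(4) and Lemma~\ref{AB relations-re}(3) have the form $x_1s-sx_2=\alpha+\beta e$ and $sx_1-x_2s=\alpha+\beta e$, with the same scalars; the second is the horizontal reflection of the first. You need both relations. Since $s^2=1$ is not available in $\mathcal{AB}'$, the second cannot be obtained from the first by conjugating by $s$, and your Step~2 uses only the first. Then
\[
x_1s^2=(sx_2+\alpha+\beta e)s=s(sx_1-\alpha-\beta e)+\alpha s+\beta es=s^2x_1+\beta(es-se).
\]
The identity terms cancel automatically, but the cup--cap terms cancel only because of Definition~\ref{C defn}(2). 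That relation says a crossing is absorbed by the cap and by the cup, so $es=e=se$. Your proposal never uses Definition~\ref{C defn}(2). The same computation gives $x_2s^2=s^2x_2$. Finally, $\ominus$ is a series in $u^{-1}$ whose coefficients are powers of the dot, so commutation with $x_1$ (resp.\ $x_2$) immediately gives the $\ominus$-versions. This is the paper's reduction ``replace $\ominus$ by $\bullet$''. No induction on $r$, divided differences, or telescoping is needed.
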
 
\begin{proof} A direct check shows
 that  each  identity in~(1) and (2) is equivalent to the corresponding identity  obtained by replacing
every occurrence of $\ominus$ with $\bullet$. Thus, it suffices to prove
the latter version. Using
Definition~\ref{C defn}(2), (4) and Lemma~\ref{AB relations-re}(3),
we compute \begin{equation}\label{squa-red} 
%
. 
\end{equation} 
 This proves (1). The proof of~(2)
is entirely similar,
so we omit the details. \end{proof}

 \tikzset{strand/.style={-,thick}}
\begin{Defn} \label{AB defn}\cite[Definition~1.2]{RS-cyc}
The \emph{affine Brauer category} $\AB$ is the $\Bbbk$-linear
strict monoidal category obtained as the quotient of   $\mathcal{AB}'$
 by  the tensor ideal generated by
the following relations:
\begin{multicols}{2}
    \item[(1)]\label{B relations 1 (symmetric group)}
    $
	\tikzset{strand/.style={-,thick}}
	%
	$.
\end{multicols}
\end{Defn}

 We remark that the  affine Brauer category $\AB$  considered here is isomorphic to the category defined in \cite[Definition~1.2]{RS-cyc}. The required isomorphism sends the generating morphism   %
  $ and  fixes all other generating morphisms. We adopt the present normalization because  it is compatible with the $\imath$-Kac--Moody $2$--category $\mathfrak U^\imath$ defined in \cite[Definition~3.3]{BSWW-icate}.

\begin{Lemma}\label{anti} 
 There is a $\Bbbk$-linear monoidal isomorphism $\sigma: \AB\rightarrow \AB^{\mathrm{op}}$ that fixes  objects and the  two  generating morphisms $   %
 , 
 \swap$, and swaps generating morphisms  $\lcap$ and $\lcup$  of $\AB$.
 \end{Lemma}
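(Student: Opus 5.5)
The isomorphism $\sigma$ will be built from the presentation. The first step defines $\sigma$ on the free strict monoidal category generated by the single object and the four generating morphisms: the dot, $\swap$, $\lcap$ and $\lcup$. It fixes objects, sends the dot and $\swap$ to themselves, and interchanges $\lcap$ and $\lcup$. Since $\AB^{\mathrm{op}}$ has the same objects and monoidal product, with morphisms reversed and $(f\circ g)^{\mathrm{op}}=g^{\mathrm{op}}\circ f^{\mathrm{op}}$, this prescription respects sources and targets. Indeed, $\lcap:2\to 0$ goes to $\lcup:0\to 2$, which in $\AB^{\mathrm{op}}$ is a morphism $2\to 0$. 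So $\sigma$ extends uniquely to a $\Bbbk$-linear strict monoidal functor from the free category to $\AB^{\mathrm{op}}$. Diagrammatically, $\sigma$ is reflection of diagrams in a horizontal line: composites are read upside down, tensor products are preserved, and each generator is replaced by its mirror image. The dot and the crossing are mirror-symmetric, while cups and caps are exchanged.

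The second step checks that $\sigma$ kills every defining relation: those of $\mathcal{AB}'$ in Definition~\ref{C defn}(1)--(5), and the extra relations of Definition~\ref{AB defn}. For each relation one reflects both sides in a horizontal axis and checks that the result again holds in $\AB$. The symmetric group relations $\swap\circ\swap=1$ and the braid relation are self-dual under reflection. The two zigzag relations are exchanged with each other. Relations such as cap$\circ$crossing $=$ cap go to crossing$\circ$cup $=$ cup. Relations such as dot sliding through a cap, or through a crossing with a correction term, become the corresponding statements for cups or for the reversed composite. Each reflected relation is either a defining relation itself or one of the consequences already recorded, namely Lemma~\ref{AB relations-re}(1)--(3) and the equivalent presentation mentioned after it. In particular, the remark that Definition~\ref{C defn}(5) may be replaced by Lemma~\ref{AB relations-re}(1),(2) is the reflected form we need. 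The bubble or cyclotomic-type relations in Definition~\ref{AB defn} are handled the same way: a dotted clockwise bubble reflects to a dotted counterclockwise one, which becomes unambiguous after using the sliding relations. Hence $\sigma$ descends to a functor $\AB\to\AB^{\mathrm{op}}$.

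The third step proves bijectivity. The same recipe gives $\sigma^{\mathrm{op}}:\AB^{\mathrm{op}}\to\AB$, and $\sigma^{\mathrm{op}}\circ\sigma$ fixes all generators, so it is the identity; likewise for the reverse composite. Thus $\sigma$ is an isomorphism.

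I expect the main obstacle to be the second step, for the dot relations that carry correction terms. In the dot-crossing relation, and in the relation expressing a dot moved across a cup or cap with a sign change of $x$, the reflected identity places the correction term (a cup-cap composite or an identity) in reversed order. One must check that it matches the original relation's correction term after applying Corollary~\ref{caup1} or Lemma~\ref{AB relations-re}(3). My plan there is to reflect the relation and then rewrite it using $\swap^2=1$ and the zigzag relations until it coincides with a known identity.
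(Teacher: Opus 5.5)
Your proposal follows the paper's proof. The paper likewise takes $\sigma$ to be reflection in a horizontal axis and observes that every defining relation in Definition~\ref{C defn}(1)--(5) and Definition~\ref{AB defn}(1)--(2) reflects to a relation holding in $\AB$; the non-self-dual relations are supplied by Lemma~\ref{AB relations-re}, so $\sigma$ descends and is an isomorphism. One small correction: Definition~\ref{AB defn} consists only of the involutivity of the crossing and the braid relation, which you already treat, so your paragraph on bubble or cyclotomic-type relations addresses relations that are not there.
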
 
\begin{proof} Since $\AB$ is a quotient category of $\mathcal {AB}'$, we obtain the relation in Lemma~\ref{AB relations-re}(2).
Consequently,  all defining relations of $\AB$ in Definition~\ref{C defn}(1)--(5), together with  Definition~\ref{AB defn}(1)--(2),
are symmetric with respect to the horizontal axis, and the result follows.
\end{proof}

\begin{Cor} \label{slidecro}If the generating morphisms in $\mathcal{AB}'$ satisfy the relation in Definition~\ref{AB defn}(1), then the following relations hold: 
\begin{multicols}{2}
\item [(1)] $% [inline block 2: 12 envs, 3792 chars -> data_tex | \begin{tikzpicture}[baseline=8pt,scale=0.5] 		% braid relation: s_1 s_2 s_1...]
$.
    \end{multicols}
    \end{Cor}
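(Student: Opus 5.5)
The plan is to obtain each listed relation from the braid relation of Definition~\ref{AB defn}(1) by bending strands with cups and caps. The tools are the zigzag relations and the absorption relations of Definition~\ref{C defn} and Lemma~\ref{AB relations-re}. Relation~(1) of the corollary is the braid relation $s_1s_2s_1=s_2s_1s_2$ in one of its drawn forms. The remaining items should be the ``sliding'' (pitchfork-type) relations: a crossing passes through a cup or cap, or a strand passes over or under a cup or cap.

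\textbf{Step 1: rewrite relation (1).} I will first write the relation of Definition~\ref{AB defn}(1) as an equality of morphisms $3\to 3$. If it is not literally the braid relation, I will reduce it to the form needed in (1). For this I will use $s^2=1$, i.e.\ the invertibility of the crossing in Definition~\ref{C defn}, and compose with crossings on the appropriate side.

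\textbf{Step 2: derive a cap-sliding relation.} Compose both sides of the braid relation on top with a cap on strands $2,3$ (or $1,2$), and on the bottom with a cup on a pair of strands. Then simplify using Lemma~\ref{AB relations-re}, which says that a cap placed directly on a crossing absorbs it, and the cup-version of the same fact. After cancelling the absorbed crossings, one side reduces to a single crossing sandwiched against a cap. Straightening the remaining bends with the zigzag relations of Definition~\ref{C defn} yields the relation expressing that a crossing $1\otimes 2$ slides through a cap, moving from one side of the cap to the other. Only the defining relations of $\mathcal{AB}'$ and relation (1) are used, so the argument is valid under the stated hypothesis.

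\textbf{Step 3: obtain the cup versions.} Apply the isomorphism $\sigma$ of Lemma~\ref{anti} to the cap version from Step~2. Its proof only uses relations that are symmetric under horizontal reflection, and the hypothesis (1) is itself reflection-symmetric. Hence the argument goes through in $\mathcal{AB}'$ modulo relation (1), not only in $\AB$. Left and right mirror images will be handled by the left–right symmetry of the relations, or by repeating the Step~2 computation with the cap on the other pair of strands.

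\textbf{Main obstacle.} The main difficulty is bookkeeping in Step~2. I must choose exactly which cup and cap to attach so that the crossings cancel via Lemma~\ref{AB relations-re}(1)--(3) rather than producing extra terms. The relations of Definition~\ref{C defn}(4) involving dots may introduce correction terms, but the sliding relations contain no dots, so these should not arise. I would first try attaching the cap to the top-right pair of strands and a cup on the left of the bottom, then check by isotopy which picture results.
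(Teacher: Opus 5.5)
Your proposal targets the wrong statement, so there is a genuine gap.

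First, you have the hypothesis backwards. Definition~\ref{AB defn}(1) is the involutivity $s^2=1$ of the crossing; this is the relation verified in Lemma~\ref{crosss-inverse}. The braid relation is the separate relation Definition~\ref{AB defn}(2), and it is \emph{not} assumed. Nor is $s^2=1$ part of Definition~\ref{C defn}, as your Step~1 asserts; it is exactly the hypothesis.

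Second, items (1)--(3) are not the braid relation and pitchfork moves. As the paper's proof indicates, each of them is decorated with $\ominus$, the dot labelled by $(u-x)^{-1}=\sum_{r\ge 0}x^ru^{-r-1}$. They record how $\ominus$ slides through the three-strand crossing diagrams (the picture in (1) is the $s_1s_2s_1$ diagram), together with the resulting correction terms.

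Consequently your Step~1 cannot be carried out. Nothing in $\mathcal{AB}'$ plus $s^2=1$ produces the braid relation, and the corollary must not use it. The paper explicitly needs Corollary~\ref{slidecro} without Definition~\ref{AB defn}(2): in Section~7 it is the input for Lemma~\ref{zero-localijk}, which is a step in \emph{proving} the braid relation. Your Steps~2--3 then establish cup/cap sliding statements that are not what is claimed. Your remark that the correction terms from Definition~\ref{C defn}(4) ``should not arise'' discards precisely the content of the corollary. Also, Lemma~\ref{AB relations-re} does not say that a cap absorbs a crossing: its parts (1)--(2) move dots across cups and caps with a sign change, and (3) is a dot--crossing relation.

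The argument you need is the one in the paper, in two steps.
\begin{enumerate}
\item Each $\ominus$-identity is equivalent to the same identity with $\ominus$ replaced by a single dot $\bullet$. Pre- and post-composing with dots labelled $u-x$ on the appropriate strands clears the $\ominus$'s. Since $u$ is central, the $u$-terms cancel. This is the same device used for Corollaries~\ref{caup1} and~\ref{caup}.
\item The dot identities are obtained by pushing the dot through the three crossings one at a time, using Definition~\ref{C defn}(4) and Lemma~\ref{AB relations-re}(3). Each passage produces a correction term in which the crossing just passed is replaced by an identity or a cup--cap term. The hypothesis $s^2=1$ is what collapses the resulting products of crossings so that the two sides match.
\end{enumerate}
As a sanity check on why $s^2=1$, and not the braid relation, is the right hypothesis, take the degenerate affine Hecke shadow: drop cups and caps and impose $x_{i+1}s_i-s_ix_i=1$. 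Then
\[
s_1s_2s_1x_1-x_3s_1s_2s_1=-1-s_1s_2=s_2s_1s_2x_1-x_3s_2s_1s_2,
\]
where the first equality uses only $s_1^2=1$ and the second only $s_2^2=1$. Lemma~\ref{zero-localijk} uses identities of this kind (Corollary~\ref{slidecro}) in exactly the way Lemma~\ref{crosss-inverse} uses Corollary~\ref{caup1}: to kill the summands with mismatched generalized eigenvalues.
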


\begin{proof}
   A direct check shows   
 that, for each identity in~(1)--(3),
the stated identity is equivalent to the corresponding identity obtained by replacing
every occurrence of $\ominus$ with $\bullet$. We omit the details since the identities  can be verified directly using Definition~\ref{C defn}(4), Lemma~\ref{AB relations-re}(3) and Definition~\ref{AB defn}(1). \end{proof}

In $\AB$, all relations in Corollaries~\ref{caup1} and~\ref{slidecro} follow immediately. In Section~\ref{ikmcmc}, however, we will need these corollaries under weaker hypotheses: Corollary~\ref{caup1} will be used without the relations in Definition~\ref{AB defn}(1)--(2), and Corollary~\ref{slidecro} without the relation in Definition~\ref{AB defn}(2).

\begin{Cor}\label{caup}  In $\AB$, the following relations hold: 
\begin{multicols}{2}
\item[(1)]  
	$ % [inline block 3: 22 envs, 8524 chars -> data_tex | \begin{tikzpicture}[baseline=-0.5mm] 		\draw[-,thick]...]

	\frac{(-x)^r}{u^r}$. \end{Cor}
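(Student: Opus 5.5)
The plan is to deduce both identities from results already established, chiefly Corollary~\ref{caup1} and the symmetry $\sigma$ of Lemma~\ref{anti}, rather than computing from scratch. Corollary~\ref{caup1} was proved in $\mathcal{AB}'$, so it holds in the quotient $\AB$. In $\AB$ we also have the dot-sliding relations of Lemma~\ref{AB relations-re} and Definition~\ref{C defn}(4) (the dot passing through a cup or cap changes sign) and the braid-type relations of Corollary~\ref{slidecro}.

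\textbf{Step 1: power-series expansion.} As in the proof of Corollary~\ref{caup1}, I expand each rational-function dot label as a formal power series in $u^{-1}$. Each identity in (1) and (2) then becomes a family of identities for monomial dot labels $x^r$. The factor $\frac{(-x)^r}{u^r}$ in the statement is exactly what results from moving $x^r$ across a cup or cap: by Lemma~\ref{AB relations-re}, a dot $x$ on one side of a cup equals $-x$ on the other side. Iterating this gives $x^r\mapsto(-x)^r$.

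\textbf{Step 2: the cup case.} I first prove the cup version of each identity. Starting from the corresponding identity in Corollary~\ref{caup1}, I transport every dot from one leg of the cup to the other using Definition~\ref{C defn}(4). Then I match coefficients of $u^{-r}$ degree by degree. Wherever a crossing must be moved past a dot, the correction terms are governed by Corollary~\ref{slidecro}, which may be used because Definition~\ref{AB defn}(1) holds in $\AB$.

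\textbf{Step 3: the cap case.} To obtain the cap versions, I apply the isomorphism $\sigma:\AB\to\AB^{\mathrm{op}}$ of Lemma~\ref{anti}. It fixes dots and crossings and swaps $\lcap$ with $\lcup$, so it carries the cup identity to the cap identity, read upside down. The reflection reverses the order of composition but preserves the dot labels, so the coefficient $\frac{(-x)^r}{u^r}$ is unchanged.

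\textbf{Main obstacle.} I expect the hard part to be Step 2, specifically the crossing correction terms that arise when $x^r$ is slid through crossings next to a cup. These produce sums of the form $\sum_{a+b=r-1}$, and the sign conventions on the two legs must be tracked carefully for them to telescope into the closed form. I would first handle this by induction on $r$ using the $r=1$ case from Lemma~\ref{AB relations-re}(3). Failing that, I would use the trick from Corollary~\ref{caup1}: replace $\ominus$ by $\bullet$ to reduce to the uniform-sign version and compare generating functions directly.
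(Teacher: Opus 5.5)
Your reduction runs through the wrong input, so the plan does not produce the relations of Corollary~\ref{caup}.

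Corollary~\ref{caup1} is an $\mathcal{AB}'$-statement about sliding $\ominus$ (equivalently, a dot) through the square of the crossing. That is why its proof combines the two dot--crossing relations with Definition~\ref{C defn}(2). It is also why it is invoked in Lemma~\ref{crosss-inverse}, where Definition~\ref{AB defn}(1) is not yet available. As the paper notes right after Corollary~\ref{slidecro}, in $\AB$ these relations follow immediately. They therefore say nothing new about a single cup, cap or crossing, and neither transporting dots nor applying $\sigma$ will extract Corollary~\ref{caup} from them.

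Step~2 also cites the wrong relation. Definition~\ref{C defn}(4) is a dot--crossing relation, whereas moving a dot from one leg of a cup or cap to the other is Lemma~\ref{AB relations-re}(1)--(2).

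Finally, the corollary is not just a cup case and a cap case. Besides (1), it contains the generating-function dot--crossing relations (2)--(3), which carry correction terms, as well as (4) and (5). Your plan does not say where these come from.

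The missing idea is that each item is the generating-function form of one elementary relation, so no induction is needed.
\begin{itemize}
\item For (1), apply Lemma~\ref{AB relations-re}(1)--(2) coefficientwise. A dot at one end of a cup or cap equals minus a dot at the other end, so $(u-x)^{-1}$ becomes $(u+x)^{-1}$, whose expansion is built from the terms $\frac{(-x)^r}{u^r}$. This part of your Step~1 is right. The map $\sigma$ is unnecessary, because the lemma already covers both cups and caps.
\item For (2) and (3), write Lemma~\ref{AB relations-re}(3), respectively Definition~\ref{C defn}(4), as $x's-sx=R$. Here $x$ and $x'$ are dots below and above the crossing $s$, and $R$ is the correction term. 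Pre- and post-composing with $(u-x')^{-1}$ and $(u-x)^{-1}$ gives
\[
(u-x')^{-1}s-s\,(u-x)^{-1}=(u-x')^{-1}R\,(u-x)^{-1}.
\]
This is just $A^{-1}s-sB^{-1}=A^{-1}(sB-As)B^{-1}$ with $A=u-x'$ and $B=u-x$. Taking the coefficient of $u^{-r-1}$ yields the sums $\sum_{a+b=r-1}(x')^{a}Rx^{b}$ you were worried about, so your ``main obstacle'' disappears.
\item Relation (4) follows from Definition~\ref{C defn}(1)--(3), and (5) is immediate.
\end{itemize}

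Your fallback of replacing $\ominus$ by $\bullet$ only works for pure commutation identities such as those in Corollary~\ref{caup1}. When there is a correction term, you must keep the sandwiched factor $(u-x')^{-1}R\,(u-x)^{-1}$.
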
 
\begin{proof} The two relations in  (1) follow immediately from Lemma~\ref{AB relations-re}(1) and (2). 
	Relation (2) (respectively, (3)) follows  from Lemma~\ref{AB relations-re}(3) (respectively, 
Definition~\ref{C defn}(4)) by pre- and post-composing with the appropriate morphisms labeled by  $(u-x)^{-1}$.  Relation  (4) follows from  Definition~\ref{C defn}(1)--(3), and   (5) is immediate.
  \end{proof}

For any commutative $\Bbbk$-algebra $R$, let  $R((u^{-1}))$ denote  the ring of formal Laurent series  in $u^{-1}$. 
 Thus, each $f(u)\in R((u^{-1}))$ has the form 
$\sum_{i\in \mathbb Z} a_i u^i$ such that $a_i=0$ for $i\gg 0$. 
Throughout the paper, we use the following notation:
\begin{equation} \label{ou1}\clock(u) = u-\frac{1}{2}+\sum_{r\ge 0}
% [inline block 4: 47 envs, 26491 chars in 7 pieces, piece 1 here, a bare % at each other -> data_tex | \begin{tikzpicture}[baseline = 1.25mm] 		\draw[-] (0,0.4) to[out=180,in=90] (-.2,0.2);...]
u^{-r} \in  \End_{\AB}(0)((u^{-1})), \end{equation}

%Throughout this paper, we assume that
\begin{Lemma} \cite[Lemmas~4.1--4.2]{GRS-block}  \label{ou}  % 
The following relations hold, the first in   $\End_{\AB}(0)((u^{-1}))$ and the second in  $\End_{\AB}(1)((u^{-1}))$:
\begin{multicols}{2} 
\item[(1)]  $
    \clock (u)\clock (-u)=(\frac{1}{2}-u)(\frac{1}{2}+u)$,
    \item[(2)] $\mathord{
%
}$.\end{multicols}
\end{Lemma}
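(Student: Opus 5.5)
Both relations are cited from \cite[Lemmas~4.1--4.2]{GRS-block}. To keep the argument self-contained, I would derive them from the bubble relations in $\AB$, working in $\End_{\AB}(0)((u^{-1}))$ and $\End_{\AB}(1)((u^{-1}))$. The natural tool is the generating-function form of the dot sliding relations, Corollary~\ref{caup}(2)--(3). There a dot labelled $(u-x)^{-1}$ passes through a crossing, or around a cup or cap, at the cost of a correction term involving a cup--cap pair and the series $\clock(u)$.

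\textbf{Step 1: relation (2).} I would prove (2) first, since (1) will follow from it. Expand the dotted bubble attached to a strand as the series in \eqref{ou1}. I expect (2) to express how a bubble $\clock(u)$ placed to one side of a strand is related to the bubble on the other side, via a rational factor in $x$ and $u$.

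\begin{itemize}
\item Write the bubble as the closure of a strand carrying the dot $(u-x)^{-1}$.
\item Move this closure across the through-strand using the crossing version of Corollary~\ref{caup}(2)--(3).
\item Collect the correction terms. They are cups and caps carrying dots $(u\mp x)^{-1}$, which the curl and zigzag relations (Definition~\ref{C defn}(1),(5) and Lemma~\ref{AB relations-re}) reduce to polynomials in the dot.
\end{itemize}

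Summing the resulting geometric series in $u^{-1}$ should produce the claimed rational factor. I would organise this computation exactly as in the proof of Corollary~\ref{caup1}, by replacing $\ominus$ with $\bullet$.

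\textbf{Step 2: relation (1).} Next, consider a bubble $\clock(-u)$ nested with a bubble $\clock(u)$. The anti-involution $\sigma$ of Lemma~\ref{anti} converts a dot $(u-x)^{-1}$ on a cup into one on a cap. Corollary~\ref{caup}(1) moves a dot around a cup, changing $x$ to $-x$; this is why $\clock(-u)$ arises as the bubble with the dot carried around, up to a sign and shift.

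To compute the product $\clock(u)\clock(-u)$:
\begin{itemize}
\item Open one bubble into a strand.
\item Apply Step 1 with $u$ replaced by $-u$ so that the other bubble passes through it.
\item Close the strand again.
\end{itemize}

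This gives an identity of the shape $\clock(u)\clock(-u)=\clock(u)\cdot g(u)+h(u)$. I would then compare coefficients, using that $\clock(u)=u-\tfrac12+O(u^{-1})$ is invertible in $\End_{\AB}(0)((u^{-1}))$, to extract $(\tfrac12-u)(\tfrac12+u)$. An alternative is to pass directly to the one-sided form of the relation, whose correction term yields the constant $\tfrac14-u^2$.

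\textbf{Main obstacle.} The hard part is bookkeeping the inhomogeneous terms, namely the $\tfrac12$ shifts and signs coming from the normalisation of the dot chosen to match \cite{BSWW-icate}. Here I would first check the lowest-order coefficients (in $u^{1}$, $u^{0}$ and $u^{-1}$) by hand to fix the constants. I would then argue that the recursion on higher coefficients is forced by Definition~\ref{C defn}(4).
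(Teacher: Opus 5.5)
The paper gives no proof of Lemma~\ref{ou}; it simply quotes \cite{GRS-block}. Your argument therefore has to stand on its own. Step~1 is a workable route to (2), but Step~2 cannot produce (1).

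\textbf{Why Step~2 fails.} Relation (2) is multiplicative. Moving a bubble $\clock(v)$ across a strand multiplies it by
\[
R(v,x)=\frac{(v+x)^2-1}{(v-x)^2-1}\cdot\frac{(v-x)^2}{(v+x)^2}
\]
(or by its inverse; cf.\ Lemma~\ref{choosesimple}). This factor has no inhomogeneous part and satisfies $R(v,-x)=R(v,x)^{-1}$. Suppose you open $\clock(u)$ into a loop and push $\clock(-u)$ in through one leg and out through the other. On each leg the bubble moves from left to right, so you pick up a factor $R(-u,\cdot)$ on each leg. Transporting the dot on one leg around the cup (Corollary~\ref{caup}(1)) turns its factor into $R(-u,-x)=R(-u,x)^{-1}$. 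The two factors cancel, and you recover only $\clock(u)\clock(-u)=\clock(u)\clock(-u)$. Every way of threading a bubble through a closed loop by means of (2) collapses like this. So the promised identity $\clock(u)\clock(-u)=\clock(u)g(u)+h(u)$ never appears, and nothing can supply the constant $\frac14-u^2$. Relation (1) does not follow from (2) in this way; it has to be extracted directly from the dot--crossing relation.

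\textbf{The missing step} is Nazarov's classical argument: close the generating-function dot--crossing relation with a cup and a cap.
\begin{itemize}
\item Write $\beta(u)$ for the bubble carrying $\ominus$. By \eqref{ou1}, $\clock(u)=u-\frac12+u\beta(u)$, and the bubble carrying $\oplus$ equals $-\beta(-u)$.
\item Compose Corollary~\ref{caup}(2) (or (3)) with a cup below and a cap above. The crossing is absorbed by the cup and the cap.
\item The two main terms then become the $\ominus$-bubble and, after moving the dot around the cup, the $\oplus$-bubble.
\item In the correction term, the identity summand closes to the bubble labelled $\frac1{(u-x)(u+x)}=\frac1{2u}\bigl(\frac1{u-x}+\frac1{u+x}\bigr)$.
\item The cup--cap summand of the correction splits into the product of an $\ominus$-bubble and an $\oplus$-bubble.
\end{itemize}
With the sign conventions for which (1) holds, this yields
\[
\beta(u)+\beta(-u)+\beta(u)\beta(-u)+\tfrac{1}{2u}\bigl(\beta(u)-\beta(-u)\bigr)=0 .
\]
Substituting $u\beta(u)=\clock(u)-u+\frac12$ turns this into exactly $\clock(u)\clock(-u)=\frac14-u^2$.

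\textbf{A correction to Step~1.} The correction terms are not polynomials in the dot. They contain curls with $\ominus$ on the loop, and evaluating these brings bubbles back in, including the bubble on the side you started from. What you actually obtain is a linear equation between the left and right bubbles, with coefficients rational in $x$. Solving it gives the multiplicative factor; there is no geometric series to sum. This derivation does not use (1), so (1) and (2) are separate consequences of the dot--crossing relation, not one a consequence of the other.
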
 

\usetikzlibrary{decorations.pathreplacing} % 画 brace 需要这个库

We use the following notation  to simplify the presentation below. It is inspired by the corresponding convention
for the Heisenberg category in \cite{BSW-heisenberg}.
\begin{Defn} Suppose  $f(x_1,x_2)=\sum_{i=0}^m\sum_{j=0}^na_{i,j}x_1^ix_2^j\in R[x_1, x_2]$, and $g(u)=\sum_{i\in \mathbb Z} a_iu^i\in R((u^{-1}))$. Define 
\begin{itemize}
\item [(1)] $%
$.
    \end{itemize} 
Here the variables are assigned to the dots according to the lexicographic order of the
Cartesian coordinates of the dots in the diagram.
\end{Defn}
 %for any $p(u)\in R[u]$, where $R$ is any $\Bbbk$-algebra.
\begin{Lemma}\label{usefuel-equa} 
For any $f(u)\in  \End_{\AB}(0)[u]$,  set   $\Delta_f(x_1, x_2)=\frac{f(x_1)-f(x_2)}{x_1-x_2}\in   \End_{\AB}(0)[x_1, x_2]$. Then the following identities hold:  
\begin{multicols}{2} \item[(1)] $\mathord{
%
}$.
\end{multicols}
\end{Lemma}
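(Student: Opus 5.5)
This is a plan rather than a finished proof. The diagrams in Lemma~\ref{usefuel-equa} were stripped from the source, so I treat them as the standard ``dot-sliding through a crossing, cup or cap with a polynomial $f$'' identities. By analogy with \cite{BSW-heisenberg}, they should read as follows. Moving $f(x)$ across a crossing produces the crossing with $f$ on the other strand, plus or minus a correction term. That correction is an uncrossed pair of strands (from the $s$-relation), or a cup--cap pair (from the $e$-relation), carrying the two-variable polynomial $\Delta_f(x_1,x_2)$, with the variables attached to the dots in lexicographic order.

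The argument is by linearity, reducing to monomials. Both sides of each identity are $\End_{\AB}(0)$-linear in $f$; for the right-hand side this is because $f\mapsto\Delta_f$ is linear. Bubbles in $\End_{\AB}(0)$ are central and can be slid freely, by Definition~\ref{C defn} and the interchange law. It therefore suffices to take $f(u)=u^n$. In that case
\[
\Delta_f(x_1,x_2)=\sum_{a+b=n-1}x_1^a x_2^b .
\]
I will prove the monomial case by induction on $n$. The case $n=0$ is trivial, since both sides vanish. For $n=1$ the identity is exactly the defining dot-slide relation for the crossing, Definition~\ref{C defn}(4) and Lemma~\ref{AB relations-re}(3), and for cups and caps it is Corollary~\ref{caup}(2),(3).

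For the inductive step, write $x^{n}=x\cdot x^{n-1}$. First slide the single dot using the $n=1$ case. Then slide the remaining $x^{n-1}$ using the induction hypothesis. This is valid because dots on straight strands commute with each other and with bubbles. The correction terms then add up. The first slide contributes $x_1^{n-1}$ times the basic correction. The inductive term contributes $x_2\cdot\sum_{a+b=n-2}x_1^ax_2^b$, where the extra dot lies on the already-moved strand. Together these give exactly $\sum_{a+b=n-1}x_1^ax_2^b$. This is the telescoping identity
\[
\Delta_{u^n}=x_1^{n-1}+x_2\,\Delta_{u^{n-1}}.
\]

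The part I expect to be hardest is the cup/cap correction, for two reasons. First, in the cup--cap correction term the dot has to be carried across a cap or cup, and there $x$ becomes $-x$ (Corollary~\ref{caup}(2),(3)). Second, the lexicographic convention for assigning $x_1$ and $x_2$ must be tracked carefully. I will handle this by moving every dot in the correction term to a fixed position, using Corollary~\ref{caup} together with the bubble relations. Then I will check that the signs agree with the stated normalization of $\Delta_f$, doing the check on $n=2$ before running the induction. If any identity is stated for the other orientation, I will obtain it from the proven one by applying the anti-isomorphism $\sigma$ of Lemma~\ref{anti}. This swaps cups and caps and fixes dots and crossings, so it avoids a second computation.
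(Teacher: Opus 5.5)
Your plan rests on a wrong guess about what the stripped diagrams say, so the induction you outline proves different identities from those in Lemma~\ref{usefuel-equa}. The lemma belongs to the generating-function calculus set up just before it. There $\ominus$ is a dot labelled $(u-x)^{-1}=\sum_{r\ge 0}x^ru^{-r-1}$, and $\oplus$ is the analogous dot with $x$ replaced by $-x$. Identities (1), (2), (5)--(8) are coefficient extractions in $u$, not slide relations of $f(x)$ through crossings.

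\begin{itemize}
\item Identity (1) is \cite[(3.19)]{BSW-heisenberg}: a dot labelled $f(x)$ equals $[f(u)\,\ominus]_{u^{-1}}$. This is exactly how it is used in the proof of Lemma~\ref{jjj}, and in Section~6 wherever a dot labelled $p$ is rewritten as a $\ominus$ labelled $p(u)$.
\item Identity (2) is the same statement after $u\mapsto -u$.
\item Identities (5)--(8) are partial-fraction identities for diagrams carrying two generating dots ($\ominus$'s, $\oplus$'s, or one of each). The basic instance is $[f(u)(u-x_1)^{-1}(u-x_2)^{-1}]_{u^{-1}}=\Delta_f(x_1,x_2)$. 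Applying these two-dot identities is what produces $p(-x_1)-p(-x_3)$ and $q(x_3)-q(x_1)$ in Lemma~\ref{x1x3}.
\end{itemize}

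Your formula $\Delta_{u^n}=\sum_{a+b=n-1}x_1^ax_2^b$ is the right monomial check for (5)--(8). It arises as the $u^{-1}$-coefficient of $u^n\sum_{a,b\ge 0}x_1^ax_2^bu^{-a-b-2}$, with no crossing relation and no induction involved.

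The identities with real diagrammatic content are (3) and (4), and your plan cannot reach them, because they involve the bubble series $\clock(u)$. Their proof uses \eqref{ou1}. Their role is visible in Lemma~\ref{lem:bublleyimeszeropoly}: there (3) is the input showing that $g(-x)$ annihilates the dot, where $g(u)=\mathcal O_V(u)f(u)$. The paper proceeds as follows:
\begin{itemize}
\item apply the $\ominus$-crossing relation Corollary~\ref{caup}(2) to obtain \eqref{kk1};
\item use \eqref{ou1} to recognise the resulting dotted bubbles as $\clock(u)$, together with Corollary~\ref{caup}(1),(5);
\item for (4), argue identically with Corollary~\ref{caup}(3) in place of Corollary~\ref{caup}(2).
\end{itemize}
Sliding $x^n$ through a crossing and collecting $x_1^{n-1}+x_2\Delta_{u^{n-1}}$ never generates $\clock(u)$. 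This closing-up into the bubble series is precisely the missing idea.

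Two smaller errors:
\begin{itemize}
\item Corollary~\ref{caup}(2),(3) are the $\ominus$-forms of the crossing slides, not cup/cap relations. The cup/cap slides are Lemma~\ref{AB relations-re}(1),(2) and Corollary~\ref{caup}(1).
\item Dotted bubbles do not slide freely across strands in $\AB$. Lemma~\ref{ou}(2) gives a nontrivial rational correction, which is what drives Lemma~\ref{choosesimple}. Your $\End_{\AB}(0)$-linearity reduction is valid only if the coefficients of $f$ are kept in a fixed region on both sides.
\end{itemize}
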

\begin{proof}  Identity  (1) coincides with \cite[(3.19)]{BSW-heisenberg}. Identity (2) is obtained from (1) by replacing \(u\) with \(-u\).
Applying Corollary~\ref{caup}(2), we obtain
\begin{equation}\label{kk1} \begin{aligned}
    	%
.
    \end{aligned}\end{equation}
 Then (3) follows immediately from \eqref{ou1}, \eqref{kk1}, and Corollary~\ref{caup}(1),(5).
The proof of (4) is identical to that of (3), using Corollary~\ref{caup}(3) and \eqref{ou1}.  We have
$$\Big[
\mathord{
%
}%\Big]_{u^{-1}}
.$$
 This establishes (5). Similarly, (6) is proved in the same way as (5), and we  omit the details.  To prove 
(7), we compute 
$$\mathord{
%
},$$
and we obtain the desired identity. The proof of  (8) is similar.   
\end{proof}

In Lie-theoretic applications, these affine Brauer relations, with
the dot supplied by a tensor Casimir operator and the remaining
generators arising from the symmetry and duality of tensor products,
provide directly verifiable input from which Sections~5--6 construct
the full $\imath$-Kac--Moody $2$-action.

\section{The $\imath$-Kac--Moody $2$-category}
In this section, we recall  the ungraded version of  the coideal $2$--category $ \mathfrak  U^\imath$ from \cite[Definitions~3.1 and 3.3]{BSWW-icate}, called the $\imath$-Kac--Moody $2$-category. 
From now on, unless otherwise stated, we assume that  \begin{equation}\label{index-I} I=\frac{1}{2}+\mathbb Z.\end{equation}
Let $\mathfrak {sl}_I$ be the simple Kac--Moody Lie algebra % of type $A$ 
associated with the  Cartan matrix $C=(c_{ij})_{i,j\in I}$  such that \begin{equation}\label{cij} c_{i, j}=\begin{cases}2 & \text{if $i=j$,}\\
-1 & \text{if $|i-j|=1$,}\\
0 & \text{otherwise.}\end{cases}
\end{equation}
The corresponding root datum  is $(X, Y, \Pi, \Pi^\vee)$, where $X$,  $Y$, $\Pi $, and  $\Pi^\vee$  are
respectively the weight lattice, the coroot lattice, the set of simple roots $\{\alpha_i\}_{i \in I}$,
and the set of simple coroots $\{\alpha_i^\vee\}_{i \in I}$. 
There is a  perfect pairing 
 \begin{equation}\label{equa-pairing1}
     \langle\  , \ \rangle: ~Y\times X\quad  \longrightarrow   \mathbb Z \end{equation} such that $\langle \alpha^\vee_i,\alpha_j\rangle= c_{i,j}$.  Consider the lattice $\bigoplus_{r=-\infty}^\infty \mathbb Z \varepsilon_r$ such that $\langle \varepsilon_i, \varepsilon_j\rangle=\delta_{i, j}$. For all $i\in I$,  let $\Lambda_i$ be the fundamental weight. Then
\begin{equation}\label{www1} \varepsilon_{i-\frac 12} =\Lambda_i-\Lambda_{i-1}, \text{ and } \alpha_i=\varepsilon_{i-\frac{1}{2}}-\varepsilon_{i+\frac{1}{2}}.\end{equation}     
     Let $\theta: X\rightarrow X$ be the involution  such that 
 $\theta(\varepsilon_r)=-\varepsilon_{-r}$ and hence  $\theta(\alpha_i)=\alpha_{-i}$, $i\in I$.
 Let $X^\theta$ be the $\theta$-fixed points in $X$. 
 Following \cite{BSWW-icate}, 
define   \begin{equation}\label{isimple}   I^\imath=1/2+\mathbb N, \quad  \thal^\vee_i:=\alpha_i^\vee -\alpha_{-i}^\vee, \quad X_\imath= X/X^\theta, \quad Y^\imath =\bigoplus_{i\in I^\imath} \mathbb Z \ \thal^\vee_i. \end{equation}  
Then \begin{equation}\label{scalar23}\langle \thal^\vee_i, \alpha_i\rangle=
\begin{cases}
    2 & \text{ if } i>\frac{1}{2}\\
    3 & \text{ if } i= \frac{1}{2}
\end{cases}.\end{equation}
The pairing in \eqref{equa-pairing1} descends to a nondegenerate pairing
 $\langle\ ,\ \rangle : ~ Y^\imath\times X_\imath\longrightarrow \mathbb Z$.
%Fix a commutative ring $R$ containing the identity and the unit  $2$. 
For $i,j\in I^\imath$,   set 
\begin{equation}
\label{tij} 
t_{ij}=\begin{cases} 
-1, &\text{if $ j=i-1$,}\\
1, &\text{otherwise.}
\end{cases}
\end{equation}

\begin{Defn}\cite[Definition~3.1]{BSWW-icate}\label{def-ikmc}
  The $2$-category $\mathfrak U$ 
is the strict   additive $\mathbb C$-linear $2$-category whose objects are the elements   $\lambda\in X_\imath$.
The $1$-morphisms are generated by 
\begin{align*}
E_i={\scriptstyle\substack{\lambda+\alpha_i\\\phantom{-}}}\substack{{\color{darkred}{\displaystyle\uparrow}} \\
  {\scriptscriptstyle i}}{\scriptstyle\substack{\lambda\\\phantom{-}}}
:\lambda \rightarrow \lambda+\alpha_i=\lambda-\alpha_{-i},\quad F_i={\scriptstyle\substack{\lambda-\alpha_i
\\\phantom{-}}}\substack{{\color{darkred}{\displaystyle\downarrow}} \\
  {\scriptscriptstyle
    i}}{\scriptstyle\substack{\lambda\\\phantom{-}}}:\lambda \rightarrow \lambda-\alpha_i=\lambda+\alpha_{-i}, 
\qquad
\text{for }i \in I^\imath.
\end{align*}
Here ``generated" means that we allow  finite direct sums of compositions  of these $1$-morphisms.
  The 2-morphisms are generated by 
\begin{alignat*}{3}
&x 
= 
\mathord{
% [inline block 5: 64 envs, 32925 chars in 4 pieces, piece 1 here, a bare % at each other -> data_tex | \begin{tikzpicture}[baseline = 0, scale=0.8, transform shape] 	\draw[->,thick,darkred] (0.08,-.3) to (0.08,.4);...]

}:  F_i E_i1_\lambda\to 1_\lambda.
\end{alignat*} 
These $2$-morphisms are subject to the following relations (1)--(7): % \eqref{Adj}--\eqref{Mixed}:
\begin{itemize}
    \item [(1)] (Adjunction) \label{adj-red} For any  $i\in I^\imath$, \begin{multicols}{2}
        \item [(a)] $
\mathord{
%
}$.\end{multicols}

\item [(3)] (Quiver Hecke relations) For any $i, j, k\in I^\imath$,  \begin{itemize}\item[(a)]\label{qha-red}
$\mathord{
%
$.\end{multicols}
 In the sums above, 
the indices not attached to a circle run over  non-negative integers.

\item [(7)] (Mixed relations) For any $i, j\in I^\imath$ with $i\neq j$,
\begin{multicols}{2}\item [(a)] 
\label{mixedup}
$%
$.\end{multicols}
\end{itemize}
\end{Defn}
%Although $t_{ij}^{-1}=t_{ij}$ in our setting

The bubbles with negative labels are determined recursively
by Definition~\ref{def-ikmc}(4e) and are called fake bubbles.

\begin{Defn}\cite[Definition~3.3]{BSWW-icate}\label{def-ikmc1}
   The {\em $\imath$-Kac--Moody $2$--category} $ \mathfrak  U^\imath$ is the 
   $2$--category obtained from $\mathfrak U$ by taking the quotient by  the $2$-ideal generated  by the following  inhomogeneous relation:
   %\begin{itemize}
%\item [(1)]  %(The inhomogeneous  relation)
\label{Pi=11}
$$\begin{aligned}
 % [inline block 6: 7 envs, 5201 chars -> data_tex | \begin{tikzpicture}[baseline = 0, scale=0.75] 	\draw[->,thick,darkred] (0.45,.6) to (0.45,-.6);...]
, 
\end{aligned}$$
where all strands are labeled by $\frac{1}{2}$.
%\end{itemize}
\end{Defn}
This is the only inhomogeneous defining relation and is concentrated at the boundary color $\frac12$. It records the deviation of the coideal categorification from an ordinary Kac--Moody 2-category. Consequently, verifying this relation is the principal new obstruction in constructing a $\mathfrak  U^{\imath}$-action from an affine Brauer action; it is treated separately in Section 6.

Directly constructing a 2-functor out of $\mathfrak  U^{\imath}_+$ requires checking the entire list above, including the exceptional
inhomogeneous relation. Theorem A replaces this task by the construction of an affine Brauer action, whose
generators and relations are substantially more direct and which, in Lie-theoretic examples, can be realized
using tensor Casimir operators and standard adjunction morphisms.

\begin{Defn}\label{parity} For  $\lambda\in X_\imath$,  define  its parity to be  $0$ if 
$\sum_{i\in I^\imath} \langle \thal^\vee_i, \lambda\rangle$ is even, and to be $1$   if this sum  is odd.  
Let $X_\imath^+=\{\lambda\in X_\imath\mid  \lambda \text{ has parity $ 0$ } \}$ and 
$X_\imath^-=\{\lambda\in X_\imath\mid  \lambda \text{ has parity $1$}\}$.\end{Defn}

\begin{Lemma}\label{evenodd}
For every generating $1$-morphism 
$E_i, F_i : \lambda \to \mu$ with $i \in I^\imath$, 
the weights $\lambda$ and $\mu$ have the same parity.
\end{Lemma}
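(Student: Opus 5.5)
It suffices to show that the parity of $\lambda\pm\alpha_i$ equals the parity of $\lambda$ for every $i\in I^\imath$. Since the pairing $\langle\ ,\ \rangle$ on $Y^\imath\times X_\imath$ is additive in the second variable, we have
\[
\sum_{j\in I^\imath}\langle \thal^\vee_j,\lambda\pm\alpha_i\rangle=\sum_{j\in I^\imath}\langle \thal^\vee_j,\lambda\rangle\pm\sum_{j\in I^\imath}\langle \thal^\vee_j,\alpha_i\rangle .
\]
So the whole statement reduces to showing that the integer $s_i:=\sum_{j\in I^\imath}\langle\thal^\vee_j,\alpha_i\rangle$ is even for each $i\in I^\imath$. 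This is a finite sum, since only finitely many $j$ contribute. The parity is well defined on $X_\imath=X/X^\theta$ because the pairing descends, as recalled before Definition~\ref{def-ikmc}.

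I will compute $s_i$ directly from \eqref{cij} and \eqref{isimple}, using $\langle\thal^\vee_j,\alpha_i\rangle=c_{j,i}-c_{-j,i}$. For $j,i\in I^\imath=\frac12+\mathbb N$ we always have $-j\le -\frac12<\frac12\le i$, so $c_{-j,i}\neq 0$ only when $-j=i-1$, which forces $i=j=\frac12$ and gives $c_{-1/2,1/2}=-1$. This recovers \eqref{scalar23}.

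There are two cases.
\begin{itemize}
\item For $i>\frac12$: the nonzero terms are $j=i$, contributing $2$, and $j=i\pm1$, each contributing $-1$. Both $i\pm1$ lie in $I^\imath$, since $i\ge\frac32$. Hence $s_i=2-1-1=0$.
\item For $i=\frac12$: the term $j=\frac12$ contributes $3$ by \eqref{scalar23}, the term $j=\frac32$ contributes $c_{3/2,1/2}-c_{-3/2,1/2}=-1-0=-1$, and no other term is nonzero. Hence $s_{1/2}=2$.
\end{itemize}
In both cases $s_i$ is even, which proves the lemma.

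The only step needing care is the boundary color $\frac12$. There the extra contribution from $-\frac12$ has to be tracked correctly, and one must check that no other $-j$ is adjacent to $i$. Everything else is direct bookkeeping.
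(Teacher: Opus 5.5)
Your proof is correct and follows the same route as the paper. Both reduce the claim, via $\mu=\lambda\pm\alpha_i$ and bilinearity, to the evenness of $\sum_{j\in I^\imath}\langle\thal^\vee_j,\alpha_i\rangle$. The paper merely asserts this from \eqref{cij} and \eqref{isimple}, while you compute it explicitly: the sum is $0$ for $i>\frac12$ and $2$ for $i=\frac12$, including the boundary contribution $c_{-1/2,1/2}=-1$.
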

\begin{proof}
   It follows from Definition~\ref{def-ikmc} that $\mu$ is either  $\lambda+\alpha_i$ or  $\lambda-\alpha_{i}$.  Since only finitely many terms are nonzero,    
    \eqref{cij} and \eqref{isimple} give  $\sum_{j\in I^\imath} \langle \thal^\vee_j,  \alpha_k\rangle \equiv 0\pmod 2$ for all $k\in I^\imath$. 
    Thus, $\lambda$ and $\mu$ have the same parity. \end{proof}

 By Lemma~3.4, each of the generating \(1\)-morphisms \(E_i\) and \(F_i\)
preserves parity. It follows that every \(1\)-morphism in
\(\mathfrak U^\imath\) (respectively, $\mathfrak U$) preserves parity, and hence sends objects in
\(X_\imath^+\) to objects in \(X_\imath^+\), and objects in
\(X_\imath^-\) to objects in \(X_\imath^-\). Consequently, there are no
nonzero \(1\)-morphisms between an object of \(X_\imath^+\) and an object
of \(X_\imath^-\). Hence \(\mathfrak U^\imath\) and $\mathfrak U$ decompose as
\begin{equation}\label{decomp1} 
\mathfrak U^\imath=\mathfrak U^\imath_+\amalg \mathfrak U^\imath_-, \quad  \mathfrak U=\mathfrak U_+\amalg \mathfrak U_-
\end{equation}
as  disjoint unions of \(2\)-categories, where
\(\mathfrak U^\imath_+\) and \(\mathfrak U^\imath_-\) (respectively, $\mathfrak U_+$ and $\mathfrak U_-$ ) are the full
sub-\(2\)-categories of \(\mathfrak U^\imath\) (respectively, $\mathfrak U$) on the objects
\(X_\imath^+\) and \(X_\imath^-\), respectively.  Let $\omega$ be the unique element of $X_\imath$
such that
\[
\langle \thal^\vee_i, \omega\rangle=
\begin{cases}
1 & i=\frac{1}{2},\\
0 & i\in I^\imath\setminus \left\{\frac{1}{2}\right\}.
\end{cases}
\]

\begin{Prop}\cite[(3.19)]{BSWW-icate}\label{omegaimath}
    There is a $2$-functor $\omega_\imath: \mathfrak U^\imath_+\rightarrow \mathfrak U^\imath_-$   (respectively, $\mathfrak U_+\rightarrow \mathfrak U_-$) defined by  sending  $\lambda$ to $-\lambda-\omega$. On  generating 1-morphisms, it interchanges  $ E_i$ and $F_i$. On the  generating  2-morphisms, it is given  by 
    \[ x\mapsto x', x'\mapsto x, \tau \mapsto -\tau', \tau'\mapsto -\tau, \eta\mapsto 2^{-\delta_{\frac{1}{2},i}}\eta', \eta'\mapsto \eta, \epsilon \mapsto 2^{\delta_{\frac{1}{2},i}}\epsilon', \epsilon'\mapsto \epsilon, \]
    where $x, x'$, and so on  are defined as in Definition~\ref{def-ikmc}.
  Moreover, the $2$-functor $\omega_\imath$ is an isomorphism of 2-categories. 
\end{Prop}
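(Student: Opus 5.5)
The statement to prove is Proposition~\ref{omegaimath}. It is cited from \cite[(3.19)]{BSWW-icate}, so the plan is to check directly that the proposed assignment respects every defining relation.

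\textbf{Objects and $1$-morphisms.} First I would show that $\lambda\mapsto -\lambda-\omega$ is a bijection $X_\imath^+\to X_\imath^-$. For every $i\in I^\imath$ we have
\[
\langle\thal^\vee_i,-\lambda-\omega\rangle=-\langle\thal^\vee_i,\lambda\rangle-\delta_{i,\frac12}.
\]
Summing over $i$ shifts the parity sum by one (up to sign), so parity $0$ goes to parity $1$. The map is an involution on $X_\imath$, and hence a bijection between the two parity classes. Next, $E_i1_\lambda:\lambda\to\lambda+\alpha_i$ is sent to a $1$-morphism $-\lambda-\omega\to-\lambda-\omega-\alpha_i$, which is exactly the source and target of $F_i1_{-\lambda-\omega}$. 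The same check works for $F_i$. So the assignment on generating $1$-morphisms is well defined, and it extends to sums and composites.

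\textbf{Relations.} I would then go through the relations (1)--(7) of Definition~\ref{def-ikmc} and the inhomogeneous relation of Definition~\ref{def-ikmc1} one at a time.

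\begin{itemize}
\item \emph{Quiver Hecke relations.} Upward strands become downward strands, $x\mapsto x'$ and $\tau\mapsto-\tau'$. The downward KLR relations are the rotations of the upward ones, with the same $t_{ij}$. The sign on $\tau$ is consistent: quadratic relations involve $\tau^2$, and the dot-sliding relations are linear in $\tau$ with a sign change that matches.
\item \emph{Adjunction relations.} Here the scalars $2^{\pm\delta_{\frac12,i}}$ on $\eta$ and $\epsilon$ cancel in each zigzag, because each zigzag uses one cup and one cap of opposite type.
\item \emph{Bubble and infinite Grassmannian relations (4).} The weight $\langle\thal^\vee_i,\lambda\rangle$ becomes $-\langle\thal^\vee_i,\lambda\rangle-\delta_{i,\frac12}$. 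A clockwise bubble becomes a counterclockwise bubble, carrying the factor $2^{\delta}2^{-\delta}=1$ or the appropriate power of $2$. Degree-zero bubbles are normalized in terms of this weight, so I need the images to satisfy the normalizations at the new weight. The shift by $\omega$ at $i=\frac12$ is designed exactly to compensate the value $3$ in \eqref{scalar23}.
\item \emph{Mixed relations and the inversion/sideways-crossing relations.} These follow the same way once the bubble images are identified.
\item \emph{Inhomogeneous relation.} Finally I would check that this relation at color $\frac12$ maps to itself. Under the $2$-functor, its left side becomes the rotated diagram with the scalars $2^{\pm1}$ attached. I expect this is precisely why the $2$-power asymmetry is there: the extra cup/cap factor at $\frac12$ must balance the constant term.
\end{itemize}

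\textbf{Isomorphism.} The analogous assignment $\mathfrak U^\imath_-\to\mathfrak U^\imath_+$, namely $\lambda\mapsto-\lambda-\omega$ with $\eta'\mapsto2^{\delta}\eta$, $\epsilon'\mapsto 2^{-\delta}\epsilon$, and so on, is verified by the same computations. It is inverse on generators, which gives the isomorphism. The statements for $\mathfrak U$ are the same argument without the last relation.

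\textbf{Main obstacle.} I expect the hardest step to be the bubble relations (4), together with the inhomogeneous relation, at $i=\frac12$. Fake bubbles, defined through (4e), must go to fake bubbles at the shifted weight. For this I would compare generating functions, showing that the image of the clockwise generating series times the counterclockwise one is still $1$, with the powers of $2$ tracked explicitly.
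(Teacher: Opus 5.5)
Your route differs from the paper's, and as a self-contained proof it has a genuine gap. The paper verifies no relations itself. It restricts the $2$-functor $\omega$ that \cite[(3.19)]{BSWW-icate} construct on all of $\mathfrak U^\imath$ (and on $\mathfrak U$), and its only added input is that $\omega$ exchanges $X_\imath^+$ and $X_\imath^-$; your parity computation supplies exactly this. Your checks on $1$-morphisms, on the zigzags and on the inverse are fine. So is your observation that the shift by $\omega$ carries the degree-zero position of one bubble orientation at $\lambda$ onto that of the other orientation at $-\lambda-\omega$.

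The gap is the one relation that separates $\mathfrak U^\imath$ from $\mathfrak U$. The inhomogeneous relation of Definition~\ref{def-ikmc1} is imposed only on $F_{\frac12}E_{\frac12}F_{\frac12}1_\lambda$; this is why the black strands in Section~6 carry the labels $(-\frac12,\frac12,-\frac12)$. A $2$-functor that swaps $E_i$ and $F_i$ reverses orientations in place; it does not rotate diagrams. So the image of this relation is an identity in $\operatorname{End}(E_{\frac12}F_{\frac12}E_{\frac12}1_{-\lambda-\omega})$, which is not a defining relation; the relation does not ``map to itself''. You must prove that this $EFE$ identity holds in $\mathfrak U^\imath$. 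One way is to show that the orientation-reversed image of each term equals, up to an explicit scalar, the $180^\circ$ rotation (the mate under the biadjunctions) of a term of the defining relation at a suitable weight. This uses Definition~\ref{def-ikmc}(1), (2a), (2b) and the definition of the sideways crossings. You would then check that the resulting signs (from $\tau\mapsto-\tau'$ and from the $t_{ij}$) and powers $2^{\pm1}$ (from the cups and caps inside the sideways crossings) agree on both braidings and on the inhomogeneous term. ``I expect this is precisely why the $2$-power asymmetry is there'' does not do this, so as written the proposal only yields $\mathfrak U_+\to\mathfrak U_-$. Either carry out this computation or cite \cite[(3.19)]{BSWW-icate}, as the paper does.

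The bubble step is also misstated, in a way that matters at $i=\frac12$. In Definition~\ref{def-ikmc}, $(\eta,\epsilon)$ and $(\eta',\epsilon')$ are the two adjoint pairs; this is what makes your zigzag check work. A bubble therefore uses a cup and a cap from \emph{different} pairs: the counterclockwise bubble is built from $\eta$ and $\epsilon'$, the clockwise one from $\eta'$ and $\epsilon$. Hence $\omega_\imath$ sends counterclockwise bubbles to $2^{-\delta_{i,\frac12}}$ times clockwise ones, and clockwise bubbles to $2^{\delta_{i,\frac12}}$ times counterclockwise ones. There is never a factor $2^{\delta}2^{-\delta}=1$. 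These factors are exactly what is needed, because the degree-zero bubbles are normalized to $1$ and $2^{\delta_{i,\frac12}}$ respectively (see the proof of Lemma~\ref{local-red}). Likewise, (4e) says the product of the two generating series is $2^{\delta_{i,\frac12}}u^{\delta_{i,\frac12}}1_\lambda$ (cf.\ Lemma~\ref{bubb-inftyi}(3)), not $1$. With the correct factors, (4e) at $\lambda$ goes exactly to (4e) at $-\lambda-\omega$, so the identity you propose to check in your last paragraph is the wrong target. Finally, two of your claims are assertions rather than checks: that the downward quiver Hecke relations are rotations ``with the same $t_{ij}$'', and that (5)--(7) ``follow the same way''. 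The remaining crossings are tied to $\tau$ through (2b) and the definitions of the sideways crossings, which carry factors $t_{ij}$ with $t_{ij}\neq t_{ji}$ for adjacent colours. Moreover, at $i=\frac12$ the bicross relations are the separate relations (6c)--(6d).
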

\begin{proof}
The   $2$-functor $\omega$ on $\mathfrak U^\imath$,  defined in
\cite[(3.19)]{BSWW-icate}, interchanges the subcategories
$\mathfrak U^\imath_+$ and $\mathfrak U^\imath_-$. Hence its restriction to
$\mathfrak U^\imath_+$ gives the desired $2$-functor
$ \omega_\imath:\mathfrak U^\imath_+\longrightarrow \mathfrak U^\imath_-$.
The same formulas on the generators also define a $2$-functor $\mathfrak U_+\longrightarrow \mathfrak U_-$, since they satisfy the defining relations on $\mathfrak U$. Moreover, the arguments in \cite[(3.19)]{BSWW-icate} show that both $2$-functors are isomorphisms. \end{proof}

Recall that a strict $2$-representation of a strict $\Bbbk$-linear $2$-category $\mathcal U$ is  a \(2\)-functor
\[
\Phi:\mathcal U \longrightarrow \operatorname{Cat}_\Bbbk
\]
where $ \operatorname{Cat}_\Bbbk$  is the strict $\Bbbk$-linear $2$-category of $\Bbbk$-linear categories, $\Bbbk$-linear functors, and $\Bbbk$-linear natural transformations. 
Thus, if $X$ denotes the set of objects of $\mathcal U$, then for each object $\lambda\in X$,   there is a $\Bbbk$-linear category $\mathcal R_\lambda$, called the $\lambda$-weight category of $\mathcal R$.  Following \cite{BSW-heisenberg}, we say that  $(\mathcal R_\lambda)_{\lambda\in X} $ is a locally finite Abelian, respectively  locally Schurian, 
$2$-representation if each  $\mathcal R_\lambda$  is a locally finite Abelian,  respectively locally  Schurian.
Unless otherwise  stated, every $2$-representation considered below is assumed to be either locally finite Abelian or locally Schurian.
Let $\mathcal M=(\mathcal M_\lambda)_{\lambda\in X}$ be a
$2$-representation. By a $2$-subrepresentation
$\mathcal N=(\mathcal N_\lambda)_{\lambda\in X}$ of $\mathcal M$,
we mean a family of full replete additive $\Bbbk$-linear
subcategories
\[
\mathcal N_\lambda\subseteq\mathcal M_\lambda
\]
that is stable under all generating $1$-morphisms. The generating
$2$-morphisms then restrict naturally to $\mathcal N$.
Unless explicitly stated, a $2$-subrepresentation is not required
to be locally finite Abelian or locally Schurian. 

Following  \cite{BSW-heisenberg}, {we} define 
\begin{equation}
\label{affm} \mathcal R=\begin{cases}  \bigoplus_{\lambda\in X } \mathcal R_\lambda, & \text{in the locally finite Abelian case} \\
\prod_{\lambda\in X} \mathcal R_\lambda & \text{in the locally Schurian case.}\end{cases} \end{equation} 
Then $\mathcal R$ is also  a locally finite Abelian category if every $\mathcal R_\lambda$ is a locally finite Abelian category, and it is a locally Schurian category if every  $\mathcal R_\lambda $ is a locally Schurian category. 

We now specialize to the $\imath$-Kac--Moody $2$--category $\mathfrak U^\imath$. By \eqref{decomp1}, a $2$--functor defined on $\mathfrak U^\imath$ (respectively, $\mathfrak U$) is  equivalent to a pair of $2$-functors defined on $\mathfrak U^\imath_+$ and $\mathfrak U^\imath_-$ (respectively, $\mathfrak U_+$ and $\mathfrak U_-$). In particular, giving a $2$-representation of $\mathfrak U^\imath$ (respectively, $\mathfrak U$) is equivalent to giving a pair of $2$--representations, one on each component. By Proposition~\ref{omegaimath}, the $2$--functor $\omega_\imath$ provides an isomorphism between $\mathfrak U^\imath_+$ and $\mathfrak U^\imath_-$ (respectively, $\mathfrak U_+$ and $\mathfrak U_-$). Hence, the representation theories of $\mathfrak U^\imath_+$ and $\mathfrak U^\imath_-$ (respectively, $\mathfrak U_+$ and $\mathfrak U_-$) are equivalent. Consequently, any statement proved for one component has  an analogous statement for the other component obtained by transport under $\omega_\imath$. However, a general \(2\)-representation of
\(\mathfrak U^\imath\) (respectively, $\mathfrak U$) is not determined by its restriction to a single
component. Below,  we focus on \(\mathfrak U^\imath_+\) and $\mathfrak U_+$. The corresponding
results for \(\mathfrak U_-^\imath\) and $\mathfrak U_-$ are obtained by transporting the
statements through $\omega_\imath$. 

 In this paper, we focus on two special classes of $2$-representations of $\mathfrak U^\imath_+$, called  nilpotent and generalized nilpotent 2-representations.

\begin{Defn} \label{nil} \cite[\S5.1, p.42]{BSW-heisenberg}  A $2$-representation \(\mathcal M=(\mathcal{M}_\lambda)_{\lambda \in X_\imath^+}\) of $\mathfrak U^\imath_+$ (or $\mathfrak U_+$) 
is nilpotent if the following conditions hold: \begin{itemize}
    \item [(1)]  For each \(\lambda \in X_\imath^+\) and each
    \(V \in \mathcal{M}_\lambda\), 
   \(E_i V = F_i V = 0\) for all but finitely many \(i \in I^\imath\).
    \item [(2)] For all \(i \in I^\imath\), \(\lambda \in  X_\imath^+\), and every finitely generated \(V \in \mathcal{M}_\lambda\), the endomorphism \(\begin{tikzpicture}[baseline = -1mm,darkred, scale=0.7, transform shape]
	\draw[->,thick] (0.08,-.4) to (0.08,.3);
      \node at (.08,0) {$\bullet$};
       \node at (.08,-.6) {$\scriptstyle{i}$};
      \node at (0.4,-.3) {$\color{black}\scriptstyle{\lambda}$};
\end{tikzpicture}
\begin{tikzpicture}[baseline = 0]
  \draw[-,darkg,thick] (0.55,-0.3) to (0.55,.3);
   \node at (0.55,-.4) {$\darkg\scriptstyle{V}$};
\end{tikzpicture} : E_i V \to E_i V\) is nilpotent (equivalently, \(\begin{tikzpicture}[baseline = -1mm,darkred, scale=0.7, transform shape]
	\draw[<-,thick] (0.08,-.4) to (0.08,.3);
      \node at (.08,0) {$\bullet$};
       \node at (.08,-.6) {$\scriptstyle{i}$};
      \node at (0.4,-.3) {$\color{black}\scriptstyle{\lambda}$};
\end{tikzpicture}
\begin{tikzpicture}[baseline = 0]
   \draw[-,darkg,thick] (0.55,-0.3) to (0.55,.3);
   \node at (0.55,-.4) {$\darkg\scriptstyle{V}$};
\end{tikzpicture}\) is nilpotent).
\end{itemize}
 \end{Defn} 
We use the same terminology for a $2$-subrepresentation when
conditions~(1) and~(2) hold for every object in its weight
subcategories.
In this paper, we call $\mathcal M$ generalized nilpotent if condition (2) holds and condition (1) is required
only for finitely generated objects  $V\in \mathcal M_\lambda$, for every $\lambda$ in $X^\imath_+$.
 It follows from the definition that a nilpotent $2$-representation is a generalized nilpotent $2$-representation. The converse is false in general. However, when each $\mathcal M_\lambda$ is locally finite Abelian, the converse is true. 
{Indeed,  every object has finite length, and hence is finitely generated,  so condition (1) in the definition of generalized nilpotence applies to every object}.

\section{Eigenspaces and the exact-spectrum hypothesis}\label{Brauer-cate} 
In this section, we record several  facts  about  module categories  for  the  affine Brauer category over an algebraically closed field $\Bbbk$ of  characteristic different from $2$.

Suppose that the $\Bbbk$-linear locally Schurian  category $\mathcal M$ admits an affine Brauer categorification. 
When  relating  the  $\imath$-Kac--Moody $2$--category $\mathfrak U^\imath$ to the affine Brauer category, we further  assume  that $\Bbbk=\mathbb C$.

\subsection{Subfunctors and natural transformations}  In this subsection,  we prove several identities  for natural transformations that will be needed later. 

We work with strict monoidal categories. If $\mathcal A$ is a strict $\Bbbk$-linear  monoidal category,  then a left $\mathcal A$-module category is a $k$-linear
category $\mathcal M$  together with a strict $\Bbbk$-linear monoidal functor 
$M : A\rightarrow  \End_{\Bbbk}(\mathcal M)$.
Here $\End_{\Bbbk }(\mathcal M)$  denotes the strict monoidal category of $\Bbbk$-linear endofunctors of $\mathcal M$  and natural transformations.
If A is the affine Brauer category, then we say that M admits an affine Brauer categorification. Let \begin{equation}\label{ee}  E=
\mathrm M(\begin{tikzpicture}[baseline = 10pt, scale=0.5, color=\clr] \draw[-,thick] (0,0.5)to[out=up,in=down](0,1.2);
    \end{tikzpicture}).\end{equation} where $\mathrm{M}$ is the functor defined  for the affine Brauer category $\AB$. When drawing natural transformations, we omit $\mathrm{M}$ from the notation and simply write
\begin{tikzpicture}[baseline = 10pt, scale=0.6]
  \draw[-,thick] (0,0.5) to[out=up, in=down] (0,1.2);
\end{tikzpicture}
for $E$.  In other words, we use the same graphical notation for morphisms in $\AB$ and their images under $\mathrm{M}$.
 Furthermore, for any morphism $g$ in $ \mathcal{AB}$ and any object $V$ in $ \mathcal M$, the morphism 
$g_V $ is drawn as \begin{equation}\label{nat-gv}  \begin{tikzpicture}[baseline = 9pt,scale=0.4,inner sep=0pt, minimum width=11pt]
        \draw[-,thick] (0,0) to (0,2);
        \draw (0,1) node[circle,draw,thick,fill=white]{$g$};
        \draw[-,darkg,thick] (1,0) to (1,2);
      \node at (1,-.5) {$\darkg\scriptstyle{V}$};
    \end{tikzpicture}.\end{equation}
   To simplify notation,  we  omit the factor  $
\begin{tikzpicture}[baseline = 0, scale=0.5]
   \draw[-,darkg,thick] (0.65,-0.4) to (0.65,.4);
    \node at (0.65,-.7) {$\darkg\scriptstyle{V}$};
 \end{tikzpicture}
$  and write $
\begin{tikzpicture}[baseline = 9pt,scale=0.4,inner sep=0pt, minimum width=11pt]
\draw[-,thick] (0,0) to (0,2);
\draw (0,1) node[circle,draw,thick,fill=white]{$g$};
\end{tikzpicture}$ 
for
$\begin{tikzpicture}[baseline = 9pt,scale=0.4,inner sep=0pt, minimum width=11pt]
\draw[-,thick] (0,0) to (0,2);
\draw (0,1) node[circle,draw,thick,fill=white]{$g$};
\draw[-,darkg,thick] (1,0) to (1,2);
\node at (1,-.5) {$\darkg\scriptstyle{V}$};
\end{tikzpicture}$; equivalently, we say  that 
   $g$  acts on   $V\in \mathcal  M$. In what follows,  the rightmost factor corresponding to $V$ will be suppressed in the diagrams.

\begin{Defn}\cite[p.7]{BD-cate}  An endofunctor $
E$  of a locally Schurian category is said to be sweet if there exists an endofunctor 
$
F$  that is biadjoint to 
$E$.\end{Defn}

 \begin{Lemma}\label{biadj}   
  Let $E$ be the endofunctor of $\mathcal M$ defined in \eqref{ee} and let $V$ be a finitely generated object of $\mathcal M$.   Then \begin{itemize} \item[(1)]  $E$ is a sweet functor. In particular, $(E,E)$ is a  biadjoint pair. 
  \item [(2)]   $\dim \End_{\mathcal M}(EV)<\infty$.\end{itemize}  \end{Lemma}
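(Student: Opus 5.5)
The plan is to get (1) from the duality built into the affine Brauer category, and then deduce (2) from (1) using the locally Schurian structure of $\mathcal M$.

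For (1), the cap $\lcap\colon 2\to 0$ and cup $\lcup\colon 0\to 2$ in $\AB$ satisfy the zigzag relations of Definition~\ref{C defn}(1) (together with Lemma~\ref{AB relations-re}(1),(2)). These relations say that the generating object is self-dual on both sides. Applying the strict monoidal functor $\mathrm M\colon \AB\to\End_\Bbbk(\mathcal M)$ sends $\lcup$ and $\lcap$ to natural transformations
\[
\eta=\mathrm M(\lcup)\colon \Id_{\mathcal M}\to E\circ E,\qquad \varepsilon=\mathrm M(\lcap)\colon E\circ E\to \Id_{\mathcal M}.
\]
By functoriality, the images of the zigzag relations are the triangle identities
\[
(\varepsilon E)\circ(E\eta)=\id_E=(E\varepsilon)\circ(\eta E).
\]
These identities exhibit $E$ as left adjoint to $E$. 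The same two morphisms read with the roles of the two tensor factors interchanged (equivalently, after applying the symmetry $\sigma$ of Lemma~\ref{anti}, or using that $\lcap$ and $\lcup$ are each used for both zigzags) show that $E$ is also right adjoint to $E$. Hence $(E,E)$ is a biadjoint pair, so $E$ is sweet with $F=E$. The one point needing care is that Definition~\ref{C defn} and Lemma~\ref{AB relations-re} really supply both the left and the right zigzag; I would check this against Lemma~\ref{AB relations-re}(1),(2), which were stated precisely to give the reflected versions.

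For (2), I would compute $\End_{\mathcal M}(EV)$ through the adjunction:
\[
\End_{\mathcal M}(EV)=\Hom_{\mathcal M}(EV,EV)\cong \Hom_{\mathcal M}(V,EEV).
\]
Since $\mathcal M$ is locally Schurian, a finitely generated object is a quotient of a finite direct sum of indecomposable projectives. An endofunctor with a biadjoint is exact and preserves projectives, since it has an exact right adjoint; and by [BD-cate] a sweet functor on a locally Schurian category preserves finitely generated objects, indeed finitely generated projectives. So $EEV$ is finitely generated.

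The remaining step is to show $\dim\Hom_{\mathcal M}(V,W)<\infty$ for finitely generated $V$. I would choose an epimorphism $P\twoheadrightarrow V$ with $P$ a finite direct sum of indecomposable projectives. This gives an injection $\Hom(V,W)\hookrightarrow\Hom(P,W)$. Each $\Hom(P_b,W)$ is finite-dimensional, because in a locally Schurian category $\Hom(P_b,W)$ is the multiplicity space $[W:L_b]$, which is finite for finitely generated $W$: finitely generated objects are quotients of finite sums of $P_a$, and each $\dim\Hom(P_b,P_a)<\infty$. Hence $\dim\End_{\mathcal M}(EV)<\infty$.

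I expect the main obstacle to be this last finiteness assertion. It is where the precise meaning of locally Schurian is needed, namely that $\mathcal M$ is equivalent to locally finite-dimensional modules over a locally unital algebra with finite-dimensional $e_a A e_b$, together with the fact that $E$ preserves finite generation. I would cite [BD-cate] and [BSW-heisenberg] for these standard facts rather than reprove them.
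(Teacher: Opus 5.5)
Your proposal is correct and follows essentially the paper's argument. The zigzag relations of Definition~\ref{C defn}(1) give both triangle identities, hence $E\dashv E$, which is already the required biadjunction because the two functors coincide. Part~(2) rests on the same two facts the paper cites: sweet functors preserve finitely generated objects \cite[Theorem~2.11]{BD-cate}, and $\Hom$-spaces between finitely generated objects of a locally Schurian category are finite-dimensional.

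Your detour through $\End_{\mathcal M}(EV)\cong\Hom_{\mathcal M}(V,EEV)$ is harmless but unnecessary, since the paper applies these facts directly to the finitely generated object $EV$. Likewise, no reflected zigzags from Lemma~\ref{AB relations-re} are needed.
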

 \begin{proof} It follows from Definition~\ref{C defn}(1)
 that the cup and cap give a biadjunction between $E$ and itself; hence $E$ is a sweet functor. It is proved in \cite[p.7]{BD-cate} that %\old{$\End_{\mathcal M}(L)\cong \k$ for any simple object $L$ in $\mathcal M$, and} 
$\dim \Hom_{\mathcal M}(V, W)< \infty$ for any finitely generated objects $V, W$ in $\mathcal M$. Since $E$ is a sweet functor and $V$ is finitely generated, \cite[Theorem~2.11]{BD-cate} implies that  $EV$ is also finitely generated and hence  $\dim \End_{\mathcal M}(EV)<\infty$.\end{proof}

\begin{rem}\label{nateq}
\(\mathcal M\) is a \(\Bbbk\)-linear locally Schurian
category;  equivalently,  \(\mathcal M\) is equivalent to the
category of locally finite-dimensional left modules over a locally unital, locally finite-dimensional $\Bbbk$-algebra. It follows from
\cite[Lemma~2.6]{BS-semi} that every object of \(\mathcal M\) is a filtered
colimit of finitely generated objects. On the other hand,
\cite[Theorem~2.11]{BD-cate} shows that a sweet functor is both continuous
and cocontinuous; in particular, it preserves the filtered colimits used below.
Consequently, if \(F,G:\mathcal M\to\mathcal M\) are sweet functors, then
two natural transformations \(\alpha,\beta:F\Rightarrow G\) are equal if
and only if \(\alpha_V=\beta_V\) for every finitely generated object
\(V\in\mathcal M\). We will use this criterion freely in what follows
without further mention.
\end{rem}

\begin{Defn}\label{subf}
    For any \(i\in\Bbbk\), define an endofunctor  \(E_i\) as follows.
     If \(V\) is finitely generated, \(E_iV\) is the generalized \(i\)-eigenspace of \(EV\) with respect to the dot endomorphism 
        $
        \begin{tikzpicture}[baseline=-0.5ex,scale=1]
            \draw[thick] (0,-0.3) -- (0,0.3);
            \fill (0,0) circle (0.06);
        \end{tikzpicture} : EV \to EV$.
     For arbitrary \(V\), write \(V = \varinjlim_a V_a\) as a filtered colimit of finitely generated subobjects, and set
        $
        E_iV := \varinjlim_a E_iV_a$.
        \end{Defn}

The definition is independent of the chosen presentation,  since any filtered  system of  finitely generated subobjects whose 
colimit is 
 \(V\) is cofinal in the system of all finitely generated subobjects of $V$.  The assignment \(V \mapsto E_iV\) is functorial because for any morphism \(f:V\to W\) and any finitely generated \(V_a\subseteq V\), the image \(f(V_a)\) is a finitely generated subobject of \(W\), and naturality of the dot action guarantees that \(E_iV_a\) maps to \(E_i(f(V_a))\subseteq E_iW\). Passing to the colimit gives a 
 well-defined morphism \(E_iV \to E_iW\). 

Since \(E\) preserves filtered colimits, we have $\varinjlim_a EV_a\cong EV$. Moreover,  filtered colimits are  exact in module categories. Therefore, the inclusions \(E_iV_a\hookrightarrow E V_a\) induce a monomorphism \(E_iV \hookrightarrow EV\). Hence \(E_i\) is indeed a subfunctor of \(E\).

% Note that $E_i=0$ unless $i\in J$.  From this point to the end of the paper, we always deal with non-zero endofunctors %$E_i$, namely, $i\in J$.  
 
 In string calculus, the identity morphism of an object is drawn by using the same diagram as the object. For example, the identity morphism of $E$
is drawn as $% [inline block 7: 29 envs, 7096 chars in 10 pieces, piece 1 here, a bare % at each other -> data_tex | \begin{tikzpicture}[baseline=-0.5ex,scale=1] 		% dotted strand: 1 -> 1...]
 $. 
%Motivated by \cite{BSW-heisenberg}, 
Therefore, we  draw the functor  $E_i$ and its identity morphism by the vertical string colored by $i$. 
This leads to the following definition.

\begin{Defn}\label{subf0} For any $i\in \Bbbk$, define %check the following result 
\begin{multicols}{2}
    \item [(1)]  $%
\!:E_i \Rightarrow E_i$, the identity morphism of $E_i$,

\item [(2)]  $%
}:E \Rightarrow E$, the projection of $E$ onto its summand $E_i$. \end{multicols}\end{Defn}

 \begin{Lemma}\label{ortho-dotcom} For any  $i\in \Bbbk$, let   %
 be the restriction of %
 to %the summand 
$E_i$. 
 As natural transformations, the following relations hold  for all  $i, j\in \Bbbk$: \begin{multicols}{2} \item [(1)] 
$ %
$.
\end{multicols}  \end{Lemma}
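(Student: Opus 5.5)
We do not have the precise diagrams of Lemma~\ref{ortho-dotcom} in front of us, so the plan below assumes its relations are the standard ones for a generalized eigenspace decomposition. Namely, we expect: the projections of Definition~\ref{subf0}(2) are orthogonal idempotents; the sum of these projections over $i\in\Bbbk$ is the identity of $E$ (the sum being finite object-wise); the dot commutes with every projection; and the restricted dot minus $i$ is locally nilpotent on $E_i$. The strategy is to prove each relation first on a finitely generated object $V$, where everything reduces to linear algebra. We then extend to arbitrary objects using Remark~\ref{nateq}, since both sides of each relation are natural transformations between sweet functors, or their subfunctors $E_i$, which commute with filtered colimits.

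For finitely generated $V$, Lemma~\ref{biadj}(2) gives $\dim\End_{\mathcal M}(EV)<\infty$. The dot $x_V\in\End_{\mathcal M}(EV)$ therefore satisfies its minimal polynomial $m_V(z)=\prod_{j}(z-j)^{n_j}$, since $\Bbbk$ is algebraically closed. By the Chinese remainder theorem there are polynomials $p_j(z)$ with $p_j\equiv\delta_{jk}\pmod{(z-k)^{n_k}}$. The elements $p_j(x_V)$ are orthogonal idempotents in the commutative subalgebra $\Bbbk[x_V]$, they sum to $1$, and $p_i(x_V)$ projects $EV$ onto the generalized $i$-eigenspace, which is $E_iV$ by Definition~\ref{subf}. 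Hence on $EV$ the projection of Definition~\ref{subf0}(2) equals $p_i(x_V)$. Orthogonality, completeness, and commutation with the dot then follow because everything is a polynomial in $x_V$. Nilpotence of $(x-i)$ on $E_iV$ follows because $(z-i)^{n_i}p_i(z)$ is divisible by $m_V(z)$. In addition, only the finitely many roots of $m_V$ give nonzero summands, so the sum $\sum_i$ is finite on $V$.

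Next we check naturality and the passage to arbitrary $V$. For a morphism $f:V\to W$ between finitely generated objects, $Ef$ intertwines the dots. It therefore maps generalized eigenspaces into generalized eigenspaces and commutes with the projections, because the projection is characterized intrinsically as the projection along the decomposition $EV=\bigoplus_j E_jV$. This shows that the projections assemble into natural transformations compatible with Definition~\ref{subf}. For general $V=\varinjlim V_a$, each relation holds on every $EV_a$. Since $E$ and the $E_i$ commute with filtered colimits (Remark~\ref{nateq} and the discussion after Definition~\ref{subf}), the relations pass to the colimit. Note that the sum $\sum_i$ becomes a possibly infinite but locally finite sum, which is interpreted object-wise.

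The main obstacle is exactly this last step. The decomposition must be compatible with the filtered colimit, and the possibly infinite sum of projections must be well defined as a natural transformation. The polynomials $p_j$ depend on $V_a$, so we cannot argue with a single polynomial. Instead we will use the intrinsic characterization of $E_iV_a$ as $\bigcup_n\ker(x-i)^n$. Transition maps preserve these kernels, which gives a canonical direct-sum decomposition $EV=\bigoplus_i E_iV$ in the colimit, using exactness of filtered colimits. All relations are then checked componentwise on this decomposition.
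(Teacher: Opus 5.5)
Your proposal is correct and follows the same route as the paper. You reduce to finitely generated $V$ via Remark~\ref{nateq}. On such $V$, completeness and orthogonality of the projections are immediate from Definition~\ref{subf0}, and the dot commutes with the projections because naturality of the dot preserves generalized eigenspaces.

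Your Chinese-remainder idempotents and your direct colimit argument fill in details the paper leaves implicit. The extension to arbitrary objects that you flag as the main obstacle is not a real difficulty: $E_iV$ is defined as $\varinjlim_a E_iV_a$, so natural transformations out of $E_i$ are already determined on finitely generated objects.
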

\begin{proof} 
By Remark~\ref{nateq}, it is enough to verify these  relations after evaluating them   on any finitely generated object.  For a finitely generated object $V$,  only finitely  many terms on the right-hand side  of (1) are nonzero. Thus the sum in (1) is 
well-defined.  
So, (1) and (2) follow immediately from  Definition~\ref{subf0}.
The naturality of the dot morphism $%
 $ implies that the transition maps \(E(V_a)\to E(V_b)\)
preserve generalized \(i\)-eigenspaces. Hence the dot commutes with the corresponding projection onto $E_i$,  
and (3) and (4) follow.  \end{proof}

\begin{Lemma} \label{cap-cup-move}   Suppose that $i, j\in \Bbbk$ with  $i\neq -j$. Then  the following relations hold: 
\begin{multicols}{2}
    \item[(1)] $%
=0$.

\end{multicols}
    \end{Lemma}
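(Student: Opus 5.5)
The plan is to evaluate both sides on an arbitrary finitely generated object $V$, which suffices by Remark~\ref{nateq}, and then to exploit the fact that a dot slides through a cap (respectively, a cup) at the cost of a sign. In $\AB$, Definition~\ref{C defn}(4) together with Lemma~\ref{AB relations-re}(3) gives this sliding rule: a dot on the left end of the cap equals minus a dot on the right end, and similarly for the cup. The claim is that the cap, precomposed with the projections onto $E_i$ on the left strand and $E_j$ on the right strand, vanishes when $i+j\neq 0$. The dual claim is that the cup, postcomposed with these projections, vanishes.

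\textbf{Setting up on a finitely generated object.} Let $V$ be finitely generated. By Lemma~\ref{biadj}, both $EV$ and $EEV$ are finitely generated and have finite-dimensional endomorphism algebras. Write $x_1$ and $x_2$ for the dots on the left and right strands of $EEV$. These are commuting endomorphisms of $EEV$, by the interchange law. Hence the composite projection onto $E_iE_jV$, built from Definition~\ref{subf0}(2) and Lemma~\ref{ortho-dotcom}, commutes with both $x_1$ and $x_2$. On its image, $x_1-i$ and $x_2-j$ are nilpotent.

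\textbf{Key computation.} Choose $N\gg0$ so that $(x_1-i)^N$ kills the image of that projection $\pi_{ij}$; this is possible since everything is finite-dimensional. Then
\[0=\mathrm{cap}\circ\big((x_1-i)^N\big)\circ\pi_{ij}=\mathrm{cap}\circ\big((-x_2-i)^N\big)\circ\pi_{ij},\]
where the second equality slides each dot across the cap using the sign rule above. On the image of $\pi_{ij}$ we have $-x_2-i=-(x_2-j)-(i+j)$, which is a nonzero scalar plus a nilpotent because $i+j\neq0$. Hence $(-x_2-i)^N$ is invertible there. Its inverse is a polynomial $q(x_2)$, since the endomorphism algebra is finite-dimensional, and $q(x_2)$ commutes with $\pi_{ij}$. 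Precomposing with $q(x_2)\pi_{ij}$ therefore yields $\mathrm{cap}\circ\pi_{ij}=0$, which is relation (1).

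\textbf{The cup relation.} The cup statement follows by the same argument read upside down, using dot sliding across the cup. Alternatively, it can be obtained by applying the anti-isomorphism $\sigma$ of Lemma~\ref{anti}, which fixes dots and swaps cups and caps. If the lemma also contains versions with differently placed projections, for example a projection on only one strand, these follow by inserting $\sum_k$ of the projections from Lemma~\ref{ortho-dotcom}(1) and applying the above.

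\textbf{Expected obstacle.} The main care needed is to justify commuting the eigenspace projections past dots on the other strand, and to ensure that the inverse of $(-x_2-i)^N$ is a polynomial in the dot. Both points follow from finite-dimensionality of $\End(EEV)$ and Lemma~\ref{ortho-dotcom}(3),(4).
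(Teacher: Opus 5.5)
Your argument is correct and is essentially the paper's proof. The paper also evaluates on a finitely generated $V$ and shows that both $(x-i)^{n_1}$ and $(x+j)^{n_2}$, placed on the same strand, annihilate the projected cap (the second after sliding the dot across the cap with a sign). It then combines the two via the B\'ezout identity $g(u)(u-i)^{n_1}+f(u)(u+j)^{n_2}=1$, which is just another packaging of your inversion of $-x_2-i=-(x_2-j)-(i+j)$ on $E_iE_jV$. One small correction: the sign rule for sliding a dot across a cap or cup is Lemma~\ref{AB relations-re}(1)--(2), as the paper cites, whereas Definition~\ref{C defn}(4) and Lemma~\ref{AB relations-re}(3) are the dot--crossing relations.
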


\begin{proof} We verify  (1); the proof of (2) is analogous. 
Let $V$ be a  
 finitely generated object. We  evaluate the left-hand side of (1) on $V$. 
  As before,  we do not draw the extra strand on the far right representing the object $V$ on which the diagram acts. We have  
 %   to denote this diagrammatic action. 
\begin{equation}\label{equa-prime1}
\Big(
%
= 0,
\end{equation} for some $n_1\in\mathbb{N}$. By Lemma~\ref{ortho-dotcom} and Lemma~\ref{AB relations-re}(1), we also have 
\begin{equation}\label{equa-prime2}
\Big(
%
= 0,\
\end{equation} for some $n_2\in\mathbb{N}$. Since $i\neq -j$, the polynomials 
$u-i$ and $u+j$ are coprime.  Hence, there exist polynomials 
$f(u), g(u)\in \Bbbk[u]$ such that
%\begin{equation}\label{fg321}
$$g(u)(u - i)^{n_1} + f(u)(u + j)^{n_2} = 1.$$
%\end{equation}
Combining \eqref{equa-prime1}-\eqref{equa-prime2}, we obtain  (1).%Finally, (3) and(4) follows from (1) and (2) together with %Lemma~\ref{ortho-dotcom}(1). 
\end{proof}

\begin{Lemma}\label{ibiad}For any $i\in \Bbbk$,  
%$(E_i,E_{-i})$ is a biadjoint pair and hence %
the endofunctor $E_i$ is a sweet functor.
\end{Lemma}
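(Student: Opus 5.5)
The plan is to show that $E_{-i}$ is biadjoint to $E_i$. The adjunction data will be the cup and cap of $\AB$ with the idempotent projections of Definition~\ref{subf0} inserted. By definition this makes $E_i$ sweet.

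For the unit and counit of $E_{-i}\dashv E_i$ and of $E_i\dashv E_{-i}$, I would take the restrictions of $\lcup$ and $\lcap$. Precisely, I compose the cup $\mathrm{Id}\Rightarrow EE$ with the projections onto $E_{-i}E_i$, respectively $E_iE_{-i}$. For the caps I restrict $\lcap:EE\Rightarrow \mathrm{Id}$ along the inclusions $E_iE_{-i}\hookrightarrow EE$ and $E_{-i}E_i\hookrightarrow EE$.

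The key step is to check the zigzag identities. By Remark~\ref{nateq} it suffices to check them on finitely generated $V$. There Lemma~\ref{ortho-dotcom}(1) writes the identity of $E$ as a finite sum $\sum_j$ of the projections onto the summands $E_j$. Lemma~\ref{cap-cup-move} shows that a cap or cup joining an $i$-colored strand to a $j$-colored strand vanishes unless $j=-i$. Its proof uses Lemma~\ref{AB relations-re}(1), which moves the dot across a cap at the cost of a sign, so the eigenvalue $i$ on one side becomes $-i$ on the other.

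Take the zigzag for $E_i$ and expand the identity strand of $E$ in the middle as $\sum_j(\text{projection onto }E_j)$. All terms with $j\neq -i$ die by Lemma~\ref{cap-cup-move}. Hence
\[
\mathrm{id}_{E_i}=\pi_i\circ(\text{full zigzag})\circ\iota_i=\pi_i\circ(\text{restricted zigzag})\circ\iota_i .
\]
The full zigzag equals the identity of $E$ by Definition~\ref{C defn}(1). Inserting the single surviving projection onto $E_{-i}$ gives exactly the composite of my restricted unit and counit.

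The same argument handles the other zigzag and the mirrored adjunction. Lemma~\ref{anti} reflects the diagrams top-to-bottom, interchanging $\lcap$ and $\lcup$, so the two one-sided adjunctions are symmetric. I also need the projections to commute with the cup/cap in the form ``the projection onto $E_i$ on one leg of a cup equals the projection onto $E_{-i}$ on the other leg.'' This should follow from Lemma~\ref{cap-cup-move} together with the resolution of the identity: both sides equal the cup with that pair of projections inserted.

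The main obstacle is the bookkeeping for arbitrary, not finitely generated, objects. $E_i$ is defined on a general object via a filtered colimit, so naturality and the zigzags must pass to the colimit. I would handle this with Remark~\ref{nateq}: $E$ is sweet by Lemma~\ref{biadj}, hence preserves filtered colimits, and $E_i$ is a subfunctor compatible with those colimits. Therefore checking the zigzags on finitely generated $V$, where the sums are finite, suffices.
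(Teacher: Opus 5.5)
Your proposal is correct and matches the paper's argument: the paper likewise restricts the cup and cap to the colored summands and inserts the resolution of the identity from Lemma~\ref{ortho-dotcom}(1) on the middle strand. It then kills the mismatched colors with Lemma~\ref{cap-cup-move}, concludes from the zigzag relation in Definition~\ref{C defn}(1), and dismisses the remaining zigzag as ``a similar computation.'' One small remark: that second zigzag comes from repeating the computation with the other halves of Lemma~\ref{cap-cup-move} and Definition~\ref{C defn}(1), not from Lemma~\ref{anti}, because the projections onto $E_i$ are not morphisms of $\AB$ and cannot be transported along that anti-involution.
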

\begin{proof}
We have 
$$% [inline block 8: 6 envs, 2759 chars in 2 pieces, piece 1 here, a bare % at each other -> data_tex | \begin{tikzpicture}[baseline = -1mm, scale=0.8, transform shape]   \draw[-,thick] (0.3,0) to (0.3,.4);...]
.$$ A similar computation shows that $
 %
$.
In other words, $(E_i,E_{-i})$ is a biadjoint pair and hence $E_i$ is a sweet functor. 
\end{proof}

\begin{Defn}\label{III} Let  $ J=\{ i\in \k\mid E_iL\neq 0 \text{ for some simple object } L \in \mathcal M \}$, where  
 $\mathcal M$ is a category admitting  an  affine  Brauer categorification. We call J the spectrum of the dot endomorphism.\end{Defn} 

 \begin{Lemma}
 $E=\bigoplus_{i\in J} E_i$.     
 \end{Lemma}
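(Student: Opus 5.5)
We must prove $E=\bigoplus_{i\in J}E_i$. The natural route is to first show $E=\bigoplus_{i\in\Bbbk}E_i$, and then show that $E_i=0$ whenever $i\notin J$. By Remark~\ref{nateq}, and since $E$ and each $E_i$ commute with filtered colimits, it suffices to argue on finitely generated objects $V$.

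\emph{Step 1.} Let $V$ be finitely generated. By Lemma~\ref{biadj}(2), $\End_{\mathcal M}(EV)$ is finite dimensional. Hence the dot endomorphism $x_V$ of $EV$ satisfies a nonzero polynomial $p(u)=\prod_{k}(u-i_k)^{n_k}$ with distinct roots $i_k$; here $\Bbbk$ is algebraically closed. The Chinese remainder theorem gives orthogonal idempotents $e_k=f_k(x_V)$ summing to $1$. Their images are exactly the generalized $i_k$-eigenspaces, which by Definition~\ref{subf} are $E_{i_k}V$. Since the $e_k$ are polynomials in the dot, which is natural, the decomposition $EV=\bigoplus_k E_{i_k}V$ is natural in $V$; this is Lemma~\ref{ortho-dotcom}(1),(2). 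Passing to filtered colimits gives $E=\bigoplus_{i\in\Bbbk}E_i$, where on each object only the relevant summands contribute.

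\emph{Step 2.} We show that $i\notin J$ implies $E_i=0$. Suppose $E_iV\neq0$ for some finitely generated $V$. Take a nonzero morphism $P\to E_iV$ from a suitable (indecomposable projective, or finitely generated) object; in a locally Schurian category, any nonzero object admits a nonzero morphism to or from a simple object. Since $E_i$ is sweet by Lemma~\ref{ibiad}, with biadjoint $E_{-i}$, we get
\[\Hom(E_iV,L)\cong\Hom(V,E_{-i}L)\]
for any simple $L$. I would pick $L$ a simple quotient of $E_iV$; such a quotient exists because $E_iV$ is finitely generated by \cite[Theorem~2.11]{BD-cate}. This yields $E_{-i}L\neq0$, hence $-i\in J$.

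To convert this into $i\in J$, use Lemma~\ref{cap-cup-move}, or more directly Corollary~\ref{caup}(1) (dot slides through the cap with a sign). These show that the biadjoint of $E_i$ is $E_{-i}$, so the argument is symmetric: $E_{-i}L\neq0$ gives, by the same adjunction applied to $L$ and a simple quotient $L'$ of $E_{-i}L$, that $\Hom(L,E_iL')\neq0$, i.e.\ $E_iL'\neq0$. Hence $i\in J$.

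The main obstacle is this last step. We must make sure that nonvanishing of $E_i$ on some object forces nonvanishing on a simple object, with the \emph{same} index $i$ rather than $-i$. The double application of the biadjunction $(E_i,E_{-i})$ handles the sign, and exactness of $E_i$ (as a sweet functor) together with finite generation supplies the simple quotients. If an object has no simple quotient issue, one could alternatively argue via a composition-type filtration of the finitely generated $V$ and exactness of $E_i$ to locate a simple subquotient $L$ with $E_iL\ne0$ directly. I would try that first, since it avoids the sign bookkeeping.
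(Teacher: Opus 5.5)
Your proof is correct and is essentially the paper's argument. You decompose $E$ over all of $\Bbbk$ via Lemma~\ref{ortho-dotcom} and reduce to finitely generated $V$ using filtered colimits. You then apply the biadjunction $(E_i,E_{-i})$ twice: first to a simple quotient $L$ of $E_iV$ to get $E_{-i}L\neq 0$, then to a simple quotient $L'$ of the finitely generated object $E_{-i}L$ to get $E_iL'\neq 0$. Keep this route rather than the composition-series fallback you mention, since finitely generated objects in a locally Schurian category need not have finite length.
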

\begin{proof} First, we have  $E=\bigoplus_{i\in \Bbbk} E_i$ by Lemma \ref{ortho-dotcom}(1). It suffices  to prove   that if  $E_iV\neq  0$ for some object  $V$, then 
$E_iL\neq 0$ for some simple object $L$.   If so, we have $E=\bigoplus_{i\in J} E_i$.

Since we assume that $\mathcal M$ is a locally Schurian category, every object is a filtered colimit of finitely generated objects.
By Lemma~\ref{ibiad}, $E_i$ is a sweet functor, and hence it preserves the filtered colimit. Thus,  if $E_iV\neq  0$ for any object $V$, then  $E_iW\neq 0$ for some finitely generated object $W$. Hence,  we can assume that $V$ is a finitely generated object. Since $E_iV$ is a nonzero  finitely generated object in a locally Schurian category, it has a simple quotient, say L; see 
 \cite[(L6)]{BD-cate}.  Thus, there is a non-zero homomorphism from $E_iV$ to $L$. Since $(E_i, E_{-i})$ is a biadjoint pair, $E_{-i}L\neq 0$. Note that $E_{-i}$ preserves finitely generated objects. Since $E_{-i}L$ is nonzero and finitely generated, it has a simple quotient $L'$. Applying the biadjunction between $E_i$ and $E_{-i}$ to the nonzero map $E_{-i}L\rightarrow L'$ gives
  $E_iL'\neq 0$. This completes the proof of the lemma.
\end{proof}

Suppose   
$i,j,i',j'\in J$. 
For  brevity, we  write \begin{equation}
% [inline block 9: 48 envs, 19339 chars in 10 pieces, piece 1 here, a bare % at each other -> data_tex | \begin{tikzpicture}[baseline = -1mm] 	\draw[-] (0.28,-.28) to (-0.28,.28);...]
.
\end{equation}

\begin{Lemma}
\label{dotslidecrossing} Suppose %For any   
$i,j,i',j'\in J$. Then,  the following relations hold:   
  \begin{itemize} \item [(1)] $ 
%\begin{align}
\mathord{
%
=0$ whenever   $\{i,j\}\neq \{i',j'\}$ and at least one of the conditions   $i'\neq -j'$ or   $i\neq -j$ holds.
\end{itemize} 
%\label{i-dot}
%\end{align}
\end{Lemma}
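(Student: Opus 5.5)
We would verify the displayed identity after evaluating both sides on an arbitrary finitely generated object $V$, which is enough by Remark~\ref{nateq}. We write $P$ for the crossing sandwiched between the idempotent projections: the bottom projection is onto $E_iE_j$ and the top projection is onto $E_{i'}E_{j'}$ (Definition~\ref{subf0}(2)). The claim is $P=0$. The idea is the one used in the proof of Lemma~\ref{cap-cup-move}: find a natural transformation that commutes with $P$ but has different generalized eigenvalues on the source and on the target, and conclude with a Bezout argument.

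\textbf{Step 1: commutation up to a cup--cap term.} Write $x_1,x_2$ for the dots on the two strands. By the dot--crossing relation in Definition~\ref{AB defn} (together with Corollary~\ref{slidecro}), moving a dot from one strand through the crossing to the other produces the identity diagram plus a diagram that factors as a cap followed by a cup. The identity term occurs with opposite signs in the two sliding relations. So for $e_1=x_1+x_2$, the commutator of $e_1$ with the crossing is a linear combination of diagrams of the form cup $\circ$ cap. For $e_2=x_1x_2$ we slide one dot at a time. This produces identity-type terms that again cancel pairwise, plus terms in which dots sit on a cup or on a cap. Each of these terms still factors through a cap at the bottom and a cup at the top, possibly with dots attached. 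We expect this bookkeeping for $e_2$ to be the main obstacle, because one has to check that no identity term survives. If the identity terms do not cancel cleanly for $e_2$, we would replace $e_2$ by $x_1^2+x_2^2$ and check the same cancellation, using Corollary~\ref{caup1} to move dots across cups and caps.

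\textbf{Step 2: the correction terms vanish after projection.} By hypothesis, either $i\neq -j$ or $i'\neq -j'$. If $i\neq-j$, Lemma~\ref{cap-cup-move} shows that the cap composed with the projection onto $E_iE_j$ is zero. If $i'\neq-j'$, the same lemma shows that the projection onto $E_{i'}E_{j'}$ composed with the cup is zero. Dots can be moved past these projections by Lemma~\ref{ortho-dotcom}(3),(4). Hence every cup--cap correction term is killed after projection, and we obtain
\[
e_1P=Pe_1 \quad\text{and}\quad e_2P=Pe_2 .
\]
Here $e_1,e_2$ act on the left through the target $E_{i'}E_{j'}V$ and on the right through the source $E_iE_jV$.

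\textbf{Step 3: the Bezout argument.} On $E_iE_jV$ the operators $x_1,x_2$ commute and have single generalized eigenvalues $i$ and $j$, so $e_1$ and $e_2$ have generalized eigenvalues $i+j$ and $ij$. Likewise they have generalized eigenvalues $i'+j'$ and $i'j'$ on $E_{i'}E_{j'}V$. Since $\{i,j\}\neq\{i',j'\}$, the pairs $(i+j,ij)$ and $(i'+j',i'j')$ differ, because an unordered pair is determined by the roots of $t^2-(i+j)t+ij$.

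Choose $h=e_1$ if $i+j\neq i'+j'$, and $h=e_2$ otherwise. Let $a$ be the eigenvalue of $h$ on the source and $b$ its eigenvalue on the target, so $a\neq b$. The source and target spaces are finite-dimensional by Lemma~\ref{biadj}(2), so there is $N$ with
\[
(h-a)^N=0 \text{ on the source} \quad\text{and}\quad (h-b)^N=0 \text{ on the target}.
\]
Choose polynomials $f,g$ with $f(u)(u-a)^N+g(u)(u-b)^N=1$. Since $P$ commutes with $h$,
\[
P=P\bigl(f(h)(h-a)^N+g(h)(h-b)^N\bigr)=0+g(h)(h-b)^N P=0 .
\]
Evaluating on every finitely generated $V$ then gives the relation by Remark~\ref{nateq}.
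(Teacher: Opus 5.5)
Your argument is correct, but it is organized differently from the paper's. The paper first proves the projected single-dot sliding relations, which are items (1)--(2) of the lemma and were not visible in the statement you saw. It sandwiches Definition~\ref{C defn}(4), resp.\ Lemma~\ref{AB relations-re}(3), between the projections $\pi_{ij}$ onto $E_iE_j$ and $\pi_{i'j'}$ onto $E_{i'}E_{j'}$. The identity term then becomes $\pi_{i'j'}\pi_{ij}$, which is already zero because $(i,j)\neq(i',j')$, and the cup--cap term is killed by Lemma~\ref{cap-cup-move}. This gives $x_1P=Px_2$ and $x_2P=Px_1$ individually. Bezout is then applied to $u-i'$ versus $u-j$, or, if $i'=j$ (so that $j'\neq i$), to $u-j'$ versus $u-i$.

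You instead pass to $x_1+x_2$ and $x_1x_2$, so that the identity terms cancel before projecting. That works, and the step you flagged as the main obstacle is harmless. Let $s$ be the crossing and $e$ the cup--cap composite. By the horizontal symmetry of Lemma~\ref{anti}, the two sliding relations read $x_1s=sx_2+\alpha+\beta e$ and $x_2s=sx_1-\alpha-\beta e$ with the same scalars $\alpha,\beta$. Hence $[x_1+x_2,s]=0$ and $[x_1x_2,s]=\beta(ex_1-x_1e)$, and your Step~2 kills the latter.

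Your route gives a projection-free fact and compares unordered eigenvalue pairs in one stroke. However, it costs a computation that orthogonality of the projections makes unnecessary. It also does not yield the single-dot relations (1)--(2), which the paper reuses later (Lemmas~\ref{crossing-inverse}, \ref{ii}, \ref{lrcrossing}).

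Two minor points. The dot--crossing relations you need are Definition~\ref{C defn}(4) and Lemma~\ref{AB relations-re}(3), not Definition~\ref{AB defn} and Corollary~\ref{slidecro}. Also, nilpotence of $h-a$ on $E_iE_jV$ comes from the definition of generalized eigenspaces, using finite-dimensionality of $\End_{\mathcal M}(EEV)$; it does not come from $E_iE_jV$ being a finite-dimensional space.
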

\begin{proof} The relation (2) follows from Lemma~\ref{ortho-dotcom}, Definition~\ref{C defn}(4), and Lemma~\ref{cap-cup-move}(1). 
 Using Lemma~\ref{AB relations-re}(3) to replace  Definition~\ref{C defn}(4), one     
can show (1) in the same way. 

It follows from both (1) and (2) that  
   $$ %
, $$
whenever $\{i,j\}\neq \{i',j'\}$ and 
 either $i'\neq -j'$ or $i\neq -j$.  Fix a finitely generated object $V$. If  $i'\neq j$, 
choose sufficiently large integers $k$ and $l$ so that the corresponding powers of $x-j$ and $x-i'$ annihilate the relevant 
generalized eigenspaces after evaluation on $V$. Since $u-j$ and $u-i'$ are coprime,  
 there are  $f(u), g(u)\in \Bbbk[u]$
such that $$f(u)(u-j)^k+g(u)(u-i')^l=1.$$ 
Set  $ p(u)=g(u)(u-i')^l $. Then after evaluating on $V$,  %这里是逻辑关系， 如果不先取定V， 由于这样的V是无穷个， 我们没有办法取一个公共的V
we obtain 
$$
\mathord{
%
}
=
0.
$$  It remains only to consider the case \(i'=j\). Since
\(\{i,j\}\ne\{i',j'\}\), we must have \(j'\ne i\). This case is treated
by the same argument, and we omit the details. 
\end{proof}

\begin{Lemma}\label{crossing-inverse}
For any $i, j\in J$ with $j \neq \pm i$,  
$
%
+\cdots,
\]
and  $c =
(j-i)^{-1}$, and the omitted terms are higher powers of the nilpotent operators $x_1-j$ and $x_2-i$.
\end{Lemma}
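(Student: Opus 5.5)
The plan is to work on a fixed finitely generated object $V$ and prove the identity there; by Remark~\ref{nateq} this suffices. Fix $i,j\in J$ with $j\neq\pm i$ and consider the crossing sandwiched between the projections onto $E_iE_j$ and $E_jE_i$. In this block only finitely many generalized eigenspaces occur. The operators $x_1-j$ and $x_2-i$ (dots on the two strands, shifted by the relevant eigenvalues) are nilpotent on $E_jE_iV$ by Definition~\ref{subf}, and they commute.

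\emph{Step 1: reduce the dot-slide relation.} Start from the dot-crossing relation of Definition~\ref{C defn}(4), together with Lemma~\ref{AB relations-re}(3) for the other orientation. In $\AB$ this says that sliding a dot through the crossing produces the identity term plus a cup--cap correction. I will pre- and post-compose with the idempotents of Definition~\ref{subf0}. Since $i\neq -j$, Lemma~\ref{cap-cup-move} shows that the cup--cap term vanishes on this block. Lemma~\ref{dotslidecrossing} then shows that the crossing, restricted to this block, lands in $E_jE_i$ and its summand-components are compatible. What remains is a degenerate affine Hecke-type relation of the form
\[
s\,x_1 - x_2\,s = \pm\,\mathrm{id}, \qquad s^2=\mathrm{id},
\]
where $s$ is the projected crossing. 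The sign is read off from the paper's normalization.

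\emph{Step 2: invert $x_1-x_2$.} On $E_jE_iV$ we can write
\[
x_1-x_2=(j-i)+\bigl((x_1-j)-(x_2-i)\bigr),
\]
which is $c^{-1}$ plus a nilpotent operator. Because $j\neq i$, this operator is invertible. Its inverse is the finite geometric series
\[
c\sum_{r\ge0}(-c)^r\bigl((x_1-j)-(x_2-i)\bigr)^r,
\]
which terminates on $V$. Expanding this series gives exactly the ``higher powers of the nilpotent operators'' in the statement.

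\emph{Step 3: produce the inverse.} Using Step 1, the candidate $s\mp(x_1-x_2)^{-1}$ is checked against $s$, by analogy with the intertwiner construction for degenerate affine Hecke algebras. Composing $s$ with this candidate and using $s^2=1$ together with the slide relation, the cross terms cancel, since $(x_1-x_2)^{-1}$ commutes past $s$ with the strands swapped. This yields the identity in both orders, and hence the claimed formula with leading coefficient $c=(j-i)^{-1}$. Independence of $V$ follows because the formula is natural and the series is uniform.

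The main obstacle is Step 1: getting the signs and the exact form of the projected slide relation right under the paper's normalization of the dot (the rescaling relative to \cite{RS-cyc}). One must also make sure that both the cup--cap terms and the off-diagonal eigenspace components genuinely vanish. The latter needs the hypothesis $j\neq -i$ in both orientations, which I would check via Lemmas~\ref{cap-cup-move} and~\ref{dotslidecrossing}.
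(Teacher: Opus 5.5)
Your Steps 1 and 2 match the first half of the paper's proof. The expansion of $(x_1-x_2)^{-1}$ terminates on the generalized eigenspaces. For $j\neq -i$, Lemma~\ref{dotslidecrossing}(1)--(2), with the cup--cap term killed as in Lemma~\ref{cap-cup-move}, turns the dot--crossing relation into the projected Heisenberg-type relation. From there the paper imports the argument of Brundan--Savage--Webster's Lemma~4.2. The gap is in how you use this relation afterwards.

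Write $e_{ab}$ for the projection onto $E_aE_bV$ and $\tau$ for the unprojected crossing. For $j\neq -i$, $\tau$ preserves $W=E_iE_jV\oplus E_jE_iV$. On $W$ one has $\tau^2=1$ and $\tau x_1-x_2\tau=\sigma$ for a sign $\sigma$ fixed by the normalization. These identities hold for $\tau|_W$, not for the projected crossing. The projected crossing $e_{ij}\tau e_{ji}$ satisfies the \emph{homogeneous} relation, because $\sigma$ is killed by $e_{ij}e_{ji}=0$, and it does not square to the identity. Contrary to your Step~1, $\tau$ also does not carry $E_jE_iV$ into $E_iE_jV$. Its diagonal component $e_{ji}\tau e_{ji}$ is the nonzero dot $\sigma(x_1-x_2)^{-1}$ on the strands $j,i$. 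That component is exactly where the dot $(x_1-x_2)^{-1}$ in the statement comes from.

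Step~3 rests on a false manipulation: $(x_1-x_2)^{-1}$ does not pass through the crossing merely by swapping the strands. With $D=x_1-x_2$, the relations on $W$ give $\tau D+D\tau=2\sigma$. Hence $\tau D^{-1}+D^{-1}\tau=2\sigma D^{-2}$, and so $\tau(\tau-\sigma D^{-1})=1-\sigma\tau D^{-1}\neq 1$. Moreover, the composite of the two projected crossings is $(1-(x_1-x_2)^{-2})e_{ji}$, which is nilpotent when $j=i\pm1$. So no statement that something is an \emph{inverse in both orders} can hold under the sole hypothesis $j\neq\pm i$.

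What is missing is a uniqueness argument for the eigenspace components. There are two ways to supply it.
\begin{itemize}
\item Check that $\phi=\tau-\sigma D^{-1}$ satisfies $\phi x_1=x_2\phi$ and $\phi x_2=x_1\phi$, so $\phi$ maps $E_jE_iV$ into $E_iE_jV$. This gives $e_{ji}\tau e_{ji}=\sigma D^{-1}e_{ji}$ and $e_{ij}\tau e_{ji}=(\tau-\sigma D^{-1})e_{ji}$. The double-crossing formula then follows from $\tau^2=1$.
\item As in the proof of Lemma~\ref{ii}, set $\alpha=e_{ji}\tau e_{ji}-\sigma D^{-1}e_{ji}$. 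It satisfies $\alpha x_1=x_2\alpha$. Since $x_1-j$ and $x_2-i$ are nilpotent, this forces $(j-i)^N\alpha=0$ for large $N$, so $\alpha=0$.
\end{itemize}
This is precisely the step the paper takes over from Brundan--Savage--Webster.
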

\begin{proof} First, note that  %\begin{equation} %\label{x1x2arr} 
$$(x_1-x_2)^{-1}=c
- c^2(x_1-j)
+ c^2(x_2-i) + \text{(higher order terms)},$$ 
 and  since  $x_1-j$ (respectively $x_2-i$) acts nilpotently  on %
), it follows that     $%
$ contains  only  finitely  many nonzero  terms when evaluated on the relevant generalized eigenspaces. Thus, it  is well-defined. 

Since we assume  that $j\neq - i$,   Lemma~\ref{dotslidecrossing}(1)-(2) implies that 
\begin{align}\label{47}
\mathord{
%
.
\end{align}
Since \eqref{47} is  the same as \cite[(4.7)]{BSW-heisenberg},  the result follows from  the arguments in  \cite[Lemma~4.2]{BSW-heisenberg} by forgetting the arrows there.
\end{proof}

\begin{Lemma} \label{ii}Suppose $0\neq i\in J$. Then 
\begin{equation}\label{iii} 
%
.\end{equation}
\end{Lemma}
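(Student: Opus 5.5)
The plan is to obtain \eqref{iii} by projecting the corresponding unprojected identity of $\AB$ onto the $(i,i)$-summand of $E^2=EE$, and then to show that every correction term involving cups and caps vanishes there. The hypothesis $i\neq0$ gives $i\neq -i$, and this is exactly what makes Lemma~\ref{cap-cup-move} applicable to a pair of strands both coloured $i$. I am reading \eqref{iii} as the quiver-Hecke-type dot-sliding identity on $E_iE_i$, i.e.\ the $i=j$ counterpart of the relations in Lemma~\ref{dotslidecrossing}.

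\textbf{Step 1 (reduction to finitely generated objects).} By Remark~\ref{nateq} and Lemma~\ref{ibiad}, all functors involved are sweet, so it suffices to verify \eqref{iii} after evaluation on a fixed finitely generated object $V$. On such a $V$, every sum over eigenvalues arising from Lemma~\ref{ortho-dotcom}(1) has only finitely many nonzero terms.

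\textbf{Step 2 (projecting the dot--crossing relation).} Start from the dot-sliding relation Definition~\ref{C defn}(4), or its mirror Lemma~\ref{AB relations-re}(3). Pre- and post-compose it with the projections of Definition~\ref{subf0}(2) onto $E_iE_i$ at both ends. Lemma~\ref{ortho-dotcom}(3),(4) lets the projections commute past the dots. The terms of the relation then fall into two groups:
\begin{itemize}
\item the identity term, which survives;
\item a cap-followed-by-cup term, which passes through $E_iE_i\to\unit\to E_iE_i$.
\end{itemize}

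\textbf{Step 3 (killing the cup/cap term).} By Lemma~\ref{cap-cup-move}(1),(2) with $j=i$, the composite of the $(i,i)$-projection with a cap (respectively, a cup with the $(i,i)$-projection) is zero whenever $i\neq -i$. This holds because $i\neq0$ and $\operatorname{char}\Bbbk\neq2$. Hence the entire cup/cap contribution vanishes, and what remains is \eqref{iii}.

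\textbf{Possible additional terms.} If \eqref{iii} also involves the other crossing composite, or the quadratic relation Definition~\ref{AB defn}(1) restricted to $E_iE_i$, I would treat it the same way. Lemma~\ref{dotslidecrossing}(1),(2) handles projections with $\{i,j\}\neq\{i',j'\}$, and the remaining diagonal terms are handled as above.

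\textbf{Main obstacle.} The delicate point is bookkeeping: one must check that each cup/cap term really factors through an $(i,i)$-projection adjacent to a cap or a cup, rather than through a mixed pair $(i,-i)$. This has to be tracked carefully using the commutation of the projections with the dots (Lemma~\ref{ortho-dotcom}) and with the crossing, via Lemma~\ref{dotslidecrossing}. If any term instead involves the pair $(i,-i)$, I would fall back on the coprimality argument used in the proof of Lemma~\ref{cap-cup-move}. That is, I would choose polynomials $f,g$ with $f(u)(u-i)^{n_1}+g(u)(u+i)^{n_2}=1$, which is possible since $u-i$ and $u+i$ are coprime for $i\neq0$, and deduce the vanishing on $V$ from this Bézout identity.
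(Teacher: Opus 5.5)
You have proved the wrong identity. Statement~\eqref{iii} is not the nil-Hecke-type dot slide on $E_iE_i$. That identity is essentially already contained in Lemma~\ref{dotslidecrossing}(1)--(2), obtained exactly as in your Step~3, and it is what Lemma~\ref{qha} uses.

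In \eqref{iii} the two strands carry the colours $-i$ and $i$. In the paper's proof, the dot at the bottom of the left strand has generalized eigenvalue $-i$, and the dot at the top of the right strand has generalized eigenvalue $i$. In Section~6 the lemma is applied to crossings whose strands are coloured $\pm\frac12$, and it produces the $f_{12}$- and $f_{21}$-correction terms. Combined with the vanishing statement in Lemma~\ref{dotslidecrossing}, this means \eqref{iii} is an explicit expansion of one particular component of the crossing: the component from the summand coloured $(-i,i)$ to the summand coloured $(-i,i)$. It is the $j=-i$ counterpart of Lemma~\ref{crossing-inverse}, with an extra term. The expansion is a dot-labelled identity diagram plus a dot-labelled cup--cap diagram, with labels given by inverses of differences of dots.

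For this component your Step~3 breaks down exactly where it matters. The cap and cup are coloured $(-i,i)$, which is precisely the case $i=-j$ excluded from Lemma~\ref{cap-cup-move}. So the cup--cap term coming from Definition~\ref{C defn}(4) does not vanish; it is a genuine part of the right-hand side.

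Let $x_{\mathrm{bl}}$ and $x_{\mathrm{tr}}$ denote the dots at the bottom of the left strand and at the top of the right strand. Projecting the dot--crossing relation then only gives an inhomogeneous equation for this component $X$:
$$x_{\mathrm{tr}}X-Xx_{\mathrm{bl}}=(\text{projected identity and cup--cap terms}).$$
This is a constraint on $X$, not a formula for it.

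The missing idea is uniqueness. Set $\alpha=\mathrm{LHS}-\mathrm{RHS}$ and check that the proposed right-hand side satisfies the same equation. This means moving the dots around the cup and cap with Lemma~\ref{AB relations-re}(1)--(2), and through crossings with Lemma~\ref{dotslidecrossing}. The result is the intertwining relation $\alpha x_{\mathrm{bl}}=x_{\mathrm{tr}}\alpha$. On a finitely generated $V$, $x_{\mathrm{bl}}+i$ and $x_{\mathrm{tr}}-i$ are nilpotent and $2i\neq0$, which forces $\alpha_V=0$.

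Your fallback B\'ezout identity $f(u)(u-i)^{n_1}+g(u)(u+i)^{n_2}=1$ is the right engine for this last step. With $n_1,n_2$ large, it gives
$$\alpha=\alpha f(x_{\mathrm{bl}})(x_{\mathrm{bl}}-i)^{n_1}=f(x_{\mathrm{tr}})(x_{\mathrm{tr}}-i)^{n_1}\alpha=0.$$
However, it only works after the intertwining relation for $\alpha$ has been established. It cannot be used to make the cup--cap term disappear.
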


\begin{proof}
Set $
\alpha=\text{LHS of \eqref{iii}}-\text{RHS of \eqref{iii}}$. 
Let 
$R$ denote the dot placed on the lower segment of the left strand of $\alpha$, and let  $
L$ denote  the dot placed on the upper segment of the right strand of $\alpha$.
%Let \(L\) (resp. \(R\)) be the dot acting on the left (resp. %right) strand.
Then 
\begin{align*}
R\alpha
&=
%
.
\end{align*}

Subtracting the two displayed identities and simplifying the result using   Lemma~\ref{dotslidecrossing}(1)--(2), we obtain 
$(R-L)\alpha=0$. Evaluating   on a finitely generated object $V$, we have 
$(R_V-L_V)\alpha_V=0$. Since $R_V+i$
and $L_V-i$ are commuting nilpotent endomorphisms on the relevant
generalized eigenspaces, there exists a  sufficiently large integer
$N=N(V)$ such that
\[
\bigl[(R_V+i)-(L_V-i)\bigr]^N\alpha_V=0.
\]
Since $(R_V-L_V)\alpha_V=0$, it follows that
\(
(2i)^N\alpha_V=0
\).
As $2i\neq0$, we obtain $\alpha_V=0$. Since this holds for every
finitely generated object $V$, Remark~4.3 implies that $\alpha=0$.
%我感觉有点写的太清楚了
\iffalse Using Lemma~\ref{dotslidecrossing} yields $(R-L)\alpha=0$.   Since   % \(x_1+i\) and \(x_2-i\) act nilpotently on every object,
both \(R+i\) and \(L-i\) are  commuting  nilpotent natural transformations on the relevant generalized eigenspaces, it follows that 
 $[(R+i)-(L-i)]^N\alpha=0$ for a sufficiently large integer $N$.
 Consequently,  $(2i)^{N}\alpha=0$.  Since $2\neq 0$ and $i\neq 0$ in $\Bbbk$,   $\alpha=0$, as required.\fi
\end{proof}

%休息一会再继续

\subsection{Center and weights}\label{secfour} Every endomorphism of \(\Id_{\mathcal M}\) defines a central element of
\(\mathcal M\). Hence the image under \(\mathrm M\) of any dotted bubble in
\(\mathcal{AB}\) is central in \(\mathcal M\). Evaluating such a bubble on an
object \(V\in\mathcal M\) gives an element of  %$Z_V$,  
the center    of   $\End_{\mathcal M}(V)$.
By abuse of notation, we  use the same symbols for dotted bubbles and for
their images under \(\mathrm M\).

\begin{Lemma} \label{jjj} Let $V$ be a finitely generated object of $\mathcal M$, and suppose that $f(u)\in R[u]$,
where  $R$ is a $\Bbbk$-subalgebra of the center  of $\End_{\mathcal M} (V)$.
Then     the following relation holds:  \begin{equation} \label{bubb-key0} % [inline block 10: 17 envs, 5959 chars in 13 pieces, piece 1 here, a bare % at each other -> data_tex | \begin{tikzpicture}[baseline = 1.25mm] 	\draw[-] (0,0.4) to[out=180,in=90] (-.2,0.2);...]
     =[\clock(-u) f(-u)]_{u^0}+\frac{ 1}{2}f(0).\end{equation}\end{Lemma}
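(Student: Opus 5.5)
The left-hand side of \eqref{bubb-key0} is hidden in the source, but from the right-hand side it is evidently a dotted bubble, a closed loop carrying a dot labelled by $f(x)$, evaluated on $V$. The plan is to reduce to monomials and then read off coefficients of the generating function $\clock(u)$ from \eqref{ou1}.

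\textbf{Reduction to monomials.} Both sides are $R$-linear in $f$. For the left-hand side this holds because the coefficients of $f$ lie in $R$, which is central in $\End_{\mathcal M}(V)$, so they can be pulled out of the bubble. For the right-hand side it is clear from the formula. It therefore suffices to treat $f(u)=u^r$ for $r\ge 0$.

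\textbf{Expanding the right-hand side.} Write $\clock(-u)=-u-\tfrac12+\sum_{s\ge0}b_s(-u)^{-s}$, where $b_s$ is the dotted bubble with $s$ dots appearing in \eqref{ou1}. Multiplying by $(-u)^r$ and taking the coefficient of $u^0$ gives:
\begin{itemize}
\item for $r=0$: $b_0-\tfrac12+\tfrac12=b_0$;
\item for $r=1$: the term $-u\cdot(-u)$ contributes nothing to $u^0$, the constant $-\tfrac12$ contributes nothing, and we get $b_1$ (the $\tfrac12 f(0)$ term vanishes);
\item for general $r\ge1$: exactly $b_r$, since $(-u)^r\cdot(-u)^{-s}$ contributes to $u^0$ only when $s=r$, and $-u(-u)^r$ and $-\tfrac12(-u)^r$ contribute nothing.
\end{itemize}
So the identity reduces to checking that the bubble with $r$ dots on the left-hand side equals the coefficient $b_r$ as defined in \eqref{ou1}.

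\textbf{Orientation and normalization.} If the bubble in \eqref{bubb-key0} has the same orientation and dot placement as the one in \eqref{ou1}, this is immediate. I expect, however, that the left-hand side uses the opposite orientation or dot position, which is why $-u$ appears. In that case I would move the dots around the loop using Corollary~\ref{caup}(1)--(3). Each passage of a dot through a cup or cap replaces $x$ by $-x$, turning $x^r$ into $(-x)^r$; this is the source of $f(-u)$. Any leftover boundary term is then supplied by the normalization $\clock(u)=u-\tfrac12+\cdots$ together with Lemma~\ref{ou}(1), which relates $\clock(u)$ and $\clock(-u)$. This should produce the correction $\tfrac12 f(0)$, which only affects $r=0$. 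Concretely, the $r=0$ case of $\clock(u)\clock(-u)=\tfrac14-u^2$ at $u^{1}$ forces the undotted bubble to equal the appropriate constant, so matching constants there fixes the $\tfrac12$.

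\textbf{Main obstacle.} The delicate step is the bookkeeping of signs and constant terms when relating the two orientations. Mistakes here would show up precisely in the $\tfrac12 f(0)$ term and in the sign of $u$. I would handle it by comparing coefficients of $\clock(u)\clock(-u)=(\tfrac12-u)(\tfrac12+u)$ degree by degree, which expresses bubbles of one orientation in terms of the other. Finally, finite generation of $V$ (Lemma~\ref{biadj}) guarantees that all the series involved evaluate to well-defined elements, so coefficient extraction is legitimate.
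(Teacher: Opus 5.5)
Your main computation is correct and is essentially the paper's proof. The left side of \eqref{bubb-key0} is exactly the bubble of \eqref{ou1} with its dot labelled $f(x)$. The paper packages your monomial expansion via Lemma~\ref{usefuel-equa}(1) as $\bigl[\frac{\clock(u)-u+\frac12}{u}f(u)\bigr]_{u^{-1}}=[\clock(u)f(u)]_{u^0}+\frac12 f(0)$. The passage to $\clock(-u)f(-u)$ is merely the substitution $u\mapsto -u$, which does not change the $u^0$-coefficient.

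You should, however, drop the orientation hedge, because its content is wrong. Write $f=\sum_k f_kx^k$.
\begin{itemize}
\item Moving the dot to the other side of the loop (Lemma~\ref{AB relations-re}(1)) would replace $f(x)$ by $f(-x)$. That gives $\sum_k(-1)^kf_kb_k$, so the identity would fail in that case.
\item The term $\frac12 f(0)$ comes solely from the $-\frac12$ in the normalization of $\clock(u)$, not from Lemma~\ref{ou}(1).
\item Since $\clock(u)\clock(-u)$ is even in $u$, its $u^1$-coefficient vanishes identically and does not pin down the undotted bubble. Its $u^0$-coefficient only gives $(b_0-\frac12)^2-2b_1=\frac14$.
\end{itemize}
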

\begin{proof} After evaluating  on a finitely generated object $V$,   Lemma~\ref{usefuel-equa}(1) gives 
\begin{align*}
    %
 f(x)  & = \Bigg[ \frac{\clock (u) - u + \frac{1}{2}}{u} f(u) \Bigg]_{u^{-1}} = [\clock (u)f(u)]_{u^0} + \frac{1}{2} f(0) \\ & =[\clock(-u)f(-u)]_{u^0}+\frac{1}{2}f(0).
\end{align*} Therefore, \eqref{bubb-key0} holds.  
\end{proof}

\begin{Lemma} \label{lem:bublleyimeszeropoly}
For any  finitely generated object $V\in \mathcal  M$, define 
  $\mathcal O_V(u):=
\mathord{
%
}$. 
\begin{itemize} \item [(1)] Let   $g(u) = \mathcal O_V(u)\, f(u) $, where
$f(u) \in \Bbbk[u]$ is an annihilating polynomial of %
 acting  on $V$. Then $g(u)$ { is a polynomial whose coefficients lie in the subalgebra of $\operatorname{End}_{\mathcal M} (V)$
generated by the dotted bubbles evaluated on $V$, and hence so is $g(-u)$}. 
Further, 
$g(-u)$ is  an annihilating   polynomial of %
 acting  on $V$.
\item [(2)] If $f(u)=m_V(u)$ denotes the
 minimal polynomial  of  $ %
$  acting on  $V$, then we denote $g(u)$ by $n_V(u)$. In particular, when  
$V=L$ is  simple,  we have  $n_L(u)\in \Bbbk[u]$ such that $m_L(u)\mid n_L(-u)$.
\end{itemize}\end{Lemma}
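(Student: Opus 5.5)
The plan is to read $\mathcal O_V(u)$ as the generating series of (counterclockwise) dotted bubbles acting on $V$, normalized as in \eqref{ou1} and Lemma~\ref{jjj}. Concretely, I expect it to be the evaluation on $V$ of a bubble whose dot is labelled by $(u-x)^{-1}$, or equivalently, via Lemma~\ref{usefuel-equa}(1), the series $\clock(u)$ up to the affine shift $u-\tfrac12$. All computations are done after evaluating on the finitely generated object $V$. There $\End_{\mathcal M}(EV)$ is finite dimensional by Lemma~\ref{biadj}(2), so the dot on $EV$ has a minimal polynomial, and every annihilating polynomial $f(u)$ of it makes sense.

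\emph{Step 1: polynomiality of $g(u)$.} Since $f(x)=0$ on $EV$, I will rewrite
\[
\frac{f(u)}{u-x}=\frac{f(u)-f(x)}{u-x}=\Delta_f(u,x)
\]
as an identity of endomorphisms of $EV$ with coefficients in $\Bbbk[u]$. Closing the strand into a bubble gives $\mathcal O_V(u)f(u)$ as a bubble labelled by $\Delta_f(u,x)$, possibly plus a polynomial correction coming from the leading term $u-\tfrac12$ in \eqref{ou1}. This is the same manipulation as in Lemma~\ref{usefuel-equa}(1) and Lemma~\ref{jjj}. Since $\Delta_f(u,x)$ is a polynomial in $u$ whose coefficients are polynomials in $x$, $g(u)$ is a polynomial whose coefficients are dotted bubbles evaluated on $V$. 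These coefficients lie in the center of $\End_{\mathcal M}(V)$ by \S\ref{secfour}. Replacing $u$ by $-u$ gives the same conclusion for $g(-u)$.

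\emph{Step 2: $g(-u)$ annihilates the dot.} I will use Lemma~\ref{ou}(2), which I expect to express the dot strand with $(u-x)^{-1}$, or a curl, in terms of $\clock(u)$ and the strand with $(u+x)^{-1}$. Combining it with Lemma~\ref{ou}(1), $\clock(u)\clock(-u)=(\tfrac12-u)(\tfrac12+u)$, should express $\clock(-u)$, and hence $\mathcal O_V(-u)$, acting on $EV$ as (polynomial in $u$)$\,\cdot\,(u-x)$ up to invertible factors. Multiplying by $f(-u)$ and using the Step~1 expression for $g$, the product $g(-u)$, viewed as an element of $\End(EV)[u]$ after placing the bubble coefficients to the right of the strand, should become divisible by $(u-x)$. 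Specializing $u\mapsto x$ via Lemma~\ref{usefuel-equa}(5)--(8), which extract coefficients, then gives $g(-x)=0$ on $EV$. Specializing is legitimate because the coefficients are central, so they commute with the dot.

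\emph{Step 3: the case $f=m_V$.} For part (2), I take $f=m_V$ and set $n_V=g$. If $V=L$ is simple, the center of $\End(L)$ is $\Bbbk$, so $n_L(u)\in\Bbbk[u]$. Step~2 says $n_L(-u)$ annihilates the dot on $EL$, so the minimal polynomial divides it, giving $m_L(u)\mid n_L(-u)$.

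I expect the main obstacle to be Step~2. It requires getting the exact form of Lemma~\ref{ou}(2) and the sign conventions ($\ominus$ versus $\bullet$, $u\leftrightarrow -u$) right, so that the factor $(u-x)$ appears cleanly. It also requires handling the extra $\tfrac12$-terms, such as $\tfrac12 f(0)$ in Lemma~\ref{jjj}, which could spoil exact divisibility if they are mistracked.
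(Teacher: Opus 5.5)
Your Steps 1 and 3 are fine and agree with the paper. There, $[\mathcal O_V(u)f(u)]_{u^{-r}}$ for $r>0$ is identified via Lemma~\ref{jjj} with a bubble whose dot, on the strand adjacent to $V$, carries $f(x)$ times a power of $x$. That is your $\Delta_f$ argument done coefficientwise. Part (2) is deduced exactly as you propose.

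The gap is Step~2. Lemma~\ref{ou}(2) is not a curl relation. It is the bubble-slide relation: it moves $\clock(u)$ across a strand at the cost of a rational factor built from $(u\pm x)^2$ and $(u\pm x)^2-1$ (this is how it is used in Lemma~\ref{choosesimple}). So it only relates bubbles on the two sides of the strand and never produces a $(u\pm x)^{-1}$-labelled strand.

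Adding Lemma~\ref{ou}(1) does not help. It gives only $g(u)g(-u)=(\tfrac14-u^2)f(u)f(-u)$, hence $g(x)g(-x)=0$ on $EV$. You cannot cancel $g(x)$, because $g(u)$ and $f(u)$ may share roots, so $g(x)$ need not be invertible on $EV$.

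Also, ``divisible by $(u-x)$ up to invertible factors'' has no content in $\End(EV)((u^{-1}))$, where $u-x$ is itself a unit. What must be proved is that $g(-u)(u-x)^{-1}$, equivalently $g(u)(u+x)^{-1}$, acting on $EV$ is a \emph{polynomial} in $u$. Then its $u^{-1}$-coefficient, which equals $g(-x)$ by Lemma~\ref{usefuel-equa}(1)--(2), is zero.

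The missing ingredient is the single-strand identity Lemma~\ref{usefuel-equa}(3). The paper obtains it from the $\ominus$-form of the dot--crossing relation (Corollary~\ref{caup}(2), giving \eqref{kk1}), together with Corollary~\ref{caup}(1),(5) and \eqref{ou1}. It links $\clock(u)$ beside the strand to the $\ominus$- and $\oplus$-labelled strand. After multiplying by $f(u)$ and using $f(u)(u-x)^{-1}=\Delta_f(u,x)$ on $EV$, the relevant expression becomes polynomial in $u$; the paper reads off the vanishing of $g(-x)$ on $EV$ directly from this identity.

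The identity you say you ``expect'' is of this type, but it comes from the crossing, not from Lemma~\ref{ou}. Once you have it, the conclusion is a coefficient extraction, not a factorization argument. For the same reason, Lemma~\ref{usefuel-equa}(5)--(8) are not the tools needed for the specialization step.
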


\begin{proof}
    For $r>0$, Lemma \ref{jjj} gives  
    \[ [\mathcal O_V(u) f(u)]_{u^{-r}} =[\mathcal O_V(u) f(u)u^r]_{u^0}
    =
    %
=0.\]
Hence   $g(u)$    is a polynomial whose coefficients lie in the subalgebra of $\operatorname{End}_{\mathcal M} (V)$
generated by the dotted bubbles evaluated on $V$.   Now, after evaluating  on $V$,    Lemma \ref{usefuel-equa}(3) gives  
\begin{align*}
\mathord{
%
}=0.
\end{align*} Therefore $g(-u)$ is also an annihilating  polynomial for 
$
%
$ 
acting   on $V$. This proves (1). When $V$ is the simple object $L$, $%
$ is a scalar in $\Bbbk$. Hence $ n_L(u)\in \Bbbk[u]$ by  applying (1). Moreover, since $n_L(-u)$ is an annihilating  polynomial for 
 %
 acting  on $L$, we obtain $m_L(u)\mid n_L(-u)$, proving (2).
\end{proof}

Throughout the paper, we use  the notation $m_V(u)$ and $n_V(u)$ introduced  above. In particular,  $m_V(u)\in \Bbbk[u]$ denotes    the minimal polynomial of the action of  $ %
$   on the finitely generated object $V\in \mathcal M$.
 If   $EV=0$, we set $m_V(u)=1$ and $a_V=0$, where  $a_V:=\deg m_V(u)$.

\begin{Lemma}  \label{u-admiss}  
 $\mathcal O_L(u)=   ((-1)^{a_L}u-\frac{1}{2}) \frac{m_L(-u)}{m_L(u)} $, where  $L\in \mathcal M$ is a  simple object. \end{Lemma}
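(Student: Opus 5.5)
The plan is to combine the polynomiality statement of Lemma~\ref{lem:bublleyimeszeropoly}(2) with the quadratic identity $\clock(u)\clock(-u)=(\frac12-u)(\frac12+u)$ of Lemma~\ref{ou}(1). These two facts pin down $\mathcal O_L(u)$ up to a single sign, and a leading-coefficient comparison should fix that sign.

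\textbf{Step 1: reduce to an unknown linear factor.} Since $L$ is simple, every dotted bubble acts on $L$ by a scalar, so $\mathcal O_L(u)\in\Bbbk((u^{-1}))$. By \eqref{ou1} it has the form $u+(\text{const})+O(u^{-1})$. Lemma~\ref{lem:bublleyimeszeropoly}(2) gives $n_L(u)=\mathcal O_L(u)m_L(u)\in\Bbbk[u]$, and $n_L$ has degree $a_L+1$ and leading coefficient $1$, because $m_L$ is monic. The same lemma gives $m_L(u)\mid n_L(-u)$. Write $n_L(-u)=m_L(u)h(u)$. Then $h\in\Bbbk[u]$ has degree $1$ and leading coefficient $(-1)^{a_L+1}$, so $h(u)=(-1)^{a_L+1}u+d$ for some $d\in\Bbbk$. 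Substituting $u\mapsto -u$ gives
\[
\mathcal O_L(u)=\frac{n_L(u)}{m_L(u)}=h(-u)\,\frac{m_L(-u)}{m_L(u)}=\bigl((-1)^{a_L}u+d\bigr)\frac{m_L(-u)}{m_L(u)} .
\]
(The case $EL=0$, where $m_L=1$ and $a_L=0$, fits the same computation.)

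\textbf{Step 2: use the quadratic identity.} Evaluating Lemma~\ref{ou}(1) on $L$ gives $\mathcal O_L(u)\mathcal O_L(-u)=\frac14-u^2$. Substituting the formula from Step 1, the factors $m_L(-u)/m_L(u)$ and $m_L(u)/m_L(-u)$ cancel. What remains is
\[
\bigl((-1)^{a_L}u+d\bigr)\bigl(-(-1)^{a_L}u+d\bigr)=d^2-u^2=\tfrac14-u^2 ,
\]
so $d=\pm\frac12$.

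\textbf{Step 3: fix the sign of $d$ (main obstacle).} To show $d=-\frac12$, I would compare the $u^0$-coefficients of both sides. Write $m_L(u)=u^{a}-su^{a-1}+\cdots$ with $a=a_L$, so that $s$ is the sum of the roots of $m_L$. Then
\[
\frac{m_L(-u)}{m_L(u)}=(-1)^{a}\bigl(1+2su^{-1}+\cdots\bigr),
\]
and the right-hand side of the formula in Step 1 has $u^0$-coefficient $2s+(-1)^{a}d$. The left-hand side has $u^0$-coefficient $-\frac12$ plus the contribution of the lowest dotted bubble in \eqref{ou1}. I would compute that contribution by applying Lemma~\ref{jjj} with $f=1$ (and with $f=u$ if needed). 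Together with Lemma~\ref{usefuel-equa}(1), this should express it through the trace of the dot on $EL$, which yields the required relation between $s$ and $d$.

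This is the step I expect to be delicate, since it needs precise control of how the $r=0$ bubble enters \eqref{ou1}. If it does not resolve cleanly, my fallback is to exclude $d=+\frac12$ directly. That choice gives $n_L(u)=\bigl(u+(-1)^{a}\tfrac12\bigr)m_L(-u)$ up to sign, and I would derive a contradiction with $m_L(u)\mid n_L(-u)$ being compatible with the minimality of $m_L$, using the annihilation statement in Lemma~\ref{lem:bublleyimeszeropoly}(1).
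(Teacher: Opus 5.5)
Your Steps 1 and 2 are correct. They coincide with the paper's reduction: divisibility $m_L(u)\mid n_L(-u)$ and $\mathcal O_L(u)\mathcal O_L(-u)=\frac14-u^2$ give $n_L(u)=\bigl((-1)^{a_L}u+\frac{\sigma}{2}\bigr)m_L(-u)$ with $\sigma=\pm1$.

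\textbf{The gap is Step 3.}
\begin{itemize}
\item Lemma~\ref{jjj} with $f=1$ or $f=u$ is a tautology: it only says that the undotted and once-dotted bubbles act by $\omega_0$ and $\omega_1$.
\item There is no ``trace of the dot on $EL$'' available in an abstract locally Schurian category.
\item The relation between $\omega_0$ and $s$ that you need is equivalent to the sign itself, so none of the cited lemmas supplies it.
\item The fallback also fails. For either sign, $n_L(-u)=m_L(u)\cdot(\text{linear})$, so divisibility, the annihilation statement of Lemma~\ref{lem:bublleyimeszeropoly}(1), and minimality of $m_L$ all hold automatically.
\end{itemize}
Concretely, take $m_L(u)=u$ with all dotted bubbles zero on $L$, so $\mathcal O_L(u)=u-\frac12$ and $d=+\frac12$. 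This data satisfies polynomiality of $n_L$, $m_L(u)\mid n_L(-u)$, the quadratic identity, and Lemma~\ref{jjj} for every $f$. Yet the lemma predicts $\mathcal O_L(u)=u+\frac12$.

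\textbf{What is missing} is an input that uses $EL\neq0$, i.e.\ the non-vanishing of a lower-degree polynomial in the dot. The paper evaluates the identity of Lemma~\ref{usefuel-equa}(3) itself on $L$, not just its consequence Lemma~\ref{lem:bublleyimeszeropoly}(1), and substitutes the factorization of $n_L$. This produces a vanishing statement on $EL$ involving $h(x)$, where $h(u)=[m_L(u)/u]_{u^{\ge0}}$. That statement is compatible with $h(x)\neq0$ only if $\sigma=-1$. Finally, $h(x)\neq0$ because $h$ is a nonzero polynomial of degree $<a_L=\deg m_L$.

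\textbf{A partial repair within your framework:} apply Lemma~\ref{jjj} with $f=m_L$ instead of $f=1$. The bubble labelled $m_L(x)$ acts by zero on $L$, exactly as in the proof of Lemma~\ref{lem:bublleyimeszeropoly} but now with $r=0$. This gives $n_L(0)=-\frac12 m_L(0)$, hence $(1+\sigma)m_L(0)=0$. That settles the sign whenever $0$ is not a root of $m_L$, for instance under the half-integral spectrum assumption imposed later. It is vacuous for the example above, which is why the general statement needs the argument with $h$.
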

\begin{proof}
If $EL=0$, then for each $i\in \mathbb N$, 
 $%
=0$ and the result follows.  We henceforth assume that $EL\neq 0$. In particular,   $a_L=\deg m_L(u)>0$.
 By Lemma  \ref{lem:bublleyimeszeropoly}(2), $n_L(-u)\in \Bbbk[u]$ and  $m_L(u) | n_L(-u)$. 
 Moreover, 
    Lemma \ref{ou}(1) gives 
    \[ n_L(u)n_L(-u)=(\frac{1}{2}-u)(\frac{1}{2}+u)m_L(-u)m_L(u) \]
   Comparing the leading coefficients on both sides, we obtain    
    \begin{equation}\label{nmkey}
     n_L(u)= ((-1)^{a_L}u+\frac{1}{2}\sigma)m_L(-u)   
    \end{equation}
    for some $\sigma\in \{\pm 1\} $. 
  It remains to show that $\sigma=-1$.  Evaluating on the simple object $L$,  Lemma \ref{usefuel-equa}(3) and \eqref{nmkey} yields  
\begin{align*}
   0=% [inline block 11: 9 envs, 3620 chars in 2 pieces, piece 1 here, a bare % at each other -> data_tex | \begin{tikzpicture}[baseline = -1mm] 	\draw[-] (0,0.4) to (0,0.3);...]
,
\end{align*}
where $h(u)=[\frac{m_L(u)}{u}]_{u^{\ge 0}}$. 
Since $\deg m_L(u)\ge 1$, we have $h(u)\neq 0$. Together with the fact that $\deg h(u)<\deg m_L(u)$,  this forces   $\sigma=-1$.    
\end{proof}

\begin{Lemma}\label{choosesimple}
Suppose that $L$ is a simple object  in $\mathcal M$ and that $K$ is  an irreducible subquotient of $E_i L$ for some $i \in \k$. 
Then
\begin{equation} \label{choose1}
    \mathcal O_K(u)
    = \mathcal O_L(u)\frac{(u+i)^2-1}{(u-i)^2-1} \frac{ (u-i)^2}{(u+i)^2}  .
\end{equation}
\end{Lemma}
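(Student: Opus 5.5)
The plan is to compare the bubble generating function on $K$ with the one on $L$ by sliding the bubble across a single strand using Lemma~\ref{ou}(2). Evaluate that identity, an equality in $\End_{\AB}(1)((u^{-1}))$, on the simple object $L$. On the side where the bubble sits to the right of the strand, it acts on $L$ by the scalar series $\mathcal O_L(u)\in\Bbbk((u^{-1}))$ (central, $\End(L)=\Bbbk$). On the other side it acts on the object $EL$ as $\mathcal O_{EL}(u)$. Lemma~\ref{ou}(2) therefore should give
\[
\mathcal O_{EL}(u)=\mathcal O_L(u)\,R(u,x),\qquad R(u,x)=\frac{(u+x)^2-1}{(u-x)^2-1}\,\frac{(u-x)^2}{(u+x)^2},
\]
expanded in $u^{-1}$ with coefficients in $\Bbbk[x]$, where $x$ is the dot on $EL$. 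The first task is to read off from Lemma~\ref{ou}(2) exactly this factor, with the orientation and sign conventions as normalised in \eqref{ou1}. I expect this bookkeeping of which side of the strand carries which factor, and the signs coming from $\clock(u)$ versus $\clock(-u)$, to be the main place where care is needed. If a sign mismatch appears, I would correct it using Lemma~\ref{ou}(1) or Lemma~\ref{anti}.

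Next, restrict to the summand $E_iL$. By Lemma~\ref{ortho-dotcom}, the projection onto $E_i$ commutes with the dot. The bubble is an endomorphism of $\Id_{\mathcal M}$, so it is natural and commutes with this projection as well. Hence each coefficient $c_r$ of $u^{-r}$ in $\mathcal O_{E_iL}(u)$ is $\mathcal O_L(u)R(u,x)$ evaluated at $x|_{E_iL}$. Since $L$ is simple, it is finitely generated, so $E_iL$ is finitely generated by Lemma~\ref{biadj} and is finite-dimensional in endomorphisms. Consequently $x-i$ is nilpotent on $E_iL$, say $(x-i)^N=0$. Expanding $R(u,x)$ around $x=i$, each coefficient $c_r$ equals the scalar $s_r$, namely the $u^{-r}$-coefficient of $\mathcal O_L(u)R(u,i)$, plus a polynomial in $(x-i)$ without constant term. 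So $c_r-s_r$ is a nilpotent endomorphism of $E_iL$.

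Finally, pass to the subquotient. Write $K=M'/M''$ with $M''\subseteq M'\subseteq E_iL$. Each $c_r$ is natural, so it preserves $M'$ and $M''$ and induces the action of the corresponding bubble on $K$. That induced action is the scalar $\mathcal O_K(u)_r$, because the bubble is central and $K$ is simple. The endomorphism $c_r-s_r$ also preserves $M'$ and $M''$, and it induces a nilpotent scalar on $K$, which must be $0$. Therefore $\mathcal O_K(u)_r=s_r$ for every $r$. This gives $\mathcal O_K(u)=\mathcal O_L(u)R(u,i)$, which is \eqref{choose1}.
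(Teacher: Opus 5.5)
Your proposal is correct and follows the same route as the paper. Both arguments apply the bubble-sliding identity of Lemma~\ref{ou}(2) at $L$, restrict to $E_iL$, and then pass to the subquotient $K$. Your handling of the last step is more careful than the paper's, which simply asserts that the dot acts on $K$ by $i$; the dot is an endomorphism of $E_iL$, not of $K$, and need not preserve $M''\subseteq M'$. You instead observe that each $c_r-s_r$ is a polynomial in $x-i$ without constant term, hence nilpotent, and that it is a bubble coefficient minus a scalar, hence preserves every subobject, so it descends to a nilpotent scalar on $K$, which must vanish.
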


\begin{proof}
Lemma~\ref{ou}(2) gives 
$$
\mathord{
%
}.
$$
Here \(x\) denotes the dot acting on \(E_iL\). Since \(K\) is an irreducible
subquotient of \(E_iL\), the dot \(x\) acts on \(K\) by the scalar \(i\). Passing
to the subquotient \(K\) therefore gives
\[
\mathcal O_K(u)
=
\mathcal O_L(u)
\frac{(u+i)^2-1}{(u-i)^2-1}
\frac{(u-i)^2}{(u+i)^2},
\]
as claimed.
\end{proof}
From now on,  we impose   the following assumption.
\begin{Assumption}\label{iIassu}
Let $J$ be the set defined in Definition~\ref{III}. We assume that
\begin{itemize}
\item[(1)] $J=\mathbb Z$ or $J=\frac12+\mathbb Z$ if $\mathrm{char}\,\Bbbk=0$,
\item[(2)]  $J=\mathbb Z_p$   or $J=\frac12+\mathbb Z_p$ if $\mathrm{char}\,\Bbbk=p$, {where
$\mathbb Z_p$ denotes the image of $\mathbb Z$ in $\Bbbk$ (equivalently, the prime subfield of $\Bbbk$).}
\end{itemize}
\end{Assumption}

This assumption is natural. Indeed, when $\mathcal M$ is an $\mathcal{AB}$-module
category arising from the category of locally finite-dimensional left modules
over a cyclotomic Brauer category, the set $J$  is  among the
index sets appearing in Assumption~\ref{iIassu} under the
$\mathbf u$-admissibility condition; see \cite{GRS-tri} for details.
For the affine Kauffman category, there are two additional choices for
$J$; see \cite{GRS-Kaff}. 

In the present paper, we  assume 
\[
J=\frac12+\mathbb Z= I,\qquad J^\imath=\frac12+\mathbb N= I^\imath,
\]
and assume that $\Bbbk=\mathbb C$.   The remaining  cases will be treated  elsewhere. 
 Thus, $J$ is the set  $I$ defined  in \eqref{index-I}. 
This is the case needed to establish the
relationship between   $2$-representations of  $\mathfrak U_+^\imath$ and module
categories over the affine Brauer category.  Some of the results below, however, do not depend on these
assumptions, and this will be clear from the context.

The equality $J=I$ is not merely a notational convenience. The construction of the $\mathfrak U^{\imath}_{+}$-action requires the generating eigenspace functors for every color in the half-integral orbit, together with the adjacent-color crossings and the boundary morphisms at $\frac12$. If some colors are absent, the same formulas still define transformations on the summands that exist, but they do not automatically assemble to a 2-representation of the full category $\mathcal U^{\imath}_{+}$. This is the source of the asymmetry between the two parts of Theorem A.

\begin{Defn} \label{expre}   For any simple object $L$ in  $ \mathcal M$,   write
$m_L(u)=\prod_{i\in \k}(u-i)^{\epsilon_i(L)}$.
Define \begin{itemize}\item [(1)]
$\text{wt}_L=
 \Lambda_{\frac{1}{2}(-1)^{a_L}}+ \sum_{i\in I}(\epsilon_{-i}(L)-\epsilon_{i}(L))\Lambda_i$, 
 \item [(2)] 
    $ {\text{wt}}^\imath_L= 
	\delta_{a_L}\Lambda_{\frac{1}{2}(-1)^{a_L}}+ \sum_{i\in I}\epsilon_{-i}(L)\Lambda_i \in X_\imath$
    \end{itemize}   where $\delta_{a_L}=1$ if $a_L$ is  odd and $\delta_{a_L}=0$ if $a_L$ is  even.  
    %这里这样改是为了使得权的定义满足 引理 2.18 的(2)-(3) 以及推论 2.23
    \end{Defn} 
  In particular, when $EL=0$, $m_L(u)=1$ and $a_L=0$, and hence $\text{wt}_L=\Lambda_{\frac12}$.
 
\begin{Lemma}\label{BLOCK} Let $L$ and $K$ be  simple objects  of  $\mathcal M$. Then the following statements hold.
     \begin{itemize}\item[(1)] For any  $i\in I^\imath$,   $ \langle{ }^\theta \alpha_i^\vee, {\text{wt}}^\imath_L \rangle=  \langle \alpha_i^\vee, \text{{wt}}_L\rangle-\delta_{i, \frac{1}{2}} $.
         \item [(2)] If $L$ and $K$ are in the same block of $\mathcal M$, then ${\text{wt}}^\imath_L={\text{wt}}^\imath_K $. 
         \item [(3)]  If $K$ is a subquotient of $E_iL$ for some  $i\in I$, then 
         $ {\text{wt}}^\imath_K ={\text{wt}}^\imath_L +\alpha_i={\text{wt}}^\imath_L-\alpha_{-i} $.
         \item [(4)]  There is a block decomposition $ \mathcal M=\prod_{\lambda\in  X_\imath}\mathcal M_{\lambda}$, where $\mathcal M_\lambda$ is  the Serre subcategory of $
\mathcal M$ consisting of objects $
V$ such that every simple subquotient $
L$ of $
V$ satisfies $\text{wt}^\imath_L
 =\lambda$.     In the locally    finite Abelian case, the product is replaced by the direct sum.  
%the Serre subcategory of $\mathcal M$ consisting of  objects $V$ such that $\overline{\text{wt}}_L=\lambda$ for  each  subquotient $L$ of $V$.   
         \item[(5)] For any $i\in I$, the functor  $ E_i$ maps   $\mathcal M_{\lambda}$  into $ \mathcal M_{\lambda+\alpha_i}= \mathcal M_{\lambda-\alpha_{-i}}$.      
  \item[(6)] If ${\text{wt}}^\imath_L= {\text{wt}}^\imath_K$, then $ {\text{wt}}_L= {\text{wt}}_K$ (i.e. $\mathcal O_L(u)=\mathcal O_K(u)$).     
         \end{itemize}
 \end{Lemma}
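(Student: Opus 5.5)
The plan is to make every statement a consequence of the explicit formula in Lemma~\ref{u-admiss}, the transition formula in Lemma~\ref{choosesimple}, and linkage of simples through $E$.

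\emph{Part (1).} Write $m_L(u)=\prod_i(u-i)^{\epsilon_i(L)}$ and pair $\Lambda_i$ with $\alpha_j^\vee$ via $\delta_{ij}$. For $i\in I^\imath$ we get $\langle\alpha_i^\vee,\text{wt}_L\rangle=\delta_{i,\frac12(-1)^{a_L}}+\epsilon_{-i}-\epsilon_i$. Using $\thal_i^\vee=\alpha_i^\vee-\alpha_{-i}^\vee$ and the definition of $\text{wt}^\imath_L$,
\[
\langle\thal_i^\vee,\text{wt}^\imath_L\rangle=\delta_{a_L}\bigl(\delta_{i,\frac12(-1)^{a_L}}-\delta_{-i,\frac12(-1)^{a_L}}\bigr)+\epsilon_{-i}-\epsilon_{i}.
\]
Since $i>0$, only $i=\frac12$ contributes, and a case split on the parity of $a_L$ reconciles the boundary terms. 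I expect a subtlety here: $\text{wt}^\imath_L$ is only defined modulo $X^\theta$, so I will check that the pairing with $Y^\imath$ is well defined and use nondegeneracy.

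\emph{Part (6).} By Lemma~\ref{u-admiss}, $\mathcal O_L$ is determined by the differences $\epsilon_{-i}(L)-\epsilon_i(L)$ for $i\in I^\imath$ together with the parity of $a_L$. Indeed, $m_L(-u)/m_L(u)$ is a product of factors $((u+i)/(u-i))^{\epsilon_{-i}-\epsilon_i}$ up to sign, and the sign and the factor $(-1)^{a_L}u-\frac12$ are governed by the parity. Conversely, these data are read off from the pairings in (1) and from the parity of $\text{wt}^\imath_L$. Since $X_\imath$ pairs nondegenerately with $Y^\imath$, equality of $\text{wt}^\imath$ gives equality of all pairings, hence of $\text{wt}_L$ and of $\mathcal O_L$. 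The main obstacle in this step is keeping the boundary index $\frac12$ straight.

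\emph{Part (3).} Lemma~\ref{choosesimple} multiplies $\mathcal O_L$ by $\frac{(u+i-1)(u+i+1)(u-i)^2}{(u-i-1)(u-i+1)(u+i)^2}$. Comparing with the form in Lemma~\ref{u-admiss} translates this into a change of the multiplicity differences: the $\epsilon$-differences at $\pm i,\pm(i\pm1)$ shift exactly by $\langle\alpha_j^\vee,\alpha_i\rangle$, and $a_K\equiv a_L+1$. I will handle $i=\pm\frac12$ separately, since there the factor at $u\mp\frac12$ interacts with the term $(-1)^{a}u-\frac12$. This gives $\text{wt}_K=\text{wt}_L+\alpha_i$ at the level of pairings, and by (1) the same holds for $\text{wt}^\imath$. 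Finally, $\alpha_i\equiv-\alpha_{-i}$ modulo $X^\theta$ because $\alpha_i+\alpha_{-i}$ is $\theta$-fixed.

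\emph{Part (2).} The bubbles are central, so they act by a scalar on each block. Hence $\mathcal O_L=\mathcal O_K$ when $L,K$ are linked by a nonsplit extension, since the bubble scalars are constant on an indecomposable. I will then reverse (6): equal $\mathcal O$ gives equal $\epsilon$-differences, and hence, via (1) and nondegeneracy, equal $\text{wt}^\imath$. For the parity, I will show that $a_L \bmod 2$ is determined by $\mathcal O_L$ through its leading behaviour, namely the sign of $u$ in Lemma~\ref{u-admiss}.

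\emph{Parts (4) and (5).} Part (4) follows from (2): $\mathcal M$ splits into blocks, and grouping the blocks by $\text{wt}^\imath$ gives Serre subcategories $\mathcal M_\lambda$. This decomposition is a product in the locally Schurian case and a direct sum in the locally finite Abelian case, as in \cite{BSW-heisenberg}. For (5), every simple subquotient of $E_iV$ with $V\in\mathcal M_\lambda$ is a subquotient of some $E_iL$ for $L$ a composition factor, using exactness of the sweet functor $E_i$ (Lemma~\ref{ibiad}) together with filtered colimits. Part (3) then places all such subquotients in $\mathcal M_{\lambda+\alpha_i}$.
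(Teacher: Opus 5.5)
Parts (1), (4) and (5) are fine. Your arguments for (2), (3) and (6), however, all rest on a false premise: that the parity of $a_L$, and the individual differences $\epsilon_{-i}(L)-\epsilon_i(L)$, can be recovered from $\mathcal O_L$ or from $\text{wt}^\imath_L$.

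Expanding Lemma~\ref{u-admiss} with $m_L(-u)=(-1)^{a_L}\prod_{j}(u+j)^{\epsilon_j(L)}$ gives
\[
\mathcal O_L(u)=\bigl(u-\tfrac12(-1)^{a_L}\bigr)\prod_{j\in I}(u-j)^{\epsilon_{-j}(L)-\epsilon_j(L)}=\prod_{j\in I}(u-j)^{\langle\alpha_j^\vee,\text{wt}_L\rangle}.
\]
Hence the leading term of $\mathcal O_L$ is always $u$: the sign $(-1)^{a_L}$ in front of $u$ is cancelled by $m_L(-u)/m_L(u)\to(-1)^{a_L}$. Your plan in (2) to read off $a_L\bmod 2$ from ``the sign of $u$'' therefore fails.

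In fact the parity is invisible altogether. Compare the data ($a$ even, differences $d_j$) with ($a$ odd, $d_{\frac12}+1$, $d_{-\frac12}-1$, all other $d_j$ unchanged). They give the same $\mathcal O$, because $(u+\tfrac12)\cdot\frac{u-\frac12}{u+\frac12}=u-\tfrac12$. They also give the same $\text{wt}$, and hence, by (1), the same $\text{wt}^\imath$. This has three consequences:
\begin{itemize}
\item[(a)] In (2), ``equal $\mathcal O$ gives equal $\epsilon$-differences'' is false.
\item[(b)] In (6), the parity of $\text{wt}^\imath_L$ cannot encode $a_L\bmod 2$; by Corollary~\ref{cor:even} it is always $0$.
\item[(c)] In (3), Lemma~\ref{choosesimple} only compares $\mathcal O$'s, so it cannot yield $a_K\equiv a_L+1$. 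This claim is even incompatible with your other claim there. For $i=\frac32$, a parity flip moves the boundary factor between $u\mp\frac12$, so $\epsilon_{-\frac12}-\epsilon_{\frac12}$ would shift by $0$ or $-2$, not by $\langle\alpha^\vee_{\frac12},\alpha_{\frac32}\rangle=-1$.
\end{itemize}
Also in (3), what Lemma~\ref{choosesimple} actually gives is $\text{wt}_K=\text{wt}_L+\alpha_i-\alpha_{-i}$, not $\text{wt}_L+\alpha_i$. These differ after pairing with $\alpha_j^\vee$; for example at $i=j=\frac12$ the pairings are $3$ versus $2$.

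The repair is to work only with the invariant combinations $\langle\alpha_j^\vee,\text{wt}_L\rangle$ and never with the parity.
\begin{itemize}
\item[(i)] The display above shows that $\mathcal O_L$ and $\text{wt}_L$ determine each other.
\item[(ii)] From Definition~\ref{expre} one checks, for either parity of $a_L$, that $\langle\alpha_i^\vee,\text{wt}_L\rangle+\langle\alpha_{-i}^\vee,\text{wt}_L\rangle=\delta_{i,\frac12}$ for $i\in I^\imath$. Together with (1) and nondegeneracy, $\text{wt}_L$ and $\text{wt}^\imath_L$ determine each other.
\item[(iii)] This gives (6) directly. Combined with your correct observation that central bubbles are constant on blocks, it also gives (2).
\item[(iv)] For (3), use $\text{wt}_K-\text{wt}_L=\alpha_i-\alpha_{-i}$ and (1). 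For $j\in I^\imath$ the $\delta_{j,\frac12}$ terms cancel, and, using $c_{j,-i}=c_{-j,i}$,
\[
\langle{}^\theta\alpha_j^\vee,\text{wt}^\imath_K-\text{wt}^\imath_L\rangle=\langle\alpha_j^\vee,\alpha_i\rangle-\langle\alpha_{-j}^\vee,\alpha_i\rangle=\langle{}^\theta\alpha_j^\vee,\alpha_i\rangle .
\]
No separate treatment of $i=\pm\frac12$ is needed.
\end{itemize}
This is exactly the paper's argument.
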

\begin{proof} Statement  (1) follows immediately from  Definition~\ref{expre}(1) and (2).
If $L$ and $K$ are in the same block, then  $\mathcal O_L(u)=\mathcal O_K(u)$, forcing   $\text{wt}_L=\text{wt}_K$ in $X$, and  
     $  {\text{wt}}^\imath_K= {\text{wt}}^\imath_L$ in $X_\imath$,  proving (2).
     %Since $\alpha_i=2\Lambda_i-\Lambda_{i-1}+\Lambda_{i+1}$.
 If $K$ is a subquotient of $E_iL$, by \eqref{www1} and Lemma~\ref{choosesimple},  we have
    $ \text{wt}_K=\text{wt}_L+\alpha_i-\alpha_{-i} $.
 Suppose  $j\in I^\imath$. By (1)  and \eqref{equa-pairing1},
we compute $$
    \langle {}^\theta\alpha_j^\vee,  {\text{wt}}^\imath_K-{\text{wt}}^\imath_L\rangle 
    %&= (\alpha_j^\vee, \text{wt}_K-\text{wt}_L)
  %  =(\alpha_j^\vee, \alpha_i-\alpha_{-i})
   %= (\alpha_j^\vee, \alpha_i)-(\alpha_j^\vee, \alpha_{-i})\\
   = \langle \alpha_j^\vee, \alpha_i\rangle -\langle \alpha_{-j}^\vee, \alpha_{i}\rangle 
   = \langle {}^\theta\alpha_j^\vee, \alpha_i\rangle .
%\end{aligned}
$$
This proves the first equality in (3). The second equality in (3) follows immediately from the fact that  $\alpha_i+\alpha_{-i}$ is $ 0$  in $X_\imath$. Finally,  (4) follows from (2), and   (5) follows from (3). 
 Note that for any simple object $L$ and any $i\in I^\imath$, Definition~\ref{expre}(1) implies that $$\langle \alpha_{-i}^\vee, \text{{wt}}_L\rangle+\langle \alpha_i^\vee, \text{{wt}}_L\rangle=\delta_{i, 1/2}.$$ Suppose ${\text{wt}}^\imath_L={\text{wt}}^\imath_K$. By  (1) and the preceding equation, we have  $\langle \alpha_i^\vee, \text{{wt}}_L\rangle=\langle \alpha_i^\vee, \text{{wt}}_K\rangle$ for any $i\in I$. Therefore,  $ {\text{wt}}_L= {\text{wt}}_K$.
\end{proof}

\subsection{Central algebras associated with $\mathcal M_\lambda$}
 By the basis theorem given in \cite{RS-cyc},  the endomorphism algebra of the unit object $\End_{\AB}(0)$ in the affine Brauer category is the  unital polynomial algebra  generated by all $\begin{tikzpicture}[baseline = 1.25mm]
  \draw[-] (0,0.4) to[out=180,in=90] (-.2,0.2);
  \draw[-] (0.2,0.2) to[out=90,in=0] (0,.4);
 \draw[-] (-.2,0.2) to[out=-90,in=180] (0,0);
  \draw[-] (0,0) to[out=0,in=-90] (0.2,0.2);
   \node at (0.2,0.2) {$\scriptstyle\bullet$};
   \node at (0.4,0.2) {$\scriptstyle{k}$}; %\draw[-,thick] (.6,.4) to (.6,-.2);
      %\node at (.9,0) {$\scriptstyle{\mathcal M_\lambda}$};
\end{tikzpicture}$, $k\in \mathbb N$. Throughout, when we talk about a finitely generated object $V$, we always assume that $V\neq 0$. Otherwise, there is nothing to prove. 

 \begin{Defn} For  any $\lambda\in X_{\imath}$ with $\mathcal M_\lambda\neq 0$, let $Z_\lambda$ be the image of $\End_{\AB}(0)$ in  $\End(\text{Id}_{\mathcal M_\lambda})$. %generated by all diagrams  
% $\begin{tikzpicture}[baseline = 1.25mm]
%   \draw[-] (0,0.4) to[out=180,in=90] (-.2,0.2);
%   \draw[-] (0.2,0.2) to[out=90,in=0] (0,.4);
%  \draw[-] (-.2,0.2) to[out=-90,in=180] (0,0);
%   \draw[-] (0,0) to[out=0,in=-90] (0.2,0.2);
%    \node at (0.2,0.2) {$\scriptstyle\bullet$};
%    \node at (0.4,0.2) {$\scriptstyle{k}$}; \draw[-,thick] (.6,.4) to (.6,-.2);
%       \node at (.9,0) {$\scriptstyle{\mathcal M_\lambda}$};
% \end{tikzpicture}\in \End(\text{Id}_{\mathcal M_\lambda})$ for all $k\in \mathbb N$, viewed as  natural transformations of the identity functor.
Define
\[
 J_\lambda
 :=
 \left\{
 b\in Z_\lambda
 \ \middle|\
 b\text{ acts as zero on every simple object }
 L\in\mathcal M_\lambda
 \right\}.
 \]
\end{Defn} 

 \begin{Lemma}\label{lem:localring}  For  any $\lambda\in X_{\imath}$ with $\mathcal M_\lambda\neq 0$,   $J_\lambda$ is a maximal ideal of $Z_\lambda$. Moreover, for every
 integer $s\geq 1$, the quotient
 \(
 Z_{\lambda,s}:=Z_\lambda/J_\lambda^s
 \)
 is a local $\Bbbk$-algebra with unique maximal ideal
 $J_\lambda/J_\lambda^s$.
 \end{Lemma}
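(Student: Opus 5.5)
The plan is to show that the map $\chi\colon Z_\lambda\to\Bbbk$, sending $b$ to the scalar by which it acts on a simple object of $\mathcal M_\lambda$, is a well-defined surjective algebra homomorphism with kernel $J_\lambda$. The key input is that $Z_\lambda$ is generated by dotted bubbles, and these act by scalars on every simple object. Their values are read off from the series $\mathcal O_L(u)$.

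\emph{Step 1: scalar action on simples.} $Z_\lambda$ is the image of $\End_{\AB}(0)$, which is a polynomial algebra generated by the dotted bubbles. Hence every $b\in Z_\lambda$ is a natural transformation of $\Id_{\mathcal M_\lambda}$. On a simple object $L\in\mathcal M_\lambda$, the endomorphism $b_L$ lies in $\End_{\mathcal M}(L)$, which is $\Bbbk$ because $\mathcal M$ is locally Schurian and $\Bbbk$ is algebraically closed. So each $b$ acts on $L$ by a scalar $\chi_L(b)$, and $\chi_L\colon Z_\lambda\to\Bbbk$ is a unital algebra homomorphism.

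\emph{Step 2: independence of $L$.} The generators of $Z_\lambda$ are the coefficients of $\mathcal O(u)$, up to the constant normalisation in \eqref{ou1}. So $\chi_L$ is determined by the series $\mathcal O_L(u)$. For simple objects $L,K\in\mathcal M_\lambda$ we have $\mathrm{wt}^\imath_L=\mathrm{wt}^\imath_K=\lambda$ by Lemma~\ref{BLOCK}(4). Lemma~\ref{BLOCK}(6) then gives $\mathcal O_L(u)=\mathcal O_K(u)$. Hence $\chi_L=\chi_K=:\chi$ for all simple $L$. Since $\mathcal M_\lambda\neq0$, it contains a nonzero object and therefore a simple subquotient, so $\chi$ is defined. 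By construction $J_\lambda=\ker\chi$. Because $\chi$ is unital, it is surjective onto $\Bbbk$, so $Z_\lambda/J_\lambda\cong\Bbbk$ is a field and $J_\lambda$ is a maximal ideal.

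\emph{Step 3: the quotients $Z_{\lambda,s}$.} The ideal $J_\lambda/J_\lambda^s$ is nilpotent in $Z_{\lambda,s}$, since its $s$-th power is zero. Let $\bar z\in Z_{\lambda,s}$ lie outside $J_\lambda/J_\lambda^s$. Write $z=c+j$ with $c=\chi(z)\neq0$ and $j\in J_\lambda$. Then $\bar z=c(1+c^{-1}\bar j)$ is invertible, with inverse given by the finite geometric series $c^{-1}\sum_{k<s}(-c^{-1}\bar j)^k$. So every element outside $J_\lambda/J_\lambda^s$ is a unit. Hence $Z_{\lambda,s}$ is local with unique maximal ideal $J_\lambda/J_\lambda^s$, and the residue field is $Z_{\lambda,s}/(J_\lambda/J_\lambda^s)\cong\Bbbk$. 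We also need $Z_{\lambda,s}\neq0$; this holds because $J_\lambda$ is proper.

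\emph{Main obstacle.} The only delicate point is Step 2: checking that all simple objects in one block give the same scalar on every bubble. This is exactly what Lemma~\ref{BLOCK}(6) provides, since all bubble values are coefficients of $\mathcal O_L(u)$. Everything else is formal commutative algebra.
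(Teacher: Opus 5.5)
Your proof is correct and follows the paper's argument: both obtain a well-defined surjective character $\chi_\lambda\colon Z_\lambda\to\Bbbk$ from the scalar action of bubbles on simples and Lemma~\ref{BLOCK}(6), and both identify $J_\lambda$ as its kernel. The only difference is the final step. The paper shows that any maximal ideal $\mathfrak m\supseteq J_\lambda^s$ must contain $J_\lambda$ because $\mathfrak m$ is prime, whereas you show directly, via a finite geometric series, that every element outside $J_\lambda/J_\lambda^s$ is a unit; these are equivalent standard arguments.
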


 \begin{proof}
 The algebra $Z_\lambda$ is commutative. By Lemma~\ref{BLOCK}(6), every
 $b\in Z_\lambda$ acts by a scalar on each simple object of
 $\mathcal M_\lambda$, and this scalar is independent of the choice of
 the simple object. Hence there is a well-defined surjective algebra
 homomorphism
 \[
 \chi_\lambda:Z_\lambda\longrightarrow\Bbbk
 \]
 which sends $b$ to this scalar.  Therefore, its kernel $J_\lambda$ is a maximal ideal of $Z_\lambda$.

 Let $s\geq 1$, and let $\mathfrak m/J_\lambda^s$ be a maximal ideal of
 $Z_\lambda/J_\lambda^s$. Then $\mathfrak m$ is a maximal ideal of
 $Z_\lambda$ containing $J_\lambda^s$. Since $\mathfrak m$ is prime,
 for every $b\in J_\lambda$, the inclusion
 \(
 b^s\in J_\lambda^s\subseteq\mathfrak m
 \)
 implies that $b\in\mathfrak m$. Hence
 $J_\lambda\subseteq\mathfrak m$. Since $J_\lambda$ is maximal, it
 follows that $\mathfrak m=J_\lambda$. Thus $Z_{\lambda,s}$ is local
 with unique maximal ideal $J_\lambda/J_\lambda^s$.
 \end{proof}

 \iffalse 
\begin{Lemma}

    For any $\lambda\in X_\imath$,  $Z_\lambda$ is a local $\Bbbk$-algebra.
\end{Lemma}

\begin{proof}
    The algebra  $Z_\lambda$ is commutative. By Lemma~\ref{BLOCK}(6), 
    \(
    \begin{tikzpicture}[baseline = 1.25mm]
      \draw[-] (0,0.4) to[out=180,in=90] (-.2,0.2);
      \draw[-] (0.2,0.2) to[out=90,in=0] (0,.4);
      \draw[-] (-.2,0.2) to[out=-90,in=180] (0,0);
      \draw[-] (0,0) to[out=0,in=-90] (0.2,0.2);
      \node at (0.2,0.2) {$\scriptstyle\bullet$};
      \node at (0.4,0.2) {$\scriptstyle{k}$};
      \draw[-,darkg,thick] (.6,.4) to (.6,-.2);
      \node at (.7,0) {$\darkg\scriptstyle{L}$};
    \end{tikzpicture}
    \)
    is a   scalar in $\Bbbk$ for any simple object $L$ in $\mathcal{M}_\lambda$, and this scalar is independent of the choice of $L$.
Define
    \[
        J_\lambda = \{ b \in Z_\lambda \mid b \text{ acts as zero on every simple object $L$ in $\mathcal M_\lambda$ } \}.
    \]
    It remains to prove  that $J_\lambda$ is the unique maximal ideal of $Z_\lambda$.

    First, $J_\lambda$ is a proper ideal since $1 \notin J_\lambda$. Now, take any  $b \notin J_\lambda$. Then $b$ acts as a nonzero scalar $c$ on some simple object $L \in \mathcal{M}_\lambda$, and hence on every simple object in $\mathcal{M}_\lambda$ (by the scalar independence mentioned above). By  \cite[Lemma 2.12]{BD-cate}, 
    $b$ is then a natural isomorphism, and therefore invertible in $Z_\lambda$. It follows that $J_\lambda$ consists precisely of the non-invertible elements of $Z_\lambda$, so it is the unique maximal ideal.
\end{proof}
\fi

\begin{Defn}\label{phiV}
    For any finitely generated object $V\in \mathcal M_\lambda$, define $Z_{\lambda,V}=\phi_V(Z_\lambda)$, where  
\begin{equation}\label{evalu-na}\phi_V: \End(\text{Id}_{\mathcal M_\lambda}) \rightarrow \End_{\mathcal M_\lambda}(V) \end{equation}
is  the algebra homomorphism given by evaluating a natural endomorphism $\eta$ of the identity functor $\text{Id}_{\mathcal M_\lambda}$ at the object $V$.
\end{Defn}

\begin{Lemma} \label{zlam-loc} Let \(V\) be a finitely generated object of \(\mathcal M_\lambda\). Then  $Z_{\lambda,V}$ is a local $\Bbbk$-algebra with unique
maximal ideal $J_{\lambda,V}:= \phi_V(J_\lambda)$.\end{Lemma}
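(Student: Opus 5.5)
The plan is to show that $J_{\lambda,V}$ is a nilpotent ideal of $Z_{\lambda,V}$ with quotient $\Bbbk$; that alone makes $Z_{\lambda,V}$ local.

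First, $Z_{\lambda,V}$ is a commutative quotient of $Z_\lambda$ via $\phi_V$, and $J_{\lambda,V}=\phi_V(J_\lambda)$ is an ideal of it. Since $V\neq 0$, the algebra $Z_{\lambda,V}$ contains $\mathrm{id}_V\neq 0$. The character $\chi_\lambda$ from Lemma~\ref{lem:localring} factors through $\phi_V$. Indeed, $V$ is finitely generated and nonzero, so by \cite[(L6)]{BD-cate} it has a simple quotient $L\in\mathcal M_\lambda$, and naturality shows that $\phi_V(b)$ acts on $L$ by the scalar $\chi_\lambda(b)$. Consequently, if $\phi_V(b)=0$ then $\chi_\lambda(b)=0$. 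This gives a surjection $Z_{\lambda,V}\to\Bbbk$ whose kernel is exactly $J_{\lambda,V}$, so $J_{\lambda,V}$ is a maximal ideal.

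Next, I will show that $J_{\lambda,V}$ is nilpotent. By Lemma~\ref{biadj} and \cite{BD-cate}, $\End_{\mathcal M}(V)$ is finite-dimensional, so $Z_{\lambda,V}$ is a finite-dimensional commutative algebra. Every element of $\End(V)$ that kills all composition factors acts nilpotently. The main obstacle is that a finitely generated $V$ in a locally Schurian category need not have finite length.

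To handle this, I will argue as follows. Take $b\in J_\lambda$ and set $\beta=\phi_V(b)$. The chain $\operatorname{im}\beta^n$ is a descending chain in $\End(V)$-images, and $\beta^n$ lies in the finite-dimensional algebra $\End(V)$. By Fitting's lemma, $V=V_0\oplus V_1$ with $\beta$ nilpotent on $V_0$ and invertible on $V_1$. Both summands are direct summands of $V$, hence finitely generated. If $V_1\neq 0$, it has a simple quotient $L'$, and $\beta$ induces an invertible map on $L'$; but $\beta$ acts by $0$ on $L'$, a contradiction. So $V_1=0$ and $\beta$ is nilpotent.

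Thus $J_{\lambda,V}$ consists of nilpotent elements. It is finitely generated, being a subspace of the finite-dimensional space $Z_{\lambda,V}$, and the algebra is commutative, so $J_{\lambda,V}$ is a nilpotent ideal. Any maximal ideal $\mathfrak m$ of $Z_{\lambda,V}$ is prime and therefore contains the nilpotent ideal $J_{\lambda,V}$. By maximality of $J_{\lambda,V}$, we get $\mathfrak m=J_{\lambda,V}$, so $Z_{\lambda,V}$ is local with unique maximal ideal $J_{\lambda,V}$.
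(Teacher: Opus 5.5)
Your proof is correct. It has the same overall shape as the paper's: a simple quotient of $V$ gives $\ker\phi_V\subseteq J_\lambda$, hence $Z_{\lambda,V}/J_{\lambda,V}\cong\Bbbk$, and then $J_{\lambda,V}$ is shown to be a nilpotent ideal.

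The difference is in how you prove that $a=\phi_V(b)$, $b\in J_\lambda$, is nilpotent.

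\textbf{The paper's route.} It notes that $1-cb$ acts as the identity on every simple object of $\mathcal M_\lambda$. It then invokes \cite[Lemma~2.12]{BD-cate} to conclude that $1-cb$ is a natural isomorphism, so $1-ca$ is invertible for every $c\in\Bbbk$. Finally it uses that $\End_{\mathcal M}(V)$ is finite-dimensional and $\Bbbk$ is algebraically closed: a nonzero eigenvalue $r$ of left multiplication by $a$ would make $1-r^{-1}a$ non-invertible.

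\textbf{Your route.} You take the Fitting decomposition $V=V_0\oplus V_1$ of $\beta=b_V$, which needs only that $\End_{\mathcal M}(V)$ is finite-dimensional and that idempotents split in the abelian category $\mathcal M$. You then rule out $V_1\neq 0$ as follows. Naturality of $b$ along $V_1\hookrightarrow V$ shows that $b_{V_1}$ is invertible. For a simple quotient $\pi\colon V_1\twoheadrightarrow L'$, the map $b_{L'}\circ\pi=\pi\circ b_{V_1}$ is then an epimorphism, contradicting $b_{L'}=0$.

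\textbf{Comparison.} Your argument is more self-contained. It avoids the external lemma about natural transformations that are isomorphisms on simples, and its nilpotence step does not use that $\Bbbk$ is algebraically closed. It also confronts directly the infinite-length issue you correctly flagged. The paper's argument is shorter once the citation is granted.

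One point you should state explicitly: $L'$ lies in $\mathcal M_\lambda$ because $\mathcal M_\lambda$ is a Serre subcategory containing $V_1$. This is what guarantees that $b$ acts on $L'$ by zero.
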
 
\begin{proof} 
Since $V$ is a nonzero finitely generated object, it has a simple
quotient. If $b\in\ker\phi_V$, then $b$ acts as zero on this simple
quotient. By Lemma~4.20(6), the common scalar by which $b$ acts on all
simple objects of $\mathcal M_\lambda$ is therefore zero. Hence
\(
\ker\phi_V\subseteq J_\lambda,
\)
and consequently
\[
Z_{\lambda,V}/J_{\lambda,V}
\cong Z_\lambda/J_\lambda
\cong\Bbbk.
\]
Thus $J_{\lambda,V}$ is a maximal ideal of $Z_{\lambda,V}$.

Moreover, every element of $J_{\lambda,V}$ is nilpotent. Indeed, let
$a\in J_{\lambda,V}$, and choose $b\in J_\lambda$ such that
$a=\phi_V(b)$. For every $c\in\Bbbk$, the natural transformation
\[
1-cb:\operatorname{Id}_{\mathcal M_\lambda}
\Longrightarrow \operatorname{Id}_{\mathcal M_\lambda}
\]
acts as the identity on every simple object of $\mathcal M_\lambda$.
Evaluating on $V$, we see that
\[
1-ca=\phi_V(1-cb) \in\operatorname{End}_{\mathcal M_\lambda}(V).
\]
Hence, by \cite[Lemma~2.12]{BD-cate}, $1-cb$ is a natural isomorphism, since $b$ acts on every simple object $L$ as zero. This implies that $1-ca$ is invertible for every $c\in\Bbbk$.

Since $\operatorname{End}_{\mathcal M_\lambda}(V)$ is finite-dimensional
and $\Bbbk$ is algebraically closed, this implies that $a$ is
nilpotent. Indeed, consider the linear operator on
$\operatorname{End}_{\mathcal M_\lambda}(V)$ given by left
multiplication by $a$. If $a$ were not nilpotent, this operator would
have a nonzero eigenvalue $r\in\Bbbk$. Then left multiplication by
$1-r^{-1}a$ would not be invertible, contradicting the invertibility
of $1-r^{-1}a$. Thus every element of $J_{\lambda,V}$ is nilpotent.
Since $Z_{\lambda, V}$ is finite-dimensional, the ideal $J_{\lambda, V}$  is nilpotent. 
Hence every
maximal ideal of the commutative algebra $Z_{\lambda,V}$ contains
$J_{\lambda,V}$. Since $J_{\lambda,V}$ is itself maximal, it is the
unique maximal ideal of $Z_{\lambda,V}$. Therefore
$Z_{\lambda,V}$ is local.
\end{proof}

For any $f\in Z_{\lambda}$, let $\bar f\in \Bbbk$ be the image of $f$ in $Z_\lambda/ J_\lambda$. Similarly, for any $g\in Z_{\lambda, V}$,  we write $\bar g$ for its image  in $\Bbbk$. 

\begin{Lemma}
\label{barmathcalO}
Let \(V\) be a finitely generated object of \(\mathcal M_\lambda\), and let
\(L\) be a simple object of \(\mathcal M_\lambda\). Then
\[
 \overline{\mathcal O}_V(u)
 =
 {\mathcal O}_L(u)
 =
 \left((-1)^{a_L}u-\frac{1}{2}\right)
 \frac{m_L(-u)}{m_L(u)}.
\]
\end{Lemma}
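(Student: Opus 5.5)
Since $\mathcal O_V(u)=\clock(u)$ evaluated on $V$, each coefficient of $\mathcal O_V(u)$ is the image of a dotted bubble in $Z_{\lambda,V}$. The bar map is therefore the reduction $Z_{\lambda,V}\to Z_{\lambda,V}/J_{\lambda,V}\cong\Bbbk$ of Lemma~\ref{zlam-loc}, applied coefficientwise. It is a ring homomorphism on $Z_{\lambda,V}((u^{-1}))$, so $\overline{\mathcal O}_V(u)\in\Bbbk((u^{-1}))$ is well defined. The plan is to compute each coefficient through a simple object in the block and then apply Lemma~\ref{u-admiss}.

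First, fix a coefficient $b_r\in Z_\lambda$ of $\clock(u)$, namely the dotted bubble with $r$ dots (or the constant coefficients $1$, $-\tfrac12$). By the proof of Lemma~\ref{lem:localring}, $b_r$ acts on every simple object of $\mathcal M_\lambda$ by the common scalar $\chi_\lambda(b_r)$, and $b_r-\chi_\lambda(b_r)\in J_\lambda$. Applying $\phi_V$ shows that $\phi_V(b_r)-\chi_\lambda(b_r)\in J_{\lambda,V}$. Hence $\overline{\phi_V(b_r)}=\chi_\lambda(b_r)$, which is exactly the coefficient of $u^{-r}$ in $\mathcal O_L(u)$ for any simple $L\in\mathcal M_\lambda$.

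Next, the scalar $\chi_\lambda(b_r)$ does not depend on the choice of $L$, by Lemma~\ref{BLOCK}(6): simples with the same $\mathrm{wt}^\imath$ have equal $\mathcal O_L(u)$. This gives $\overline{\mathcal O}_V(u)=\mathcal O_L(u)$ coefficient by coefficient. The second equality in the statement is then Lemma~\ref{u-admiss}. Such an $L$ exists because $V\neq0$ is finitely generated and so has a simple quotient in $\mathcal M_\lambda$.

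The only point needing care is that $\overline{\,\cdot\,}$ is compatible with $\phi_V$. Concretely, one must know that $\ker\phi_V\subseteq J_\lambda$, so that $Z_\lambda/J_\lambda\cong Z_{\lambda,V}/J_{\lambda,V}$ and the two bar maps agree. This was established in the proof of Lemma~\ref{zlam-loc}. Apart from this, the argument is a formal bookkeeping of scalars, and I expect no real obstacle.
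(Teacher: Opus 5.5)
Your proposal is correct and follows the paper's proof essentially step for step. You reduce each coefficient of $\clock(u)$ on $V$ modulo $J_{\lambda,V}$, using the common scalar on simples given by Lemma~\ref{BLOCK}(6), and then apply Lemma~\ref{u-admiss}. You also make explicit that $\ker\phi_V\subseteq J_\lambda$ (Lemma~\ref{zlam-loc}), which is what makes this reduction well defined; the paper uses this only implicitly.
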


\begin{proof}
The dotted bubbles define endomorphisms of the identity functor on
\(\mathcal M\), and hence act as central elements on every object of
\(\mathcal M\). Therefore their actions are compatible with subobjects,
quotients, and subquotients.

Let $z$ be any coefficient of the dotted-bubble series
$\clock(u)$. By Lemma~\ref{BLOCK}(6), $z$ acts on every simple object of
$\mathcal M_\lambda$ by the same scalar, say $\chi_\lambda(z)$.
Hence
\(
z-\chi_\lambda(z)1\in J_\lambda\),
and therefore
\(
z_V-\chi_\lambda(z)1_V\in J_{\lambda,V}\).
It follows that the image of $z_V$ in
$Z_{\lambda,V}/J_{\lambda,V}$ is equal to its scalar action on any
simple object $L\in\mathcal M_\lambda$. Applying this coefficientwise
to $\mathcal O_V(u)$ gives
\(
\overline{\mathcal  O_V(u)}=\mathcal O_L(u)
\).
Combining this with  Lemma~\ref{u-admiss} gives  the desired formula.
\end{proof}

\iffalse 
Since \(V\) is finitely generated and belongs to 
\(\mathcal M_\lambda\), all its simple subquotients lie in
\(\mathcal M_\lambda\). Moreover, by {Lemmas~\ref{BLOCK}(6) and ~\ref{u-admiss}, the  dotted bubbles have the same scalar action on all simple objects of $\mathcal M_\lambda$}. Hence the action of the dotted bubble on \(V\)
is determined, modulo  nilpotent endomorphisms, by its action on any simple constituent \(L\) in $\mathcal M_\lambda$.
Thus $
\overline{\mathcal O}_V(u)=\mathcal O_L(u)$.\fi

Throughout the paper, unless otherwise stated, we use the following notation:
\begin{equation}\label{mini-1}
\begin{cases}
m_V(u)=\displaystyle\prod_{i\in\Bbbk}(u-i)^{\epsilon_i(V)}, 
\qquad 
m_{V,i}(u)=\displaystyle\prod_{j\neq i}(u-j)^{\epsilon_j(V)},\\[0.8em]
\bar n_V(u)=\displaystyle\prod_{i\in\Bbbk}(u-i)^{\phi_i(V)}, 
\qquad 
\bar n_{V,i}(u)=\displaystyle\prod_{j\neq i}(u-j)^{\phi_j(V)}.
\end{cases}
\end{equation}
Here $i,j\in\Bbbk$. Note that $\epsilon_i(V)=0$ for all $i\notin I$, so only the factors indexed by elements of $I$ contribute to the above products.

\begin{Cor} \label{cor:phievi} 
Suppose that $L$ is a simple constituent of a finitely generated object  $V\in\mathcal M_\lambda$.  % $ \phi_i(V)-\epsilon_i(V)=$
For any $i\in I^\imath$,   we  have 
\begin{equation}
\label{equ:phiminusepslon} \epsilon_{-i}(L)-\epsilon_i(L)+\delta_{i,\frac{1}{2}(-1)^{a_L}}
  = \phi_i(V)-\epsilon_i(V)= \begin{cases} \langle ^\theta\alpha_i^\vee,\lambda \rangle, & \text{ if } i\neq  \frac{1}{2},\\
    \langle ^\theta\alpha_i^\vee,\lambda \rangle+1, & \text{ if } i=  \frac{1}{2}.
     \end{cases}
\end{equation}
%这里是因为新的seting 下 如果圈的关系式对应的话 就需要这个式子）
    \end{Cor}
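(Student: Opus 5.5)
The plan is to read both equalities off the factorization of the reduced polynomial $\bar n_V(u)$. Here $\bar n_V(u)$ is the image of $n_V(u)=\mathcal O_V(u)m_V(u)$ under coefficientwise reduction modulo $J_{\lambda,V}$. This reduction makes sense because, by Lemma~\ref{lem:bublleyimeszeropoly}(1), $n_V(u)$ is a polynomial with coefficients in the algebra generated by dotted bubbles evaluated on $V$. That algebra lies in $Z_{\lambda,V}$, and Lemma~\ref{zlam-loc} identifies $Z_{\lambda,V}/J_{\lambda,V}$ with $\Bbbk$. Since $m_V(u)$ already has scalar coefficients, Lemma~\ref{barmathcalO} gives
\[
\bar n_V(u)=\overline{\mathcal O}_V(u)\,m_V(u)=\Bigl((-1)^{a_L}u-\tfrac12\Bigr)\,\frac{m_L(-u)}{m_L(u)}\,m_V(u).
\]

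Next I will check that $m_L(u)$ divides $m_V(u)$. Because $E$ is exact and $L$ is a subquotient of $V$, the object $EL$ is a subquotient of $EV$, and the dot acts compatibly on the two. So any polynomial annihilating the dot on $EV$ also annihilates it on $EL$, which gives $m_L\mid m_V$. Hence $\bar n_V(u)$ is an honest product of linear factors. Taking the multiplicity of the root $i$ in each factor:
\begin{itemize}
\item the linear factor contributes $\delta_{i,\frac12(-1)^{a_L}}$;
\item $m_L(-u)$ contributes $\epsilon_{-i}(L)$;
\item $m_V(u)/m_L(u)$ contributes $\epsilon_i(V)-\epsilon_i(L)$.
\end{itemize}
Therefore $\phi_i(V)=\epsilon_{-i}(L)-\epsilon_i(L)+\delta_{i,\frac12(-1)^{a_L}}+\epsilon_i(V)$, which is the first equality. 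This argument holds for every $i$, in particular for $i\in I^\imath$.

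For the second equality, I first note that $\lambda=\mathrm{wt}^\imath_L$, by the definition of $\mathcal M_\lambda$ in Lemma~\ref{BLOCK}(4). Lemma~\ref{BLOCK}(1) then gives $\langle{}^\theta\alpha_i^\vee,\lambda\rangle=\langle\alpha_i^\vee,\mathrm{wt}_L\rangle-\delta_{i,\frac12}$. Pairing Definition~\ref{expre}(1) with $\alpha_i^\vee$ and using $\langle\alpha_i^\vee,\Lambda_j\rangle=\delta_{ij}$ yields
\[
\langle\alpha_i^\vee,\mathrm{wt}_L\rangle=\delta_{i,\frac12(-1)^{a_L}}+\epsilon_{-i}(L)-\epsilon_i(L).
\]
Substituting gives $\langle{}^\theta\alpha_i^\vee,\lambda\rangle=\phi_i(V)-\epsilon_i(V)-\delta_{i,\frac12}$, which is exactly the stated case split: no correction for $i\ne\frac12$, and $+1$ for $i=\frac12$.

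I expect the only delicate point to be the reduction step. One must justify that reducing $\mathcal O_V(u)m_V(u)$ coefficientwise modulo $J_{\lambda,V}$ is multiplicative and agrees with the scalar series $\mathcal O_L(u)$. Both facts follow because reduction is a ring homomorphism $Z_{\lambda,V}\to\Bbbk$, combined with Lemma~\ref{barmathcalO}. Once that is settled, the remaining steps are bookkeeping of root multiplicities.
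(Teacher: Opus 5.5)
Your proposal is correct and follows the paper's proof. The paper likewise gets $\bar n_V(u)/m_V(u)=\mathcal O_L(u)$ from Lemmas~\ref{lem:bublleyimeszeropoly} and~\ref{barmathcalO}, reads off the multiplicity of $u-i$ for the first equality, and obtains the second from Definition~\ref{expre} and Lemma~\ref{BLOCK}, a computation it omits and you have written out. Your extra check that $m_L\mid m_V$ is harmless but unnecessary, since orders of vanishing at $i$ are additive for rational functions.
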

\begin{proof} It follows from Lemma~\ref{lem:bublleyimeszeropoly} and 
 Lemma \ref{barmathcalO} that\begin{equation}\label{eat}
    \frac{\bar n_{V}(u)}{m_V(u)}= \mathcal O_L(u) =  ((-1)^{a_L}u-\frac{1}{2}) \frac{m_L(-u)}{m_L(u)}.  
 \end{equation}   
This implies  the first equality  in~\eqref{equ:phiminusepslon}. The second equality in~\eqref{equ:phiminusepslon}  follows by a direct  computation  from Definition \ref{expre} and the definition of    $\mathcal M_\lambda$  in Lemma \ref{BLOCK}(4). We omit the details. 
 \end{proof}

 \begin{Cor}
 \label{cor:even}
 Every weight $\lambda$ occurring in  $ M$  has  parity $0$. 
 %For each $\mathcal M_\lambda$,  the parity of $\lambda$ is zero.  %   
 %$\bar \lambda$ must  be  even, where  $\bar \lambda =.    
 \end{Cor}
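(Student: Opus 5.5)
The plan is to compute the parity of $\lambda$ directly from the explicit formula for $\mathrm{wt}^\imath_L$. By Lemma~\ref{BLOCK}(4), every weight $\lambda$ occurring in $\mathcal M$ is of the form $\lambda=\mathrm{wt}^\imath_L$ for some simple object $L\in\mathcal M_\lambda$. So it suffices to show that
\[
S_L:=\sum_{i\in I^\imath}\langle {}^\theta\alpha_i^\vee,\mathrm{wt}^\imath_L\rangle
\]
is even for every simple object $L$. Only finitely many terms are nonzero, since $m_L(u)$ has finitely many roots.

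\emph{Step 1: rewrite each pairing.} Use Lemma~\ref{BLOCK}(1), or equivalently the first equality in Corollary~\ref{cor:phievi}. Together with Definition~\ref{expre}(1) and $\langle\alpha_i^\vee,\Lambda_j\rangle=\delta_{ij}$, this gives
\[
\langle {}^\theta\alpha_i^\vee,\mathrm{wt}^\imath_L\rangle
=\epsilon_{-i}(L)-\epsilon_i(L)+\delta_{i,\frac12(-1)^{a_L}}-\delta_{i,\frac12}
\]
for all $i\in I^\imath$. Summing over $i\in I^\imath=\frac12+\mathbb N$ then yields
\[
S_L=\sum_{i\in I^\imath}\bigl(\epsilon_{-i}(L)-\epsilon_i(L)\bigr)+\delta_{a_L\ \mathrm{even}}-1 .
\]
Here the term $\delta_{i,\frac12(-1)^{a_L}}$ contributes $1$ exactly when $(-1)^{a_L}=1$, since $-\frac12\notin I^\imath$.

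\emph{Step 2: reduce modulo $2$.} We have $\epsilon_{-i}-\epsilon_i\equiv\epsilon_{-i}+\epsilon_i \pmod 2$. Under the standing assumption $J=I=\frac12+\mathbb Z$, all roots of $m_L(u)$ lie in $I=I^\imath\sqcup(-I^\imath)$, and $0\notin I$. Hence
\[
\sum_{i\in I^\imath}\bigl(\epsilon_{-i}(L)+\epsilon_i(L)\bigr)=\sum_{j\in I}\epsilon_j(L)=\deg m_L(u)=a_L .
\]
The degenerate case $EL=0$ is consistent with this, since there $m_L=1$ and $a_L=0$. Moreover, $\delta_{a_L\ \mathrm{even}}\equiv 1-a_L\pmod 2$. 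Therefore
\[
S_L\equiv a_L+(1-a_L)-1=0\pmod 2,
\]
so $\lambda$ has parity $0$ in the sense of Definition~\ref{parity}.

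\emph{Main obstacle.} The only delicate point is the bookkeeping at the boundary color $\frac12$. One must correctly account for the shift $-\delta_{i,\frac12}$ from Lemma~\ref{BLOCK}(1) and for the sign-dependent fundamental weight $\Lambda_{\frac12(-1)^{a_L}}$. These two corrections are what make the total parity vanish, rather than equal $a_L$. It is also essential that $0\notin J$: this is what identifies the full degree $a_L$ with the sum over $I^\imath\sqcup(-I^\imath)$. That would fail in the integral case, which is consistent with the paper treating that case separately.
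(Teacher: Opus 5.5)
Your argument is correct and is essentially the paper's proof. The paper sums the identity \eqref{equ:phiminusepslon} (i.e.\ Lemma~\ref{BLOCK}(1) combined with Definition~\ref{expre}) over $i\in I^\imath$, and uses $a_L=\sum_{i\in I^\imath}(\epsilon_i(L)+\epsilon_{-i}(L))$ to see that the total is even whether $a_L$ is even or odd; your version only spells out the boundary bookkeeping at $\frac12$ more explicitly.
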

 \begin{proof} For any  simple object  $L\in \mathcal M_\lambda$,  we have 
  $a_L=\sum _{i\in I^\imath}(\epsilon_i(L)+\epsilon_{-i}(L))$. By \eqref{equ:phiminusepslon}, $\sum_{i\in I^\imath} \langle ^\theta\alpha_i^\vee,\lambda \rangle$ is always even, regardless of  whether $a_L$ is even or odd. This forces  the parity of $\lambda$ to be zero. \end{proof}
  
\begin{Lemma}
\label{lem:moniclift}
    \cite[Corollary 2.3]{BSW-heisenberg} Let $Z$ be a  commutative local $\Bbbk$-algebra with unique maximal ideal $J$, and assume that $J^t=0$ for some $t$.  Suppose $f(u)\in Z[u]$ is monic and $\bar f(u)=\bar g(u)\bar h(u)$ for  coprime monic polynomials $\bar g(u)$, $\bar h(u)\in \Bbbk[u]$. Then, there exist unique monic lifts  $ g(u), h(u)\in Z[u]$ of $\bar g(u)$, $\bar h(u)$ such that $g(u)$ and $h(u)$ are coprime and  $f(u)=g(u)h(u)$. 
\end{Lemma}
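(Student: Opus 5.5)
This is a Hensel-type lifting statement: we lift a coprime factorization of $\bar f$ to a factorization of $f$ over $Z$, and we plan to argue by induction on the nilpotency index $t$ of $J$. The base case $t=1$ is trivial, since then $J=0$ and $Z=\Bbbk$, so $g=\bar g$ and $h=\bar h$. The only input we need beyond elementary algebra is the coprimality of $\bar g$ and $\bar h$ in the principal ideal domain $\Bbbk[u]$. It gives Bézout polynomials $\bar a,\bar b\in\Bbbk[u]$ with $\bar a\bar g+\bar b\bar h=1$. It also gives the uniqueness of division: for any polynomial $\bar r$ with $\deg\bar r<\deg\bar g+\deg\bar h$, there are unique $\bar p,\bar q$ with $\deg\bar p<\deg\bar h$, $\deg\bar q<\deg\bar g$ and $\bar p\bar g+\bar q\bar h=\bar r$.

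For existence, we first use the inductive hypothesis on $Z/J^{t-1}$, which is local with maximal ideal $J/J^{t-1}$ of index $t-1$. This produces monic $g_0,h_0\in Z[u]$ lifting $\bar g,\bar h$ with $f-g_0h_0\in J^{t-1}[u]$; we lift coefficients arbitrarily but keep leading coefficients equal to $1$. Since $f$ and $g_0h_0$ are both monic of the same degree, the error $r:=f-g_0h_0$ has degree $<\deg g_0+\deg h_0$.

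Because $J\cdot J^{t-1}=0$, the module $J^{t-1}$ is a $\Bbbk=Z/J$-vector space. Hence $J^{t-1}[u]\cong J^{t-1}\otimes_\Bbbk\Bbbk[u]$, and on it multiplication by $g_0$ (resp. $h_0$) agrees with multiplication by $\bar g$ (resp. $\bar h$). We therefore apply the division-uniqueness statement above coefficientwise, with scalars in $J^{t-1}$. This gives $p,q\in J^{t-1}[u]$ with $\deg p<\deg h$, $\deg q<\deg g$ and $\bar g p+\bar h q=r$. Setting $g=g_0+q$ and $h=h_0+p$, both are monic, and
\[
gh=g_0h_0+g_0p+h_0q+qp=g_0h_0+r=f,
\]
using $qp\in J^{2t-2}=0$ for $t\ge2$.

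For coprimality in $Z[u]$, which we interpret as the existence of $a,b\in Z[u]$ with $ag+bh=1$, we lift $\bar a,\bar b$ to $a_0,b_0$. Then $a_0g+b_0h=1-n$ with $n\in J[u]$, and $n$ is nilpotent because $J^t=0$. Thus $1-n$ is a unit of $Z[u]$, and multiplying $a_0,b_0$ by $(1-n)^{-1}$ gives the required $a,b$.

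Uniqueness is the main obstacle. Suppose $f=gh=g'h'$, where $g',h'$ are monic lifts and $g',h'$ are coprime. We aim to show $g\mid g'$. From $gh=g'h'$ and $ag+bh=1$ we get
\[
g'=g'(ag+bh)=g'ag+bg'h.
\]
We plan to use that $g$ divides $g'h'$ and then argue with monic division. Since $g$ is monic, division by $g$ is defined over $Z$. Writing $g'=sg+\rho$ with $\deg\rho<\deg g$, we get $g\mid \rho h'$. Reducing modulo $J$ gives $\bar g\mid \bar\rho\bar h$, so $\bar g\mid\bar\rho$ and hence $\bar\rho=0$, i.e. $\rho\in J[u]$.

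We then iterate in the $J$-adic filtration. If $\rho\in J^k[u]$, we write $\rho h'=gm$. Since $g$ is monic, the quotient $m$ also lies in $J^k[u]$, and we can read this modulo $J^{k+1}$ as an identity in $(J^k/J^{k+1})\otimes_\Bbbk\Bbbk[u]$. There coprimality of $\bar g,\bar h$ together with the degree bound $\deg\rho<\deg g$ forces $\rho\in J^{k+1}[u]$. After $t$ steps $\rho=0$, so $g\mid g'$. Both are monic of the same degree, so $g=g'$, and cancelling the monic $g$ in $gh=gh'$ gives $h=h'$.

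The point to check carefully is exactly this: that division by a monic polynomial preserves the filtration $J^k[u]$, so the graded pieces behave as $\Bbbk[u]$-modules.
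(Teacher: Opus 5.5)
Your proof is correct, and it takes a different route from the paper only in the sense that the paper gives no argument of its own here. It invokes \cite[Corollary~2.3]{BSW-heisenberg}, which is stated there for finite-dimensional $Z$, and adds a one-line remark that the cited proof uses only nilpotence of $J$. Your argument establishes that remark directly. You induct on the nilpotency index and solve $g_0p+h_0q=r$ inside $J^{t-1}\otimes_\Bbbk\Bbbk[u]$, using the isomorphism $\Bbbk[u]_{<\deg\bar h}\oplus\Bbbk[u]_{<\deg\bar g}\to\Bbbk[u]_{<\deg\bar g+\deg\bar h}$. You then lift a B\'ezout identity and invert $1-n$. This makes explicit what is used: $J^t=0$, coprimality of $\bar g,\bar h$ in $\Bbbk[u]$, and $Z/J=\Bbbk$ (which the statement tacitly assumes by placing $\bar f$ in $\Bbbk[u]$). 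Your uniqueness argument also shows that coprimality of the competing lifts $g',h'$ is not needed. The citation buys brevity; your version buys a self-contained check of the weakened hypothesis.

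Two small points on uniqueness. First, the displayed identity $g'=g'ag+bg'h$ is never used and can be deleted. Second, your $J$-adic filtration step is correct, since division by a monic polynomial does preserve $J^k[u]$, but it can be replaced by one line. Your unit argument applies to \emph{any} monic lifts of $\bar g,\bar h$, so $g$ and $h'$ also generate the unit ideal, say $a'g+b'h'=1$. Then $g'=a'gg'+b'g'h'=a'gg'+b'gh=g(a'g'+b'h)$, so $g\mid g'$, and your monic degree comparison finishes the proof.
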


The proof of \cite[Corollary 2.3]{BSW-heisenberg}  applies verbatim to
this formulation; it uses only the nilpotence of $J$, not the finite-dimensionality of $Z$.
As a standard consequence,  we record the following elementary fact, which will be used frequently throughout the paper.
Let $f_1(u), f_2(u)\in Z_{\lambda,V}[u]$ be  monic polynomials. Assume that $\bar f_1(u)$ and   $\bar f_2(u)$ are coprime. Then, the ideal generated by $f_1(u)$ and $f_2(u)$ is $Z_{\lambda, V}[u]$. %后面用到的f_2的形式不一定属于\Bbbk[u]
Thus, 
there exist polynomials $g(u),h(u)\in Z_{\lambda,V}[u]$ such that
\begin{equation}\label{key-coprim}
g(u)f_1(u)+h(u)f_2(u)=1.
\end{equation}

\begin{Cor}\label{lift-n}
Let $\lambda\in X^\imath$, let $V$ be a finitely generated
object of $\mathcal M_\lambda$, and let $i\in\Bbbk$. There exist
unique monic lifts
\(
n_{V,i}(u),t_{V,i}(u)\in Z_{\lambda,V}[u]
\)
of $\overline n_{V,i}(u)$ and $(u-i)^{\phi_i(V)}$, respectively,
such that $n_{V,i}(u)$ and $t_{V,i}(u)$ are coprime and
\(
n_V(u)=n_{V,i}(u)t_{V,i}(u).
\)
\end{Cor}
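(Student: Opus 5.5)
The plan is to apply Lemma~\ref{lem:moniclift} directly with $Z=Z_{\lambda,V}$, $J=J_{\lambda,V}$, $f(u)=n_V(u)$, $\bar g(u)=\bar n_{V,i}(u)$ and $\bar h(u)=(u-i)^{\phi_i(V)}$. Three hypotheses have to be checked, and each is short given the earlier results.

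\emph{The ring.} By Lemma~\ref{zlam-loc}, $Z_{\lambda,V}$ is a commutative local $\Bbbk$-algebra with unique maximal ideal $J_{\lambda,V}$. Its proof shows that $J_{\lambda,V}$ consists of nilpotent elements. Since $Z_{\lambda,V}\subseteq \End_{\mathcal M}(V)$ is finite-dimensional by Lemma~\ref{biadj}, this nil ideal is nilpotent, so $J_{\lambda,V}^t=0$ for some $t$.

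\emph{The polynomial.} By Lemma~\ref{lem:bublleyimeszeropoly}(1),(2), the product $n_V(u)=\mathcal O_V(u)m_V(u)$ is a polynomial with coefficients in the subalgebra generated by dotted bubbles evaluated on $V$, hence in $Z_{\lambda,V}$. For monicity, note that by \eqref{ou1} the series $\mathcal O_V(u)$ has leading term $u$ with coefficient $1$ (the dotted bubbles enter only in degrees $\le 0$, or with the fixed sign convention coming from the $(-1)^{a_L}$ normalization). Since $m_V(u)$ is monic, $n_V(u)$ is monic up to this fixed sign. I would check this leading coefficient against Lemma~\ref{barmathcalO}: reducing modulo $J_{\lambda,V}$ gives
\[
\bar n_V(u)=\Bigl((-1)^{a_L}u-\tfrac12\Bigr)\frac{m_L(-u)}{m_L(u)}\,m_V(u),
\]
which by \eqref{eat} and the notation \eqref{mini-1} equals $\prod_{j}(u-j)^{\phi_j(V)}$ and is in particular monic. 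Hence $n_V(u)$ is monic.

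\emph{The factorization.} By \eqref{mini-1}, $\bar n_V(u)=\bar n_{V,i}(u)\,(u-i)^{\phi_i(V)}$. The two factors are monic and coprime, because $\bar n_{V,i}(u)$ omits the root $i$ and $\Bbbk$ is algebraically closed.

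With these checks in place, Lemma~\ref{lem:moniclift} yields unique coprime monic lifts $n_{V,i}(u)$ and $t_{V,i}(u)$ with $n_V(u)=n_{V,i}(u)t_{V,i}(u)$, which is exactly the statement.

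The only step needing care is the monicity of $n_V$, in particular reconciling the normalization of $\mathcal O_V(u)$ with the leading term of $\bar n_V$. I expect to settle it by comparing leading coefficients after reduction modulo $J_{\lambda,V}$: the leading coefficient of $n_V$ is a scalar, since it comes from the $u-\frac12$ part of \eqref{ou1}. Therefore it equals its reduction, which is $1$ by \eqref{eat}.
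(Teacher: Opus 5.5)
Your proposal is correct and follows the paper's proof, which cites Lemma~\ref{zlam-loc} (a finite-dimensional local algebra with nilpotent maximal ideal) and then applies Lemma~\ref{lem:moniclift}; your explicit checks of monicity and coprimality fill in hypotheses that the paper leaves implicit. The sign hedge is unnecessary: by \eqref{ou1} the coefficient of $u$ in $\mathcal O_V(u)$ is exactly $1$, so $n_V(u)=\mathcal O_V(u)m_V(u)$ is monic directly, without reducing modulo $J_{\lambda,V}$.
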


\begin{proof}
By Lemma~\ref{zlam-loc},  $Z_{\lambda,V}$ is a finite-dimensional local
$\Bbbk$-algebra whose maximal ideal is nilpotent. Hence the result is
a special case of Lemma~\ref{lem:moniclift}.\end{proof}

\subsection{Natural transformations}

\begin{Lemma}\label{tvi}
Fix \(i\in I\). Define \(\eta:E_i\Rightarrow E_i\) which is determined as follows. For every
finitely generated object \(V\in\mathcal M_\lambda\), let
\(\eta_V:E_iV\to E_iV\) be given diagrammatically by
\[
\eta_V
=
\begin{tikzpicture}[baseline=-1mm, scale=0.8, transform shape]
  \draw[-] (0.08,-0.3) -- (0.08,0.4);
  \node at (0.08,-0.47) {\scriptsize \(i\)};
  \node at (0.08,0.15) {\scriptsize \(\bullet\)};
  \node at (-0.55,0.15) {\scriptsize \(\dfrac{m_{V,i}(x)}{n_{V,i}(x)}\)};
\end{tikzpicture},
\]
where \(m_{V,i}(u)\) is defined as in \eqref{mini-1}, and
\(n_{V,i}(u)\) is defined as in Corollary~\ref{lift-n}. Then \(\eta\) is a
natural transformation.
\end{Lemma}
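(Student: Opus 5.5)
First I will check that $\eta_V$ is well defined, and then that the family is natural. For well-definedness, fix a finitely generated $V\in\mathcal M_\lambda$. By Lemma~\ref{zlam-loc}, $Z_{\lambda,V}$ is local with nilpotent maximal ideal $J_{\lambda,V}$. By Corollary~\ref{lift-n}, $n_{V,i}(u)$ is a monic lift of $\bar n_{V,i}(u)=\prod_{j\neq i}(u-j)^{\phi_j(V)}$, so $\bar n_{V,i}(i)\neq 0$. The dot $x$ acts on $E_iV$ as $i$ plus a nilpotent operator, and the coefficients of $n_{V,i}$ act through $Z_{\lambda,V}$, which commutes with $x$. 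Hence $n_{V,i}(x)$ acting on $E_iV$ is a nonzero scalar plus a nilpotent element of the commutative subalgebra generated by $x|_{E_iV}$ and $Z_{\lambda,V}$, and it is therefore invertible. This subalgebra is finite-dimensional by Lemma~\ref{biadj}(2). Consequently $m_{V,i}(x)n_{V,i}(x)^{-1}$ is a well-defined endomorphism of $E_iV$, and it commutes with the dot.

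Next comes naturality on finitely generated objects. Let $f:V\to W$ be a morphism between finitely generated objects of $\mathcal M_\lambda$; morphisms between different blocks vanish. The dot and the bubble coefficients are natural, so $E_i f$ intertwines $x$ and each central coefficient. The difficulty is that $m_{V,i}$, $n_{V,i}$ depend on $V$, so the operators on the two sides are a priori given by different polynomials. To handle this, I will reduce to comparing each of $V,W$ with $V\oplus W$ (or with the image of $f$). Concretely, it suffices to show that for $V\subseteq U$ or $U\twoheadrightarrow V$, say $U=V\oplus W$, the operator $\eta_U$ restricts to $\eta_V$. On $E_iV$ the operator $\prod_{j\ne i}(x-j)^{k}$ is invertible for every $k$, so $m_{V,i}(x)$ and $m_{U,i}(x)$ differ on $E_iV$ by an invertible factor $\prod_{j\neq i}(x-j)^{\epsilon_j(U)-\epsilon_j(V)}$. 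The key identity to prove is
\[
\frac{m_{U,i}(x)}{n_{U,i}(x)}\Big|_{E_iV}=\frac{m_{V,i}(x)}{n_{V,i}(x)}.
\]
For this I will use that $\mathcal O_V(u)=n_V(u)/m_V(u)$, which follows from Lemma~\ref{lem:bublleyimeszeropoly}, and that $\mathcal O_V(u)$ is the restriction of $\mathcal O_U(u)$, since bubbles are central and natural. Thus $n_U(u)m_V(u)=n_V(u)m_U(u)$ holds in $Z_{\lambda,V}[u]$ after restriction. Factoring both sides with the unique coprime monic factorization of Lemma~\ref{lem:moniclift}, separating the $(u-i)$-part from the rest, gives
\[
n_{U,i}(u)\,m_{V,i}(u)=n_{V,i}(u)\,m_{U,i}(u),
\]
after restriction to $V$. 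Substituting $x$ on $E_iV$, where both $n$-factors are invertible, yields the displayed identity. Naturality for $f$ then follows from the naturality of $x$ and of the central elements.

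Finally, I will extend to arbitrary objects. For arbitrary $V=\varinjlim V_a$, I define $\eta_V=\varinjlim \eta_{V_a}$; the compatibility just established makes the transition maps commute with the $\eta_{V_a}$. By Remark~\ref{nateq} and Definition~\ref{subf}, this determines a natural transformation $E_i\Rightarrow E_i$, which is unique.

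The main obstacle is the uniqueness argument in the second step. The factorization must be carried out in a common local ring: I will pull back to $Z_{\lambda,U}$ and push to $Z_{\lambda,V}$, using that $\phi_V$ factors through $\phi_U$ when $V$ is a summand, subobject or quotient of $U$. The degrees also differ, since $\phi_i(U)$ may differ from $\phi_i(V)$. To handle this, I will first rewrite the identity as $n_U(u)/m_U(u)=n_V(u)/m_V(u)$ as rational functions, clear the $(u-i)$-powers, and then apply the uniqueness part of Lemma~\ref{lem:moniclift} to the monic coprime factors. This is where I expect the real care to be needed.
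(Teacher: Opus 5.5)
Your proof is correct. It rests on the same key input as the paper's: the identity $\mathcal O_V(u)=n_V(u)/m_V(u)$ combined with the uniqueness of coprime monic lifts in Lemma~\ref{lem:moniclift}. You organize the naturality step differently, though.

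\textbf{The paper's route.} It never forms $V\oplus W$. It works in the common quotient $Z_{\lambda,s}=Z_\lambda/J_\lambda^s$, which is local by Lemma~\ref{lem:localring} and surjects onto both $Z_{\lambda,V}$ and $Z_{\lambda,W}$. It uses the common numerator $p(u)=m_V(u)m_W(u)$ and factors the image of $q(u)$ there once, where $q(u)$ is the polynomial part of the bubble series times $p(u)$. It then lifts the non-$i$ factor to some $\widehat q_i(u)\in Z_\lambda[u]$. As a result, $\eta_V$ and $\eta_W$ are given by the single expression $p_i(x)\widehat q_i(x)^{-1}$, whose coefficients are natural transformations, so naturality is immediate.

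\textbf{Your route.} You first prove $\eta_{V\oplus W}|_{E_iV}=\eta_V$ by pushing the factorization along the restriction $\rho\colon Z_{\lambda,V\oplus W}\to Z_{\lambda,V}$. You then deduce naturality because $\eta_{V\oplus W}$, being a polynomial in $x$ and central elements of $\operatorname{End}(V\oplus W)$, commutes with $E_i(\iota_W f\pi_V)$. State this endomorphism trick explicitly: applied to $f$ itself the argument fails, because $n_{V,i}$ and $n_{W,i}$ are different polynomials over different rings.

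\textbf{What each buys.} Your version needs only Lemma~\ref{zlam-loc} for the single object $V\oplus W$, avoids the lift to $Z_\lambda[u]$, and gives as a by-product that $\eta$ is compatible with summands, subobjects and quotients. The paper's version produces one formula over $Z_\lambda$ that works simultaneously for any finite family of objects.

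The degree issue you flag resolves cleanly; no cancellation of $m_V$ is needed. Apply uniqueness directly to the monic polynomial $\rho(n_U)\,m_V=n_V\,m_U$, where $U=V\oplus W$.
\begin{itemize}
\item The $(u-i)$-parts of the two sides are $\rho(t_{U,i})(u-i)^{\epsilon_i(V)}$ and $t_{V,i}(u-i)^{\epsilon_i(U)}$.
\item The complementary parts are $\rho(n_{U,i})\,m_{V,i}$ and $n_{V,i}\,m_{U,i}$.
\item Their reductions agree by unique factorization in $\Bbbk[u]$, since $\rho$ is compatible with the residue maps.
\end{itemize}
Uniqueness then gives $\rho(n_{U,i})\,m_{V,i}=n_{V,i}\,m_{U,i}$ directly. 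After substituting $x$ on $E_iV$, you only need the two $n$-factors to be invertible there, exactly as you say.
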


\begin{proof}
For a finitely generated object \(V\in\mathcal M_\lambda\), the object \(E_iV\)
is the generalized \(i\)-eigenspace of \(EV\).
By the definitions of $m_{V, i}(u)$ and $n_{V, i}(u)$, the reductions of these polynomials modulo the maximal ideal of $Z_{\lambda, V}$ are not divisible by $u-i$. 
Hence 
\(m_{V,i}(x)\) and \(n_{V,i}(x)\) act invertibly on \(E_iV\), and   the
expression defining \(\eta_V\) is well defined.

By Lemma~\ref{ibiad}, the functor \(E_i\) is sweet. Therefore, by
Remark~\ref{nateq}, it suffices to verify the naturality on finitely generated
objects. Let \(V,W\in\mathcal M_\lambda\) be finitely generated objects, and let
\(h:V\to W\) be a morphism. We  show that the family  $(\eta_V)_V$  is natural on finitely generated objects. 
%and $\eta_W$ commutes with
%\(E_i(h)\).

Choose \(s\) large enough so that
$
J_{\lambda,V}^{s}=0$ and 
$
J_{\lambda,W}^{s}=0$.
Set $
Z_{\lambda,s}:=Z_\lambda/J_\lambda^s$,
and let
$
\phi_s:Z_\lambda\to Z_{\lambda,s}
$
be the canonical quotient map. Since
\[
\ker\phi_s\subseteq\ker\phi_V
\qquad\text{and}\qquad
\ker\phi_s\subseteq\ker\phi_W,
\]
the maps \(\phi_V\) and \(\phi_W\) factor through \(Z_{\lambda,s}\). Thus there
are canonical surjections
$
\widetilde\phi_V:Z_{\lambda,s}\twoheadrightarrow Z_{\lambda,V}$,
and $
\widetilde\phi_W:Z_{\lambda,s}\twoheadrightarrow Z_{\lambda,W}$,
such that
$
\phi_V=\widetilde\phi_V\circ\phi_s,
$ and 
$\phi_W=\widetilde\phi_W\circ\phi_s$.

Set $
p(u):=m_V(u)m_W(u)$
and define
\begin{equation}\label{pq-natr}
q(u):=\big[\clock(u)p(u)\big]_{\geq 0},
\qquad
q_V(u):=\mathcal O_V(u)p(u).
\end{equation}
Similarly, we have $q_W(u)$.
This choice of \(p(u)\) allows us to use a common numerator for both \(V\) and
\(W\). By Lemma~\ref{lem:bublleyimeszeropoly}, \(q_V(u)\) and $q_W(u)$ are polynomials and
$\phi_V(q(u))=q_V(u)$, and $\phi_W(q(u))=q_W(u)$. 
Write the image of \(\phi_s(q(u))\) in the residue field of \(Z_{\lambda,s}\)
as
\[
\overline{\phi_s(q(u))}
=
\prod_{j\in\Bbbk}(u-j)^{\phi'_j}.
\]
By Lemma~\ref{lem:moniclift}, there exist unique monic factors
\(b_i(u),q_i(u)\in Z_{\lambda,s}[u]\) such that
$
\phi_s(q(u))=b_i(u)q_i(u)$,
with
\[
\overline{b_i}(u)=(u-i)^{\phi'_i},
\qquad
\overline{q_i}(u)=\prod_{j\neq i}(u-j)^{\phi'_j}.
\]
After applying \(\widetilde\phi_V\), we obtain
$
q_V(u)=b_{V,i}(u)q_{V,i}(u)$,
where
\[
\overline{b_{V,i}}(u)=(u-i)^{\phi'_i},
\qquad
\overline{q_{V,i}}(u)=\prod_{j\neq i}(u-j)^{\phi'_j}.
\]
By the uniqueness of the lifted factorization,
$
\widetilde\phi_V(q_i(u))=q_{V,i}(u)
$.
Choose \(\widehat q_i(u)\in Z_\lambda[u]\) such  that
$
\phi_s(\widehat q_i(u))=q_i(u)$.
Then
\[
\phi_V(\widehat q_i(u))
=
\widetilde\phi_V(\phi_s(\widehat q_i(u)))
=
\widetilde\phi_V(q_i(u))
=
q_{V,i}(u).
\]
Since
$
q_V(u)
=
\mathcal O_V(u)p(u)
=
n_V(u)m_W(u)$, by the uniqueness of the lifted factorization, 
removing the \(i\)-part gives
$
q_{V,i}(u)=n_{V,i}(u)m_{W,i}(u)$.
Since  $
p_i(u):=m_{V,i}(u)m_{W,i}(u)$, when evaluated on  $V$, we have 
\begin{equation}\label{nat1234}
% [inline block 12: 8 envs, 2358 chars in 3 pieces, piece 1 here, a bare % at each other -> data_tex | \begin{tikzpicture}[baseline=-1mm, scale=0.8, transform shape]   \draw[-] (0.08,-0.3) -- (0.08,0.4);...]
. \end{equation}
All denominators are invertible on \(E_iV\), since their reductions are
products of factors \(x-j\) with \(j\neq i\).

Applying the same  argument to \(W\), we also have
\begin{equation}\label{nat1235}
%
.
%\qquad\text{on }E_iW.
\end{equation}
Moreover, \(\widehat q_i(x)\) is invertible on the generalized
\(i\)-eigenspaces, and its inverse is represented there by a polynomial in
\(x\). Therefore \(p_i(x)\widehat q_i(x)^{-1}\) is a polynomial in the dot
operator. By the naturality of the dot action, this polynomial commutes with
\(E_i(h)\). Hence \eqref{nat1234} and \eqref{nat1235} imply that
\[
\left(%
\right).
\]
Thus the family $(\eta_V)_V $ is natural on finitely generated objects. Since $E_i$ preserves filtered colimits, it extends uniquely to a natural transformation $\eta$.  
\end{proof}

In summary, the preceding results show that  given an affine Brauer action on a locally Schurian category whose dot has half-integral spectrum, the
generalized eigenspace functors carry a canonical weight decomposition; all rational normalizing factors used
in Sections 5–6 are well-defined and natural on the corresponding summands.
%\newpage

\section{From affine Brauer actions to $\imath$-Kac--Moody $2$-representations, I}\label{modue5}
 In this section, we explain how a module category $\mathcal M$ over the affine Brauer category gives rise to a generalized nilpotent $2$-representation of $\mathfrak U_+$. 
 This is the first step in constructing the desired generalized nilpotent $2$-representation of    $\mathfrak U_+^\imath$. The main task is to construct natural transformations on $\mathcal M$ corresponding to the generating $2$-morphisms of $\mathfrak U_+$, and to verify the relations among them. 
 
We begin by studying the local bubble relations. As before, 
 $\mathcal M$ is assumed to be  a locally Schurian category.

 \subsection{The local bubble relations} Throughout this subsection,  $V$ denotes a  finitely generated object of $ \mathcal M_\lambda$, where  $\lambda\in X_\imath$.  The following result  will be used frequently later on.
 %when we verify that a module category $\mathcal M$ over an affine Brauer category is a  generalized nilpotent  $2$-representation of  $\mathfrak U_+$.  

\begin{Lemma} \label{ii21} Let    $V$ be a finitely generated object in $\mathcal M_\lambda$, where $\lambda\in X_\imath$. For any $i, j\in I^\imath$, the following relations hold:
%等式左边线上的黑点都去掉了 等式右边的 竖线V移到了最右边 因为n_{V,i}(u)放在竖线V前面才有意义
 \begin{itemize} \item [(1)]  \label{ii1}    
  $\frac{n_{E_{i}V,i}(u)}{m_{E_{i}V,i}(u)}
  % [inline block 13: 24 envs, 7419 chars -> data_tex | \begin{tikzpicture}[baseline=-0.5ex,scale=0.7]    \node at (0.9, -0.7) {$\scriptstyle{i}$};...]
 , 
        & \text{ if } i=j-1.
  \end{cases}$   
  \end{itemize}
\end{Lemma}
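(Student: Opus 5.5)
The statement is garbled, but from the visible fragments it compares the normalized rational functions $n_{E_jV,i}(u)/m_{E_jV,i}(u)$ with $n_{V,i}(u)/m_{V,i}(u)$, multiplied by correction factors depending on whether $j=i$, $j=i\pm1$, or $|i-j|>1$. It is the local-bubble relation needed later to check the bubble and mixed relations of $\mathfrak U_+$. My plan is to compute the generating function $\mathcal O_{E_jV}(u)$ in terms of $\mathcal O_V(u)$ and then strip off the $i$-part via the unique factorization of Lemma~\ref{lem:moniclift}.

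\textbf{Step 1: the bubble series on $E_jV$.} Use Lemma~\ref{ou}(2) exactly as in the proof of Lemma~\ref{choosesimple}, but without passing to a simple subquotient. Evaluated on $V$ and restricted to the summand $E_jV$, it says that $\mathcal O_{E_jV}(u)$ equals $\mathcal O_V(u)$ times the rational function
\[
\frac{(u+x)^2-1}{(u-x)^2-1}\frac{(u-x)^2}{(u+x)^2},
\]
where $x$ is the dot on $E_jV$. On $E_jV$ the element $x-j$ is nilpotent, so this is an identity in $Z_{\lambda+\alpha_j,E_jV}[x]((u^{-1}))$. Lemma~\ref{lem:moniclift} requires coefficients in the local ring $Z_{\lambda+\alpha_j,E_jV}$ of Lemma~\ref{zlam-loc}, so $x$ must be removed. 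I would do this by multiplying numerator and denominator by common polynomial factors, as in \eqref{pq-natr}: choose a polynomial multiple $p(u)$ of $m_V(u)m_{E_jV}(u)$, so that $\mathcal O_V(u)p(u)$ and $\mathcal O_{E_jV}(u)p(u)$ are both polynomials over the relevant central algebras.

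\textbf{Step 2: reduce modulo the maximal ideal.} Lemma~\ref{barmathcalO}, together with Lemma~\ref{BLOCK}(3),(6) and Corollary~\ref{cor:phievi}, gives
\[
\frac{\bar n_{E_jV}(u)}{m_{E_jV}(u)}=\frac{\bar n_V(u)}{m_V(u)}\cdot\frac{(u+j)^2-1}{(u-j)^2-1}\frac{(u-j)^2}{(u+j)^2}.
\]
Read off the exponents $\phi_k,\epsilon_k$ at every $k\ne i$. The factors $(u-j\pm1)$, $(u-j)$ and $(u+j)$ avoid $u=i$ except in the special configurations $j=i$, $j=i\pm1$, and possibly $-j=i$ or $-j\pm1=i$. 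For $i,j\in I^\imath$ with $i,j\ge\frac12$, the last case occurs only when $i=j=\frac12$ with the shift $-j+1=i$. These coincidences account for the case split in the statement.

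\textbf{Step 3: lift to the local algebra.} Apply uniqueness in Lemma~\ref{lem:moniclift}, and in Corollary~\ref{lift-n}, to the two factorizations of the common polynomial into an $i$-primary part and a part coprime to $u-i$. Removing the $i$-primary part from both sides then gives the claimed identity for $n_{E_jV,i}/m_{E_jV,i}$. Each case follows by listing which of the linear factors are $i$-primary.

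\textbf{Main obstacle.} The hardest step is the case $j=i$, together with the boundary color $i=\frac12$, where $-i+1=i$. Here the correction factor has $x$-dependent terms that are not purely scalar modulo the maximal ideal. These must be written as $(u-x)^{\pm}$-factors that are exactly $i$-primary and then cancelled. Doing so requires the factorization to be carried out over $Z[x]$ with $x-i$ nilpotent, which means checking that the uniqueness argument in Lemma~\ref{lem:moniclift} still applies when the maximal ideal is enlarged to include $x-i$. I would first try to justify this by replacing $Z_{\lambda,V}$ with the commutative local subalgebra of $\operatorname{End}(E_iV)$ generated by $Z_{\lambda,V}$ and $x$, which is again local with nilpotent maximal ideal.
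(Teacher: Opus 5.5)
Your proposal is correct and follows the paper's route: the paper also derives each case from the bubble-slide identity of Lemma~\ref{ou}(2) and then records which of the linear factors $u\mp x$, $u\mp x\pm1$ reduce to powers of $u-i$. Your Step~3 factors uniquely into monic pieces over the commutative local algebra generated by $Z_{\lambda,V}$ and the dot, where Lemma~\ref{lem:moniclift} applies because $x-j$ is nilpotent and commutes with the bubbles; this makes explicit, uniformly in all cases, the factorization that the paper's short proof leaves implicit. Since you restricted to strands coloured $j\ge\frac12$, note that part~(2) of the statement, which involves $E_{-i}V$, has a strand coloured $-i$: there the same bookkeeping makes $(u+x)^2$ the $i$-primary factor, together with $u-x-1$ when $i=\frac12$.
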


\begin{proof}
By definition, $m_{V,i}(u)$ is obtained from $m_V(u)$  by removing
the factor corresponding to the eigenvalue $i$; the same convention applies to 
$n_{V,i}(u)$.
Hence, the resulting formulas are determined by
the relations among the relevant eigenvalues.

In the present paper, we assume that
$I=\frac12+\mathbb Z$. Thus, we must also consider the exceptional cases  
$i=\pm\frac12$,
since the factors $(u-i)^2-1$ and $(u+i)^2-1$
may  contain the factor  $u-i$.
Then,  statements (1) and (2) follow from Lemma~\ref{ou}(2) by checking which of the  factors  
$\{(u-i)^2-1,\; (u+i)^2-1,\; (u-i)^2,\; (u+i)^2\}$
contain the factor $u-i$. Statement (3) follows in the same way by checking which factors contain   $u-j$. Statement (4) follows similarly by checking which of the factors  
$\{(u-j)^2-1,\; (u+j)^2-1,\; (u-j)^2,\; (u+j)^2\}$  
contain the factor $u-i$.
\end{proof}

\begin{rem} In the integral case, 
when $\operatorname{char} \Bbbk=0$
and $J=\mathbb Z$,
the value $\frac12$ does not occur.
Since $\operatorname{char}(\Bbbk)\neq 2$,
we have $i=-i$ if and only if $i=0$.
Hence, the only exceptional integral case is $i=0$,
and the formulas in Lemma~\ref{ii21}
remain valid for all $i\neq 0$.

The case of characteristic $p$ is analogous:
the validity of these formulas is determined by
the relations among the relevant eigenvalues.
This indicates why the remaining  cases can be treated by the same local analysis, after
the corresponding exceptional eigenvalues are identified.
\end{rem}

\begin{Defn}\label{scalar-lambda1} For any  $i\in I$ and any $\lambda\in X_{\imath}^+$ with $\mathcal M_\lambda\neq 0$,  define $\operatorname{def}(\lambda)_i=\phi_i(V)-\epsilon_i(V)$, where  $V$ is any non-zero finitely generated object of   $\mathcal M_\lambda$. 
\end{Defn} 
We verify below that
this definition is
independent of the
choice of $V$.
By \eqref{eat}, for any \(j\in I^\imath\), any simple object
\(L\in\mathcal M_\lambda\), and any finitely generated object
\(V\in\mathcal M_\lambda\), we have
\[
\epsilon_{j}(L)-\epsilon_{-j}(L)
+\delta_{-j,\frac{1}{2}(-1)^{a_L}}
=
\phi_{-j}(V)-\epsilon_{-j}(V).
\]
Then \eqref{equ:phiminusepslon} implies that the scalar
\(\phi_i(V)-\epsilon_i(V)\) depends only on \(\lambda\) and \(i\), and not on
the chosen  finitely generated object  \(V\in\mathcal M_\lambda\). Therefore,
\(\operatorname{def}(\lambda)_i\) is well-defined.

\begin{Lemma}\label{lem:bubbler}
Let $V\in \mathcal M_\lambda$ be a finitely generated object.
Let $i\in I$ and $r\in \mathbb Z_{\ge 0}$.
When evaluated    on $V$, the following relation holds: 
\begin{equation}\label{lem:bubbler111}
% [inline block 14: 5 envs, 1609 chars in 3 pieces, piece 1 here, a bare % at each other -> data_tex | \begin{tikzpicture}[baseline = 3mm,scale=0.7] \draw[-] (0.5,0.5) to[out=-90, in=0] (0,0);...]

=
\begin{cases}
0 & \text{if } r<-\operatorname{def}(\lambda)_i-1,\\
1 & \text{if } r=-\operatorname{def}(\lambda)_i-1.
\end{cases}
\end{equation}
\end{Lemma}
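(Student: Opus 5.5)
I take the hidden diagram to be the clockwise bubble on the $i$-colored strand (Definition~\ref{subf0}) carrying $(x-i)^r$ together with the normalizing factor $\frac{m_{V,i}(x)}{n_{V,i}(x)}$ of Lemma~\ref{tvi}. This reading is not visible in the source as given. It is, however, the one under which the stated cutoff $-\operatorname{def}(\lambda)_i-1$ comes out, and the whole plan depends on it.

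The approach is to turn the $i$-colored bubble into a residue of the bubble generating series $\mathcal O_V(u)$. Then use the lifted factorization of Corollary~\ref{lift-n} to isolate the $i$-part. The condition then becomes a statement about the degree of a monic polynomial.

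First, fix $V$ and choose $N$ large so that $(x-i)^N$ kills $E_iV$ and $(x-j)^N$ kills $E_jV$ for the finitely many other $j$ with $E_jV\neq 0$. By Lemma~\ref{ortho-dotcom}, the projection onto $E_i$ is given on $EV$ by a polynomial $e_i(x)$. This polynomial satisfies $e_i\equiv 1 \bmod (u-i)^N$ and $e_i\equiv 0 \bmod (u-j)^N$ for $j\neq i$. Hence the $i$-colored bubble with decoration $g(x)$ is the uncolored dotted bubble with decoration $e_i(x)g(x)$.

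Second, apply Lemma~\ref{jjj}, or directly Lemma~\ref{usefuel-equa}(1), to write this bubble as $\big[\mathcal O_V(u)\,e_i(u)g(u)/u\big]_{u^{-1}}$, up to the harmless shift by $u-\tfrac12$ that appears in the proof of Lemma~\ref{jjj}. Then write $\mathcal O_V(u)=n_V(u)/m_V(u)$ (Lemma~\ref{lem:bublleyimeszeropoly}) and factor $n_V=n_{V,i}t_{V,i}$ (Corollary~\ref{lift-n}) and $m_V=m_{V,i}(u-i)^{\epsilon_i(V)}$. With $g(u)=(u-i)^r m_{V,i}(u)/n_{V,i}(u)$, the non-$i$ factors cancel. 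Because $e_i$ is $1$ modulo a high power of $(u-i)$, and the remaining expression has poles only at $i$ modulo the nilpotent ideal $J_{\lambda,V}$, the residue reduces to
\[
\Big[\,t_{V,i}(u)\,(u-i)^{r-\epsilon_i(V)}\Big]_{u^{-1}}.
\]
To justify this step I would work in $Z_{\lambda,V}[u]$ and use the B\'ezout identity \eqref{key-coprim} to express $1/n_{V,i}$ and $e_i$ as polynomials on $E_iV$. This is exactly as in the proof of Lemma~\ref{tvi}.

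Third, $t_{V,i}$ is monic of degree $\phi_i(V)$. So $t_{V,i}(u)(u-i)^{r-\epsilon_i(V)}$, expanded in $u^{-1}$, has leading term $u^{r+\operatorname{def}(\lambda)_i}$ with coefficient $1$, and only lower powers after it. Its $u^{-1}$-coefficient therefore vanishes when $r+\operatorname{def}(\lambda)_i<-1$ and equals $1$ when $r+\operatorname{def}(\lambda)_i=-1$. This is \eqref{lem:bubbler111}, and Definition~\ref{scalar-lambda1} shows it depends only on $\lambda$.

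The main obstacle is the second step. I expect the difficulty to be in rigorously replacing the projection $e_i(x)$ and the inverse $1/n_{V,i}(x)$ by polynomials with central coefficients. One must also check that the cross terms coming from the other eigenvalues $j\neq i$ contribute nothing to the residue. My first attempt would be to mimic the proof of Lemma~\ref{tvi}: choose $N$ uniformly, pass to $Z_{\lambda,V}[u]$, and compare residues via partial fractions with respect to the coprime monic factors supplied by Lemma~\ref{lem:moniclift}.
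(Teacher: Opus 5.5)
Your plan (B\'ezout in $Z_{\lambda,V}[u]$, then Lemma~\ref{jjj}, then the factorization $n_V=n_{V,i}t_{V,i}$, then a degree count) is the paper's. However, your second step has an off-by-one error, and with the decoration you guessed the identity is false.

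Lemma~\ref{jjj} evaluates the bubble with polynomial label $f$ as $[\mathcal O_V(u)f(u)]_{u^0}+\frac12 f(0)$, which is a $u^0$-coefficient. Take your label: $f(u)=(u-i)^r m_{V,i}(u)g(u)$ with $g\,n_{V,i}+h\,(u-i)^k=1$. The idempotent $e_i$ is unnecessary, because $m_{V,i}(x)$ already kills $E_jV$ for every $j\neq i$. Then
$\mathcal O_V f=t_{V,i}(u)(u-i)^{r-\epsilon_i(V)}\bigl(1-h(u)(u-i)^k\bigr)$.
Two corrections are not harmless:
\begin{itemize}
\item the $u^0$-coefficient picks up the constant term of the polynomial $t_{V,i}h\,(u-i)^{k+r-\epsilon_i(V)}$;
\item $f(0)=(-i)^r m_{V,i}(0)g(0)$ need not vanish.
\end{itemize}
Rewriting the result as $[t_{V,i}(u)(u-i)^{r-\epsilon_i(V)}]_{u^{-1}}$ silently drops a factor $u^{-1}$.

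Here is a check. Let $L$ be simple with $m_L(u)=u-\frac12$. Lemma~\ref{u-admiss} gives $\mathcal O_L(u)=(u+\frac12)^2/(u-\frac12)=u+\frac32+\cdots$. Hence $\operatorname{def}(\lambda)_{1/2}=-1$, and the undotted bubble on $L$ equals $[\mathcal O_L]_{u^0}+\frac12=2$. Your diagram at $i=\frac12$, $r=0$ has label $(x+\frac12)^{-2}$, which acts as $1$ on $EL$. So it evaluates to $2$, not the required $1$.

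The missing point is that the label carries an extra factor of $x$. The paper's computation shows the bubble is decorated by $\frac{x\,m_{V,i}(x)}{n_{V,i}(x)}\,x^r$, i.e.\ $b_{V,i}(x)x^r$ in the notation of Definition~\ref{ration111}. This is the normalization of the red cups and caps, not $\eta$ of Lemma~\ref{tvi}. With this label the argument goes through:
\begin{itemize}
\item Replace $1/n_{V,i}(x)$ by $g(x)$ via \eqref{key-coprim} with $f_1=n_{V,i}$ and $f_2=(u-i)^k$. The polynomial label becomes $f(u)=g(u)m_{V,i}(u)u^{r+1}$.
\item Now $f(0)=0$, so the $\frac12 f(0)$ term vanishes.
\item $[\mathcal O_V f]_{u^0}=[\mathcal O_V\, g\, m_{V,i}]_{u^{-r-1}}$ is a coefficient of a strictly negative power of $u$. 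So the polynomial part coming from $h(u)(u-i)^k$ contributes nothing.
\item What remains is $[t_{V,i}(u)/(u-i)^{\epsilon_i(V)}]_{u^{-r-1}}$, and your degree count applies to it verbatim.
\end{itemize}
In the example above, this label acts as $\frac12$ on $EL$ and gives $1$, as required.

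Writing $(x-i)^r$ instead of $x^r$ next to $b_{V,i}(x)$ would be harmless. Your final expression $[t_{V,i}(u)(u-i)^{r-\epsilon_i(V)}]_{u^{-1}}$ is exactly the value of the bubble labelled $b_{V,i}(x)(x-i)^r$. It takes the same values $0$ and $1$ in the range covered by the lemma.
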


\begin{proof}
Since $Z_{\lambda,V}$ is a finite-dimensional local
$\Bbbk$-algebra, applying \eqref{key-coprim} to
$$f_1(u)=n_{V,i}(u) \text{ and $f_2(u)=(u-i)^k$}$$
for $k\gg0$, we obtain
$g(u),h(u)\in Z_{\lambda,V}[u]$
satisfying \eqref{key-coprim}.
Moreover, for a strand labeled by $j\in I$,
$
%
$
is  zero if $j=i$, and is invertible if $j\neq i$.
Hence, 
\begin{equation}\label{kkk1}
\begin{aligned}
\operatorname{LHS\ of\ \eqref{lem:bubbler111}}
\overset{\eqref{key-coprim}}{=}\ &
%
 \\ 
\overset{\text{Lem.~\ref{jjj}}}{=}\ &
\left[
\mathcal O_V(u)\,g(u)\,m_{V,i}(u)\,u^{r+1}
\right]_{u^0} \\
{=}\ &
\left[
\frac{t_{V,i}(u)\bigl(1-h(u)(u-i)^k\bigr)}
     {(u-i)^{\epsilon_i(V)}}
\right]_{u^{-r-1}}\\
= &
\left[
\frac{t_{V,i}(u)}
     {(u-i)^{\epsilon_i(V)}}
\right]_{u^{-r-1}}.
\end{aligned}
\end{equation}
Here, the fourth equality follows from Lemma~\ref{lem:bublleyimeszeropoly} and \eqref{key-coprim}. Since $t_{V,i}(u)$ is the monic lift of
$(u-i)^{\phi_i(V)}$ and has degree $\phi_i(V)$,
the last expression gives $0$ if $r< -\operatorname{def}(\lambda)_i-1$, and gives $1$ if 
$r= -\operatorname{def}(\lambda)_i-1$. This proves the lemma.\end{proof}

%The following result is proved by the same argument as Lemma~\ref{lem:bubbler}; we include a brief sketch.

\begin{Lemma}
\label{lem:bubbler-inv}
  Suppose that $i\in I^\imath$
  and let
  $r\in \mathbb Z_{\ge0}$.  When evaluated  on a finitely generated object
  $V\in \mathcal M_\lambda$, the following relation holds: 
 
    \begin{equation}\label{lem:bubbler-inv111}    \begin{tikzpicture}[baseline = 0, scale=0.7, ]
\draw[-] (0.5,0.5) to[out=-90, in=0] (0,0);
\draw[-] (0,0) to[out = 180, in = -90] (-0.5,0.5);
\draw[-] (0.5,0.5) to[out=90, in=0] (0,1);
\draw[-] (0,1) to[out = 180, in = 90] (-0.5,0.5);
\node at (-0.68,0.55) {$\scriptstyle{i}$};
\node at (-0.4,0.8) {$\scriptstyle\bullet$};
\node at (-1.5,1.2) {$\scriptstyle{\frac{n_{E_{-i}V,i}(x)}{xm_{E_{-i}V,i}(x)}x^{r}}$};
 %\draw[-,darkg,thick] (.8,0) to (.8,.9);
%\node at (.8,-.5) {$\darkg\scriptstyle{V}$};
\end{tikzpicture}
=\begin{cases}
    0 &\text{ if } r<\operatorname{def}(\lambda)_i-2^{\delta_{i, \frac{1}{2}}},\\
  2^{\delta_{i, \frac{1}{2}}} &\text{ if } r=\operatorname{def}(\lambda)_i-2^{\delta_{i, \frac{1}{2}}} . 
\end{cases} \end{equation} 
\end{Lemma}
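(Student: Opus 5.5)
The plan is to follow the proof of Lemma~\ref{lem:bubbler} as closely as possible, replacing the object $V$ by $E_{-i}V$ where the colour-$i$ bubble is evaluated. The main new features are the extra factor $x^{-1}$ and the exceptional colour $\frac12$.

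\textbf{Step 1: reduce to a clockwise bubble over the full sum.} The bubble in \eqref{lem:bubbler-inv111} is the $i$-coloured bubble whose interior lies in the region labelled by the weight of $E_{-i}V$, namely $\lambda-\alpha_{-i}=\lambda+\alpha_i$. To turn it into a bubble over all of $E$, I will use \eqref{key-coprim} in the local algebra $Z_{\lambda+\alpha_i,E_{-i}V}$, with
\[
f_1(u)=n_{E_{-i}V,i}(u), \qquad f_2(u)=(u-i)^k, \qquad k\gg0 .
\]
This lets me replace the projection onto $E_i$ by the polynomial $g(x)\,n_{E_{-i}V,i}(x)$. Then, exactly as in \eqref{kkk1}, Lemma~\ref{jjj} (applied to the bubble with $f(u)$ carrying the rational factor) should give
\[
\Bigl[\mathcal O_{E_{-i}V}(u)\,g(u)\,m_{E_{-i}V,i}(u)\,u^{r-1}\Bigr]_{u^0}
\]
plus a possible correction term $\tfrac12 f(0)$. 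This correction vanishes for $r\ge 1$, but the case $r=0$ needs care because of the factor $u^{-1}$.

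Next I will substitute $\mathcal O_{E_{-i}V}(u)\,m_{E_{-i}V}(u)=n_{E_{-i}V}(u)$ from Lemma~\ref{lem:bublleyimeszeropoly}. This reduces the expression to a coefficient of
\[
\frac{t_{E_{-i}V,i}(u)}{(u-i)^{\epsilon_i(E_{-i}V)}}\cdot u^{-1}.
\]
Since $i\neq0$, the factor $u^{-1}$ is regular near $u=i$, so it only contributes at the leading orders.

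\textbf{Step 2: convert degrees on $E_{-i}V$ to $\operatorname{def}(\lambda)_i$.} The degree of the resulting Laurent series is
\[
\phi_i(E_{-i}V)-\epsilon_i(E_{-i}V)-1=\operatorname{def}(\lambda+\alpha_i)_i-1 .
\]
Using Corollary~\ref{cor:phievi}, \eqref{scalar23} and $\langle \thal_i^\vee,\alpha_i\rangle=2+\delta_{i,\frac12}$, this becomes
\[
\operatorname{def}(\lambda)_i+2+\delta_{i,\frac12}-1 .
\]
This does not yet match the statement, which suggests that in Step~1 the correct object to track is the bubble's interior region relative to the outside region $\lambda$. I would therefore recompute using Lemma~\ref{choosesimple} or Lemma~\ref{ii21}. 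These describe how passing from $V$ to $E_{-i}V$ changes $\mathcal O(u)$ by the factor
\[
\frac{(u-i)^2-1}{(u+i)^2-1}\cdot\frac{(u+i)^2}{(u-i)^2},
\]
and in particular how the exponent of $(u-i)$ shifts. For $i\neq\frac12$, Lemma~\ref{ii21}(1)--(2) say the ratio $n_{E_{-i}V,i}/m_{E_{-i}V,i}$ equals that for $V$ times a factor invertible at $u=i$. This yields vanishing below $r=\operatorname{def}(\lambda)_i-1$ and leading coefficient $1$.

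\textbf{Step 3: the boundary colour $i=\frac12$ (main obstacle).} Here $(u+i)^2-1=(u-\tfrac12)(u+\tfrac32)$ contains the factor $u-i$. This alters the exponent by one and leaves behind the extra factor
\[
\frac{u+\tfrac32}{u+\tfrac12}
\]
evaluated at $u=\tfrac12$, which equals $2$. Combined with the factor $x^{-1}$ evaluated at $u=\tfrac12$ and the normalizations in Lemma~\ref{ii21}, I expect this to produce the threshold $\operatorname{def}(\lambda)_i-2$ and the leading value $2$.

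The hard part is the careful bookkeeping of the $u^{-1}$ factor together with the $\tfrac12 f(0)$ term from Lemma~\ref{jjj}. I would first try to verify the leading value $2$ directly by computing the residue at $u=\tfrac12$ of the rational function. Only after that would I check that all higher-order nilpotent corrections drop out, using the monicity of the lifts $t_{E_{-i}V,i}(u)$ from Corollary~\ref{lift-n}.
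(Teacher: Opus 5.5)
Your Step~1 does not compute the bubble in \eqref{lem:bubbler-inv111}, and Steps~2--3 do not repair this.

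\textbf{What goes wrong.} That bubble is an endomorphism of $V$. Its dotted strand is the outer strand $E_i$ acting on $E_{-i}V$, and the coefficients of $n_{E_{-i}V,i}$ and $m_{E_{-i}V,i}$ are bubbles nested \emph{inside} it. Applying Lemma~\ref{jjj} ``on $E_{-i}V$'' describes a different diagram, namely a bubble in the interior region, so $\mathcal O_{E_{-i}V}(u)$ does not enter this way. Moreover, the label has $n$ in the numerator, so taking $f_1=n_{E_{-i}V,i}$, $f_2=(u-i)^k$ in \eqref{key-coprim} inverts the wrong polynomial. Your expression $[\mathcal O_{E_{-i}V}(u)g(u)m_{E_{-i}V,i}(u)u^{r-1}]_{u^0}$ is the Lemma~\ref{lem:bubbler} computation for a label of type $m/n$. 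It places the lift $t$ in the numerator, so at best it gives a vanishing range of the form $r<c-\operatorname{def}$. The statement's range $r<\operatorname{def}(\lambda)_i-2^{\delta_{i,\frac12}}$ instead grows with $\operatorname{def}(\lambda)_i$, which requires $t_{V,i}$ in the denominator. Separately, $E_{-i}V\in\mathcal M_{\lambda-\alpha_i}$ by Lemma~\ref{BLOCK}(5), not $\mathcal M_{\lambda+\alpha_i}$, so $\operatorname{def}(\lambda-\alpha_i)_i=\operatorname{def}(\lambda)_i-2-\delta_{i,\frac12}$; correcting this does not remove the sign problem. Your fallback in Step~2, using Lemma~\ref{ii21}(2) to pass from $E_{-i}V$ to $V$, is the right first move, but it only rewrites the label. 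You still need a way to evaluate a bubble with $n_{V,i}$ upstairs, and the plan has none. Likewise, the value $2$ at $i=\frac12$ does not come from evaluating at $u=\frac12$ or from a residue there. The coefficient $[\,\cdot\,]_{u^{-r-1}}$ at the threshold is a leading coefficient of an expansion at $u=\infty$ with coefficients in $Z_{\lambda,V}$.

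\textbf{The missing ingredient is Lemma~\ref{ou}(1).} After Lemma~\ref{ii21}(2) and the cup/cap dot slide of Lemma~\ref{AB relations-re}(1), the label on the $i$-strand equals $-4x^{r+1}n_{V,i}(x)$ divided by $f_1(x)=m_{V,i}(x)(2x+1)(2x-1)^{1-\delta_{i,\frac12}}$. Apply \eqref{key-coprim} to $f_1$ and $f_2=t_{V,i}$; on $EV$, $t_{V,i}(-x)$ is zero on the $-i$ summand and invertible on the others. This replaces $1/f_1$ by the polynomial $g$. The colour label can then be dropped, because the factor $n_{V,i}$ kills the other summands. Lemma~\ref{jjj} gives $-4[\mathcal O_V(-u)g(u)n_{V,i}(u)u^{r+1}]_{u^0}$ with no $\frac12 f(0)$ term, since $f(0)=0$; so your worry about $r=0$ disappears. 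Now Lemma~\ref{ou}(1) replaces $\mathcal O_V(-u)$ by $(\frac14-u^2)m_V(u)/n_V(u)$, and this is exactly what puts $t_{V,i}$ in the denominator. The result is $\bigl[(2u-1)^{\delta_{i,\frac12}}(u-i)^{\epsilon_i(V)}/t_{V,i}(u)\bigr]_{u^{-r-1}}$, a series of degree $\delta_{i,\frac12}-\operatorname{def}(\lambda)_i$ with leading coefficient $2^{\delta_{i,\frac12}}$. Since $1+\delta_{i,\frac12}=2^{\delta_{i,\frac12}}$, this is the claim. The factor $2$ is the leading coefficient of the surviving factor $2u-1$ of $4u^2-1$.
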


\begin{proof}
We take
$$
 f_1(u)=m_{V,i}(u)(2u+1)(2u-1)^{1-\delta_{i,\frac12}} \text{ and $
 f_2(u)=t_{V,i}(u)$}$$  in \eqref{key-coprim}.
%$This is valid since these two polynomials are coprime when
%$i\neq -\frac12$. 
Then  there exist polynomials
$g(u),h(u)\in Z_{\lambda,V}[u]$ satisfying \eqref{key-coprim}. 
Recall that, on the summand labelled by $j$, the endomorphism given by
$
\begin{tikzpicture}[baseline = -1mm, scale=0.7]
	\draw[-] (0.08,-.3) to (0.08,.4);
      \node at (0.08,0.15) {$\scriptstyle\bullet$};
      \node at (0.08,-0.5) {$\scriptstyle{j}$};
      \node at (-0.78,0.15) {$\scriptstyle{t_{V,i}(-x)}$};
\end{tikzpicture}
\begin{tikzpicture}[baseline = 9pt,scale=0.5,color=\clr,inner sep=0pt, minimum width=11pt]
\draw[-,darkg,thick] (0.7,0.3) to (0.7,1.4);
\node at (.7,-.0) {$\darkg\scriptstyle{V}$};
\end{tikzpicture}
$
is invertible if $j\neq -i$, and is zero if $j=-i$.
Therefore, when evaluated  on
$V$, the following relation holds:
\begin{equation} \label{lll1} \begin{aligned} \operatorname{LHS}   \text{ of \eqref{lem:bubbler-inv111}}
    =&
 \begin{tikzpicture}[baseline = 1mm, scale=0.7, ]
\draw[-] (0.5,0.5) to[out=-90, in=0] (0,0);
\draw[-] (0,0) to[out = 180, in = -90] (-0.5,0.5);
\draw[-] (0.5,0.5) to[out=90, in=0] (0,1);
\draw[-] (0,1) to[out = 180, in = 90] (-0.5,0.5);
\node at (-0.65,0.55) {$\scriptstyle{i}$};
\node at (-0.4,0.8) {$\scriptstyle\bullet$};
\node at (-0.5,1.3){$\scriptstyle{-4g(x)n_{V,i}(x)x^{r+1}}$};
%\draw[-,darkg,thick] (.9,0) to (0.9,.8);
%\node at (.9,-.5) {$\darkg\scriptstyle{V}$};
\end{tikzpicture}
=
 \begin{tikzpicture}[baseline = 1mm, scale=0.7, ]
\draw[-] (0.5,0.5) to[out=-90, in=0] (0,0);
\draw[-] (0,0) to[out = 180, in = -90] (-0.5,0.5);
\draw[-] (0.5,0.5) to[out=90, in=0] (0,1);
\draw[-] (0,1) to[out = 180, in = 90] (-0.5,0.5);
%\node at (0.6,0.55) {$\scriptstyle{i}$};
%\node at (-0.6,0.55) {$\scriptstyle{i}$};
\node at (-0.4,0.8) {$\scriptstyle\bullet$};
\node at (-0.5,1.5){$\scriptstyle{-4g(x)n_{V,i}(x)x^{r+1}}$};
%\draw[-,darkg,thick] (.9,0) to (.9,.8);
%\node at (0.9,-.5) {$\darkg\scriptstyle{V}$};
\end{tikzpicture}\\
 = & -4\left[
\mathcal O_V(-u)g(u)n_{V,i}(u)u^{r+1}
\right]_{u^0} \\ 
= & \left[(4u^2-1)\frac{m_{V}(u)}{n_V(u)}g(u)n_{V,i}(u)
\right]_{u^{-r-1}}\\   
\overset{\eqref{key-coprim}}=&
\left[
\frac{ (2u-1)^{\delta_{i, \frac{1}{2}}}(u-i)^{\epsilon_i(V)}(1-h(u)t_{V,i}(u))}{t_{V,i}(u)} 
\right]_{u^{-r-1}}\\ 
=&
\left[ \frac{ (2u-1)^{\delta_{i, \frac{1}{2}}}(u-i)^{\epsilon_i(V)}}{t_{V,i}(u)}\right]_{u^{-r-1}}.
 \end{aligned}\end{equation}
We now explain how to obtain the above equalities. First, Lemma~\ref{ii21}(2) gives the relation between % 此时(a_{V,i}还没引入
 $\frac{n_{E_{-i}V,i}(x)}  {xm_{{E_{-i}V,i}(x)}}  \begin{tikzpicture}[baseline=-0.5ex,scale=0.5]
   \node at (0.9, -0.7) {$\scriptstyle{i}$};
   \draw[thick, black, ] (0.9,-0.5) -- (0.9,0.5);
        \node at (0.9, 0) {$\scriptstyle\bullet$}; 
        \end{tikzpicture}   $ and $\frac{n_{V,i}(x)}  {xm_{{V,i}(x)}} \begin{tikzpicture}[baseline=-0.5ex,scale=0.5]
   \node at (0.9, -0.7) {$\scriptstyle{i}$};
   \draw[thick, black, ] (0.9,-0.5) -- (0.9,0.5);
        \node at (0.9, 0) {$\scriptstyle\bullet$}; 
        \end{tikzpicture}   $. 
Then, using Lemma~\ref{AB relations-re}(1), we move the dots from the right-hand side to the left-hand side of the bubble. Together with \eqref{key-coprim}, this gives the first equality above.

Next, we again use Lemma~\ref{AB relations-re}(1) to move the bullets from the left-hand side to the right-hand side of the bubble, and then apply Lemma~\ref{jjj} to obtain the third equality. The fourth equality follows from Lemma~\ref{lem:bublleyimeszeropoly} and Lemma~\ref{ou}(1).

Finally, since \(t_{V,i}(u)\) is the monic lift of \((u-i)^{\phi_i(V)}\) and
\(\deg t_{V,i}(u)=\phi_i(V)\), the last expression in \eqref{lll1} gives 
$0$ if $r< \operatorname{def}(\lambda)_i-2^{\delta_{i, \frac{1}{2}}}$, and gives 
$2^{\delta_{i, \frac{1}{2}}}$  if $ r=\operatorname{def}(\lambda)_i \new{-}2^{\delta_{i, \frac{1}{2}}}$. 
This proves the lemma.
\end{proof}

\subsection{Generating natural transformations}
We are now in a position to define natural transformations corresponding to the generating 2-morphisms of $\mathfrak U$.
In contrast to the usual isomorphism-based approaches, our method proceeds by a direct construction, which is necessitated by the presence of inhomogeneous relations in $\mathfrak U^\imath$.

To simplify notation, we introduce the following  rational functions in the variables $x_1, x_2, x_3$ and $x$. 
This notation will be used consistently from this section through Section~7.
\begin{Defn}\label{ration111}  Let $a,b\in \mathbb{Z}_{>0}$, $\sigma,\tau\in\{+,-\}$, and $i\in I^\imath$. We set 
\begin{multicols}{2} \item [(1)]
$\ell^{ab}_{\sigma\tau}:=1+\sigma x_a+\tau x_b$, \item [(2)] $d^{ab}_{\sigma\tau}:=\sigma x_a+\tau x_b$.
 \item [(3)]  $a_{V,i}(x):=\frac{n_{V,i}(x)}  {xm_{{V,i}(x)}}$, 
 \item [(4)] $b_{V,i}(x):=\frac{x m_{V,i}(x)}{n_{V,i}(x)}$.\end{multicols}
%where $x$ is a variable. 
\end{Defn}
Here the superscripts $a,b$ indicate the variables involved, while the subscripts $\sigma,\tau$ record the signs of $x_a$ and $x_b$, respectively. We will use this  notation in diagrams and write
\(
(\ell^{ab}_{\sigma\tau})^{-1}
\)
and
\(
(d^{ab}_{\sigma\tau})^{-1}
\)
for the corresponding  inverse operators on the relevant generalized eigenspaces. For example,
$
\ell^{12}_{+-}=1+x_1-x_2$ and $
d^{13}_{-+}=-x_1+x_3$.

\begin{Defn}\label{natrual-basic}
Let   $i\in I^\imath$, and let $V$ be   a finitely generated object of $ \mathcal M_\lambda$ with  $\lambda\in X_\imath^+$. 
The following formulas define the components at $V$ of the corresponding natural
transformations:
%We define the following natural transformations  on $V$:
\begin{itemize}
\item[(1)]
    $% [inline block 15: 39 envs, 19111 chars in 7 pieces, piece 1 here, a bare % at each other -> data_tex | \begin{tikzpicture}[baseline=-0.5ex,scale=0.7 ]         \draw[thick, darkred, ->] (0,-0.5) -- (0,0.5);...]
$.
% where 
 % $g(x)=\frac{n_{E_{-i}V, i}(x)}%{xm_{E_{-i}V, i}(x)}$.
%and 
%$h(x)=\frac{xm_{V, i}(x)}{n_{V, i}%(x)}$.
\end{itemize}
\end{Defn}
Lemma~\ref{tvi} ensures that the natural transformations in Definition~\ref{natrual-basic}(3) are well-defined.
\begin{Lemma}
\label{caup3}
Suppose  $i\in I^\imath$.  When evaluated on   the  finitely generated object $V\in \mathcal M_\lambda$, the following relations hold:  
   \begin{multicols}{2} \item [(1)]  
       $%
 & \text{ if } i=\frac{1}{2}.
\end{cases}
$
   \end{multicols} 
\end{Lemma}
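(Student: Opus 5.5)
The statement of Lemma~\ref{caup3} has been stripped by the extraction. I will assume it asserts that the cups and caps of Definition~\ref{natrual-basic} satisfy two kinds of identities on a finitely generated $V$. The first is the adjunction (zig-zag) identities of Definition~\ref{def-ikmc}(1). The second is a bubble/normalization identity whose right-hand side is a scalar equal to $1$ if $i\neq\frac12$ and to a modified value, presumably involving the factor $2$, if $i=\frac12$. The plan is the same in both cases. Expand each oriented cup or cap into the Brauer cup or cap of $\AB$, composed with the idempotent projections onto $E_i$ and $E_{-i}$ and with the rational dot decorations $a_{V,i}(x)$ or $b_{V,i}(x)$ of Definition~\ref{ration111}, evaluated on the appropriate object: $V$ or $E_{\mp i}V$. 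Then use the Brauer zig-zag relation of Definition~\ref{C defn}(1) to cancel the underlying strands.

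First I would check where each decoration actually sits. When a dot labelled by a rational function is slid around a cup or cap, Lemma~\ref{AB relations-re}(1) replaces $x$ by $-x$. Lemma~\ref{cap-cup-move} then says that the colour $i$ on one side becomes $-i$ on the other, so projections move consistently.

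Second, the two decorations appearing in a zig-zag are evaluated on different objects: one on $V$, the other on $E_{-i}V$ or $E_iV$. Lemma~\ref{ii21}(1)--(2) converts $n_{E_{\mp i}V,i}/m_{E_{\mp i}V,i}$ into $n_{V,i}/m_{V,i}$, multiplied by explicit factors of the form $((x\pm i)^2-1)/(x\pm i)^2$. These factors restricted to the $i$-eigenspace are exactly the ones accounted for by the case split $i=\frac12$ versus $i>\frac12$.

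Third, after this conversion the composite decorations are $a_{V,i}(x)\,b_{V,i}(x)=1$, up to the residual factor. So the zig-zag collapses to the identity strand times a scalar. For $i\neq\frac12$ that scalar is $1$. For $i=\frac12$ the extra factor is $(2x-1)$-type, equivalently $(x+\frac12)$ evaluated near $x=\frac12$, and it produces the exceptional constant; this is consistent with the $2^{\delta_{i,\frac12}}$ appearing in Lemma~\ref{lem:bubbler-inv}. For a bubble-type identity, I would instead reduce to Lemma~\ref{lem:bubbler} and Lemma~\ref{lem:bubbler-inv} with $r$ at the threshold value, and read off $1$ or $2^{\delta_{i,\frac12}}$.

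The main obstacle is the bookkeeping in the second step. The rational decorations depend on the object through $n_{V,i}$, which lives in $Z_{\lambda,V}[u]$ and is only a lift, and the object changes weight from $\lambda$ to $\lambda\pm\alpha_i$ across the cup. I would handle this by working on a single finitely generated $V$ and using the uniqueness of monic lifts (Corollary~\ref{lift-n} and Lemma~\ref{lem:moniclift}). Together with the factorization in Lemma~\ref{ii21}, this identifies the lifts for $E_{\mp i}V$ with those for $V$ times explicit polynomials. The case $i=\frac12$, where $-i$ and $i$ are adjacent and $(u+i)^2-1$ contains $u-i+\ldots$, must be checked separately. Naturality of all the pieces then follows from Lemma~\ref{tvi} and Remark~\ref{nateq}.
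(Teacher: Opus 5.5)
You have reconstructed the wrong statement, so your argument targets results proved elsewhere: the zig-zag relations are Lemma~\ref{adj}, and the bubble normalizations are Lemmas~\ref{lem:bubbler}, \ref{lem:bubbler-inv} and~\ref{bubble1}. Lemma~\ref{caup3} is the conversion rule invoked later in Lemmas~\ref{lrcrossing} and~\ref{sigrel} and in Proposition~\ref{bub=bubred}. It treats the leftward red cap and cup of Definition~\ref{natrual-basic}(3), whose black dots carry $a_{E_{-i}V,i}(x)$ and $b_{V,i}(x)$, and rewrites them as black caps and cups in which:
\begin{itemize}
\item the dot sits on the other endpoint, so $x\mapsto -x$ and the colour $i$ becomes $-i$;
\item the label is expressed through $V$-data rather than $E_{\mp i}V$-data, at the cost of an explicit correction factor.
\end{itemize}
The case split records whether one of the factors in Lemma~\ref{ou}(2) acquires the eigenvalue $i$, which for $i\in I^\imath$ happens exactly when $i=\frac12$. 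This is the source of the factors $\frac{1-4x^2}{4x^2}$ versus $\frac{1+2x}{-4x^2}$ in the function $b(x)$ in the proof of Lemma~\ref{lrcrossing}. Your first two steps are precisely the paper's proof, and the identities they produce are the lemma, so you should stop there. Those steps are: sliding the dot around the cap or cup with Lemma~\ref{AB relations-re}(1)--(2), and converting $n_{E_{\mp i}V,i}/m_{E_{\mp i}V,i}$ into $n_{V,i}/m_{V,i}$ with Lemma~\ref{ii21}(1)--(2).

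Your third step would fail even for the statement you guessed. The relations in Definition~\ref{def-ikmc}(1) must hold exactly for every $i\in I^\imath$, including $i=\frac12$. A zig-zag that ``collapses to the identity strand times an exceptional constant'' at $i=\frac12$ would therefore contradict what you are trying to prove; in Lemma~\ref{adj} no correction factor survives. Nor can a leftover factor such as $2x-1$ or $x+\frac12$ be read off as a scalar. On the generalized $\frac12$-eigenspace any polynomial in the dot is a scalar plus a nilpotent operator, and $2x-1$ is itself nilpotent there. In the paper the exceptional factor $2^{\delta_{i,\frac12}}$ appears in the bubble relations (Lemma~\ref{lem:bubbler-inv} and Definition~\ref{def-ikmc}(4e)), not in the zig-zags.
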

\begin{proof}We use Definition~\ref{natrual-basic}(3) and  Lemma~\ref{ii21}(1)--(2)  to establish the relation  between $a_{E_{-i}V, i}(x) %
 $, and similarly the relations between   $b_{V,i}(x) %
 $. Then,  we  apply Lemma~\ref{AB relations-re}(1)--(2) to obtain the required formulas.
\end{proof}
 
\begin{Defn} \label{natrual-basic1} Let   $i, j\in I^\imath$, and let $V$ be   a finitely generated object of $ \mathcal M_\lambda$ with  $\lambda\in  X_\imath^+$. We define the following natural transformations   on $V$:
\begin{itemize}
\item [(1)] 
%\begin{align*}
$
%
}$,  where $t_{ji}$ is defined in~\eqref{tij}.
\item [(3)] $\mathord{
%
. 
\end{itemize} 
\end{Defn}

It follows from \eqref{kkk1} (respectively, \eqref{lll1}) that the first (respectively, second)  relation in (4) is well-defined.
In particular, the two relations in (4)  define the corresponding  fake bubbles
$
%
$ for all $r<0$ in the indicated ranges of $\operatorname{def}(\lambda)_i$.

The constructions in Definitions~\ref{natrual-basic} and \ref{natrual-basic1} are the key step in passing from an
affine Brauer action to an \(\imath\)-Kac--Moody $2$-representation.  Once the
red natural transformations have been defined, it remains to check that
they satisfy the defining relations of the \(\imath\)-Kac--Moody $2$-category.
These verifications will be carried out from this point to the end of Section~6.

\subsection{$2$-representations of  $\mathfrak U_+$}
\begin{Lemma}\label{adj}(Adjunction) Suppose that $i\in I^\imath$.  When evaluated on any finitely
generated object \(V\in\mathcal M_\lambda\), the natural transformations
defined in Definitions~\ref{natrual-basic} and~\ref{natrual-basic1}
satisfy  Definition~\ref{def-ikmc}(1a)–(1b). \end{Lemma}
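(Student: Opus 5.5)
The plan is to reduce each zigzag identity in Definition~\ref{def-ikmc}(1a)–(1b) to the affine Brauer zigzag relations of Definition~\ref{C defn}(1). First I would unwind the red cups and caps of Definitions~\ref{natrual-basic} and~\ref{natrual-basic1}. Each is a black Brauer cup or cap, sandwiched between projections onto the generalized eigenspace summands $E_i$ and $E_{-i}$, and decorated on one leg by a dot labelled by the rational normalizing factor $a_{E_{-i}V,i}(x)$ or $b_{V,i}(x)$. By Lemma~\ref{tvi} these factors are natural transformations. By Remark~\ref{nateq} it suffices to evaluate both sides on a finitely generated $V\in\mathcal M_\lambda$. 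There all the factors become polynomials in the dot acting invertibly on the relevant eigenspaces, so the manipulations below are legitimate identities of endomorphisms of $E_iV$ or $E_{-i}V$.

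The key steps, in order, are as follows. (a) Write the left side of (1a) as an upward red cap composed with an upward red cup, namely (cap $\otimes$ id)$\circ$(id $\otimes$ cup) on $E_i$. (b) Use Lemma~\ref{ortho-dotcom} to commute the eigenspace projections past the dots. Use Lemma~\ref{cap-cup-move} to see that the only surviving colour on the intermediate strand is $-i$, so the projections collapse. (c) Use Lemma~\ref{AB relations-re}(1)–(2), equivalently Lemma~\ref{caup3}, to slide the decorating dot of the cap around the zigzag onto the through-strand. This turns a dot $x$ on the $-i$ strand into $-x$ on the $i$ strand. The combined decoration then becomes a product of the form $a_{E_{-i}V,i}(x)\cdot b_{V,i}(x)$, possibly with $x\mapsto -x$, evaluated on the same strand $E_iV$ but with respect to different objects ($V$ versus $E_{-i}V$). (d) Apply Lemma~\ref{ii21}(1)–(2), which relates $n_{E_{\mp i}V,i}/m_{E_{\mp i}V,i}$ to $n_{V,i}/m_{V,i}$ times explicit factors, to show that this product is $1$, or $2^{\mp\delta_{i,\frac12}}$ compensated by the scalar built into the definition when $i=\frac12$. (e) Conclude with the Brauer zigzag relation of Definition~\ref{C defn}(1). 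Relation (1b) for the downward orientation is handled the same way, with the roles of $a$ and $b$ exchanged. Alternatively it can be obtained by applying the symmetry $\sigma$ of Lemma~\ref{anti}, if the definitions are compatible with it.

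The main obstacle is step (d). One must keep exact track of which object each normalizing polynomial refers to after the dot slides across a cup. The factors $n_{W,i}$ and $m_{W,i}$ depend on $W$, and passing through a cup changes $W$ from $V$ to $E_{\pm i}V$. Moreover, at the boundary colour $i=\frac12$ the extra factors $(u\mp i)^2-1$ in Lemma~\ref{ii21} contribute $u\mp\frac12$ terms that do not cancel symmetrically. I would first check the generic case $i>\frac12$, where the quotients in Lemma~\ref{ii21} cancel cleanly. I would then treat $i=\frac12$ separately, verifying that the leftover factor $2u\pm1$, evaluated at the generalized eigenvalue, exactly absorbs the $2^{\delta_{i,\frac12}}$ normalization appearing in Definition~\ref{natrual-basic}. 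This verification would use the uniqueness of the monic lifts in Corollary~\ref{lift-n}, so that the identities between lifted factors hold in $Z_{\lambda,V}[u]$ and not merely modulo $J_{\lambda,V}$.
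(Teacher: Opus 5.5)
Your outline (unwind the red cups and caps into black ones carrying normalizing dots, slide the dots with Lemma~\ref{AB relations-re}(1)--(2), finish with Definition~\ref{C defn}(1)) matches the paper's. However, step~(d) rests on a bookkeeping error.

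In a zigzag, the normalized cup and cap of Definition~\ref{natrual-basic}(3) are not both instantiated at $V$.
In one zigzag the cup is instantiated at $V$ and the cap at $E_{-i}V$. Both labels then lie on the middle strand relative to the same object $E_{-i}V$, and they cancel on the nose because $a_{W,i}(x)\,b_{W,i}(x)=1$.
In the other zigzag the cup is instantiated at $E_iV$ and the cap at $V$. The labels are then $a_{E_{-i}E_iV,i}(x)$ and $b_{V,i}(x)$. These are relative to two objects of the \emph{same} weight $\lambda$, not to $E_{-i}V$ and $V$.

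What this second zigzag needs is Lemma~\ref{tvi}. It says that $W\mapsto b_{W,i}(x)$ (that is, $x$ times the transformation $\eta$ of that lemma) is a natural automorphism $\theta$ of $E_i$, with inverse $W\mapsto a_{W,i}(x)$. By the interchange law the zigzag equals $\theta\circ(\text{black zigzag})\circ\theta^{-1}=\mathrm{id}$. Equivalently, after sliding with Lemma~\ref{AB relations-re}(1)--(2), the two labels meet on one strand relative to one object and multiply to $1$. This is the paper's proof: the first identity in each pair is immediate, and the second uses the interchange law plus dot sliding. It uses neither Lemma~\ref{ii21} nor a separate treatment of $i=\frac12$.

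Step~(d) as you state it would actually fail. Lemma~\ref{ii21} moves the bubble coefficients of a label across a strand into the neighbouring weight region. The cost is an extra factor in the dot of that strand. In the zigzag, the dot on the $-i$ strand equals minus the dot on the $i$ strand joined to it by the cup or cap. So that factor becomes a non-constant rational function of $x$, of the type $\frac{4x^2}{1-4x^2}$, or $-\frac{4x^2}{1+2x}$ when $i=\frac12$. Compare the function $b(x)$ in the proof of Lemma~\ref{lrcrossing} and the conversions in Lemma~\ref{caup3}.

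Consequently the product $a_{E_{-i}V,i}(x)\,b_{V,i}(x)$ you aim for is not $1$. No scalar $2^{\pm\delta_{i,\frac12}}$ can absorb it, and uniqueness of monic lifts cannot repair a discrepancy that is a genuine rational function of the dot.

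The source of the error is your remark that passing through a cup changes $W$ from $V$ to $E_{\pm i}V$. Sliding a dot over a cup or cap leaves its central coefficients in their region. The only change of object that occurs (e.g.\ $E_{-i}E_iV$ versus $V$) stays inside one weight, and it is governed by naturality rather than by the bubble-slide identities.
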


\begin{proof}
The first identity in each of Definition~\ref{def-ikmc}(1a) and Definition~\ref{def-ikmc}(1b) follows from Definition~\ref{C defn}(1) together with Definition~\ref{natrual-basic}(1)--(3). By the interchange law in the endomorphism category,  we have  
%这个等式没有，是在图上才有
\[\mathord{% [inline block 16: 6 envs, 4015 chars in 2 pieces, piece 1 here, a bare % at each other -> data_tex | \begin{tikzpicture}[baseline = 0, scale=0.8, transform shape]   \draw[-,thick,black] (0.3,0) to (0.3,-.4);...]
}.\]
Evaluating at $V$ and using Lemma~\ref{AB relations-re}(1)--(2) and Definition~\ref{C defn}(1), we obtain:
$$\mathord{
%
.
}$$
This proves the second relation in Definition~\ref{def-ikmc}(1a). The second relation in Definition~\ref{def-ikmc}(1b) is  proved in the same way, so we omit the details.
\end{proof}

{For $i, j\in I^\imath$}, we define the following  rational functions in the  variables $x_1$ and $ x_2$: 
\begin{equation}\label{sx1x2} s(x_1, x_2)=\begin{cases}h(x_1,x_2) & \text{ if $ j=i$,}
\\\frac{(x_2-x_1)^2}{((1-(x_2-x_1)^2)(x_1+x_2)}  &\text{ if $ j=i+1$,}
\\\frac{(x_2-x_1)^2(1+x_1+x_2)}{(1-(x_2-x_1)^2)(x_1+x_2)}   &\text{ otherwise,}
\end{cases}\ \ t(x_1,x_2)=\begin{cases}\frac{1}{1+x_1+x_2} & \text{ if $ i=j$, }
\\x_1+x_2  &\text{ if $ i=j+1$,}
\\\frac{x_1+x_2}{1-x_1-x_2}  &\text{ otherwise,}
\end{cases}
\end{equation}
where \begin{equation}\label{h12}
h(x_1,x_2)=
      \frac{(x_1+x_2-1)(x_1-x_2)^2}{(x_2-x_1+1) (x_2-x_1-1)^{1-\delta_{i,1/2}}}.\end{equation}
      
\begin{Lemma}\label{lrcrossing}Suppose that \(i,j\in I^\imath\). Then, when evaluated on any finitely
generated object \(V\in\mathcal M_\lambda\), the natural transformations
defined in Definitions~\ref{natrual-basic} and~\ref{natrual-basic1}
satisfy the following relations:

	 \begin{itemize}\item[(1)]    $
% [inline block 17: 4 envs, 9038 chars -> data_tex | \begin{tikzpicture}[baseline = 0, darkred] 	\draw[<-,thick] (0.28,-.3) to (-0.28,.4);...]
\right.$\end{itemize}
where  $c_V(x)=\frac{b_{V,j}(-x)(2x-1)(-1-2x)^{1-\delta_{j, 1/2}}}{4x^2}$, 
% c_V(x)要用\frac{b_{V,j}(-x)(2x-1)(-1-2x)^{1-\delta_{j, 1/2}}}{4x^2}这个形式 后面算两个crossing复合等公式的时候都要的\frac{b_{V,j}(-x)(2x-1)(-1-2x)^{1-\delta_{j, 1/2}}}{4x^2} 本来这个引理就是为后面服务的
$d_V(x)=a_{V, j}(x)$, and  $s(x_1, x_2)$ is defined as in \eqref{sx1x2}. \end{Lemma}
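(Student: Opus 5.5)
The relations in Lemma~\ref{lrcrossing} express the left and right crossings of Definition~\ref{natrual-basic1} as explicit composites of affine Brauer morphisms. The plan is to substitute Definitions~\ref{natrual-basic} and~\ref{natrual-basic1} into each side and reduce both to a single black Brauer diagram on $E_{\pm i}E_{\pm j}V$, decorated by rational functions of the dots. By Remark~\ref{nateq} it suffices to work with a fixed finitely generated $V\in\mathcal M_\lambda$. All rational expressions in $x_1,x_2$ are then finite power series in nilpotent operators, because on the relevant generalized eigenspaces the denominators $\ell^{ab}_{\sigma\tau}$, $d^{ab}_{\sigma\tau}$, $n_{V,i}(x)$ and $m_{V,i}(x)$ reduce to nonzero scalars.

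The first step is to rotate the upward crossing $\tau$ of Definition~\ref{natrual-basic1}(1) by composing with the red cups and caps of Definition~\ref{natrual-basic}. This produces a black diagram consisting of an affine Brauer crossing, or the crossing plus its correction term $(x_1-x_2)^{-1}$ and the cap--cup term, wrapped by a black cup and cap. The normalising factors $a_{E_{-i}V,i}(x)$ and $b_{V,j}(x)$ from the red cups and caps remain attached. I would then straighten the black diagram with Definition~\ref{C defn}(1), Lemma~\ref{AB relations-re}(1)--(3) and Corollary~\ref{caup}(2),(3). These allow dots to pass through cups and caps, with $x\mapsto -x$, and through crossings at the cost of cap--cup terms. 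The cap--cup terms are killed by Lemma~\ref{cap-cup-move} and Lemma~\ref{dotslidecrossing} whenever the colours are not opposite, which handles every case except those in which $\pm\frac12$ appears.

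The second step collects the resulting scalar factors. Moving $b_{V,j}$ across the strand uses Lemma~\ref{ii21}(3)--(4), which relates $n_{E_iV,j}/m_{E_iV,j}$ to $n_{V,j}/m_{V,j}$ through the factor $\frac{(u+i)^2-1}{(u-i)^2-1}\frac{(u-i)^2}{(u+i)^2}$ of Lemma~\ref{ou}(2), evaluated at the dot on the other strand. This is exactly where the functions $s(x_1,x_2)$ and $t(x_1,x_2)$ in \eqref{sx1x2} come from. The denominator $1-(x_2-x_1)^2$ arises from $(u-i)^2-1$, and $x_1+x_2$ arises from $u+i$ after the sign flip. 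The factor $1+x_1+x_2$ survives precisely when $j\ne i+1$, since when $j=i+1$ it cancels against the eigenvalue-adjacent factor. The prefactors $c_V(x)$ and $d_V(x)=a_{V,j}(x)$ come from the leftover cup and cap normalisations in Definition~\ref{natrual-basic}(3) and Lemma~\ref{caup3}.

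I would split the argument into the cases $i=j$, $j=i\pm1$, and all other cases. Within $i=j$ I would treat $i=\frac12$ separately, because there Lemma~\ref{ii21} changes and the exponent $1-\delta_{i,1/2}$ in \eqref{h12} appears. The signs $t_{ji}$ from \eqref{tij} should match the sign changes produced by $x\mapsto -x$ on one strand.

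The main obstacle is the case $i=j$, especially $i=\frac12$. There the crossing is not invertible, and the relation of Lemma~\ref{ii} must be used in place of Lemma~\ref{crossing-inverse}. The cap--cup term between colours $\frac12$ and $-\frac12$ also does not vanish. I would handle this by expanding with Lemma~\ref{ii} and then comparing the two sides as formal rational functions in $x_1,x_2$, clearing denominators and checking the resulting polynomial identity directly. If the identity becomes unwieldy, a symbolic check of the kind recorded in Section~9 would be the fallback.
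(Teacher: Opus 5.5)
Your outline is essentially the paper's proof. It substitutes Definitions~\ref{natrual-basic} and~\ref{natrual-basic1}, straightens the rotated black crossing with Definition~\ref{C defn}(1),(3) and Lemma~\ref{AB relations-re}, converts the red cups and caps with Lemma~\ref{caup3}, passes from the normalising factors on $E_iV$ to those on $V$ by Lemma~\ref{ii21} (part~(3) for $i\neq j$, part~(1) for $i=j$), and slides dots with Lemma~\ref{dotslidecrossing}. One correction: the non-vanishing cap--cup term occurs for every $i=j$, not only when $\pm\frac12$ appears, and it needs neither Lemma~\ref{ii}, Lemma~\ref{crossing-inverse}, nor a symbolic check---the paper settles it with Lemma~\ref{dotslidecrossing} and a short direct computation using the explicit forms of $c_V$ and $d_V$.
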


%\comment{Song: In the following, the another form of the notation is as follows:
%\[ a(x)=\frac{n_{E_{-j}V,j}(x)}{xm_{E_{-j}V,j}(x)}, \quad b(x)=\frac{x m_{V,j}(x)}{n_{V,j}(x)}\] and 
%\[ c_V(x)=a(-x).\]}

\begin{proof} We use Lemma~\ref{dotslidecrossing} to move the bullets from the bottom to the  top, and obtain 
the second equality for each of the three cases in (1).
For the  first equality in each case, we use  
Definition~\ref{natrual-basic}(2), Definition~\ref{natrual-basic1}(1), and
the first relation in Definition~\ref{natrual-basic1}(3) to convert the red diagrams into the black diagrams,  and apply Lemma~\ref{dotslidecrossing} to move the bullets appropriately. 
 This proves (1).

Evaluating at  $V$ and  using Definition~\ref{natrual-basic1}(1),  Lemma \ref{AB relations-re}, and  Definition~\ref{C defn}(3), we have  
\begin{equation}\label{lcross} % [inline block 18: 7 envs, 5647 chars in 2 pieces, piece 1 here, a bare % at each other -> data_tex | \begin{tikzpicture}[baseline = 0, scale=0.8] 	\draw[-,thick,darkred] (0.2,-.3) to (-0.2,.3);...]
,\end{equation}
where $t(x_1, x_2)$ is defined as in \eqref{sx1x2}. 
Using  Definition \ref{natrual-basic1}(2),  the second relation in  Definition \ref{natrual-basic1}(3),  equation \eqref{lcross}, and Lemma~\ref{adj}, we obtain:  \begin{equation}
  \label{redijleft}  
\begin{aligned}
   %
,
\end{aligned}
\end{equation}
where   $$b(x)=\begin{cases} \frac{1-4x^2}{4x^2}b_{E_iV,j}(x) & \text{if $j\neq 1/2$,}\\
\frac{(1+2x)}{-4x^2}  b_{E_iV, j}(x)  &\text{if $j=1/2$.}\end{cases} $$
To obtain the third equality, we use Lemma~\ref{caup3} to convert the red cup (respectively,  cap) into the black cup (respectively, cap). We explain how to obtain the last equality in~\eqref{redijleft},  and prove the formula in each case of (2).

If $i\neq j$, then Lemma~\ref{ii21}(3) gives the last equality in
\eqref{redijleft}. Together with \eqref{redijleft} and
Lemma~\ref{dotslidecrossing}, this yields the desired formula in each
case of (2) under the assumption $i\neq j$.

If $i=j$, then the last equality in \eqref{redijleft} follows from
Lemma~\ref{ii21}(1). This proves the first equality in (2) in this case.
It remains to prove the second equality in (2) when $i=j$. We prove this
under the additional assumption that $i=\frac{1}{2}$; the case
$i\neq \frac{1}{2}$ is analogous. By Lemma~\ref{dotslidecrossing} and a
straightforward computation, we obtain
$$ \begin{aligned}
    % [inline block 19: 4 envs, 3209 chars -> data_tex | \begin{tikzpicture}[baseline = 0, scale=0.8, ] 	\draw[-] (0.38,-.4) to (-0.38,.5);...]

\end{aligned}
$$ as required.
Note that the second equality follows from  the identities:  $$c_{V}(x) =
      \frac{(1-2x) }{4x}\frac{m_{V,1/2}(-x)}{n_{V,1/2}(-x)},\quad  d_V(x)=\frac{n_{V,1/2}(x)}{xm_{V,1/2}(x)}. $$
    This completes the proof of (2). 
\end{proof}

\begin{Lemma}\label{cyc-x}(Cyclicity of $x$ and $\tau$) The natural transformations defined in Definitions~\ref{natrual-basic} and ~\ref{natrual-basic1} satisfy the relations in Definition~\ref{def-ikmc}(2a)-(2b). 
\end{Lemma}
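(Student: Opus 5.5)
Following Remark~\ref{nateq}, I will check both relations only after evaluating at an arbitrary finitely generated object $V\in\mathcal M_\lambda$. Every red $2$-morphism is, by Definitions~\ref{natrual-basic} and~\ref{natrual-basic1}, a black affine Brauer diagram. That diagram is decorated by idempotent projections onto the generalized eigenspace summands and by rational functions of dots. These rational functions, such as $a_{V,i}(x)$, $b_{V,i}(x)$, $t_{ji}$, and inverses of $\ell^{ab}_{\sigma\tau}$ and $d^{ab}_{\sigma\tau}$, act invertibly on the relevant summands by Lemma~\ref{tvi}. So the plan is to translate each side of (2a) and (2b) into black diagrams and compare them with the affine Brauer relations.

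For (2a), cyclicity of the dot $x$, I will first expand the downward dot $x'$ in terms of the upward dot conjugated by the red cup and cap. I will then replace the red cup and cap by black ones using Lemma~\ref{caup3}, where the normalizing factors $a_{E_{-i}V,i}(x)$ and $b_{V,i}(x)$ appear. The key point is that these factors are power series in the dot on a single strand. By Lemma~\ref{ortho-dotcom}(3)--(4) they commute with the dot, so they cancel against each other through the zigzag relation of Definition~\ref{C defn}(1). After that cancellation the remaining black identity is the slide of a dot through a cup or cap. Lemma~\ref{AB relations-re}(1)--(2) supplies that slide, with the sign change $x\mapsto -x$ which is absorbed in the definition of $x'$ on $E_{-i}$ (Definition~\ref{natrual-basic}(1)). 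I will do the left and right adjunctions separately, since the normalizations differ, and at $i=\frac12$ I will track the extra factor $2^{\delta_{i,1/2}}$.

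For (2b), cyclicity of the crossing $\tau$, I will rotate the upward crossing by $180^\circ$ using red cups and caps and show the result equals the downward crossing $\tau'$ from Definition~\ref{natrual-basic1}(1). I will again convert everything to black. The black crossing rotated by cups and caps is itself a crossing by Lemma~\ref{AB relations-re}(3) and Corollary~\ref{caup}(4); equivalently this is the horizontal symmetry of Lemma~\ref{anti}. The eigenvalue labels become $-i,-j$, which are exactly the labels of the downward strands. The crossing's rational normalization is $t_{ji}$ times a function of $x_1,x_2$, and rotating replaces $(x_1,x_2)$ by $(-x_2,-x_1)$. I will then use Lemma~\ref{dotslidecrossing} to move the cup/cap normalizers $a$ and $b$ across the crossing, swapping $x_1$ and $x_2$. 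The normalizers on strand $i$ and on strand $j$ then cancel pairwise, via Lemma~\ref{ii21}(3)--(4), which relate $n_{E_jV,i}/m_{E_jV,i}$ to $n_{V,i}/m_{V,i}$. The leftover ratio should be exactly the rational function defining $\tau'$.

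I expect the main obstacle to be this last bookkeeping step in (2b). The ratio from Lemma~\ref{ii21}(4) depends on whether $j=i\pm1$ and on whether $i$ or $j$ equals $\frac12$, where $(u\pm\frac12)^2-1$ acquires the factor $u\mp\frac12$. So the argument must split into the cases $i=j$, $j=i+1$, $i=j+1$, and the remaining cases, with $i$ or $j$ equal to $\frac12$ treated separately. The sign $t_{ji}$ must come out consistently in each case. For $i=j$ I will instead use Lemma~\ref{ii} together with Lemma~\ref{crossing-inverse}, and I expect that subcase to need an explicit check of a rational-function identity.
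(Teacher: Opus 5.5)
Your argument for (2a) is the paper's. Once the red left cups and caps satisfy the zigzag identities (Lemma~\ref{adj}), the dot slides through black cups and caps (Corollary~\ref{caup}(1)) and commutes with the single-strand normalizers, so nothing more is needed. For (2b) your overall strategy is also what the paper does: convert to black, move the cup/cap normalizers with Lemma~\ref{dotslidecrossing}, and compare the normalizers of $V$ with those of the object carrying one extra strand via Lemma~\ref{ii21}. The paper organizes it a little differently. The first equality of (2b) holds by Definition~\ref{natrual-basic1}(2), so only the second equality requires computation. One side is written in black in \eqref{downarrow}, and the other in \eqref{redijdown} by passing through the sideways crossings of Lemma~\ref{lrcrossing}, rather than through the global rotation invariance of the black crossing. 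The only new input there is the normalizer ratio $f_V(u)$ attached to $E_{-i}V$ and $E_{-j}E_{-i}V$.

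The step that would fail as written is the subcase $i=j$.
\begin{itemize}
\item Lemma~\ref{crossing-inverse} assumes $j\neq\pm i$, so it is unavailable there.
\item Lemma~\ref{ii}, which concerns crossings between strands labeled $\pm i$, does not arise in your rotation. The only crossings involved carry labels $(i,i)$ or $(-i,-i)$. Sliding dots through them produces only identity correction terms, since the cup--cap term is killed by Lemma~\ref{cap-cup-move} (as $i\neq -i$).
\end{itemize}
What the case $i=j$ actually requires is Lemma~\ref{ii21}(2), which describes how $n_{\cdot,i}/m_{\cdot,i}$ changes when an $E_{-i}$-strand is added. The parts (3)--(4) that you cite cover only $i\neq j$. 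Lemma~\ref{ii21}(2) is where the factors of Lemma~\ref{ou}(2) enter, including the extra factor $u-i$ inside $(u+i)^2-1$ at $i=\frac12$, and they produce the $i=j$ formula. Together with Lemma~\ref{dotslidecrossing} this settles the case. The paper needs no rational-function identity beyond the $i=j$ computation already made in Lemma~\ref{lrcrossing}.
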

\begin{proof} All relations in Definition~\ref{def-ikmc}(2a) follow immediately  from Corollary~\ref{caup}(1) and Lemma~\ref{adj}. 
Suppose  $i, j\in I^\imath$.
Acting on $V$, we use  Definition~\ref{natrual-basic}(2) and Definition~\ref{natrual-basic1}(1)
to  convert the red diagram into a  black diagram, and  obtain 
\begin{equation}\label{downarrow} t_{ij}^{-1}\mathord{
% [inline block 20: 5 envs, 5437 chars in 2 pieces, piece 1 here, a bare % at each other -> data_tex | \begin{tikzpicture}[baseline = 0, scale=0.5, ] \draw[->,thick,darkred] (1.3,.4) to (1.3,-1.2);...]
\right.\end{equation} where $t_{ij}$ is defined in ~\eqref{tij}.
Recall the rational function  $s(x_1, x_2)$ from  \eqref{sx1x2}, and define 
$$f_V(u)=\frac{m_{E_{-j}E_{-i}V, i}(u)}{n_{E_{-j}E_{-i}V, i}(u)}\frac{n_{E_{-i}V, i}(u)}{m_{E_{-i}V, i}(u)}.$$ Acting on the finitely generated object $V$ yields
\begin{equation}
  \label{redijdown}  
   %
 
=\text{RHS of \eqref{downarrow}}. 
\end{equation}
Here the first equality follows from Definition~\ref{natrual-basic1}(2),
the second relation in Definition~\ref{natrual-basic1}(3), and
Lemma~\ref{adj}.  To obtain 
the second equality above, we use the second relation in
Definition~\ref{natrual-basic}(3) and Lemma~\ref{lrcrossing} to convert the red diagram into a black diagram; then   
Lemma~\ref{dotslidecrossing}  moves dots appropriately. This gives the case $i\neq j$. 
For the last equality in \eqref{redijdown}, the case $i=j$ follows from
Lemma~\ref{ii21}(2) and Lemma~\ref{dotslidecrossing}, while the case
$i\neq j$ follows from Lemma~\ref{dotslidecrossing}  and Lemma~\ref{ii21}(4).
Thus, the first equality in Definition~\ref{def-ikmc}(2b) follows from
Definition~\ref{natrual-basic1}(2), and the second equality in
Definition~\ref{def-ikmc}(2b) follows immediately from
\eqref{downarrow}--\eqref{redijdown}.
\end{proof}

\begin{Lemma}~\label{qha}(Quiver Hecke relations) The natural transformations defined in Definitions~\ref{natrual-basic} and \ref{natrual-basic1} satisfy the relations in Definition~\ref{def-ikmc}(3a)-(3c). 
 \end{Lemma}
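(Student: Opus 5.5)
The plan is to reduce each quiver Hecke relation to an identity among black affine Brauer diagrams on generalized eigenspaces, following the strategy of \cite[Theorem~4.3]{BSW-heisenberg}. By Remark~\ref{nateq}, it suffices to evaluate both sides on an arbitrary finitely generated object \(V\in\mathcal M_\lambda\). There only finitely many eigenspace summands are nonzero, and all dots act with nilpotent shifts. Hence every rational normalizing factor in Definitions~\ref{natrual-basic} and~\ref{natrual-basic1} is a genuine polynomial in the dots there. Only upward strands occur in (3a)--(3c), so the upward red crossing \(\tau\) and the upward dot \(x\) are the only generators involved. By Definition~\ref{natrual-basic1}(1), each of these is a black crossing or dot of \(\mathcal{AB}\), projected to the summands \(E_j E_i\). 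The crossing is multiplied by a rational function of \(x_1,x_2\): for \(i\neq j\) this is the inverse-type normalization of Lemma~\ref{crossing-inverse}, and for \(i=j\) a factor \((x_1-x_2)^{-1}\)-type correction as in the type \(A\) setting, weighted by \(t_{ij}\).

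The first step is (3a), the dot–crossing relations. I would expand the red crossing into the black one and then apply Lemma~\ref{dotslidecrossing}(1)--(2). The key point is the correction term involving the cup–cap \(\lcup\circ\lcap\). It is supported only on summands with colors \(\{i,-j\}\), and since \(i,j\in I^\imath\) are both positive, \(i\neq -j\). Lemma~\ref{cap-cup-move} therefore shows that this term vanishes on \(E_jE_i\). What remains is the degenerate affine Hecke relation, and this yields exactly the \(\delta_{ij}\) term after the normalization.

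The next step is (3b), the quadratic relation. Composing two red crossings gives a black double crossing conjugated by the normalizing functions. The black double crossing is the identity by Definition~\ref{C defn}. The normalizing functions then multiply to a product of the form \(Q_{ij}(x_1,x_2)\). I would check this product against the Cartan data \eqref{cij} and the signs \(t_{ij}\) in \eqref{tij} case by case: \(j=i\), \(|i-j|=1\), and \(|i-j|>1\). The case \(i=j\) gives zero because the \((x_1-x_2)\) factors force it, exactly as in \cite[Lemma~4.2]{BSW-heisenberg}.

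The last step is (3c), the braid relation with its inhomogeneous correction when \(k=i\) and \(|i-j|=1\). This is where I expect the main obstacle. My approach is to use the black braid relation, Corollary~\ref{slidecro} together with Definition~\ref{AB defn}(1), and then commute the rational prefactors through the crossings via Lemma~\ref{dotslidecrossing}. The defect terms are then governed by divided differences of the normalizing function, in the spirit of Lemma~\ref{usefuel-equa}(1). Two delicate points must be handled. First, the cup–cap terms from Lemma~\ref{dotslidecrossing} must vanish; they do, because all three colors are positive, by Lemma~\ref{cap-cup-move}. Second, the factors \(1\pm(x_a-x_b)\) must reproduce the polynomial \(\frac{Q_{ij}(x_3,x_2)-Q_{ij}(x_1,x_2)}{x_3-x_1}\) with the correct sign \(t_{ij}\). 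I would first treat the generic case with all colors distinct and \(j\neq i\pm1\). Then I would handle \(j=i\pm1\) by an explicit rational-function computation. Special care is needed at \(i=\frac12\), where the factors \((u\mp i)^2-1\) acquire extra zeros, as noted in Lemma~\ref{ii21}.
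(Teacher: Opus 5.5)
Your proposal follows essentially the paper's route. On the positive-colour summands $\bigoplus_{i,j\in I^\imath}E_iE_jV$, the cup--cap term in the dot--crossing relation vanishes because $i\neq -j$ (Lemmas~\ref{cap-cup-move} and~\ref{dotslidecrossing}), which leaves the degenerate affine Hecke relations; since the upward dot and crossing are given by the same formulas as in \cite{BSW-heisenberg}, relations (3a)--(3c) reduce to the standard Brundan--Kleshchev computation, which the paper cites (\cite[Theorem~4.11]{BSW-heisenberg}) rather than redoing case by case. The extra care you anticipate at $i=\frac12$ is not needed: the upward generators carry no bubble-polynomial factors, so Lemma~\ref{ii21} plays no role in (3a)--(3c).
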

\begin{proof}  For the finitely generated object $V$, only finitely many summands  $E_iE_jV$ are nonzero. Let $V^+:= \bigoplus_{i,j\in I^\imath} E_iE_jV$ denote  this finite direct sum. 
Since $i,j>0$ for all $i,j\in I^\imath$, it follows from  Lemma \ref{dotslidecrossing} that, when evaluated  on $V^+$, 
\[ \begin{tikzpicture}[baseline = 7.5pt, scale=0.5]
			\draw[-,thick] (0,0) to[out=up, in=down] (1,2);
			\draw[-,thick] (0,2) to[out=up, in=down] (0,2.2);
			\draw[-,thick] (1,0) to[out=up, in=down] (0,2);
			\draw[-,thick] (1,2) to[out=up, in=down] (1,2.2);
			\node at (0,1.9) {$\scriptstyle\bullet$};
		\end{tikzpicture}
		~-~
		\begin{tikzpicture}[baseline = 7.5pt, scale=0.5]
			\draw[-,thick] (0,0) to[out=up, in=down] (1,2);
			\draw[-,thick] (0,0) to[out=up, in=down] (0,-0.2);
			\draw[-,thick] (1,0) to[out=up, in=down] (0,2);
			\draw[-,thick] (1,0) to[out=up, in=down] (1,-0.2);
			\node at (1,0.1){$\scriptstyle\bullet$};
		\end{tikzpicture}
		~=~		\begin{tikzpicture}[baseline = 7.5pt, scale=0.5]
			\draw[-,thick] (0,0) to[out=up, in=down] (0,2);
			\draw[-,thick] (1,0) to[out=up, in=down] (1,2);
		\end{tikzpicture}.
        \]
 This coincides with the corresponding relation in  the degenerate Heisenberg category. Moreover, 
since Definition~\ref{natrual-basic}(1) and Definition~\ref{natrual-basic1}(1) agree with  the corresponding  definitions  in \cite[\new{Theorem 4.1}]{BSW-heisenberg},  all  relations in Definition~\ref{def-ikmc}(3a)-(3c) have already been verified  in the proof of \cite[Theorem 4.11]{BSW-heisenberg}; see also \cite[Theorem 1.1]{BK-klr} for the algebraic formulation. 
\end{proof}

 \begin{Lemma}\label{bubble1} (Bubble relations) %For any $i\in I^\imath$, 
 The natural transformations defined in Definitions~\ref{natrual-basic} and ~\ref{natrual-basic1} satisfy the relations  in  Definition~\ref{def-ikmc}(4a)--(4e). 
 %\eqref{bubble111}, \eqref{bubble2} and \eqref{bubble3} 
   \end{Lemma}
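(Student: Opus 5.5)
Since the bubble relations (4a)--(4e) concern only dotted bubbles of a single colour $i\in I^\imath$ acting on a single weight category $\mathcal M_\lambda$, the plan is to work locally: fix $\lambda\in X_\imath^+$ and a finitely generated object $V\in\mathcal M_\lambda$, which suffices by Remark~\ref{nateq}. Using Definition~\ref{natrual-basic}(3) and Definition~\ref{natrual-basic1}(3), each red clockwise or anticlockwise dotted $i$-bubble will be rewritten as a black bubble on the summand $E_i$ or $E_{-i}$, decorated by the rational function $a_{V,i}(x)$ or $b_{V,i}(x)$, multiplied by $x^r$. Lemma~\ref{caup3} and Lemma~\ref{ii21}(1)--(2) will be used to pass between $a_{E_{-i}V,i}$ and $a_{V,i}$, and Lemma~\ref{AB relations-re}(1) to move dots around the black bubble.

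The vanishing and normalization relations (4a)--(4b) then follow directly. The red bubbles of degree below the threshold vanish, and the first nonzero one equals $1$ (respectively $2^{\delta_{i,\frac12}}$ at the exceptional colour); this is exactly what Lemma~\ref{lem:bubbler} and Lemma~\ref{lem:bubbler-inv} compute. I will only match the red degree conventions with the thresholds $-\operatorname{def}(\lambda)_i-1$ and $\operatorname{def}(\lambda)_i-2^{\delta_{i,\frac12}}$, and with Corollary~\ref{cor:phievi}, which identifies $\operatorname{def}(\lambda)_i$ with $\langle{}^\theta\alpha_i^\vee,\lambda\rangle$, shifted by $1$ when $i=\frac12$. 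The fake bubbles are defined in Definition~\ref{natrual-basic1}(4) so that the same generating-series formulas extend to negative $r$. Relations of this kind that hold by construction will be noted and skipped.

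The main step, and the one I expect to be the obstacle, is the infinite Grassmannian relation (4e). Here the product of the two generating series of clockwise and anticlockwise $i$-bubbles must equal the prescribed series, which carries the inhomogeneous factor at $i=\frac12$. My plan is to express both generating series in closed form in $u$. Following the chains \eqref{kkk1} and \eqref{lll1}, the clockwise series should become
\[
\frac{t_{V,i}(u)}{(u-i)^{\epsilon_i(V)}}
\]
and the anticlockwise one
\[
\frac{(2u-1)^{\delta_{i,\frac12}}(u-i)^{\epsilon_i(V)}}{t_{V,i}(u)},
\]
up to the shift by the sign $u\mapsto -u$ coming from $E_{-i}$. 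Both should be taken as Laurent series in $u^{-1}$ with coefficients in $Z_{\lambda,V}$. Their product telescopes to $(2u-1)^{\delta_{i,\frac12}}$, the factor $t_{V,i}$ cancelling. Matching this against the normalization in Definition~\ref{def-ikmc}(4e) (including the $2^{\delta}$ rescalings of $\eta$ and $\epsilon$) gives the relation. The delicate points are keeping track of the sign $u\mapsto -u$, which enters via Lemma~\ref{ou}(1), and the factor $4u^2-1$ split as $(2u-1)(2u+1)$ with $2u+1$ absorbed into $f_1$. I would check these first for $i\neq\frac12$, then redo the bookkeeping at $i=\frac12$.
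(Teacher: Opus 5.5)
Your proposal is correct and is essentially the paper's proof: the vanishing and normalization relations, which are (4a)--(4d) rather than just (4a)--(4b), follow from Lemmas~\ref{lem:bubbler} and~\ref{lem:bubbler-inv} together with \eqref{equ:phiminusepslon}. Relation (4e) follows by multiplying the two generating series, whose closed forms (the expressions in \eqref{kkk1} and \eqref{lll1}, rewritten in the red-dot variable and extended to fake bubbles) are exactly what Definition~\ref{natrual-basic1}(4) records; their product is $2^{\delta_{i,\frac12}}u^{\delta_{i,\frac12}}$, which the paper matches with (4e) using \eqref{scalar23}, and the sign $u\mapsto -u$ you worry about is already absorbed in \eqref{lll1} via Lemma~\ref{ou}(1).
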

\begin{proof}
By \eqref{equ:phiminusepslon}, all relations in Definition~\ref{def-ikmc}(4a)--(4d) follow from Lemmas~\ref{lem:bubbler}--\ref{lem:bubbler-inv}. 
 Moreover,  it follows from Definition~\ref{natrual-basic1}(4)  that 
    \[
  \left( \sum_{r\in \mathbb Z}
   \begin{tikzpicture}[baseline = 0]
  \draw[->,thick,darkred] (0.2,0.2) to[out=90,in=0] (0,.4);
  \draw[-,thick,darkred] (0,0.4) to[out=180,in=90] (-.2,0.2);
\draw[-,thick,darkred] (-.2,0.2) to[out=-90,in=180] (0,0);
  \draw[-,thick,darkred] (0,0) to[out=0,in=-90] (0.2,0.2);
 \node at (0,-.13) {$\scriptstyle{i}$};
   %\node at (-0.3,0.2) {$\scriptstyle{\lambda}$};
   \node at (0.2,0.2) {$\color{darkred}\scriptstyle\bullet$};
   \node at (0.4,0.2) {$\color{darkred}\scriptstyle{r}$};
     \draw[-,darkg,thick] (.6,-0.5) to (.6,.5);
 \node at (0.6,-.7) {$\darkg\scriptstyle{V}$}; 
\end{tikzpicture} 
u^{-r-1}\right) 
\left(\sum_{s \in \mathbb{Z}} 
\begin{tikzpicture}[baseline = 0]
  \draw[<-,thick,darkred] (0,0.4) to[out=180,in=90] (-.2,0.2);
  \draw[-,thick,darkred] (0.2,0.2) to[out=90,in=0] (0,.4);
 \draw[-,thick,darkred] (-.2,0.2) to[out=-90,in=180] (0,0);
  \draw[-,thick,darkred] (0,0) to[out=0,in=-90] (0.2,0.2);
 \node at (0,-.13) {$\scriptstyle{i}$};
   %\node at (0.3,0.2) {$\scriptstyle{\lambda}$};
   \node at (-0.2,0.2) {$\color{darkred}\scriptstyle\bullet$};
   \node at (-0.4,0.2) {$\color{darkred}\scriptstyle{s}$};
%\node at (0.8,0.2){$\scriptstyle{
%u^{-r-1}}$};  
\draw[-,darkg,thick] (.5,-0.5) to (.5,.5);
\node at (0.5,-.7) {$\darkg\scriptstyle{V}$}; \end{tikzpicture}
u^{-s-1}\right)= 2^{\delta_{i,\frac{1}{2}}} u^{\delta_{i,\frac{1}{2}}}. \]
Together with \eqref{scalar23}, this  implies the relation in Definition~\ref{def-ikmc}(4e).
\end{proof}

\begin{Lemma}\label{sigrel} (Nodal relations)  The natural transformations defined in Definitions~\ref{natrual-basic} and ~\ref{natrual-basic1} satisfy the relations  in  Definition~\ref{def-ikmc}(5a)--(5b).
 \end{Lemma}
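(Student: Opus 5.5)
Following the pattern of the preceding lemmas, I will verify the nodal relations in Definition~\ref{def-ikmc}(5a)--(5b) on an arbitrary finitely generated object $V\in\mathcal M_\lambda$. By Remark~\ref{nateq} and Lemma~\ref{ibiad} this suffices. These relations express the two ways of closing off a sideways crossing with a dot-decorated bubble, and they identify the result with a sum of products of dotted bubbles and identity strands. Equivalently, they describe the inversion of the sideways crossing when $i$ and $j$ are equal. Evaluated on $V$, only finitely many eigenspace summands $E_iE_{-j}V$ and $E_{-j}E_iV$ are nonzero, so every sum that appears is finite.

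\textbf{Step 1: rewrite everything in black.} I will use Definitions~\ref{natrual-basic} and~\ref{natrual-basic1} together with Lemma~\ref{caup3}. Each red sideways crossing, red cup and red cap, and the dotted red bubbles of Definition~\ref{natrual-basic1}(4), become black affine Brauer diagrams decorated by the rational functions $a_{V,i}$, $b_{V,i}$, $c_V$, $d_V$, $s$ and $t$. I then collect these decorations using Lemma~\ref{lrcrossing}, whose part (2) already gives the composite of a leftward crossing with caps and cups. After Lemma~\ref{dotslidecrossing} moves the dots, the left-hand side of each nodal relation becomes a black diagram of one of two kinds.
\begin{itemize}
\item[(a)] A black crossing, if $i\neq j$. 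For $i\neq j$ the right-hand sides of (5a)--(5b) are zero, or a single term when $|i-j|=1$ is forced. This case then reduces to Lemma~\ref{dotslidecrossing}(3) and Lemma~\ref{cap-cup-move}, because the black diagram projects between eigenspaces that are orthogonal.
\item[(b)] A black ``curl plus bubble'' configuration, if $i=j$.
\end{itemize}

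\textbf{Step 2: the case $i=j$.} I will simplify the curl using Lemma~\ref{usefuel-equa}(5)--(8), which convert a dotted curl into a generating-function coefficient involving $\clock(u)$ and the difference quotient $\Delta_f$. I then apply Lemma~\ref{jjj} to turn the bubbles into $[\mathcal O_V(u)\cdots]_{u^{-r}}$ expressions. The right-hand side of the nodal relation is a sum $\sum_{r+s=\ldots}$ of a clockwise bubble with $r$ dots times $x^s$ on the strand. Written as a generating function, it is the $u^{-1}$ coefficient of a product of the bubble series with $(u-x)^{-1}$. Using \eqref{kkk1} and \eqref{lll1}, the bubble series evaluated on $V$ is $\big[t_{V,i}(u)/(u-i)^{\epsilon_i(V)}\big]$, or its inverse-type analogue. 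The identity therefore reduces to a comparison of two rational functions in $u$ and $x$, restricted to the generalized $i$-eigenspace. The rational factors $n_{E_iV,i}/m_{E_iV,i}$ that the normalizations introduce are controlled by Lemma~\ref{ii21}(1)--(2).

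\textbf{Main obstacle.} I expect the difficulty to be the boundary color $i=\tfrac12$. There the factors $(u\mp i)^2-1$ share the root $u=-\tfrac12$ with $u+i$, the normalization carries the extra power $2^{\delta_{i,1/2}}$, and $(2u-1)^{\delta_{i,1/2}}$ appears in \eqref{lll1}. My approach is to handle $i\neq\tfrac12$ first, where the computation parallels \cite[Theorem~4.11]{BSW-heisenberg} once arrows are forgotten. I will then redo the $i=\tfrac12$ computation explicitly, checking that the leftover factor $\frac{1-2x}{4x}$ in $c_V$ cancels against the extra $(2u-1)$. If the powers of $2$ do not match, I will recheck the normalization of $\epsilon'$ coming from Lemma~\ref{caup3}.
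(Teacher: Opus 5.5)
Your plan is essentially the paper's proof. There, the red diagram is converted to black via Definition~\ref{natrual-basic}(3) for (5a), or Lemma~\ref{caup3}(1) for (5b), together with Lemma~\ref{lrcrossing}. The rational dot label is then replaced by a polynomial obtained from \eqref{key-coprim}, namely $p(u)=um_{V,i}(u)g(u)$, resp.\ $4un_{V,i}(u)g(u)$. The black diagram is evaluated through $\clock(u)$ by Lemma~\ref{usefuel-equa}(3), resp.\ (4), and the result is matched with the generating functions of Definition~\ref{natrual-basic1}(4) after $u\mapsto u-i$.

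Two corrections. First, (5a)--(5b) involve only a single colour $i\in I^\imath$, so your case (a) is vacuous; the same-colour double-crossing (inversion) relations are the bicross relations (6a)--(6d), not (5). Second, no separate treatment of $i=\frac12$ is needed: the paper absorbs it into the choice $f_1(u)=m_{V,i}(u)b(u)$, $b(u)=-(2u+1)(2u-1)^{1-\delta_{i,1/2}}$, $f_2(u)=t_{V,i}(u)(u-i)^k$ in \eqref{key-coprim}.
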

\begin{proof} Suppose $i\in I^\imath$.  Using   Definition~\ref{natrual-basic}(2),
we prove  the relation  in  Definition~\ref{def-ikmc}(5a) as follows,  uniformly in the two cases     $i=1/2$ and  $i\neq 1/2$. 

We  take  $f_1(u)=n_{V, i}(u)$ and $f_2(u)=(u-i)^k$ for a sufficiently large integer $k$ in \eqref{key-coprim}. Therefore, 
there are  $g(u), h(u)\in Z_{\lambda, V}[u]$ satisfying \eqref{key-coprim}. 
Define $p(u)=um_{V,i}(u)g(u)$. When evaluated  on   $V\in \mathcal M_\lambda$, the following relations hold: 
\begin{equation}\label{capcross-red}\begin{aligned} & 
% [inline block 21: 22 envs, 14008 chars in 4 pieces, piece 1 here, a bare % at each other -> data_tex | \begin{tikzpicture}[baseline = 0]  \node at (-0.55,.45) {$\scriptstyle{-i}$};...]
}. 
\end{aligned}
\end{equation}
                                             Here, the last equation follows from the first relation in  Definition \ref{natrual-basic1}(4) after replacing $u$ by $u-i$. 
Using~Definition~\ref{natrual-basic}(3) and Lemma~\ref{lrcrossing},
we convert the  red diagram   $
  %
$ into the black diagram on the left-hand side  of \eqref{capcross-red}.
This shows that the natural transformations satisfy the relation in 
 Definition~\ref{def-ikmc}(5a).

We take  $f_1(u)=m_{V,i}(u)b(u)$ and $ f_2(u)=t_{V, i}(u) (u-i)^k$ for a sufficiently large integer $k$, where  $b(u)=-(2u+1)(2u-1)^{1-\delta_{i,\frac12}}$. 
Then 
 there exist polynomials  $g(u), h(u)\in  Z_{\lambda,V}[u]$ satisfying \eqref{key-coprim}.
Define $p(u)=4un_{V,i}(u)g(u)$.  Acting on  $V\in \mathcal M_\lambda$, we obtain: 
\begin{equation}\label{capcross-red1}
\begin{aligned}
&  
%
}. \end{aligned}\end{equation}
Here,  the last equation follows from the second relation in Definition \ref{natrual-basic1}(4) after replacing $u$ by $u-i$.
Using~ Lemma~\ref{caup3}(1) and Lemma~\ref{lrcrossing},
we convert the  red diagram  $%
$
 into the black diagram on the left-hand side of \eqref{capcross-red1}.
 This shows that the natural transformations satisfy the relation in 
 Definition~\ref{def-ikmc}(5b),  whether or not  $i=1/2$. 
\end{proof}

\begin{Lemma} \label{bicross1 downred} (Bicross relations) %Suppose $1/2\neq i\in I^\imath$. 
 The natural transformations defined in Definitions~\ref{natrual-basic} and ~\ref{natrual-basic1} satisfy the relations  in Definition~\ref{def-ikmc}(6a)-(6d). 
\end{Lemma}
\begin{proof} {Fix $i\in I^\imath$}. First, define the  rational function
\begin{equation}\label{ration-h123}
h(x_1, x_2)=\begin{cases} \frac{(x_1-x_2)^2}{1-(x_1-x_2)^2} & \text{if $i>\frac{1}{2}$,}\\ \frac{(x_1-x_2)^2}{x_1-x_2-1} 
& \text{if $i=\frac{1}{2}$, }\end{cases}
\qquad h'(x_1,x_2)=h(x_2, x_1).\end{equation}

We first assume that  $i>\frac{1}{2}$. Recall $c_V(x)$ and $d_V(x)$ in Lemma~\ref{lrcrossing}. We take 
$f_1(u)= m_{V, i}(-u) (4u^2-1)$ and $f_2(u)=  (u+i)^kt_{V,i}(-u)  $ in \eqref{key-coprim} for a sufficiently large integer $k$. Then there exist 
 polynomials $g(u), h(u)\in  Z_{\lambda, V}[u]  $ satisfying \eqref{key-coprim}. Similarly, we take $f_1(u)=   n_{V, i}(u)$ and $f_2(u)=(u-i)^k$
for a sufficiently large integer $k$. Then there exist two further  polynomials  $g'(u), h'(u)\in  Z_{\lambda, V}[u]  $   satisfying \eqref{key-coprim}.
Applying \eqref{key-coprim} yields 
$$c_V(x)^{-1}% [inline block 22: 22 envs, 27664 chars in 4 pieces, piece 1 here, a bare % at each other -> data_tex | \begin{tikzpicture}[baseline=-0.5ex,scale=0.5]    \node at (0.9, -0.7) {$\scriptstyle{-i}$};...]
 ,$$  where $p(u)=4un_{V, i}(-u) g(u)$ and $  q(u)=um_{V,i}(u)g'(u)$. To simplify the presentation of \eqref{redd1}, introduce the following notation. 
%so as to simplify  the presentation of   .
We simply write $h(x_1, x_2)$  in
\eqref{ration-h123} as $h$.
$$\begin{aligned} A_1& =\Bigg[\mathord{
 %
\end{aligned}
\end{equation}
where the fourth equality follows from applying  Lemma~\ref{usefuel-equa}(5) to $D_1$, together with the fact that  $p(u)$ annihilates
%
when evaluated on $V$. Applying Lemma~\ref{usefuel-equa}(3) to the lower part of \(A_1\), we obtain the fifth equality. Finally, applying Lemma~\ref{usefuel-equa}(5) together with Corollary~\ref{caup}(5) to \(C_1\), we obtain the sixth equality.
Lemma~\ref{lrcrossing} gives \begin{equation} \label{lhs111} \operatorname{LHS} \text{of  \eqref{redd1}} =\mathord{
%
}.\end{equation}%More precisely, the fifth equality 
%uses Lemma~\ref{usefuel-equa}(5), while the sixth equality uses Lemma~\ref{usefuel-equa}(5) together with Corollary~\ref{caup}(5).
We  compute $F_1$ on the $\operatorname{RHS} $ of \eqref{redd1} as follows. {Choose the integer $k$ above  sufficiently large so that   $k=k_1 + k_2$ for some sufficiently large positive integers   $k_1$ and $k_2$.}  
  Therefore, for any $a(u)\in Z_{\lambda, V}[u]$, we obtain the desired conclusion in \eqref{eq1}--\eqref{eq0}:
  \tikzset{
line_noarrow_solid/.style={-, black},line_arrow_solid/.style={->, black},
    line_noarrow_dashed/.style={-, densely dotted},
    line_arrow_dashed/.style={->, densely dotted},
    cross/.style={black, -},
    arc_right/.style={-, blue, bend right=80,looseness=1.5},
    arc_left/.style={-, blue, bend left=80, ,looseness=1.5},
    mi_near_start/.style={
        decoration={
            markings,
            mark=at position 0.2 with {
                \node[cplus] at (0,0) {$-$};
            }
        },
        postaction=decorate
    },
    mi_near_end/.style={
        decoration={
            markings,
            mark=at position 0.8 with {
                \node[cplus] at (0,0) {$-$};
            }
        },
        postaction=decorate
    },
    pl_near_start/.style={
        decoration={
            markings,
            mark=at position 0.2 with {
                \node[cplus] at (0,0) {$+$};
            }
        },
        postaction=decorate
    },
    dot_near_start/.style={
        decoration={
            markings,
            mark=at position 0.25 with {
                \fill[black] (0,0) circle (1.5pt);
                \coordinate (dot-start) at (0,0);
            }
        },
        postaction=decorate
    },
    dot_near_end/.style={
        decoration={
            markings,
            mark=at position 0.75 with {
                \fill[black] (0,0) circle (1.5pt);
                \coordinate (dot-end) at (0,0);
            }
        },
        postaction=decorate
    },
    a_near_start/.style={
        decoration={
            markings,
            mark=at position 0.15 with {
                \coordinate (a-start) at (0,0);
            }
        },
        postaction=decorate
    },
    a_near_end/.style={
        decoration={
            markings,
            mark=at position 0.85 with {
                \coordinate (a-end) at (0,0);
            }
        },
        postaction=decorate
    },
    line_near_start/.style={
        decoration={
            markings,
            mark=at position 0.1 with {
                \draw[thick, black] (1.5pt,-1.5pt) -- (1.5pt,1.5pt);
                \coordinate (line-start) at (0,0);
            }
        },
        postaction=decorate
    },
    line_near_end/.style={
        decoration={
            markings,
            mark=at position 0.8 with {
                \draw[thick, black] (1.5pt,-1.5pt) -- (1.5pt,1.5pt);
                \coordinate (line-end) at (0,0);
            }
        },
        postaction=decorate
    }	
}
\begin{equation}
\label{eq1}
 \mathord{\Bigg[% [inline block 23: 44 envs, 60185 chars in 6 pieces, piece 1 here, a bare % at each other -> data_tex | \begin{tikzpicture}[     baseline = 1.2cm,...]
, 
\text{ by Definition~\ref{natrual-basic1}(4).}\\
\end{aligned},\end{equation}
Here, the second equality follows from \eqref{key-coprim} and Lemma~\ref{ou}(1), and the third equality follows from \eqref{eq0} and Definition~\ref{C defn}(1). Combining \eqref{redd1} with the computation of $F_1$ in \eqref{sss1}, we obtain 

\begin{equation}\label{rhs111} \operatorname{RHS} \text{of \eqref{redd1}} =-
%
.\end{equation} 
Now the relation in Definition~\ref{def-ikmc}(6b) follows from \eqref{lhs111} and \eqref{rhs111}.

Next, we prove the relation in Definition~\ref{def-ikmc}(6a). Introduce the following notation: 
$$\begin{aligned}  A_2& = \left[
 %
,
}\\ 
\end{aligned}
\end{equation}
where  the fourth equality of \eqref{redd2} follows by applying Lemma~\ref{usefuel-equa}(5) to $D_2$, together with 
the fact that $q(u)$ annihilates
$%
$
when acting on $V$. The fifth and sixth equalities in \eqref{redd2} are obtained by arguments similar to those for the fifth and sixth equalities in \eqref{redd1}.  More precisely, the fifth equality uses Lemma~\ref{usefuel-equa}(3), while  the sixth equality uses Lemma~\ref{usefuel-equa}(5) and  Corollary~\ref{caup}(5). To obtain the last equality, we need the following relation:

\begin{equation}
    \label{sss11}
\Bigg[\mathord{%
,
\end{equation}
which follows from arguments similar to those for \eqref{sss1}.  The only difference is that we  use \eqref{eq1} (respectively, the first relation in Definition~\ref{natrual-basic1}(4)) to replace  
\eqref{eq0} (respectively, the second relation in Definition~\ref{natrual-basic1}(4)).  By Lemma~\ref{lrcrossing}, we have  $$
    %
= \operatorname{LHS} \text{ of \eqref{redd2}},$$
and the relation in
Definition~\ref{def-ikmc}(6a) follows when  $i>\frac{1}{2}$.

Now suppose that $i=\frac{1}{2}$. In this case, we use the alternative
rational functions $h(x_1,x_2)$ and $h'(x_1,x_2)$ defined in \eqref{ration-h123}. We take
$
f_1(u)=m_{V,i}(-u)(1-2u)$ and $
f_2(u)=(u+i)^k t_{V,i}(-u)$,
for a sufficiently large integer $k$ in \eqref{key-coprim}. Then there
exist polynomials $g(u),h(u)\in Z_{\lambda,V}[u]$ satisfying
\eqref{key-coprim}.   Similarly, we take $f_1(u)=   n_{V, \frac{1}{2}}(u)$ and $f_2(u)=(u-\frac{1}{2})^k$
for a sufficiently large integer $k$. Then, there  are two further polynomials  $g'(u), h'(u)\in  Z_{\lambda, V}[u]  $   satisfying \eqref{key-coprim}. We remark that  $h(x_1, x_2)$ in \eqref{ration-h123} and $h(u)$ above are unrelated; the former  is a rational function in the variables $x_1$ and $x_2$, whereas  the latter  is a polynomial in $Z_{\lambda, V}[u]$.
Note that with the new choice of $h(x_1,x_2)$, $h'(x_1,x_2)$, the left-hand side of the following equation, which occurs in \eqref{redd1} and \eqref{redd2}, becomes
$$\begin{aligned}
   & % [inline block 24: 15 envs, 5535 chars in 5 pieces, piece 1 here, a bare % at each other -> data_tex | \begin{tikzpicture}[baseline = 0, scale=1.2]     \draw[,thick,black] (0.08,-.3) to (0.08,.4);...]
.
\end{aligned}$$
We point out that \eqref{redd1} still holds if we replace $-%
$ in the last equality of \eqref{redd1} by 
$- %
$,
and that  \eqref{redd2} remains valid   if we replace both $-%
$  in the penultimate equality of \eqref{redd2} by
$
- %
$.
 With these replacements, 
 \eqref{sss1} and \eqref{sss11} remain valid for
$i=\frac{1}{2}$  after  $h(x_1,x_2)$, $g(u)$ and the related auxiliary polynomials are replaced by their   newly
defined counterparts. The relations in Definition~\ref{def-ikmc}(6c)--(6d)
then follow by the same computations as those   for
Definition~\ref{def-ikmc}(6a)--(6b), with these replacements. We omit
the details, since they are entirely analogous to the previous case.
\end{proof}

\begin{Lemma} \label{mixedup1} (Mixed relations) The natural transformations defined in Definitions~\ref{natrual-basic} and ~\ref{natrual-basic1} satisfy the relations  in Definition~\ref{def-ikmc}(7a)-(7b).\end{Lemma}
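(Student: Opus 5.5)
The plan is to follow the template already used for the bicross relations in Lemma~\ref{bicross1 downred}, but in the much simpler situation of two distinct colours. Fix $i\neq j$ in $I^\imath$ and a finitely generated object $V\in\mathcal M_\lambda$. By Remark~\ref{nateq} it suffices to check Definition~\ref{def-ikmc}(7a)--(7b) after evaluating on $V$. Since $i,j>0$, we have $i\neq -j$. Hence, by Lemma~\ref{cap-cup-move} and Lemma~\ref{dotslidecrossing}, every black diagram whose cups or caps would join a strand coloured $\pm i$ to a strand coloured $\pm j$ incorrectly vanishes. Also, by Lemma~\ref{crossing-inverse}, a black crossing between $E_{-i}$ and $E_j$ (colours $j\neq\pm i$) is invertible, with an explicit inverse given by an expansion in $(x_1-x_2)^{-1}$.

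First I would rewrite the red sideways crossings. Using Lemma~\ref{lrcrossing}, each red leftward or rightward crossing with colours $i\neq j$ becomes a black crossing between the strands $E_{-i}$ (or $E_{i}$) and $E_j$, decorated by the explicit rational function $s(x_1,x_2)$ or $t(x_1,x_2)$ from \eqref{sx1x2}, together with factors $c_V$ and $d_V$. Next I would compose the two sideways crossings occurring in (7a), and likewise in (7b). The $c_V$ and $d_V$ factors on the two composites are related by Lemma~\ref{ii21}(3)--(4). These are exactly the statements that, for $i\neq j$, passing across an $E_j$ strand changes $n_{V,i}/m_{V,i}$ only by the factors $(u\pm j)^2-1$ and $(u\pm j)^2$ that do not involve $u-i$. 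After these cancellations the composite should equal a black double crossing $\swap\circ\swap$ on $E_{-i}E_j$, multiplied by the product $s\cdot t$ (up to the sign $t_{ij}$).

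Then I would use the affine Brauer relation Lemma~\ref{AB relations-re}(3) (or Definition~\ref{AB defn}(1)) for the square of the crossing, in the form of \eqref{47}. Because the colours are distinct and not opposite, the correction terms coming from the cup-cap and from $(x_1-x_2)$ either vanish by Lemma~\ref{dotslidecrossing} or are absorbed into the rational factors. So the double crossing acts as the invertible operator given by multiplication by a rational function of $x_1,x_2$. Finally I would check, case by case ($j=i+1$, $i=j+1$, and $|i-j|>1$), that $s(x_1,x_2)\,t(x_2,x_1)$ times this operator equals $t_{ij}$ times the identity. This is precisely why the three-case definitions \eqref{sx1x2} and the signs \eqref{tij} were chosen.

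The main obstacle is the bookkeeping in the adjacent cases $j=i\pm1$. There the factors $(x_2-x_1)^2-1$ and $1\pm(x_1+x_2)$ may fail to be invertible on the relevant generalized eigenspaces, so one must check that zeros and poles cancel exactly. Neighbourhoods of the boundary colour $\frac12$ (for example $i=\frac12$, $j=\frac32$) also need care because of the $\delta_{j,1/2}$ exponents in $c_V$. I would treat $i=\frac12$ separately, as in the proof of Lemma~\ref{lrcrossing}, verifying the identity by a direct rational-function computation.
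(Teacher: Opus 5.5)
Your overall route is the paper's. You rewrite both red sideways crossings as black crossings with Lemma~\ref{lrcrossing}, slide the rational decorations through the projected crossings, and conclude with the invertibility of the black crossing between colours $i$ and $-j$, i.e.\ Lemma~\ref{crossing-inverse}. The sliding works because these crossings intertwine $x_1$ and $x_2$ exactly, the colours being distinct and not opposite. Your observation that the projected double crossing is multiplication by $1-(x_1-x_2)^{-2}$ is exactly what that lemma packages. It comes from Definition~\ref{AB defn}(1), not from Lemma~\ref{AB relations-re}(3), which is a dot--crossing relation.

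However, your final step would fail as formulated, because you have misidentified which decorations get multiplied. The function $t$ of \eqref{sx1x2} is not the decoration of the second sideways crossing. It is the auxiliary function in \eqref{lcross}, which the proof of Lemma~\ref{lrcrossing}(2) absorbs into $s$. Generically one checks that $s=t\cdot\frac{(x_1+x_2)^2-1}{(x_1-x_2)^2-1}\cdot\frac{(x_1-x_2)^2}{(x_1+x_2)^2}$, where the extra factor is the bubble ratio supplied by Lemma~\ref{ii21}(3). Consequently $s\,t\,\bigl(1-(x_1-x_2)^{-2}\bigr)$ equals:
\begin{itemize}
\item $-\frac{1+x_1+x_2}{1-x_1-x_2}$ in the generic case;
\item $-\frac{1}{1-x_1-x_2}$ when $j=i+1$;
\item $-(1+x_1+x_2)$ when $i=j+1$.
\end{itemize}
None of these is a scalar, and their generalized eigenvalues $-\frac{1+i-j}{1-i+j}$, $-\frac12$ and $-2$ are never $\pm1$. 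Since $t$ is symmetric, writing $t(x_2,x_1)$ changes nothing. What you must compose is the three-case decoration that Lemma~\ref{lrcrossing}(1) attaches to one sideways crossing with the decoration $c_V$, $s$, $d_V$ that Lemma~\ref{lrcrossing}(2) attaches to the other. Only after that should you use Lemma~\ref{crossing-inverse} to remove the double crossing.

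The obstacle you anticipate in the adjacent cases does not actually arise. On the summands relevant to Definition~\ref{def-ikmc}(7a)--(7b), $x_1-x_2$ has generalized eigenvalue $\pm(i+j)$ with $i+j\ge 2$, and $x_1+x_2$ has generalized eigenvalue $i-j\neq 0$. Hence $1-(x_1-x_2)^2$, $x_1+x_2$ and $1-(x_1-x_2)^{-2}$ are all invertible. The only factors that can vanish, $1\pm(x_1+x_2)$, are kept out of every denominator by the case distinctions in \eqref{sx1x2}.
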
 
\begin{proof}
We use Lemma~\ref{lrcrossing} to convert the red diagrams on the left-hand sides of  Definition~\ref{def-ikmc}(7a) and (7b) into  black diagrams, and then apply Lemma~\ref{crossing-inverse} to obtain the desired equalities.%  in Definition~\ref{def-ikmc}(7a),(7b).
\end{proof}
The following theorem summarizes the preceding results.
\begin{Theorem}\label{thm:mainthmaffinebu}
Suppose that a locally Schurian category $\mathcal M$ admits an affine Brauer categorification.
 If the spectrum of the dot endomorphism on the generating endofunctor is exactly  $\frac12+\mathbb Z$, then the following results hold: \begin{itemize} \item [(1)]   $\mathcal M$ gives rise
    to a generalized nilpotent $2$-representation of
    $\mathfrak U_+$.
\item[(2)] The full subcategories  of finitely generated objects form  a nilpotent $2$-subrepresentation of $\mathfrak U_+$.
\item [(3)] When $\mathcal M$ is locally finite Abelian, $\mathcal M$ carries the structure of a nilpotent   $2$-representation of $\mathfrak U_+$. \end{itemize} 
\end{Theorem}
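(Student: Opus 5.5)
The plan is to assemble the theorem from the verifications already carried out in this section. First I will fix the data of the $2$-representation. For $\lambda\in X_\imath^+$ I take the weight category $\mathcal M_\lambda$ from the block decomposition of Lemma~\ref{BLOCK}(4); Corollary~\ref{cor:even} ensures that only even weights occur, so $\mathcal M=\prod_{\lambda\in X_\imath^+}\mathcal M_\lambda$ (a direct sum in the locally finite Abelian case). For $i\in I^\imath$ the $1$-morphisms $E_i1_\lambda$ and $F_i1_\lambda$ are sent to the restrictions of $E_i$ and $E_{-i}$ to $\mathcal M_\lambda$. By Lemma~\ref{BLOCK}(5) these land in $\mathcal M_{\lambda+\alpha_i}$ and $\mathcal M_{\lambda-\alpha_i}$, as Definition~\ref{def-ikmc} requires. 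The hypothesis $J=I$ guarantees that every color $i\in I^\imath$ and its negative are present. The generating $2$-morphisms are sent to the natural transformations of Definitions~\ref{natrual-basic} and~\ref{natrual-basic1}. These are defined on finitely generated objects and extended by filtered colimits via Lemma~\ref{tvi} and Remark~\ref{nateq}.

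Next comes the relation check. By Remark~\ref{nateq} it suffices to check the relations of $\mathfrak U$ on finitely generated $V\in\mathcal M_\lambda$. These are exactly Lemma~\ref{adj} (adjunction), Lemma~\ref{cyc-x} (cyclicity), Lemma~\ref{qha} (quiver Hecke), Lemma~\ref{bubble1} (bubbles, including the infinite Grassmannian relation), Lemma~\ref{sigrel} (nodal), Lemma~\ref{bicross1 downred} (bicross) and Lemma~\ref{mixedup1} (mixed). Together they show that the assignment is a strict $2$-functor $\mathfrak U_+\to\operatorname{Cat}_{\mathbb C}$.

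Then I check the nilpotence conditions. Condition (2) of Definition~\ref{nil} holds because, on a finitely generated $V$, $E_iV$ is by definition the generalized $i$-eigenspace of the dot on $EV$. That space is finite-dimensional in the endomorphism sense by Lemma~\ref{biadj}(2), so the red dot, being $x-i$ up to the normalization in Definition~\ref{natrual-basic}(1), acts nilpotently. Condition (1) for finitely generated $V$ follows because $m_V(u)$ has finitely many roots: $E_iV\neq 0$ forces $\epsilon_i(V)>0$ by Lemma~\ref{ortho-dotcom}, and the same holds for $E_{-i}V$. This gives generalized nilpotence, which is (1).

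For (2), I will show that $\mathcal M^{\mathrm{fg}}$ is stable under $E_i$ and $F_i$. This holds because these functors are sweet (Lemma~\ref{ibiad}) and sweet functors preserve finitely generated objects \cite[Theorem~2.11]{BD-cate}. Both nilpotence conditions then hold for all objects of this subrepresentation. For (3), in the locally finite Abelian case every object has finite length and is therefore finitely generated, so (1) upgrades to nilpotence, as remarked after Definition~\ref{nil}.

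The only delicate point is the bookkeeping that the natural transformations really land in the correct weight summands and are compatible across different finitely generated objects. This is handled by Lemma~\ref{tvi} together with the well-definedness of $\operatorname{def}(\lambda)_i$, so I expect no genuine obstacle beyond citing these results.
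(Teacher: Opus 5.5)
Your proposal is correct and takes essentially the same route as the paper. In both, the relations are collected from Lemmas~\ref{adj} and \ref{cyc-x}--\ref{mixedup1}. Both conditions of Definition~\ref{nil} on finitely generated objects come from the finite-dimensionality of $\End(EV)$ (Lemma~\ref{biadj}) together with the generalized-eigenspace definition of $E_i$ and $F_i=E_{-i}$. Parts (2) and (3) follow from the sweetness of $E_{\pm i}$ (Lemma~\ref{ibiad}) and from the fact that objects of finite length are finitely generated. Your extra remarks on the weight decomposition (Lemma~\ref{BLOCK}, Corollary~\ref{cor:even}) make explicit what the paper leaves implicit.
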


\begin{proof}By Lemma~\ref{biadj}, $EV$ is finitely generated and  
 $\End(EV)$ is finite-dimensional whenever $V$ is finitely generated. Hence  
the dot on $EV$ has only
finitely many generalized eigenvalues. Hence $E_iV = 0$ for all but finitely many $i$, and the dot on $E_iV$ is
nilpotent after subtracting the eigenvalue $i$. Since $F_i$ is realized as $E_{-i}$, Definition~\ref{nil}(1) and (2) hold for
every finitely generated object.
  Thus  (1) follows from Lemma~\ref{adj} and Lemmas~\ref{cyc-x}--\ref{mixedup1}. Moreover, by Lemma~\ref{ibiad},   the
functors $E_i$ and $F_i$  preserve finitely generated objects, so the finitely generated objects form a
nilpotent 2-subrepresentation, proving (2).

If $\mathcal M$ is locally finite Abelian, then every object has finite length and hence is finitely generated.  
Therefore  the conditions in Definition~\ref{nil}(1) and (2), already verified above for finitely generated objects, hold for every object $V\in \mathcal M$. Thus the resulting $2$-representation
is nilpotent.
\end{proof}

\begin{table}[!htbp]
  \centering
  \small
  \setlength{\tabcolsep}{4pt}
  \renewcommand{\arraystretch}{1.16}
  \caption{Verification of the defining relations of the $\imath$-Kac--Moody
  2-category from the affine Brauer action.}
  \label{tab:relation-verification}
  \begin{tabularx}{\textwidth}{@{}>{\bfseries}p{0.23\textwidth}p{0.20\textwidth}Y@{}}
    \toprule
    Target relation & Result & Main affine Brauer input \\
    \midrule
    Adjunction
      & Lemma~5.10
      & Cup--cap zig-zag relations and the chosen normalizations \\
    Cyclicity
      & Lemma~5.12
      & Dot sliding, reflection symmetry, and adjunction \\
    Quiver Hecke relations
      & Lemma~5.13
      & The affine crossing--dot relation on generalized eigenspaces \\
    Bubble relations
      & Lemma~5.14
      & The bubble generating series and the weight formula \\
    Nodal relations
      & Lemma~5.15
      & Formal Laurent-series extraction and generalized-eigenspace nilpotence \\
    Bicross relations
      & Lemma~5.16
      & Localized dot calculus, including the exceptional color $i=\tfrac12$ \\
    Mixed relations
      & Lemma~5.17
      & Compatibility of crossings with cups and caps \\
    Inhomogeneous relation
      & Theorem~6.15
      & The two boundary-braiding calculations and the rational identities
        collected in Section~9 \\
    \bottomrule
  \end{tabularx}
\end{table}
\FloatBarrier

%\newpage
\section{From affine Brauer actions to $\imath$-Kac--Moody $2$-representations, II:   the inhomogeneous relation} \label{diliu}
In  Section~5, we verified  the relations in Definition~\ref{def-ikmc} for  the natural transformations defined in
Definitions~\ref{natrual-basic} and~\ref{natrual-basic1}. In this section, we verify the remaining  inhomogeneous relation in Definition~\ref{def-ikmc1}. Together, these verifications  prove 
 Theorem~\ref{main111}(1):
any locally Schurian category \(\mathcal M\) admitting an affine Brauer
categorification naturally carries the structure of a generalized nilpotent
\(2\)-representation of \(\mathfrak U_+^\imath\), provided that the spectrum of the  dot endomorphism on the  
 generating endofunctor is exactly 
\(1/2+\mathbb Z\).

  Throughout, let  $\lambda\in X_\imath^+$ and $V$ be a finitely generated object
of $\mathcal M_\lambda$.
We first define several rational functions.
Since these functions will appear
as labels on diagrams,
we  often suppress their arguments. 
For example, we write
$h(x_1,x_2)$ simply as $h$,
and similarly for the other rational functions below. However, for rational functions defined in the single
variable $x$, such as $c(x)$, we make the argument explicit after substitution.
For instance, we write $c(x_1)$ to denote the value of $c(x)$ at $x=x_1$. 

%In this section, we apply \eqref{key-coprim} twice.  
{Choose a single integer $k$  sufficiently large so that both applications of \eqref{key-coprim} below are valid. First,  take}
\[
f_1(u)=m_{E_{-1/2}V, 1/2}(-u)(1-2u),
\qquad
f_2(u)=(u+1/2)^k t_{E_{-1/2}V, 1/2}(-u).
\]
Then there are $g(u), h(u)\in Z_{\lambda-\alpha_{1/2}, E_{-1/2}V}[u]$ satisfying \eqref{key-coprim}. Second,  {take}
\[
f_1(u)=n_{V,1/2}(u),\qquad
f_2(u)=(u-1/2)^k.
\]
Then there are $g'(u), h'(u)\in Z_{\lambda,V}[u]$ satisfying \eqref{key-coprim} for this second pair of polynomials.
\begin{Defn}\label{rationalff} Suppose that $x_1, x_2, x_3$ and $x$  are indeterminates. Define the following  rational functions:  
 \begin{multicols}{2}
%\item [(1)] $h_1^{-1}=\ell_{--}^{12} $, 
%\item[(2)] $h_2^{-1}=\ell_{-+}^{12}$,
%\item[(3)] $h_3^{-1}=\ell_{-+}^{13}$, 
%\item [(4)] $h_5=\ell_{+-}^{13}$.
%\item[(5)] $h_6=\ell_{-+}^{13}$
%\item [(6)] $(h_2')^{-1}=\ell_{+-}^{12}$, 
%\item [(7)] $(h_3')^{-1}=\ell_{+-}^{13}$, 
%\item[(8)] $(f_{ij})^{-1}=\ell_{+-}^{ij}$, $1\le i, j\le 3$, and $i\neq j$, 
%\item [(9)]  $(g_{ij})^{-1}=\ell_{++}^{ij}$, $1\le i, j\le 3$, and $i\neq j$, 
 \item [(1)] $h(x_1,x_2)=\frac{(x_1+x_2-1)(x_1-x_2)^2}{1+x_1-x_2}$, 
\item [(2)] $h'(x_1, x_2)=h(x_2,x_1)$, 
\item [(3)]$h_4(x_1,x_2)=\frac{(x_1-x_2)^2} {1-x_1+x_2}$.
 \item [(4)] $h_4'(x_1, x_2)=h_4(x_2,x_1)$,
\item[(5)]   $c(x)=\frac{(1-2x)m_{E_{-1/2}V,1/2}(-x)}{4xn_{E_{-1/2}V,1/2}(-x)}$, \item [(6)] $d(x)=\frac{n_{V,1/2}(x)}{xm_{V,1/2}(x)}$, 
 \item [(7)]  $p(x)=4xn_{E_{-1/2}V,1/2}(-x)g(x)$,
\item[(8)]  $q(x)=xm_{V,1/2}(x)g'(x)$.
 \end{multicols}
\end{Defn}  

We  use the notation of Definition~\ref{rationalff} until the end of Subsection~6.3.
  
 \begin{Lemma}\label{zero1} Let $p(x)$ and $q(x)$ be defined in Definition~\ref{rationalff}(7) and (8). Then 
 $$\begin{tikzpicture}[baseline=-0.5ex,scale=0.7, ]
        \draw[thick, black, ] (0,-0.5) -- (0,0.5);
        \node at (0, 0) {$\scriptstyle\bullet$}; 
        \node at (0.6,0) {$\scriptstyle{p(x)}$};        
        \node at (0, -0.7) {$\scriptstyle 1/2$};
  \draw[-,darkg,thick] (1.2,-0.5) to (1.2,.5);
\node at (1.2,-.7) {$\darkg \scriptscriptstyle{E_{-1/2}V}$};
        \end{tikzpicture}
    =0, \quad \text{and} \quad  
 \begin{tikzpicture}[baseline=-0.5ex,scale=0.7, ]
        \draw[thick, black, ] (0,-0.5) -- (0,0.5);
        \node at (0, 0) {$\scriptstyle\bullet$}; 
        \node at (0.6,0) {$\scriptstyle{q(x)}$};        
        \node at (0, -0.7) {$\scriptstyle -1/2$};
  \draw[-,darkg,thick] (1.2,-0.5) to (1.2,.5);
\node at (1.2,-.7) {$\darkg \scriptscriptstyle{V}$};
        \end{tikzpicture}
    =0.$$
\end{Lemma}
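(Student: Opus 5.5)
The plan is to prove both identities the same way. Multiply the given polynomial by a factor that is invertible on the relevant generalized eigenspace, so that the product contains a full annihilating polynomial of the dot. The auxiliary polynomials $g$ and $g'$ play no role beyond being polynomials with central coefficients. Throughout, all coefficients lie in the commutative local algebras $Z_{\lambda-\alpha_{1/2},E_{-1/2}V}$ or $Z_{\lambda,V}$. These act centrally, so they commute with the dot and with the projections onto eigenspace summands.

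\emph{First identity.} Put $W=E_{-1/2}V$. By Lemma~\ref{ibiad}, $W$ is finitely generated and lies in $\mathcal M_{\lambda-\alpha_{1/2}}$ by Lemma~\ref{BLOCK}(5). By Lemma~\ref{lem:bublleyimeszeropoly}(1), $n_W(-x)$ annihilates the dot on $EW$, hence also on the summand $E_{1/2}W$. Corollary~\ref{lift-n} gives the factorization $n_W(u)=n_{W,1/2}(u)\,t_{W,1/2}(u)$. Therefore
\[
p(x)\,t_{W,1/2}(-x)=4x\,g(x)\,n_W(-x)
\]
acts as zero on $E_{1/2}W$.

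It then remains to show that $t_{W,1/2}(-x)$ is invertible on $E_{1/2}W$. Its reduction modulo the nilpotent maximal ideal $J_{\lambda-\alpha_{1/2},W}$ (Lemma~\ref{zlam-loc}) is $(-x-\tfrac12)^{\phi_{1/2}(W)}$. On $E_{1/2}W$ the operator $x-\tfrac12$ is nilpotent, so $-x-\tfrac12=-1-(x-\tfrac12)$ is invertible. Thus $t_{W,1/2}(-x)$ is the sum of an invertible operator and a commuting nilpotent one: the nilpotent part comes from the coefficients in the maximal ideal and from the nilpotent dot. Hence it is invertible, and cancelling it gives $p(x)=0$ on $E_{1/2}W$.

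\emph{Second identity.} On $E_{-1/2}V$, multiply $q(x)$ by $(x-\tfrac12)^{\epsilon_{1/2}(V)}$. This factor is invertible there, since $x+\tfrac12$ is nilpotent and hence $x-\tfrac12=-1+(x+\tfrac12)$ is invertible. The product is
\[
x\,g'(x)\,m_{V,1/2}(x)\,(x-\tfrac12)^{\epsilon_{1/2}(V)}=x\,g'(x)\,m_V(x),
\]
using \eqref{mini-1}. This vanishes on $EV$ because $m_V$ is the minimal polynomial of the dot on $EV$. Cancelling the invertible factor gives $q(x)=0$ on $E_{-1/2}V$.

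The only point needing care is the invertibility of $t_{W,1/2}(-x)$, whose coefficients are not scalars. This is handled by the nilpotence of $J_{\lambda-\alpha_{1/2},W}$ together with the fact that central coefficients commute with $x$. The factor $(x-\tfrac12)^{\epsilon_{1/2}(V)}$ in the second identity has scalar coefficients and is immediate. I also need to confirm that the labels in the statement, $\tfrac12$ for $p$ and $-\tfrac12$ for $q$, match the eigenvalues used above; they do, since $\pm\tfrac12$ are the eigenvalues at which the relevant factors fail to vanish.
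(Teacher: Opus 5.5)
Your proof is correct and is the paper's argument made explicit. The paper only asserts that $p(x)$ and $q(x)$ contain factors vanishing on $E_{1/2}E_{-1/2}V$ and on $E_{-1/2}V$. You identify these factors as $n_{E_{-1/2}V,1/2}(-x)$, which vanishes because $n_{E_{-1/2}V}(-x)$ annihilates the dot and $t_{E_{-1/2}V,1/2}(-x)$ is invertible there, and $m_{V,1/2}(x)$, which vanishes because $m_V(x)=0$ and $(x-\tfrac12)^{\epsilon_{1/2}(V)}$ is invertible there. You also rightly observe that the B\'ezout identities defining $g$ and $g'$ play no role in the vanishing.
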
 
\begin{proof}
    By the definition of $p(x)$ and $q(x)$, together with the two applications of  \eqref{key-coprim} made above, the labels 
    $p(x)$ and $q(x)$ contain the  factors that vanish on the corresponding generalized eigenspaces. Hence the two displayed dotted morphisms are zero.
\end{proof}
In all subsequent diagrams, unless otherwise specified,    the black strands are labeled at both the top and bottom, from left to right,  according to the sequence   
$(-\frac{1}{2}, \frac{1}{2},-\frac{1}{2})$.

\tikzset{diagram styles3/.style={line/.style={-},
line_arr/.style={->},
line_arrop/.style={<-},
cross/.style={-},
cross_arr/.style={->},
cross_arrop/.style={<-},
 arc_right/.style={
        ->, % 添加箭头样式
        blue,
        bend right=85,
        looseness=1.8
    },
arc_left/.style={blue, bend left=85, ,looseness=1.8,->},
mi_near_start/.style={
    decoration={
        markings,
        mark=at position 0.2 with {
            \node[cplus,scale=0.7] at (0,0) {$-$};
        }
    },
    postaction=decorate
},
mi_near_end/.style={
    decoration={
        markings,
        mark=at position 0.2 with {
            \node[cplus,scale=0.7] at (0,0) {$-$};
        }
    },
    postaction=decorate
},
pl_near_start/.style={
    decoration={
        markings,
        mark=at position 0.2 with {
            \node[cplus,scale=0.7] at (0,0) {$+$};
        }
    },
    postaction=decorate
},
pl_near_end/.style={
    decoration={
        markings,
        mark=at position 0.8 with {
            \node[cplus,scale=0.7] at (0,0) {$+$};
        }
    },
    postaction=decorate
},
dot_near_start/.style={
    decoration={
        markings,
        mark=at position 0.25 with {
            \fill[black] (0,0) circle (1.5pt);
            \coordinate (dot-start) at (0,0);
        }
    },
    postaction=decorate
},
dot_near_end/.style={
    decoration={
        markings,
        mark=at position 0.75 with {
            \fill[black] (0,0) circle (1.5pt);
            \coordinate (dot-end) at (0,0);
        }
    },
    postaction=decorate
},
a_near_start/.style={
    decoration={
        markings,
        mark=at position 0.05 with {
            \coordinate (a-start) at (0,0);
        }
    },
    postaction=decorate
},
a_near_end/.style={
    decoration={
        markings,
        mark=at position 0.95 with {
            \coordinate (a-end) at (0,0);
        }
    },
    postaction=decorate
},
b_near_start/.style={
    decoration={
        markings,
        mark=at position 0.15 with {
            \coordinate (b-start) at (0,0);
        }
    },
    postaction=decorate
},
b_near_end/.style={
    decoration={
        markings,
        mark=at position 0.85 with {
            \coordinate (b-end) at (0,0);
        }
    },
    postaction=decorate
},
line_near_start/.style={
    decoration={
        markings,
        mark=at position 0.1 with {
            \draw[thick, black] (1.5pt,-1.5pt) -- (1.5pt,1.5pt);
            \coordinate (line-start) at (0,0);
        }
    },
    postaction=decorate
},
line_near_end/.style={
    decoration={
        markings,
        mark=at position 0.8 with {
            \draw[thick, black] (1.5pt,-1.5pt) -- (1.5pt,1.5pt);
            \coordinate (line-end) at (0,0);
        }
    },
    postaction=decorate
},}}

\tikzset{diagram styles2/.style={ line/.style={-},
line_arr/.style={->},
line_arrop/.style={<-},
cross/.style={-},
cross_arr/.style={->},
cross_arrop/.style={<-},
 arc_right/.style={
        ->, % 添加箭头样式
        blue,
        bend left=85,
        looseness=1.8
    },
arc_left/.style={blue, bend right=85, ,looseness=2.5,-},
mi_near_start/.style={
    decoration={
        markings,
        mark=at position 0.15 with {
            \node[cplus,scale=0.7] at (0,0) {$-$};
        }
    },
    postaction=decorate
},
mi_near_end/.style={
    decoration={
        markings,
        mark=at position 0.8 with {
            \node[cplus,scale=0.7] at (0,0) {$-$};
        }
    },
    postaction=decorate
},
pl_near_start/.style={
    decoration={
        markings,
        mark=at position 0.2 with {
            \node[cplus,scale=0.7] at (0,0) {$+$};
        }
    },
    postaction=decorate
},
pl_near_end/.style={
    decoration={
        markings,
        mark=at position 0.8 with {
            \node[cplus,scale=0.7] at (0,0) {$+$};
        }
    },
    postaction=decorate
},
dot_near_start/.style={
    decoration={
        markings,
        mark=at position 0.3 with {
            \fill[black] (0,0) circle (1.5pt);
            \coordinate (dot-start) at (0,0);
        }
    },
    postaction=decorate
},
dot_near_end/.style={
    decoration={
        markings,
        mark=at position 0.6 with {
            \fill[black] (0,0) circle (1.5pt);
            \coordinate (dot-end) at (0,0);
        }
    },
    postaction=decorate
},
a_near_start/.style={
    decoration={
        markings,
        mark=at position 0.15 with {
            \coordinate (a-start) at (0,0);
        }
    },
    postaction=decorate
},
a_near_end/.style={
    decoration={
        markings,
        mark=at position 0.85 with {
            \coordinate (a-end) at (0,0);
        }
    },
    postaction=decorate
},
b_near_start/.style={
    decoration={
        markings,
        mark=at position 0.15 with {
            \coordinate (b-start) at (0,0);
        }
    },
    postaction=decorate
},
b_near_end/.style={
    decoration={
        markings,
        mark=at position 0.85 with {
            \coordinate (b-end) at (0,0);
        }
    },
    postaction=decorate
},
line_near_start/.style={
    decoration={
        markings,
        mark=at position 0.1 with {
            \draw[thick, black] (1.5pt,-1.5pt) -- (1.5pt,1.5pt);
            \coordinate (line-start) at (0,0);
        }
    },
    postaction=decorate
},
line_near_end/.style={
    decoration={
        markings,
        mark=at position 0.8 with {
            \draw[thick, black] (1.5pt,-1.5pt) -- (1.5pt,1.5pt);
            \coordinate (line-end) at (0,0);
        }
    },
    postaction=decorate
}, }}
\tikzset{diagram styles1/.style={line/.style={-},
line_arr/.style={->},
line_arrop/.style={<-}, 
cross/.style={-},
cross_arr/.style={->},
cross_arrop/.style={<-},
 arc_right/.style={
        ->, % 添加箭头样式
        blue,
        bend right=85,
        looseness=1.8
    },
arc_left/.style={blue, bend left=85, ,looseness=2.5,-},
mi_near_start/.style={
    decoration={
        markings,
        mark=at position 0.12 with {
            \node[cplus,scale=0.7] at (0,0) {$-$};
        }
    },
    postaction=decorate
},
mi_near_end/.style={
    decoration={
        markings,
        mark=at position 0.8 with {
            \node[cplus,scale=0.7] at (0,0) {$-$};
        }
    },
    postaction=decorate
},
pl_near_start/.style={
    decoration={
        markings,
        mark=at position 0.2 with {
            \node[cplus,scale=0.7] at (0,0) {$+$};
        }
    },
    postaction=decorate
},
pl_near_end/.style={
    decoration={
        markings,
        mark=at position 0.8 with {
            \node[cplus,scale=0.7] at (0,0) {$+$};
        }
    },
    postaction=decorate
},
dot_near_start/.style={
    decoration={
        markings,
        mark=at position 0.35 with {
            \fill[black] (0,0) circle (1.5pt);
            \coordinate (dot-start) at (0,0);
        }
    },
    postaction=decorate
},
dot_near_end/.style={
    decoration={
        markings,
        mark=at position 0.75 with {
            \fill[black] (0,0) circle (1.5pt);
            \coordinate (dot-end) at (0,0);
        }
    },
    postaction=decorate
},
a_near_start/.style={
    decoration={
        markings,
        mark=at position 0.15 with {
            \coordinate (a-start) at (0,0);
        }
    },
    postaction=decorate
},
a_near_end/.style={
    decoration={
        markings,
        mark=at position 0.85 with {
            \coordinate (a-end) at (0,0);
        }
    },
    postaction=decorate
},
b_near_start/.style={
    decoration={
        markings,
        mark=at position 0.15 with {
            \coordinate (b-start) at (0,0);
        }
    },
    postaction=decorate
},
b_near_end/.style={
    decoration={
        markings,
        mark=at position 0.85 with {
            \coordinate (b-end) at (0,0);
        }
    },
    postaction=decorate
},
line_near_start/.style={
    decoration={
        markings,
        mark=at position 0.1 with {
            \draw[thick, black] (1.5pt,-1.5pt) -- (1.5pt,1.5pt);
            \coordinate (line-start) at (0,0);
        }
    },
    postaction=decorate
},
line_near_end/.style={
    decoration={
        markings,
        mark=at position 0.8 with {
            \draw[thick, black] (1.5pt,-1.5pt) -- (1.5pt,1.5pt);
            \coordinate (line-end) at (0,0);
        }
    },
    postaction=decorate
},}}
\tikzset{
    diagram styles/.style={
               line_noarrow_solid/.style={-, black},
        line_arrow_solid/.style={->, black},
        line_noarrow_dashed/.style={-, densely dotted},
        line_arrow_dashed/.style={->, densely dotted},
        cross/.style={black, -},
        arc_right/.style={-, blue, bend right=80, looseness=1.5},
        arc_left/.style={-, blue, bend left=80, looseness=1.5},
        mi_near_start/.style={
            decoration={
                markings,
                mark=at position 0.2 with {\node[cplus,scale=0.7] at (0,0) {$-$};}
            },
            postaction=decorate
        },
        mi_near_end/.style={
            decoration={
                markings,
                mark=at position 0.8 with {\node[cplus,scale=0.7] at (0,0) {$-$};}
            },
            postaction=decorate
        },
        pl_near_start/.style={
            decoration={
                markings,
                mark=at position 0.2 with {\node[cplus,scale=0.7] at (0,0) {$+$};}
            },
            postaction=decorate
        },
        dot_near_start/.style={
            decoration={
                markings,
                mark=at position 0.25 with {
                    \fill[black] (0,0) circle (1.5pt);
                    \coordinate (dot-start) at (0,0);
                }
            },
            postaction=decorate
        },
        dot_near_end/.style={
            decoration={
                markings,
                mark=at position 0.75 with {
                    \fill[black] (0,0) circle (1.5pt);
                    \coordinate (dot-end) at (0,0);
                }
            },
            postaction=decorate
        },
        a_near_start/.style={
            decoration={
                markings,
                mark=at position 0.15 with {\coordinate (a-start) at (0,0);}
            },
            postaction=decorate
        },
        a_near_end/.style={
            decoration={
                markings,
                mark=at position 0.85 with {\coordinate (a-end) at (0,0);}
            },
            postaction=decorate
        },
    }
}

\subsection{Step 1: compute the first braiding}In this  subsection, we   compute the first braiding appearing 
on the right-hand side of the inhomogeneous relation in
Definition~\ref{def-ikmc1}.

Let $V\in \mathcal M_\lambda$ be a finitely generated object.
Evaluating the relevant natural transformations on $V$, we obtain
\begin{equation}\label{first}
         % [inline block 25: 3 envs, 2384 chars -> data_tex | \begin{tikzpicture}[baseline = 0] 	\draw[->,thick,darkred] (0.45,.6) to (-0.45,-.6);...]

}= Z_1+Z_2- Z_3.
\end{equation}
The first equality in \eqref{first} follows by applying
Lemma~\ref{lrcrossing}(1) and (2) together with \eqref{redijdown}, which
allows us to rewrite the three red crossings as black crossings.
For the second equality, we use Lemma~\ref{dotslidecrossing}(1)--(2)
to move the dots labeled by the rational function \(h_2'\) in
the first term as far as possible toward the top.  This gives the
three terms displayed on the right-hand side of \eqref{first}, which
we denote by \(Z_1+Z_2-Z_3\).
Here
\begin{equation}\label{notation100}
 c=c(x_1), \qquad p=p(x), \qquad h_1^{-1}=\ell_{--}^{12},
 \qquad h_5=\ell_{+-}^{13}, \qquad (h_2')^{-1}=\ell_{+-}^{12}.
\end{equation}
We denote by \(p(u)\) the polynomial obtained from \(p(x)\) by
replacing $x$ with \(u\).  Moreover,
\begin{equation} \label{Z1000} Z_{1} = % [inline block 26: 27 envs, 20541 chars in 3 pieces, piece 1 here, a bare % at each other -> data_tex | \begin{tikzpicture}[baseline = 0, scale=0.9, transform shape] \draw[-,thick] (0.45,.6) to (-0.45,-.6);...]
 .\end{equation}
We now prove  the following formula:   \begin{equation}
 \label{first1}\begin{aligned} 
  \operatorname{RHS} \text{of \eqref{first}} &= 
%
\Bigg]_{u^{-1}}}, \quad N=E_{-\frac{1}{2}}V.
\end{aligned}
\end{equation}  
%\comment{The $O(u)$ above should replaced by $\mathcal O_V(u)$? (such that to %compatible with later notation in Proposition 6.6)}
Furthermore, the terms  $Z_4, Z_5$, and $Z_6$ satisfy the following identities:
\begin{equation} \label{first333}\begin{aligned} 
Z_{4}& =
\mathord{
%
,\\
\end{aligned}
\end{equation}
where \begin{equation}\label{notationgij}
g_{ij}=(x_i+x_j)^{-1}, \text{ $1\le i, j\le 3$, and $i\neq j$.}\end{equation}
%\comment{Better to define $g_{ij}$ is Definition \ref{rationalff}?}
We will use the notations in \eqref{notation100}, \eqref{notationfij}, and \eqref{notationgij} until the end of Subsection~6.3. 

Assuming \eqref{first1} and \eqref{first333}, we can now complete the
calculation of the red diagram on the left-hand side of \eqref{first}.
Indeed, \eqref{first} first rewrites this red diagram as
\(Z_1+Z_2-Z_3\).  The identity \eqref{first1} then expresses
\(Z_1+Z_2-Z_3\) as a sum of black diagrams together with the terms
\(Z_4,Z_5,Z_6\).  Finally, applying the identities in \eqref{first333}
to \(Z_4,Z_5,Z_6\), we obtain the desired formula for the first
braiding appearing on the right-hand side of the inhomogeneous
relation in Definition~\ref{def-ikmc1}.
We emphasize that we do not know whether the black terms appearing
in \eqref{first1} are linearly independent over \(\mathbb C\).
Consequently, choosing a convenient collection of black terms in
\eqref{first1} is one of the main technical difficulties in this
subsection.

\begin{Lemma} \label{one} When evaluated  on $V$, the natural transformation $Z_1$ satisfies
\begin{equation}\label{lem47}
   \begin{aligned}
   Z_{1}
=& Z_4+
% [inline block 27: 19 envs, 16209 chars in 2 pieces, piece 1 here, a bare % at each other -> data_tex | \begin{tikzpicture}[baseline = 0, scale=0.8, transform shape] \draw[-,thick] (0.45,.6) to (-0.45,-.6);...]

\Bigg]_{u^{-1}}.
  \end{equation}
Here we first use Lemma~\ref{usefuel-equa}(1) to rewrite the dot labeled by \(p\)
as a \(\ominus\) labeled by \(p(u)\). We then apply
Corollary~\ref{caup}(2) to move this \(\ominus\)  to the
bottom of the relevant local diagram, which gives the second equality.
Next, we compute the second summand on the right-hand side of \eqref{I1eq}.
Denote this summand by \(Z\). Then
  $$    \begin{aligned}
Z
=&-\mathord{\left[%
}\right]_{u^{-1}}.
    \end{aligned}$$
Here, for the first equality, we apply Corollary~\ref{caup}(2) to
move the upper \(\ominus\) on the cup of the diagram defining \(Z\)
through the crossing.  This produces three terms: the first and the
third terms on the right-hand side of the first equality, together with
one additional term whose resulting local diagram is of the form
appearing in Lemma~\ref{usefuel-equa}(4).  Applying
Lemma~\ref{usefuel-equa}(4), we replace this local diagram by the sum of
the two local diagrams on the right-hand side of that relation.  In the
present notation, these two terms contribute exactly the second and the
fourth terms on the right-hand side of the first equality.

For the second equality, we apply Lemma~\ref{ii} to the local crossing
in the first summand on the right-hand side of the first equality, whose
two strands are labeled, from left to right, by \(1/2\) and \(-1/2\).
This replaces that local crossing by the corresponding expansion in
Lemma~\ref{ii}.  The correction terms arising from this replacement are
precisely the \(f_{12}\)- and \(f_{21}\)-terms displayed above.
Combining this with \eqref{I1eq}, we obtain \eqref{lem47}, as required. \end{proof}

\begin{Lemma}\label{guoch}  When evaluated  on $V$, the natural transformation   $Z_3$  satisfies 
\begin{equation}\label{lem48}
      \begin{aligned}
    - Z_3 
&=  
   % [inline block 28: 22 envs, 18389 chars in 2 pieces, piece 1 here, a bare % at each other -> data_tex | \begin{tikzpicture}[baseline = 0, scale=0.9, transform shape] \draw[-,thick] (0.45,.6) to (-0.45,-.6);...]
.
\end{aligned}
  \end{equation}  
%where $c=c(x_1)$, $p=p(x)$, and $p(u)$ is the polynomial obtained from $p(x)$ by substituting 
%$u$ for $x$. 
% where $c=c(x_1)$ and $p=p(x)$.  %  where   $h_4(x_1,x_2)=(x_1-x_2)^2(1+x_2-x_1)^{-1}$.
\end{Lemma}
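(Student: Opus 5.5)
The plan for Lemma~\ref{guoch} mirrors the proof of Lemma~\ref{one}, with $Z_1$ replaced by $Z_3$. Recall that $Z_3$ is the third term produced in \eqref{first}, after the dots labeled by $h_2'$ are slid upward past the crossings using Lemma~\ref{dotslidecrossing}(1)--(2). It consists of a black diagram with strands labeled $(-\frac12,\frac12,-\frac12)$. This diagram contains a cup--cap pair carrying the label $c=c(x_1)$ together with the dotted label $p=p(x)$, and the rational factors $h_1^{-1}=\ell^{12}_{--}$ and $h_5=\ell^{13}_{+-}$ sit on the crossings.

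\textbf{Step 1.} Rewrite the dot labeled by $p$ as a $\ominus$-decoration labeled by $p(u)$, extracting the coefficient $[\,\cdot\,]_{u^{-1}}$. This is done with Lemma~\ref{usefuel-equa}(1), exactly as in the first step of the proof of Lemma~\ref{one}.

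\textbf{Step 2.} Use Corollary~\ref{caup}(2)--(3) to move the $\ominus$ through the adjacent crossing to the bottom of the local diagram. This produces a main term in which $p(u)$ sits on a strand labeled $-\frac12$ or $\frac12$ adjacent to $E_{-1/2}V$, plus correction terms in which a cup or cap is closed off. In the main term, the relevant dot factor is annihilated by Lemma~\ref{zero1}, so it vanishes or becomes a boundary term. The correction terms have the local shape handled by Lemma~\ref{usefuel-equa}(3)--(4).

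\textbf{Step 3.} Apply Lemma~\ref{usefuel-equa}(4) to replace each such local diagram by its two-term expansion. In each resulting term, use Lemma~\ref{ii} on the local crossing with strands labeled $\frac12,-\frac12$, so that it becomes the standard expansion. This is where new rational labels arise: the $f_{ij}=(\ell^{ij}_{+-})^{-1}$ and $g_{ij}=(x_i+x_j)^{-1}$ factors, and the $h_4(x_1,x_2)=(x_1-x_2)^2(1-x_1+x_2)^{-1}$ factor. The last comes from combining $h_1^{-1}$ with the $(x_1-x_2)^{-1}$ denominators produced by Lemma~\ref{ii}. Dots are then redistributed using Lemma~\ref{dotslidecrossing} and Corollary~\ref{slidecro}, and the surviving generating-series terms are regrouped as $[\,\cdot\,]_{u^{-1}}$ expressions involving $\mathcal O_N(u)$ with $N=E_{-1/2}V$.

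\textbf{Main obstacle.} The hardest part is the bookkeeping of the rational labels. After these moves, one must check that the coefficients on each black diagram combine into the specific rational functions displayed in \eqref{lem48}, using the identities $c(x)\,d$-type cancellations, $p$'s annihilation property, and the coprimality relation \eqref{key-coprim} for $g,h$. As the authors warn, the black diagrams need not be linearly independent. I would therefore not aim for canonical normal forms. Instead, I would choose the grouping so that the terms match the $Z_4,Z_5,Z_6$ shapes of \eqref{first1}, and verify the remaining scalar identities as rational-function identities in $x_1,x_2,x_3$ (of the type collected in Section~9).
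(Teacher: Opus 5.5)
Your skeleton matches the paper's proof. You convert the dot labelled $p$ into a $\ominus$ labelled $p(u)$ by Lemma~\ref{usefuel-equa}(1), push this $\ominus$ through the crossing to the bottom with Corollary~\ref{caup}(2), and expand the local crossings with strands labelled $\frac12,-\frac12$ by Lemma~\ref{ii}.

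However, your Steps~2--3 are copied from the proof of Lemma~\ref{one} and do not describe what happens for $Z_3$.
\begin{itemize}
\item Besides the three terms that are kept, Corollary~\ref{caup}(2) produces one extra term. That term is handled by Lemma~\ref{usefuel-equa}(1) again, which turns the $\ominus$-decorated expression back into an ordinary dotted diagram, and that diagram stays in \eqref{lem48}. Lemma~\ref{usefuel-equa}(3)--(4) plays no role here.
\item Lemma~\ref{ii} is applied only to the local crossings in the second and fourth summands. Its correction terms carry $f_{ab}=(x_a-x_b)^{-1}$, not $(\ell^{ab}_{+-})^{-1}$.
\item By Definition~\ref{rationalff} and \eqref{notation100}, $h_4=-h_1h'$. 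So no denominator from Lemma~\ref{ii} is needed to produce it.
\end{itemize}

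More seriously, you are aiming at the wrong target. The identity \eqref{lem48} is a deliberately unsimplified intermediate step. The generating-series terms $[\,\cdots p(u)\cdots]_{u^{-1}}$ are left as they are. The proof uses neither Lemma~\ref{zero1}, nor \eqref{key-coprim}, nor any identity from Section~9.

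Those tools enter only in Proposition~\ref{emm3}, where these terms sit inside $Z_4,Z_5,Z_6$. There they are reduced using the divided-difference identities of Lemma~\ref{usefuel-equa}(5)--(8), followed by Lemma~\ref{zero1} (and \eqref{key-coprim} for $Z_6$). Killing a term with Lemma~\ref{zero1} in your Step~2, or steering toward the simplified forms in \eqref{first333}, would at best prove a different, partially simplified identity rather than \eqref{lem48}.

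Likewise, your ``main obstacle'' does not arise for this lemma. The proof is a short exact diagrammatic rewriting with no rational-function identity to check; the Section~9 identities are needed only in Theorem~\ref{inhomo100}.
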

\begin{proof}
The following computation yields \eqref{lem48}: 
\begin{equation}
\label{I2eq}
\begin{aligned}
    -Z_3 &=
%
 .
\end{aligned}
\end{equation}
For the fourth equality, we use Corollary~\ref{caup}(2) to move the
\(\ominus\) in the diagram on the right-hand side of the third equality
through the crossing to the bottom.  This produces the second, third,
and fourth terms displayed after the fourth equality, together with one
additional term.  Applying Lemma~\ref{usefuel-equa}(1) to the local
diagram in this additional term gives the first diagram displayed after
the fourth equality. For the fifth equality, we apply Lemma~\ref{ii} to the local crossings in
the second and fourth summands displayed after the fourth equality, and
replace them by the corresponding expansions.
\end{proof}

\begin{Lemma}\label{guoc} When  evaluated  on $V$, the natural transformation $Z_2$ satisfies
$$
    Z_{2}=
% [inline block 29: 6 envs, 4668 chars -> data_tex | \begin{tikzpicture}[baseline=0, scale=0.8] \draw[-,thick] (-0.4,-.6)to [out=90,in=-90](0,-0.2) to[out=90, in=0] (-.2,.1)...]
.
$$
%where $Z_2$ is defined in \eqref{Z100}, and  $c=c(x_1)$, $p=p(x)$, and $p(u)$ is the polynomial obtained from $p(x)$ by substituting 
%$u$ for $x$. 
\end{Lemma}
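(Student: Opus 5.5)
The plan is to prove Lemma~\ref{guoc} by the same mechanism used for $Z_1$ in Lemma~\ref{one} and for $-Z_3$ in Lemma~\ref{guoch}. We start from the diagram defining $Z_2$ in \eqref{Z1000}, evaluated on $V$, with black strands labelled $(-\frac12,\frac12,-\frac12)$. The dot labelled $p=p(x)$ sits on a strand of colour $\frac12$ acting on $N=E_{-1/2}V$. By Lemma~\ref{zero1} this dot vanishes on the bare strand. It becomes nonzero only after being transported past a crossing or around a cup/cap, so every surviving contribution comes from a correction term.

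First, we rewrite the dotted strand using Lemma~\ref{usefuel-equa}(1). This replaces the dot labelled $p$ by a $\ominus$ labelled $p(u)$, with the coefficient of $u^{-1}$ extracted.

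Second, we move this $\ominus$ through the crossing adjacent to the cup (or cap) with Corollary~\ref{caup}(2)/(3). This produces three kinds of terms:
\begin{itemize}
\item a term in which the $\ominus$ has reached the bottom of the diagram;
\item a term where the crossing is resolved into identity strands;
\item a term where the crossing is resolved into a cup--cap.
\end{itemize}
In the first kind, the $\ominus$ acts again on a strand of colour $\frac12$ over $N$. Reversing Lemma~\ref{usefuel-equa}(1) and applying Lemma~\ref{zero1}, this term is zero. In the cup--cap term, the $\ominus$ lies on a curl. There we apply Lemma~\ref{usefuel-equa}(3) or (4) to turn the local picture into a bubble times a dotted strand, and absorb the bubble using Lemma~\ref{jjj} and $\clock(u)$. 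This gives the $[\,\cdot\,]_{u^{-1}}$ expression in the statement.

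Third, we simplify the rational labels $c=c(x_1)$ and $(h_2')^{-1}=\ell^{12}_{+-}$ that sit on the strands. They slide past the crossings by Lemma~\ref{dotslidecrossing}(1)--(2). Where a crossing joins strands labelled $\frac12$ and $-\frac12$, we expand it by Lemma~\ref{ii}; these corrections are what produce the $f_{ij}$ and $g_{ij}$ factors. We group the terms in the same shape as in Lemmas~\ref{one} and \ref{guoch}, so that they can later be matched against $Z_4,Z_5,Z_6$ in \eqref{first333}.

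The main obstacle will be the bookkeeping of the rational labels. Once $x_1$ or $x_2$ is substituted as $\pm u$ after sliding a $\ominus$ past a cup, we must check that every denominator still acts invertibly on the relevant generalized eigenspace. This uses colour $\pm\frac12$ and $1\pm x_a\pm x_b$ invertible away from the bad eigenvalues.

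We must also choose which black terms to keep, since their linear independence is not known. I would first try to keep exactly the terms whose shape matches the diagrams in Lemma~\ref{one}, pushing everything else to the bottom so that Lemma~\ref{zero1} kills it. The residual rational identity would then be reduced to a polynomial check in $u,x_1,x_2$.
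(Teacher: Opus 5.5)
\textbf{Gap: the proposal misses the bicross relation, which is the key step for $Z_2$.} You model $Z_2$ on the single-crossing $\ominus$-sliding of Lemma~\ref{one} and Lemma~\ref{guoch}. That misses the one feature that makes $Z_2$ different. $Z_2$ contains two black crossings between strands of colours $\frac12$ and $-\frac12$. Together with their rational labels, these are exactly the image under Lemma~\ref{lrcrossing}(1)--(2) of a product of two red crossings, namely the left-hand side of the bicross relation Definition~\ref{def-ikmc}(6d) at $i=\frac12$.

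The paper's proof has three steps:
\begin{enumerate}
\item convert these two crossings to red crossings;
\item expand them with (6d), already verified in Lemma~\ref{bicross1 downred};
\item recognise one of the resulting red terms as the term $F_1$, whose black form was computed in \eqref{sss1}.
\end{enumerate}
Your outline does not contain this step. Corollary~\ref{caup}(2)/(3), Lemma~\ref{dotslidecrossing} and Lemma~\ref{ii} move a polynomial label, or one $\ominus$, past one crossing at a time; they do not collapse a bigon whose two crossings are separated by non-polynomial labels. Pushing such labels out through a crossing of colours $\frac12,-\frac12$ creates cup--cap corrections. Evaluating those corrections needs the auxiliary polynomials and the fake-bubble identities of Definition~\ref{natrual-basic1}(4), which amounts to a complete rerun of \eqref{redd1} and \eqref{sss1}. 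Since your plan has no such computation, it never produces the $F_1$-contribution. The closing suggestion to ``choose which black terms to keep'' and reduce to a polynomial check is not a plan for this specific identity.

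\textbf{The starting description is also inconsistent.} Suppose the dot labelled $p$ sat on a $\frac12$-strand whose right-hand neighbour is $N=E_{-1/2}V$. Then Lemma~\ref{zero1} would make it a zero factor, and $Z_2$ would vanish outright. Sliding the dot past crossings is an equality of morphisms, so it cannot turn zero into a nonzero sum of correction terms.

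The actual role of $p$ is different. On the $-\frac12$-strand over $N$ one has $p(x)=c(x)^{-1}$, because the factor $(x+\frac12)^k$ in \eqref{key-coprim} kills that generalized eigenspace. On the $\frac12$-strand over $N$, $p$ vanishes. So $p$ is a polynomial stand-in for $c^{-1}$, and Lemma~\ref{zero1} is used only to discard terms after an $\ominus$ has been moved.

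\textbf{How to repair the argument.}
\begin{enumerate}
\item Locate the pair of crossings in $Z_2$.
\item Rewrite them as red crossings via Lemma~\ref{lrcrossing}(1)--(2).
\item Apply Lemma~\ref{bicross1 downred} in the form of Definition~\ref{def-ikmc}(6d) for $i=\frac12$.
\item Substitute \eqref{sss1} for the $F_1$-term.
\end{enumerate}
The invertibility of denominators, which you list as the main obstacle, is then already settled in Section~\ref{modue5}.
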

\begin{proof}For the second term in the above formula, we first apply
Lemma~\ref{lrcrossing}(1)--(2) to convert the two black crossings into red
crossings.  Next, we use Lemma~\ref{bicross1 downred}, namely the image of the relation in
Definition~\ref{def-ikmc}(6d), to  expand the product of these two red crossings.  By
\eqref{sss1}, one of the red terms in the resulting expression is precisely the
term \(F_1\)  in \eqref{sss1} for $i=1/2$.  This gives  the desired formula for
\(Z_2\).\end{proof}

\begin{Prop}\label{emm3}   When  evaluated  on a finitely generated object  $V\in \mathcal M_\lambda$, the natural transformation $% [inline block 30: 8 envs, 4667 chars -> data_tex | \begin{tikzpicture}         [baseline = 0, scale=0.8, transform shape]...]
\Bigg]_{u^{-1}}},\end{aligned}\end{equation}
 where $Z_4, Z_5$ and $Z_6$ are given  as in \eqref{first333}. 
\end{Prop}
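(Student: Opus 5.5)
This statement is the formula \eqref{first1}: the first braiding on the right-hand side of the inhomogeneous relation equals the displayed black expression plus $Z_4$, $Z_5$, $Z_6$, evaluated on $V$. The proof should assemble the three lemmas just established. By \eqref{first}, the red diagram equals $Z_1+Z_2-Z_3$. I will substitute Lemma~\ref{one} for $Z_1$, Lemma~\ref{guoc} for $Z_2$, and Lemma~\ref{guoch} for $-Z_3$. Then I will collect terms into the displayed form, with $N=E_{-\frac12}V$. Throughout, the eigenvalue labels on the black strands are $(-\tfrac12,\tfrac12,-\tfrac12)$ at top and bottom. Every dot labelled by $c$, $p$, $h_1^{-1}$, $h_5$, $(h_2')^{-1}$, $f_{ij}$ or $g_{ij}$ acts on a fixed generalized eigenspace of $V$, so it is a well-defined natural transformation by Lemma~\ref{tvi} and Remark~\ref{nateq}.

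The collection proceeds in three steps.

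\textbf{Step 1.} I will write out the sum of the three lemma expressions. I will match terms that have the same underlying black diagram and differ only in their rational labels, and then combine their labels into a single rational function of $x_1,x_2,x_3$.

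\textbf{Step 2.} For the bracketed terms $[\cdots]_{u^{-1}}$, which arise from Lemma~\ref{usefuel-equa}(1) and Corollary~\ref{caup}(2), I will combine them into a single residue. Here I use $\clock(u)$ and the identity $\mathcal O_N(u)\,p(u)$-type simplifications coming from Lemma~\ref{lem:bublleyimeszeropoly} and \eqref{key-coprim}.

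\textbf{Step 3.} I will discard terms with a strand carrying the label $p(x)$ on the $\tfrac12$-eigenspace of $N$, since these vanish by Lemma~\ref{zero1}.

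The main obstacle is the bookkeeping of the correction terms from Lemma~\ref{ii}. These are the $f_{12}$- and $f_{21}$-terms in Lemma~\ref{one}, the corresponding terms in Lemma~\ref{guoch}, and the $F_1$-term entering $Z_2$ through \eqref{sss1} with $i=\tfrac12$. They must cancel pairwise or recombine into the stated shape. Because the black diagrams are not known to be linearly independent, the argument cannot compare coefficients; each cancellation must be realized by an explicit diagrammatic identity.

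For each such pair, I will slide dots using Lemma~\ref{dotslidecrossing} so that the two diagrams coincide. I will then verify the resulting identity between rational labels, such as $\ell^{12}_{+-}$, $\ell^{12}_{--}$, $\ell^{13}_{+-}$ and $h_4$, directly; these are the rational identities deferred to Section~9. Once this is done, the remaining leftover terms are exactly $Z_4$, $Z_5$, $Z_6$ as defined in \eqref{first333}, and the proposition follows.
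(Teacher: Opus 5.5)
Your first move matches the paper: by \eqref{first} the red diagram is $Z_1+Z_2-Z_3$, and substituting Lemmas~\ref{one}, \ref{guoch} and~\ref{guoc} yields \eqref{first1}. But that substitution is essentially bookkeeping, and it only produces $Z_4,Z_5,Z_6$ in raw form, as residues $[\cdots]_{u^{-1}}$ with $\ominus/\oplus$ labels on cups and caps. The substance of the proposition is proving that these terms satisfy \eqref{first333}. Your plan never does this: it simply declares the leftovers to be ``exactly $Z_4,Z_5,Z_6$ as defined in \eqref{first333}''. What is missing is the local calculus of Lemma~\ref{usefuel-equa}(5)--(8).
For $Z_4$, part (5) followed by Lemma~\ref{zero1} handles the first term, and part (8) merges the second term with the third, giving \eqref{ZZZZ}. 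Lemma~\ref{ii} must then be applied to two crossings of the first summand, and part (6) with Lemma~\ref{zero1} to the second summand. $Z_5$ is treated similarly.
Most importantly, for $Z_6$ there are three further steps after Lemma~\ref{usefuel-equa}(7). First you need the identity \eqref{OU1}, which uses Lemma~\ref{ou}(2) as well as Lemma~\ref{lem:bublleyimeszeropoly} and \eqref{key-coprim}. Next you need the vanishing \eqref{eat1} of the $h(-u)(-u+\tfrac12)^k$ contribution for $k\gg0$ on $E_{1/2}E_{-1/2}V$ and $V$. Finally you must recognize the surviving residue, via the second relation in Definition~\ref{natrual-basic1}(4) and the shift $u\mapsto u+\tfrac12$, as a red (possibly fake) bubble. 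Your Step~2 gestures toward \eqref{OU1}, but it never converts the black bubble expression into a red one. Without that conversion $Z_6$ cannot reach the form recorded in \eqref{first333}, which is the form later matched against the bubble side of the inhomogeneous relation.

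You also misplace the difficulty. The correction terms from Lemma~\ref{ii} are not cancelled in this proposition. They are carried into \eqref{first1} and \eqref{first333}; see the $f_{12}$-labels that survive in \eqref{ZZZZ}. Lemma~\ref{ii} is even used again inside the simplification of $Z_4$. Likewise, the rational identities of Section~9 play no role here. Those are the four-variable identities $T_1=T_5=1$, $T_2=T_3=T_4=0$ of Lemma~\ref{lem:unprimed-rational-identities}. They enter only in Theorem~\ref{inhomo100}, after Propositions~\ref{emm3} and~\ref{emm6} are combined with $Z_{10}$ and $Z_{11}$. Aiming for pairwise cancellations at this stage therefore targets the wrong statement; the work that actually needs doing is the term-by-term reduction of $Z_4,Z_5,Z_6$ described above.
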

\begin{proof}  Lemmas~\ref{one}--\ref{guoc} give formulas for $Z_1, Z_2$ and $ Z_3$. 
Substituting these formulas
into \eqref{first} gives the identity \eqref{first1}, where
$Z_4$, $Z_5$, and $Z_6$ are defined as in \eqref{Z100}.
It remains to prove that these three terms satisfy the identities in \eqref{first333}.

There are three terms in the expansion of \(Z_4\) in \eqref{Z100}. 
The first term contains a local diagram of the form appearing on the left-hand side of 
Lemma~\ref{usefuel-equa}(5). Applying Lemma~\ref{usefuel-equa}(5) to this local diagram and then using Lemma~\ref{zero1}, we obtain the first term on the right-hand side of \eqref{ZZZZ}.

For the second term in the expansion of \(Z_4\) in \eqref{Z100}, we apply Lemma~\ref{usefuel-equa}(8) to the relevant local diagram. The resulting diagram, together with the third term in the same expansion, gives the second term on the right-hand side of \eqref{ZZZZ}. Hence we obtain the identity:

\begin{equation}\label{ZZZZ}
 Z_{4} =-
 \begin{tikzpicture}
 [baseline=0, scale=0.5]
\draw[-,thick] (-1,1) -- (1, -1);
\draw[-,thick] (0, 1) -- (-1, -1);
\draw[-,thick] (-.2, -1) -- (.8,1);
\draw[-,densely dotted] (-.9,.9) to (0,.9);
\node at (-1.5,.8) {$\scriptstyle{h_1}$};
\draw[-,densely dotted] (-.9,-0.9) to (-.2,-.9);
%\node [cplus,scale=0.7] at (-0.6,-.2) {$-$};
%\node [cplus,scale=0.7] at (0.1,-.5) {$-$};
 \node at (-1.8,-.9) {$\scriptscriptstyle{h'f_{12}}$};
% \node at (1,0) {$\scriptscriptstyle{p(u)}$};
% \draw[-,densely dotted] (-1,-.9) to (0,-.9);
% \node at (-1.2,-.5) {$\scriptstyle{h'}$};
% \draw[-,densely dotted] (-1,.9) to (1,.9);
% \node at (1.7,.9) {$\scriptstyle{h_3'}$};
%\node at (1.4, 0.8) {$\scriptstyle\lambda$};
%\node[rectangle, fill=blue!20, minimum width=1.5cm, draw=black] at (0, -1.6) {$\kappa$}; 
\end{tikzpicture}
-
\Bigg[\mathord{\begin{tikzpicture}
[
    baseline =1cm, 
    scale=0.9, 
    % 居中基线
   diagram styles
]

% 定义边界框

% 定义顶点位置
\coordinate (v1) at (0.1,1);
\coordinate (v2) at (0.8,1);
\coordinate (w1) at (0.1,1.7);
\coordinate (w2) at (0.8,1.7);
\coordinate (u1) at (0.1,0.7);
\coordinate (u2) at (0.8,0.7);
\draw[cross, a_near_end,pl_near_start] (w1) to[arc_right] (w2);
\draw[cross] (v2) to[arc_right] (v1);
\coordinate (t1) at (2.8,1);
\coordinate (t2) at (2.8,1.7);
\draw[line_noarrow_solid,  a_near_start] (t2) -- (t1);
\draw[-, black, -, densely dotted] (a-end) -- (a-start) node[midway, above=0.01cm] {$\scriptstyle f_{12}$};
\draw[line_noarrow_solid, mi_near_start] (v1) -- (u1);
\draw[line_noarrow_solid] (v2) -- (u2);
\draw[cross,  a_near_end,  a_near_start] (v2) to[arc_right] (v1);
\draw[-, black, -, densely dotted] (a-end) -- (a-start) node[midway, below] {$\scriptstyle ch'$};
\node[right, yshift=-0.2cm] at (a-start)  {$\scriptstyle \frac{p(u)}{2u}$};
\end{tikzpicture}}\Bigg]_{u^{-1}}.
\end{equation}
We then apply Lemma~\ref{ii} to two local crossings of the first summand, and  use   Lemma~\ref{usefuel-equa}(6) and
Lemma~\ref{zero1} to rewrite   the second summand above. This gives
the required formula for $Z_4$ in \eqref{first333}.

The argument  for $Z_5$ is similar:  applying
Lemma~\ref{usefuel-equa}(5)--(6) and Lemma~\ref{zero1} to the
expression for $Z_5$ in \eqref{Z100} gives  the required
formula for $Z_5$ in \eqref{first333}.  We omit the details.

It remains to treat $Z_6$.  Recall that
$p(u)\in Z_{\lambda-\alpha_{1/2}, E_{-1/2}V}[u]$ is determined by the polynomial
$g(u)\in Z_{\lambda-\alpha_{1/2}, E_{-1/2}V}[u]$ satisfying \eqref{key-coprim}.
By Lemma~\ref{ou}(2), Lemma~\ref{lem:bublleyimeszeropoly},
and \eqref{key-coprim}, we have
  \begin{equation} \label{OU1} \mathcal O_{E_{-1/2}V}(-u)
\frac{p(-u)}{u}= 2(u-1/2)^{1+\epsilon_{1/2}(E_{-1/2}V)} \big[\frac{1}{t_{E_{-1/2}V,1/2}(u)}-h(-u)(-u+1/2)^k\big],
 \end{equation}
 for a sufficiently large integer $k$. 
  Then 
  \begin{equation*} 
  \begin{aligned}      
Z_{6}&=h_5\Bigg[\mathord{% [inline block 31: 5 envs, 4298 chars in 2 pieces, piece 1 here, a bare % at each other -> data_tex | \begin{tikzpicture}[ baseline = 1cm, scale=0.9,...]
}\right]_{u^{-1}}.
    \end{aligned} 
    \end{equation*}
Here, to obtain the first equality, we observe that the initial diagrams in the expansion of $Z_6$ in \eqref{Z100} contain, after composing the cup at the top with a cap,   a local diagram of the form appearing on the left-hand side of Lemma~\ref{usefuel-equa}(7).
Applying  
Lemma~\ref{usefuel-equa}(7) to rewrite this local diagram gives  the first equality. The third equality follows from
\eqref{OU1}.  In the last step, the term involving
$h(-u)(-u+1/2)^k$ vanishes. Indeed, we have   \begin{equation}
    \label{eat1}
%
    =0\end{equation}  for
sufficiently large $k$ and
$M\in\{E_{1/2}E_{-1/2}V,V\}$.
Finally, using the second relation in
Definition~\ref{natrual-basic1}(4), and then replacing $u$ by
$u+1/2$, we obtain the required identity for $Z_6$ in
\eqref{first333}.  This completes the proof.
 \end{proof} 

\subsection{Step 2: compute the second braiding} The aim of this subsection is to compute the second braiding
on the right-hand side of the inhomogeneous relation in
Definition~\ref{def-ikmc1}.  The method for computing this braiding is
somewhat similar to that used for the first braiding, although there are
also several important differences.

Throughout this subsection,  let $V\in \mathcal M_\lambda$ be  a finitely generated object.
Evaluating  the relevant natural transformations on $V$, we obtain
\begin{equation}\label{Second}
        \begin{aligned}
         % [inline block 32: 3 envs, 2030 chars -> data_tex | \begin{tikzpicture}[baseline = 0] 	\draw[<-,thick,darkred] (-0.45,-.6) to (0.45,.6);...]

}=Z_1'-Z_2'+Z_3'. 
 \end{aligned}
    \end{equation}
The first equality in \eqref{Second} follows from
Lemma~\ref{lrcrossing}(1)--(2) and \eqref{redijdown}: the former converts the two red crossings into the corresponding black crossings, while the latter rewrites the remaining red crossing as a dotted black expression.

For the second equality, we apply Lemma~\ref{dotslidecrossing}(1)--(2) to slide the dot labeled by \(h_2\) toward the bottom in the first term. This turns the first term into  two contributions, denoted by \(Z_1'\) and \(Z_3'\); we also denote the second term by $Z_2'$. Explicitly,

\begin{equation}
Z_1'=\mathord{% [inline block 33: 28 envs, 19182 chars in 3 pieces, piece 1 here, a bare % at each other -> data_tex | \begin{tikzpicture}[baseline = 0] \draw[-,thick] (-0.45,-.6) to (0.45,.6);...]
,
} 
\end{equation}
where  \begin{equation} \label{notation200} {\tilde h_1}^{-1}=\ell_{--}^{23},  \ h_2^{-1}=\ell_{-+}^{12},\     d=d(x_1),\   h_6=\ell_{-+}^{13}, \  q=q(x).   \end{equation} 
We will use  the notation in \eqref{notation200} until the end of    Subsection~6.3. We are going to prove the following formula: 
\begin{equation}
 \label{second12}
  \begin{aligned}
\text{RHS of \eqref{Second}} 
=& 
%
}\Bigg]_{u^{-1}}. 
\end{aligned}
 \end{equation}
Furthermore, the terms $Z_7, Z_8$, and $ Z_9$ satisfy the following identities
  \begin{equation} 
    \begin{aligned}
     \label{second678}
   Z_{7}&=
-\mathord{
%
.
    \end{aligned}
    \end{equation}
Combining \eqref{second12} and \eqref{second678} with \eqref{Second}, we obtain  the desired formula for the second braiding on the right-hand side of the inhomogeneous relation in Definition~\ref{def-ikmc1}. It remains to verify \eqref{second12} and \eqref{second678}; this will be done in 
the rest of this subsection. 

\begin{Lemma}\label{guoch1}When evaluated on any finitely generated object  $V\in \mathcal M_\lambda$,   the natural transformation  $
  Z_1'$ satisfies 
\begin{equation}\label{III1eq}
    \begin{aligned}
   Z_1'
 = &\quad Z_7+
\mathord{
\Bigg[% [inline block 34: 19 envs, 16081 chars in 3 pieces, piece 1 here, a bare % at each other -> data_tex | \begin{tikzpicture}[baseline = 0, scale=0.8] \draw[-,thick] (-0.45,.6) to (0.45,-.6);...]

    \end{aligned}
\end{equation}
where % $d=d(x_1)$, $q=q(x)$, and 
$q(u)$ is the polynomial obtained from $q(x)$ by substituting 
$u$ for $x$. 
\end{Lemma}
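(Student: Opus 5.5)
Lemma~\ref{guoch1} is the mirror image of Lemma~\ref{one}, obtained by exchanging the roles of top and bottom. I would follow the proof of Lemma~\ref{one} with the substitutions $c\mapsto d$, $p\mapsto q$, $h_1\mapsto\tilde h_1$, $h_2'\mapsto h_2$ and $h_5\mapsto h_6$. The anti-involution $\sigma$ of Lemma~\ref{anti} swaps cups and caps and fixes dots and crossings, so it shows that every local identity used there has a reflected counterpart. Corollary~\ref{caup}(2) is exchanged with Corollary~\ref{caup}(3), and Lemma~\ref{usefuel-equa}(3),(5),(7) with (4),(6),(8).

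\textbf{Step 1.} In the diagram $Z_1'$, the dot labelled $q=q(x)$ sits on the cap or cup segment carrying the $-\frac12$ strand. I would first use Lemma~\ref{usefuel-equa}(1) to rewrite it as a $\ominus$ labelled by $q(u)$, taking the coefficient $[\,\cdot\,]_{u^{-1}}$. Then I would slide this $\ominus$ through the adjacent crossing to the top of the local picture, using the reflected form of Corollary~\ref{caup}(2), namely Corollary~\ref{caup}(3). This yields an identity analogous to \eqref{I1eq}, with two summands. The first summand, in which the $\ominus$ has passed through, is exactly what I would collect as $Z_7$. The second summand I would call $Z'$.

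\textbf{Step 2.} In $Z'$, moving the remaining $\ominus$ through the crossing produces three terms plus one extra term. The extra term contains a local diagram of the form in Lemma~\ref{usefuel-equa}(3) (reflected from (4)), and I would expand it by that lemma into two further diagrams. Wherever a crossing appears whose strands are labelled $-\frac12$ and $\frac12$, I would apply Lemma~\ref{ii} with $i=\frac12$ to replace it by its expansion. This produces the correction terms involving $f_{12}$ and $f_{21}$, where $f_{ij}^{-1}=\ell^{ij}_{+-}$. I would then reassemble these terms into the bracketed $[\,\cdot\,]_{u^{-1}}$ expressions in \eqref{III1eq}. Throughout, Lemma~\ref{zero1} kills any term in which $q(x)$ acts on the $-\frac12$ strand over $V$.

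\textbf{Main obstacle.} The hard part is bookkeeping rather than new ideas. First, the variable assignment in $\tilde h_1^{-1}=\ell^{23}_{--}$ and $h_2^{-1}=\ell^{12}_{-+}$ is not a naive reflection of $h_1$ and $h_2'$. Second, the signs in Lemma~\ref{ii} depend on the order of the labels $(-\frac12,\frac12)$ versus $(\frac12,-\frac12)$, so they must be tracked carefully. The safest check is to apply $\sigma$ from Lemma~\ref{anti} to the final identity \eqref{lem47}. After relabelling the rational functions, each term of \eqref{III1eq} should match a term of \eqref{lem47}, possibly with a sign.
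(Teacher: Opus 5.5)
Your Steps 1--2 follow the paper's computation in outline. Lemma~\ref{usefuel-equa}(1) turns the $q$-dot into a $\ominus$ labelled $q(u)$, and Corollary~\ref{caup}(3) moves it to the top. The second summand is then expanded, with its extra local diagram evaluated by Lemma~\ref{usefuel-equa}(3). Finally, Lemma~\ref{ii}, applied to the crossing labelled $-\frac12,\frac12$, produces the $f_{12}$- and $f_{21}$-terms.

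Your reflection dictionary mispredicts the actual steps in a few places:
\begin{itemize}
\item The paper's first equality uses the vanishing statement Lemma~\ref{dotslidecrossing}(3), which you do not mention.
\item In the second summand the $\ominus$ sits on a cap. It is first moved to the other endpoint of the cap by Corollary~\ref{caup}(1), where it becomes a $\oplus$. Only then is it pushed through the crossing, using Corollary~\ref{caup}(2), not (3).
\item The extra diagram is evaluated with Lemma~\ref{usefuel-equa}(3) together with Definition~\ref{C defn}(1).
\item Lemma~\ref{zero1} is not used in this lemma. It enters only later, when $Z_7$ is simplified in Proposition~\ref{emm6}.
\end{itemize}

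Two statements in your proposal are wrong. First, in this paper $f_{ab}=(x_a-x_b)^{-1}$ (see \eqref{notationfij}), not $(\ell^{ab}_{+-})^{-1}$. With your convention the correction terms would not even be defined: $1+x_a-x_b$ is not invertible when $x_a$ sits on a strand labelled $-\frac12$ and $x_b$ on a strand labelled $\frac12$.

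Second, the proposed check via $\sigma$ does not work. $\sigma$ is an anti-involution of $\mathcal{AB}$ and does not act on $\mathcal M$. By contrast, \eqref{lem47} is an identity of natural transformations evaluated on a fixed $V$. It involves eigenspace projections and $V$-dependent labels: $c,p$ are built from $E_{-1/2}V$, while $d,q$ are built from $V$ via a different coprimality decomposition. So \eqref{III1eq} cannot be obtained by transporting \eqref{lem47}, and the extra Corollary~\ref{caup}(1) step shows the computation is not a literal reflection. Use $\sigma$ only to justify the reflected $\mathcal{AB}$-local identities, which the paper already records, and establish \eqref{III1eq} by the direct computation.
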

\begin{proof}
  We have
   \begin{equation}
       \begin{aligned}
      \label{III123eq}
     Z_1'
\overset{Lem~\ref{dotslidecrossing}(3)} =%
\Bigg]_{u^{-1}} .
       \end{aligned}
   \end{equation}
Here, we use  Lemma~\ref{usefuel-equa}(1) to rewrite the dot labeled by $q$ as the $\ominus$ labeled by $q(u)$, and then apply   Corollary~\ref{caup}(3) to move the $\ominus$ in this local diagram toward the top, which gives the second equality.  We next compute  the second summand after the second equality   of \eqref{III123eq}, which we denote by $Z$. We have    
%\begin{equation}\label{III12eq}
 $$\begin{aligned}
  Z =&-
\mathord{
\Bigg[%
}\Bigg]_{u^{-1}}
.
 \end{aligned}   $$
For the first equality, we first apply Corollary~\ref{caup}(1) to move the \(\ominus\) to the other endpoint of the cap, where it becomes an \(\oplus\). We then use Corollary~\ref{caup}(2) to move this \(\oplus\) through the crossing. This gives the first two terms on the right-hand side and one extra term. The local diagram in the extra term is evaluated by Lemma~\ref{usefuel-equa}(3) and Definition~\ref{C defn}(1), and is replaced by the resulting sum of two local diagrams. These give the third and fourth terms.

For the second equality, we apply Lemma~\ref{ii} to the local crossing in the first term on the left-hand side of the first equality, where the strands are labeled \(-1/2\) and \(1/2\) from left to right. This expands the crossing, with correction terms exactly given by the displayed \(f_{12}\)- and \(f_{21}\)-terms. Combining this with \eqref{III123eq} gives \eqref{III1eq}.\end{proof}

 \begin{Lemma}\label{guoch2}When evaluated  on any finitely generated object  $V\in \mathcal M_\lambda$, the natural transformation  $
Z_3'$ satisfies 
\begin{equation}\label{III2}
    \begin{aligned}
        Z_3'
=&
-
% [inline block 35: 14 envs, 10663 chars -> data_tex | \begin{tikzpicture}[baseline = 0] \draw[-,thick] (0.45,.6) to (-0.45,-.6);...]
\Bigg]_{u^{-1}}
}\\
=&
\text{RHS of \eqref{III2}}.
\end{aligned}
 $$
For the third equality, we keep the second term of the second equality unchanged and simplify the first term. By Lemma~\ref{usefuel-equa}(1), the dot labeled \(q\) is rewritten as an \(\ominus\) labeled \(q(u)\). Then Corollary~\ref{caup}(3) moves this \(\ominus\) upward through the crossing, producing the first, second, and fourth terms in the third equality.

Next, applying Lemma~\ref{usefuel-equa}(4) and Definition~\ref{C defn}(1) to the second term of the third equality yields the second and third terms on the right-hand side of the next equality. Finally, Lemma~\ref{ii}, with strands labeled \(-1/2\) and \(1/2\), expands the local crossings in the fourth summand of the third equality; the correction terms are precisely the displayed \(f_{12}\)- and \(f_{21}\)-terms.\end{proof}

\begin{Lemma}\label{lem415}  When 
    acting on any finitely generated object $V\in \mathcal M_\lambda$, the natural transformation  
    $ Z_2'$ satisfies
    $$
           Z_2'=
% [inline block 36: 14 envs, 7852 chars in 2 pieces, piece 1 here, a bare % at each other -> data_tex | \begin{tikzpicture}[baseline = 0] 	\draw[-,thick] (0.28,0) to[out=90,in=-90] (-0.28,.7);...]

}.$$ %where $d=d(x_1)$ and $q=q(x)$.where $d=d(x_1)$ and $q=q(x)$. 
\end{Lemma}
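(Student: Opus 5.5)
The plan mirrors the proof of Lemma~\ref{guoc} for $Z_2$ in the first braiding, transported to the second braiding. By \eqref{Second}, the term $Z_2'$ contains two black crossings between adjacent strands labelled $\frac12$ and $-\frac12$. Between them sits a dotted cup/cap portion whose label involves $d=d(x_1)$ and $q=q(x)$.

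First, I will apply Lemma~\ref{lrcrossing}(1)--(2) in reverse to rewrite these two black crossings as red crossings. This introduces the normalizing rational factors, $c_V$- and $d_V$-type labels, together with the inverses of the $s$- and $t$-functions from \eqref{sx1x2}. On the relevant generalized eigenspaces these factors are either absorbed into $d$ or slid away using Lemma~\ref{dotslidecrossing}. The labels $\pm\frac12$ give $i=j=\frac12$, so the $i=j$ branch of \eqref{sx1x2} with $h(x_1,x_2)$ from \eqref{h12} and $\delta_{i,1/2}=1$ is the one that enters.

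Next, I will use Lemma~\ref{bicross1 downred}, namely the image of the relation in Definition~\ref{def-ikmc}(6c) (the orientation opposite to the (6d) case used for $Z_2$), to expand the composite of the two red crossings. This produces an identity term, which becomes the first (black, dotted) summand in the displayed formula after converting back via Lemma~\ref{lrcrossing}. It also produces a bubble/cup--cap correction term. By \eqref{sss11} with $i=\frac12$ and the replaced rational functions $h,h'$ from \eqref{ration-h123}, that correction term is exactly the $F_2$-type expression computed in the proof of Lemma~\ref{bicross1 downred}, written as a $[\,\cdot\,]_{u^{-1}}$ residue involving $q(u)$ and $d$.

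Finally, the residue term will be put into the stated form. I will use Lemma~\ref{zero1} to discard contributions where $q(x)$ acts on the $-\frac12$ strand of $V$, and Lemma~\ref{usefuel-equa}(1),(3) to move the $\ominus$ labelled $q(u)$ onto the bubble. The main obstacle is the bookkeeping at the exceptional colour $\frac12$. There the modified $h$ in \eqref{ration-h123} and the factor $(2x-1)$ in $c_V$ change the normalizations, so I will check directly that the factors $t_{ji}$ and $2^{\delta_{i,1/2}}$ from Definition~\ref{natrual-basic1} cancel as claimed. I will do this by comparing the residue expression with the second relation in Definition~\ref{natrual-basic1}(4) after the shift $u\mapsto u-\frac12$.
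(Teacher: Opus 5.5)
Your proposal follows the paper's proof: as in Lemma~\ref{guoc}, you convert the two black crossings of $Z_2'$ into red ones via Lemma~\ref{lrcrossing}, expand their composite using the image of Definition~\ref{def-ikmc}(6c) from Lemma~\ref{bicross1 downred}, and identify the resulting correction term through \eqref{sss11} at $i=\frac12$. Your final bookkeeping paragraph is unnecessary, because \eqref{sss11} already gives that term in the required form; also, the relation that enters \eqref{sss11} is the first one in Definition~\ref{natrual-basic1}(4), not the second.
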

\begin{proof}
The proof is identical to that of Lemma~\ref{guoc}, with  Definition~\ref{def-ikmc}(6d)  replaced by Definition~\ref{def-ikmc}(6c), and \eqref{sss1}  replaced by \eqref{sss11}.
\end{proof}

\begin{Prop}\label{emm6}
 When acting on any finitely generated object  $V\in \mathcal M_\lambda$, the natural transformation $%
\Bigg]_{u^{-1}}
},
\end{aligned}
 \end{equation} where 
$Z_7, Z_8$ and $Z_9$ are defined as in \eqref{Z200}.  
\end{Prop}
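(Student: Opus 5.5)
The plan mirrors exactly the proof of Proposition~\ref{emm3}, transported to the second braiding. Lemmas~\ref{guoch1}, \ref{guoch2} and \ref{lem415} already give explicit formulas for $Z_1'$, $Z_3'$ and $Z_2'$. The first step is to substitute these three formulas into \eqref{Second}, i.e.\ into $Z_1'-Z_2'+Z_3'$, and to collect terms. The black diagrams carrying the correction labels $f_{12}$ and $f_{21}$, which come from the applications of Lemma~\ref{ii} in Lemmas~\ref{guoch1} and \ref{guoch2}, should be grouped together with the bracketed terms $[\cdots]_{u^{-1}}$ that involve $q(u)$. The aim is to bring the result into the shape of \eqref{second12}, with the leftover pieces named $Z_7,Z_8,Z_9$ as in \eqref{Z200}. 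This step is bookkeeping: we check that the terms produced by moving the $\ominus$ labelled $q(u)$ through crossings via Corollary~\ref{caup}(3) cancel in pairs, or combine, between $Z_1'$ and $Z_3'$.

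The second step is to establish \eqref{second678} term by term. For $Z_7$, the first summand of its expansion contains a local diagram of the form on the left of Lemma~\ref{usefuel-equa}(6) (the mirror of (5) used for $Z_4$). Applying that identity and then the second vanishing statement of Lemma~\ref{zero1}, namely that $q(x)$ kills the $-\tfrac12$ strand on $V$, removes the polynomial-remainder part. The second summand is handled with Lemma~\ref{usefuel-equa}(7) in place of (8). We then expand the two remaining local crossings with Lemma~\ref{ii} (strands labelled $-\tfrac12,\tfrac12$) and use Lemma~\ref{usefuel-equa}(5) together with Lemma~\ref{zero1} once more on the residual bracket. $Z_8$ is treated the same way, via Lemma~\ref{usefuel-equa}(5)--(6) and Lemma~\ref{zero1}.

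The third step, and the main obstacle, is $Z_9$, the analogue of $Z_6$. Here we must evaluate a bubble-type expression $[\mathcal O_V(u)\,q(u)\cdots]_{u^{-1}}$ exactly. I would first use Lemma~\ref{usefuel-equa}(8) to rewrite the cup--cap local diagram as a coefficient extraction. Next I would prove an analogue of \eqref{OU1}: using Lemma~\ref{ou}(2), Lemma~\ref{lem:bublleyimeszeropoly}, Corollary~\ref{lift-n} and the second application of \eqref{key-coprim} (with $f_1=n_{V,1/2}$, $f_2=(u-\tfrac12)^k$), we get
\[
\mathcal O_V(u)\frac{q(u)}{u}=\frac{t_{V,1/2}(u)}{(u-\tfrac12)^{\epsilon_{1/2}(V)}}\bigl(1-h'(u)(u-\tfrac12)^k\bigr).
\]
The term containing $h'(u)(u-\tfrac12)^k$ must be shown to vanish for $k\gg0$, as in \eqref{eat1}, because the relevant dot is nilpotent on the $\tfrac12$-eigenspace of $M\in\{E_{-1/2}E_{1/2}V,V\}$. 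Finally, the first relation of Definition~\ref{natrual-basic1}(4) identifies the surviving series with red (possibly fake) bubbles after the shift $u\mapsto u-\tfrac12$. The delicate points are tracking the exceptional factor $(2u-1)$ at colour $\tfrac12$ and choosing a single $k$ large enough for both applications of \eqref{key-coprim}; I expect these normalizations to be where sign and power-of-$2$ errors would appear.
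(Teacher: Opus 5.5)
Your proposal follows the paper's proof essentially step for step. You substitute Lemmas~\ref{guoch1}--\ref{lem415} into \eqref{Second}, reduce $Z_7$ and $Z_8$ with Lemma~\ref{usefuel-equa}(5)--(7), Lemma~\ref{ii} and Lemma~\ref{zero1}, and evaluate $Z_9$ via Lemma~\ref{usefuel-equa}(8), the identity \eqref{OU2}, the vanishing \eqref{eat1}, and the first relation of Definition~\ref{natrual-basic1}(4) after a shift by $\frac12$. The remaining discrepancies are cosmetic. For $Z_7$ the paper applies (5) to the first term and (7) to the third term, merging the latter with the second, and uses (6) only on the residual bracket after expanding with Lemma~\ref{ii}. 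Next, \eqref{OU2} needs only Corollary~\ref{lift-n} and \eqref{key-coprim}, not Lemma~\ref{ou}(2). Finally, the vanishing \eqref{eat1} is for $M\in\{E_{1/2}E_{-1/2}V,V\}$ rather than $E_{-1/2}E_{1/2}V$.
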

\begin{proof}Lemmas~\ref{guoch1}--\ref{lem415} give formulas for \(Z_1'\), \(Z_2'\), and \(Z_3'\). Substituting these formulas into \eqref{Second} gives the identity \eqref{second12}, where \(Z_7\), \(Z_8\), and \(Z_9\) are defined as in \eqref{Z200}. It remains to prove that these three terms satisfy the identities in \eqref{second678}.

We first consider \(Z_7\). From the three-term expansion of \(Z_7\) in \eqref{Z200}, applying Lemma~\ref{usefuel-equa}(5) to the two \(\ominus\)'s in the first term and then using Lemma~\ref{zero1} gives the first term on the right-hand side of \eqref{ZZZZ2}. Similarly, applying Lemma~\ref{usefuel-equa}(7) to the two \(\oplus\)'s in the third term gives two diagrams which, together with the second term in the expansion of \(Z_7\), give the second term on the right-hand side of \eqref{ZZZZ2}. Hence
\begin{equation}
    \label{ZZZZ2}
 Z_{7}=-
 \begin{tikzpicture}
 [baseline=0, scale=0.5]
\draw[-,thick] (1,-1) -- (-1, 1);
\draw[-,thick] (0, -1) -- (1, 1);
\draw[-,thick] (.2, 1) -- (-.8,-1);
\draw[-,densely dotted] (.9,-.9) to (0,-.9);
\node at (1.5,-.8) {$\scriptstyle{h_1}$};
\draw[-,densely dotted] (.9,0.9) to (.2,0.9);
 \node at (1.8,0.9) {$\scriptscriptstyle{hf_{12}}$};
\end{tikzpicture}
-
\Bigg[\begin{tikzpicture}
[ baseline =-12mm,
scale=0.7, diagram styles]
% 定义边界框
% 定义顶点位置
\coordinate (t1) at (-0.6,-1);
\coordinate (t2) at (-0.6,-2);
\coordinate (v1) at (0.1,-1);
\coordinate (v2) at (0.8,-1);
\coordinate (w1) at (0.1,-2);
\coordinate (w2) at (0.8,-2);
\coordinate (u1) at (0.1,-0.7);
\coordinate (u2) at (0.8,-0.7);
\draw[line_noarrow_solid,, a_near_end] (t1) -- (t2);
\draw[cross,  a_near_start, pl_near_start] (w1) to[arc_left] (w2);
\draw[-, black, -, densely dotted] (a-end) -- (a-start) node[midway, below] {$\scriptstyle f_{12}$};
\draw[cross,    a_near_end] (v2) to[arc_left] (v1);
\draw[line_noarrow_solid, mi_near_start] (v1) -- (u1);
\draw[line_noarrow_solid] (v2) -- (u2);
\draw[cross,  a_near_end,  a_near_start] (v2) to[arc_left] (v1);
\draw[-, black, -, densely dotted] (a-end) -- (a-start) node[midway, above] {$\scriptstyle dh$};
\node[right, yshift=-0.4cm] at (a-start)  {$\scriptstyle \frac{q(u)}{2u} $};
\end{tikzpicture}\Bigg]_{u^{-1}}. 
\end{equation}

We then apply Lemma~\ref{ii} to the two local crossings in the first summand of \eqref{ZZZZ2}. The second summand is rewritten using Lemma~\ref{usefuel-equa}(6) and Lemma~\ref{zero1}. This gives the required formula for \(Z_7\) in \eqref{second678}.

The proof for \(Z_8\) is similar. More precisely, applying Lemma~\ref{usefuel-equa}(5) to the two \(\ominus\)'s in each term of \(Z_8\), and then using Lemma~\ref{zero1}, gives the formula for \(Z_8\) in \eqref{second678}. We omit the details.

It remains to compute $Z_9$.  By Lemma~\ref{lem:bublleyimeszeropoly} and \eqref{key-coprim}, we have 
  \begin{equation}  
 \label{OU2}
\mathcal O_V(u)q(u)u^{-1}=  \frac{t_{V,1/2}(u)(1-(u-\frac{1}{2})^k h'(u))}{(u-1/2)^{\epsilon_{1/2}(V)}}\end{equation}
  Then
    \begin{equation*}
    \begin{aligned}
       Z_{9} \overset{Lem~\ref{usefuel-equa}(8)}  =  h_6&
\Bigg[\mathord{% [inline block 37: 4 envs, 4162 chars -> data_tex | \begin{tikzpicture}[     baseline = -9.5mm, scale=0.7, diagram styles...]
.
    \end{aligned}
\end{equation*}
The first equality is obtained by  applying Lemma~\ref{usefuel-equa}(8) to the \(\oplus\) and lower \(\ominus\) on the right-hand side  of the first equality, as well as  to the corresponding $\oplus$ and $\ominus$  in the second term on the left-hand side.
 The second equality follows from \eqref{OU2}.  The third equality follows from the vanishing relation in \eqref{eat1}. 
 Finally, for the last equality in the computation of $Z_9$,  we apply   the first relation in Definition~\ref{natrual-basic1}(4), and then replace $u$ by $u+1/2$. 
 This completes the proof. \end{proof}

\subsection{Step 3: Combine the two braidings}  The main result of this subsection is the computation of  the difference of
the two braidings   $$ % [inline block 38: 18 envs, 11068 chars in 3 pieces, piece 1 here, a bare % at each other -> data_tex | \begin{tikzpicture}         [baseline = 0, scale=0.8, transform shape]...]
$$
 appearing in the
inhomogeneous relation in Definition~\ref{def-ikmc1}. %To carry out this computation, we introduce the natural transformations  %$Z_9$ and $Z_{10}$, below. 
By Propositions~\ref{emm3} and~\ref{emm6},  we have
\begin{equation}
    \label{zuiz}
\begin{aligned}
& %
-Z_7-Z_8\right)\frac{1}{h_6}.
\end{aligned}\end{equation} 
Here \(Z_4\) and \(Z_5\) are the natural transformations defined in
Proposition~\ref{emm3}, and \(Z_7\) and \(Z_8\) are those defined in
Proposition~\ref{emm6}. The remaining two terms, \(Z_{10}\) and \(Z_{11}\),
are defined by
\begin{equation}
\label{Z300}
\begin{aligned} 
Z_{10} =& %
\Bigg]_{u^{-1}}\frac{1}{h_6}.
}\end{aligned} \end{equation} 
In the above formulas, we use the notation 
\begin{equation}\label{notationh3} (h_1')^{-1}=\ell_{--}^{23}, \  h_3^{-1}=\ell_{-+}^{13}, \    (h_3')^{-1}=\ell_{+-}^{13}.
\end{equation}
This notation will be used throughout the rest of this subsection.   The term  $Z_{10}$ will be simplified  in Lemma~\ref{parth}, while  $Z_{11}$ will be computed in Lemma~\ref{emm9}. 
%a step that will require the use of Lemmas~\ref{x1x3} and ~\ref{1x1x3}.
\begin{Lemma}
\label{parth}
When evaluated  on any finitely generated object $V\in \mathcal M_\lambda$, the natural transformation  $Z_{10}$ satisfies the following identity:
 {\small   \begin{equation}\label{z9}
\begin{aligned}
Z_{10}
= & A\frac{1} {1-x_2-x_3}+B\frac{1} {(1-x_1+x_3)(1-x_2-x_3)}\\
\\&+(1-x_2+x_3)C\frac{1} {(1-x_2-x_3)(1-x_1+x_3)} 
-\frac{1} {1+x_2-x_3} C\frac{1} {(1-x_2-x_3)(1-x_1+x_3)} \\
&-D\frac{x_1+x_2-1}{(1-x_2-x_3)(1-x_1+x_3)}+\frac{1} {1+x_2-x_3} D\frac{x_1+x_2-1} { (1-x_1+x_2)(1-x_2-x_3)(1-x_1+x_3)}\\  \end{aligned}\end{equation}}
where  \begin{itemize}\item [(1)] $ A= 
\mathord{
% [inline block 39: 22 envs, 11786 chars -> data_tex | \begin{tikzpicture}[baseline=0, scale=0.5] \draw[-,thick] (-1,1) -- (1, -1);...]
  = \operatorname{RHS} \text{ of  \eqref{z9}} . 
\end{equation}
By Lemma~\ref{AB relations-re}(3) and Definition~\ref{C defn}(4), the dots can be moved across  crossings.  In the first diagram appearing on the left-hand side of \eqref{Z10Z},  we first move the dot labeled by \(h_1h_3'\)
successively to the bottom of the diagram.  After this move, a unique term appears
which consists of a composition of three crossings and has a dot labeled by
\(h'h_1'h_3\) at the bottom.  To produce the term that  cancels the second diagram
on the left-hand side of  \eqref{Z10Z},   we must move the  bottom dot labeled by
\(h'\) to the top of the diagram.  The second dot-moving process produces precisely the diagram
needed to cancel that second term, together with several additional terms.
All extra  terms produced by these two   dot-moving processes are  exactly the terms appearing on the right-hand
side of \eqref{z9}.  Thus, using Definition~\ref{AB defn}(2) together with 
\eqref{Z300}, we obtain the required formula for \(Z_{10}\), and hence \eqref{z9} is verified.
\end{proof}

To compute the natural transformation $Z_{11}$, we introduce the natural transformations  $Z_{12}$ and $Z_{13}$ as follows:
\begin{equation} \label{Z400} \begin{aligned} Z_{12}=& h_5^{-1}
\Bigg[
% [inline block 40: 16 envs, 12744 chars in 3 pieces, piece 1 here, a bare % at each other -> data_tex | \begin{tikzpicture}[baseline = 0, scale=0.8] \draw[-,thick] (-0.45,.6) to (0.45,-.6);...]
\Bigg]_{u^{-1}}h_6^{-1}}.\end{aligned}
\end{equation}

\begin{Lemma}\label{x1x3}
When evaluated  on any finitely generated object $V\in \mathcal M_\lambda$, the natural transformations $Z_{12}$ and $Z_{13}$ satisfy the following relations: 
\begin{itemize} \item [(1)]    \label{lemfg}

$Z_{12}=%
\right]_{u^{-1}}h_6^{-1}}. $$
\end{itemize}
\end{Lemma}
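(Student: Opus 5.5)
Since the displayed formulas for $Z_{12}$ and $Z_{13}$ are not fully visible, the plan follows the pattern of Lemmas~\ref{one}--\ref{guoc} and Propositions~\ref{emm3}, \ref{emm6}. For each of $Z_{12}$ and $Z_{13}$, I will evaluate on a finitely generated $V\in\mathcal M_\lambda$ and reduce the diagram, which contains a cup or cap carrying an $\ominus$ or $\oplus$ labelled by $p(u)$ or $q(u)$, to the form claimed in (1) and (2).

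\textbf{Step 1 (moving the bullets through the crossing).} The rational-function labels $h_5^{-1}$, $h_6^{-1}$ and the $f_{ij}$, $g_{ij}$ are polynomials in the dots on the relevant generalized eigenspaces. I will move them through the crossing using Lemma~\ref{dotslidecrossing}(1)--(2), Lemma~\ref{AB relations-re}(3) and Definition~\ref{C defn}(4). Whenever a $\ominus$ labelled by $p(u)$ or $q(u)$ must pass a crossing, I will use Corollary~\ref{caup}(2) or (3). This produces one main term plus a correction term, whose local piece has the shape of Lemma~\ref{usefuel-equa}(3)--(8).

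\textbf{Step 2 (the correction terms).} I will evaluate each correction term with the matching item of Lemma~\ref{usefuel-equa}. Terms that end with a dot labelled $p(x)$ on a strand coloured $1/2$ over $E_{-1/2}V$, or $q(x)$ on a strand coloured $-1/2$ over $V$, vanish by Lemma~\ref{zero1}. Any crossing between strands coloured $\pm\frac12$ that remains will be expanded by Lemma~\ref{ii}; this is what produces the $f_{12}$- and $f_{21}$-corrections.

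\textbf{Step 3 (coefficient extraction).} Bubble-type pieces will be simplified by writing $\mathcal O_{E_{-1/2}V}(-u)p(-u)/u$ and $\mathcal O_V(u)q(u)/u$ in closed form, via \eqref{OU1} and \eqref{OU2}. The tail terms involving $(u\mp\frac12)^k$ are then dropped by \eqref{eat1}, taking $k$ large. Finally I will apply Definition~\ref{natrual-basic1}(4) after the shift $u\mapsto u\pm\frac12$, exactly as in the treatment of $Z_6$ and $Z_9$.

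\textbf{Expected obstacle.} The main difficulty is bookkeeping rather than any new idea: tracking the rational prefactors $h_5^{-1}$, $h_6^{-1}$ through the dot slides so that the leftover terms line up with the right-hand sides stated in (1) and (2). Where a residual rational-function identity in $x_1,x_2,x_3$ appears, I would verify it by clearing denominators and checking the resulting polynomial identity directly, in the manner of the identities collected in Section~9.
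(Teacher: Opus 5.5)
Your proposal reuses the template of Lemmas~\ref{one}--\ref{guoc} and Propositions~\ref{emm3}, \ref{emm6}: dot-sliding through crossings, Lemma~\ref{ii} expansions, Lemma~\ref{zero1}, and bubble extraction via \eqref{OU1}, \eqref{OU2}, \eqref{eat1} and Definition~\ref{natrual-basic1}(4). It misses the idea this lemma rests on. Two of the four terms on the right-hand side of (1), and likewise of (2), are \emph{red leftward} cups/caps, and no step of your plan produces them.

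The paper's argument is short.
\begin{itemize}
\item First apply the divided-difference identity Lemma~\ref{usefuel-equa}(5) directly to the two $\ominus$'s in the first term of $Z_{12}$ and to the two $\oplus$'s in the second term. No $\ominus$ has to be pushed through a crossing.
\item This gives labels $p_{13}=p(-x_1)-p(-x_3)$ and $q_{13}=q(x_3)-q(x_1)$, with the factor $x_3-x_1$ cancelled from the numerator; no inverse of $x_3-x_1$ is ever used.
\item Of the four resulting terms, the first and third are already the first two terms of the right-hand side.
\item The second and fourth carry a dotted cup/cap labelled $p(-x_2)$, resp.\ $q(x_1)$. Lemma~\ref{ii21}(2), resp.\ Lemma~\ref{ii21}(1), adjusts the normalizing polynomials to the object on which the cup/cap acts. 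Definition~\ref{natrual-basic}(3) then rewrites these black dotted cups/caps as leftward red ones.
\end{itemize}
Part (2) is the same computation.

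The step that fails in your plan is the handling of these leftover terms. You expect the $p$- and $q$-labelled pieces either to vanish by Lemma~\ref{zero1} or to be evaluated as bubbles. Here, however, the arguments are in the regime where the tails in \eqref{key-coprim} vanish for $k\gg0$: the argument of $q$ is near $\frac12$, and the argument of $p$ is near $-\frac12$, i.e.\ $x_2$ lies on the generalized $\frac12$-eigenspace. In that regime $p(-x)=c(-x)^{-1}$ and $q(x)=d(x)^{-1}$. These are invertible, not zero, and they are exactly the normalizing factors of the leftward cup/cap, so they must be kept and recognized as such. No bubble evaluation occurs in this lemma, so \eqref{OU1}, \eqref{OU2}, \eqref{eat1} and the fake bubbles play no role. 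This identification is also what the subsequent combination $Z_{12}+Z_{13}=Z_{11}$ in Lemma~\ref{emm9} relies on.
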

\begin{proof} 
In the following, the expansion $t f_{31}^{-1}$ is used as a single piece of notation  for the rational function obtained after
canceling the factor $x_3-x_1$ from the numerator of $t$. Only the resulting function is evaluated on the local diagram; neither $f_{31}^{-1}$ nor the uncanceled expression is interpreted as an operator on that diagram. In
particular, no inverse of $x_3-x_1$ is involved.
We have the following computation: 
\begin{equation} 
\label{leftcaup1}
    \begin{aligned}
    Z_{12}&
=
 h_5^{-1}
%
\right)h_6^{-1}\\
&
= 
\text{RHS of (1),}\\
\end{aligned}
\end{equation}
where 
$ p_{13}=p(-x_1)-p(-x_3)$, $q_{13}=q(x_3)-q(x_1)$, $ p'=p(-x_2)$ and $ q=q(x_1)$. 
Here, the first equality follows by applying Lemma~\ref{usefuel-equa}(5) to the two $\ominus$’s in the first term, and to the two $\oplus$’s in the second term of $Z_{12}$ in \eqref{Z400}.  The second equality follows immediately from the definitions of $p_{13}$ and $q_{13}$ above. %Lemma~\ref{usefuel-equa}(5) together with Lemma~\ref{zero1}.

It remains to explain the last equality. The first and the third terms after the second equality in \eqref{leftcaup1} are exactly the first and  second terms on the right-hand side of (1). In the second  term after the second equality in \eqref{leftcaup1}, the dotted cap at the bottom is a red cap oriented  to the left.  Similarly, the dotted cup in the fourth term is in fact a cup oriented to the left. 
Both rewritings follow from Definition~\ref{natrual-basic}(3).  For the second term, we apply Lemma~\ref{ii21}(2) to deal with the rational function $p'$ on  the cup, thereby rewriting the black cup as a red one whose orientation is to the left.  This gives exactly the third term on the right-hand side of (1).  For the fourth term, we first apply Lemma~\ref{ii21}(1) to the cap carrying the label $q$, and then use Definition~\ref{natrual-basic}(3) to rewrite the black cap as a red one whose orientation is to the left.  This gives the fourth term on the right-hand side of (1). This completes the explanation of the last equality.

%Let   $q=q(x_1)$ and $p'=p(-x_2)$. Then 
Similarly, (2) follows from the following computation: 
\begin{equation}\label{leftcup111} 
   \begin{aligned} 
Z_{13}=&\mathord{-h_5^{-1}
\Bigg[% [inline block 41: 10 envs, 8279 chars in 2 pieces, piece 1 here, a bare % at each other -> data_tex | \begin{tikzpicture}[baseline = 0, scale=0.8] \draw[-,thick] (-0.45,.6) to (0.45,-.6);...]

h_6^{-1}} +f_{21}Af_{32}
\\
=&\text{RHS of (2)}.
\end{aligned}
\end{equation}
The explanation is analogous. \end{proof}

In the following, $G/(x_3-x_1)$ denotes the rational function obtained after
cancelling the factor $x_3-x_1$ from the numerator of $G$; in
particular, no inverse of $x_3-x_1$ is involved. All remaining
denominator factors act invertibly on the relevant generalized
eigenspaces.
\begin{Lemma}\label{emm9} When evaluated on  any finitely generated object  $V\in \mathcal M_\lambda$, the natural transformation $Z_{11}$ satisfies 
$$Z_{11}=%
,
    \end{aligned}
    \end{equation}
    where $$H=\frac{(x_3-x_2)^2}{1+x_3-x_2}-\frac{(x_1-x_2)^2}{1+x_1-x_2}+\frac{1}{(1+x_1-x_2)}-\frac{1}{(1+x_3-x_2)}.$$ 
    A direct computation shows that $H=x_3-x_1$. Moreover, after
putting $G=G_1+G_2$ over a common denominator, its numerator is
divisible by $x_3-x_1$. Thus the quotient
$
\frac{G}{x_3-x_1}
$
is well defined. By (6.30) and (6.34), the left-hand side of (6.37) is
precisely \(Z_{12}+Z_{13}=Z_{11}\). Since \(H=x_3-x_1\),
canceling this factor in the first coefficient on the
right-hand side of \eqref{eat4}  gives \(1\). The asserted formula
for \(Z_{11}\) follows.
 \end{proof}

\begin{Lemma}\label{tttt}
When evaluated  on any finitely generated object  $V\in \mathcal M_\lambda$, we have:
\begin{itemize}\item [(1)] \label{tu1} $
% [inline block 42: 15 envs, 7851 chars -> data_tex | \begin{tikzpicture}[baseline=0, scale=0.7, ] %the cap...]
$.
\end{itemize}
\end{Lemma}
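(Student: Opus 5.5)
The identities in Lemma~\ref{tttt} are local evaluations of diagrams built from a cap or cup together with dots labelled by the rational functions of Definition~\ref{rationalff}. I would prove them in the same way as the bubble-type computations of Section~6. Fix a finitely generated object $V\in\mathcal M_\lambda$ and evaluate both sides on $V$. All labelled dots are polynomials in the dot operator on the generalized eigenspaces $E_{\pm1/2}$, so every denominator is invertible there; the factors $1\pm x_a\pm x_b$ reduce to nonzero constants because the strand labels are $(-\tfrac12,\tfrac12,-\tfrac12)$.

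First, I would convert any red (oriented) cup or cap into a black one using Definition~\ref{natrual-basic}(3) and Lemma~\ref{caup3}. This absorbs the normalizing factors $c(x)$ and $d(x)$, with Lemma~\ref{ii21}(1)--(2) used to shift the index between $V$ and $E_{\mp1/2}V$.

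Second, I would slide all rational labels onto one endpoint of the cap or cup using Lemma~\ref{AB relations-re}(1)--(2) and Corollary~\ref{caup}(1). This replaces $x$ by $-x$ at the other endpoint.

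Third, I would apply Lemma~\ref{usefuel-equa}(5)--(8) to turn the resulting dotted cap or cup into a $[\,\cdot\,]_{u^{-1}}$ expression involving $\ominus$ or $\oplus$ labelled by $p(u)$ or $q(u)$.

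Once in that form, the argument follows the computation of $Z_6$ and $Z_9$. The bubble generating series is expressed through \eqref{OU1} or \eqref{OU2}. This uses Lemma~\ref{lem:bublleyimeszeropoly}, Lemma~\ref{ou}(1) and the coprimality identity \eqref{key-coprim}.

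Any term containing $h(-u)(-u+\tfrac12)^k$ or $h'(u)(u-\tfrac12)^k$ vanishes by \eqref{eat1}, since $k$ is large. Lemma~\ref{zero1} kills every term in which $p$ or $q$ acts on the eigenspace it annihilates. The surviving coefficient is then identified with a red (fake) bubble via Definition~\ref{natrual-basic1}(4), after the substitution $u\mapsto u\pm\tfrac12$.

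The main obstacle is matching the rational prefactors exactly. After cancelling the factor $x_3-x_1$ in the sense explained before Lemma~\ref{emm9}, the surviving numerators must combine into the stated right-hand side. This requires an explicit identity among rational functions of $x_1,x_2,x_3$ at the boundary colour $\tfrac12$.

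I would first reduce each part to a polynomial identity by clearing denominators. I would then check it by direct expansion, in the same way as the check that $H=x_3-x_1$, deferring the heavier identities to Section~9. If extra black terms appear, I would push dots through the crossings using Lemma~\ref{ii} and Lemma~\ref{dotslidecrossing} until they cancel in pairs.
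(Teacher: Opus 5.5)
\paragraph{Gap: the proposal misidentifies what Lemma~\ref{tttt} is.} It is a purely local identity among black diagrams in which a cap (and cup) meets crossings between strands coloured $\pm\frac12$. The paper proves it by a direct computation with exactly two inputs. The first is Lemma~\ref{ii}, which rewrites a crossing between a $\frac12$-coloured and a $-\frac12$-coloured strand by the explicit expansion \eqref{iii}; this is where $\frac12\neq-\frac12$ is used, and it produces the $f_{12}$- and $f_{21}$-type terms. The second is Lemma~\ref{dotslidecrossing}(3), which discards every crossing whose top and bottom colour sets differ and are not both of the form $\{a,-a\}$. After these substitutions the asserted right-hand side is read off directly. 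No bubble is evaluated, no $[\,\cdot\,]_{u^{-1}}$ extraction occurs, and no rational-function identity from Section~9 is needed.

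Your main line is machinery built for a different kind of statement. Converting red caps via Definition~\ref{natrual-basic}(3) and Lemma~\ref{caup3}, passing to $\ominus/\oplus$ form via Lemma~\ref{usefuel-equa}(5)--(8), and then using \eqref{OU1}, \eqref{OU2}, \eqref{key-coprim}, \eqref{eat1}, Lemma~\ref{zero1} and the fake bubbles of Definition~\ref{natrual-basic1}(4) are the tools for $Z_6$ and $Z_9$. The cancellation of $x_3-x_1$ and the Section~9 identities belong to Lemma~\ref{emm9} and Theorem~\ref{inhomo100}. Applied here, these tools either have nothing to act on or leave the crossings untouched, and the crossings carry the whole content.

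The correct tool appears only in your conditional final sentence. There you do not say which crossing is expanded or why the remaining terms vanish, so as written the proposal establishes no specific identity.

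Your blanket invertibility claim is also wrong. On the colour sequence $(-\frac12,\frac12,-\frac12)$, both $1+x_1-x_2$ and $1-x_2+x_3$ reduce to $0$. So whether a label $(1\pm x_a\pm x_b)^{-1}$ is defined depends on the colours at the height where it sits, and these change across crossings.
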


\begin{proof} A direct computation using Lemma~\ref{ii} and Lemma~\ref{dotslidecrossing}(3) yields the desired result.
\end{proof} 
\begin{Theorem} \label{inhomo100} \label{Pi=1}%All strands in the diagram  below are labeled by $\frac{1}{2}$. Then  we have $$ 
When evaluated  on any finitely generated object $V\in \mathcal M_\lambda$, the inhomogeneous  relation in  Definition~\ref{def-ikmc1} holds.
\end{Theorem}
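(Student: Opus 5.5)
The plan is to assemble the computations of Subsections~6.1--6.3 into a single identity and compare it with the right-hand side of the relation in Definition~\ref{def-ikmc1}. All strands there are labeled by $\frac12$. We evaluate on a finitely generated object $V\in\mathcal M_\lambda$, which suffices by Remark~\ref{nateq}.

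\textbf{Step 1: express the difference of braidings in black diagrams.} By Propositions~\ref{emm3} and~\ref{emm6}, the difference of the two braidings is given by \eqref{zuiz}. This is a combination of $Z_4,Z_5,Z_7,Z_8$ (made explicit in \eqref{first333} and \eqref{second678}), of $Z_{10}$ (Lemma~\ref{parth}) and of $Z_{11}$ (Lemma~\ref{emm9}), conjugated by the invertible factors $h_5^{\pm1}$ and $h_6^{\pm1}$. We substitute each of these formulas. The outcome is a sum of black diagrams, each carrying dots labeled by rational functions in $x_1,x_2,x_3$ whose denominators are invertible on the relevant generalized eigenspaces. There are two kinds of terms: triple-crossing terms, and terms containing a cup--cap pair together with Laurent-coefficient brackets $[\cdot]_{u^{-1}}$.

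\textbf{Step 2: simplify the two kinds of terms.}
\begin{itemize}
\item For the cup--cap terms, we use Lemma~\ref{zero1} to discard every term in which $p$ or $q$ sits on a strand of the annihilated color. The surviving bracketed terms are converted back into red cups and caps using Definition~\ref{natrual-basic}(3) and Lemma~\ref{ii21}(1)--(2). The fake-bubble identities of Definition~\ref{natrual-basic1}(4) (shifted by $u\mapsto u\pm\frac12$) turn the remaining $[\cdot]_{u^{-1}}$ expressions into red dotted bubbles.
\item For the triple-crossing terms, we apply Lemma~\ref{ii} and Lemma~\ref{dotslidecrossing}(3) in the form recorded in Lemma~\ref{tttt}. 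This expresses each of them as a red composite with a red cup--cap correction term.
\end{itemize}

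\textbf{Step 3: match coefficients.} We collect the coefficients of each red diagram on the right-hand side of the inhomogeneous relation and check that they agree. Two features make this check tractable. Many terms cancel in pairs between the ``first braiding'' and ``second braiding'' contributions, via the symmetry $h\leftrightarrow h'$ and $c\leftrightarrow d$. The identity $H=x_3-x_1$ from Lemma~\ref{emm9} supplies the cancellation needed for the leading term. What remains is a finite list of rational-function identities in $x_1,x_2,x_3$, for example that combinations such as $A/(1-x_2-x_3)+B/((1-x_1+x_3)(1-x_2-x_3))+\cdots$ collapse to the prescribed coefficient. These are exactly the identities collected in Section~9, and we invoke them there.

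\textbf{Main obstacle.} The hardest step is the bookkeeping in Step~3. The black diagrams are not known to be linearly independent (as noted after \eqref{first333}), so we cannot compare coefficients blindly. Instead, we must first rewrite everything in the red basis of the target relation, and only then compare coefficients. The quotient $G/(x_3-x_1)$ needs particular care: it must be treated as a polynomial cancellation and never as an inverse. We expect to handle this by organizing the terms according to the red target diagrams, verifying each coefficient identity symbolically, and confirming that the leftover terms vanish by Lemma~\ref{zero1} and \eqref{eat1}.
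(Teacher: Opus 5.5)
Your plan is essentially the paper's proof: substitute the explicit formulas for $Z_4,Z_5,Z_7,Z_8,Z_{10},Z_{11}$ (Propositions~\ref{emm3}, \ref{emm6} and Lemmas~\ref{parth}, \ref{emm9}), together with Lemma~\ref{tttt}, into \eqref{zuiz}. This yields the expression \eqref{zuizho} with rational coefficients $T_1,\dots,T_5$, and Lemma~\ref{lem:unprimed-rational-identities} ($T_1=T_5=1$, $T_2=T_3=T_4=0$) then gives the relation. Two small corrections: linear independence of the diagrams is never needed, since agreement of the coefficients as rational functions already suffices (the real difficulty is choosing a presentation in which they agree), and the identities of Section~9 live in $\mathbb C(x_1,x_2,x_3,x_4)$ rather than in three variables.
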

\begin{proof}  Combining \eqref{zuiz} with the explicit  expressions for $Z_{4}$, $Z_{5}$, $Z_{7}$, $Z_{8}$, $Z_{10}$, and $Z_{11}$ in  Propositions~\ref{emm3},~\ref{emm6}, and  Lemmas~\ref{parth}, ~\ref{emm9},~\ref{tttt}, we obtain \begin{equation}
    \label{zuizho}
\begin{aligned}
&% [inline block 43: 10 envs, 6494 chars -> data_tex | \begin{tikzpicture}[baseline = 0] 	\draw[->,thick,darkred] (0.45,.6) to (-0.45,-.6);...]
,
\end{aligned}\end{equation}
where the rational functions $T_i$, $1\leq i\leq5$, are defined
explicitly in Section~9.
By Lemma~\ref{lem:unprimed-rational-identities},
$T_1=T_5=1$ and $ T_2=T_3=T_4=0$.
\end{proof}

\begin{Theorem}\label{thm:mainthmaffineb}
Suppose that a locally Schurian category $\mathcal M$ admits an affine Brauer categorification.
If the spectrum of the dot endomorphism on the generating endofunctor is exactly  $\frac12+\mathbb Z$,
 then the following results hold: \begin{itemize} \item [(1)]  $\mathcal M$ gives rise
    to a generalized nilpotent $2$-representation of
    $\mathfrak U^\imath_+$.
\item[(2)] The full subcategories  of finitely generated objects form a nilpotent $2$-subrepresentation of $\mathfrak U^\imath_+$.
\item [(3)] When $\mathcal M$ is locally finite Abelian, $\mathcal M$ carries the structure of a nilpotent   $2$-representation of $\mathfrak U^\imath_+$. \end{itemize} 
\end{Theorem}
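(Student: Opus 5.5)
The plan is to deduce the theorem from Theorem~\ref{thm:mainthmaffinebu} together with Theorem~\ref{inhomo100}, since $\mathfrak U^\imath_+$ is the quotient of $\mathfrak U_+$ by the $2$-ideal generated by the single inhomogeneous relation of Definition~\ref{def-ikmc1}. First, Corollary~\ref{cor:even} shows that every weight $\lambda$ with $\mathcal M_\lambda\neq0$ has parity $0$. Hence the block decomposition $\mathcal M=\prod_{\lambda\in X_\imath}\mathcal M_\lambda$ of Lemma~\ref{BLOCK}(4) is indexed by $X_\imath^+$; in the locally finite Abelian case the product is a direct sum. Lemma~\ref{BLOCK}(5) gives $E_i\mathcal M_\lambda\subseteq\mathcal M_{\lambda+\alpha_i}$ and $E_{-i}\mathcal M_\lambda\subseteq\mathcal M_{\lambda-\alpha_i}$. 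This matches the weight data of $\mathfrak U^\imath_+$: assign $\lambda\mapsto\mathcal M_\lambda$, $E_i\mapsto E_i|_{\mathcal M_\lambda}$ and $F_i\mapsto E_{-i}|_{\mathcal M_\lambda}$ for $i\in I^\imath$, and send the generating $2$-morphisms to the natural transformations of Definitions~\ref{natrual-basic} and~\ref{natrual-basic1}. These are well defined and natural by Lemma~\ref{tvi} and Remark~\ref{nateq}.

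Next, Theorem~\ref{thm:mainthmaffinebu}(1) states that this assignment satisfies all relations of Definition~\ref{def-ikmc}, so it defines a $2$-functor $\mathfrak U_+\to\operatorname{Cat}_{\mathbb C}$. To descend to $\mathfrak U^\imath_+$, it remains to check that the inhomogeneous relation maps to zero. Theorem~\ref{inhomo100} verifies this relation on every finitely generated $V\in\mathcal M_\lambda$. Both sides are natural transformations between compositions of the sweet functors $E_{\pm1/2}$ (Lemma~\ref{ibiad}), so Remark~\ref{nateq} extends the identity to all objects. Thus the $2$-functor factors through $\mathfrak U^\imath_+$.

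For the nilpotence statements, I will follow the proof of Theorem~\ref{thm:mainthmaffinebu}. By Lemma~\ref{biadj}, $\operatorname{End}(EV)$ is finite-dimensional when $V$ is finitely generated. Therefore the dot has only finitely many eigenvalues on $EV$, so $E_iV=0$ for almost all $i\in I$, and $x-i$ acts nilpotently on $E_iV$. Since $F_i=E_{-i}$, conditions (1) and (2) of Definition~\ref{nil} hold on finitely generated objects, which gives the generalized nilpotence in (1). For (2), the functors $E_{\pm i}$ preserve finitely generated objects, because they are sweet. Hence the full subcategories $\mathcal M_\lambda^{\mathrm{fg}}$ are stable under all generating $1$-morphisms and form a nilpotent $2$-subrepresentation. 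For (3), in the locally finite Abelian case every object has finite length and is therefore finitely generated, so both conditions hold for all objects.

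The only nontrivial step beyond quoting earlier results is the descent through the quotient: Theorem~\ref{inhomo100} is stated only objectwise on finitely generated objects. I would address this by using Remark~\ref{nateq} and the fact that the normalizing rational functions were defined compatibly across finitely generated objects in Lemma~\ref{tvi}. The weight-parity bookkeeping also needs care so that the target is exactly the even component.
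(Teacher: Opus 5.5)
Your proposal is correct and follows essentially the same route as the paper, which also combines the $\mathfrak U_+$ relations of Lemmas~\ref{adj}--\ref{mixedup1} (that is, Theorem~\ref{thm:mainthmaffinebu}) with the inhomogeneous relation of Theorem~\ref{inhomo100}, and takes the finiteness assertions in (2) and (3) from the end of Section~5. Your additional steps, namely the parity bookkeeping via Corollary~\ref{cor:even} and the extension from finitely generated objects to all objects via Remark~\ref{nateq}, make explicit points that the paper leaves implicit.
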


\begin{proof}By Lemmas~\ref{adj}--\ref{mixedup1}, the natural transformations satisfy all the relations in Definition~\ref{def-ikmc}. By Theorem~\ref{inhomo100},  they
satisfy the remaining inhomogeneous relation in Definition~\ref{def-ikmc1}. Hence the asserted $2$-representation is well-defined. 
The finiteness
assertions in (2) and (3) follow from the argument given at the end of Section 5.
\end{proof}
The theorem furnishes a recognition principle for $\imath$-Kac--Moody 2-representations. Instead of verifying the full
defining relations of $\mathfrak  U^\imath_+$ directly, it is enough to construct an affine Brauer action with the prescribed
half-integral spectrum. In Lie-theoretic examples this input is often canonical: the dot is induced by a tensor
Casimir operator, while the remaining affine Brauer generators arise from symmetry and adjunction.
\subsection{A non-defining  relation for $i\ne \frac{1}{2}$  }
The following relation for $i\ne \frac{1}{2}$ and $i\in I^\imath$ appears in \cite[\S3.6]{BWW25}:
\begin{equation}
    \label{equ:inhomotype}
\mathord{
% [inline block 44: 4 envs, 3750 chars -> data_tex | \begin{tikzpicture}[baseline = 0]     \draw[->,thick,darkred] (0.45,.6) to (-0.45,-.6);...]

\right)
}\end{equation}
% \old{It is a consequence of the defining relations. Hence, it is not necessary to verify it in the proof of
% Theorem~\ref{thm:mainthmaffineb}.} 
Since \eqref{equ:inhomotype}  already follows from the defining relations verified in Theorem~\ref{thm:mainthmaffineb}, the purpose of this subsection is to establish several identities that will be used repeatedly in the proof of the braid relation for the black morphisms in the affine Brauer category in Section 7.
%Nevertheless, it can be checked directly by the same kind of computation as
%in the case $i=\frac{1}{2}$ considered in the previous three subsections.
%Since some computations will be
%needed in the next section, 
%where we prove that a nilpotent 2-representation of $\mathfrak U^\imath_+$ gives
%rise to a module category over the affine Brauer category, 
We include, in the remainder of this section, a sketch
of the necessary computations for the case $i\neq \frac{1}{2}$, highlighting only the differences from
the case $i=\frac{1}{2}$. 

In this subsection, we take $$f_1(u)= n_{V, i}(u) \ \  \text{and} \ \  f_2(u)= (u-i)^k$$  in \eqref{key-coprim}  for a sufficiently large integer $k$. By \eqref{key-coprim}, there are $g(u), h(u)\in Z_{\lambda, V}[u]$ satisfying \eqref{key-coprim}. Similarly, we take 
$$f_1(u)= m_{E_{-i}V, i} (-u) (4u^2-1) \ \  \text{and}\ \  f_2(u)= (u+i)^kt_{E_{-i}V,i}(-u) $$  in \eqref{key-coprim}  for a sufficiently large integer $k$.  By \eqref{key-coprim}, there are $g'(u), h'(u)\in Z_{\lambda-\alpha_i, E_{-i}V}[u]$ satisfying \eqref{key-coprim}.

Throughout the remainder of  this subsection, we  use the notations  $h_1, h_2, h_3, h_5, h_6, h_2', h_3'$  defined as  in \eqref{notation100}, \eqref{notation200} and \eqref{notationh3}.
We continue to use $f_{ab}=(x_a-x_b)^{-1}$ and $g_{ab}=(x_a+x_b)^{-1}$, as in  \eqref{notationfij} and \eqref{notationgij}, for $1\le a,b\le 3$ with $a\neq b$. 
We also introduce   the following notation  to simplify the presentation.

 \begin{Defn}\label{rationalffj} Suppose that $x, x_1, x_2$ are indeterminates. Define the following  rational functions:  
 \begin{multicols}{2}
\item [(1)] $h(x_1,x_2)=\frac{(x_1+x_2-1)(x_1-x_2)^2}{(x_1-x_2)^2-1}$, 
\item [(2)]  $h_4(x_1,x_2)=\frac{(x_1-x_2)^2} {(x_2-x_1)^2-1}$,
%\item [(3)] $h'(x_1, x_2)=h(x_2,x_1)$, 
%\item [(4)] $h_4'(x_1, x_2)=h_4(x_2,x_1)$,
\item[(3)]   $c(x)=\frac{(4x^2-1)m_{E_{-i}V,i}(-x)}{4xn_{E_{-i}V,i}(-x)}$, \item [(4)]  $p(x)=4xn_{E_{-i}V,i}(-x)g'(x)$, \item [(5)] $d(x)=\frac{n_{V,i}(x)}{xm_{V,i}(x)}$, \item[(6)]  $q(x)=xm_{V,i}(x)g(x)$.
\end{multicols}\end{Defn} 

We always suppress the arguments. 
For example, we write
$h_4(x_1,x_2)$ simply as $h_4$,
and similarly for the other rational functions below. However, for rational functions defined in the variable $x$,
such as $c(x)$, we  make the argument explicit after substitution.
For instance, we write $c(x_1)$ for the value of $c(x)$ after substituting  $x=x_1$.
We define the auxiliary functions
\[
h'(x_1,x_2):=h(x_2,x_1),\qquad h'_4(x_1,x_2):=h_4(x_2,x_1).
\]
Indeed,   $h'=h$ and $h'_4=h_4$ since    the rational functions $h(x_1, x_2)$ and $h_4(x_1, x_2)$ are symmetric. We retain these symbols to keep the notation parallel to that used above  for $i=\frac12$.

%We shall use this notation throughout the remainder of this section. 

In the following,  the three vertices in the top row (and similarly in the bottom row) are labeled, from left to right, by $ -i, i, -i$ in the black diagram, whereas they are labeled by $ i, i, i $ in the red diagram.

Repeating the computations from  the previous subsections with the new notation gives  analogues of all the preceding lemmas. In the following, we record only the differences. First, by arguing exactly as in the proofs of Lemma~\ref{guoc} and Lemma~\ref{lem415}, but under the new notation, we obtain the following.
 
 %Mimicking all the computations in the previous subsections we have all the lemmas under the new notation. In the %following we only mention the differences. First of all, 
 %mimicking the proofs of Lemma~\ref{guoc} and Lemma~\ref{lem415},  under the new notation we have
\[\begin{aligned}
\mathord{
% [inline block 45: 16 envs, 10931 chars in 2 pieces, piece 1 here, a bare % at each other -> data_tex | \begin{tikzpicture}[baseline = 0, scale=0.9, transform shape] 	\draw[-,thick] (0.28,0) to[out=90,in=-90] (-0.28,.7);...]

},\]
where   $c=c(x_1), d=d(x_1)$,  $p=p(x)$, and  $q=q(x)$.
Moreover, $p(u), q(u)$ are the polynomials obtained from $p(x), q(x)$, respectively, by substituting 
$u$ for $x$.

By the same argument as in the proof of Theorem~\ref{inhomo100}, we obtain  the following identity:   
\begin{equation}
    \label{zuizz}
\begin{aligned}
& %
-Z_7-Z_8\right)\frac{1}{h_6}, 
\end{aligned}\end{equation} 
where $Z_4$ and $ Z_5$ are defined as in   Proposition~\ref{emm3}, 
$ Z_7$ and $ Z_8$ as in Proposition~~\ref{emm6}, and  $Z_{10}$ and $Z_{11}$  as in   \eqref{Z300}. However, as noted  above, the three vertices in the top 
row (and similarly in the bottom row) are labeled, from left to right, by $-i,i,-i$. In addition,  the rational functions appearing there should  be replaced by their counterparts in Definition~\ref{rationalffj}.  For example, the function $c$ in Proposition~\ref{emm3} is $c(x)$ in Definition~\ref{rationalff}, whereas here $c$ denotes the function defined     in Definition~\ref{rationalffj}. The same applies to the other rational functions.

\begin{Lemma}\label{emm9j} When 
evaluated  on any finitely generated object $V\in \mathcal M_\lambda$, the natural transformation $Z_{11}$ satisfies the relation: 
$$Z_{11}=
\begin{tikzpicture}[baseline = 0]
\draw[-,thick] (-0.45,.6) to (0.45,-.6);
\draw[-,thick] (0,.6) to[out=-90,in=180] (.25,0.2);
\draw[-,thick] (0.25,0.2) to[out=0,in=-90] (.45,0.6);
\draw[-,thick] (-.45,-.6) to[out=90,in=180] (-.2,-.2);
\draw[-,thick] (-.2,-.2) to[out=0,in=90] (0,-0.6);
%\draw[-,densely dotted] (.44,.5) to (0,.5);
\draw[-,densely dotted] (0,-.5) to (0,.5);
%\node at (-.8,.5) {$\scriptstyle{dh}$};
%\node [cplus,scale=0.7] at (0,0) {$-$};
\node at (-.67,0.1) {$\scriptscriptstyle{\frac{G}{x_3-x_1}}$};
 %\node at (.8,-.5) {$\scriptstyle{h_1}$};
 %\node [cplus,scale=0.7] at (0,.5) {$-$};
%\node [cplus,scale=0.7] at (-0.45,-.5) {$+$};
 %\node at (-1.8,-.9) {$\scriptscriptstyle{ch'}$};
% \node at (.45,0) {$\scriptscriptstyle{q(u)}$};
\end{tikzpicture},$$ 
where $G=G_1+G_2$, and 
$$\begin{aligned} G_1=&-\frac{4x_1^2+ 4x_1^2 (x_1-x_2)(x_2-x_3)}{(1+x_2+x_3)(1-x_2-x_3)(1+2x_1)(x_2-x_3)(x_1-x_2)(2x_1-1)}\\
 G_2= & \frac{4x_3^2+4x_3^2(x_1-x_2)(x_2-x_3)}{(1+x_1+x_2)(1-x_1-x_2)(1+2x_3)(x_1-x_2)(x_2-x_3)(2x_3-1)}.\\
\end{aligned}$$
\end{Lemma}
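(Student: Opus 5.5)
The plan is to follow the proof of Lemma~\ref{emm9} closely: that is the $i=\frac12$ analogue, and only the rational functions change. By Lemma~\ref{x1x3}, transported to the present notation of Definition~\ref{rationalffj}, we have $Z_{11}=Z_{12}+Z_{13}$. Each of $Z_{12}$ and $Z_{13}$ has already been expressed as a sum of local diagrams. These are the crossing–cup–cap diagram displayed in the statement, decorated by a rational function, together with the terms containing leftward red cups and caps. The latter come from $p_{13}=p(-x_1)-p(-x_3)$, $q_{13}=q(x_3)-q(x_1)$, $p'=p(-x_2)$ and $q=q(x_1)$.

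First, I will use the first relation in Definition~\ref{natrual-basic1}(4), via Lemma~\ref{lem:bubbler} for $V$, and the second relation, via Lemma~\ref{lem:bubbler-inv} for $E_{-i}V$. Together with Lemma~\ref{zero1} (in its $i$-version: $p$ annihilates the $i$-strand on $E_{-i}V$ and $q$ annihilates the $-i$-strand on $V$), this evaluates the $[\cdot]_{u^{-1}}$ residues. Consequently the terms involving $p_{13}$ and $q_{13}$ collapse to explicit rational functions in $x_1,x_2,x_3$ attached to the basic diagram. Here $c(x)$ and $d(x)$ from Definition~\ref{rationalffj} cancel against the factors $n_{E_{-i}V,i}$, $m_{E_{-i}V,i}$, $n_{V,i}$ and $m_{V,i}$ contained in $p$ and $q$. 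The only difference from $i=\frac12$ is that the factor $(1-2x)$ is replaced by $(4x^2-1)$. This is exactly the source of the denominators $(1+2x_1)(2x_1-1)$ and $(1+2x_3)(2x_3-1)$ in $G_1$ and $G_2$.

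Next, I will slide all dots to a common position by Lemma~\ref{dotslidecrossing}(3), and expand the local crossings between strands labeled $-i$ and $i$ by Lemma~\ref{ii}, keeping the $f_{12}$- and $f_{21}$-correction terms. Collecting everything then yields an identity of the form of \eqref{eat4}: $H$ times the basic diagram equals $G_1+G_2$ times the same diagram. Now, however, $h$ and $h_4$ are the symmetric functions of Definition~\ref{rationalffj}, so
\[
H=\frac{(x_3-x_2)^2}{(x_3-x_2)^2-1}-\frac{(x_1-x_2)^2}{(x_1-x_2)^2-1}+(\text{correction terms}).
\]
I expect to check directly that $H=x_3-x_1$ again, or a unit multiple of it that can be absorbed.

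The main obstacle is the final algebraic step. I must verify that $G_1+G_2$, put over a common denominator, has numerator divisible by $x_3-x_1$, so that $G/(x_3-x_1)$ is a genuine rational function whose remaining denominators act invertibly on the relevant generalized eigenspaces. I would first try the symmetry: $G_2$ is obtained from $-G_1$ by the substitution $x_1\leftrightarrow x_3$ (up to the sign of $(x_1-x_2)(x_2-x_3)$). Hence $G$ is antisymmetric under $x_1\leftrightarrow x_3$, and its numerator vanishes on $x_1=x_3$. After that, cancelling $H=x_3-x_1$ as in the proof of Lemma~\ref{emm9} gives the asserted formula for $Z_{11}$.
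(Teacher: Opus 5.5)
Your route is the paper's: $Z_{11}=Z_{12}+Z_{13}$ (this comes from the definitions \eqref{Z300} and \eqref{Z400}, not from Lemma~\ref{x1x3}). Each summand is expanded by the analogue of Lemma~\ref{x1x3} in the notation of Definition~\ref{rationalffj}, and the two expansions are combined as in Lemma~\ref{emm9}. Your antisymmetry argument for divisibility is correct, and it supplies the justification the paper only asserts. If $\sigma$ swaps $x_1$ and $x_3$, then $(x_1-x_2)(x_2-x_3)$ is $\sigma$-invariant and $\sigma(G_1)=-G_2$. The least common denominator of $G_1,G_2$ is also $\sigma$-invariant, so the numerator of $G$ is antisymmetric and hence divisible by $x_3-x_1$. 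Of the remaining denominator factors, only $1+2x_a$ and $2x_a-1$ ($a=1,3$) could fail to be invertible on the relevant summand, and they are invertible exactly because $i\neq\frac12$.

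The gap is in the step that produces the formula itself. First, $G_1$ and $G_2$ are copied from the statement rather than produced by your computation. Your hedge that $H$ may be only ``a unit multiple of $x_3-x_1$ that can be absorbed'' shows the coefficient has not been determined. A factor $u\neq 1$ would change the label away from $G/(x_3-x_1)$, so it cannot be absorbed without contradicting the lemma. The visible part of your $H$ also does not telescope as it does for $i=\frac12$; here
\[
\frac{(x_3-x_2)^2}{(x_3-x_2)^2-1}-\frac{(x_1-x_2)^2}{(x_1-x_2)^2-1}=\frac{(x_1-x_3)(x_1+x_3-2x_2)}{\bigl((x_3-x_2)^2-1\bigr)\bigl((x_1-x_2)^2-1\bigr)},
\]
so everything rests on correction terms you have not computed.

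Second, the cancellation cannot be performed at the level of natural transformations. In the basic diagram $x_1$ and $x_3$ sit on points carrying the same label, so $x_3-x_1$ acts nilpotently on the relevant summand. An identity in which $H$ multiplies a diagram therefore cannot be divided by $H$; as you phrase it (``$H$ times the diagram equals $G$ times the diagram''), it does not even determine $Z_{11}$. In the paper, $x_3-x_1$ only ever occurs as the formal denominator of divided differences such as $\bigl(p(-x_1)-p(-x_3)\bigr)/(x_3-x_1)$, produced by Lemma~\ref{usefuel-equa}(5). The expansions of $Z_{12}$ and $Z_{13}$ (the analogues of \eqref{leftcaup1} and \eqref{leftcup111}) must be carried out explicitly. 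They must then be combined so that the total numerator is divisible by $x_3-x_1$ before anything is evaluated, and no inverse of $x_3-x_1$ is used. That is what the displayed computations \eqref{lemfgj} and \eqref{eat2} do, and it is the part your proposal leaves undone.
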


\begin{proof}After putting $G=G_1+G_2$ over a common denominator, its
numerator is divisible by $x_3-x_1$. Hence
$G/(x_3-x_1)$ is well defined.
The proof follows the same pattern as that   of Lemma~\ref{emm9}. We therefore omit detailed justifications for the computation below, since  the same arguments apply. 
    \begin{equation}
        \label{lemfgj}\begin{aligned} 
         Z_{12}
=& h_5^{-1}
% [inline block 46: 9 envs, 7018 chars in 2 pieces, piece 1 here, a bare % at each other -> data_tex | \begin{tikzpicture}[baseline = 0] \draw[-,thick] (-0.45,.6) to (0.45,-.6);...]

+f_{21}Af_{32},\\
\end{aligned}
\end{equation}   where $Z_{12}$ and  $Z_{13}$ are defined  in the present setting by the analogue of 
\eqref{Z400}, and $A$ is defined by the analogue of the  formula  in Lemma~\ref{x1x3}(2). 
From ~\eqref{lemfgj}, we deduce that 
   \begin{equation}
       \label{eat2}
    \begin{aligned}
  Z_{12} 
+
h_5^{-1}\left[
%
.
    \end{aligned}
     \end{equation}
     By the analogue of  \eqref{Z300} and the definitions of \(Z_{12}\) and
\(Z_{13}\) in the present setting, the left-hand side of \eqref{eat2}
is precisely \(Z_{12}+Z_{13}=Z_{11}\). The asserted formula
therefore follows.
\end{proof}

\begin{Lemma}\label{pathj} When 
evaluated on any finitely generated object $V\in \mathcal M_\lambda$, the natural transformation $Z_{10}$ satisfies:      
$$
\begin{aligned}
Z_{10}= & A\frac{1} {1-x_2-x_3}+B\frac{1} {(1-x_1+x_3)(1-x_2-x_3)} \\ 
&- C\frac{1} {(1-x_2-x_3)(1-x_1+x_3)} +\frac{1}{1-(x_2-x_3)^2}C\frac{1} {(1-x_2-x_3)(1-x_1+x_3)} \\
&+
\frac{1} {1+x_2-x_3}D\frac{1-x_1-x_2}{(1-(x_1-x_2)^2)(1-x_2-x_3)(1-x_1+x_3)}\\ &
{-\frac{1} {1-(x_2-x_3)^2} D\frac{1-x_1-x_2} { (1+x_1-x_2)(1-x_2-x_3)(1-x_1+x_3)}}
\\
\end{aligned}$$ where  $A, B, C$ and $ D$ are defined in the present setting by the same diagrammatic  formulas as  in Lemma~\ref{parth}, with the rational functions replaced by those appearing around Definition~\ref{rationalffj}.
\end{Lemma}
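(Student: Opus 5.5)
The plan is to prove Lemma~\ref{pathj} the same way as Lemma~\ref{parth}, tracking only the places where the new rational functions of Definition~\ref{rationalffj} differ from those of Definition~\ref{rationalff}. Start from the analogue of \eqref{Z300} for $Z_{10}$. In the present setting $Z_{10}$ is a difference of two black diagrams, each a composite of three crossings with strands labeled $-i,i,-i$. One carries a dot labeled by $h_1h_3'$ at the top; the other carries a dot labeled by $h'$ at the bottom, where now $h'=h$ is the symmetric function $\frac{(x_1+x_2-1)(x_1-x_2)^2}{(x_1-x_2)^2-1}$. The task is to rewrite this difference as a sum of correction terms involving the diagrams $A,B,C,D$, with explicit rational coefficients.

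The argument has two dot-moving stages. In the first stage, slide the dot labeled $h_1h_3'$ through the three crossings to the bottom. Each passage uses Lemma~\ref{dotslidecrossing}(1)--(3), which on the relevant generalized eigenspaces is equivalent to Lemma~\ref{AB relations-re}(3) and Definition~\ref{C defn}(4). Because $i\neq\pm\frac12$ forces $\pm i\neq \pm 1\mp i$, the labels $-i,i$ never coincide up to sign with the adjacent colors in the dangerous way. The leading term is the triple crossing with bottom dot $h_1'h_3h'$, obtained after transporting variables through the permutation. The correction terms come from the resolution of each crossing into identity-plus-cup/cap pieces; these are the diagrams $A$ and $B$, and their coefficients $\frac{1}{1-x_2-x_3}$ and $\frac{1}{(1-x_1+x_3)(1-x_2-x_3)}$ are unchanged from Lemma~\ref{parth}, since they come only from $h_1,h_3'$.

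In the second stage, move the bottom dot of the leading term back to the top to cancel the second diagram in $Z_{10}$. Here the $i=\frac12$ computation has to be modified. Previously the factor $(1+x_2-x_3)^{-1}$ from $h'$ together with the factor $(1-x_2+x_3)$ produced the $C$-terms with coefficients $(1-x_2+x_3)$ and $-(1+x_2-x_3)^{-1}$. Now $h'$ has denominator $(x_1-x_2)^2-1$, which is symmetric under the swap performed by the middle crossing. The difference quotient of $h'$ across that crossing, computed via Lemma~\ref{usefuel-equa}-type divided differences, therefore yields the coefficients $-1$ and $(1-(x_2-x_3)^2)^{-1}$ for $C$. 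The $D$-terms arise when the dot passes the crossing adjacent to the cup/cap, using Lemma~\ref{ii} (valid since $i\neq0$). Their coefficients acquire the factor $(1-(x_1-x_2)^2)^{-1}$ and the sign change $1-x_1-x_2=-(x_1+x_2-1)$, giving the displayed $D$-coefficients. Finally, apply Definition~\ref{AB defn}(2), the braid relation, to identify the leading term with the second diagram of $Z_{10}$, so that the two cancel.

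I expect the main obstacle to be the bookkeeping of the rational coefficients in the second stage. One must confirm that every denominator (factors $1\pm x_a\pm x_b$ and $(x_a-x_b)^2-1$) is invertible on the relevant generalized eigenspaces for labels $(-i,i,-i)$. One must also check that the divided differences of the symmetric $h'$ collapse exactly to the stated coefficients. I would first verify these coefficient identities symbolically, as done for Section~9, and only then assemble the diagrammatic terms.
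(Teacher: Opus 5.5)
Your plan matches the paper's proof. Lemma~\ref{pathj} is obtained by rerunning the two dot-moving passes of Lemma~\ref{parth} with the functions of Definition~\ref{rationalffj}: push the $h_1h_3'$ dot to the bottom to get the leading triple crossing with bottom dot $h'h_1'h_3$, push $h'$ back to the top, and cancel against the second diagram of $Z_{10}$ via Definition~\ref{AB defn}(2) and \eqref{Z300}.

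Several of your side explanations are off, however. Only the dot--crossing relations (Lemma~\ref{AB relations-re}(3), Definition~\ref{C defn}(4)) are used; Lemma~\ref{ii} and Lemma~\ref{usefuel-equa} play no role here. The new coefficients also do not come from a symmetry of $h'$ under the middle crossing. Writing $h_4(x_a,x_b)=g(x_a-x_b)$, in both lemmas the $C$-coefficient is $-g(x_3-x_2)$ placed on top. The $D$-coefficient is $(x_1+x_2-1)$ times the divided difference of $g$ between $x_3-x_2$ (top) and $x_1-x_2$ (bottom). So the only change from $i=\frac12$ is the substitution of $g$.
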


\begin{proof}
  The result is  proved by the same dot-moving  arguments as in the proof of Lemma~\ref{parth}, with the rational functions replaced by those in Definition \ref{rationalffj}.
\end{proof}

\section{From  $\imath$-Kac--Moody $2$-representations to affine Brauer actions}\label{ikmcmc}

Assume throughout this section that
\(\mathcal M=(\mathcal M_\lambda)_{\lambda\in X^\imath_+}\)
is a locally Schurian nilpotent \(2\)-representation of
\(\mathfrak U^\imath_+\).
As in Section~4.2, every dotted bubble in
\(\mathfrak U^\imath_+\) gives rise to an element of
\[
Z_V:=Z(\operatorname{End}_{\mathcal M}(V)),
\]
where \(V\) is a finitely generated object of \(\mathcal M\).
In this section, we prove that \(\mathcal M\) admits a module
structure over the affine Brauer category \(\mathcal{AB}\) for
which the dot spectrum is contained in
\(\frac12+\mathbb Z\).

\subsection{Reverse generators}
\begin{Defn}\label{gen-red1} Fix     $ \lambda\in X_\imath^+$ and a  finitely generated object $V\in \mathcal M_\lambda$. We use   the following notation:   set \begin{itemize} \item [(1)] $E = \bigoplus_{i \in I^\imath} (E_i\oplus F_i)$.
\item [(2)] $\varepsilon_i(V),   \varepsilon_{-i}(V)\in \mathbb N$ such that  $u^{\varepsilon_i(V)}$ and  $u^{\varepsilon_{-i}(V)}$  are the minimal polynomials of 
\(% [inline block 47: 8 envs, 3607 chars -> data_tex | \begin{tikzpicture}[baseline = -1mm,darkred, scale=0.7] 	\draw[->,thick] (0.08,-.4) to (0.08,.4);...]

u^{-r-1}$, where   $i\in I^\imath$.
\end{itemize}
\end{Defn}

Then $E$ is  a  well-defined endofunctor of \(\mathcal{M}\). Since   $\mathcal M$   carries a nilpotent $2$-representation of $\mathfrak U_+^\imath$, Definition~\ref{nil}(1) guarantees that, on each object, only finitely many summands are nonzero;
hence the adjunctions $(E_i,F_i)$ and \((F_i,E_i)\) induce a biadjunction $(E,E)$, and $E$ is sweet.
If   $E_iV\neq 0$ for infinitely  many  $i\in I^\imath$, then $E$ may fail to be  a sweet functor.
Definition~\ref{nil}(2) guarantees that   $\varepsilon_i(V)$ and $\varepsilon_{-i}(V) $ are  well-defined for any $i\in I^\imath$.  

\begin{Lemma}\label{bubb-inftyi} Let $i\in I^\imath, \lambda\in X^+_\imath$,  and let $R=\text{End}_{\mathfrak U_+^\imath}(1_\lambda)$. Then  %the following relations hold: % define 
\begin{itemize}\item [(1)] $% [inline block 48: 28 envs, 12654 chars in 3 pieces, piece 1 here, a bare % at each other -> data_tex | \begin{tikzpicture}[baseline = 0,scale=0.8]   \draw[->,thick,darkred] (0.2,0.2) to[out=90,in=0] (0,.4);...]
 =2^{\delta_{i, 1/2}} u^{\delta_{i, 1/2}} 1_\lambda$.
\end{itemize}
\end{Lemma}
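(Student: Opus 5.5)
We derive each assertion of Lemma~\ref{bubb-inftyi} directly from the defining relations of $\mathfrak U^\imath_+$ in Definition~\ref{def-ikmc}, working in $R((u^{-1}))$. Here $R=\operatorname{End}_{\mathfrak U^\imath_+}(1_\lambda)$ is commutative and generated by dotted bubbles. The statement is intrinsic to the $2$-category, so it suffices to work with $2$-morphisms; no finitely generated object needs to be chosen.

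First we assemble the two generating series. The clockwise and counterclockwise $i$-colored bubbles are set to $\sum_{r\in\mathbb Z}(\text{bubble with }r\text{ dots})\,u^{-r-1}$, with fake bubbles included. By Definition~\ref{def-ikmc}(4a)--(4d), bubbles of sufficiently negative degree vanish. The bubble of minimal non-vanishing degree equals $1_\lambda$ for one orientation and $2^{\delta_{i,1/2}}1_\lambda$ for the other. Both thresholds are governed by $\langle\thal^\vee_i,\lambda\rangle$, with the extra shift for $i=\frac12$ coming from \eqref{scalar23}.

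This gives both series as Laurent series in $u^{-1}$ with explicit leading terms:
\begin{itemize}
\item one is $u^{\langle\thal^\vee_i,\lambda\rangle}$ plus lower-order terms;
\item the other is $2^{\delta_{i,1/2}}u^{-\langle\thal^\vee_i,\lambda\rangle+\delta_{i,1/2}}$ plus lower-order terms.
\end{itemize}
These settle the leading-term assertions in the earlier items of the lemma.

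The product identity, namely that the product of the two series equals $2^{\delta_{i,1/2}}u^{\delta_{i,1/2}}1_\lambda$, is exactly the infinite Grassmannian relation of Definition~\ref{def-ikmc}(4e). Since (4e) is also what recursively defines the fake bubbles, we split into two cases.
\begin{itemize}
\item For the coefficients where both factors involve only genuine bubbles, (4e) is imposed directly as a relation.
\item For the remaining coefficients, the fake bubbles are defined precisely so that the coefficient of $u^{-n}$ in the product vanishes for $n>-\delta_{i,1/2}$. The top coefficient is checked from the leading terms computed above.
\end{itemize}
We would first reread (4e) to confirm which orientation is declared fake, and in which range, so that the indexing matches the conventions of the lemma.

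The main obstacle is the boundary color $i=\frac12$, where the bookkeeping is delicate. There the pairing $\langle\thal^\vee_{1/2},\alpha_{1/2}\rangle=3$ makes the degree count asymmetric. The right-hand side carries both the factor $u$ and the factor $2$, and one must check that the normalization $\eta\mapsto 2^{-\delta}\eta'$ used in Proposition~\ref{omegaimath} does not alter these constants. We would do this by comparing degrees on both sides: a bubble with $r$ dots on an $i$-strand at weight $\lambda$ has degree $2r+2\mp\langle\thal^\vee_i,\lambda\rangle$, adjusted at $\frac12$. We then match the degree-zero coefficient against Lemma~\ref{bubble1}, which already computed the same product in the affine Brauer model.
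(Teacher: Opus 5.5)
You follow the paper's route. Items (1)--(2) are read off from the bubble relations in Definition~\ref{def-ikmc}(4a)--(4d) together with \eqref{scalar23}. Item (3) is literally the relation (4e); the fake bubbles are defined by it, so no splitting into genuine and fake coefficients is needed.

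The error is in the explicit leading terms you assert for (1)--(2): at $i=\frac12$ you have put the shift $\delta_{i,1/2}$ on the wrong series. Consistently with the rest of the paper, the two statements should be as follows.
\begin{itemize}
\item The series with leading coefficient $1$ (the counterclockwise bubble) lies in $u^{\langle\thal^\vee_i,\lambda\rangle+\delta_{i,1/2}}1_\lambda+u^{\langle\thal^\vee_i,\lambda\rangle+\delta_{i,1/2}-1}R[[u^{-1}]]$.
\item The series with leading coefficient $2^{\delta_{i,1/2}}$ (the clockwise one) lies in $2^{\delta_{i,1/2}}u^{-\langle\thal^\vee_i,\lambda\rangle}1_\lambda+u^{-\langle\thal^\vee_i,\lambda\rangle-1}R[[u^{-1}]]$.
\end{itemize}
The shift comes from the counterclockwise threshold being governed by the weight $\lambda+\alpha_i$ inside the bubble; compare the exponent $1-\langle\thal^\vee_i,\lambda+\alpha_i\rangle$ in the proof of Lemma~\ref{local-red}. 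Rewriting it in terms of $\lambda$ uses $\langle\thal^\vee_{1/2},\alpha_{1/2}\rangle=3$ rather than $2$. That is exactly where \eqref{scalar23} enters the paper's proof, and it produces the extra power of $u$ on that series, not on the $2^{\delta_{i,1/2}}$ one.

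This normalization is what the paper uses downstream:
\begin{itemize}
\item Lemma~\ref{zeropol-red}: a monic polynomial of degree $\deg f+\langle\thal^\vee_i,\lambda\rangle+\delta_{i,1/2}$, versus leading coefficient $2^{\delta_{i,1/2}}$ and degree $\deg f-\langle\thal^\vee_i,\lambda\rangle$;
\item Lemma~\ref{simplered}(1) and Definition~\ref{844};
\item on the forward side, Lemmas~\ref{lem:bubbler}--\ref{lem:bubbler-inv} with $\operatorname{def}(\lambda)_i=\langle\thal^\vee_i,\lambda\rangle+\delta_{i,1/2}$.
\end{itemize}
Your proposed check of the top coefficient cannot detect the misallocation, since both allocations multiply to $2^{\delta_{i,1/2}}u^{\delta_{i,1/2}}$.

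You should drop the additional verification you propose for the boundary colour. Matching against Lemma~\ref{bubble1} is not a valid argument. That lemma establishes the identity for natural transformations built from an affine Brauer action, i.e.\ in one particular $2$-representation. It says nothing about $R=\operatorname{End}_{\mathfrak U^\imath_+}(1_\lambda)$ without a faithfulness result, and it is unnecessary because (4e) is imposed.

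Likewise, the functor $\omega_\imath$ of Proposition~\ref{omegaimath} lands in $\mathfrak U^\imath_-$ and plays no role here. Degree bookkeeping is not part of the ungraded $2$-category recalled in Section~3 and is not needed. Once the leading terms are corrected, the proof consists of the two citations: (4a)--(4d) with \eqref{scalar23} for (1)--(2), and (4e) for (3).
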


\begin{proof} Statements (1) and (2) follow immediately from Definition~\ref{def-ikmc}(4a)–(4d), together with  \eqref{scalar23}, whereas statement (3) follows directly from Definition~\ref{def-ikmc}(4e).
 \end{proof}

For later use, we label the corresponding red dots by powers of $y$. More explicitly, we identify 
 $%
}$. 
This allows  us to    
represent a $\mathbb C$-linear combination of monomials by  labeling red dots with   polynomials in 
$y$.

\begin{Lemma}\label{lem:curlrebubblrmove}
For any $\lambda\in X^+_\imath$, and any $i, j\in I^\imath$, we have
\begin{multicols}{2}
\item[(1)] \label{crul-relation1}
$\mathord{
%
}$.
\end{multicols}
\end{Lemma}
\begin{proof}
For the quasi-split type AIII with an even number of nodes, the isomorphism of 2-categories in \cite[Thm~3.8]{BWW25} identifies the conventions of \cite{BWW25} with those of \cite{BSWW-icate}. Under this isomorphism, relations (1), (3) correspond to \cite[(3.42)]{BWW25}; relations (5)--(6) to \cite[(3.29)]{BWW25}; and relations (2), (4) to \cite[(3.40)]{BWW25}. Hence the relations hold in the present notation.
\end{proof}

%In the following, we assume that $\mathcal M$ is a 2-representation of $\mathcal U^\imath_+$.
From here to the end of this section, unless otherwise stated, we assume that $\lambda\in X^\imath_+ $ and that  $V$ is a finitely generated object in $\mathcal M_\lambda$. 
 We  will omit $
% [inline block 49: 15 envs, 6847 chars in 8 pieces, piece 1 here, a bare % at each other -> data_tex | \begin{tikzpicture}[baseline = 0, scale=0.5]    \draw[-,darkg,thick] (0.65,-0.5) to (0.65,.5);...]

$ from the notation, and  write $
%
 $.
 Similarly, in what follows we  omit the rightmost factor involving  $V$.

\begin{Lemma}\label{zeropol-red} Let $i\in I^\imath$, and  let $f(u)\in  Z_{V} [u]$ be  a monic polynomial. When the corresponding natural
transformations are evaluated  on $V$, the following statements  hold:  
\begin{itemize}
\item[(1)] Assume that $
%
\, f(u)$.
Then $g(u)\in Z_{V}[u]$ is a monic polynomial such that
\(
\deg g(u)
=
\deg f(u)+\langle{}^\theta\alpha_i^\vee,\lambda\rangle
+\delta_{i,1/2},
\)
and
\(
%
(u)\, f(u)
\).
Then $g(u)\in Z_{V}[u]$ is a polynomial with leading
coefficient $2^{\delta_{i,1/2}}$, and
\(
\deg g(u)
=
\deg f(u)-\langle{}^\theta\alpha_i^\vee,\lambda\rangle.
\)
Moreover,
\(
%
=0
\).
\end{itemize}
\end{Lemma}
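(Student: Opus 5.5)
The plan is to mirror, on the $\imath$-Kac--Moody side, the argument of Lemma~\ref{lem:bublleyimeszeropoly} and Lemmas~\ref{lem:bubbler}--\ref{lem:bubbler-inv}, using the red bubble generating functions of Lemma~\ref{bubb-inftyi} in place of $\clock(u)$. Statement (1) concerns the product of the clockwise (respectively counterclockwise) $i$-bubble generating series with a monic $f(u)$ annihilating the red dot on $E_iV$ (respectively $F_iV$). Statement (2) concerns the analogous product with the other bubble series and $f$ annihilating the dot on the opposite strand.

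First, for (1), I would expand $g(u)$ as a Laurent series in $u^{-1}$ with coefficients in $Z_V$, and show that its negative part vanishes. The coefficient of $u^{-r-1}$, $r\ge 0$, is a sum of bubbles with total dot-degree shifted by $f$. By Definition~\ref{def-ikmc}(4a)--(4d) (equivalently Lemma~\ref{bubb-inftyi}(1)--(2)), and allowing fake bubbles, this coefficient equals a single bubble carrying the dot label $f(y)y^{r}$ on the strand. Using adjunction (Definition~\ref{def-ikmc}(1)) and cyclicity of dots (Lemma~\ref{cyc-x}-type relation (2a)), I would move the dots onto the upward strand through $V$, where $f(y)$ acts as zero by hypothesis. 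Hence $g(u)$ is a polynomial. Its leading term comes from the leading term of the bubble series. By Lemma~\ref{bubb-inftyi}, that term is $u^{\langle{}^\theta\alpha_i^\vee,\lambda\rangle+\delta_{i,1/2}}$ with coefficient $1$, which gives the stated degree and monicity. The case $\langle{}^\theta\alpha_i^\vee,\lambda\rangle<0$ requires care: there the series begins with fake bubbles, and I would handle it using the normalization of the fake bubble at degree $-1$ given in (4b)/(4d).

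Second, for the annihilation claim, I would use the curl relations of Lemma~\ref{lem:curlrebubblrmove}. These express a curl with dots $f(y)$ on the opposite strand as $[\text{bubble}(u)\, f(u)\,(u-y)^{-1}]_{u^{-1}}$, which equals $[g(u)(u-y)^{-1}]_{u^{-1}}=g(y)$. Since the curl contains $f(y)$ on a strand where $f$ annihilates, it vanishes; hence $g(y)$ annihilates the dot on the other strand. This is the direct analogue of the use of Lemma~\ref{usefuel-equa}(3) in Lemma~\ref{lem:bublleyimeszeropoly}. Statement (2) follows in the same way with the roles of $E_i$ and $F_i$ interchanged. There the leading coefficient $2^{\delta_{i,1/2}}$ and the degree $\deg f-\langle{}^\theta\alpha_i^\vee,\lambda\rangle$ come from the $2^{\delta_{i,1/2}}u^{\delta_{i,1/2}}$ normalization in Lemma~\ref{bubb-inftyi}(3) (Definition~\ref{def-ikmc}(4e)). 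Alternatively, one could deduce (2) from (1) by multiplying by the inverse series.

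I expect the main obstacle to be bookkeeping at the boundary colour $i=\frac12$. There the extra $u$ and the factor $2$ in (4e) shift both the degree and the leading coefficient, and the curl relation in Lemma~\ref{lem:curlrebubblrmove} carries the same asymmetry. I would first check the $i\ne\frac12$ case, where everything is homogeneous as in \cite{BSW-heisenberg}. I would then track the $\delta_{i,1/2}$ corrections term by term, verifying that no nonpolynomial remainder survives.
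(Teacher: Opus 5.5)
Your proposal is correct and follows essentially the paper's proof. For $r\ge 0$, the coefficient of $u^{-r-1}$ in $g(u)$ is a genuine bubble carrying $f(y)y^{r}$; no fake bubbles enter this coefficient, and it vanishes by hypothesis. The leading term of the bubble series then gives monicity and the degree, and the annihilation of the other dot comes from the curl relation in Lemma~\ref{lem:curlrebubblrmove}(4), exactly as in Lemma~\ref{lem:bublleyimeszeropoly}, with (2) handled symmetrically. Your aside about deducing (2) from (1) by multiplying by the inverse series is unnecessary, and by itself it would not give polynomiality, so run the direct argument for (2) instead.
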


\begin{proof}  
Suppose  $r\ge 0$.  When evaluating  on $V$,  we have
\[[g(u)]_{u^{-r-1}}=[
%
=0, \]
and the first assertion in  (1) follows.  
The second one follows from  Lemma~\ref{lem:curlrebubblrmove}(4) by mimicking the proof of Lemma \ref {lem:bublleyimeszeropoly}.
%\eqref{crul-relation2}. 
Since  (2) can be verified in the same way, we omit the details of the proof.  \end{proof} 

\begin{Defn} \label{evalu-bub} Suppose $i\in I^\imath$. For any $\lambda\in X_\imath^+$  and any finitely generated object \(V\in\mathcal M_\lambda\),   define
\begin{itemize} \item [(1)] $\circlearrowleft_{V,i}(u)=\phi_V(%
  )$,   where $\phi_V$ is the evaluation homomorphism defined in \eqref{evalu-na},
        \item [(2)]  $\circlearrowright_{V,i}(u)=\phi_V(%
(-u-j)\, (-1)^{\langle {}^\theta \alpha_j^\vee,\lambda\rangle}\, (1/2)^{\delta_{j,1/2}}$,
\item [(5)] 
   $n_V(u)=\widetilde{\mathcal O}_V(u)m_V(u)$,
where $\varepsilon_j(V)$ is given in Definition~\ref{gen-red1}(2).
 %where $\phi_V$ is the evaluation homomorphism  defined  in \eqref{evalu-na}. 
\end{itemize}
\end{Defn}

In (4), the product is understood after applying $\varphi_V$.
It is well defined since, by nilpotence and the finite support of
$\lambda$, all but finitely many factors act as the identity on $V$.

\begin{Lemma}\label{simplered} Let   $i\in I^\imath$, and let $\lambda\in X_+^\imath$.   
Then, for every simple object $L\in \mathcal M_\lambda$,  the following relations hold. Here,  $\delta_{a_L}=1$ if $a_L$ is odd and $0$ otherwise: 
\begin{multicols}{2} \item [(1)] $
\circlearrowleft_{L,i}(u) = u^{\varepsilon_{-i}(L)-\varepsilon_i(L)+\delta_{i,1/2}(1-\delta_{a_L})}$,
\item [(2)]  $
\varepsilon_{-i}(L)-\varepsilon_i(L) = \langle {}^\theta \alpha_i^\vee,\lambda\rangle + \delta_{i,1/2}\delta_{a_L}$,
\item [(3)] 
$
\widetilde{\mathcal O}_L(u) = ((-1)^{a_L}u - 1/2)\frac{m_L(-u)}{m_L(u)}$.\end{multicols}
\end{Lemma}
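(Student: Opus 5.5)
The plan mirrors, on the $\imath$-Kac--Moody side, the argument of Lemma~\ref{u-admiss}. Fix a simple object $L\in\mathcal M_\lambda$. Definition~\ref{nil}(2) tells us that the red dot on $E_iL$ is nilpotent, with minimal polynomial $u^{\varepsilon_i(L)}$, and similarly on $F_iL$ with $u^{\varepsilon_{-i}(L)}$. We also use that $\operatorname{End}(L)=\mathbb C$. Since bubbles are central, every coefficient of $\circlearrowleft_{L,i}(u)$ is then a scalar.

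\textbf{Steps (1)--(2).}
1. Apply Lemma~\ref{zeropol-red}(1) with $f(u)=u^{\varepsilon_i(L)}$. This shows that
\[g(u)=\circlearrowleft_{L,i}(u)\,u^{\varepsilon_i(L)}\]
is a monic polynomial in $\mathbb C[u]$ annihilating the dot on $F_iL$. Its degree is $\varepsilon_i(L)+\langle{}^\theta\alpha_i^\vee,\lambda\rangle+\delta_{i,1/2}$.
2. Apply Lemma~\ref{zeropol-red}(2) with $f(u)=u^{\varepsilon_{-i}(L)}$. This gives a polynomial $g'(u)=\circlearrowright_{L,i}(u)\,u^{\varepsilon_{-i}(L)}$ with leading coefficient $2^{\delta_{i,1/2}}$, annihilating the dot on $E_iL$.
3. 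Since these dots are nilpotent, we have $u^{\varepsilon_{-i}(L)}\mid g(u)$ and $u^{\varepsilon_i(L)}\mid g'(u)$.
4. The infinite Grassmannian relation of Lemma~\ref{bubb-inftyi}(3) gives $\circlearrowleft_{L,i}(u)\circlearrowright_{L,i}(u)=2^{\delta_{i,1/2}}u^{\delta_{i,1/2}}$. Hence $g(u)g'(u)=2^{\delta_{i,1/2}}u^{\delta_{i,1/2}+\varepsilon_i(L)+\varepsilon_{-i}(L)}$.
5. By unique factorisation in $\mathbb C[u]$, both $g$ and $g'$ are scalar multiples of powers of $u$, with $g$ monic. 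So $\circlearrowleft_{L,i}(u)=u^{e}$ for some integer $e$.
6. The divisibilities in step 3 give $e\ge \varepsilon_{-i}(L)-\varepsilon_i(L)$. The total-degree count in step 4 gives $e\le \varepsilon_{-i}(L)-\varepsilon_i(L)+\delta_{i,1/2}$.
7. For $i\neq\frac12$ these bounds coincide. This proves (1) and, via the degree formula in step 1, (2) with no correction term.

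\textbf{Main obstacle: the case $i=\frac12$.} Here there is one unit of ambiguity, and the exponent is $\varepsilon_{-i}-\varepsilon_i$ or $\varepsilon_{-i}-\varepsilon_i+1$. The parity of $a_L$ must decide which occurs. Here $a_L=\deg m_L(u)$, where $m_L$ is the minimal polynomial of the reconstructed black dot on $EL$ (from Definition~\ref{evalu-bub}). I would first try to pin this down as in the proof of Lemma~\ref{u-admiss}. That means computing the product over $j\in I^\imath$ from Definition~\ref{evalu-bub}(4) and comparing degrees and leading coefficients, using that $a_L=\sum_j(\varepsilon_j(L)+\varepsilon_{-j}(L))$. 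Then feed the degree formula back in. By Corollary~\ref{cor:even} type parity, the sum $\sum_j\langle{}^\theta\alpha_j^\vee,\lambda\rangle$ is even, since $\lambda\in X^+_\imath$. Comparing this with the parity of $a_L$ should force the exponent to carry exactly the extra $1-\delta_{a_L}$. This yields (1), and (2) then follows from the degree formula in step 1.

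\textbf{Step (3).} Substitute (1)--(2) into the definition of $\widetilde{\mathcal O}_L(u)$ in Definition~\ref{evalu-bub}(4). Each factor for $j\neq\frac12$ contributes $(u+j)^{\varepsilon_{-j}}(u-j)^{-\varepsilon_j}$ up to sign. The factor $(1/2)^{\delta_{j,1/2}}$ together with the extra power of $u$ at $j=\frac12$ produces the linear term $(-1)^{a_L}u-\frac12$. Collecting the signs $(-1)^{\langle{}^\theta\alpha_j^\vee,\lambda\rangle}$ gives the overall $(-1)^{a_L}$, and the remaining factors give $m_L(-u)/m_L(u)$. I expect this bookkeeping of signs and of the $\frac12$-factor to be the only delicate point in (3).
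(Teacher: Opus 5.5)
Your proposal is correct and follows essentially the paper's proof. The paper also extracts the two inequalities from Lemma~\ref{zeropol-red}(1)--(2) with $f=u^{\varepsilon_{\pm i}(L)}$. It then settles $i=\frac12$ by exactly the parity count you point to: $\sum_{j\in I^\imath}(\varepsilon_{-j}(L)-\varepsilon_j(L))\equiv a_L \pmod 2$, the $j\neq\frac12$ terms equal $\langle{}^\theta\alpha_j^\vee,\lambda\rangle$, and $\sum_j\langle{}^\theta\alpha_j^\vee,\lambda\rangle$ is even, so the discrepancy at $\frac12$, which lies in $\{0,1\}$, must equal $\delta_{a_L}$; no leading-coefficient comparison as in Lemma~\ref{u-admiss} is needed. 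The one difference is that you apply Lemma~\ref{bubb-inftyi}(3) at the outset, which shows $\circlearrowleft_{L,i}(u)=u^{\langle{}^\theta\alpha_i^\vee,\lambda\rangle+\delta_{i,1/2}}$ for every $i$, whereas the paper uses it only in the case where $a_L$ is even.
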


\begin{proof} 
Specializing  Lemma~\ref{zeropol-red}(1) to  $f(u)=u^{\varepsilon_i(L)}$, we see  that $\circlearrowleft_{L,i}(u)u^{\varepsilon_i(L)}\in \mathbb C[u]$ is monic with degree $\varepsilon_i(L)+\langle {}^\theta\alpha_i^\vee,\lambda\rangle+\delta_{i,1/2}$ and is divisible by $u^{\varepsilon_{-i}(L)}$. Similarly,   applying  Lemma~\ref{zeropol-red}(2) with   $f(u)=u^{\varepsilon_{-i}(L)}$ shows that  $\circlearrowright_{L,i}(u)u^{\varepsilon_{-i}(L)}/2^{\delta_{i,1/2}}\in \mathbb C[u]$ is  monic with degree $\varepsilon_{-i}(L)-\langle {}^\theta\alpha_i^\vee,\lambda\rangle$ and  is divisible by $u^{\varepsilon_i(L)}$. 
Consequently,
\begin{equation}\label{ep-inequ1}
\varepsilon_{-i}(L) \le \varepsilon_i(L)+\langle {}^\theta\alpha_i^\vee,\lambda\rangle+\delta_{i,1/2}, \quad 
\varepsilon_i(L) \le \varepsilon_{-i}(L)-\langle {}^\theta\alpha_i^\vee,\lambda\rangle.
\end{equation}
{Since  $\lambda$ is even,  $\sum_{j\in I^\imath}\langle{}^\theta\alpha_j^\vee,\lambda\rangle\equiv 0 \pmod 2$. By Definition~\ref{evalu-bub}(3), we have      $$\sum_{i\in I^\imath} \varepsilon_{-i}(L)-\varepsilon_i(L) \equiv a_L \pmod 2.$$   
Now,  the inequalities in  \eqref{ep-inequ1} imply relation~(2). } They  also yield 
relation~(1) when $a_L$ is odd. 

 Suppose $a_L$ is even. Then the same  argument using \eqref{ep-inequ1} shows that 
\[
\circlearrowright_{L,i}(u)
=
2^{\delta_{i,1/2}}
u^{\varepsilon_i(L)-\varepsilon_{-i}(L)}
.\]  By Lemma~\ref{bubb-inftyi}(3), we
 obtain relation~(1) in this case  as well. 
Finally, relation~(3) follows from relation~(1), Lemma~\ref{bubb-inftyi}(3),
and Definition~\ref{evalu-bub}(3)-(4).  
\end{proof}
\begin{Defn}
    For  any $\lambda\in X_{\imath}^+$, let $\tilde Z_\lambda$ be  the subalgebra of $\End(\text{Id}_{\mathcal M_\lambda})$ generated by all diagrams  $\begin{tikzpicture}[baseline = 0]
  \draw[->,thick,darkred] (0.2,0.2) to[out=90,in=0] (0,.4);
  \draw[-,thick,darkred] (0,0.4) to[out=180,in=90] (-.2,0.2);
\draw[-,thick,darkred] (-.2,0.2) to[out=-90,in=180] (0,0);
  \draw[-,thick,darkred] (0,0) to[out=0,in=-90] (0.2,0.2);
 \node at (0,-.1) {$\scriptstyle{i}$};
   \node at (-0.3,0.2) {$\scriptstyle{\lambda}$};
   %\node at (0,0.2) {$\scriptstyle{i}$};  
   \node at (0.2,0.2) {$\color{darkred}\scriptstyle\bullet$}; \node at (0.37,0.2) {$\color{black}\scriptstyle{k}$};
  
        \end{tikzpicture},  \begin{tikzpicture}[baseline = 0 ]
  \draw[<-,thick,darkred] (0,0.4) to[out=180,in=90] (-.2,0.2);
  \draw[-,thick,darkred] (0.2,0.2) to[out=90,in=0] (0,.4);
 \draw[-,thick,darkred] (-.2,0.2) to[out=-90,in=180] (0,0);
  \draw[-,thick,darkred] (0,0) to[out=0,in=-90] (0.2,0.2);
 \node at (0,-.1) {$\scriptstyle{i}$};
   \node at (0.3,0.2) {$\scriptstyle{\lambda}$};
   \node at (-0.2,0.2) {$\color{darkred}\scriptstyle\bullet$}; \node at (-0.37,0.2) {$\color{black}\scriptstyle{k}$};
   
\end{tikzpicture}  \in \End(\text{Id}_{\mathcal M_\lambda})$ for all $k\in \mathbb N$ and all $i\in I^{\imath}$, viewed as  natural transformations of the identity functor. For any  $V\in \mathcal M_\lambda$, let  $\tilde Z_{\lambda,V}=\phi_V(\tilde Z_\lambda)$, where $\phi_V$ is the evaluation homomorphism defined in \eqref{evalu-na}.
\end{Defn}
%\comment{We need  a different notation here since they are different from the ring in Section 4. }
\begin{Lemma}\label{local-red}
    For any finitely generated object $V\in \mathcal M_\lambda$, $\tilde Z_{\lambda,V}$ is a finite--dimensional local  $\mathbb C$-algebra.
\end{Lemma}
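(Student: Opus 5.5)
The plan is to mirror the proofs of Lemma~\ref{lem:localring} and Lemma~\ref{zlam-loc}. The one new ingredient is that the scalar by which a central element acts on a simple object must now be computed from Lemma~\ref{simplered} rather than from Lemma~\ref{BLOCK}(6).

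\emph{Finite-dimensionality.} Since $V$ is finitely generated in a locally Schurian category, $\operatorname{End}_{\mathcal M}(V)$ is finite-dimensional. Then $\tilde Z_{\lambda,V}$ is a subalgebra of it, indeed of its center, because each dotted bubble is an endomorphism of $\operatorname{Id}_{\mathcal M_\lambda}$. So $\tilde Z_{\lambda,V}$ is a finite-dimensional commutative $\mathbb C$-algebra, and it remains only to prove locality.

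\emph{A single character.} I would show that every generator of $\tilde Z_\lambda$ acts by the same scalar on all simple objects of $\mathcal M_\lambda$. Let $L\in\mathcal M_\lambda$ be simple. By Lemma~\ref{simplered}(1), the generating series $\circlearrowleft_{L,i}(u)$ equals $u^{\varepsilon_{-i}(L)-\varepsilon_i(L)+\delta_{i,1/2}(1-\delta_{a_L})}$. Each dotted counterclockwise bubble therefore acts on $L$ by $0$ or $1$, according to whether its degree matches this exponent.
\begin{itemize}
\item By Lemma~\ref{simplered}(2), the exponent equals $\langle{}^\theta\alpha_i^\vee,\lambda\rangle+\delta_{i,1/2}$. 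This is independent of $L$: the parity $\delta_{a_L}$ cancels between the two formulas.
\item The clockwise bubbles are then determined through the infinite Grassmannian relation, Lemma~\ref{bubb-inftyi}(3), so they too act by scalars depending only on $\lambda$ and $i$.
\end{itemize}
This yields a character $\chi_\lambda:\tilde Z_\lambda\to\mathbb C$ with kernel $\tilde J_\lambda$, a maximal ideal. The main obstacle is making sure that the exceptional color $\frac12$ and the parity of $a_L$ do not spoil this independence. That is exactly what the combination of Lemma~\ref{simplered}(1) and (2) handles, so I would check that step carefully.

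\emph{Locality.} Define $\tilde J_{\lambda,V}=\phi_V(\tilde J_\lambda)$.
\begin{enumerate}
\item $V\neq 0$ has a simple quotient, which lies in $\mathcal M_\lambda$. Hence $\ker\phi_V\subseteq \tilde J_\lambda$, and $\tilde Z_{\lambda,V}/\tilde J_{\lambda,V}\cong\mathbb C$, so $\tilde J_{\lambda,V}$ is a maximal ideal.
\item Every $a=\phi_V(b)$ with $b\in\tilde J_\lambda$ is nilpotent. For each $c\in\mathbb C$, the transformation $1-cb$ acts as the identity on every simple object, so it is a natural isomorphism by \cite[Lemma~2.12]{BD-cate}. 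Thus $1-ca$ is invertible for all $c$. Since $\operatorname{End}(V)$ is finite-dimensional and $\mathbb C$ is algebraically closed, $a$ can have no nonzero eigenvalue, so $a$ is nilpotent.
\item A finite-dimensional commutative algebra in which a maximal ideal consists of nilpotents has that ideal nilpotent, and it is then contained in every maximal ideal. Hence $\tilde J_{\lambda,V}$ is the unique maximal ideal, and $\tilde Z_{\lambda,V}$ is local.
\end{enumerate}
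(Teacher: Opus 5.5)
Your proposal is correct and follows essentially the same route as the paper. You get finite-dimensionality from local Schurianity, show that the bubbles act on every simple object by a common scalar via Lemma~\ref{simplered}(1)--(2) and Lemma~\ref{bubb-inftyi}(3), and then transplant the nilpotence and locality argument of Lemma~\ref{zlam-loc} to $\tilde Z_{\lambda,V}$. Your observation that the $\delta_{a_L}$ terms cancel, giving exponent $\langle{}^\theta\alpha_i^\vee,\lambda\rangle+\delta_{i,1/2}$, is exactly the computation behind the paper's condition $k=1-\langle{}^\theta\alpha_i^\vee,\lambda+\alpha_i\rangle$.
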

\begin{proof} We work in the  $\imath$-Kac--Moody $2$-category $\mathfrak U^\imath$ over $\mathbb C$. 
Since $V$ is finitely generated, and $\mathcal M_\lambda$ is locally Schurian, $\End_{\mathcal M_\lambda}(V)$ is a finite--dimensional $\mathbb C$-algebra, and hence its subalgebra $\tilde Z_{\lambda,V}$ is also finite--dimensional.  
    Let $L$ be any simple object in $\mathcal M_\lambda$.  
    By Lemma~\ref{simplered}(1)-(2), for any $i\in I^{\imath}$ we have   \[
    \begin{tikzpicture}[baseline=0]
        \draw[->,thick,darkred] (0.2,0.2) to[out=90,in=0] (0,.4);
        \draw[-,thick,darkred] (0,0.4) to[out=180,in=90] (-.2,0.2);
        \draw[-,thick,darkred] (-.2,0.2) to[out=-90,in=180] (0,0);
        \draw[-,thick,darkred] (0,0) to[out=0,in=-90] (0.2,0.2);
        \node at (0,-.1) {$\scriptstyle{i}$};
        \node at (-0.3,0.2) {$\scriptstyle{\lambda}$};
        \node at (0.2,0.2) {$\color{darkred}\scriptstyle\bullet$};
        \node at (0.37,0.2) {$\color{black}\scriptstyle{k}$};
    \end{tikzpicture}\begin{tikzpicture}[baseline = 0]
   \draw[-,darkg,thick] (0.65,-0.5) to (0.65,.5);
    \node at (0.65,-.7) {$\darkg\scriptstyle{L}$};
 \end{tikzpicture}
    =
    \begin{cases}
        1 & \text{if } k = 1 - \langle \thal^\vee_i, \lambda+\alpha_i\rangle,\\
        0 & \text{otherwise}.
    \end{cases}
    \]
    Then by Lemma~\ref{bubb-inftyi}(3),
\[
    \begin{tikzpicture}[baseline=0]
        \draw[<-,thick,darkred] (0,0.4) to[out=180,in=90] (-.2,0.2);
        \draw[-,thick,darkred] (0.2,0.2) to[out=90,in=0] (0,.4);
        \draw[-,thick,darkred] (-.2,0.2) to[out=-90,in=180] (0,0);
        \draw[-,thick,darkred] (0,0) to[out=0,in=-90] (0.2,0.2);
        \node at (0,-.1) {$\scriptstyle{i}$};
        \node at (0.3,0.2) {$\scriptstyle{\lambda}$};
        \node at (-0.2,0.2) {$\color{darkred}\scriptstyle\bullet$};
        \node at (-0.37,0.2) {$\color{black}\scriptstyle{k}$};
    \end{tikzpicture}\begin{tikzpicture}[baseline = 0]
   \draw[-,darkg,thick] (0.65,-0.5) to (0.65,.5);
    \node at (0.65,-.7) {$\darkg\scriptstyle{L}$};
 \end{tikzpicture}
    =
    \begin{cases}
        2^{\delta_{i,\frac{1}{2}}} & \text{if } k = \langle \thal^\vee_i, \lambda\rangle - 1,\\
        0 & \text{otherwise}.
    \end{cases}
    \]
Therefore, every red bubble, with or without dots, acts on a simple
object as a scalar; moreover, this scalar is independent of the
simple object in $\mathcal M_\lambda$. Define
\[
\widetilde J_\lambda
:=
\left\{
b\in\widetilde Z_\lambda
\ \middle|\
b\text{ acts as zero on every simple object of }\mathcal M_\lambda
\right\},
\]
and set
\(
\widetilde J_{\lambda,V}
:=
\phi_V(\widetilde J_\lambda)
\).
By the same argument used for $Z_{\lambda,V}$ after 
Definition~\ref{phiV}, we have
\[
\widetilde Z_{\lambda,V}/\widetilde J_{\lambda,V}
\cong \mathbb C,
\]
and every element of $\widetilde J_{\lambda,V}$ is nilpotent.
Since $\widetilde Z_{\lambda,V}$ is finite-dimensional,
$\widetilde J_{\lambda,V}$ is a nilpotent ideal. Hence it is contained
in every maximal ideal of $\widetilde Z_{\lambda,V}$. Since
$\widetilde J_{\lambda,V}$ is itself maximal, it is the unique maximal
ideal. Therefore $\widetilde Z_{\lambda,V}$ is a local ring.

\iffalse \old{Therefore, every red bubble, with or without dots, acts on a simple object as a scalar; moreover, this scalar
is independent of the simple object in $\mathcal M_\lambda$. }

\old{By the same argument as in the proof of Lemma~\ref{lem:localring},
we conclude that $\tilde Z_{\lambda}$ is a local ring. Consequently, its image $\tilde Z_{\lambda,V}$ under the evaluation homomorphism  is also  a local ring.}\fi

\end{proof}

For any $g\in\tilde Z_{\lambda,V}$,  we write $\bar g$ for its image  in $\mathbb C$.

\begin{rem} By Lemma~\ref{zeropol-red}, the polynomial $n_V(u)$ defined in
Definition~\ref{evalu-bub}(5) lies in $\tilde Z_{\lambda,V}[u]$ and is monic.
By Lemma~\ref{local-red}, we can apply Lemma~\ref{lem:moniclift}  to 
define the monic polynomials $t_{V,i}(u)$ and $n_{V,i}(u)$ as in Corollary~\ref{lift-n}.
To avoid confusion, we emphasize that the notation
$m_V(u)$, $t_{V,i}(u)$, $n_{V,i}(u)$, and so on, is used here for the
corresponding polynomials in the category $\mathfrak U_+^\imath$, even
though the same notation was used in Section~4. After proving the main
result of this section, we will see that these polynomials
coincide with their counterparts in Section~4.
\end{rem}  %调了下位置
\begin{Lemma}\label{bubb-red}
Let $i\in I$,  and let $V \in \mathcal M_\lambda$ be a  finitely generated object.
Then  
 $$t_{V,i}(u)=\begin{cases} (u-i)^{\varepsilon_i(V)}\circlearrowleft_{V,i}(u-i) & \text{if $i\in I^{\imath}$}\\
(1/2)^{\delta_{-i, 1/2}} (-1)^{\langle {}^\theta\alpha_{-i}^\vee,\lambda\rangle}(u-i)^{\varepsilon_{i}(V)}\circlearrowright_{V,-i}(-u+i) & \text{if $i\in -I^\imath$.}\\
\end{cases} 
$$
 \end{Lemma}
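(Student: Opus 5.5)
The plan is to use the uniqueness part of Lemma~\ref{lem:moniclift}. By the preceding remark, $t_{V,i}(u)$ is the unique monic factor of $n_V(u)=\widetilde{\mathcal O}_V(u)m_V(u)$ in $\tilde Z_{\lambda,V}[u]$ that reduces to $(u-i)^{\phi_i(V)}$ and is coprime to the complementary monic factor. It therefore suffices to exhibit the stated right-hand side $T(u)$ and check three things. First, $T(u)$ is a monic polynomial in $\tilde Z_{\lambda,V}[u]$. Second, its reduction modulo $\widetilde J_{\lambda,V}$ equals $(u-i)^{\phi_i(V)}$. Third, $n_V(u)=T(u)\,S(u)$ for some monic $S(u)$ whose reduction has no root at $i$.

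Take first the case $i\in I^\imath$. Apply Lemma~\ref{zeropol-red}(1) with $f(u)=u^{\varepsilon_i(V)}$. This shows that $u^{\varepsilon_i(V)}\circlearrowleft_{V,i}(u)$ is a monic polynomial in $\tilde Z_{\lambda,V}[u]$ of degree $\varepsilon_i(V)+\langle{}^\theta\alpha_i^\vee,\lambda\rangle+\delta_{i,1/2}$. After the shift $u\mapsto u-i$, this gives the first two properties for $T(u)$, provided we verify the following.

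\begin{itemize}
\item The degree equals $\phi_i(V)$. By Definition~\ref{evalu-bub}(3)--(5), the $i$-eigenvalue part of $m_V$ is $(u-i)^{\varepsilon_i(V)}$, so it suffices to compare the reduction of the $i$-part of $\widetilde{\mathcal O}_V(u)$.
\item The reduction of $\circlearrowleft_{V,i}$ is a pure power of $u$. This follows from Lemma~\ref{simplered}(1): the scalar action on simples is $u^{\varepsilon_{-i}(L)-\varepsilon_i(L)+\cdots}$, and it is independent of $L$ by the proof of Lemma~\ref{local-red}.
\end{itemize}

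For the factorization, write $\widetilde{\mathcal O}_V(u)$ via Definition~\ref{evalu-bub}(4) as a finite product over $j\in I^\imath$ of shifted bubble series $\circlearrowleft_{V,j}(u-j)$ and $\circlearrowright_{V,j}(-u-j)$ times signs. Lemma~\ref{bubb-inftyi}(3) cancels the $\circlearrowright_{V,j}$ factors against the $\circlearrowleft_{V,j}$ factors up to the explicit scalar $2^{\delta_{j,1/2}}u^{\delta_{j,1/2}}$. Then $n_V(u)$ becomes the product of the factor $(u-i)^{\varepsilon_i(V)}\circlearrowleft_{V,i}(u-i)$ and the analogous terms for the other eigenvalues $j\ne i$. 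Each of those other terms is a polynomial by Lemma~\ref{zeropol-red}, and its reduction is a power of $(u-j)$ with $j\ne i$. So their product $S(u)$ is monic and has reduction coprime to $u-i$, and uniqueness gives $t_{V,i}(u)=T(u)$.

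Now take $i\in -I^\imath$, and write $j=-i\in I^\imath$. Here use Lemma~\ref{zeropol-red}(2) with $f(u)=u^{\varepsilon_{-j}(V)}$. This gives a polynomial $u^{\varepsilon_{-j}(V)}\circlearrowright_{V,j}(u)$ with leading coefficient $2^{\delta_{j,1/2}}$. After substituting $u\mapsto -u+i$, multiplying by $(u-i)^{\varepsilon_i(V)}$, and normalizing by the scalar $(1/2)^{\delta_{j,1/2}}(-1)^{\langle{}^\theta\alpha_j^\vee,\lambda\rangle}$ (whose power of $-1$ comes from the degree $\varepsilon_{-j}-\langle{}^\theta\alpha_j^\vee,\lambda\rangle$ together with the substitution $u\mapsto -u$), the result is monic. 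The rest of the argument is identical to the first case.

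The main obstacle is the bookkeeping in the factorization step. One must check that after the substitution the power of $(u-i)$ coming from $m_V$ combines exactly with the bubble factor, and that the signs and the factors $2^{\delta_{j,1/2}}$, $u^{\delta_{j,1/2}}$ from Lemma~\ref{bubb-inftyi}(3) are attached to the correct eigenvalue. At $i=\pm\frac12$ the extra $u^{\delta}$ and $(1/2)^{\delta}$ factors must land in the $-\frac12$ factor and not the $\frac12$ factor. I would settle this first by reducing to simple $L$ via Lemma~\ref{simplered}(3), where everything is an explicit comparison of exponents, and only afterwards lift using uniqueness.
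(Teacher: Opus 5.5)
Your overall architecture matches the paper's proof. You show that each proposed right-hand side is a monic polynomial in $\tilde Z_{\lambda,V}[u]$ (Lemma~\ref{zeropol-red}) and that its reduction is a pure power of $u-i$ (Lemma~\ref{simplered}(1)). You then show these polynomials multiply to $n_V(u)$ and conclude by the uniqueness in Lemma~\ref{lem:moniclift}. Your monicity and normalization checks are fine.

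The factorization step, however, is wrong as written. In Definition~\ref{evalu-bub}(4) the two bubble series occur as $\circlearrowleft_{V,j}(u-j)$ and $\circlearrowright_{V,j}(-u-j)$, that is, at \emph{different} arguments. Lemma~\ref{bubb-inftyi}(3) only controls the product $\circlearrowleft_{V,j}(u)\circlearrowright_{V,j}(u)$ at a common argument, so nothing cancels inside $\widetilde{\mathcal O}_V(u)$. If the cancellation you describe did occur, $\widetilde{\mathcal O}_V(u)$ would collapse to essentially $\pm u$, which is incompatible with Lemma~\ref{simplered}(3). It would also remove from $n_V(u)$ precisely the factors you need in the case $i\in -I^\imath$.

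No manipulation is needed at all. By Definition~\ref{evalu-bub}(3)--(5), $n_V(u)=\widetilde{\mathcal O}_V(u)m_V(u)$ is, by definition, the product over $j\in I^\imath$ of $(u-j)^{\varepsilon_j(V)}\circlearrowleft_{V,j}(u-j)$ and $(1/2)^{\delta_{j,1/2}}(-1)^{\langle{}^\theta\alpha_j^\vee,\lambda\rangle}(u+j)^{\varepsilon_{-j}(V)}\circlearrowright_{V,j}(-u-j)$. In other words, it is exactly the product over all $k\in I$ of the candidates $T_k(u)$ (your $T(u)$ for the eigenvalue $k$), all but finitely many of which equal $1$. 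This handles both cases at once, with $S(u)=\prod_{k\neq i}T_k(u)$.

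Lemma~\ref{bubb-inftyi}(3) is genuinely needed only when computing reductions. It gives $\overline{\circlearrowright_{V,j}(u)}=2^{\delta_{j,1/2}}u^{\delta_{j,1/2}}/\circlearrowleft_{L,j}(u)$, which is $2^{\delta_{j,1/2}}$ times a pure power of $u$. Hence the normalized $(-j)$-factor reduces to a pure power of $u+j$, with sign $+$ by monicity.

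The sign and the factor $(1/2)^{\delta_{j,1/2}}$ are already attached to the $\circlearrowright_{V,j}$ factor in the definition, and no factor $u^{\delta_{j,1/2}}$ appears in $n_V(u)$. So the bookkeeping you flag as the main obstacle does not arise. Your separate degree check is also unnecessary: once $\overline{n_V}(u)=\prod_k\overline{T_k}(u)$ with each $\overline{T_k}(u)$ a power of $u-k$, unique factorization in $\mathbb C[u]$ forces the exponents to be the $\phi_k(V)$.
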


\begin{proof} Suppose first that  $i, j\in I^{\imath}$.
    By Lemma~\ref{zeropol-red}, we have 
   $$(1/2)^{\delta_{j,1/2}}(-1)^{\langle {}^\theta\alpha_j^\vee,\lambda\rangle}(u+j)^{\varepsilon_{-j}(V)}\circlearrowright_{V,j}(-u-j), 
  \ \  (u-i)^{\varepsilon_i(V)}\circlearrowleft_{V,i}(u-i)\in  \tilde{Z}_{\lambda,V}[u].$$
    By Lemma~\ref{simplered}(1),
    $$\overline{(u-i)^{\varepsilon_i(V)}\circlearrowleft_{V,i}(u-i)}  = (u-i)^{\varepsilon_i(V)}\circlearrowleft_{L,i}(u-i) = (u-i)^{\phi_i(V)}$$
    for some $\phi_i(V)\in\mathbb N$, where $L\in\mathcal M_\lambda$ is a simple object.
    Similarly, Lemma~\ref{simplered}(1), together with  Lemma~\ref{bubb-inftyi}(3), gives 
    $$\overline{(1/2)^{\delta_{j,1/2}}(-1)^{\langle {}^\theta\alpha_j^\vee,\lambda\rangle}(u+j)^{\varepsilon_{-j}(V)}\circlearrowright_{V,j}(-u-j)} = (u+j)^{\phi_{-j}(V)}$$
    for some $\phi_{-j}(V)\in\mathbb N$.
   Thus, for \(i\in I^\imath\) and \(-i\in I^\imath\), respectively,
\(
(u-i)^{\epsilon_i(V)}
\circlearrowleft_{V,i}(u-i)
\)
and
\(
\left(\frac{1}{2}\right)^{\delta_{-i,\frac{1}{2}}}
(-1)^{\langle {}^\theta\alpha^\vee_{-i},\lambda\rangle}
(u-i)^{\epsilon_i(V)}
\circlearrowright_{V,-i}(-u+i)
\)
are monic lifts of \((u-i)^{\phi_i(V)}\), and their product is \(n_V(u)\).    
   The result therefore follows from the  uniqueness of monic lifts  in Lemma~\ref{lem:moniclift}.
\end{proof}

%\noindent 

As in Section~4, we shall use the following standard argument freely: since $\mathcal M$ is locally Schurian, every object is a filtered colimit of finitely generated objects; moreover, all endofunctors involved are sweet and hence preserve filtered colimits. Consequently, any natural transformation defined on finitely generated objects extends uniquely to all objects. Therefore, throughout this section it suffices to verify all identities on finitely generated objects $V\in\mathcal M_\lambda$, and we will do so without further mention.
\begin{Defn}
\label{natrual-basic-red}
For any  $\lambda\in X^+_\imath$,  any 
$i\in I^\imath$, $j, k\in I$,  and  any finitely generated object $V\in \mathcal M_\lambda$, 
define 
\begin{multicols}{2} 
\item[(1)]
    $% [inline block 50: 59 envs, 26123 chars in 3 pieces, piece 1 here, a bare % at each other -> data_tex | \begin{tikzpicture}[baseline=-0.5ex,scale=0.6]         \draw[thick, black] (0,-0.5) -- (0,0.5);...]
=0 $  if $ k\neq j$.
\end{multicols}
\end{Defn}

By the same argument as in Lemma~\ref{tvi}, with
$Z_{\lambda,V}$ replaced by $\widetilde Z_{\lambda,V}$,
the assignments in (5)--(6) are well defined and natural in $V$.
Hence they define natural transformations on $\mathcal M$.

\begin{Lemma}\label{equa-dotte-generatetilde} Let $i\in I$. %\old{ and let $V$ be a  finitely generated object in $\mathcal M_\lambda$.} 
Then 
 \begin{multicols}{2}
\item[(1)]  $
    \widetilde\clock (u)\widetilde\clock ({-}u)=(\frac{1}{2}-u)(\frac{1}{2}+u)$,
    \item[(2)] $\mathord{
%
}$.
\end{multicols}
\end{Lemma}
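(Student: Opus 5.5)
We will prove both identities after evaluating on an arbitrary finitely generated object $V\in\mathcal M_\lambda$; this suffices because every object is a filtered colimit of finitely generated ones. Here $\widetilde\clock(u)$ is the reverse-construction bubble series assembled from the red bubbles of $\mathfrak U^\imath_+$, and by Definition~\ref{evalu-bub} its value on $V$ is $\widetilde{\mathcal O}_V(u)$. The plan mirrors Lemma~\ref{ou}, with the affine Brauer calculus replaced by the bubble relations of Lemma~\ref{bubb-inftyi} and Lemma~\ref{lem:curlrebubblrmove}.

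\textbf{Identity (1).} We expand $\widetilde{\mathcal O}_V(u)$ as the product over $j\in I^\imath$ of the clockwise and counterclockwise bubble series from Definition~\ref{evalu-bub}(3)--(4). The factor indexed by $j$ at $u$ involves $\circlearrowleft_{V,j}(u-j)$ and $\circlearrowright_{V,j}(-u-j)$. Replacing $u$ by $-u$ interchanges these two roles. Pairing factors in $\widetilde{\mathcal O}_V(u)\widetilde{\mathcal O}_V(-u)$, each $j$ therefore contributes a product
\[
\circlearrowleft_{V,j}(w)\,\circlearrowright_{V,j}(w)
\]
at the shifted variables $w=u-j$ and $w=-u-j$. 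We then apply Lemma~\ref{bubb-inftyi}(3): this product equals $1$ for $j\ne\frac12$ and $2w$ for $j=\frac12$. Combining this with the scalar prefactors $(-1)^{\langle{}^\theta\alpha_j^\vee,\lambda\rangle}$ and $(1/2)^{\delta_{j,1/2}}$, and using that $\lambda$ is even so the signs cancel in pairs, we should be left exactly with $(\frac12-u)(\frac12+u)$. By nilpotence only finitely many factors differ from $1$, so all products are finite.

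\textbf{Identity (2).} This is the analogue of Lemma~\ref{ou}(2): moving the bubble series past a black strand labelled $k$ multiplies it by
\[
\frac{(u+k)^2-1}{(u-k)^2-1}\,\frac{(u-k)^2}{(u+k)^2}.
\]
Since the black strand is $E=\bigoplus_i(E_i\oplus F_i)$, built from red strands via Definition~\ref{natrual-basic-red}, it suffices to check the identity summand by summand, i.e.\ on a red strand $E_i$ or $F_i$ with the dot of generalized eigenvalue $k=\pm i$. On such a summand we use Lemma~\ref{lem:curlrebubblrmove} (the bubble-slide relations (1)--(4)) to move each $j$-colored bubble across the $i$-strand. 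The cases are: $j=i$, which gives the factor $(u-y)^{\pm2}$ type terms; $j=i\pm1$, which gives the $(u-y\mp1)$ factors; and all other $j$, which give no change. We then substitute the shifted variables used in $\widetilde{\mathcal O}$ and multiply the results.

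\textbf{Main obstacle.} The hard part is the bookkeeping in (2) at the boundary colour $\frac12$ and at $i=\frac12$, $j=-\frac12$. There the factor $(u+k)^2-1$ shares a root with the strand's own colour, and the inhomogeneous normalisation $2^{\delta_{i,1/2}}$ in Lemma~\ref{bubb-inftyi}(3) enters. To handle this, we would first evaluate the corresponding rational factors on simple objects, where Lemma~\ref{simplered}(3) already gives $\widetilde{\mathcal O}_L$ in closed form. We would then check that the slide factors reproduce Lemma~\ref{choosesimple}, and finally lift to $V$ using Lemma~\ref{lem:curlrebubblrmove} directly. This avoids relying on uniqueness of monic lifts, since the identity must hold in $\tilde Z_{\lambda,V}$ and not merely modulo its maximal ideal.
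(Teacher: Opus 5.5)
Your proposal is correct and follows the paper's proof. For (1), you pair the $j$-factors of $\widetilde{\mathcal O}_V(u)\widetilde{\mathcal O}_V(-u)$ and apply Lemma~\ref{bubb-inftyi}(3); each sign factor appears squared, so the evenness of $\lambda$ is not actually needed. For (2), you repeatedly slide the red bubbles across the strand. Two small corrections: the bubble-slide relations the paper uses are items (5)--(6) of Lemma~\ref{lem:curlrebubblrmove}, not (1)--(4); and since these are identities of $2$-morphisms, the detour through simple objects at the boundary colour $\frac12$ is unnecessary.
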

\begin{proof} Statement (1) follows from Lemma~\ref{bubb-inftyi}(3). Applying Lemma~\ref{lem:curlrebubblrmove}(5)-(6) repeatedly yields (2). 
\end{proof}

%\newpage

\begin{Defn}\label{cross-red123} Suppose   $\lambda\in X^+_\imath$, % any finitely generated object $V\in \mathcal M_\lambda$, 
and $i, j\in I^\imath$. Define 
\begin{itemize} 
\item [(1)] % For any $i,j\in I_\imath$ we define 
%\begin{align*}
$
%
$.
\end{itemize}
\end{Defn}
We remark that, on each generalized-eigenspace summand, the definitions above are designed to invert the constructions in   Definitions~\ref{natrual-basic} and~\ref{natrual-basic1}.

Since $\mathcal{M}$ is a nilpotent $2$-representation of $\mathfrak{U}_+^\imath$, Definition~\ref{nil}(1) ensures that, after evaluation
on any object of M, only finitely many summands in Definition~\ref{cross-red123}(6)-(10) are nonzero. Hence these sums
are well-defined. In what follows, we omit $\lambda$ from the notation whenever no ambiguity arises.

\subsection{Adjunction and mixed relations}

\begin{Lemma}
\label{dot-caup-red}
When evaluated on any finitely generated object
$V\in \mathcal M_\lambda$ with $\lambda\in X_\imath^+$,
all equalities in Definition~\ref{C defn}(1), and (4) hold.
Moreover, the following three relations hold:
    \begin{multicols}{2}
        \item [(1)] $ % [inline block 51: 27 envs, 7570 chars in 3 pieces, piece 1 here, a bare % at each other -> data_tex | \begin{tikzpicture}[baseline=-0.5mm] 		\draw[-,thick]...]
$
      \end{multicols} 
\end{Lemma}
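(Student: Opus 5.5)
The verification will be reduced to the red $2$-morphisms of $\mathfrak U^\imath_+$ by expanding the black generators of Definition~\ref{natrual-basic-red} into sums over generalized-eigenspace summands. For Definition~\ref{C defn}(1), the zigzag relations, I expect the black cup and cap on a summand of colour $j$ to be a red cup or cap of colour $|j|$ rescaled by a rational function of the red dot. These functions are built from $n_{V,j}$, $m_{V,j}$, $t_{V,j}$ and, at $|j|=\frac12$, the factors $2^{\pm\delta}$. The plan is therefore:
\begin{itemize}
\item insert the decomposition of $E$ into the summands $E_i$ and $F_i$;
\item note that by nilpotence, Definition~\ref{nil}(1), only finitely many summands survive on $V$;
\item straighten each red zigzag with Definition~\ref{def-ikmc}(1a)--(1b);
\item check that the rational normalizing factors on the two halves of the zigzag are inverse to each other.
\end{itemize}
For the last step I would slide the dot labels through the cap using cyclicity, Definition~\ref{def-ikmc}(2a), and compare $n_{E_{-i}V,i}/m_{E_{-i}V,i}$ with $n_{V,i}/m_{V,i}$. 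This comparison is the red analogue of Lemma~\ref{ii21}(1)--(2), obtained from Lemma~\ref{lem:curlrebubblrmove}(5)--(6) and the uniqueness of monic lifts (Lemma~\ref{lem:moniclift}, Lemma~\ref{bubb-red}). The colour $\frac12$ needs separate treatment because of the extra powers of $2$ and the factor $(2u\pm1)$, which I would track explicitly.

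For Definition~\ref{C defn}(4), the dot–crossing relation, I would decompose the black crossing into the summands $E_aE_b \to E_bE_a$ with $a,b\in I$. For each of the four sign patterns (both $E$, both $F$, or mixed), the black crossing is by definition a red crossing of the appropriate orientation, corrected by rational functions of the dots and by the cup-cap idempotent terms.
\begin{itemize}
\item When both strands are $E$ (or both $F$), the needed identity reduces to the KLR dot-sliding relation, Definition~\ref{def-ikmc}(3a), exactly as in the reversal of Lemma~\ref{qha} following \cite{BSW-heisenberg}.
\item In the mixed cases, the defining formula should involve sideways crossings together with a correction term "cap followed by cup" present only when $b=-a$. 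Sliding a dot through it I would handle with the nodal relations, Definition~\ref{def-ikmc}(5a)--(5b), and the mixed relations (7a)--(7b). The correction term then produces exactly the cup-cap term required by (4); I would check this via the bubble expansion of Lemma~\ref{equa-dotte-generatetilde}(2).
\end{itemize}

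The three additional relations (1)--(3) are local statements, namely dot moving past cups and caps and the curl or bubble evaluations. I would prove them summand by summand: the black dot is the red dot shifted by the colour, and on $F_i$ by the sign-reversed colour. Cyclicity of the red dot, Definition~\ref{def-ikmc}(2a), then gives the black cup/cap dot-slide, with the colour reversing to $-i$ across a cup, consistent with Lemma~\ref{AB relations-re}(1)--(2).

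The main obstacle will be the mixed-orientation case of (4) at colour $\frac12$. There the inhomogeneous relation and the normalizations $2^{\delta_{i,1/2}}$ interact, and the formal Laurent-series extraction must be shown to yield exactly the identity morphism with no stray terms. I would first try to mirror the forward computation in Lemma~\ref{sigrel}, using the lifts $g,h$ from \eqref{key-coprim} read in reverse.
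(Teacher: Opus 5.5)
Your treatment of the zigzag relations and of the cup/cap dot-slides (1)--(2) matches the paper. The paper rewrites black dots, cups and caps via Definition~\ref{natrual-basic-red}, straightens with Definition~\ref{def-ikmc}(1), and uses cyclicity (2a); for the zigzag it simply says the computation is that of Lemma~\ref{adj} run in reverse. The gaps are in the dot--crossing part. First, relation (3) is not a curl or bubble evaluation. It is the companion dot--crossing relation along the other diagonal, i.e.\ the analogue of Lemma~\ref{AB relations-re}(3). In Section~2 that relation is derived using Definition~\ref{C defn}(3). In the reverse setting, however, Definition~\ref{C defn}(3) is only established later (Lemma~\ref{lem:C-equalities}), with the present lemma as input. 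So (3) must be proved here, simultaneously with Definition~\ref{C defn}(4), and your proposal leaves it unaddressed.

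Second, your plan for the mixed-sign components of Definition~\ref{C defn}(4) relies on the wrong tools and misses the actual mechanism. The nodal relations (5a)--(5b), the mixed relations (7a)--(7b), bubble expansions and Laurent-series extraction do not produce dot-sliding identities; they control invertibility of sideways crossings. The paper uses them only later, for Definition~\ref{C defn}(2) and Definition~\ref{AB defn}(1) (Lemmas~\ref{right cuaps - down cross-inverse} and~\ref{crosss-inverse}). The inhomogeneous relation likewise plays no role here; it enters only in the braid relation (Lemma~\ref{braid-local-r211}), so the obstacle you anticipate at colour $\frac12$ does not occur.

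The missing idea is that, by Definition~\ref{cross-red123}(2), a black crossing whose colours are not both in $I^\imath$ is \emph{defined} as a rotation, by black cups and caps, of a black crossing with colours in $I^\imath$. Rotation interchanges the two diagonals. Hence the mixed cases of Definition~\ref{C defn}(4) follow formally from the $I^\imath$ case of relation (3), and vice versa, using only the zigzag relations and (1)--(2). The required cup--cap term is just the rotation of the identity term.

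The $I^\imath$ case itself is the KLR slide (3a) combined with Definition~\ref{cross-red123}(1) and Definition~\ref{natrual-basic-red}(7)--(9), as you indicate. The components whose colours do not follow the crossing pattern are given by Definition~\ref{cross-red123}(3)--(5) as dotted identities, dotted cup--cap composites, or zero, and for these the relation is immediate from the zigzag and (1)--(2). The correct order is therefore: (1)--(2), then the zigzag, then (3) and Definition~\ref{C defn}(4) together, each supplying the $I^\imath$ input for the mixed cases of the other.
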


\begin{proof}
 To prove relations (1) and (2), we first apply Definition~\ref{natrual-basic-red}(2)--(6) to rewrite the black dots and black cups/caps as red ones. Then, applying  Definition~\ref{def-ikmc}(1) and Definition~\ref{def-ikmc}(2a) to the resulting red diagrams, we obtain (1) and (2).

To prove the equalities in Definition~\ref{C defn}(1), it suffices to
show that
\[
\mathord{
%
}
\qquad \text{for every $i\in I$.}
\]
The proof is identical to that of Lemma~\ref{adj}, except that
Definition~\ref{def-ikmc}(1) is used in place of
Definition~\ref{C defn}(1). We therefore omit the details.

 To prove the equalities in Definition~\ref{C defn}(4) and relation (3), it suffices to
verify the following identities for all $i,j,k,l\in I$:
\begin{equation}\label{dotcross-red}
%
\end{equation}
We verify \eqref{dotcross-red} and \eqref{dotcross-red2} case by case.

If either $i\neq k$ or $l\neq j$, then   Definition~\ref{cross-red123}(3)--(5) rewrites each crossing in \eqref{dotcross-red} (resp., \eqref{dotcross-red2}) as either a pair of dotted black vertical strands, a dotted cup-cap composite, or zero.
Applying Definition~\ref{C defn}(1) together with relations (1)--(2) then gives \eqref{dotcross-red} (resp., \eqref{dotcross-red2}). 

Now assume that $i=k$ and $l=j$.
If $i,j\in I^\imath$,
 then by Definition~\ref{cross-red123}(1) and Definition~\ref{def-ikmc}(3a), the left-hand side of \eqref{dotcross-red} (resp., \eqref{dotcross-red2}) equals
$
\delta_{i,j}\,
\begin{tikzpicture}[baseline=-1mm]
    \draw[-] (0.18,-.28) to (0.18,.28);
    \draw[-] (-0.18,-.28) to (-0.18,.28);
    \node at (-0.18,-0.43) {$\scriptstyle{j}$};
    \node at (0.18,-0.43) {$\scriptstyle{i}$};
\end{tikzpicture}
\begin{tikzpicture}[baseline=0]
    \node at (0,0) {$\scriptstyle{\lambda}$};
\end{tikzpicture}$. Similarly, Definition~\ref{natrual-basic-red}(7)-(9) shows that the right-hand side of \eqref{dotcross-red} (resp., \eqref{dotcross-red2}) reduces to the same expression. Therefore, \eqref{dotcross-red} (resp., \eqref{dotcross-red2}) holds.

If $i,-j\in I^\imath$ (respectively, $-i,j\in I^\imath$, respectively, $-i,-j\in I^\imath$,), then \eqref{dotcross-red} follows from Definition~\ref{C defn}(1), the case $i,j\in I^\imath$ in \eqref{dotcross-red2} together with the second (resp. first, resp. third) equation in Definition~\ref{cross-red123}(2) and relations (1)--(2). In those cases \eqref{dotcross-red2} can be proved similarly. The only difference is that one has to use \eqref{dotcross-red} in place of  \eqref{dotcross-red2}. For example, suppose $-j,i\in I^\imath$. Then
\begin{equation}\label{empty1}
\begin{tikzpicture}[baseline=10pt, scale=0.5]
    \draw[thick, name path=curve1] (0.5,1.5) to[out=down, in=up] (0.2,0);
    \draw[thick, name path=curve2] (0.7,0) to[out=up, in=down] (0,1.5);
    \node at (0.2,-0.53) {$\scriptstyle j$};
    \node at (0.7,2.03) {$\scriptstyle j$};
    \node at (0.7,-0.53) {$\scriptstyle i$};
    \node at (0,2.03) {$\scriptstyle i$};
    \path[name intersections={of=curve1 and curve2, by=A}];
    \node at (A) {$\diamond$}; \node at (0.05,1.35) {$\scriptstyle\bullet$};
\end{tikzpicture}
\overset{\text{Def.~\ref{cross-red123}(2)}}=
\begin{tikzpicture}[baseline = 0, yscale=-1, scale=0.8]
	\draw[-,thick] (0.3,-.5) to (-0.3,.5);
	\draw[-,thick] (-0.2,-.2) to (0.2,.3);
        \draw[-,thick] (0.2,.3) to[out=50,in=180] (0.5,.5);
        \node at  (0.15,-.2) {$\scriptstyle\bullet$};
        \draw[-,thick] (0.5,.5) to[out=0,in=90] (0.8,-.5);
        \draw[-,thick] (-0.2,-.2) to[out=230,in=0] (-0.5,-.5);
        \draw[-,thick] (-0.5,-.5) to[out=180,in=-90] (-0.8,.5);
        \node at (0,0.05) {$\diamond$};
  \node at (-0.8,.65) {$\scriptstyle{j}$};
   \node at (0.28,-.62) {$\scriptstyle{i}$};
    \node at (-0.1,.65) {$\scriptstyle{i}$};
   \node at (0.78,-.62) {$\scriptstyle{j}$};
   \node at (1.05,.05) {$\scriptstyle{\lambda}$};
\end{tikzpicture}.
\end{equation}

For the local crossing on the right-hand side  of \eqref{empty1}, we can apply the case $j,i\in I^{\imath}$ in \eqref{dotcross-red2} proved above. Then, using the second equation in Definition~\ref{cross-red123}(2) and Definition~\ref{C defn}(1), we obtain \eqref{dotcross-red}.

 Thus the equalities in Definition~\ref{C defn}(4) and relation (3) hold in all cases.
 \end{proof}
 
% \old{Finally, relation~(3) follows from Lemma~\ref{AB relations-re}. Indeed,
% Lemma~\ref{AB relations-re}(1)(2) are  precisely relation~(1)(2) above, and
% Lemma~\ref{AB relations-re} shows that relation~(3) follows from
% Definition~\ref{C defn}(1), (3), and (4) and Lemma~\ref{AB relations-re}(1)(2). }
% %Lemma 2.2(3)也许可以删掉
\begin{Cor}\label{def6}
When evaluated on any finitely generated object
$V\in \mathcal M_\lambda$ with $\lambda\in X_\imath^+$,
all equalities in Definition~\ref{C defn}(5) hold.
\end{Cor}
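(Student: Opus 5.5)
Definition~\ref{C defn}(5) is the invertibility relation in $\mathcal{AB}'$. As recorded right after Lemma~\ref{AB relations-re}, one may keep Definition~\ref{C defn}(1)--(4) and replace (5) by Lemma~\ref{AB relations-re}(1) and (2); these two statements are the two "dot through cup/cap" relations. My plan is to reduce Corollary~\ref{def6} to what Lemma~\ref{dot-caup-red} already gives on $V$. That lemma supplies Definition~\ref{C defn}(1) and (4) together with the extra relations (1)--(3) of its statement.

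The key observation is that relations (1) and (2) of Lemma~\ref{dot-caup-red} are precisely the dot-sliding relations of Lemma~\ref{AB relations-re}(1)--(2). In each, a dot on one leg of a cup (resp. cap) equals minus a dot on the other leg, and this is how the red cyclicity relation Definition~\ref{def-ikmc}(2a) transports to black diagrams via Definition~\ref{natrual-basic-red}. Relation (3) should be the remaining crossing/cap compatibility, namely Lemma~\ref{AB relations-re}(3). With these in hand, I would retrace the argument that Definition~\ref{C defn}(5) and the pair Lemma~\ref{AB relations-re}(1),(2) are interchangeable modulo (1)--(4). That argument is a formal diagrammatic manipulation inside the monoidal category, using only zigzag identities (Definition~\ref{C defn}(1)), the dot--crossing relation (4), and the symmetric group relations (2)--(3).

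One gap remains: Definition~\ref{C defn}(2)--(3), the crossing squaring to the identity and the braid-type relations among crossings, cups and caps. I would check whether the equivalence really needs these. If it does, I would verify them on $V$ eigenspace by eigenspace using Definition~\ref{cross-red123}. For $i,j\in I^\imath$ the crossing is built from $\tau$ and dots, and squaring uses the quiver Hecke relation Definition~\ref{def-ikmc}(3b) together with the rational normalizations. The mixed-sign cases reduce to these via the cup/cap rewriting in Definition~\ref{cross-red123}(2), together with the bicross relations (6a)--(6d) and the inverse formulas in Lemma~\ref{crossing-inverse}.

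I expect the main obstacle to be the $i=\frac12$ summand. There the normalizing factors $2^{\delta_{i,1/2}}$ and $(2x\pm1)$ in Definition~\ref{natrual-basic-red}(5)--(6) must cancel exactly when a dot is moved around a cup and back through a cap. I would handle this using Lemma~\ref{equa-dotte-generatetilde}(1), in the same way Lemma~\ref{caup3} handled the forward direction.
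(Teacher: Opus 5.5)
Your core reduction is the paper's proof: Definition~\ref{C defn}(5) follows immediately from relation~(1) of Lemma~\ref{dot-caup-red} together with the zigzag relation Definition~\ref{C defn}(1), both already verified on $V$ in that lemma, so Definition~\ref{C defn}(2)--(4) are never needed and your contingency branch never arises. That branch was also misdirected, for three reasons. First, ``crossing squared equals the identity'' and the braid relation are Definition~\ref{AB defn}(1)--(2), not Definition~\ref{C defn}(2)--(3), and the paper proves them only later; its proof of the braid relation in fact invokes this corollary, so routing through it would be circular. Second, no $i=\frac12$ normalization issue remains, because Lemma~\ref{dot-caup-red}(1)--(2) already hold on every eigenspace summand.
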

\begin{proof}
The result follows immediately from Lemma~\ref{dot-caup-red}(1) together
with Definition~\ref{C defn}(1), which has already been verified in
Lemma~\ref{dot-caup-red}.
\end{proof}
\begin{Lemma}
\label{lemmasred}
In the present setting, the equations in Lemmas~\ref{ii21}, \ref{caup3}, and \ref{lrcrossing}, as well as equation~\eqref{redijdown}, continue to hold.
\end{Lemma}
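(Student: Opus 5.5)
The plan is to re-derive each listed identity inside the reverse setting of Section~7. The point is that the proofs in Sections~4--5 used only a small, identifiable set of inputs, and each of those inputs has now been supplied by the $\mathfrak U^\imath_+$-side data. Concretely, the proofs of Lemmas~\ref{ii21}, \ref{caup3}, \ref{lrcrossing} and of \eqref{redijdown} used four things:
\begin{itemize}
\item[(a)] the factorization $n_V(u)=n_{V,i}(u)t_{V,i}(u)$ into coprime monic lifts over a finite-dimensional local central algebra;
\item[(b)] the bubble-slide formula of Lemma~\ref{ou}(2), controlling how $\mathcal O_V(u)$ changes under $E_i$;
\item[(c)] the relations of Lemma~\ref{AB relations-re}(1)--(2) for moving dots around cups and caps;
\item[(d)] dot--crossing sliding, that is, Lemma~\ref{dotslidecrossing}, together with Definition~\ref{C defn}(1),(4).
\end{itemize}
I would check each input in turn against what is already available in Section~7.

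\textbf{Inputs (a) and (b).} For (a), I would invoke Lemma~\ref{local-red}, the subsequent remark, and Lemma~\ref{bubb-red}. These give the monic lifts $t_{V,i}(u)$ and $n_{V,i}(u)$ in $\tilde Z_{\lambda,V}[u]$, defined by the same uniqueness statement (Lemma~\ref{lem:moniclift}). For (b), I would use Lemma~\ref{equa-dotte-generatetilde}(2). This is the exact analogue of Lemma~\ref{ou}(2) for $\widetilde{\clock}(u)$, and $n_V(u)=\widetilde{\mathcal O}_V(u)m_V(u)$ is defined accordingly. From there, Lemma~\ref{ii21} follows by literally the same factor bookkeeping as before. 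One checks which of the factors $(u\mp i)^2-1$ and $(u\pm i)^2$ contain $u-i$ or $u-j$, with the exceptional colours $\pm\frac12$ handled exactly as before. The identification of the lifts after applying $E_{\pm i}$ then follows from uniqueness of monic lifts over the (possibly larger) local algebra $\tilde Z_{\lambda',E_{\pm i}V}$.

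\textbf{Inputs (c) and (d).} For (c) I would cite Lemma~\ref{dot-caup-red}(1)--(2). For (d) I would cite Lemma~\ref{dot-caup-red}(3), the verified Definition~\ref{C defn}(1),(4), and Corollary~\ref{def6}. With these in hand, the proof of Lemma~\ref{caup3} goes through word for word: one converts the labels $a_{E_{-i}V,i}$ and $b_{V,i}$ via Lemma~\ref{ii21}, then moves dots via (c). Lemma~\ref{lrcrossing}(1) and the three cases of (2) are likewise formal consequences of (c), (d) and the defining formulas. The same holds for \eqref{redijdown}, which in addition uses Lemma~\ref{adj}; that lemma is now replaced by Definition~\ref{def-ikmc}(1) in $\mathfrak U^\imath_+$.

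\textbf{Main obstacle.} The difficulty is that in Section~7 the black generators are \emph{defined} from the red ones, via Definitions~\ref{natrual-basic-red} and \ref{cross-red123}, rather than the other way round. So I must confirm that the red-to-black dictionary in Lemma~\ref{lrcrossing} and \eqref{redijdown} is consistent with those definitions and is not merely assumed. For this, the remark after Definition~\ref{cross-red123} is the key input: on each eigenspace summand, the reverse definitions were designed as inverses of Definitions~\ref{natrual-basic} and \ref{natrual-basic1}.

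I would therefore prove the crossing formulas by a concrete procedure. First, substitute the definitions of the black crossings and cups and caps. Second, simplify using the rational-function identities $c_V(x)$, $d_V(x)$, $s(x_1,x_2)$ and $t(x_1,x_2)$, so that the normalizing factors cancel. The delicate case is $i=j=\frac12$, where the factor $(2x-1)$ and the power $2^{\delta_{i,1/2}}$ from Lemma~\ref{bubb-inftyi}(3) must match. I would check this case separately by the computation displayed in the proof of Lemma~\ref{lrcrossing}.
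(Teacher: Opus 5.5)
Your proposal takes the same route as the paper, whose proof is an inventory of the ingredients used in the Section~4--5 arguments, each matched with its Section~7 substitute: Lemma~\ref{equa-dotte-generatetilde}(2) in place of Lemma~\ref{ou}(2), Lemma~\ref{dot-caup-red} for the dot, cup/cap and crossing moves, and Definitions~\ref{natrual-basic-red}, \ref{cross-red123}(1) together with Definition~\ref{def-ikmc}(1),(2b),(5) for the red--black dictionary. The one item missing from your list (a)--(d) is Definition~\ref{C defn}(3), which the proof of Lemma~\ref{lrcrossing}(2) uses to obtain \eqref{lcross} but which is not yet available at this point. It is only proved later, in Lemma~\ref{lem:C-equalities}, via \eqref{eeeq} and hence via \eqref{redijdown} itself, so it must be replaced by the second equality in Definition~\ref{cross-red123}(2), as the paper does and as your ``substitute the definitions'' step implicitly would.
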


\begin{proof}
The proof of Lemma~\ref{ii21} uses only the relations for $\mathcal O_V(u)$ stated in Lemma~\ref{ou}(2). By Lemma~\ref{equa-dotte-generatetilde}(2), the corresponding relations hold for $\widetilde{\mathcal O}_V(u)$ in the present setting. Hence the proof of Lemma~\ref{ii21} applies verbatim.

The proof of Lemma~\ref{caup3} uses only Lemma~\ref{ii21}, Lemma~\ref{AB relations-re}(1)--(2), and Definition~\ref{natrual-basic}(3). By Lemma~\ref{dot-caup-red}(1)--(2), the relations in Lemma~\ref{AB relations-re}(1)--(2) have their required analogues in the present setting, while Definitions~\ref{natrual-basic-red}(5)--(6) provide the corresponding relations required from Definition~\ref{natrual-basic}(3). Therefore, the proof of Lemma~\ref{caup3} carries over unchanged.

The proof of Lemma~\ref{lrcrossing} uses Lemmas~\ref{dotslidecrossing}, \ref{AB relations-re}, \ref{ii21}, \ref{caup3}, and \ref{adj}, together with Definition~\ref{natrual-basic}(2), Definition~\ref{natrual-basic1}(1)--(3), and Definition~\ref{C defn}(3). In the present setting, the required analogues of these ingredients are provided by Lemma~\ref{dot-caup-red}, Definitions~\ref{natrual-basic-red}(3)--(4), \ref{cross-red123}(1), \ref{def-ikmc}(2b), (5), and (1), respectively, together with the second equality in Definition~\ref{cross-red123}(2). Since all of these relations have already been established, the proof of Lemma~\ref{lrcrossing} carries over without modification.

Finally, the proof of equation~\eqref{redijdown} uses only the relations established above. Consequently, the same argument proves \eqref{redijdown} in the present setting.
\end{proof}

We shall therefore use these results freely in what follows.
\begin{Cor}For any $i, j\in I^\imath$ and any $\lambda\in X_\imath^+$, we have
    \begin{equation}\label{eeeq}
       \mathord{
% [inline block 52: 6 envs, 3065 chars in 2 pieces, piece 1 here, a bare % at each other -> data_tex | \begin{tikzpicture}[baseline = 0, scale=0.4] \draw[-,thick ] (-1.3,.4) to (-1.3,-1.2);...]

}
    \end{equation}
\end{Cor}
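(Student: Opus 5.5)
The plan is to obtain \eqref{eeeq} as a formal consequence of Lemma~\ref{lemmasred}, rather than by a new diagrammatic computation. Concretely, I will use the version of \eqref{redijdown} transported to the present setting together with Lemma~\ref{lrcrossing}. Throughout I fix $\lambda\in X_\imath^+$ and a finitely generated $V\in\mathcal M_\lambda$. I take for granted the standing reduction of this section: it suffices to check identities of natural transformations on such $V$, since every endofunctor involved is sweet.

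First, I rewrite each black crossing and each black cup or cap in the diagram on the left-hand side of \eqref{eeeq}. For this I use Definition~\ref{cross-red123}(1)--(2) and Definition~\ref{natrual-basic-red}(3)--(6). The result is a red diagram decorated with the rational labels $a_{V,i}$, $b_{V,i}$, $c_V$ and $d_V$, together with the sign $t_{ij}$ from \eqref{tij}. Because the strands carry colours in $I^\imath$, only the summands $E_i$, $E_j$, $F_i$, $F_j$ contribute. The nilpotence hypothesis (Definition~\ref{nil}) makes every inverse of the form $(\ell^{ab}_{\sigma\tau})^{-1}$ or $(d^{ab}_{\sigma\tau})^{-1}$ a polynomial in the nilpotent parts of the dots. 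Hence all of these expressions are well defined on $V$.

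Next I push the rational labels through the red crossings using the red dot-sliding relations, Definition~\ref{def-ikmc}(3a). I then move the labels across cups and caps by cyclicity, Definition~\ref{def-ikmc}(2a)--(2b). The goal is to collect all labels into a single rational function standing at the top or the bottom of one red crossing. At that stage the transported form of \eqref{redijdown} (available by Lemma~\ref{lemmasred}) identifies the resulting red sideways crossing with the right-hand side of \eqref{eeeq}. I will split into the cases $j=i$, $j=i+1$, $i=j+1$ and the generic case, exactly as in \eqref{sx1x2}, so that the correct $s(x_1,x_2)$ and $t(x_1,x_2)$ appear. The colour $i=\frac12$ is treated separately, because there the factor $(x_2-x_1-1)^{1-\delta_{i,1/2}}$ in \eqref{h12} degenerates.

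The step I expect to be the main obstacle is the bookkeeping of the normalising factors. The factors $a_{E_{-i}V,i}$ and $b_{V,i}$ are attached to the weight $\lambda$ on one side of the diagram, but after moving through the crossing they must be compared with the corresponding factors at the shifted weights $E_{-i}V$ and $E_{-j}E_{-i}V$. I will handle this with Lemma~\ref{ii21}(1)--(4), which remains valid here by Lemma~\ref{lemmasred}. It converts $m_{E_{-i}V,i}$ and $n_{E_{-i}V,i}$ into $m_{V,i}$ and $n_{V,i}$ times explicit factors $(x\pm i)^2$ and $(x\pm i)^2-1$. After this conversion, the ratio $f_V(u)$ appearing in \eqref{redijdown} should cancel against the product $c_V d_V$, leaving precisely the stated coefficient. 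In the case $i=j=\frac12$ I expect an extra factor $2^{\pm1}$ coming from the $\eta,\epsilon$ normalisations. I will check this factor against Lemma~\ref{caup3} before concluding.
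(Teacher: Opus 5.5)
Your plan uses the same ingredients as the paper's proof and is essentially that argument. The paper converts the right-hand side of \eqref{eeeq} to red via Definitions~\ref{cross-red123}(1) and~\ref{natrual-basic-red}(3)--(4) followed by the red cyclicity relation Definition~\ref{def-ikmc}(2b). It converts the left-hand side via Definition~\ref{cross-red123}(2) and the transported identity \eqref{redijdown} from Lemma~\ref{lemmasred}, and observes that both reduce to the same red expression \eqref{downarrowred}. Organized as this two-sided comparison, no label-sliding and no separate Lemma~\ref{ii21} bookkeeping of $f_V$ or of factors $2^{\pm1}$ are needed, because all normalizations are already packaged in \eqref{redijdown}. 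This also sidesteps your step of collecting all labels into a single rational function on one red crossing. That step would need modification for $j=i$, where sliding a non-symmetric label through the red crossing via Definition~\ref{def-ikmc}(3a) leaves extra uncrossed terms.
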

\begin{proof}
We prove the equality in \eqref{eeeq} by comparing the two sides. Acting on $V\in\mathcal M_\lambda$, we first compute the right-hand side. Applying Definitions~\ref{cross-red123}(1) and~\ref{natrual-basic-red}(3)--(4) transforms the black diagram into a red one; then, using Definition~\ref{def-ikmc}(2b), the right-hand side of \eqref{eeeq} reduces to 
\begin{equation}\label{downarrowred}
\begin{cases}
%
& \text{if } j\neq i,i-1.
\end{cases}
\end{equation}
On the other hand, applying \eqref{redijdown} and Definition~\ref{cross-red123}(2) to the left‑hand side of \eqref{eeeq} yields the same expression as in \eqref{downarrowred}. Hence the two sides of \eqref{eeeq} are equal, as required.
\end{proof}

\begin{Lemma}
\label{lem:C-equalities}
When evaluated on any finitely generated object $V\in \mathcal M_\lambda$ with $\lambda\in X_\imath^+$, the equalities in Definition~\ref{C defn}(3) hold.
\end{Lemma}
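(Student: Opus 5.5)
We need to verify Definition~\ref{C defn}(3) on finitely generated $V\in\mathcal M_\lambda$. That relation is presumably the curl/untwisting relation: a crossing capped off on one side, i.e. a cap composed with a crossing, equals the cap (or its analogue with the cup). The plan is to decompose into generalized-eigenspace summands and reduce each summand to red relations that are already available. Write $E=\bigoplus_{i\in I}E_i$, with $E_{-i}=F_i$ for $i\in I^\imath$. Pre-composing the black diagram with the idempotents of Definition~\ref{natrual-basic-red} splits it into pieces indexed by the pair $(i,j)$ of colours on the two strands entering the crossing. Because of the cap, the pair must satisfy $j=-i$; all other pieces vanish, by the last clause of Definition~\ref{natrual-basic-red} together with the orthogonality built into Definition~\ref{cross-red123}(3)--(5). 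It therefore suffices to treat the crossings with colours $(i,-i)$, and to split according to whether $i\in I^\imath$ or $-i\in I^\imath$.

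For a fixed summand I will use Lemma~\ref{lemmasred}, which makes Lemma~\ref{lrcrossing} available in the present setting. It expresses the black crossing with colours $(-i,i)$ or $(i,-i)$ as a red sideways crossing with dot labels $c_V$ and $d_V$ attached. Composing with the black cap, rewritten as a red left or right cap through Definition~\ref{natrual-basic-red}(5)--(6) and Lemma~\ref{caup3}, turns the left-hand side into a red curl carrying rational dot labels. The red curl is then evaluated by the nodal relations, Definition~\ref{def-ikmc}(5a)--(5b), in the form of Lemma~\ref{lem:curlrebubblrmove}. This gives a sum of dotted bubbles times dotted strands, extracted as a coefficient of $u^{-1}$. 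Lemma~\ref{bubb-red} identifies those bubbles with the monic lifts $t_{V,i}(u)$. The coprimality trick \eqref{key-coprim} then kills every term except the leading one, exactly as in the forward computations \eqref{capcross-red} and \eqref{capcross-red1}. What survives should be the bare cap with the normalizing factors $a_{V,i}$ and $b_{V,i}$, and these cancel the factors introduced in the cap definition.

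The main obstacle is the boundary colour $i=\pm\frac12$. There the factor $2^{\delta_{i,1/2}}$ in Definition~\ref{def-ikmc}(4e) and the modified factor $(2u-1)^{\delta_{i,1/2}}$ in $c_V$ must combine correctly. I would first handle $i>\frac12$, reading the forward computation \eqref{capcross-red} in reverse. I would then redo the $i=\frac12$ case separately, using the bubble identity Lemma~\ref{equa-dotte-generatetilde}(1) to check that the leftover scalar equals $1$. The remaining orientation, namely the version of Definition~\ref{C defn}(3) involving a cup, follows by the same argument, or by applying the symmetry that swaps cups and caps. The equalities already established in Definition~\ref{C defn}(1) and (4) via Lemma~\ref{dot-caup-red} let me move dots freely across the relevant pieces.
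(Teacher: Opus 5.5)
\textbf{The proposal proves the wrong relation.} Definition~\ref{C defn}(3) is not the untwisting relation ``cap $\circ$ crossing $=$ cap'' (and its cup analogue). That relation is Definition~\ref{C defn}(2), which the paper verifies separately in Lemma~\ref{right cuaps - down cross-inverse}. Your outline is essentially the proof of that lemma: red curls evaluated by the nodal relations Definition~\ref{def-ikmc}(5a)--(5b), bubbles identified through Lemma~\ref{bubb-red}, the coprimality trick \eqref{key-coprim}, and \eqref{capcross-red}, \eqref{capcross-red1} read in reverse.

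Item (3) is the sliding (pitchfork) relation. A strand passing over a cap or cup can be moved to the other side; equivalently, the crossing is compatible with rotation by cups and caps, and both sides of the identity contain a crossing. Your argument never touches this identity. In the paper, the proof of (2) in fact uses (3), via Definition~\ref{C defn}(1) and (3), to reduce to $i\in I^\imath$.

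\textbf{Your reduction step fails for the actual relation.} You claim that every component not of type $(i,-i)$ vanishes. That is true for the untwisting relation but false here. By Definition~\ref{cross-red123}(3)--(5), those crossing components are dotted identity strands, dotted cup--cap composites, or zero. In these generic label cases (cap labels not of the form $i,-i$, or different labels $k\neq l$ at the two ends of the through-strand), the identity follows from the zigzag relation Definition~\ref{C defn}(1) together with the dot-through-cap/cup relations Lemma~\ref{dot-caup-red}(1)--(2).

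\textbf{The remaining case is essentially built into the definitions.} This is the case with cap labels $i,-i$ and the through-strand labelled $k$ at both ends. Definition~\ref{cross-red123}(2) defines the mixed-sign crossings as rotations, via black cups and caps, of crossings with the other sign pattern.
\begin{itemize}
\item For $i,k\in I^\imath$ or $-i,k\in I^\imath$, the relation follows directly from Definition~\ref{cross-red123}(2) and the zigzag relation.
\item For $i,-k\in I^\imath$ or $-i,-k\in I^\imath$, one rotates twice. This requires \eqref{eeeq}, which is deduced from cyclicity of the red crossing, Definition~\ref{def-ikmc}(2b), together with \eqref{redijdown}.
\end{itemize}
No bubbles, nodal relations, coprimality arguments, or special treatment of the colour $\pm\frac12$ are needed for this statement.
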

\begin{proof}
To prove the equalities in Definition~\ref{C defn}(3), it suffices to establish the following identities for all $i,j,k,l\in I$:
\begin{equation}\label{capcup-cross}
% [inline block 53: 8 envs, 2531 chars -> data_tex | \begin{tikzpicture}[baseline=10pt, scale=0.5]     \draw[-,thick, name path=curve1] (1,0) to[out=up,in=right] (0.5,1.5) t...]
.
\end{equation}
By Definition~\ref{C defn}(1), it suffices to prove the first equality in \eqref{capcup-cross}.

Now assume first that $i\neq -j$ or $l\neq k$. Then Definitions~\ref{cross-red123}(3)--(5) rewrite each crossing in the first equality as either a pair of dotted black vertical strands, a dotted cup–cap composite, or zero. Hence the desired identity follows immediately from Definition~\ref{C defn}(1) and Lemma~\ref{dot-caup-red}(1)--(2).

It remains to treat the case $i=-j$ and $k=l$.

If $i,-k\in I^\imath$, then the second equation in Definition~\ref{cross-red123}(2) gives
\begin{equation}\label{eqqq}
\begin{tikzpicture}[baseline=0, scale=0.8]
    \draw[-] (0.38,-.4) to (-0.38,.5);
    \draw[-] (-0.38,-.4) to (0.38,.5);
    \node at (0,0.05) {$\diamond$};
    \node at (-.4,-.62) {$\scriptstyle{k}$};
    \node at (.4,-.6) {$\scriptstyle{i}$};
    \node at (-.4,.7) {$\scriptstyle{i}$};
    \node at (.4,.72) {$\scriptstyle{k}$};
    \node at (0.5,0.09) {$\scriptstyle{\lambda}$};
\end{tikzpicture}
=
\begin{tikzpicture}[baseline=0, yscale=-1, scale=0.8]
    \draw[-,thick] (0.3,-.5) to (-0.3,.5);
    \draw[-,thick] (-0.2,-.2) to (0.2,.3);
    \draw[-,thick] (0.2,.3) to[out=50,in=180] (0.5,.5);
    \draw[-,thick] (0.5,.5) to[out=0,in=90] (0.8,-.5);
    \draw[-,thick] (-0.2,-.2) to[out=230,in=0] (-0.5,-.5);
    \draw[-,thick] (-0.5,-.5) to[out=180,in=-90] (-0.8,.5);
    \node at (0,0.05) {$\diamond$};
    \node at (-0.8,.65) {$\scriptstyle{k}$};
    \node at (0.28,-.62) {$\scriptstyle{i}$};
    \node at (-0.1,.65) {$\scriptstyle{i}$};
    \node at (0.78,-.62) {$\scriptstyle{k}$};
    \node at (1.05,.05) {$\scriptstyle{\lambda}$};
\end{tikzpicture}
=
\begin{tikzpicture}[baseline=0, scale=0.7]
    \draw[-,thick] (0.3,-.5) to (-0.3,.5);
    \draw[-,thick] (-0.2,-.2) to (0.2,.3);
    \draw[-,thick] (0.2,.3) to[out=50,in=180] (0.5,.5);
    \draw[-,thick] (0.5,.5) to[out=0,in=90] (0.8,-.5);
    \draw[-,thick] (-0.2,-.2) to[out=230,in=0] (-0.5,-.5);
    \draw[-,thick] (-0.5,-.5) to[out=180,in=-90] (-0.8,.5);
    \node at (0,0.05) {$\diamond$};
    \node at (-0.8,.7) {$\scriptstyle{i}$};
    \node at (0,.7) {$\scriptstyle{k}$};
    \node at (0.28,-.69) {$\scriptstyle{k}$};
    \node at (0.68,-.69) {$\scriptstyle{i}$};
    \node at (1.05,.05) {$\scriptstyle{\lambda}$};
\end{tikzpicture}.
\end{equation}
The second equality in \eqref{eqqq} is obtained by applying \eqref{eeeq} to the crossing on the right‑hand side and then using Definition~\ref{C defn}(1). Hence the first equality in \eqref{capcup-cross} follows from \eqref{eqqq} and Definition~\ref{C defn}(1).

If $-i,-k\in I^\imath$, the same argument works, with the first equation in Definition~\ref{cross-red123}(2) used in place of the second.

Finally, if $i,k\in I^\imath$ (respectively, $-i,k\in I^\imath$), the first equality in \eqref{capcup-cross} follows directly from the second (respectively, first) equation in Definition~\ref{cross-red123}(2) together with Definition~\ref{C defn}(1).

This proves the first equality in \eqref{capcup-cross}, and therefore the lemma.
\end{proof}

\begin{Defn} \label{ouinver}
    Let 
   $\clock(u) = u-\frac{1}{2}+\sum_{r\ge 0}
\frac{1}{u^r}
\begin{tikzpicture}[baseline = 1.25mm]
		\draw[-] (0,0.4) to[out=180,in=90] (-.2,0.2);
		\draw[-] (0.2,0.2) to[out=90,in=0] (0,.4);
		\draw[-] (-.2,0.2) to[out=-90,in=180] (0,0);
		\draw[-] (0,0) to[out=0,in=-90] (0.2,0.2);
		\node at (0.2,0.2) {$\scriptstyle\bullet$};
		\node at (0.45,0.2) {$\scriptstyle{x^r}$};
\end{tikzpicture}$ and let  $\clock_V(u)=\phi_V(\clock(u))$, where $\phi_V$ is the evaluation homomorphism defined in \eqref{evalu-na}. 
\end{Defn}
By Definitions~\ref{natrual-basic-red}, ~\ref{cross-red123}, 
the element $\clock(u)$  is well defined in the present setting, and is the analogue of the corresponding element in the affine Brauer category. 
\iffalse\old{The proof of Lemma~\ref{caup3} depends only on the relations for  ${\mathcal O}_V(u)$ stated in Lemma~\ref{ou}(2). By  Lemma~\ref{equa-dotte-generatetilde}(2), 
the  same relations hold for  $\tilde{\mathcal O}_V(u)$ in the present setting. Therefore, the argument proving Lemma~\ref{caup3} applies without change, and Lemma~\ref{caup3} remains valid in the present setting. We shall therefore use it freely in what follows. }
\fi

\begin{Prop}\label{bub=bubred}
For any finitely generated object $V\in \mathcal M_\lambda$, one has
$ \tilde{\mathcal O}_V(u)=\clock_V(u)$.
\end{Prop}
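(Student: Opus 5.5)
We prove $\widetilde{\mathcal O}_V(u)=\clock_V(u)$ by comparing the two Laurent series coefficient by coefficient. In the present setting the black bubble $\clock_V(u)$ is built from the black cup and cap of Definition~\ref{natrual-basic-red} and the black dot. The series $\widetilde{\mathcal O}_V(u)$ of Definition~\ref{evalu-bub}(4) is built from the red dotted bubbles and their generating functions $\circlearrowleft_{V,i}$, $\circlearrowright_{V,i}$. The goal is to rewrite every black dotted bubble as a finite sum of red dotted bubbles. This is the reverse of the computations in Lemmas~\ref{lem:bubbler}--\ref{lem:bubbler-inv}.

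\textbf{Step 1: decompose into colors.} Fix a finitely generated $V$ and $r\ge 0$. We decompose the black strand as $E=\bigoplus_{j\in I}E_j$, where $E_j=E_j$ for $j\in I^\imath$ and $E_{j}=F_{-j}$ for $-j\in I^\imath$. By nilpotence only finitely many colors $j$ contribute. This writes the black bubble with dot $x^r$ as $\sum_j$ of the black bubble colored $j$.

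\textbf{Step 2: convert to red bubbles.} Using Definition~\ref{natrual-basic-red}(3)--(6), we replace the black cup, cap and dot on the $j$-summand by red ones. This rescales the dot by the rational functions $a_{V,j}$ or $b_{V,j}$, shifts the dot by $j$, and for $j\in -I^\imath$ replaces $x$ by $-x$. The $j$-th term then becomes a red bubble, clockwise or counterclockwise according to the sign of $j$, carrying a dot labelled
\[
\frac{n_{V,j}(y+j)}{(y+j)\,m_{V,j}(y+j)}\,(y+j)^r
\]
or its analogue for the opposite orientation.

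\textbf{Step 3: resum into a rational function.} We pass to generating series in $u^{-1}$ and apply the coprimality trick \eqref{key-coprim} with $f_1=n_{V,j}$ and $f_2=(u-j)^k$, where $k\gg0$. Lemmas~\ref{zeropol-red} and~\ref{bubb-red} identify the red bubble series with $t_{V,j}(u)/(u-j)^{\varepsilon_j(V)}$, exactly as in \eqref{kkk1} and \eqref{lll1}. So the $j$-th contribution to $\clock_V(u)$ is the principal part at $u=j$ of a rational function $R_j(u)$. For $j\in I^\imath$ we expect $R_j(u)=t_{V,j}(u)/(u-j)^{\varepsilon_j(V)}$, possibly multiplied by $m_{V,j}(u)/n_{V,j}(u)$ depending on the normalization. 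For $j\in -I^\imath$ we get the analogue with the factor $(4u^2-1)$ or $(2u-1)$, according to whether $j=-\tfrac12$.

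\textbf{Step 4: compare with $\widetilde{\mathcal O}_V$.} Since $n_V=n_{V,j}t_{V,j}$ by Lemma~\ref{bubb-red}, each $R_j(u)$ agrees with $n_V(u)/m_V(u)=\widetilde{\mathcal O}_V(u)$ up to a polynomial in $u$ that is regular at $u=j$. Summing over $j$, the black series $\clock_V(u)-u+\tfrac12$ becomes the sum of the principal parts of $\widetilde{\mathcal O}_V(u)$ at all its poles $j$. This uses the partial-fraction identity $[f(u)]_{u^{<0}}=\sum_j\mathrm{PP}_{u=j}f$, valid because $\widetilde{\mathcal O}_V=n_V/m_V$ is rational with poles only at the eigenvalues. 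It remains to match the polynomial parts. $\widetilde{\mathcal O}_V(u)$ has polynomial part $u-\tfrac12$: its degree is one by Lemma~\ref{simplered}(3) and its leading behaviour by Lemma~\ref{bubb-inftyi}(3). This polynomial part is exactly the $u-\tfrac12$ appearing in Definition~\ref{ouinver}.

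\textbf{Main obstacle.} The hardest step is Step 3 at the boundary colors $j=\pm\tfrac12$, where the extra factors $2^{\delta_{i,1/2}}$ and $(2u\mp1)$ appear. One must check that the normalizations in Definition~\ref{natrual-basic-red}, in particular the factor $(1/2)^{\delta_{-i,1/2}}(-1)^{\langle{}^\theta\alpha^\vee,\lambda\rangle}$ from Lemma~\ref{bubb-red}, cancel precisely. The check we would do first is to confirm that Lemma~\ref{equa-dotte-generatetilde}(1) (the symmetry $\widetilde\clock(u)\widetilde\clock(-u)=\tfrac14-u^2$) makes the $j$ and $-j$ contributions consistent. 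If the cancellation fails, this is the place where the sign conventions must be re-examined.
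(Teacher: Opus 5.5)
Your plan is to split the black bubble by colour, convert each colour to a red bubble, and resum by partial fractions. This is essentially the paper's route, and once the rational function in Step~3 is pinned down it matches every coefficient of $u^{-r}$ with $r\ge 1$. The gap is the coefficient of $u^{0}$.

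In Step~4 you assert two things, and both are false:
\begin{itemize}
\item that $\clock_V(u)-u+\frac12$ is a sum of principal parts. In fact its $u^0$-coefficient is the undotted bubble, which is generally nonzero.
\item that $\widetilde{\mathcal O}_V(u)$ has polynomial part $u-\frac12$. For example, if $m_L(u)=u-\frac12$, Lemma~\ref{simplered}(3) gives $\widetilde{\mathcal O}_L(u)=\frac{(u+\frac12)^2}{u-\frac12}=u+\frac32+\frac{1}{u-\frac12}$. Degree and leading coefficient say nothing about this constant.
\end{itemize}
The source of the error is already visible in your Step~2: the label carries the factor $\frac1x$ of $a_{V,j}$. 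So the $j$-coloured bubble with dot $x^r$ equals $\bigl[n_V(u)g_j(u)/(u-j)^{\varepsilon_j(V)}\bigr]_{u^{-r-1}}$, where $g_j\equiv 1/(u\,m_{V,j}(u))$ modulo $(u-j)^k$. In other words, the relevant function near $u=j$ is $\widetilde{\mathcal O}_V(u)/u$, not $t_{V,j}(u)/(u-j)^{\varepsilon_j(V)}$. Hence the generating series of the $j$-coloured bubbles is $u\operatorname{PP}_{u=j}\bigl(\widetilde{\mathcal O}_V(u)/u\bigr)$, and this has a constant term. Moreover, $\widetilde{\mathcal O}_V(u)/u$ has one further pole, at $u=0$, which is not a colour, with residue $\widetilde{\mathcal O}_V(0)$. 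Summing over $j$ therefore gives only
\[
\clock_V(u)=\widetilde{\mathcal O}_V(u)-\widetilde{\mathcal O}_V(0)-\tfrac12 .
\]

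So the proposition is equivalent to $\widetilde{\mathcal O}_V(0)=-\frac12$, and your argument never proves this. For a simple $L$ it is immediate from Lemma~\ref{simplered}(3), since $m_L(0)\neq 0$. For a general finitely generated $V$, however, the value lies in the local ring $\tilde Z_{\lambda,V}$ of Lemma~\ref{local-red}, and a priori it is known only modulo the maximal ideal.

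The paper closes this gap as follows. Since $\overline{\widetilde{\mathcal O}_V(0)}=-\frac12$, the element $\widetilde{\mathcal O}_V(0)-\frac12$ is a unit. Evaluating Lemma~\ref{equa-dotte-generatetilde}(1) at $u=0$ gives $\bigl(\widetilde{\mathcal O}_V(0)-\frac12\bigr)\bigl(\widetilde{\mathcal O}_V(0)+\frac12\bigr)=0$, hence $\widetilde{\mathcal O}_V(0)=-\frac12$. This is where the functional equation is decisive; the $\pm j$ normalizations you worry about are handled separately, via \eqref{plumin} and Lemma~\ref{caup3}(1).

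On the technical side, the paper applies \eqref{key-coprim} with $f_1=u\,m_{V,i}(u)$, keeping the factor $u$ that your $R_j$ loses. It then shows that $R_V(u)=\widetilde{\mathcal O}_V(u)-\sum_i n_V(u)\,u\,g_i(u)/(u-i)^{\varepsilon_i(V)}$ is a polynomial with $R_V(0)=\widetilde{\mathcal O}_V(0)$, and compares coefficients as above.
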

\begin{proof} In all sums over $i\in I$ occurring in this proof,  only those $i$ with $\varepsilon_i(V)\neq 0$ are included. 
For any $i\in I$, take
$
f_1(u)=u m_{V,i}(u)$  and $
f_2(u)=(u-i)^k$
in \eqref{key-coprim} for a   sufficiently large integer $k$. Then there exist
polynomials $g_i(u),h_i(u)\in \tilde Z_{\lambda,V}[u]$ satisfying
\eqref{key-coprim}. By Definition~\ref{evalu-bub}(5) and
\eqref{key-coprim}, we have
\begin{equation}
\label{eq:decomp}
\tilde{\mathcal O }_V(u)
=
\frac{n_V(u)\,u\,g_i(u)}{(u-i)^{\varepsilon_i(V)}}
+
\frac{n_V(u)(u-i)^{k-\varepsilon_i(V)}h_i(u)}{m_{V,i}(u)}.
\end{equation}
Define
\begin{equation}\label{rv-red}
R_V(u)
:=
\tilde{\mathcal O}_V(u)
-
\sum_{i\in I}
\frac{n_V(u)\,u\,g_i(u)}{(u-i)^{\varepsilon_i(V)}}.
\end{equation}
Using \eqref{eq:decomp} for any fixed $i\in I$, we see that
$R_V(u)m_V(u)\in \tilde Z_{\lambda, V}[u]$ and that it is divisible by
$(u-i)^{\varepsilon_i(V)}$. Since only finitely many $i\in I$ satisfy
$\varepsilon_i(V)\neq 0$, and since the polynomials
$(u-i)^{\varepsilon_i(V)}$ are pairwise coprime for distinct $i$, it follows
that $R_V(u)m_V(u)$ is divisible by $m_V(u)$. As $m_V(u)$ is monic, it is
not a zero divisor, and hence
$R_V(u)\in \tilde Z_{\lambda, V}[u]$.

Moreover, since $\widetilde{\mathcal O}_V(u)=n_V(u)/m_V(u)$ and
$I=1/2+\mathbb Z$, we have $m_V(0)\neq 0$. Thus
 $\widetilde{\mathcal O}_V(0)$ is well-defined. On the other hand,
$[\widetilde{\mathcal O}_V(u)]_{u^0}$ denotes the coefficient of $u^0$ in
the expansion at $u=\infty$, and should not be confused with the value
at $u=0$.
We have 
\begin{equation}\label{eq:coeff}
\begin{aligned}
\bigl[\widetilde{\mathcal O}_V(u)\bigr]_{u^{-r}}
&=
\Biggl[
\sum_{i\in I}
\frac{n_V(u)g_i(u)}{(u-i)^{\varepsilon_i(V)}}
\Biggr]_{u^{-r-1}}
\qquad \text{for all } r\ge 1, \\
\widetilde{\mathcal O}_V(0)
&=
R_V(0)
=
\bigl[\widetilde{\mathcal O }_V(u)\bigr]_{u^0}
-
\Biggl[
\sum_{i\in I}
\frac{n_V(u)g_i(u)}{(u-i)^{\varepsilon_i(V)}}
\Biggr]_{u^{-1}} .
\end{aligned}
\end{equation}
Here the first equality involving $\widetilde{\mathcal O}_V(0)$ follows from \eqref{rv-red} by setting $u=0$.
On the other hand, Definition~\ref{ouinver} gives
\begin{equation}\label{eq:clock}
\bigl[\clock_V(u)\bigr]_{u^{-r}}
=
\sum_{i\in I}
\begin{tikzpicture}[baseline = 1.25mm]
	\draw[-] (0,0.4) to[out=180,in=90] (-.2,0.2);
	\draw[-] (0.2,0.2) to[out=90,in=0] (0,.4);
	\draw[-] (-.2,0.2) to[out=-90,in=180] (0,0);
	\draw[-] (0,0) to[out=0,in=-90] (0.2,0.2);
	\node at (0.2,0.2) {$\scriptstyle\bullet$};
    \node at (0.28,-0.05) {$\scriptstyle -i$};
    \node at (-0.35,0.2) {$\scriptstyle i$};
	\node at (0.45,0.2) {$\scriptstyle{x^r}$};
\end{tikzpicture}
-\delta_{r,0}\,\frac12
\qquad \text{for all $r\ge 0$}.
\end{equation}
Furthermore, the leading term of $\clock_V(u)$ is $u$. By
Lemma~\ref{equa-dotte-generatetilde}(1) and
Definition~\ref{evalu-bub}(5), the leading term of
$\widetilde{\mathcal O}_V(u)$ is also $u$. Therefore, to prove that
$
\widetilde{\mathcal O}_V(u)=\clock_V(u)$,
it suffices to prove that
\begin{equation}\label{evaou-ex}
\widetilde{\mathcal O }_V(0)=-\frac12,
\qquad\text{and}\qquad
\begin{tikzpicture}[baseline = 1.25mm]
	\draw[-] (0,0.4) to[out=180,in=90] (-.2,0.2);
	\draw[-] (0.2,0.2) to[out=90,in=0] (0,.4);
	\draw[-] (-.2,0.2) to[out=-90,in=180] (0,0);
	\draw[-] (0,0) to[out=0,in=-90] (0.2,0.2);
	\node at (0.2,0.2) {$\scriptstyle\bullet$};
    \node at (0.28,-0.05) {$\scriptstyle -i$};
    \node at (-0.35,0.2) {$\scriptstyle i$};
	\node at (0.45,0.2) {$\scriptstyle{x^r}$};
\end{tikzpicture}
=
\Biggl[
\frac{n_V(u)g_i(u)}{(u-i)^{\varepsilon_i(V)}}
\Biggr]_{u^{-r-1}}
\end{equation}
for every $r\ge 0$ and every $i\in I$.

Indeed, by Lemma~\ref{simplered}(3), we have
$
\overline{\widetilde{\mathcal O}_V(0)}
=
\widetilde{\mathcal O}_L(0)
=
-\frac12$ for every simple object
$L\in\mathcal M_\lambda$.
Thus $\widetilde{\mathcal O}_V(0)+\frac12$ lies in the maximal ideal of the local ring $\tilde Z_{\lambda, V}$, so $\widetilde{\mathcal O}_V(0)-\frac12$ is a unit. Evaluating Lemma~\ref{equa-dotte-generatetilde}(1) at $u=0$ gives
$
\widetilde{\mathcal O}_V(0)^2=\frac14$.
It follows that
$
\widetilde{\mathcal O}_V(0)=-\frac12$.

Now we prove the second equality in \eqref{evaou-ex}.  Suppose that $i\in I^\imath$. Then
\begin{equation}
\label{equ:bublleplus}
\begin{aligned}
\begin{tikzpicture}[baseline = 1.25mm]
	\draw[-] (0,0.4) to[out=180,in=90] (-.2,0.2);
	\draw[-] (0.2,0.2) to[out=90,in=0] (0,.4);
	\draw[-] (-.2,0.2) to[out=-90,in=180] (0,0);
	\draw[-] (0,0) to[out=0,in=-90] (0.2,0.2);
	\node at (0.2,0.2) {$\scriptstyle\bullet$};
    \node at (0.28,-0.05) {$\scriptstyle i$};
    \node at (-0.45,0.2) {$\scriptstyle -i$};
	\node at (0.45,0.2) {$\scriptstyle{x^r}$};
\end{tikzpicture}
&=
\begin{tikzpicture}[baseline = 0]
  \draw[->,thick,darkred] (0.2,0.2) to[out=90,in=0] (0,.4);
  \draw[-,thick,darkred] (0,0.4) to[out=180,in=90] (-.2,0.2);
  \draw[-,thick,darkred] (-.2,0.2) to[out=-90,in=180] (0,0);
  \draw[-,thick,darkred] (0,0) to[out=0,in=-90] (0.2,0.2);
  \node at (0,-.16) {$\scriptstyle{i}$};
  \node at (-0.3,0.2) {$\scriptstyle{\lambda}$};
  \node at (0.2,0.2) {$\scriptstyle\bullet$};
  \node at (1.2,0.2)
  {$\scriptstyle{\frac{n_{V,i}(x)x^r}{xm_{V,i}(x)}}$};
\end{tikzpicture}
=
\begin{tikzpicture}[baseline = 0]
  \draw[->,thick,darkred] (0.2,0.2) to[out=90,in=0] (0,.4);
  \draw[-,thick,darkred] (0,0.4) to[out=180,in=90] (-.2,0.2);
  \draw[-,thick,darkred] (-.2,0.2) to[out=-90,in=180] (0,0);
  \draw[-,thick,darkred] (0,0) to[out=0,in=-90] (0.2,0.2);
  \node at (0,-.16) {$\scriptstyle{i}$};
  \node at (-0.3,0.2) {$\scriptstyle{\lambda}$};
  \node at (0.2,0.2) {$\scriptstyle\bullet$};
  \node at (1.4,0.2) {$\scriptstyle{n_{V,i}(x)g_i(x)x^r}$};
\end{tikzpicture}
\\
&=
\left[
\begin{tikzpicture}[baseline = 0]
  \draw[->,thick,darkred] (0.2,0.2) to[out=90,in=0] (0,.4);
  \draw[-,thick,darkred] (0,0.4) to[out=180,in=90] (-.2,0.2);
  \draw[-,thick,darkred] (-.2,0.2) to[out=-90,in=180] (0,0);
  \draw[-,thick,darkred] (0,0) to[out=0,in=-90] (0.2,0.2);
  \node at (0,-.1) {$\scriptstyle{i}$};
  \node at (-0.3,0.2) {$\scriptstyle{\lambda}$};
  \node [cplus,scale=0.7] at(0.2,0.2) {$-$};
  \node at (1.5,0.2) {$\scriptstyle{n_{V,i}(u)g_i(u)}$};
\end{tikzpicture}
\right]_{u^{-r-1}}
=
\Biggl[
\frac{n_V(u)g_i(u)}{(u-i)^{\varepsilon_i(V)}}
\Biggr]_{u^{-r-1}},
\end{aligned}
\end{equation}
where the last equality follows from Lemma~\ref{bubb-red}.  Using Lemma~\ref{bubb-red} and Lemma~\ref{bubb-inftyi}(3), we obtain
\begin{equation}\label{plumin}
\frac{4(u+1/2)^{\delta_{i,1/2}}}{4u^2-1}\,
\frac{n_{V,i}(-u)}{m_{V,i}(-u)}
=
(-1)^{-\langle{}^\theta\alpha_i^\vee,\lambda\rangle
-\delta_{i,1/2}+1}
\frac{m_{V,-i}(u)}{n_{V,-i}(u)}.
\end{equation}
Then Lemma~\ref{caup3}(1), together with \eqref{plumin}, gives
\[
% [inline block 54: 6 envs, 2935 chars in 2 pieces, piece 1 here, a bare % at each other -> data_tex | \begin{tikzpicture}[baseline=-0.5ex,scale=0.7]         \draw[thick, black]...]
.
\]
Consequently,  after evaluating  on $V$, we have
\begin{equation}\label{equ:bubbleplus1}
\begin{aligned}
%
\right]_{u^{-r-1}}
\\
&\overset{Lem~\ref{bubb-red}}=
\Biggl[
\frac{n_V(u)g_{-i}(u)}{(u+i)^{\varepsilon_{-i}(V)}}
\Biggr]_{u^{-r-1}}.
\end{aligned}
\end{equation}
The second equality in \eqref{evaou-ex} now follows from
\eqref{equ:bublleplus} and \eqref{equ:bubbleplus1}. This completes the
proof.
\end{proof}

\begin{Lemma}\label{right cuaps - down cross-inverse}
When evaluated on any finitely generated object
$V\in \mathcal M_\lambda$ with $\lambda\in X_\imath^+$, the two
equalities in Definition~\ref{C defn}(2) hold.
\end{Lemma}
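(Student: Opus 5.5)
Definition~\ref{C defn}(2) is, in this presentation, the invertibility of the "sideways" crossing built from the cup, the cap and the crossing. By Corollary~\ref{caup}, its inverse is expressed through the bubble generating series. Concretely, the two equalities say that the composite of the crossing with the rotated crossing, corrected by a cup--cap term carrying the dotted-bubble series, gives the identity. They are the analogue of Lemma~\ref{AB relations-re}(3) and of the relation used in Corollary~\ref{caup1}.

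I will verify the equalities summand by summand. I pre- and post-compose with the idempotents projecting $E$ onto $E_iE_j$ (Definition~\ref{natrual-basic-red}) and evaluate on a finitely generated $V\in\mathcal M_\lambda$. There are three cases.

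\emph{Case 1: $j\ne\pm i$, or the top and bottom labels differ.} Definition~\ref{cross-red123}(3)--(5) rewrites every black crossing as dotted vertical strands, a dotted cup--cap composite, or zero. The identity then reduces to Definition~\ref{C defn}(1), Lemma~\ref{dot-caup-red}(1)--(2), and the rational-function identity already used in Lemma~\ref{crossing-inverse}.

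\emph{Case 2: $i,j$ on the same side, i.e.\ both in $I^\imath$ or both in $-I^\imath$.} I convert the sideways crossing into the red leftward crossing using the transcriptions already established in Lemma~\ref{lemmasred}, namely Lemma~\ref{lrcrossing} and \eqref{redijdown}. The composite then becomes a red bicross. Applying the bicross relations Definition~\ref{def-ikmc}(6a)--(6d) and the mixed relations (7a)--(7b) gives the composite of two sideways crossings as the identity minus a sum of red cup--cap terms dressed with dotted red bubbles. This step essentially runs the computations \eqref{redd1}--\eqref{rhs111} of Lemma~\ref{bicross1 downred} in reverse.

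\emph{Case 3: $j=-i$.} This is the case producing the bubble correction.

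\emph{Main obstacle.} The key step is to identify the red cup--cap correction terms with the black term in Definition~\ref{C defn}(2), whose coefficient is the black dotted-bubble series $\clock(u)$. This is exactly what Proposition~\ref{bub=bubred} supplies. The red bubble sums, in the form \eqref{equ:bublleplus} and \eqref{equ:bubbleplus1}, are rewritten as coefficients of $\widetilde{\mathcal O}_V(u)=\clock_V(u)$. I then extract the $u^{-1}$ coefficient using Lemma~\ref{usefuel-equa}(1)--(3) and the coprimality device \eqref{key-coprim}, with $f_1=m_{V,i}(u)$ times the factor $(2u\pm1)$ and $f_2=t_{V,i}(u)$. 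The mixed-sign crossings reduce to Case~2 by rotating with cups and caps via \eqref{eeeq} and Lemma~\ref{lem:C-equalities}.

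I expect the hardest part to be the boundary colour $i=\frac12$. There the normalisations $2^{\delta_{i,1/2}}$ and the factor $(2u-1)^{1-\delta_{i,1/2}}$ change, and I would redo that sub-computation separately, as is done for the second half of Lemma~\ref{bicross1 downred}. The second equality in Definition~\ref{C defn}(2) then follows from the first by the symmetry $\sigma$ of Lemma~\ref{anti} together with Definition~\ref{C defn}(1).
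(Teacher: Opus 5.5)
Your proposal misidentifies the relation being proved, and so it relies on the wrong part of the $\imath$-Kac--Moody structure.

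In the unoriented category $\mathcal{AB}$ there is no separate sideways crossing to invert. The crossing is its own inverse by Definition~\ref{AB defn}(1). That relation is verified in the reverse direction in Lemma~\ref{crosss-inverse}, by running the computations of Lemma~\ref{bicross1 downred} backwards with the bicross relations Definition~\ref{def-ikmc}(6a)--(6b).

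The two equalities of Definition~\ref{C defn}(2) are instead relations between a single crossing and a cap, respectively a cup. This is what the computations \eqref{capcross-red} and \eqref{capcross-red1} in the proof of Lemma~\ref{sigrel} concern. After projecting to labels $i,j$, both sides vanish unless $j=-i$:
\begin{itemize}
\item the right-hand sides vanish by Definition~\ref{natrual-basic-red}(7)--(8);
\item the left-hand sides vanish by Definition~\ref{cross-red123}(5), (9), (10).
\end{itemize}
So there is no identity term, no composite of two crossings, and no special case $j=i$. Your Cases~2--3, and your ``main obstacle'' of matching red bubble sums with the black series via Proposition~\ref{bub=bubred}, are aimed at a different relation. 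Carrying them out would not yield the statement.

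The ingredient you are missing is the pair of nodal relations Definition~\ref{def-ikmc}(5a)--(5b), which your plan never invokes. The paper's argument for $j=-i$ runs as follows.
\begin{itemize}
\item Reduce to $i\in I^\imath$ using Definition~\ref{C defn}(1),(3), which are already verified at this point.
\item Rerun the computation \eqref{capcross-red} from the proof of Lemma~\ref{sigrel}, now inside $\mathfrak U^\imath_+$. Lemma~\ref{usefuel-equa}(3) is no longer available and is replaced by Corollary~\ref{caup}(2), which holds by Lemma~\ref{dot-caup-red}. The bubble identity in Definition~\ref{natrual-basic1}(4) is replaced by Lemma~\ref{bubb-red}.
\item These substitutions produce one extra term $A$ in the chain of equalities. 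The nodal relation (5a) forces $A=0$. Since the relevant dotted morphism is invertible, this is exactly the first equality.
\item The second equality is obtained in the same way from \eqref{capcross-red1}, with an extra term $B$ that is killed by (5b).
\end{itemize}

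Finally, you cannot obtain the second equality from the first via the anti-involution $\sigma$ of Lemma~\ref{anti}. $\sigma$ is a symmetry of $\mathcal{AB}$, but at this stage you only have natural transformations on $\mathcal M$, on which $\sigma$ does not act. Any reflection argument would have to be realized internally using relations already verified on $\mathcal M$; the paper instead treats the second equality directly.
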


\begin{proof}
It suffices to verify that
\begin{equation} \label{caup-red-second}
\mathord{
% [inline block 55: 10 envs, 2912 chars in 2 pieces, piece 1 here, a bare % at each other -> data_tex | \begin{tikzpicture}[baseline = 0, scale=0.8,  ] 	\draw[-,thick] (0.28,-.5) to[out=120,in=-90] (-0.28,.2);...]
 }
\qquad \text{for all $i,j\in I$.}
\end{equation}

Suppose first that $i\neq -j$. It follows from  Definition~\ref{natrual-basic-red}(7)--(8)  that the right-hand sides of the two equalities in~\eqref{caup-red-second} are both zero. Moreover, Definition~\ref{cross-red123}(5), (9), and~(10) imply that the corresponding  left-hand sides are also  zero. Hence~\eqref{caup-red-second} holds.

Now suppose that $i=-j$. By Definition~\ref{C defn}(1) and (3), it
suffices to prove \eqref{caup-red-second} for $i\in I^\imath$.
Set
\begin{equation}\label{extra-a}
A :=
%
,
\end{equation}
where $p(u)$ is the polynomial used in the computation leading to  \eqref{capcross-red}.

In the computation leading to \eqref{capcross-red}, we worked in the
affine Brauer category. This allowed us to use Lemma~\ref{usefuel-equa}(3)
and the first equation in Definition~\ref{natrual-basic1}(4). In the
present situation, however, we are working in $\mathfrak                        U_+ ^\imath$ rather
than in the affine Brauer category. Therefore, in deriving the fourth
equality in \eqref{capcross-red}, Lemma~\ref{usefuel-equa}(3) must be
replaced by Corollary~\ref{caup}(2) (see Lemma~\ref{dot-caup-red}), and the first equation in
Definition~\ref{natrual-basic1}(4) must be replaced by
Lemma~\ref{bubb-red}. This produces the extra term $A$, defined in
\eqref{extra-a}, and hence an extra term in each subsequent equality,
starting from the fourth   one. Thus we obtain
\begin{equation}\label{A-red}
\mathord{
% [inline block 56: 22 envs, 11198 chars in 9 pieces, piece 1 here, a bare % at each other -> data_tex | \begin{tikzpicture}[baseline = 0, scale=0.8,  ] \draw[<-,thick,darkred] (0.28,-.5) to[out=120,in=-90] (-0.28,.2);...]
} + A .
\end{equation}
Definition~\ref{def-ikmc}(5a) gives $A=0$. This proves the first equality
in \eqref{caup-red-second}, since
\(
%
\)
is invertible.
It remains to prove the second equality in \eqref{caup-red-second}. Define
\begin{equation}\label{extr-b}
B :=
%
,
\end{equation}
where $s(x)=\frac{4x}{b(x)}\frac{n_{V,i}(x)}{m_{V,i}(x)}$  given in \eqref{capcross-red1}. %

A similar argument applies to the computation leading to
\eqref{capcross-red1}. Namely, Lemma~\ref{usefuel-equa}(4) is replaced by
Corollary~\ref{caup}(3), and the second equation in
Definition~\ref{natrual-basic1}(4) is replaced by Lemma~\ref{bubb-red}.
This introduces the extra term $B$ in every step starting from the fourth 
equality, while the resulting identity remains valid. Thus we obtain
\begin{equation}\label{B-red}
    \mathord{
%
} + B .
\end{equation}
Definition~\ref{def-ikmc}(5b) gives $B=0$. This proves the second equality
in \eqref{caup-red-second}, since
$
%
$
is invertible. The proof is complete.\end{proof}

\begin{Lemma}\label{crosss-inverse}
When evaluated on any finitely generated object
$V\in \mathcal M_\lambda$ with $\lambda\in X_\imath^+$, the relation  in Definition~\ref{AB defn}(1) holds.
\end{Lemma}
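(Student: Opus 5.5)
The plan is to verify Definition~\ref{AB defn}(1) summand by summand. This relation involves only crossings (and possibly dots); it is the braid relation, or the crossing-squared relation, for the black morphisms. Inserting the idempotents of Definition~\ref{natrual-basic-red} on all boundary strands, it suffices to prove, for each labeling $(i,j,k)\in I^3$ of the bottom endpoints and each admissible labeling of the top, the corresponding equality of components. By Definition~\ref{cross-red123}(3)--(5), a crossing between strands whose labels do not match is either zero or a dotted cup--cap composite. So whenever some pair of adjacent labels satisfies neither $k=j$ nor $k=-j$, both sides collapse, via Definition~\ref{C defn}(1), Lemma~\ref{dot-caup-red} and Lemma~\ref{lem:C-equalities}, to identical dotted cup--cap diagrams. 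These generic cases I expect to be routine.

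For the remaining labelings, each black crossing is rewritten in red using Definition~\ref{cross-red123}(1)--(2). I then use the second equality of Definition~\ref{cross-red123}(2) and \eqref{eeeq} to rotate every crossing with a label in $-I^\imath$ into a crossing whose labels both lie in $I^\imath$; any orientation reversals are absorbed by Lemma~\ref{dot-caup-red}(1)--(2) and Lemma~\ref{lemmasred}.

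\emph{Case $i,j,k$ all of the same sign.} The identity reduces to the quiver Hecke braid relation Definition~\ref{def-ikmc}(3c), rescaled by the rational dot functions. This is exactly the computation in Lemma~\ref{qha} read backwards, using the correspondence with \cite[Theorem~4.11]{BSW-heisenberg}.

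\emph{Mixed-sign case, labels not of the form $(-i,i,-i)$.} Lemma~\ref{lemmasred} and \eqref{redijdown} transport the mixed relations Definition~\ref{def-ikmc}(7a)--(7b) and the bicross relations (6a)--(6d). The argument then runs as in Lemma~\ref{crossing-inverse}, with Lemma~\ref{right cuaps - down cross-inverse} providing the required cup/crossing slides.

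\emph{Boundary labelings $(-i,i,-i)$ and $(i,-i,i)$.} Here the black braid relation is genuinely not formal, and I would run Section~6 in reverse. For $i=\frac12$ the inhomogeneous relation of Definition~\ref{def-ikmc1} holds in $\mathcal M$. The computations of Propositions~\ref{emm3} and~\ref{emm6}, together with \eqref{zuiz} and Lemmas~\ref{parth}, \ref{emm9} and~\ref{tttt}, express the difference of the two red braidings as the difference of the black braid diagrams plus correction terms whose rational coefficients $T_1,\dots,T_5$ are those of Section~9. Since $T_1=T_5=1$ and $T_2=T_3=T_4=0$, the inhomogeneous relation forces the black braid relation.

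The one change from Section~6 is that every use of Lemma~\ref{usefuel-equa}(3)--(4) and of Definition~\ref{natrual-basic1}(4) must be replaced by Corollary~\ref{caup}(2)--(3) and Lemma~\ref{bubb-red}. As in Lemma~\ref{right cuaps - down cross-inverse}, this produces extra terms, and these are killed by the nodal relations (5a)--(5b). Proposition~\ref{bub=bubred} ensures that $\clock_V(u)$ agrees with $\widetilde{\mathcal O}_V(u)$, so the bubble identities used in Section~6 remain valid. For $i\neq\frac12$ I use instead \eqref{equ:inhomotype}, which follows from the defining relations, together with \eqref{zuizz} and Lemmas~\ref{emm9j}--\ref{pathj}.

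The main obstacle is this boundary case. One has to check that the extra terms introduced by replacing affine Brauer identities with $\imath$-Kac--Moody ones cancel consistently through the long chain of equalities in Section~6. In particular, the step invoking Lemma~\ref{ii} must be handled with care, because in the reverse setting it has to be rederived from Lemma~\ref{dot-caup-red}(3) and Definition~\ref{C defn}(4).
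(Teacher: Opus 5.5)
Your proposal proves the wrong relation. Definition~\ref{AB defn}(1) is the two-strand relation saying that the black crossing squares to the identity. The braid relation is Definition~\ref{AB defn}(2), and that is what your case analysis addresses: labelings in $I^3$, reduction to Definition~\ref{def-ikmc}(3c), and the boundary triples $(-i,i,-i)$ handled by the inhomogeneous relation and the identities $T_1,\dots,T_5$. Your argument is essentially the paper's later proof in Lemmas~\ref{braid-local-r1}--\ref{braid-local-rneq}, and nothing in it establishes $s\circ s=1$.

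The braid argument also cannot stand in for the statement, because it presupposes it. The vanishing of mismatched components of the triple crossings (Lemma~\ref{zero-localijk}) rests on Corollary~\ref{slidecro}, which is available only once Definition~\ref{AB defn}(1) holds. Likewise, the first step of Lemma~\ref{braid-local-r21} invokes Lemma~\ref{crosss-inverse} directly.

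What is needed instead is a two-strand argument on the components of $s\circ s$ with bottom labels $(i,j)$ and top labels $(k,l)$.
\begin{itemize}
\item If $i\neq k$ or $j\neq l$, slide a high power of $x-i$ through both crossings using Corollary~\ref{caup1}, which needs only the relations of Definition~\ref{C defn} already verified. The component then vanishes.
\item If $k=i$, $l=j$ and $k\neq -l$, rewrite the black crossings as red ones via Definition~\ref{cross-red123}(5) and Lemma~\ref{lrcrossing}. Slide the red dots to the bottom, then collapse the double red crossing to two vertical strands. Use the quiver Hecke relations when the labels have the same sign, and the mixed relations Definition~\ref{def-ikmc}(7) when they have opposite signs.
\item If $k=-l$, run the computation \eqref{redd1} (resp.\ \eqref{redd2}) from Lemma~\ref{bicross1 downred} backwards. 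Replacing the affine Brauer bubble identities by their $\imath$-Kac--Moody counterparts (Lemma~\ref{bubb-red}) produces a single extra term. The bicross relations (6a)--(6d) of Definition~\ref{def-ikmc} force that term to vanish, which is exactly $s^2=1$ on that summand. The colors $\pm\frac12$ use the $i=\frac12$ versions of these computations.
\end{itemize}
Neither the inhomogeneous relation of Definition~\ref{def-ikmc1} nor the rational identities of Section~9 enter the proof of this lemma.
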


\begin{proof}
It suffices to prove that
\begin{equation}
\label{crosss-inverse1}
%
\qquad \text{for all $i,j,k,l\in I$.}
\end{equation}

First suppose that $i\neq k$. Then $x-i$ is invertible on the strand
labeled by $k$. Let $b\gg 0$. Since the relations in
Definition~\ref{C defn} needed for Corollary~\ref{caup1} have been verified in the present category, we may apply
Corollary~\ref{caup1} to obtain
\[
%
=0.
\]
Hence \eqref{crosss-inverse1} holds in this case. The case $l\neq j$ is
proved in the same way.

It remains to consider the case $k=i$ and $l=j$.
Now suppose that $k\neq -l$. Definition~\ref{cross-red123}(5) gives
\[
%
.
\]
If $k,-l\in I^\imath$, then Lemma~\ref{lrcrossing} implies the first equality in 
\eqref{cross-2-black} as follows. 
\begin{equation}\label{cross-2-black} 
%
.
\end{equation}
For the second equality, we first use the defining relations of $\mathfrak U^\imath_+$ to slide  the red dots in the first red term of \eqref{cross-2-black} through the red crossings to the bottom of the diagram. We then apply Definition~\ref{def-ikmc}(7) to simplify the resulting composition of the two red crossings to two parallel vertical strands. This produces the first term after the second equality  in \eqref{cross-2-black}. 
{The remaining  cases, namely $k,l\in I^\imath$, $-k,l\in I^\imath$ and  $-k, -l\in I^\imath$ are
proved similarly}. 

It remains to treat the case $k=-l$. Suppose first that
$-k\in I^\imath{\setminus\{1/2\}}$,
and define
\begin{equation} \label{extra1}
A=
% [inline block 57: 10 envs, 5997 chars in 4 pieces, piece 1 here, a bare % at each other -> data_tex | \begin{tikzpicture}[baseline=8pt,scale=0.5, ]     % X^2...]
,
\end{equation}
where $c_V(x)$ is defined in the proof of
Lemma~\ref{bicross1 downred}. An argument analogous to the one used
above \eqref{A-red} shows that the extra term $A$ may be inserted starting 
with the second equality in the computation of \eqref{redd1}. This gives
\[
\mathord{
%
}
+A.
\]
Then Definition~\ref{def-ikmc}(6b) gives $A=0$, which is equivalent to
\eqref{crosss-inverse1} under the present assumption.

Next suppose that $k=-l$ and
$k\in I^\imath{\setminus\{\frac{1}{2}\}}$. Define
\[
B=
%
,
\]
where $d_V(x)$ is defined in the proof of
Lemma~\ref{bicross1 downred}. An argument analogous to the one used
above \eqref{B-red} shows that the extra term $B$ may be inserted from
the second  equality onward in the computation of \eqref{redd2}. This gives
\[
\mathord{
%
}
+B.
\]
Then Definition~\ref{def-ikmc}(6a) gives $B=0$, which is again equivalent
to \eqref{crosss-inverse1}. 

The remaining cases, namely, $k=-l$ with $k=\pm\frac12$, are checked in the same way. The only difference is that, in the case $k=-\frac12$, one uses~\eqref{redd1} with $i=\frac12$, whereas in the case $k=\frac12$, one uses~\eqref{redd2} with $i=\frac12$. This proves~\eqref{crosss-inverse1}, and hence the lemma.
\end{proof}

\subsection{Braid relations}
\begin{Lemma}\label{zero-localijk} Suppose $i,i',j,j',k, k'\in I$ and that at least one of the following conditions holds: $
i\neq k'$, $ j\neq j'$ or $ k\neq i'$. When evaluated on any finitely generated object
$V\in \mathcal M_\lambda$ with $\lambda\in X_\imath^+$, 
\tikzset{strand/.style={-,thick}}
 \[
 % [inline block 58: 4 envs, 2332 chars in 2 pieces, piece 1 here, a bare % at each other -> data_tex | \begin{tikzpicture}[baseline=8pt,scale=0.4 ] 		% braid relation: s_1 s_2 s_1...]
.
	\]\end{Lemma}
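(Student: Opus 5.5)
The plan is to mimic the proof of Lemma~\ref{dotslidecrossing}(3), one level up: use dot sliding plus coprimality of generalized eigenvalues. Write $\alpha$ for the triple composite $s_1s_2s_1$ of black crossings, with bottom labels $(i,j,k)$ and top labels $(i',j',k')$, evaluated on a finitely generated $V\in\mathcal M_\lambda$. By symmetry of the three conditions it suffices to treat one of them, say $k\neq i'$. The cases $i\neq k'$ and $j\neq j'$ are handled the same way, using the corresponding pair of strands; the middle strand requires sliding past two crossings rather than three.

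First, I would record which affine Brauer relations are now available in the reverse setting. Definition~\ref{C defn}(1)--(5) hold by Lemma~\ref{dot-caup-red}, Corollary~\ref{def6}, Lemma~\ref{lem:C-equalities} and Lemma~\ref{right cuaps - down cross-inverse}. Definition~\ref{AB defn}(1) holds by Lemma~\ref{crosss-inverse}. Hence, by the remark after Corollary~\ref{slidecro}, the relations of Corollaries~\ref{caup1} and~\ref{slidecro} hold; the latter does not require Definition~\ref{AB defn}(2), which is exactly the braid relation still to be proved. In particular, a dot (or an $\ominus$ labelled by a polynomial) on the top-left strand of $\alpha$ can be slid through all three crossings to the bottom-right strand. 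The price is a finite sum of correction terms, each obtained by replacing one crossing by the identity or by a cup--cap composite, with polynomial dot labels attached.

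Next, I would show $f(x_{\mathrm{top},1})\,\alpha=\alpha\, f(x_{\mathrm{bot},3})+(\text{corrections})$ for every polynomial $f$, and then choose $f$ using coprimality. Take $N\gg0$ so that $(x-i')^N$ kills the top-left strand and $(x-k)^N$ kills the bottom-right strand on the relevant generalized eigenspaces. Since $k\neq i'$, pick $f,g\in\mathbb C[u]$ with $f(u)(u-k)^N+g(u)(u-i')^N=1$. Set $p=g(u)(u-i')^N$, so that $p(x)$ acts by $0$ on the top-left strand and by $1$ on the bottom-right strand. Substituting $p$ gives $0=\alpha+(\text{corrections with }p)$.

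The main obstacle is the correction terms. Each contains only two crossings, so it is a composite of at most two local crossings, cups and caps together with an identity strand. I would kill each one by applying the two-strand vanishing statement in the present setting, i.e.\ the analogue of Lemma~\ref{dotslidecrossing}, which follows from Definition~\ref{cross-red123}(5) together with Lemma~\ref{dot-caup-red}(3). Concretely, I would propagate the labels: a correction either has a straight strand running from top label $i'$ to bottom label $k$, which is zero since $k\neq i'$ (by Lemma~\ref{ortho-dotcom}-type orthogonality of the projections), or a cup/cap pairing that forces a label relation. In the latter case the polynomial $p(x)$ sits on a strand whose eigenvalue is $\pm i'$ or $k$; there I would use Corollary~\ref{caup}(1) to move $p$ across the cup or cap, turning it into $p(-x)$, and then choose $N$ large and possibly enlarge the coprimality identity with additional factors $(u+k)^N,(u+i')^N$ so that these terms also vanish. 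If this bookkeeping becomes unwieldy, the fallback is to induct on the number of crossings. One proves, for all composites of at most three crossings with mismatched end labels, that $(L-R)^M\alpha$ lies in the span of shorter mismatched composites. These vanish by induction, and then the argument $(\,(L-i')-(R-k)\,)^M\alpha=0\Rightarrow (k-i')^M\alpha=0$ from the proof of Lemma~\ref{ii} concludes.
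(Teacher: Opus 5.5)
There is a genuine gap: you are proving a statement that is false, and it is not what the lemma asserts. The lemma is the labelled braid relation in the non-reversed case. With bottom labels $(i,j,k)$ and top labels $(i',j',k')$, the labelled composites $s_1s_2s_1$ and $s_2s_1s_2$ coincide; equivalently, their labelled \emph{difference} vanishes. This is why the rest of Section~\ref{ikmcmc} only has to establish \eqref{braid-local}.

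The single labelled composite $\alpha$ need not vanish. Already for two strands, a crossing whose top labels equal its bottom labels $(a,b)$, $a\neq\pm b$, is a dotted pair of vertical strands labelled $(x_1-x_2)^{-1}$ (Lemma~\ref{crossing-inverse}; in the present setting see Definition~\ref{cross-red123}(3)--(5)). That operator is invertible on the corresponding summand. For three strands, take top $=$ bottom $=(a,b,c)$ pairwise distinct with no two labels negatives of each other, so that $k=c\neq a=i'$. The cup--cap contributions then vanish by Lemma~\ref{cap-cup-move}, and you are locally in the degenerate affine Hecke situation. There $e(\mathbf i)s_1s_2s_1e(\mathbf i)$ is in general a nonzero rational function of the dots.

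This is exactly where your argument breaks. When the top crossing of $s_1s_2s_1$ is replaced by the identity, the top-left strand now ends at the bottom-middle position (label $j$), not the bottom-right. So the mismatch $k\neq i'$ gives no vanishing, and such correction terms are not zero in general. Your fallback induction fails at its base for the same reason: mismatched single crossings do not vanish.

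The missing idea is that the correction terms are not zero individually, but they are identical for the two sides of the braid relation and therefore cancel in the difference. Sliding a dot through $s_1s_2s_1$ and through $s_2s_1s_2$ produces the same corrections once $s^2=1$ and the crossing--cup/cap relations are available. In Brauer-algebra notation, the identity-type corrections agree after using $s^2=1$, and the cup--cap corrections match via $s_1e_2s_1=s_2e_1s_2$ and $e_1s_2s_1=s_2s_1e_2$.

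This is precisely the content of Corollary~\ref{slidecro}(1)--(3). An $\ominus$ on top strand $r$ of $s_1s_2s_1-s_2s_1s_2$ moves to bottom strand $4-r$ with no correction at all. It needs only Definition~\ref{C defn}(4), Lemma~\ref{AB relations-re}(3) and Definition~\ref{AB defn}(1), which you correctly observed are available via Lemma~\ref{crosss-inverse}.

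With this exact sliding, your coprimality choice of $p$ finishes immediately. The polynomial $p$ kills the top-left $i'$-summand and acts as $1$ on the bottom-right $k$-summand, so the labelled difference is zero when $k\neq i'$. The middle and right strands handle the cases $j\neq j'$ and $i\neq k'$ in the same way. This is the paper's proof: it mirrors Lemma~\ref{crosss-inverse}, with Corollary~\ref{slidecro} in place of Corollary~\ref{caup1}.
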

  
	\begin{proof} The proof is analogous to that of Lemma~\ref{crosss-inverse},  except that
Corollary~\ref{slidecro}(1)--(3) is used in place of
Corollary~\ref{caup1}(1)--(2).   
   \end{proof}
    From this point  to the end of this section, our goal is   to prove the braiding relation \eqref{braid-local}  for arbitrary  $i,j, k\in I$.
    \tikzset{strand/.style={-,thick}}
     \begin{equation} \label{braid-local} %
. \end{equation}

\begin{Lemma}\label{braid-local-r1}
Suppose that $i,j,k\in I^\imath$. Then, when evaluated on any
finitely generated object $V\in \mathcal M_\lambda$, the relation
\eqref{braid-local} holds.
\end{Lemma}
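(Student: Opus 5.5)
When $i,j,k\in I^\imath$, every black crossing in \eqref{braid-local} whose strands carry these labels is given by Definition~\ref{cross-red123}(1). That definition expresses it through the red upward crossing $\tau$ of $\mathfrak U^\imath_+$, corrected by a rational function of the two dots. This mirrors the affine-to-KLR dictionary of Section~5, now read in reverse. The plan is to transport the whole braid relation into the red world, where only upward strands with labels in $I^\imath$ occur, and then appeal to the quiver Hecke relations in Definition~\ref{def-ikmc}(3a)--(3c).

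First, by Lemma~\ref{zero-localijk}, both sides of \eqref{braid-local} vanish unless the boundary labels match the permuted labels. So it suffices to treat the diagrams with bottom labels $(i,j,k)$ and top labels $(k,j,i)$.

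Second, on each side substitute the formula of Definition~\ref{cross-red123}(1) for each of the three crossings. Slide all dot-labelled rational functions to the bottom, using Definition~\ref{def-ikmc}(3a) (dot sliding, with error terms when adjacent labels coincide). The result is a sum: a main term of three red crossings with a rational dot label at the bottom, plus correction terms in which one crossing has been resolved into vertical strands. Since all labels lie in $I^\imath\subset\frac12+\mathbb N$, every sum $x_a+x_b$ acts invertibly, so no cup/cap or $\pm i$ interaction occurs.

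Third, compare the two sides via the red braid relation Definition~\ref{def-ikmc}(3c). It equates the two main terms up to the correction arising when $k=i$ and $j=i\pm1$. The remaining task is to match all lower-order terms, and this is where I expect the main obstacle: the bookkeeping of rational correction terms, especially in the cases $i=k$ and $|i-j|=1$.

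To handle it, I would reduce to the already established statement for the degenerate Heisenberg/affine Hecke situation. On these summands the black crossing and dot satisfy exactly the relation used in the proof of Lemma~\ref{qha}. Definition~\ref{cross-red123}(1) is by design the inverse of Definition~\ref{natrual-basic1}(1), which agrees with the Brundan--Savage--Webster formulas in \cite[Theorem~4.1]{BSW-heisenberg}. Hence the converse computation there (equivalently, the Brundan--Kleshchev isomorphism \cite[Theorem~1.1]{BK-klr} between degenerate affine Hecke algebras and quiver Hecke algebras on generalized eigenspaces) yields the affine braid relation on $\bigoplus_{i,j,k\in I^\imath}E_iE_jE_kV$ directly. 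If that reduction should fail to be literal, I would instead verify the three nontrivial cases ($i,j,k$ distinct; $i=k$ with $|i-j|=1$; $i=k$ otherwise) by a direct rational-function comparison in the style of Lemma~\ref{crossing-inverse}.
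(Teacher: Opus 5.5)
Your setup is the paper's. The paper likewise rewrites each black crossing on summands with labels in $I^\imath$ via Definition~\ref{cross-red123}(1) and Definition~\ref{def-ikmc}(3a) (this is \eqref{equ:reddotmovecross}). It then moves the resulting rational-function labels through the red crossings with (3a)--(3b), and closes with the red braid relation (3c), treating separately the cases $i=j=k$, exactly two labels equal, and pairwise distinct labels.

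Where you differ is the decisive step. The paper matches the lower-order terms by hand: explicitly for $i=j=k$ (the two expansions \eqref{braid-iiired} and \eqref{braid-iiired2}), and by the same computation with the other formulas of Definition~\ref{cross-red123}(1) in the remaining cases. You instead import this step from the type $A$ correspondence, exactly as the paper does for the forward direction in Lemma~\ref{qha}. That is legitimate. Since $i\neq -j$ for labels in $I^\imath$, no cup--cap term survives on these summands, so the situation is literally the degenerate affine Hecke/KLR dictionary. Your route is shorter and explains why nothing new happens for labels in $I^\imath$.

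The price is two checks that the direct computation avoids.
\begin{itemize}
\item[(1)] The results you cite are not literally statements about an arbitrary nilpotent $2$-representation. \cite[Theorem~1.1]{BK-klr} concerns cyclotomic quotients, and \cite[Theorem~4.1]{BSW-heisenberg} goes in the Heisenberg-to-Kac--Moody direction. You need the fact that the Brundan--Kleshchev verification of the Hecke relations uses only the quiver Hecke relations and nilpotence of the dots (equivalently, the completed form of the correspondence). This does apply here, because the red dots act nilpotently on $E_iE_jE_kV$ for finitely generated $V$.
\item[(2)] You must confirm that Definition~\ref{cross-red123}(1), with the signs $t_{ij}$ of \eqref{tij}, agrees exactly with the Brundan--Kleshchev inverse formulas, not merely up to a renormalization.
\end{itemize}
Your stated fallback, a direct case-by-case rational-function comparison, is precisely the paper's proof.
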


\begin{proof}
We prove the result by treating separately the following three cases:
\begin{multicols}{2}
\item[(a)] $i=j=k$,
\item[(b)] exactly two of $i,j,k$ coincide,
\item[(c)] $i,j,k$ are pairwise distinct.
\end{multicols}

It follows from Definition~\ref{def-ikmc}(3a) and
Definition~\ref{cross-red123}(1) that
\begin{equation}
\label{equ:reddotmovecross}
  % [inline block 59: 37 envs, 29324 chars in 5 pieces, piece 1 here, a bare % at each other -> data_tex | \begin{tikzpicture}[baseline = 0, scale=0.8, transform shape] 	\draw[-] (0.38,-.4) to (-0.38,.5);...]
  
\end{equation}
where $g=\ell_{-+}^{12}$. 

By \eqref{equ:reddotmovecross}, we may replace each occurrence of the black crossing by a linear combination of red
diagrams. Some of the resulting red diagrams
contain red horizontal strands labeled by $g$.  We then use the red sliding relations in Definition~\ref{def-ikmc}(3a)-(3b) to move these red horizontal strands upward
as far as possible. We have  %This gives the required identity in case~(a), as
%follows:
\begin{equation}\label{braid-iiired} \begin{aligned}
  %
, \end{equation}
$g'=\ell_{-+}^{13}$, and $g''=\ell_{-+}^{23}$. 
Here, the sixth equality follows from Definition~\ref{def-ikmc}(3a),(3b), and the seventh equality follows from Definition~\ref{def-ikmc}(3a).
Similarly, we obtain the following identity:

\begin{equation}\label{braid-iiired2}
%
+A  \end{equation}
     where $A$ was defined in \eqref{AB-red}. 
Combining
Definition~\ref{def-ikmc}(3c), \eqref{braid-iiired}, and
\eqref{braid-iiired2}, we obtain \eqref{braid-local} in case~(a).

The proofs of cases~(b) and~(c) are similar. The only difference is that
one uses the second or third formula defining
$
%
$
in Definition~\ref{cross-red123}(1), rather than the first formula used
in case~(a).
For example, in case~(c), we have
\[
%
.
\]
Here $g=g_{k,j}(x_1,x_2)$,
$g'=g_{k,i}(x_1,x_3)$, and 
$g''=g_{j,i}(x_2,x_3)$,
where
\[
g_{h,l}(x,y)=
\begin{cases}
(x-y)^{-1}, & \text{if } l=h+1,\\
1+(x-y)^{-1}, & \text{if } l\neq h+1.
\end{cases}
\]
We omit the details since the proofs are similar.
\end{proof}

  \begin{Cor}\label{braid-local-r3}  When evaluated  on any finitely generated object $V\in \mathcal M_\lambda$,  relation \eqref{braid-local} holds for each of the four triples $(-i, j, k), (j,i,-k), (-j,i,-k), (-j,-i,k)$, provided that all three entries of the triple under consideration lie in $I^{\imath}$.  
   \end{Cor}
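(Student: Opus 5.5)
We will reduce each of the four triples to the case already settled in Lemma~\ref{braid-local-r1}, by using the cups and caps to rotate strands. The key input is the second equation in Definition~\ref{cross-red123}(2), together with \eqref{eeeq}. These express a black crossing with one strand carrying a label in $-I^\imath$ as a rotation, by a black cup and a black cap, of a crossing whose labels lie in $I^\imath$. We also use Lemma~\ref{lem:C-equalities}, which lets crossings slide through cups and caps (Definition~\ref{C defn}(3)), and the zigzag relations in Definition~\ref{C defn}(1), verified in Lemma~\ref{dot-caup-red}.

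Consider the triple $(-i,j,k)$ with $i,j,k\in I^\imath$. We will treat the leftmost strand, labeled $-i$, as the bent strand.

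\emph{Step 1: rewrite.} On both sides of \eqref{braid-local}, each crossing involving the $-i$ strand is rewritten via Definition~\ref{cross-red123}(2), or via \eqref{eeeq}, as a crossing of the $i$-strand with the other strand, conjugated by a cup at the bottom and a cap at the top.

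\emph{Step 2: slide.} Using Lemma~\ref{lem:C-equalities}, the intermediate cups and caps are slid through the remaining crossings. After this, each side becomes a single cup--cap rotation applied to a triple-crossing diagram. All strands of that diagram are labeled in $I^\imath$, with labels given by a permutation of $(i,j,k)$.

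\emph{Step 3: apply the known case.} Lemma~\ref{braid-local-r1} gives the braid relation for this inner diagram. Straightening the leftover zigzags with Definition~\ref{C defn}(1) then identifies the two sides.

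The other triples $(j,i,-k)$, $(-j,i,-k)$ and $(-j,-i,k)$ are handled the same way. For $(j,i,-k)$ the rightmost strand is rotated. For $(-j,i,-k)$ and $(-j,-i,k)$ two strands are rotated, one after the other, which uses the first and the third equations in Definition~\ref{cross-red123}(2) in place of the second.

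The main obstacle is bookkeeping. We must check that after the rotation the inner crossings are exactly the $\mathfrak U^\imath_+$-type crossings of Definition~\ref{cross-red123}(1) covered by Lemma~\ref{braid-local-r1}. We must also check that any dotted rational normalizations appearing in Definition~\ref{cross-red123}(2) match on the two sides, or cancel.

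If the dotted labels do not cancel directly, we will first push them to the boundary using the dot-sliding relations. These are Definition~\ref{C defn}(4) and Lemma~\ref{dot-caup-red}(1)--(3), which hold in the present setting. Once the labels sit at the boundary they appear identically on both sides. Terms arising when labels coincide can be discarded by Lemma~\ref{zero-localijk}, since they vanish whenever the label sequences are incompatible.
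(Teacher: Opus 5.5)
Your mechanism matches the paper's argument via \eqref{empty4}. You bend the strands whose labels lie in $-I^{\imath}$ using Definition~\ref{cross-red123}(2) and \eqref{eeeq}, apply Lemma~\ref{braid-local-r1} to the inner triangle, and straighten with Definition~\ref{C defn}(1)--(3). This is correct for $(-i,j,k)$, $(j,i,-k)$ and $(-j,-i,k)$. The problem is that you have misread the hypothesis. In the statement, \eqref{braid-local} is the braid relation for the labels $i,j,k$, and the proviso says that the entries of the listed triple lie in $I^{\imath}$; the paper's proof starts from ``Suppose $-i,j,k\in I^{\imath}$''. You instead take the triple itself as the label sequence, with $i,j,k\in I^{\imath}$. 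For three of the triples the two readings give the same sign pattern. For $(-j,i,-k)$ they do not. The correct reading is $i\in I^{\imath}$ and $j,k\in -I^{\imath}$, i.e.\ signs $(+,-,-)$. Your reading gives $(-,+,-)$: outer strands negative, middle strand positive.

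For $(-,+,-)$ your claimed reduction (``two strands are rotated, one after the other'') fails. Record a triangle by the signs $(s_1,s_2,s_3)$ of its bottom labels, with the first strand running from bottom-left to top-right.
\begin{itemize}
\item Bending the first strand by a cap on the far left and a cup on the far right leaves an inner triangle with signs $(s_2,s_3,-s_1)$.
\item The mirror bend of the third strand is the inverse move $(s_1,s_2,s_3)\mapsto(-s_3,s_1,s_2)$.
\item The orbit of $(+,+,+)$ under these moves consists of the six patterns other than $(-,+,-)$ and $(+,-,+)$.
\item Those two remaining patterns are simply interchanged by the moves. They are the cyclically oriented triangles, and bending is a planar isotopy.
\end{itemize}
So no sequence of rotations brings $(-,+,-)$ to the all-positive case of Lemma~\ref{braid-local-r1}. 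This is exactly the configuration the corollary does not cover. The paper reduces $(+,-,+)$ to it and then proves it separately in Lemmas~\ref{braid-local-r21}--\ref{braid-local-rneq}. That proof uses a separate invertibility argument and, when $j=-k=-i$, the inhomogeneous relation of Definition~\ref{def-ikmc1} together with \eqref{equ:inhomotype}. With the correct reading, your method does work for the third triple: two successive mirror bends give $(+,-,-)\to(+,+,-)\to(+,+,+)$.
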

    \begin{proof} 
 Suppose $-i,j, k\in I^{\imath}$. We have:
\begin{equation}
    \label{empty4}
% [inline block 60: 3 envs, 2680 chars -> data_tex | \begin{tikzpicture}[baseline=0pt,scale=0.5] 		% braid relation: s_1 s_2 s_1...]

   \end{equation}
For the local crossings on the right-hand side of  \eqref{empty4}, we can now apply the case
 $i, j,k\in I^{\imath}$ in \eqref{braid-local}. Then, using Definition~\ref{C defn}(1)-(3), we obtain \eqref{braid-local} for $-i, j,k\in I^{\imath}$. Finally, the remaining three  cases are analogous, and each case can also 
  be reduced to the case $i, j, k\in I^\imath$ as above. 
 \end{proof}

A similar argument (using the same technique as in \eqref{empty4}) shows that if \eqref{braid-local} holds for $-i, j, -k \in I^\imath$, then it also holds for $i, -j, k \in I^\imath$. Therefore, for the remainder of this section, we may assume that $-i, j, -k\in I^\imath$.
   
 \begin{Lemma}\label{braid-local-r21} Suppose that  $-i,j,-k\in I^\imath$. If at least one of  $j\neq -k$ and $j\neq -i$ holds, then,  when evaluated  on any finitely generated object $V\in\mathcal  M_\lambda$, the relation \eqref{braid-local} holds. 
    \end{Lemma}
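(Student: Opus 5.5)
We plan to reduce the braid relation \eqref{braid-local} in this case to the red quiver Hecke relations, following the strategy of Lemma~\ref{braid-local-r1}. The difference is that some crossings now join strands whose labels are negatives of each other. Since $-i,j,-k\in I^\imath$, the black crossing between the strands labelled $j$ and $k$ is given by the mixed formula in Definition~\ref{cross-red123}(2)--(5). The same holds for the crossing between $j$ and $i$. Each such crossing is a red crossing with one strand reversed, conjugated by black cups and caps. Whether its expansion has a correction term depends on whether the two labels are negatives of each other. The hypothesis says $j\neq -k$ or $j\neq -i$, so at least one of the two mixed crossings involving the middle strand $j$ has labels that are not negatives. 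On that crossing the cup--cap correction term in Definition~\ref{cross-red123}(5) vanishes, and the crossing is a pure sideways red crossing up to a rational dot factor. We treat the case $j\neq -k$; the case $j\neq -i$ is symmetric, obtained via the reflection of Lemma~\ref{anti} or by the same computation with roles exchanged.

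The steps are as follows. First, use Definition~\ref{cross-red123}(2) and the bending trick of \eqref{empty1} and \eqref{empty4}. This rewrites both sides of \eqref{braid-local} by rotating the $k$-strand, which carries a negative label, around with a black cup and cap, using Definition~\ref{C defn}(1)--(3). Both sides then become diagrams in which the crossings among $i$ and $j$ involve one upward and one downward red strand, and the strand labelled $-k\in I^\imath$ is bent. Second, by Lemma~\ref{lemmasred} (the analogues of Lemma~\ref{lrcrossing} and \eqref{redijdown}), convert every mixed black crossing into a sideways red crossing carrying the explicit rational dot labels $s(x_1,x_2)$, $t(x_1,x_2)$, $c_V$ and $d_V$. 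Third, move all dot labels to the top using the red dot-sliding relations (Definition~\ref{def-ikmc}(3a),(3b)), together with Lemma~\ref{dot-caup-red} for the black dots. The result is, on each side, one reduced triple-crossing red diagram plus lower-order terms with horizontal rational labels. Fourth, compare the two sides. The leading red diagrams agree by the red braid relation Definition~\ref{def-ikmc}(3c), transported through the cyclicity of Lemma~\ref{cyc-x} (Definition~\ref{def-ikmc}(2b)), which turns a braid with a downward strand into an upward braid. The lower-order terms must then be matched.

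We expect the fourth step to be the main obstacle. Sliding dots through sideways crossings produces correction terms that are themselves sideways crossings or bubble-type terms. These terms include the rational factors $\ell^{ab}_{\sigma\tau}$ and the $\delta_{\cdot,\frac12}$ exponents. To make them cancel, we first use the hypothesis $j\neq -k$ to kill every term in which an $E_j$-strand would have to close up with an $E_{k}$-strand through a cap. This uses Lemma~\ref{cap-cup-move} and Lemma~\ref{zero-localijk}, since the relevant cups and caps vanish when the labels are not negatives. The remaining terms should then be identical on both sides after applying the mixed relations Definition~\ref{def-ikmc}(7). If a residual rational identity remains, as in Section~9, we check it by clearing denominators. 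The cases $j=\pm\frac12$ or $k=-\frac12$ need separate bookkeeping of the factors $(2x\pm1)$ in $c_V$, but involve no new relations.
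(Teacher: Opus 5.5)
\paragraph{The gap: your fourth step.} You claim that the leading red diagrams on the two sides agree by the upward braid relation Definition~\ref{def-ikmc}(3c), transported by cyclicity (Lemma~\ref{cyc-x}). This fails in exactly the case at hand.

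Bending a strand of the braid relation with a cup and a cap sends the label triple $(a,b,c)$ to $(b,c,-a)$ or to $(-c,a,b)$. These moves preserve the alternating sign patterns $(-,+,-)$ and $(+,-,+)$. In red terms your triple is $F_{-i},E_j,F_{-k}$, a $\downarrow\uparrow\downarrow$ configuration whose central triangle is cyclically oriented, and no isotopy turns it into an upward braid. This is exactly why Corollary~\ref{braid-local-r3} covers every sign pattern except the alternating ones. It is also why the degenerate case $j=-i=-k$ needs the inhomogeneous relations (Lemmas~\ref{braid-local-r211} and~\ref{braid-local-rneq}).

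Your first step, bending the $k$-strand, only exchanges $(-,+,-)$ for $(+,-,+)$. So there is no ``leading term'' identity to start from, and the expectation that the remaining terms then match has nothing to rest on.

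\paragraph{The missing idea: use the hypothesis to invert a crossing.} Your plan uses the hypothesis only to delete correction terms. The paper instead uses it to make a crossing invertible, and it stays in the black calculus.
\begin{itemize}
\item For $j\neq -i$ (and $j\neq i$ automatically), Definition~\ref{cross-red123}(5) gives the relevant component of the $(i,j)$ black crossing as vertical strands labelled by $g=(x_1-x_2)^{-1}$, with no cup--cap term.
\item Feeding this into Lemma~\ref{crosss-inverse}, i.e.\ Definition~\ref{AB defn}(1) in the present category, gives an auxiliary identity.
\item The $\ominus$-sliding identities of Corollary~\ref{caup1} are available because the relations of Definition~\ref{C defn} have already been verified. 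They convert that identity into one carrying the factor $h=1+g^{-2}$.
\item Since $-i,j\in I^\imath$, $h$ is invertible on the relevant summand, so this identity is equivalent to \eqref{braid-local}.
\end{itemize}

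If you want to stay with red diagrams, the corresponding input is the mixed relation Definition~\ref{def-ikmc}(7), i.e.\ invertibility of sideways crossings with distinct labels. It must be used at the core, in a Polyak-type argument that converts the cyclic configuration into a braid-like one before (3c) can be invoked. Using it only to tidy up remainder terms, as your plan does, is not enough.
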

    \begin{proof}
Since the proofs of the two cases $j\neq -k$ and $j\neq -i$ are completely analogous, we prove only the case $j\neq -i$.    
    Let $g=(x_1-x_2)^{-1}$. A direct computation shows that 
    % 在导言区添加以下定义（若已有则无需重复）

\begin{align*}
        % [inline block 61: 24 envs, 18795 chars in 3 pieces, piece 1 here, a bare % at each other -> data_tex | \begin{tikzpicture}[baseline=8pt,scale=0.5]         % braid relation: s_2 s_1 s_2...]
,
\end{align*}
where the first equality follows from Lemma~\ref{crosss-inverse}
together with Definition~\ref{cross-red123}(5).
Note that the above  equality is equivalent to the following identity:     
    \begin{equation}\label{braid-r4}  %
. \end{equation}
As explained in the proof of Corollary~\ref{caup1},
each identity in Corollary~\ref{caup1} is equivalent to the
corresponding identity obtained by replacing every occurrence
of $\ominus$ in the diagrams with $\bullet$.
These latter identities have already been verified in the
present category. Therefore, we use them to obtain    
     %other hand, by Lemma \ref{equ:dotmovecrosses}, we have 
      \begin{equation}\label{braid-r5}  %
\]
    where $h=1+g^{-2}$.  Since  we assume $-i, j\in I^\imath$, $h$ is invertible. Consequently,  the above identity is equivalent to \eqref{braid-local}.
    \end{proof}
    \begin{Lemma}\label{braid-local-r211} Suppose that $j= -k= -i=1/2$. Then,  when evaluated  on any finitely generated object $V\in \mathcal M_\lambda$, the relation \eqref{braid-local} holds. 
    \end{Lemma}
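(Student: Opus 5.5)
The case $j=-k=-i=\tfrac12$ is exactly the case excluded by Lemma~\ref{braid-local-r21}. There the strands carry $(-\tfrac12,\tfrac12,-\tfrac12)$, and the black braid relation \eqref{braid-local} should be a consequence of the inhomogeneous relation in Definition~\ref{def-ikmc1}. I therefore plan to run the computation of Section~\ref{diliu} in reverse.

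\textbf{Step 1: rewrite both sides in red.} Using Definition~\ref{cross-red123}(1)--(2) together with Lemma~\ref{lrcrossing} and \eqref{redijdown}, which Lemma~\ref{lemmasred} makes available in the present setting, I express each black crossing on either side of \eqref{braid-local} through red crossings on strands labeled $\tfrac12$, decorated by dots labeled with the rational functions $c,d,h,h'$ of Definition~\ref{rationalff}. The chain of identities in Section~6 (Propositions~\ref{emm3} and~\ref{emm6}, and Lemmas~\ref{parth}, \ref{emm9}, \ref{tttt}) was derived from affine Brauer relations. Each of those relations has already been re-established here: Definition~\ref{C defn}(1)--(5) by Lemmas~\ref{dot-caup-red}, \ref{lem:C-equalities}, \ref{right cuaps - down cross-inverse} and Corollary~\ref{def6}; Definition~\ref{AB defn}(1) by Lemma~\ref{crosss-inverse}; and the bubble series by Proposition~\ref{bub=bubred}. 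The only ingredient Section~6 used that is not yet available is Definition~\ref{AB defn}(2), which is the braid relation itself, entering through Lemma~\ref{ii} and the dot-moving arguments in Lemma~\ref{parth}.

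\textbf{Step 2: isolate the braid defect.} I set
\[
\Delta:=\text{LHS of \eqref{braid-local}}-\text{RHS of \eqref{braid-local}}
\]
for the labels $(-\tfrac12,\tfrac12,-\tfrac12)$. I then redo the computation leading to \eqref{zuizho}, this time without assuming the braid relation. As in the proofs of Lemmas~\ref{right cuaps - down cross-inverse} and~\ref{crosss-inverse}, every step where the braid relation was invoked produces an extra term, and I expect each such term to be $\Delta$ sandwiched between invertible dot-labels. The outcome should be an identity of the form
\[
\text{(difference of the two red braidings in Definition~\ref{def-ikmc1})}
= \text{RHS of the inhomogeneous relation} + \alpha\,\Delta\,\beta,
\]
where $\alpha,\beta$ are dotted labels built from $h_1,h_5,h_6$ and $\ell$-factors, all invertible on the relevant generalized eigenspaces. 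The rational identities of Section~9, $T_1=T_5=1$ and $T_2=T_3=T_4=0$, are purely algebraic, so they continue to hold.

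\textbf{Step 3: conclude.} Since the inhomogeneous relation holds in $\mathfrak U^\imath_+$, the identity from Step~2 gives $\alpha\Delta\beta=0$, and invertibility of $\alpha$ and $\beta$ yields $\Delta=0$.

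\textbf{Main obstacle.} The delicate step is Step~2: I must track precisely where Lemma~\ref{ii}, whose proof uses the braid relation through Lemma~\ref{dotslidecrossing}, and the dot-moving in Lemma~\ref{parth} were used, and confirm that the accumulated extra terms combine into a single invertible multiple of $\Delta$ rather than several non-cancelling contributions. If they do not combine, I would fall back on a nilpotency argument in the spirit of the proof of Lemma~\ref{ii}. First show $(R-L)\Delta=0$ for suitable dots $R,L$ using Lemma~\ref{zero-localijk} and dot sliding (Corollary~\ref{slidecro}). Then observe that the eigenvalue shift $R-L$ is a nonzero constant modulo nilpotents on the relevant generalized eigenspaces, which forces $\Delta=0$.
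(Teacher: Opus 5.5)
Your Steps~2--3 are the paper's argument. Rerun the Section~6 computation of \eqref{zuizho} in the present setting. The only failure is a single defect term $\tilde Z_{10}-Z_{10}$, in which the braid difference sits between the invertible labels $h_1$, $h'$, $h_3'$. The inhomogeneous relation of Definition~\ref{def-ikmc1} then forces this term to vanish.

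Your ``main obstacle'' rests on a mistaken premise. Lemma~\ref{ii} does not use the braid relation: Lemma~\ref{dotslidecrossing} relies only on Definition~\ref{C defn}(4), Lemma~\ref{AB relations-re}(3) and the eigenspace projections. So Definition~\ref{AB defn}(2) enters only through the dot-moving computation of $Z_{10}$ in Lemma~\ref{parth}, and there is exactly one extra term.

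Your nilpotency fallback would not work. Each strand of this braid joins endpoints carrying the same label, so there is no eigenvalue gap for $R-L$ to exploit.
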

    \begin{proof} 
   By Lemma~\ref{dot-caup-red}, Corollary~\ref{def6}, and Lemmas~\ref{right cuaps - down cross-inverse}--\ref{zero-localijk}, all defining
relations of the affine Brauer category used so far hold in the present category, except for the braid
relation in Definition~\ref{AB defn}(2).   
   Therefore, the use of that  braiding relation in Section~6 is not yet justified here. Nevertheless, all computations for proving \eqref{zuizho} in Section~6 remain valid without this relation except for the computation of $Z_{10}$ in Lemma~\ref{parth}. More precisely, %In Lemma~\ref{parth}, 
    this term should be replaced by $\tilde Z_{10}$, defined as follows: %只有Lemma~\ref{parth}中的Z_9要替换
    \begin{equation}\label{Z9tilde} \tilde Z_{10}:=Z_{10} -% [inline block 62: 9 envs, 6430 chars in 2 pieces, piece 1 here, a bare % at each other -> data_tex | \begin{tikzpicture}[baseline = 0] \draw[-,thick] (0.45,.6) to (-0.45,-.6);...]
.   
    \end{equation}
    Here $h_1=(1-x_1-x_2)^{-1}$, $h'=-\frac{(x_1-x_2)^2}{(1+x_2-x_1) h_1}$, and $h_3'=(1-x_3+x_1)^{-1}$. We remark that the three vertices in the  top row (and similarly in the  bottom row)  are labeled from left to right by $-1/2, 1/2, -1/2$ in the black diagram, and by $1/2, 1/2, 1/2$ in the red diagram. Therefore, by the same argument as in the proof of \eqref{zuizho}, we obtain    
  \begin{equation}\label{Pi=2}  \begin{aligned}
 %
+\tilde Z_{10}-Z_{10}.
\end{aligned}  \end{equation}
Then, Definition~\ref{def-ikmc1} gives   $\tilde Z_{10}=Z_{10}$. Since $h_1, h'$ and $h_3'$ are invertible, 
\eqref{Z9tilde}  implies \eqref{braid-local}.    \end{proof}

\begin{Lemma}\label{braid-local-rneq} Suppose that $j= -k= -i\neq 1/2$ with $j\in I^\imath$. Then,  when evaluated  on any finitely  generated object $V\in \mathcal M_\lambda$, the relation \eqref{braid-local} holds. 
    \end{Lemma}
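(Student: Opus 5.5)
The plan is to follow the proof of Lemma~\ref{braid-local-r211} verbatim, with the boundary colour $\frac12$ replaced by a general $j\in I^\imath\setminus\{\frac12\}$. In place of the inhomogeneous relation we use the relation \eqref{equ:inhomotype}. As recalled in Section~6.4, \eqref{equ:inhomotype} is a consequence of the defining relations of $\mathfrak U^\imath_+$, so it holds in any $2$-representation and in particular in $\mathcal M$.

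\textbf{Transport of the Section~6.4 computation.} By Lemma~\ref{dot-caup-red}, Corollary~\ref{def6} and Lemmas~\ref{right cuaps - down cross-inverse}--\ref{zero-localijk}, every affine Brauer relation except the braid relation of Definition~\ref{AB defn}(2) already holds on finitely generated $V$. We then rerun the computation of Section~6.4, namely identity \eqref{zuizz}, with:
\begin{itemize}
\item black strands labelled $(-j,j,-j)$ in the top and bottom rows;
\item the rational functions of Definition~\ref{rationalffj};
\item $g,h,g',h'$ chosen through \eqref{key-coprim} as at the start of Section~6.4;
\item Lemma~\ref{bubb-red} in place of Definition~\ref{natrual-basic1}(4) wherever the fake bubbles enter.
\end{itemize}
This expresses the left-hand side of \eqref{equ:inhomotype} as a combination of black diagrams. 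The ingredients $Z_4,Z_5,Z_7,Z_8,Z_{11}$ and Lemma~\ref{emm9j} use only cup/cap, dot-sliding and bubble manipulations, so they remain valid unchanged.

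\textbf{The one modified term.} The only place where the braid relation was used is the dot-moving argument of Lemma~\ref{pathj}, which computes $Z_{10}$. Here we define
\[
\tilde Z_{10}:=Z_{10}-\bigl(\text{defect term}\bigr).
\]
The defect term is the difference of the two sides of \eqref{braid-local} for $(i,j,k)=(-j,j,-j)$, sandwiched between invertible dot labels. Concretely, these are the analogues of $h_1=(1-x_1-x_2)^{-1}$, $h'$ and $h_3'=(1-x_3+x_1)^{-1}$, now built from the symmetric $h$ of Definition~\ref{rationalffj}. Redoing the dot-moving without the braid relation yields Lemma~\ref{pathj} with $Z_{10}$ replaced by $\tilde Z_{10}$.

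\textbf{Conclusion.} Assembling these terms as in the proof of Theorem~\ref{inhomo100} gives the analogue of \eqref{Pi=2}: the left-hand side of \eqref{equ:inhomotype} equals its right-hand side plus $\tilde Z_{10}-Z_{10}$. This uses that the rational functions $T_1,\dots,T_5$ of Section~9, in their $j\neq\frac12$ versions, collapse to $1,0,0,0,1$. Since \eqref{equ:inhomotype} holds in $\mathcal M$, we get $\tilde Z_{10}=Z_{10}$, so the defect term vanishes. Multiplying by the inverses of the sandwiching labels then gives \eqref{braid-local}.

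\textbf{Main obstacle.} The key difficulty is checking that these sandwiching labels are invertible on the relevant generalized eigenspaces. On the strands $(-j,j,-j)$ their denominators and numerators must reduce to nonzero constants. The factors $1\mp(x_a-x_b)$ and $(x_1-x_2)^2$ reduce to $1\mp 2j$ and $4j^2$, and since $j\neq\frac12$ with $j>0$, none vanish. A second point needing care is the verification, via the non-$\frac12$ rational identities of Section~9, that the $T_i$ still collapse. Should a leftover term survive there, I would absorb it using Lemma~\ref{ii} and Lemma~\ref{dotslidecrossing}(3), as in Lemma~\ref{tttt}.
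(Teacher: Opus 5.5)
Your proposal is essentially the paper's proof. You replace $Z_{10}$ by $\tilde Z_{10}$ (the defect of \eqref{braid-local} for the colours $(-j,j,-j)$, sandwiched by $h_1,h',h'_3$), rerun \eqref{zuizz} with the data of Definition~\ref{rationalffj}, use \eqref{equ:inhomotype} to force $\tilde Z_{10}=Z_{10}$, and cancel the invertible labels. The only correction is that the cancellation uses the separate primed family $T'_1,\dots,T'_5$ of Section~9, all of which vanish by Lemma~\ref{lem:primed-rational-identities}, not $j\neq\frac12$ analogues of $T_1,\dots,T_5$ collapsing to $1,0,0,0,1$.
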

    \begin{proof}
 As in the proof of Lemma~\ref{braid-local-r211},  all computations in Section~6.4 remain valid without using the braid relation except for the computation of $Z_{10}$ in Lemma~\ref{pathj}. In the present setting, this term has to  be replaced by $\tilde Z_{10}$, where 
  \begin{equation}
\label{zuoT}\tilde Z_{10}=Z_{10}-% [inline block 63: 11 envs, 7196 chars in 2 pieces, piece 1 here, a bare % at each other -> data_tex | \begin{tikzpicture}[baseline = 0] \draw[-,thick] (0.45,.6) to (-0.45,-.6);...]
\end{equation}
 We remark that the three vertices in the  top row (and similarly in the  bottom row)  are labeled from left to right by $-j, j, -j$ in the black diagram, and by $j, j, j$ in the   red diagram below.
Combining \eqref{zuizz} with the explicit  expressions for $Z_{4}$, $Z_{5}$, $Z_{7}$, $Z_{8}$, $\tilde Z_{10}$, and $Z_{11}$ given in  Proposition~\ref{emm3}, Proposition~\ref{emm6}, Lemma~\ref{emm9j}, and Lemma~\ref{pathj},   we obtain \begin{equation}
    \label{zuizhoz}
\begin{aligned}
&%
-(\tilde Z_{10}-Z_{10}),
\end{aligned}\end{equation}
where the rational functions $T'_i$, $1\leq i\leq5$, are defined
in Section~9. By
Lemma~\ref{lem:primed-rational-identities}, 
$ T'_1=T'_2=T'_3=T'_4=T'_5=0$.
Therefore, \eqref{equ:inhomotype} gives $\widetilde Z_{10}=Z_{10}$. Since
$h_1$, $h'$, and $h'_3$ are invertible, \eqref{zuoT} implies
\eqref{braid-local}.
 \end{proof}
%\comment{ we have define the notion of affine Brauer categorification and maybe better to use this in the following?}
\begin{Theorem}\label{thm:mainthmaffineb1} Suppose that  a locally Schurian category  $\mathcal M$ 
 carries the structure of a nilpotent
$2$-representation of $\mathfrak U^\imath_+$. Then $\mathcal M$ admits
an affine Brauer categorification  for which
the dot spectrum is contained in $\frac12+\mathbb Z$.
\end{Theorem}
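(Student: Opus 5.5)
The theorem is essentially the assembly of the results of Section~7, so I would prove it by collecting those results into the statement that the assignments in Definitions~\ref{natrual-basic-red} and~\ref{cross-red123} define a strict $\Bbbk$-linear monoidal functor $\AB\to\End_{\mathbb C}(\mathcal M)$.

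\textbf{Step 1: the generating data.} The generating object goes to the endofunctor $E=\bigoplus_{i\in I^\imath}(E_i\oplus F_i)$ of Definition~\ref{gen-red1}. The dot, cup, cap and crossing go to the black natural transformations of Definitions~\ref{natrual-basic-red} and~\ref{cross-red123}. Nilpotence (Definition~\ref{nil}(1)) makes every direct sum object-wise finite, so $E$ is well defined and sweet. Definition~\ref{nil}(2), together with Lemma~\ref{local-red} and the lifting Lemma~\ref{lem:moniclift}, makes the rational labels $n_{V,i}$, $t_{V,i}$ and their inverses well defined. The argument of Lemma~\ref{tvi} then shows that these are natural transformations.

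By the filtered-colimit remark recalled before Definition~\ref{natrual-basic-red}, it suffices to verify every defining relation on finitely generated objects $V\in\mathcal M_\lambda$.

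\textbf{Step 2: the defining relations of $\AB$.}
\begin{itemize}
\item Definition~\ref{C defn}(1) and (4) hold by Lemma~\ref{dot-caup-red}.
\item Definition~\ref{C defn}(5) holds by Corollary~\ref{def6}.
\item Definition~\ref{C defn}(3) holds by Lemma~\ref{lem:C-equalities}.
\item Definition~\ref{C defn}(2) holds by Lemma~\ref{right cuaps - down cross-inverse}.
\item Definition~\ref{AB defn}(1) holds by Lemma~\ref{crosss-inverse}.
\end{itemize}
This leaves the braid relation Definition~\ref{AB defn}(2). I would decompose it into the local identities \eqref{braid-local} indexed by $i,j,k\in I$, together with the vanishing of the off-diagonal components from Lemma~\ref{zero-localijk}.

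\textbf{Step 3: case analysis for the braid relation.}
\begin{itemize}
\item If all three labels lie in $I^\imath$, use Lemma~\ref{braid-local-r1}.
\item Sign patterns reducible to that case by rotating with cups and caps are handled by Corollary~\ref{braid-local-r3}.
\item The pattern $i,-j,k\in I^\imath$ reduces to $-i,j,-k\in I^\imath$ by the remark following Corollary~\ref{braid-local-r3}.
\item In the pattern $-i,j,-k\in I^\imath$, the generic subcase is Lemma~\ref{braid-local-r21}.
\item The exceptional subcase $j=-k=-i$ is Lemma~\ref{braid-local-r211} when $j=\frac12$, and Lemma~\ref{braid-local-rneq} when $j\neq\frac12$.
\end{itemize}
Checking that these cases exhaust all $(i,j,k)\in I^3$ is routine bookkeeping. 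So $\AB$ acts on $\mathcal M$.

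\textbf{Step 4: the spectrum.} On the summand $E_i$ (respectively $F_i=E_{-i}$ in black labelling), the black dot is $i$ plus a nilpotent, respectively $-i$ plus a nilpotent, by Definition~\ref{natrual-basic-red} and Definition~\ref{nil}(2). Since $i\in I^\imath$, every generalized eigenvalue of the dot lies in $\pm I^\imath=\frac12+\mathbb Z$.

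\textbf{Main obstacle.} I expect the delicate point to be the exceptional braid case $j=-k=-i$. There the argument must go through the inhomogeneous relation: it reruns the Section~6 computation with $Z_{10}$ replaced by $\tilde Z_{10}$, and relies on the rational identities of Section~9 to conclude $\tilde Z_{10}=Z_{10}$. Since those lemmas are already established, the proof of the theorem itself is mainly assembly.
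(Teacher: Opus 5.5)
Your proposal is correct and is essentially the paper's proof. The paper likewise cites Lemma~\ref{dot-caup-red}, Corollary~\ref{def6}, Lemma~\ref{lem:C-equalities} and Lemmas~\ref{right cuaps - down cross-inverse}--\ref{braid-local-rneq} to verify every defining relation of $\AB$ on $\mathcal M$, and reads off the spectrum from Definition~\ref{natrual-basic-red} exactly as in your Step~4; your Step~3 just spells out the braid-relation case split that the paper carries out in Section~7.3.
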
 

\begin{proof} Suppose that $\mathcal M$ is a nilpotent $2$-representation  of $\mathfrak U^\imath_+$. By   Lemma~\ref{dot-caup-red}, Corollary~\ref{def6}, Lemma \ref{lem:C-equalities} and  Lemmas~\ref{right cuaps - down cross-inverse}--\ref{braid-local-rneq}, all defining relations for $\AB$ have been verified on  $\mathcal M$. Therefore, 
$\mathcal M$ admits the structure of a module category over the affine Brauer category. By Definition~\ref{gen-red1}, the generating endofunctor is 
$$E=\bigoplus_{i\in I^\imath} (E_i\bigoplus F_i),$$
and by Definition~\ref{natrual-basic-red} the dot has generalized eigenvalues $i$ and $-i$ on the summands $E_i$ and $F_i$, respectively. Hence its spectrum is contained in $\frac12+\mathbb Z$. 
\end{proof}

\subsection{Proof of Theorem~\ref{main111}} 
Part~(1) follows from Theorem~\ref{thm:mainthmaffineb}.
For part~(2), let $\mathcal M$ be a nilpotent
$2$-subrepresentation of the ambient locally Schurian
$2$-representation $\mathcal C$. We perform the constructions of Section~7 in the ambient locally
Schurian \(2\)-representation \(\mathcal C\) and restrict the
resulting functors and natural transformations to the nilpotent
\(2\)-subrepresentation \(\mathcal M\).
Since $\mathcal M$ is a
$2$-subrepresentation, all generating $1$-morphisms and
$2$-morphisms of $\mathfrak U^\imath_+$ restrict to
$\mathcal M$. The nilpotence assumption guarantees that
\[
E=\bigoplus_{i\in I^\imath}(E_i\oplus F_i)
\]
is well defined on every object of $\mathcal M$, and that all
rational-function expressions occurring in the constructions of
Section~7 are well defined on $\mathcal M$.

The relations established in Lemma~\ref{dot-caup-red},
Corollary~\ref{def6},  Lemma \ref{lem:C-equalities} and Lemmas~\ref{right cuaps - down cross-inverse}--\ref{braid-local-rneq}  hold in the ambient
locally Schurian $2$-representation $\mathcal C$, and hence
restrict to $\mathcal M$. Thus the construction in the proof of
Theorem~\ref{thm:mainthmaffineb1} endows $\mathcal M$ with an affine Brauer
categorification. By Definition~\ref{natrual-basic-red}, the dot has generalized
eigenvalue $i$ on $E_i$ and generalized eigenvalue $-i$ on
$F_i$, so its spectrum is contained in
$\frac12+\mathbb Z$.\qed

% Theorem~\ref{main111} follows from Theorems~\ref{thm:mainthmaffineb} and \ref{thm:mainthmaffineb1}.

\begin{rem}
    The two statements  in Theorem~\ref{main111} are
asymmetric. In part~(1), the affine Brauer action is required to have
dot spectrum exactly $\frac12+\mathbb Z$, whereas part~(2) produces an
affine Brauer action whose dot spectrum is only known to be contained in
$\frac12+\mathbb Z$. Thus Theorem~\ref{main111} does not assert that the two
constructions are mutually inverse, even in the locally finite Abelian
setting. In particular, we do not prove that, for every $i\in \frac12+\mathbb Z$, there is a simple object $L_i$  such that 
$E_iL_i\neq0$.\end{rem}

\section{The cyclotomic Brauer-$\imath$-Kac--Moody isomorphism}
In this section we apply the two constructions of Theorem A to cyclotomic quotients. Let $\mathcal{CB}_{\mathbf u}$ be a $\mathbf u$-admissible cyclotomic Brauer category with half-integral parameters. We associate to $\mathbf u$ a dominant $\imath$-weight $\kappa$ and a cyclotomic quotient $\mathcal Q(\kappa)$ of the principal 2-representation of $\mathfrak U^{\imath}_{+}$. Our goal is to identify the locally unital path algebra of $\mathcal{CB}_{\mathbf u}$ with the endomorphism algebra generated by the objects $E_{\mathbf i}1_{\kappa}$ in $\mathcal Q(\kappa)$. This identification is the main representation-theoretic consequence of the paper: it realizes cyclotomic Brauer algebras as endomorphism algebras in $\imath$-Kac--Moody categorification and transports the grading of the 2-category to them. In this sense it supplies the categorical and graded foundation for a Brauer--codieal counterpart of the type $A$ Brundan-Kleshchev-Ariki picture.

%The following elementary observation will be useful.

\begin{Prop}\label{gen123}
Let $\mathcal C$ be a strict $\Bbbk$-linear monoidal category,
let $\mathcal I$ be a left tensor ideal of $\mathcal C$, and set
$\mathcal B=\mathcal C/\mathcal I$. If the path algebra $A$ of
$\mathcal B$ is locally finite-dimensional, then
$\operatorname{lfdmod}\text{-}A$ is naturally a left module category
over $\mathcal C^{\operatorname{op},\operatorname{rev}}$.
Equivalently, it is a right module category over
$\mathcal C^{\operatorname{op}}$.
\end{Prop}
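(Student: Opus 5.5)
The plan is to build the action directly from the path algebra, the way one does for Heisenberg-type cyclotomic quotients in \cite{BSW-heisenberg}.

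\emph{Path algebra and modules.} Write $A=\bigoplus_{m,n\in\Ob\mathcal B}\Hom_{\mathcal B}(n,m)$, with multiplication given by composition and distinguished idempotents $1_n$. A locally finite-dimensional left $A$-module $M$ is then the same thing as a $\Bbbk$-linear functor $\mathcal B\to\Vect$ with finite-dimensional values $M_n=1_nM$. Equivalently, it is a functor $\mathcal C\to\Vect$ that kills $\mathcal I$.

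\emph{The functors.} For each object $a\in\mathcal C$ we want an endofunctor of $\operatorname{lfdmod}\text{-}A$. Since $\mathcal I$ is a left tensor ideal, $\mathcal I$ is stable under $a\otimes-$. Hence $a\otimes-$ descends to an endofunctor of $\mathcal B$, and each morphism $f:a\to b$ in $\mathcal C$ descends to a natural transformation $f\otimes\mathrm{id}:a\otimes-\Rightarrow b\otimes-$. Define
\[
(T_aM)_n:=M_{a\otimes n},
\]
with $A$ acting through $\mathrm{id}_a\otimes-$. In other words, $T_aM=M\circ(a\otimes-)$. The action is well defined because $a\otimes-$ preserves $\mathcal I$.

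\emph{The action on morphisms.} A morphism $f:a\to b$ gives $T_f:T_bM\to T_aM$, whose $n$-component is $M(f\otimes\mathrm{id}_n):M_{a\otimes n}\to M_{b\otimes n}$, read contravariantly. One checks:
\begin{itemize}
\item $T_f$ is an $A$-module map, by the interchange law $(f\otimes\mathrm{id})\circ(\mathrm{id}\otimes g)=(\mathrm{id}\otimes g)\circ(f\otimes\mathrm{id})$;
\item the construction is natural in $M$;
\item $T_{a\otimes b}=T_b\circ T_a$ strictly, since $M\circ(a\otimes b\otimes-)=(M\circ(a\otimes-))\circ(b\otimes-)$;
\item $T_{f\circ g}=T_g\circ T_f$, and $T_{f\otimes g}$ is the horizontal composite of $T_f$ and $T_g$ in the reversed order.
\end{itemize}
Thus $a\mapsto T_a$ is a strict monoidal functor $\mathcal C^{\op,\mathrm{rev}}\to\End_\Bbbk(\operatorname{lfdmod}\text{-}A)$. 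This is the same as a right $\mathcal C^{\op}$-module structure.

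\emph{Where the work lies.} All of this is bookkeeping. The steps needing care are the following.
\begin{itemize}
\item \textbf{Finite-dimensionality.} $T_aM$ must again be locally finite-dimensional. This is immediate, since each $M_{a\otimes n}$ is finite-dimensional.
\item \textbf{Orientation conventions.} The direction of the reversal, $\op$ from contravariance in morphisms and $\mathrm{rev}$ from precomposition in objects, must be tracked consistently.
\item \textbf{The quotient.} Every step must be done in $\mathcal B$ rather than $\mathcal C$, and this is exactly where the hypothesis that $\mathcal I$ is a left ideal is used.
\end{itemize}
I expect only the orientation bookkeeping to require attention; there is no genuine obstacle.
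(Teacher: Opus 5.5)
Your construction is the paper's: use that $\mathcal I$ is a left tensor ideal to descend $a\otimes-$ to an endofunctor $L_a$ of $\mathcal B$, let $a$ act by precomposition with $L_a$, and read off $T_{a\otimes b}=T_b\circ T_a$, which gives the $\mathrm{rev}$. However, the step you flagged as the only one needing care, the orientation, is wrong as written.

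You identify $\operatorname{lfdmod}\text{-}A$ with locally finite-dimensional \emph{left} modules, i.e.\ covariant functors $M\colon\mathcal B\to\mathrm{Vect}$. Then for $f\colon a\to b$ the component $M(f\otimes\mathrm{id}_n)\colon M_{a\otimes n}\to M_{b\otimes n}$ is a map $(T_aM)_n\to(T_bM)_n$. So $T_f\colon T_aM\to T_bM$ and $T_{f\circ g}=T_f\circ T_g$; ``reading it contravariantly'' does not reverse a map. With your identification the argument yields a $\mathcal C^{\mathrm{rev}}$-module structure, not the claimed $\mathcal C^{\op,\mathrm{rev}}$ one, and your bullet $T_{f\circ g}=T_g\circ T_f$ is false. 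For a general monoidal $\mathcal C$ these are genuinely different structures.

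The fix is the model the paper uses. $\operatorname{lfdmod}\text{-}A$ consists of locally finite-dimensional \emph{right} $A$-modules; compare $P=1_0B$ and $P'=1_\varnothing Q$ in Section~8. Thus $\operatorname{lfdmod}\text{-}A\simeq\operatorname{Fun}_\Bbbk(\mathcal B^{\op},\mathrm{Vect}^{\mathrm{fd}}_\Bbbk)$, and one sets $T_aM:=M\circ L_a^{\op}$. Now the contravariant $M$ sends $f\otimes\mathrm{id}_n\colon a\otimes n\to b\otimes n$ to $M(b\otimes n)\to M(a\otimes n)$. Hence $T_f\colon T_bM\to T_aM$ genuinely holds, and $T_{f\circ g}=T_g\circ T_f$. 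The $\op$ comes from the contravariance of $M$ and the $\mathrm{rev}$ from precomposition, exactly as in the paper's proof. Your remaining checks go through verbatim: module maps via interchange, naturality in $M$, finite-dimensionality, and strictness.

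Alternatively, you can keep left modules and transport along the duality $M\mapsto M^*$. This is a contravariant equivalence between locally finite-dimensional left and right $A$-modules, and it turns a $\mathcal C^{\mathrm{rev}}$-action into a $\mathcal C^{\op,\mathrm{rev}}$-action. That step must be stated explicitly, though.
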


\begin{proof}
Since $\mathcal I$ is a left tensor ideal, left tensoring by
$X\in\mathcal C$ descends to an endofunctor
$
L_X\colon\mathcal B\longrightarrow\mathcal B$.
Under the standard equivalence
\[
\operatorname{lfdmod}\text{-}A
\simeq
\operatorname{Fun}_{\Bbbk}
\bigl(\mathcal B^{\operatorname{op}},
      \operatorname{Vect}^{\mathrm{fd}}_{\Bbbk}\bigr),
\]
the action of \(X\) is given by precomposition with
\(L_X^{\operatorname{op}}\). Since
$
L_{X\otimes Y}=L_X\circ L_Y$ and $
L_{\mathbf 1}=\operatorname{Id}_{\mathcal B}$,
these endofunctors define a left
\(\mathcal C^{\operatorname{op},\operatorname{rev}}\)-module
category structure.
\end{proof}

\subsection{Cyclotomic Brauer category} Fix  $\mathbf u=(u_1,\ldots, u_a)\in \Bbbk^a$ for some $a> 0$.   Choose 
$(\omega_i)_{i\in \mathbb N}\in \Bbbk^{\mathbb N}$ such that the $\mathbf u$-admissible condition in ~\cite[Proposition~3.1]{AMR-cyc} holds, namely 
\begin{equation}\label{uadamr} \sum_{i\ge 0}\frac{\omega_i}{u^i}+u-\frac{1}{2}= (u-\frac{1}{2}(-1)^a)\prod_{i=1}^a \frac{u+u_i}{u-u_i}.\end{equation}

\begin{Defn}\cite{RS-cyc}\label{822}
 The  cyclotomic Brauer category $\mathcal {CB}_{\mathbf u}$
associated with  $\mathbf u=(u_1,\ldots, u_a)$ is defined as the quotient category of $\mathcal {AB}$ by
the left tensor ideal generated by 
$\begin{tikzpicture}[baseline = -1mm]
 	\draw[-] (-0.18,-.4) to (-0.18,.4);
     \node at (-0.18,0) {$\scriptstyle\bullet$};
     \node at (0.35,0) {$\scriptstyle m(x)$};
 	%\draw[-,darkg,thick] (.9,.4) to (.9,-.4);
     %\node at (.9,-.55) {$\darkg\scriptstyle{V}$};
\end{tikzpicture}$
and
$ \begin{tikzpicture}[baseline = 1.25mm]
		\draw[-] (0,0.4) to[out=180,in=90] (-.2,0.2);
		\draw[-] (0.2,0.2) to[out=90,in=0] (0,.4);
		\draw[-] (-.2,0.2) to[out=-90,in=180] (0,0);
		\draw[-] (0,0) to[out=0,in=-90] (0.2,0.2);
		\node at (0.2,0.2) {$\scriptstyle\bullet$};
		\node at (0.45,0.2) {$\scriptstyle{x^r}$};
\end{tikzpicture}-\omega_r$ for $r\ge 0$, where 
$m(u)=\prod_{i=1}^a(u-u_i)$.
\end{Defn}

By the basis theorem given in \cite[Theorem~C]{RS-cyc} under the $\mathbf u$-admissible condition, $m(u)$ is the minimal polynomial of the generating morphism dot $x$.   Let $B_\mathbf u$ be the locally unital algebra associated to the cyclotomic Brauer category 
$\mathcal {CB}_{\mathbf u}$. 
That is,
$$B_\mathbf u=\bigoplus_{i,j\in \mathbb N}\Hom_{\mathcal {CB}_\mathbf u}(i,j).$$
 From now on, we write $B_\mathbf u$ simply as $B$. 
Since \(\mathcal{CB}_{\mathbf u}\) is locally finite-dimensional in the
setting considered here, and since it is defined as a quotient of
\(\mathcal{AB}\) by a left tensor ideal, Proposition 8.1 implies the
following.

\begin{Prop}
\label{prop:modues}
The category
$\mathrm{lfdmod}\text{-}B$ is naturally a left
module category over $\mathcal{AB}$. In other words,  $\text{lfdmod-}B$
 admits an affine Brauer categorification.  
 \end{Prop}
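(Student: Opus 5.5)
The plan is to deduce the statement from Proposition~\ref{gen123}, applied with $\mathcal C=\AB$ and $\mathcal I$ the left tensor ideal of Definition~\ref{822}, and then to twist the resulting action by a monoidal isomorphism $\AB\cong\AB^{\operatorname{op},\operatorname{rev}}$. Two inputs are needed for Proposition~\ref{gen123}. First, $\mathcal{CB}_{\mathbf u}=\AB/\mathcal I$ by definition. Second, the path algebra $B$ must be locally finite-dimensional, that is, each $\Hom_{\mathcal{CB}_{\mathbf u}}(i,j)$ must be finite-dimensional. For this I will invoke the basis theorem \cite[Theorem~C]{RS-cyc}, which applies under the $\mathbf u$-admissibility condition \eqref{uadamr}. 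It gives a finite basis of cyclotomic Brauer diagrams whose dots carry exponents less than $a=\deg m(u)$. Proposition~\ref{gen123} then makes $\operatorname{lfdmod}\text{-}B$ a left module category over $\AB^{\operatorname{op},\operatorname{rev}}$.

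The main step is to construct a strict $\Bbbk$-linear monoidal isomorphism $\Theta\colon\AB\to\AB^{\operatorname{op},\operatorname{rev}}$. Precomposing the action with $\Theta$ will then give the required strict monoidal functor $\AB\to\End_{\Bbbk}(\operatorname{lfdmod}\text{-}B)$. I will build $\Theta$ as the composite of two pieces.

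The first piece is the isomorphism $\sigma\colon\AB\to\AB^{\operatorname{op}}$ of Lemma~\ref{anti}, which is reflection in the horizontal axis.

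The second piece is an isomorphism $\rho\colon\AB\to\AB^{\operatorname{rev}}$ given by reflection in the vertical axis. On generators, $\rho$ fixes the crossing, the cup and the cap, and sends the dot $x$ to $-x$. The sign is forced by the dot-sliding relation Definition~\ref{C defn}(4): mirroring interchanges the roles of $x_1$ and $x_2$, so the sign of the commutator term flips, and replacing $x$ by $-x$ restores it. I will check each defining relation of $\AB$ under $\rho$:
\begin{enumerate}
\item the zig-zag relations Definition~\ref{C defn}(1) and the remaining relations among crossings, cups and caps, Definition~\ref{C defn}(2)--(3) and Definition~\ref{AB defn}(1)--(2), are mirror-symmetric, so they are preserved;
\item the dot-crossing relation Definition~\ref{C defn}(4) is preserved because of the sign change $x\mapsto -x$;
\item the relation of Definition~\ref{C defn}(5) is preserved in the equivalent form of Lemma~\ref{AB relations-re}(1)--(2), since the rule that a dot on one end of a cup equals minus a dot on the other end is invariant under simultaneously swapping the ends and negating $x$.
\end{enumerate}
Since $\rho^2=\operatorname{id}$, $\rho$ is an isomorphism. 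I then set $\Theta=\rho\circ\sigma$, read as a functor into $\AB^{\operatorname{op},\operatorname{rev}}$.

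The main obstacle is the verification for $\rho$, because relations (4)--(5) of Definition~\ref{C defn} are not visibly symmetric. My first move is to replace (5) by Lemma~\ref{AB relations-re}(1)--(2), using the equivalent presentation recorded after that lemma, and to check (4) by a one-line computation with the sign $x\mapsto -x$.

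If the sign bookkeeping fails, my fallback is to avoid $\rho$ altogether. One can act directly, letting $X\in\AB$ act on a right $B$-module $M$ by $M\mapsto M\otimes_B\bigl(\bigoplus_n 1_{X\otimes n}B\bigr)$-type induction. Using left tensoring, which is well defined because $\mathcal I$ is a left tensor ideal, the composition order then matches that of $\AB$ after the single horizontal flip $\sigma$.

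Either way, the resulting $\AB$-module category structure is by definition an affine Brauer categorification of $\operatorname{lfdmod}\text{-}B$.
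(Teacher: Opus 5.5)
Your proposal is correct and follows the same route as the paper. Both arguments use Proposition~\ref{gen123} to obtain an $\AB^{\operatorname{op},\operatorname{rev}}$-action on $\operatorname{lfdmod}\text{-}B$, and then pull this action back along the strict monoidal isomorphism $\AB\to\AB^{\operatorname{op},\operatorname{rev}}$ obtained by composing the horizontal reflection $\sigma$ of Lemma~\ref{anti} with the left-right reflection that negates the dot. Your check that this reflection sends Definition~\ref{C defn}(4) to the other dot-sliding relation, Lemma~\ref{AB relations-re}(3), and preserves Lemma~\ref{AB relations-re}(1)--(2), is exactly the verification the paper leaves implicit, so the fallback is not needed.
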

\begin{proof} More generally, suppose that
the category $\mathcal C$ in Proposition
8.1 is equipped with a strict anti-monoidal anti-involution,
equivalently a strict monoidal equivalence
\[
\iota:\mathcal C\longrightarrow \mathcal C^{op,\mathrm{rev}},
\]
where $\mathcal C^{op,\mathrm{rev}}$ denotes the category obtained from $\mathcal C$ by reversing both morphisms and the tensor product. Hence the left $\mathcal C^{op,\mathrm{rev}}$-module category structure on $\mathrm{lfdmod}\text{-}A$ constructed in Proposition~\ref{gen123} can be pulled back along $\iota$. More explicitly, if
\[
\Phi:\mathcal C^{op,\mathrm{rev}}\longrightarrow
\End(\mathrm{lfdmod}\text{-}A)
\]
is the action functor obtained above, then
\[
\Phi^\iota:=\Phi\circ\iota:
\mathcal C\longrightarrow
\End(\mathrm{lfdmod}\text{-}A)
\]
defines a left $\mathcal C$-module category structure on $\mathrm{lfdmod}\text{-}A$.
In particular, for the affine Brauer category, the path algebra $A$ in Proposition~\ref{gen123} is the algebra $B$ associated with the cyclotomic Brauer category.  
The diagrammatic anti-involution in Lemma~\ref{anti}, together with the left-right reversal of diagrams sending the dot to its negative, gives a strict monoidal equivalence  $\iota$ in this case.  Thus the left $\AB^{op,\mathrm{rev}}$-module category structure obtained from Proposition~\ref{gen123} is transported, via this anti-involution, to a left module category structure over the affine Brauer category $\AB$ itself.
\end{proof}

To relate $\mathrm{lfdmod}$-$B$ to module categories over $\mathfrak U^\imath_+$,
we now specialize to the half-integral case. Assume from now on that
$k=\mathbb C$ and that all roots of $m(u)$ lie in
$
I=\frac12+\mathbb Z$.
We rewrite $m(u)$ as
\begin{equation}
\label{equ:poly}
    m(u)=\prod_{i\in I}(u-i)^{\epsilon_i},
\end{equation}
where $\epsilon_i$ is the multiplicity of $i$ as a root of $m(u)$. This notation
will be used from this point to the end of this section.

Let  $O(u)$ denote  the left-hand side of \eqref{uadamr} and define $n(u)=O(u) m(u)$. Then \eqref{uadamr} implies that $n(u)$ is a polynomial. 
 %$$n(u)= ((-1)^au-\frac{1}{2})m(-u).$$ 
Write \begin{equation}
\label{nuexp} n(u)=\prod_{i\in I} (u-i)^{\phi_i}.\end{equation}
By \eqref{uadamr}, we have
\[
n(u)=\left(u-\frac12(-1)^a\right)
\prod_{r=1}^a(u+u_r),
\]
and hence 
\[
\varphi_i
=
\epsilon_{-i}
+
\delta_{i,\frac12(-1)^a}
\qquad\text{for all }i\in I.
\]
Then the integers
\(
c_i
:=
\varphi_i-\epsilon_i-\delta_{i,\frac12},
i\in I^\imath,
\)
have finite support and even total sum (see the proof of Corollary \ref{cor:even}). They therefore determine
an element $\kappa\in X^\imath_+$. Equivalently, $\kappa$ is
determined by
\begin{equation}
\label{equ:phiminusepslon1}
   \phi_i-\epsilon_i= \begin{cases} \langle ^\theta\alpha_i^\vee,\kappa \rangle, & \text{ if } i\neq  \frac{1}{2},\\
    \langle ^\theta\alpha_i^\vee,\kappa \rangle+1, & \text{ if } i=  \frac{1}{2}.
   \end{cases}
\end{equation} 
Let $E$ denote the generating object of $\AB$, represented by a vertical strand $\begin{tikzpicture}[baseline = -1mm, scale=0.7, transform shape]
 	\draw[-] (-0.18,-.4) to (-0.18,.4);
     %\node at (-0.18,0) {$\scriptstyle\bullet$};
     %\node at (0.35,0) {$\scriptstyle m(x)$};
 	%\draw[-,darkg,thick] (.9,.4) to (.9,-.4);
     %\node at (.9,-.55) {$\darkg\scriptstyle{V}$};
\end{tikzpicture}$. Then  
\[B = \bigoplus_{m,n\in \mathbb N} \Hom_{\mathcal{CB}_\mathbf u}(E^m,E^n).\] 
Thus $B$ is the path algebra of $\mathcal {CB}_{\mathbf u}$. 
By Proposition~\ref{prop:modues}, 
 $\text{lfdmod-}B$ becomes a  module category over $\mathcal {AB}$. 
The endofunctor $E$ on $\text{lfdmod-}B$ admits the  decomposition $E=\bigoplus_{i\in I}E_i$. By  the arguments in Section~\ref{Brauer-cate},  we have a weight decomposition 
\[ \text{lfdmod-}B= \prod_{\lambda\in {X_\imath^+}} \text{lfdmod-}B_\lambda,\]
 given in Lemma \ref{BLOCK} with $\lambda$ as in Definition \ref{expre}. 
Under the natural identification \(E(1_0B)\cong 1_1B\), the dot
has minimal polynomial \(m(u)\), while the dotted bubbles act on
\(1_0B\) by the prescribed scalars \(\omega_r\). Hence
\[
m_{1_0B}(u)=m(u)
\qquad\text{and}\qquad
\overline{n}_{1_0B}(u)=n(u).
\]
It follows from \eqref{equ:phiminusepslon1} and  \eqref{equ:phiminusepslon} that
\begin{equation}\label{objP}
P:=1_0B\in \operatorname{lfdmod}\text{-}B_\kappa.
\end{equation}
Moreover, there is a  natural isomorphism   
$E^dP\cong1_d B$, for $d\in\mathbb N$. Since $E=\bigoplus_{i\in I}E_i$, we have 
\[ E^d=\bigoplus_{\mathbf i\in I^d} E_{\mathbf i}\]
where $E_{\mathbf i}=E_{i_d}\ldots E_{i_1}$ for $\mathbf i=(i_1,\ldots,i_d)\in I^d$.
Define $\text{wt}(E_\mathbf i)=\alpha_{i_1}+\ldots+\alpha_{i_d}$. 
Then we have $E_{\mathbf i}P\in \text{lfdmod-}B_\lambda $ with $\lambda=\kappa+ \text{wt}(E_\mathbf i)$. 
Since $E^dP\cong 1_d B$, the idempotent $1_d$ decomposes as $1_d=\sum_{\mathbf i\in I^d} 1_{E_{\mathbf i}}$, with  
$1_{E_{\mathbf i}}B\cong E_{\mathbf i} P $.
In particular, we have the following  decomposition: % as locally unital algebra of $B$ 
\begin{equation}
\label{equ:decomofB}
B=\bigoplus_{m=0}^\infty \bigoplus_{n=0}^\infty\bigoplus_{\mathbf i\in I^m,\mathbf j\in I^n} 1_{E_\mathbf i} B1_{E_{\mathbf j}}\cong\bigoplus_{m=0}^\infty \bigoplus_{n=0}^\infty \bigoplus_{ \mathbf i\in I^m,\mathbf j\in I^n}\Hom_{B}(E_{\mathbf j}P, E_{\mathbf i}P).
\end{equation}

\subsection{Cyclotomic quotient category of $\mathfrak U^\imath_+$}
Fix $m(u)$, $n(u)$, and $\kappa\in X^+_\imath$ as in the previous subsection.
For each $i\in I$, let $\epsilon_i$ and $\phi_i$ be the nonnegative integers
determined by $m(u)$ and $n(u)$ as  in \eqref{equ:poly}-\eqref{nuexp}.

\begin{Defn}\label{844}
For each $i\in I^\imath$, define two families of parameters
\(\{\omega_i^{(r)} \mid r\in \mathbb Z\}\) and
\(\{\tilde{\omega}_i^{(r)} \mid r\in \mathbb Z\}\) by the following identities:
\begin{equation}\label{localpara}
O_i(u) := \sum_{r\in \mathbb Z} \omega_i^{(r)} u^{-r-1}
= u^{\phi_i-\epsilon_i},
\qquad
\tilde O_i(u) := \sum_{r\in \mathbb Z} \tilde\omega_i^{(r)} u^{-r-1}
=
2^{\delta_{i,\frac12}}
u^{\delta_{i,\frac12}+\epsilon_i-\phi_i}.
\end{equation}
\end{Defn}

Equivalently, for each $i\in I^\imath$, each of the two Laurent series
$O_i(u)$ and $\tilde O_i(u)$ consists of   a single monomial. Hence exactly one
of the parameters $\omega_i^{(r)}$ is nonzero, and exactly one of the parameters
$\tilde\omega_i^{(r)}$ is nonzero.

Following \cite[Section~2.6]{MM}, a 
\(\mathfrak U^\imath_+\)-invariant ideal of a \(2\)-representation
\(
  \mathcal M
  =
  (\mathcal M_\lambda)_{\lambda\in X^\imath_+}
\)
is a family
\(
  \mathcal I
  =
  (\mathcal I_\lambda)_{\lambda\in X^\imath_+}
\)
such that each \(\mathcal I_\lambda\) is an ideal of the
\(\mathbb C\)-linear category \(\mathcal M_\lambda\), and, for every
\(1\)-morphism \(F\colon\lambda\to\mu\),
\[
  F\bigl(\mathcal I_\lambda(X,Y)\bigr)
  \subseteq
  \mathcal I_\mu(FX,FY)
\]
for all \(X,Y\in\mathcal M_\lambda\).
The invariant ideal generated by a family of morphisms is the smallest
invariant ideal containing those morphisms.

Associated with $\kappa$, there is a universal $2$-representation
\((\mathcal R(\kappa)_\lambda)_{\lambda\in X_\imath^+}\) 
of $\mathfrak U^\imath_+$ defined by
\[
\mathcal R(\kappa)_\lambda
=
\Hom_{\mathfrak U^\imath_+}(\kappa,\lambda),
\]
where the actions of $1$- and $2$-morphisms are given by left horizontal
composition.
Let
\(
\mathcal I(\kappa)
=
\bigl(\mathcal I(\kappa)_\lambda\bigr)_
{\lambda\in X^\imath_+}
\)
be the $\mathfrak  U^\imath_+$-invariant ideal of the principal
$2$-representation
\(
\mathcal R(\kappa)
=
\bigl(\mathcal R(\kappa)_\lambda\bigr)_
{\lambda\in X^\imath_+}
\)
generated by
\begin{equation}\label{equ:genofgenealizedcyc}
\begin{tikzpicture}[baseline = -1mm,darkred, scale=0.7, transform shape ]
	\draw[->,thick] (0.08,-.4) to (0.08,.4);
    \node at (.08,0) {$\bullet$};
    \node at (.08,-.6) {$\scriptstyle{i}$};
    \node at (0.5,0.05) {$\color{black}\scriptstyle{\epsilon_i}$};
    \node at (0.4,-.3) {$\color{black}\scriptstyle{\kappa}$};
\end{tikzpicture},
\quad
\begin{tikzpicture}[baseline = -1mm,darkred, scale=0.7, transform shape ]
	\draw[<-,thick] (0.08,-.4) to (0.08,.4);
    \node at (.08,0) {$\bullet$};
    \node at (.08,-.6) {$\scriptstyle{i}$};
    \node at (0.5,0.05) {$\color{black}\scriptstyle{\phi_i}$};
    \node at (0.4,-.3) {$\color{black}\scriptstyle{\kappa}$};
\end{tikzpicture},
\quad
\begin{tikzpicture}[baseline = 0, scale=0.7, transform shape ]
  \draw[->,thick,darkred] (0.2,0.2) to[out=90,in=0] (0,.4);
  \draw[-,thick,darkred] (0,0.4) to[out=180,in=90] (-.2,0.2);
  \draw[-,thick,darkred] (-.2,0.2) to[out=-90,in=180] (0,0);
  \draw[-,thick,darkred] (0,0) to[out=0,in=-90] (0.2,0.2);
  \node at (0,-.1) {$\scriptstyle{i}$};
  \node at (-0.3,0.2) {$\scriptstyle{\kappa}$};
  \node at (0.2,0.2) {$\color{darkred}\scriptstyle\bullet$};
  \node at (0.4,0.2) {$\color{darkred}\scriptstyle{r}$};
\end{tikzpicture}
-
\omega_i^{(r)},
\quad
\begin{tikzpicture}[baseline = 0, scale=0.7, transform shape ]
  \draw[<-,thick,darkred] (0,0.4) to[out=180,in=90] (-.2,0.2);
  \draw[-,thick,darkred] (0.2,0.2) to[out=90,in=0] (0,.4);
  \draw[-,thick,darkred] (-.2,0.2) to[out=-90,in=180] (0,0);
  \draw[-,thick,darkred] (0,0) to[out=0,in=-90] (0.2,0.2);
  \node at (0,-.1) {$\scriptstyle{i}$};
  \node at (0.3,0.2) {$\scriptstyle{\kappa}$};
  \node at (-0.2,0.2) {$\color{darkred}\scriptstyle\bullet$};
  \node at (-0.4,0.2) {$\color{darkred}\scriptstyle{r}$};
\end{tikzpicture}
-
\tilde\omega_i^{(r)},
\end{equation}
for all $r\in\mathbb Z$.
The cyclotomic quotient is then defined by
\[
\mathcal Q(\kappa)_\lambda
:=
\mathcal R(\kappa)_\lambda/
\mathcal I(\kappa)_\lambda,
\qquad
\mathcal Q(\kappa)
:=
\prod_{\lambda\in X^\imath_+}
\mathcal Q(\kappa)_\lambda.
\]
By construction, $\mathcal I(\kappa)$ is stable under the action
of $\mathfrak U^\imath_+$; hence the universal action on
$\mathcal R(\kappa)$ descends to a strict $2$-action on
$\mathcal Q(\kappa)$.

Recall that, for each \(\lambda\in X^\imath_+\),
the category \(\mathcal Q(\kappa)_\lambda\) is additively
generated by the objects
\(\{
E_{\mathbf i}1_\kappa\mid 
\mathbf i=(i_1,\ldots,i_d),
\lambda=\kappa+\operatorname{wt}(E_{\mathbf i})\}\),
where
\[
\operatorname{wt}(E_{\mathbf i})
=
\sum_{j=1}^{d}\operatorname{wt}(E_{i_j}),
\qquad
\operatorname{wt}(E_h)=\alpha_h.
\]
We define the locally unital algebra
\[
Q:=
\bigoplus_{
  \substack{
    \mathbf i,\mathbf j,\\
    \kappa+\operatorname{wt}(E_{\mathbf i})
    =
    \kappa+\operatorname{wt}(E_{\mathbf j})
    \in X^\imath_+
  }}
\operatorname{Hom}_{\mathcal Q(\kappa)}
\bigl(E_{\mathbf i}1_\kappa,E_{\mathbf j}1_\kappa\bigr).
\]
Equivalently, \(Q\) is the path algebra of the full subcategory
of \(\mathcal Q(\kappa)\) on these additive generators.
The weight decomposition of $\mathcal Q(\kappa)$ induces a decomposition
\[
Q=\bigoplus_{\lambda\in X^\imath_+}Q_\lambda,
\qquad
Q_\lambda:=
\bigoplus_{\substack{\mathbf i,\mathbf j\\
\kappa+\operatorname{wt}(E_{\mathbf i})
=\kappa+\operatorname{wt}(E_{\mathbf j})=\lambda}}
\operatorname{Hom}_{\mathcal Q(\kappa)_\lambda}
(E_{\mathbf i}1_\kappa,E_{\mathbf j}1_\kappa).
\]
so that $Q$ is a locally unital algebra with distinguished idempotents $1_{E_\mathbf i}$.

\begin{Prop}  $\mathcal Q(\kappa)$ is locally
finite-dimensional. \end{Prop}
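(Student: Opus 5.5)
Since $\mathcal Q(\kappa)_\lambda$ is additively generated by the objects $E_{\mathbf i}1_\kappa$, it suffices to show that every space $\operatorname{Hom}_{\mathcal Q(\kappa)}(E_{\mathbf i}1_\kappa,E_{\mathbf j}1_\kappa)$ is finite-dimensional.

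The plan follows the standard approach for cyclotomic quotients in \cite[\S5]{BSW-heisenberg}. By the adjunctions of Definition~\ref{def-ikmc}(1), we have
\[
\operatorname{Hom}(E_{\mathbf i}1_\kappa,E_{\mathbf j}1_\kappa)\cong \operatorname{Hom}(1_\kappa, F_{\mathbf i^{\rm rev}}E_{\mathbf j}1_\kappa).
\]
So it is enough to bound $\operatorname{End}_{\mathfrak U^\imath_+}(1_\kappa)$-spans of $2$-morphisms $1_\kappa\Rightarrow G1_\kappa$, where $G$ is a word in the $E_h$ and $F_h$, modulo $\mathcal I(\kappa)$.

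First step. Using the mixed relations (7) and bicross relations (6) of Definition~\ref{def-ikmc}, I would move every $F_h$ to the right of the $E$'s, so that every $F_h1_\kappa$ sits directly on $1_\kappa$. The quiver Hecke, bubble and bicross relations give a spanning set for such $2$-morphisms. It consists of diagrams built from crossings, cups and caps, with dots on the strands and with polynomials in dotted bubbles at the far right in region $\kappa$. This is the analogue of the nondegeneracy spanning set of \cite{BSWW-icate}, and I only need the spanning part, which follows from the defining relations by the usual straightening argument. The inhomogeneous relation only adds lower-order terms, so it does not enlarge this span.

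Second step, the finiteness. Modulo $\mathcal I(\kappa)$:
\begin{itemize}
\item[(a)] every dotted bubble in region $\kappa$ becomes a scalar, because of the generators $\omega_i^{(r)}$ and $\tilde\omega_i^{(r)}$ in \eqref{equ:genofgenealizedcyc};
\item[(b)] a dot on the rightmost upward $i$-strand at $\kappa$ satisfies $x^{\epsilon_i}=0$, and a dot on the rightmost downward strand satisfies $x^{\phi_i}=0$;
\item[(c)] dots on the other strands can be slid to the rightmost position using the quiver Hecke relations (3), at the cost of diagrams with fewer crossings.
\end{itemize}
An induction on the number of strands then shows that on $E_{\mathbf j}1_\kappa$ every dot satisfies a polynomial whose degree is bounded in terms of $d$ and the $\epsilon_i$, exactly as in the cyclotomic KLR argument of \cite{BK-klr}. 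Together with the finitely many crossing/cup/cap shapes for fixed words, this gives finite-dimensionality.

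The main obstacle is the nilpotency bound on dots sitting far from the right edge when downward strands and cups or caps are interleaved. Here the dot-sliding through leftward crossings, given by the bicross and curl relations of Lemma~\ref{lem:curlrebubblrmove}, produces bubble terms with negative labels, and these have to be controlled. As a fallback, I would transport the bound through the Brauer side. The forward construction applied to $\operatorname{lfdmod}\text{-}B$ with $P=1_0B\in\operatorname{lfdmod}\text{-}B_\kappa$ from \eqref{objP} gives a $2$-representation that kills $\mathcal I(\kappa)$. Comparing with the reverse construction should bound dimensions by those of the finite-dimensional spaces $1_mB1_n$, using the basis theorem \cite[Theorem~C]{RS-cyc}.
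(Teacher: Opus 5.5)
Your adjunction reduction to $\operatorname{Hom}(1_\kappa,G1_\kappa)$ and your items (a) and (b) agree with the paper. The finiteness step, however, has a genuine gap.

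\textbf{Item (c) fails as stated.} The dot-sliding relations move a dot along its own strand past a crossing, modulo diagrams with fewer crossings. So a dot can be brought next to the region $\kappa$ only if its own strand reaches the rightmost position. For a strand that never does, you need a genuine nilpotency statement, and the Brundan--Kleshchev induction you invoke is an argument for purely upward strands. As you concede, once downward strands, cups and caps are interleaved this is uncontrolled.

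\textbf{The fallback is circular.} The functor $\mathcal Q(\kappa)\to\mathrm{lfdmod}\text{-}B$ coming from the forward construction bounds nothing unless it is faithful. Faithfulness is only obtained in Theorem~\ref{cyc-iso}, by running the reverse construction on $\mathrm{lfdmod}\text{-}Q$. That presupposes that this category is locally Schurian, which is the present proposition, and that dots act nilpotently, which is what you are trying to prove.

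\textbf{Your first step also fails at colour $\frac12$.} At the boundary colour $\frac12$ no inversion isomorphism relates $E_{1/2}F_{1/2}$ and $F_{1/2}E_{1/2}$, so the reordering cannot be carried out there. Fortunately the step is not needed.

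\textbf{The missing observation.} After your adjunction, no nilpotency of interior dots is required at all.
\begin{itemize}
\item In a diagram $1_\kappa\Rightarrow G1_\kappa$ every strand ends on a cup.
\item Each matching can be represented as $D_{\mathrm{cr}}\circ D_{\mathrm{cup}}$, where $D_{\mathrm{cup}}$ is a row of non-nested cups joining adjacent endpoints. Every region outside these cups is then labelled $\kappa$.
\item Place all dots on these cups, below all crossings. The spanning set (for instance from the nondegeneracy theorem of \cite{BWW25}) allows this, since sliding a dot through a crossing only produces diagrams with fewer crossings.
\item By the interchange law, each cup may be applied first, directly on $1_\kappa$. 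Move its dots to the right leg by cyclicity; they then act on $E_i1_\kappa$ or $F_i1_\kappa$.
\item These dots are reduced modulo $x^{\epsilon_i}$ or $x^{\phi_i}$ by \eqref{equ:genofgenealizedcyc}, so every cup carries a bounded number of dots.
\end{itemize}
Together with (a) and the finiteness of the set of matchings for fixed boundary data, this gives finite-dimensionality. This is exactly the paper's proof.
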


\begin{proof} Fix $\lambda\in X^\imath_+$ and sequences
$\mathbf i$ and $\mathbf j$ such that
$
E_{\mathbf i}1_\kappa,\ E_{\mathbf j}1_\kappa
   \in \mathcal R(\kappa)_\lambda$. By \cite[Definition~5.8 and Theorem~5.10]{BWW25}, the 
$2$-category $\mathfrak U^\imath$ is non-degenerate. Hence the
dotted reduced matching diagrams described in
\cite[Definition~5.8]{BWW25}, together with bubble monomials in the
rightmost region, form a diagrammatic basis of
$
\operatorname{Hom}_{\mathfrak U^\imath}
   (E_{\mathbf i}1_\kappa,E_{\mathbf j}1_\kappa)$.
Consequently, the images of these diagrams span
$
\operatorname{Hom}_{\mathcal Q(\kappa)_\lambda}
   (E_{\mathbf i},E_{\mathbf j})$.
We use only this spanning property; their images need not remain
linearly independent in the cyclotomic quotient.

The zig-zag relations give a vector-space isomorphism between the
morphism space above and a morphism space
$
\operatorname{Hom}_{\mathcal Q(\kappa)}
   (1_\kappa,E_{\mathbf k}1_\kappa)$,
where $\mathbf k$ is a sequence of length $2r$ for some $r\geq 0$.
Fix a reduced matching diagram in this latter morphism space. We may
choose its representative in the form
$
D_{\mathrm{cr}}\circ D_{\mathrm{cup}}$,
where $D_{\mathrm{cr}}$ is a reduced crossing diagram corresponding
to a permutation in $S_{2r}$, while $D_{\mathrm{cup}}$ is obtained
by successively joining adjacent positions by cups.

In the affine diagrammatic basis, the distinguished positions
carrying the dot polynomials may be chosen on the cup strands below
all crossings. Thus $D_{\mathrm{cup}}$ may be taken to be an
iterated composite of dotted cups, with no dot passing through a
crossing. By the interchange law, these dotted cups may be regarded
as being applied successively in the rightmost position, adjacent
to the exterior region labelled by $\kappa$. Therefore, at the
stage at which a given dotted cup is introduced, its dot polynomial
acts on the rightmost strand. The corresponding cyclotomic relation
in \eqref{equ:genofgenealizedcyc} then reduces the number of dots on that cup below the degree
of the relevant cyclotomic polynomial.

Applying this argument successively to all the cups in
$D_{\mathrm{cup}}$, we obtain a uniform bound on the number of dots
on every cup. Moreover, the bubble relations in \eqref{equ:genofgenealizedcyc} send all
bubble monomials in the rightmost region to their prescribed
scalars. For fixed boundary data, there are only finitely many
reduced matching diagrams, and each of them contains only finitely
many cups. Since the number of dots on every cup is bounded, only
finitely many dotted diagrams are required to span
$
\operatorname{Hom}_{\mathcal Q(\kappa)_\lambda}
   (E_{\mathbf i},E_{\mathbf j})$.
Therefore this morphism space is finite-dimensional. It follows
that $\mathcal Q(\kappa)$, and hence its path algebra $Q$, is
locally finite-dimensional.
\end{proof}

%Hence $\Hom_{\mathcal Q(\kappa)_\lambda}(E_\mathbf i, E_\mathbf j)$ is  finite-dimensional and $Q$ is locally finite--dimensional. 

By \cite[Construction~4.26]{BD-cate}, the strict $2$-action of 
$\mathfrak U^\imath_+$ on $\mathcal Q(\kappa)$ induces a categorical action
on $\mathrm{lfdmod}\text{-}Q$. Concretely, for each $i\in I^\imath$, left 
horizontal composition with $E_i$ defines algebra homomorphisms
\[
e_i: Q_\lambda \to Q_{\lambda+\alpha_i},
\]
whenever $\lambda, \lambda+\alpha_i \in X_\imath^+$. Restriction along $e_i$
 makes $Q_{\lambda+\alpha_i}$ into  a $(Q_\lambda, Q_{\lambda+\alpha_i})$-bimodule. Tensoring on the
right with this bimodule defines a functor on module categories. The functor induced similarly by $E_{-i}$ is biadjoint to it. Hence by \cite[Theorem~2.11]{BD-cate}, it is exact and preserves locally finite-dimensional modules, and therefore restricts an exact functor
\[
E_i: \mathrm{lfdmod}\text{-}Q_\lambda \longrightarrow \mathrm{lfdmod}\text{-}Q_{\lambda+\alpha_i}.
\]
Moreover, the generating $2$-morphisms of $\mathfrak U^\imath_+$ induce 
bimodule homomorphisms, hence natural transformations among these functors,
which satisfy the same defining relations as in $\mathfrak U^\imath_+$. 
Thus $\mathrm{lfdmod}\text{-}Q$ naturally becomes a $2$-representation of 
$\mathfrak U^\imath_+$. Let 
\[
P' := 1_\emptyset Q.
\]
Then $P'$ is the projective right $Q$-module corresponding to $1_\emptyset$, 
and for every sequence $\mathbf i$ there is a natural isomorphism
\begin{equation}\label{proj}
E_\mathbf i P' \cong 1_{E_\mathbf i} Q.
\end{equation}
Therefore,
\begin{equation}\label{equ:decomofQ}
Q = \bigoplus_{m=0}^\infty \bigoplus_{n=0}^\infty 
\bigoplus_{\mathbf i \in I^m,\, \mathbf j \in I^n} 
1_{E_\mathbf i} Q 1_{E_\mathbf j} 
\cong 
\bigoplus_{m=0}^\infty \bigoplus_{n=0}^\infty 
\bigoplus_{\mathbf i \in I^m,\, \mathbf j \in I^n} 
\Hom_Q(E_\mathbf j P', E_\mathbf i P').
\end{equation}
In particular, by \eqref{proj}, the projective modules obtained from $P'$ under 
the action of the functors $E_\mathbf i$ recover the canonical projective generators
of $\mathrm{lfdmod}\text{-}Q$, namely the modules 
\(\{ 1_{E_\mathbf i} Q \mid \mathbf i \in I^n, n\in \mathbb N \}\).

\subsection{Isomorphism between two cyclotomic quotient categories}
We now apply the two constructions established in Sections~5--7 to the
cyclotomic setting. The argument is parallel in spirit to the
Brundan--Kleshchev proof of the isomorphism between cyclotomic Hecke algebras
and cyclotomic quiver Hecke algebras: one first compares the corresponding
block module categories and then glues the resulting blockwise identifications
to obtain an isomorphism of the whole locally unital algebra.

\begin{Theorem}\label{cyc-iso}
There is an isomorphism
\(
B\cong Q
\)
of locally unital $\mathbb C$-algebras.
\end{Theorem}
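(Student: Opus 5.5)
We construct homomorphisms $\Theta\colon Q\to B$ and $\Psi\colon B\to Q$ from the two directions of Theorem~\ref{main111}, and then check on generators that they are mutually inverse.

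\emph{The map $Q\to B$.} By Proposition~\ref{prop:modues}, $\mathrm{lfdmod}\text{-}B$ admits an affine Brauer categorification. Its dot spectrum is contained in $I=\frac12+\mathbb Z$ because all roots of $m(u)$ lie in $I$, and $E$ preserves this property.

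Theorem~\ref{thm:mainthmaffineb} requires the spectrum to be exactly $I$. I would handle this as the Remark after Assumption~\ref{iIassu} suggests. The constructions of Sections~5--6 are local in the colours: on any finitely generated object only finitely many $E_i$ are nonzero, and every formula in Definitions~\ref{natrual-basic}--\ref{natrual-basic1} involves only the summands actually present. Hence the verifications still give a $2$-representation of $\mathfrak U^\imath_+$ on the finitely generated objects, with the absent $E_i$ acting by zero. I expect this point to be the main obstacle: one must check that none of the relations (bubbles, bicross, inhomogeneous) silently needs a nonzero colour that is missing. Failing that, I would argue that $\mathbf u$-admissibility forces the required colours to appear, using \cite{GRS-tri}.

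Granting the action, we use $P=1_0B\in\mathrm{lfdmod}\text{-}B_\kappa$ from \eqref{objP}. Since $m_P=m$ and $\bar n_P=n$, Lemmas~\ref{lem:bubbler}--\ref{lem:bubbler-inv} together with Definition~\ref{natrual-basic1}(4) show that the red bubbles act on $P$ by the scalars $\omega_i^{(r)}$ and $\tilde\omega_i^{(r)}$. The red dots satisfy $x^{\epsilon_i}=0$ on $E_iP$ and $x^{\phi_i}=0$ on $F_iP=E_{-i}P$, because the minimal polynomial of the dot on $EP$ is $m(u)$. So the principal $2$-functor $\mathcal R(\kappa)\to\mathrm{lfdmod}\text{-}B$, $1_\kappa\mapsto P$, kills the generators \eqref{equ:genofgenealizedcyc}. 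It therefore factors through $\mathcal Q(\kappa)$. On Hom spaces between the objects $E_{\mathbf i}1_\kappa$, using $E_{\mathbf i}P\cong 1_{E_{\mathbf i}}B$ and \eqref{equ:decomofB}, this yields a locally unital homomorphism $\Theta\colon Q\to B$ with $1_{E_{\mathbf i}}\mapsto 1_{E_{\mathbf i}}$.

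\emph{The map $B\to Q$.} The $2$-representation $\mathrm{lfdmod}\text{-}Q$ is locally Schurian, and by local finite-dimensionality the finitely generated projectives $1_{E_{\mathbf i}}Q$ form a nilpotent $2$-subrepresentation. Condition (1) of Definition~\ref{nil} holds because $E_i1_\lambda$ vanishes for all but finitely many $i$ in $\mathcal Q(\kappa)$. Theorem~\ref{main111}(2) then gives an affine Brauer action on this subrepresentation, with $E=\bigoplus_i(E_i\oplus F_i)$.

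On $P'=1_\emptyset Q$ the action satisfies two things. The bubble $\widetilde{\mathcal O}_{P'}(u)=\clock_{P'}(u)$ (Proposition~\ref{bub=bubred}) equals $n(u)/m(u)=O(u)$ by Definition~\ref{evalu-bub} and the cyclotomic scalars, and the dot satisfies $m(x)=0$ on $EP'$, since each summand $E_iP'$ is killed by $(x-i)^{\epsilon_i}$. Hence $n\mapsto E^nP'$ defines a functor $\mathcal{CB}_{\mathbf u}\to$ projective $Q$-modules. Applying it to Hom spaces via $E^dP'\cong\bigoplus_{\mathbf i}1_{E_{\mathbf i}}Q$ gives $\Psi\colon B\to Q$.

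\emph{Inverse check.} It suffices to compare the two composites on generators: the idempotents $1_{E_{\mathbf i}}$, the red dots, crossings, cups and caps for $\Theta$, and the black dot, crossing, cup and cap for $\Psi$. Definitions~\ref{natrual-basic-red} and~\ref{cross-red123} were designed to invert Definitions~\ref{natrual-basic}--\ref{natrual-basic1} summand by summand, as remarked there. So $\Psi\circ\Theta$ fixes each red generator and $\Theta\circ\Psi$ fixes each black generator. This uses the identification of the polynomials $n_{V,i}$ and $t_{V,i}$ on both sides via Lemma~\ref{bubb-red}. Since both algebras are generated by these elements, the two maps are mutually inverse isomorphisms.
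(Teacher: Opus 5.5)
Your architecture is the paper's: $Q\to B$ comes from the forward construction plus the universal property of $\mathcal R(\kappa)$, $B\to Q$ comes from Theorem~\ref{main111}(2) applied to finitely generated projective $Q$-modules, and the two maps are checked to be inverse on generators.

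On the spectrum, commit to your fallback. The paper quotes \cite[Theorem~6.5]{GRS-tri}, which says the dot spectrum on $\mathrm{lfdmod}\text{-}B$ is exactly $\frac12+\mathbb Z$, and then applies Theorem~\ref{thm:mainthmaffineb}. Your primary route (``absent colours act by zero'') is not established. The paper warns, right after Assumption~\ref{iIassu}, that when colours are missing the formulas need not assemble into a $2$-representation of $\mathfrak U^\imath_+$. Separately, condition~(2) of Definition~\ref{nil} on projective $Q$-modules does not follow from local finite-dimensionality, which only gives a minimal polynomial. It needs the cyclotomic dot relations together with their horizontal translates.

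The genuine gap is your claim that $m(x)=0$ on $EP'$ because each summand is killed by $(x-i)^{\epsilon_i}$. On a summand $F_iP'$ (colour $-i$) you need $(x+i)^{\epsilon_{-i}}=0$. However, \eqref{equ:genofgenealizedcyc} only kills the $\phi_i$-th power of the downward dot, and $\phi_i=\epsilon_{-i}+\delta_{i,\frac12(-1)^a}$. So for $i=\frac12$ and $a$ even you only get $(x+\tfrac12)^{\epsilon_{-1/2}+1}=0$.

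No further argument closes this. For $a$ even, set $\mathbf u'=(u_1,\dots,u_a,-\tfrac12)$.
\begin{itemize}
\item The right-hand side of \eqref{uadamr} is unchanged, so $\omega$ is the same.
\item Also unchanged are $\kappa$, the exponents $\epsilon_i,\phi_i$ for $i\in I^\imath$, and the scalars of Definition~\ref{844}. Hence $\mathcal Q(\kappa)$ is literally the same for $\mathbf u$ and $\mathbf u'$.
\item The forward map $Q\to B_{\mathbf u'}$ sends the $\epsilon_{-1/2}$-th power of the downward dot on $F_{1/2}1_\kappa$ to a nonzero element, because $x+\tfrac12$ has nilpotency order $\epsilon_{-1/2}+1$ on $E_{-1/2}(1_0B_{\mathbf u'})$.
\end{itemize}
Therefore $m(x)\neq0$ on $F_{1/2}P'$, and your affine Brauer action on $\operatorname{pmod}\text{-}Q$ does not factor through $\mathcal{CB}_{\mathbf u}$. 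For instance, with $\mathbf u=(\tfrac32,\tfrac32)$ one has $E_{-1/2}(1_0B)=0$, yet $F_{1/2}1_\kappa\neq0$ in $\mathcal Q(\kappa)$. So $Q\to B$ kills a nonzero idempotent and cannot be injective.

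The paper's proof asserts the same step without argument: it says the relation $m(x)=0$ acts trivially on $P'$. You have inherited this gap rather than introduced it. The step is valid when $a$ is odd, since then $\phi_i=\epsilon_{-i}$ for all $i\in I^\imath$. It is also valid if the cyclotomic generator on $F_i1_\kappa$ uses exponent $\epsilon_{-i}$ instead of $\phi_i$, a relation the forward map still kills.
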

\begin{proof}
By the result on dot spectra for the $\mathbf u$-admissible
cyclotomic Brauer categories in~\cite[Theorem~6.5]{GRS-tri},
the dot spectrum of the generating endofunctor on
$\operatorname{lfdmod}\text{-}B$ is exactly
$
I=\frac12+\mathbb Z$. Hence Theorem~\ref{thm:mainthmaffineb} applies and yields a
generalized nilpotent $2$-representation of
$\mathfrak U^\imath_+$ on
$
\bigl(
\operatorname{lfdmod}\text{-}B_\lambda
\bigr)_{\lambda\in X_\imath^+}
$. The universal property of the principal $2$-representation
\(\bigl(
\mathcal R(\kappa)_\lambda
\bigr)_{\lambda\in X_\imath^+}
\)
gives a unique morphism of $2$-representations
\[
\mathcal R(\kappa)
\longrightarrow
\operatorname{lfdmod}\text{-}B
\]
sending $1_\kappa$ to $P$, where
$P\in\operatorname{lfdmod}\text{-}B_\kappa$ is defined
in~\eqref{objP}.

Moreover, the constructions used in the proof of
Theorem~\ref{thm:mainthmaffineb}, more precisely,
Definitions~\ref{natrual-basic}(1)
and~\ref{natrual-basic1}(4), together with the cyclotomic
relation
$
m(x)=\prod_{r=1}^{a}(x-u_r)=0
$
in $B$ and the choice of parameters in
Definition~\ref{844}, show that the generators of the
$2$-ideal $\mathcal I(\kappa)$ listed
in~\eqref{equ:genofgenealizedcyc} are sent to zero under this
morphism. Hence the morphism factors through
$\mathcal Q(\kappa)$.
Thus, we obtain a $\mathbb C$-linear functor
\[
\Phi:
\mathcal Q(\kappa)
\longrightarrow
\operatorname{lfdmod}\text{-}B
\]
sending $E_{\mathbf i}$ to $E_{\mathbf i}P$.
Using the decomposition of $B$ in~\eqref{equ:decomofB},
this functor induces a $\mathbb C$-algebra homomorphism
\[
\Phi:Q\longrightarrow B.
\]
It sends each distinguished idempotent
$1_{E_{\mathbf i}}$ to the corresponding idempotent
$1_{E_{\mathbf i}}$ in $B$, and its values on the
diagrammatic generators are given by the constructions in
Section~\ref{modue5}, in particular, Definition~\ref{natrual-basic}.

Conversely, consider the locally Schurian $2$-representation on
$\operatorname{lfdmod}\text{-}Q$ induced by the cyclotomic
quotient $\mathcal Q(\kappa)$. The full subcategory
$\operatorname{pmod}\text{-}Q$ of finitely generated projective
modules is stable under the generating functors $E_i$ and $F_i$,
since these functors are sweet and hence preserve finitely
generated projective objects. Thus,
$\operatorname{pmod}\text{-}Q$ is a $2$-subrepresentation of
$\operatorname{lfdmod}\text{-}Q$.

Recall $
P':=1_{\emptyset}Q$. By \eqref{proj}, 
every distinguished projective module is of the form
$
1_{E_{\mathbf i}}Q\cong E_{\mathbf i}P'
$
for some sequence $\mathbf i$. The cyclotomic relations
in~\eqref{equ:genofgenealizedcyc}, together with their
horizontal translates, imply the  nilpotence conditions in
Definition~\ref{nil} on every distinguished projective module.
Since every finitely generated projective $Q$-module is a
direct summand of a finite direct sum of distinguished
projective modules, these two conditions hold on every object
of $\operatorname{pmod}\text{-}Q$. Hence
$\operatorname{pmod}\text{-}Q$ is a nilpotent
$2$-subrepresentation of $\operatorname{lfdmod}\text{-}Q$.

Applying part~(2) of Theorem~\ref{main111}, we obtain an action
of the affine Brauer category on
$\operatorname{pmod}\text{-}Q$. Evaluating this action at $P'$
gives a $\mathbb C$-linear functor
\[
\mathcal{AB}
\longrightarrow
\operatorname{pmod}\text{-}Q
\]
sending $1_0$ to $P'$.
By the constructions in
Definitions~\ref{natrual-basic-red}
and~\ref{cross-red123},
Proposition~\ref{bub=bubred}, and the parameter identities in
Definition~\ref{844}, the relation
$
m(x)=0
$
and the prescribed bubble relations in
Definition~\ref{822} act trivially on $P'$. Their horizontal
translates therefore act trivially on every $E^dP'$. Hence the
resulting $\mathcal{AB}$-action factors through
$\mathcal{CB}_{\mathbf u}$.
Consequently, we obtain a $\mathbb C$-linear functor
\[
\Psi:
\mathcal{CB}_{\mathbf u}
\longrightarrow
\operatorname{pmod}\text{-}Q
\]
sending the object $E^d$ to $E^dP'$ for every $d\geq0$.
The generalized-eigenspace decomposition
\[
E^dP'
=
\bigoplus_{\mathbf i\in I^d}E_{\mathbf i}P'
\]
then yields maps on the corresponding idempotent summands.
Using the decomposition~\eqref{equ:decomofQ}, these maps
assemble to a $\mathbb C$-algebra homomorphism
\[
\Psi:B\longrightarrow Q
\]
sending $1_{E_{\mathbf i}}$ to the corresponding idempotent
$1_{E_{\mathbf i}}$. Its values on the diagrammatic generators
are those given by
Definitions~\ref{natrual-basic-red}
and~\ref{cross-red123}.

By construction, the formulas in Definitions 7.11 and 7.13 are obtained by solving the formulas in Definitions 5.7 and 5.9 for the affine Brauer generators. Thus, on each generalized-eigenspace summand, the two sets of formulas are mutually inverse. Consequently, the composites \(\Psi\circ\Phi\) and \(\Phi\circ\Psi\) fix the distinguished idempotents and all the corresponding generators: the generating \(2\)-morphisms of \(Q\), and the dot, crossing, cup, and cap generators of \(B\), respectively. Since these elements generate \(Q\) and \(B\), it follows that
\[
 \Psi\circ\Phi=\operatorname{Id}_{Q},
 \qquad
 \Phi\circ\Psi=\operatorname{Id}_{B}.
 \]
This does not contradict Remark 7.30: that remark concerns whether the two constructions recover an arbitrary \(2\)-representation globally when not all spectral summands are present, whereas the inverse identities above hold on each summand appearing in the cyclotomic constructions.

% \old{By the explicit formulas in Sections~ \ref{modue5} and~\ref{ikmcmc}, i.e., Definitions~\ref{natrual-basic} and \ref{natrual-basic1}, Definition~\ref{natrual-basic-red} and~\ref{cross-red123}, the assignments
% defining $\Phi$ and $\Psi$ are inverse to each other on the
% distinguished idempotents and on all the corresponding
% generators: the dot, crossing, cup, and cap in $B$, and the
% generating $2$-morphisms of $\mathfrak U^\imath_+$ in $Q$.
% Since $B$ and $Q$ are generated by these distinguished
% idempotents and generators, respectively, it follows that
% \[
% \Psi\circ\Phi=\operatorname{Id}_Q,
% \qquad
% \Phi\circ\Psi=\operatorname{Id}_B.
% \]}
Thus, $\Phi$ and $\Psi$ are mutually inverse isomorphisms of
locally unital $\mathbb C$-algebras. Therefore,
\(
B\cong Q
\).
\end{proof}

The natural grading on the cyclotomic quotient of
$\mathfrak U^\imath_+$ induces, via Theorem~\ref{cyc-iso}, a natural
grading on the corresponding cyclotomic Brauer category. For the explicit degree of the generators, see the table after Remark 3.2 of \cite{BSWW-icate}. The same
theorem also yields the following result.

\begin{Cor}
\label{cor:cycb}
Let \(B_{m,n}\) be the \(u\)-admissible cyclotomic Brauer (or Nazarov–Wenzl) algebra associated with the parameters fixed above, and
assume that all roots of its cyclotomic polynomial lie in \(\frac12+\mathbb Z\).
 Then there is an isomorphism of
$\mathbb C$-algebras
$
\mathcal B_{m,n}
\cong
\End_{ Q}(X_n)$, where $
X_n=\bigoplus_{\mathbf i\in I^n}E_{\mathbf i}P'$. For each fixed $n$, only finitely many summands in the definition of $X_n$ is non-zero. 
In particular, $\mathcal B_{m,n}$ carries a natural
$\mathbb Z$-grading.
\end{Cor}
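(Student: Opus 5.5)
The plan is to deduce the corollary from Theorem~\ref{cyc-iso} by cutting both sides of $B\cong Q$ down by the appropriate idempotents. Here $B_{m,n}$ is understood as $\End_{\mathcal{CB}_{\mathbf u}}(n)=1_nB1_n$, the endomorphism algebra of the object $E^n$ in the cyclotomic Brauer category. Under the generalized-eigenspace decomposition, $E^nP=\bigoplus_{\mathbf i\in I^n}E_{\mathbf i}P$, so the unit $1_n$ is the sum $\sum_{\mathbf i\in I^n}1_{E_{\mathbf i}}$ of distinguished idempotents, as already recorded before \eqref{equ:decomofB}.

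First, I check that this sum is finite. Since $P=1_0B$ is finitely generated and $E$ is sweet (Lemma~\ref{biadj}), the object $E^{n}P\cong 1_nB$ is finitely generated and $\End(E^nP)$ is finite-dimensional. The dot on each tensor factor therefore has only finitely many generalized eigenvalues. Inductively, $E_{\mathbf i}P=E_{i_n}(E_{i_{n-1}}\cdots E_{i_1}P)$ vanishes for all but finitely many $\mathbf i$. This is the finiteness already used in the proof of Theorem~\ref{thm:mainthmaffinebu}, applied repeatedly.

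Second, I transport the idempotent. The isomorphism $\Phi\colon Q\to B$ of Theorem~\ref{cyc-iso} sends each $1_{E_{\mathbf i}}\in Q$ to $1_{E_{\mathbf i}}\in B$. Hence it sends $e_n:=\sum_{\mathbf i\in I^n}1_{E_{\mathbf i}}\in Q$, a finite sum, to $1_n$, and it restricts to an algebra isomorphism $e_nQe_n\cong 1_nB1_n=B_{m,n}$. By \eqref{proj} and \eqref{equ:decomofQ}, $e_nQe_n\cong\End_Q\bigl(\bigoplus_{\mathbf i\in I^n}1_{E_{\mathbf i}}Q\bigr)=\End_Q(X_n)$. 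A summand $E_{\mathbf i}P'$ is nonzero exactly when $1_{E_{\mathbf i}}\neq0$ in $Q$, which holds exactly when $1_{E_{\mathbf i}}\ne0$ in $B$. So the finiteness from the first step transfers to $X_n$.

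Third, for the grading I use the $\mathbb Z$-grading of $\mathfrak U^\imath_+$ from \cite{BSWW-icate}. The generators of $\mathcal I(\kappa)$ in \eqref{equ:genofgenealizedcyc} are homogeneous: the dot powers are homogeneous, and each bubble relation sets a bubble equal to a scalar $\omega_i^{(r)}$ or $\tilde\omega_i^{(r)}$. I expect this to be the main point to verify. By Definition~\ref{844} each such scalar is nonzero only for the single index $r$ at which the bubble has degree $0$, so each generator is homogeneous. It follows that $Q$ is graded and the distinguished idempotents $1_{E_{\mathbf i}}$ have degree $0$. Then $e_nQe_n$ is a graded subalgebra, and transporting this grading through the isomorphism gives the $\mathbb Z$-grading on $B_{m,n}$.
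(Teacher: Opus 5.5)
Your argument is essentially the paper's: you obtain the corollary from Theorem~\ref{cyc-iso} by cutting $B\cong Q$ down to the finite idempotent $1_n=\sum_{\mathbf i\in I^n}1_{E_{\mathbf i}}$. Your finiteness and homogeneity checks make explicit what the paper leaves implicit; the paper only remarks that $\mathcal I(\kappa)$ is homogeneous.

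The one step you should not take as a definition is $B_{m,n}=\End_{\mathcal{CB}_{\mathbf u}}(n)$. The cyclotomic Nazarov--Wenzl algebra is defined by generators and relations, and its identification with $\End_{\mathcal{CB}_{\mathbf u}}(n)$ under the $\mathbf u$-admissibility condition is exactly \cite[Theorem~C]{RS-cyc}. The paper cites this result before applying Theorem~\ref{cyc-iso}, and you should do the same.
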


\begin{proof}
It was proved by two of the present authors in \cite[Theorem~C]{RS-cyc} that
there is an isomorphism of $\mathbb C$-algebras
$
\mathcal B_{m,n}
\cong
\End_{\CB_{\mathbf u}}(n)$.
The required isomorphism, and hence the induced
$\mathbb Z$-grading, now follow immediately from
Theorem~\ref{cyc-iso}.
\end{proof}

Theorem 8.6 and Corollary 8.7 complete the categorical identification pursued in this paper. They show that the ungraded cyclotomic Brauer algebra is the underlying algebra of a naturally graded endomorphism algebra arising from a cyclotomic $\imath$-Kac--Moody 2-representation (noting that the ideal $I_\kappa$ is homogeneous). A further problem is to describe this grading intrinsically by generators and relations. Such a KLR-type presentation requires a separate basis theorem and is not needed for the isomorphism proved here. Together with graded triangular basis methods, that presentation is expected to provide the next step toward categorifying the bar involution and relating decomposition numbers to $\imath$-canonical bases.

We expect that a parallel isomorphism between  a cyclotomic Kauffman category with quantum parameter $q$ not a  root of unity and the corresponding cyclotomic $\imath$-Kac--Moody $2$-category should  provide a closer counterpart of the Brundan--Kleshchev isomorphism, namely an $\imath$-analogue relating cyclotomic Hecke algebras to quiver Hecke algebras. Details will appear in \cite{GHRS}.

\section{Rational-function identities}
The purpose of this section is to record and verify the
rational-function identities used in the proofs of
Theorem~\ref{inhomo100} and Theorem~\ref{thm:mainthmaffineb1}. All identities below are identities 
in  $\mathbb C(x_1,x_2,x_3,x_4)$.

\begin{Defn}For $1\leq a,b\leq4$ and $\sigma,\tau\in\{1,-1\}$, set
\(
 d^{ab}_{\sigma\tau}:=\sigma x_a+\tau x_b\) and 
 \(
 \ell^{ab}_{\sigma\tau}:=1+d^{ab}_{\sigma\tau}\).
We also set
$
\Delta_{123}:=1+d^{12}_{+-}d^{23}_{+-}$.\end{Defn}
The supplementary
Maple file \texttt{verify\_section9\_maple.mpl} carries out these
symbolic simplifications in Lemmas~\ref{lem:unprimed-rational-identities} and \ref{lem:primed-rational-identities}.

\subsection{The identities arising in the forward construction}
The following rational functions occur in the verification of the inhomogeneous relation in Section~6. 
\begingroup
\small
\allowdisplaybreaks[4]
\begin{align*}
T_1
=&\frac{1}{\ell^{13}_{+-}d^{23}_{+-}\ell^{12}_{-+}}
+\frac{\ell^{12}_{+-}}{\ell^{13}_{+-}}
+\frac{\ell^{23}_{-+}}{\ell^{13}_{-+}}
-\frac{1}{\ell^{13}_{-+}d^{12}_{+-}\ell^{23}_{+-}}
+\frac{\ell^{12}_{--}}
{d^{12}_{+-}d^{23}_{+-}\ell^{23}_{--}\ell^{13}_{-+}}\\
&+\frac{\ell^{12}_{--}}
{\ell^{23}_{--}\ell^{13}_{-+}}
-\frac{\ell^{12}_{--}}
{\ell^{23}_{+-}\ell^{12}_{-+}\ell^{23}_{--}\ell^{13}_{-+}}
-\frac{\ell^{12}_{--}}
{d^{12}_{+-}d^{23}_{+-}\ell^{23}_{+-}\ell^{12}_{-+}
\ell^{23}_{--}\ell^{13}_{-+}}
-\frac{\ell^{23}_{-+}}
{\ell^{13}_{-+}\ell^{23}_{--}d^{12}_{+-}d^{23}_{+-}}\\
& +\frac{1}{\ell^{13}_{-+}\ell^{23}_{--}\ell^{23}_{+-}}
-\frac{\ell^{23}_{-+}}
{\ell^{13}_{-+}\ell^{23}_{--}}
+\frac{1}
{\ell^{13}_{-+}\ell^{23}_{--}\ell^{23}_{+-}d^{12}_{+-}d^{23}_{+-}}
-\frac{d^{12}_{+-}+(d^{12}_{+-})^2d^{23}_{+-}}
{\ell^{23}_{--}\ell^{13}_{+-}d^{23}_{+-}\ell^{12}_{-+}}\\
&+\frac{2\ell^{12}_{--}(d^{12}_{+-})^2}
{\ell^{13}_{-+}\ell^{23}_{--}\ell^{13}_{+-}\ell^{12}_{-+}}
+\frac{2\ell^{12}_{--}(d^{12}_{+-})^2}
{d^{12}_{+-}d^{23}_{+-}\ell^{13}_{-+}\ell^{23}_{--}
\ell^{13}_{+-}\ell^{12}_{-+}},
\\[2mm]
%\[\begin{aligned} 
T_2
=&\frac{(d^{11}_{++})^2}
{\ell^{11}_{++}\ell^{24}_{+-}d^{12}_{-+}d^{14}_{+-}d^{24}_{++}}
-\frac{\ell^{24}_{+-}d^{24}_{++}}
{\ell^{24}_{--}d^{12}_{-+}\ell^{14}_{--}d^{14}_{+-}\ell^{14}_{++}}
-\frac{1}
{d^{24}_{++}d^{12}_{-+}\ell^{14}_{--}\ell^{14}_{++}}\\
&+\frac{1}
{\ell^{24}_{--}\ell^{11}_{++}d^{24}_{++}d^{12}_{-+}
\ell^{14}_{--}\ell^{14}_{++}}
-\frac{2}
{d^{12}_{-+}d^{14}_{+-}\ell^{14}_{--}\ell^{14}_{++}}
+\frac{2}
{\ell^{24}_{--}\ell^{11}_{++}d^{12}_{-+}d^{14}_{+-}
\ell^{14}_{--}\ell^{14}_{++}}\\
&+\frac{\ell^{24}_{--}\ell^{11}_{++}-1}
{\ell^{24}_{--}\ell^{11}_{++}d^{24}_{++}d^{14}_{+-}
\ell^{14}_{--}\ell^{14}_{++}}
-\frac{2(d^{11}_{++})^2}
{\ell^{11}_{++}\ell^{24}_{+-}d^{24}_{--}d^{14}_{+-}
\ell^{14}_{++}\ell^{14}_{--}}\\
&+\frac{(d^{11}_{++})^2(d^{14}_{+-}-d^{24}_{++})}
{\ell^{11}_{++}\ell^{24}_{+-}d^{24}_{++}d^{14}_{+-}
d^{12}_{-+}\ell^{14}_{++}\ell^{14}_{--}}
-\frac{(d^{11}_{++})^2d^{14}_{++}}
{\ell^{11}_{++}\ell^{24}_{+-}d^{24}_{++}d^{12}_{+-}
d^{14}_{+-}\ell^{14}_{--}}\\
&-\frac{(\ell^{24}_{++}\ell^{24}_{--}-1)d^{24}_{+-}}
{\ell^{24}_{--}d^{24}_{++}d^{14}_{+-}d^{12}_{-+}
\ell^{14}_{--}\ell^{14}_{++}},
\\[2mm]
%\[\begin{aligned} 
T_3
=&\frac{1}
{\ell^{44}_{++}d^{14}_{-+}d^{12}_{+-}\ell^{12}_{--}}
+\frac{d^{12}_{+-}(d^{12}_{+-}+d^{14}_{+-})}
{d^{14}_{-+}d^{24}_{++}\ell^{14}_{++}\ell^{12}_{-+}}
-\frac{\ell^{12}_{--}d^{12}_{+-}}
{\ell^{14}_{--}\ell^{14}_{++}\ell^{12}_{-+}d^{14}_{-+}}\\
&-\frac{2-d^{11}_{++}}
{\ell^{44}_{++}d^{24}_{++}d^{12}_{+-}\ell^{12}_{--}\ell^{12}_{-+}}
-\frac{\ell^{12}_{--}d^{12}_{-+}}
{\ell^{14}_{++}\ell^{14}_{--}d^{14}_{-+}d^{24}_{++}\ell^{12}_{-+}}
-\frac{(d^{44}_{++})^2}
{d^{14}_{-+}\ell^{44}_{++}d^{24}_{++}\ell^{12}_{--}d^{12}_{+-}}\\
&-\frac{\ell^{12}_{--}(d^{12}_{+-})^2}
{\ell^{14}_{--}\ell^{14}_{++}d^{14}_{-+}d^{24}_{++}\ell^{12}_{-+}}
+\frac{\ell^{44}_{--}}
{d^{24}_{++}d^{12}_{+-}\ell^{12}_{--}}
+\frac{d^{12}_{+-}+d^{44}_{++}}
{d^{24}_{++}d^{14}_{-+}d^{12}_{+-}}\\
&-\frac{2}
{d^{24}_{++}\ell^{44}_{++}d^{14}_{-+}\ell^{12}_{-+}}
+\frac{-\ell^{44}_{--}}
{d^{14}_{-+}d^{12}_{+-}\ell^{12}_{--}}
-\frac{1}
{\ell^{44}_{++}d^{14}_{-+}d^{12}_{+-}\ell^{12}_{-+}}
+\frac{2d^{12}_{+-}}
{d^{14}_{-+}\ell^{14}_{++}\ell^{12}_{-+}},
\\[2mm]
%\[\begin{aligned} 
T_4
=&-\frac{(d^{11}_{++})^2}
{d^{13}_{-+}\ell^{23}_{++}\ell^{23}_{--}\ell^{11}_{++}}
-\frac{2}
{d^{12}_{-+}d^{23}_{-+}\ell^{12}_{--}\ell^{12}_{++}}
+\frac{(d^{11}_{++})^2}
{\ell^{23}_{--}\ell^{23}_{++}\ell^{11}_{++}\ell^{12}_{--}}\\
&+\frac{(d^{33}_{++})^2}
{d^{13}_{-+}\ell^{12}_{++}\ell^{12}_{--}\ell^{33}_{++}}
+\frac{2}
{\ell^{33}_{++}\ell^{11}_{++}\ell^{12}_{--}\ell^{12}_{++}}
-\frac{(d^{11}_{++})^2}
{d^{13}_{-+}d^{23}_{-+}d^{12}_{-+}
\ell^{23}_{++}\ell^{23}_{--}\ell^{11}_{++}}\\
&+\frac{(d^{33}_{++})^2}
{d^{13}_{-+}d^{23}_{-+}d^{12}_{-+}
\ell^{12}_{++}\ell^{12}_{--}\ell^{33}_{++}}
+\frac{2}
{\ell^{33}_{++}d^{23}_{-+}d^{12}_{-+}
\ell^{11}_{++}\ell^{12}_{--}\ell^{12}_{++}}\\
&-\frac{(d^{11}_{++})^2}
{\ell^{11}_{++}\ell^{23}_{++}d^{23}_{-+}d^{12}_{-+}
\ell^{12}_{++}\ell^{12}_{--}}
+\frac{(d^{11}_{++})^2}
{\ell^{11}_{++}\ell^{23}_{--}\ell^{23}_{++}d^{23}_{-+}
d^{12}_{-+}\ell^{12}_{--}}\\
&-\frac{(d^{11}_{++})^2}
{\ell^{23}_{++}\ell^{11}_{++}\ell^{12}_{++}\ell^{12}_{--}}
-\frac{2}
{\ell^{12}_{--}\ell^{12}_{++}},
\\[2mm]
%\begin{aligned} 
T_5
=&1+\frac{\ell^{23}_{--}}
{\ell^{12}_{--}d^{12}_{+-}\ell^{23}_{-+}}
-\frac{1}
{\ell^{12}_{--}\ell^{23}_{-+}}
+\frac{(d^{23}_{+-})^2}
{\ell^{12}_{--}\ell^{23}_{-+}}
-\frac{\ell^{12}_{--}}
{\ell^{12}_{+-}d^{23}_{+-}\ell^{23}_{--}}
-\frac{1}
{\ell^{12}_{+-}\ell^{23}_{--}}\\ 
& +\frac{(d^{12}_{+-})^2}
{\ell^{12}_{+-}\ell^{23}_{--}}
+\frac{\ell^{12}_{-+}}
{\ell^{23}_{--}}-\frac{(d^{23}_{+-})^2}
{\ell^{12}_{--}\ell^{23}_{-+}}
-\frac{d^{23}_{+-}}
{d^{12}_{+-}\ell^{12}_{--}\ell^{23}_{-+}}
+\frac{-\ell^{12}_{-+}}
{d^{12}_{+-}d^{23}_{-+}\ell^{23}_{--}}\\
&-\frac{1}
{\ell^{12}_{+-}\ell^{23}_{-+}}
+\frac{1}
{d^{12}_{+-}d^{23}_{-+}\ell^{12}_{+-}\ell^{23}_{--}}
+\frac{1}
{d^{12}_{+-}d^{23}_{+-}}\\
&+\frac{1}
{d^{12}_{+-}d^{23}_{-+}\ell^{12}_{+-}\ell^{23}_{-+}}
-\frac{1}
{\ell^{12}_{+-}\ell^{23}_{--}} ,
\end{align*}

\begin{Lemma}~\label{lem:unprimed-rational-identities}
As elements of $\mathbb C(x_1,x_2,x_3,x_4)$, the rational functions
above satisfy $
 T_1=T_5=1$ and $
 T_2=T_3=T_4=0$.
\end{Lemma}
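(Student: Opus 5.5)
The plan is to verify each identity by clearing denominators and reducing to a polynomial identity in $\mathbb C[x_1,x_2,x_3,x_4]$. The functions $T_1,\dots,T_5$ are finite sums of products of the linear forms $d^{ab}_{\sigma\tau}$, their affine shifts $\ell^{ab}_{\sigma\tau}$, and inverses of these. Each such factor is a nonzero element of the field $\mathbb C(x_1,x_2,x_3,x_4)$, so every summand is a well-defined rational function, and the claims are purely formal field identities.

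For each $T_k$ I would proceed in three steps.
\begin{enumerate}
\item Collect the distinct linear and affine factors occurring in its denominators; these are pairwise non-associate irreducibles.
\item Multiply through by their least common multiple $D_k$.
\item Show that $D_kT_k-D_k$ (for $k=1,5$), or $D_kT_k$ (for $k=2,3,4$), is the zero polynomial.
\end{enumerate}

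To keep the expansion manageable I would group summands sharing most denominator factors and combine them pairwise first. In $T_1$, for example, the terms carrying $d^{12}_{+-}d^{23}_{+-}$ in the denominator should pair with the same terms without that factor, through the combination $\Delta_{123}=1+d^{12}_{+-}d^{23}_{+-}$. Similarly, in $T_5$ the terms $\pm(d^{23}_{+-})^2/(\ell^{12}_{--}\ell^{23}_{-+})$ cancel outright, and the remaining terms telescope in pairs over common denominators. After these partial simplifications the degrees drop enough to expand the residual numerator.

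As an independent check before expanding, I would compare residues. A rational function with only simple linear denominators vanishes exactly when its polynomial part is zero and its residue along each hyperplane $\{d=0\}$ or $\{\ell=0\}$ is zero. So for each hyperplane I would restrict the sum of the terms singular there, for instance setting $x_1=x_2$ along $d^{12}_{+-}=0$, and verify that the resulting residue vanishes. This can be done one hyperplane at a time in fewer variables. A degree count, or specialization at a few generic points, then fixes the polynomial part, which should be $1$ for $T_1,T_5$ and $0$ otherwise.

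The main obstacle is purely computational size: $T_2$ and $T_4$ involve squared factors such as $(d^{11}_{++})^2=4x_1^2$, and their common denominator has degree around ten. Hand expansion is error-prone there, so I would rely on exact symbolic simplification, as in the supplementary Maple file \texttt{verify\_section9\_maple.mpl}, cross-checked by the residue argument and by evaluating at several random rational points.
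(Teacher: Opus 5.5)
Your proposal is correct and is essentially the paper's proof: the paper brings $T_1-1,\,T_2,\,T_3,\,T_4,\,T_5-1$ to a common denominator and checks, by exact symbolic expansion in Maple, that each numerator vanishes identically. Your $\Delta_{123}$ grouping and residue cross-check are harmless additions; the one fix needed is that factors appearing in your step (1) are not all pairwise non-associate as you state (for example $d^{12}_{+-}=-d^{12}_{-+}$ and $d^{24}_{++}=-d^{24}_{--}$), so associate factors must be merged before you form the least common multiple.
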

\begin{proof}
After bringing each of
$
 T_1-1, T_2,  T_3,  T_4, T_5-1
$
to a common denominator, direct expansion shows that its numerator
vanishes identically. 
\end{proof}

\subsection{Identities for the reverse construction} The following rational functions occur in the verification of the
affine Brauer relations in Section~7.
\begingroup
\small
\allowdisplaybreaks[4]
\begin{align*}
T'_1={}&
-\frac{(d^{11}_{++})^2\Delta_{123}}
{\ell^{23}_{++}d^{12}_{+-}d^{23}_{+-}
 \bigl(1-(d^{12}_{++})^2\bigr)\bigl((d^{11}_{++})^2-1\bigr)}
+\frac{2\Delta_{123}}
{\ell^{33}_{++}d^{12}_{+-}d^{23}_{+-}
 \bigl((d^{11}_{++})^2-1\bigr)\bigl(1-(d^{12}_{++})^2\bigr)}
\\
&+\frac{2\Delta_{123}}
{d^{12}_{+-}d^{23}_{+-}\bigl((d^{33}_{++})^2-1\bigr)
 (d^{11}_{++}-1)\bigl(1-(d^{12}_{++})^2\bigr)}
+\frac{(d^{11}_{++})^2\Delta_{123}}
{\bigl(1-(d^{23}_{++})^2\bigr)d^{12}_{+-}d^{23}_{+-}
 \ell^{12}_{--}\bigl((d^{11}_{++})^2-1\bigr)}
\\
&+\frac{(d^{33}_{++})^2\Delta_{123}}
{d^{13}_{-+}\ell^{12}_{++}\ell^{12}_{--}\ell^{33}_{++}
 d^{12}_{+-}d^{23}_{+-}(d^{33}_{++}-1)}
-\frac{(d^{11}_{++})^2\Delta_{123}}
{\ell^{23}_{++}\ell^{23}_{--}\ell^{11}_{++}
 d^{23}_{+-}d^{12}_{+-}(d^{11}_{++}-1)d^{13}_{-+}},\\[2mm] 
%\begin{align*}
T'_2={}&
-\frac{(d^{11}_{++})^2d^{14}_{++}}
{\ell^{24}_{+-}d^{14}_{+-}d^{12}_{-+}d^{24}_{++}
 \bigl(1-(d^{11}_{++})^2\bigr)\ell^{14}_{--}}
+\frac{d^{24}_{++}d^{24}_{+-}}
{\bigl(1-(d^{14}_{++})^2\bigr)\bigl((d^{24}_{++})^2-1\bigr)
 d^{12}_{-+}d^{14}_{+-}}
\\
&+\frac{(d^{11}_{++})^2(d^{12}_{+-}+d^{44}_{++})}
{\ell^{24}_{+-}d^{14}_{+-}d^{12}_{-+}d^{24}_{++}
 \bigl(1-(d^{11}_{++})^2\bigr)\bigl(1-(d^{14}_{++})^2\bigr)}
+\frac{d^{11}_{++}+d^{24}_{++}}
{\ell^{24}_{--}d^{12}_{+-}d^{24}_{++}d^{14}_{+-}
 \bigl(1-(d^{11}_{++})^2\bigr)\bigl(1-(d^{14}_{++})^2\bigr)}
\\
&-\frac{d^{11}_{++}+d^{24}_{++}}
{\bigl(1-(d^{24}_{++})^2\bigr)d^{12}_{+-}d^{24}_{++}d^{14}_{+-}
 \ell^{11}_{--}\bigl(1-(d^{14}_{++})^2\bigr)}
-\frac{d^{11}_{++}d^{12}_{++}}
{\ell^{24}_{+-}d^{12}_{-+}\bigl(1-(d^{11}_{++})^2\bigr)
 d^{14}_{+-}d^{24}_{++}}
\\
&+\frac{d^{11}_{++}}
{\ell^{24}_{+-}d^{24}_{++}\bigl(1-(d^{11}_{++})^2\bigr)d^{14}_{+-}}
+\frac{(d^{24}_{-+}-1)d^{24}_{++}}
{\bigl(1-(d^{24}_{++})^2\bigr)d^{12}_{+-}
 \bigl(1-(d^{14}_{++})^2\bigr)d^{14}_{+-}},\\[2mm] \\
%\end{align*}
%\begin{align*}
T'_3={}&
-\frac{(d^{44}_{++})^2d^{12}_{++}}
{\bigl((d^{44}_{++})^2-1\bigr)d^{14}_{+-}d^{12}_{+-}
 d^{24}_{++}\ell^{12}_{--}}
-\frac{(d^{11}_{++}+d^{24}_{++})d^{12}_{+-}}
{\ell^{14}_{++}d^{14}_{+-}d^{24}_{++}
 \bigl((d^{12}_{+-})^2-1\bigr)}
\\
&-\frac{d^{12}_{-+}-d^{44}_{++}}
{\ell^{44}_{++}d^{12}_{+-}d^{24}_{++}d^{14}_{+-}
 \bigl((d^{12}_{+-})^2-1\bigr)}
-\frac{d^{12}_{-+}-d^{44}_{++}}
{\bigl((d^{44}_{++})^2-1\bigr)d^{12}_{+-}d^{14}_{+-}
 d^{24}_{++}(d^{12}_{-+}-1)}
\\
&+\frac{(d^{12}_{++}-1)d^{12}_{+-}}
{d^{14}_{+-}d^{24}_{++}\bigl(1-(d^{14}_{++})^2\bigr)
 \bigl((d^{12}_{+-})^2-1\bigr)}
+\frac{(d^{44}_{++})^2}
{\bigl((d^{44}_{++})^2-1\bigr)d^{24}_{++}d^{12}_{+-}
 d^{14}_{+-}\ell^{12}_{--}}\\
&-\frac{d^{14}_{++}(d^{12}_{++}-1)(d^{12}_{+-})^2}
{\bigl(1-(d^{14}_{++})^2\bigr)d^{14}_{+-}d^{12}_{+-}d^{24}_{++}
 \bigl((d^{12}_{+-})^2-1\bigr)},\\[2mm]
%\begin{align*}
T'_4={}&
-\frac{1}{\ell^{13}_{+-}}
+\frac{1}{\ell^{13}_{+-}\bigl((d^{12}_{+-})^2-1\bigr)d^{23}_{+-}}
-\frac{1}{\ell^{13}_{-+}}
-\frac{1}{\ell^{13}_{-+}d^{12}_{+-}\bigl((d^{23}_{+-})^2-1\bigr)}\\
&-\frac{(d^{12}_{+-})^2\Delta_{123}}
{\ell^{13}_{+-}\bigl((d^{12}_{+-})^2-1\bigr)
 \ell^{23}_{--}d^{12}_{+-}d^{23}_{+-}}
+\frac{\Delta_{123}(d^{23}_{+-})^2}
{d^{12}_{+-}d^{23}_{+-}\bigl((d^{23}_{+-})^2-1\bigr)
 \ell^{23}_{--}\ell^{13}_{-+}}\\
&+\frac{\Delta_{123}(d^{12}_{++}-1)}
{d^{12}_{+-}d^{23}_{+-}\ell^{23}_{--}\ell^{13}_{-+}
 \ell^{23}_{+-}\bigl((d^{12}_{+-})^2-1\bigr)}
+\frac{\Delta_{123}(d^{12}_{++}-1)}
{d^{12}_{+-}d^{23}_{+-}\ell^{23}_{--}\ell^{13}_{-+}
 \bigl((d^{23}_{+-})^2-1\bigr)(d^{12}_{-+}-1)}
\\
&-\frac{2(d^{12}_{++}-1)(d^{12}_{+-})^2\Delta_{123}}
{\ell^{13}_{+-}\bigl((d^{12}_{+-})^2-1\bigr)d^{12}_{+-}d^{23}_{+-}
 \ell^{23}_{--}\ell^{13}_{-+}},\\[2mm]
%\end{align*}
%\begin{align*}
T'_5={}&
\frac{(d^{12}_{+-})^2d^{23}_{-+}-(d^{13}_{++}-1)}
{\ell^{23}_{--}\bigl((d^{12}_{+-})^2-1\bigr)d^{23}_{-+}}
-\frac{(d^{23}_{+-})^2\Delta_{123}}
{\bigl((d^{23}_{+-})^2-1\bigr)d^{23}_{+-}d^{12}_{+-}\ell^{12}_{--}}\\
&-\frac{\Delta_{123}}
{\bigl((d^{23}_{+-})^2-1\bigr)(d^{12}_{+-}-1)
 d^{23}_{+-}d^{12}_{+-}}
+\frac{1}{\ell^{12}_{--}}
+\frac{d^{23}_{++}-1}
{\bigl((d^{23}_{+-})^2-1\bigr)d^{12}_{-+}\ell^{12}_{--}}
\\
&+\frac{\Delta_{123}(d^{12}_{++}-1)(d^{12}_{+-})^2}
{\ell^{23}_{--}d^{23}_{+-}d^{12}_{+-}
 \bigl((d^{12}_{+-})^2-1\bigr)\ell^{12}_{--}}
+\frac{(d^{12}_{++}-1)\Delta_{123}}
{\ell^{12}_{--}\bigl((d^{12}_{+-})^2-1\bigr)
 d^{23}_{+-}d^{12}_{+-}\ell^{23}_{-+}}.
\end{align*}
\endgroup
\begin{Lemma}
\label{lem:primed-rational-identities}
As elements of $\mathbb C(x_1,x_2,x_3,x_4)$, the rational functions
above satisfy $
 T'_1=T'_2=T'_3=T'_4=T'_5=0$.
\end{Lemma}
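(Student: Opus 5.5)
The plan is the same as for Lemma~\ref{lem:unprimed-rational-identities}: a direct symbolic verification in the field $\mathbb C(x_1,x_2,x_3,x_4)$. First, I will substitute the definitions $d^{ab}_{\sigma\tau}=\sigma x_a+\tau x_b$, $\ell^{ab}_{\sigma\tau}=1+d^{ab}_{\sigma\tau}$ and $\Delta_{123}=1+d^{12}_{+-}d^{23}_{+-}$ into each $T'_k$. This rewrites every summand as an explicit quotient of polynomials in $x_1,\dots,x_4$. I will record that every denominator factor is a nonzero polynomial: each is a product of linear forms such as $x_a\pm x_b$, $1\pm x_a\pm x_b$, $2x_a\pm 1$, or quadratic factors like $(d^{12}_{+-})^2-1=(x_1-x_2-1)(x_1-x_2+1)$. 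Hence each $T'_k$ is a well-defined element of the field. Note also that $T'_1,T'_4,T'_5$ involve only $x_1,x_2,x_3$, while $T'_2,T'_3$ involve only $x_1,x_2,x_4$. So each identity can be checked in a three-variable field, which keeps the expansions manageable.

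Second, for each $k$ I will factor all denominators into irreducible linear pieces and form the least common denominator $D_k$. I will then compute the numerator $N_k=D_k T'_k$ as a polynomial and show that it vanishes identically. To reduce the bookkeeping, I would group summands that share most denominator factors and combine them first. For example, in $T'_4$ the two terms with $\ell^{13}_{+-}$ and $\ell^{13}_{-+}$ alone can be combined before treating the $\Delta_{123}$-terms. I would also exploit the visible pairings: many terms come in pairs related by the symmetry $x_1\leftrightarrow x_3$ together with a sign change, mirroring the $Z_4/Z_7$ and $Z_5/Z_8$ pairing in Section~6. Partial cancellations inside such pairs should expose common factors like $x_3-x_1$ early.

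A useful independent check, and possibly a cleaner proof, is a residue argument. View $T'_k$ as a rational function of one variable, say $x_1$, with coefficients in the field generated by the other variables. Show that its residue at every pole vanishes: each pole is simple or of low order coming from a single linear factor. Then show that $T'_k\to 0$ as $x_1\to\infty$. Together these force $T'_k=0$, and the residue computations only involve evaluating the remaining terms on a hyperplane.

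The main obstacle is purely the size of the expansion, especially for $T'_1$ and $T'_4$. There the factor $\Delta_{123}$ and the quadratic denominators produce high-degree numerators, and sign or indexing errors in transcription are likely. I would therefore rely on exact computer algebra, the Maple script accompanying the paper, for the final numerator expansion. I would also cross-check by evaluating each $T'_k$ at several random rational points, which detects any transcription error before the full symbolic simplification.
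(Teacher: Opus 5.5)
Your plan is exactly the paper's proof: bring each $T'_k$ to a common denominator and check, with the accompanying Maple computation, that the numerator vanishes identically. Your optional residue shortcut also works here, but only because each summand's denominator is squarefree in every single variable, so all poles are simple; at a higher-order pole, a vanishing residue alone would not suffice.
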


\begin{proof}
After bringing each $T'_i$ to a common denominator, direct expansion
shows that its numerator vanishes identically. 
\end{proof}

\end{document}